\documentclass[11pt,reqno]{amsart}

\usepackage{amsmath, amsthm, amssymb}
\usepackage{amsfonts}
\usepackage{mathtools}

\usepackage[ansinew]{inputenc}
\usepackage[dvips]{epsfig}
\usepackage{graphicx}
\usepackage[english]{babel}

\usepackage{aliascnt}

\theoremstyle{plain}
\newtheorem{theorem}{Theorem}[section]

\newaliascnt{lemma}{theorem}
\newtheorem{lemma}[lemma]{Lemma}
\aliascntresetthe{lemma}

\newaliascnt{proposition}{theorem}
\newtheorem{proposition}[proposition]{Proposition}
\aliascntresetthe{proposition}

\newaliascnt{corollary}{theorem}

\aliascntresetthe{corollary}

\theoremstyle{definition}
\newaliascnt{definition}{theorem}
\newtheorem{definition}[definition]{Definition}
\aliascntresetthe{definition}

\newaliascnt{remark}{theorem}
\newtheorem{remark}[remark]{Remark}
\aliascntresetthe{remark}

\newaliascnt{example}{theorem}
\newtheorem{example}[example]{Example}
\aliascntresetthe{example}

\theoremstyle{definition}
\newtheorem*{remark*}{Remark}
\newtheorem*{assumption*}{Assumption}

\numberwithin{equation}{section}

\usepackage[backgroundcolor=white, bordercolor=blue,
linecolor=blue]{todonotes}

\usepackage{hyperref}
\usepackage{cleveref}

\usepackage{dsfont}
\usepackage{bbm}

\newcommand{\N}{\mathbb{N}}
\newcommand{\R}{\mathbb{R}}

\newcommand{\Z}{\mathbb{Z}}

\newcommand{\cH}{\mathcal{H}}
\newcommand{\cA}{\mathcal{A}}

\newcommand{\cP}{\mathcal{P}}
\newcommand{\cQ}{\mathcal{Q}}

\newcommand{\cN}{\mathcal{N}}

\newcommand{\cM}{\mathcal{M}}
\newcommand{\cI}{\mathcal{I}}

\newcommand{\eps}{\epsilon}

\newcommand{\ddiv}{\mathrm{div}_v}
\newcommand{\ddiw}{\mathrm{div}_w}

\newcommand{\bbR}{\mathbb{R}}

\DeclareMathOperator{\supp}{supp}

\renewcommand{\d}{\textnormal{\,d}}

\newcommand{\average}{{\mathchoice {\kern1ex\vcenter{\hrule height.4pt
width 6pt depth0pt} \kern-9.7pt} {\kern1ex\vcenter{\hrule
height.4pt width 4.3pt depth0pt} \kern-7pt} {} {} }}
\newcommand{\dashint}{\average\int}

\begin{document}
\allowdisplaybreaks
\title{Sharp regularity near the grazing set for kinetic Fokker-Planck equations}

\author{Kyeongbae Kim}
\address{Institute for Applied Mathematics, University of Bonn, Endenicher Allee 60, 53115, Bonn, Germany}
\email{kkim@uni-bonn.de}
\urladdr{https://sites.google.com/view/kyeongbaekim/}

\author{Marvin Weidner}
\address{Institute for Applied Mathematics, University of Bonn, Endenicher Allee 60, 53115, Bonn, Germany}
\email{mweidner@uni-bonn.de}
\urladdr{https://sites.google.com/view/marvinweidner/}

\keywords{kinetic, Fokker-Planck, Kolmogorov, boundary, hypoelliptic, diffuse reflection, in-flow}

\subjclass[2020]{35Q84, 35B65, 82C40}

\allowdisplaybreaks

\begin{abstract}
We prove optimal regularity results for solutions to linear kinetic Fokker-Planck equations in bounded domains. Our contributions are two-fold. First, we establish the sharp $C^{1/2}$ regularity for either diffuse reflection or prescribed in-flow boundary conditions. Previously, in this setting, it was only known that solutions are $C^{\alpha}$ for some small $\alpha > 0$. Second, we provide a complete characterization of the solution behavior near the grazing set by proving higher order expansions beyond the critical regularity threshold of $\frac{1}{2}$. These results demonstrate for the first time that solutions maintain higher smoothness up to the grazing set near the incoming boundary.
\end{abstract}

\maketitle

\section{Introduction}  

The goal of this article is to investigate the boundary behavior of solutions to linear kinetic equations in domains, subject to physical boundary conditions. The prototype model of our analysis is the classical Kolmogorov equation 
\begin{align}
\label{eq:Kolmogorov}
    (\partial_t + v \cdot \nabla_x) f - \Delta_v f = F \quad \text{in } (0,T) \times \Omega \times \R^n,
\end{align}
yet all our results hold for the general class of kinetic Fokker-Planck equations given by
\begin{align}
\label{eq:FPK}
    (\partial_t + v \cdot \nabla_x) f - \ddiv(A\nabla_vf) = B \cdot \nabla_v f + F \quad \text{in } (0,T) \times \Omega \times \R^n
\end{align}
with sufficiently smooth coefficients. These are well-known models in kinetic theory, arising naturally as the linearization of the Landau equation from plasma physics.

Establishing the regularity of solutions is a central challenge in kinetic theory. In the context of nonlinear models like the non-cutoff Boltzmann and Landau equations, regularity is directly linked to well-posedness \cite{HeSn20,GKTT17,GuSi25,HST25,ISV26} and convergence to equilibrium \cite{DeVi01,DeVi05}. 
In recent years there has been huge interest in the regularity of linear kinetic equations such as \eqref{eq:FPK} \cite{GIMV19,ImMo21} and their nonlocal counterparts \cite{ImSi20,ImSi21}, since these results provide the necessary estimates to establish conditional regularity results for the Boltzmann and Landau equations \cite{CSS18,GIMV19,HeSn20}, \cite{ImSi20,ImSi22,FRW25}. Yet, despite these advances, the theory remains largely limited to interior regularity results or to $x$-periodic solutions.

Characterizing the behavior of solutions near physical boundaries remains an outstanding challenge in kinetic theory that is currently far from understood, even for the Kolmogorov equation \eqref{eq:Kolmogorov}, which is the simplest canonical model. The difficulty is largely due to the characteristic nature of the boundary, which leads to singular solution behavior near the grazing set \cite{GJW99,HJV14}
\begin{align*}
    \gamma_0 = (0,T) \times \{ (x,v) \in \partial \Omega \times \R^n : v \cdot n_x = 0 \}.
\end{align*}
To date, a complete regularity theory is only available for solutions with specular reflection boundary condition \cite{RoWe25}, namely
\begin{align}
\label{eq:specular}
    f(t,x,v) = f(t,x,R_xv) \quad \text{for } x \in \partial \Omega, \quad \text{where } R_x v = v - 2(v \cdot n_x) n_x,
\end{align}
where $n_x$ denotes the outward unit normal vector at $x$. In \cite{RoWe25}, it was proved that solutions are at most $C^{4,1}$ up to $\partial \Omega$ and that this regularity is optimal as $(t,x,v)$ approaches the grazing set $\gamma_0$, whereas solutions are $C^{\infty}$ away from $\gamma_0$. Previously, it was only known that solutions are $L^{\infty}_t C^{\alpha/3,\alpha}_{x,v}$ and $C^{1,\alpha}_v$ for any $\alpha \in (0,1)$ \cite{DGY22,GHJO20}.

Beyond this, other physically relevant boundary conditions have remained largely unexplored, and available results fail to reach the optimal regularity threshold, as we will discuss below. 

The first main contribution of this article is to resolve this open question by establishing \textit{optimal regularity estimates} for solutions with either \textit{diffuse reflection} boundary condition or prescribed \textit{in-flow}. Both of these conditions induce a boundary behavior that stands in stark contrast to the one of specular reflection, since solutions are in general not better than $C^{1/2}$ up to the grazing set $\gamma_0$.

Second, motivated by the singular behavior of solutions near the grazing set, we establish \textit{higher order expansions at $\gamma_0$} that surpass the regularity threshold of $\frac{1}{2}$. This provides a precise characterization of the asymptotic behavior of solutions at higher order. As an application, we establish the following properties of solutions $f$ to \eqref{eq:FPK} with zero in-flow (i.e. boundaries are absorbing), which, to the best of our knowledge, are the first regularity results \textit{above the critical threshold of $\frac{1}{2}$}:
\begin{itemize}
    \item near the incoming boundary, $f \in C^{3-\eps}$ up to $\gamma_0$, i.e. as $v \cdot n_x \nearrow 0$, $f$ is better than $C^{1/2}$ ,
    \item away from the incoming boundary, $f/\Phi \in C^{0,1}$ up to $\gamma_0$ for an explicit function $\Phi \in C^{1/2}$.
\end{itemize}

In the sequel, we will explain our main results in detail and discuss their novelty.

\subsection{Optimal regularity estimates}

We decompose the kinetic boundary $\gamma := (0,T) \times \partial \Omega \times \R^n$ into its outgoing, incoming, and grazing parts $\gamma = \gamma_+ \cup \gamma_- \cup \gamma_0$, where
\begin{align*}
    \gamma_{\pm} := (0,T) \times \{ (x,v) \in \partial \Omega \times \R^n : \pm v \cdot n_x > 0 \}, \quad \gamma_{0} := (0,T) \times \{ (x,v) \in \partial \Omega \times \R^n : v \cdot n_x = 0 \}.
\end{align*}

There are several natural boundary conditions for kinetic equations (see \cite{Vil02}). The specular reflection boundary condition \eqref{eq:specular} has a clear physical interpretation and is mathematically tractable as it resembles a Neumann condition. However, since boundaries are usually rough on a microscopic scale, particles do not bounce elastically, but interact with the boundary and are re-emitted according to a velocity distribution determined by the boundary temperature \cite{Mis10}. 

This mechanism is encoded in the diffuse reflection boundary condition, which is defined as follows, given a function $\cM$\footnote{Usually, one assumes $\int_{\R^n} \cM (t,x,w) (w \cdot n_x)_- \d w = 1$. A canonical choice would be $\cM(v) = 2 e^{-|v|^2}$.}:
\begin{align}
\label{eq:diffuse}
    f = \cN f \quad \text{in } \gamma_-, \quad \text{where}& \quad \cN f(t,x,v) = \cM(t,x,v) \int_{\R^n} f(t,x,w) (w \cdot n_x)_- \d w.
\end{align}

Our first main result gives a complete characterization of the smoothness of solutions to \eqref{eq:FPK} subject to the diffuse reflection boundary condition.

\begin{theorem}
\label{thm:main-diffuse}
    Let $\Omega \subset \R^n$ be a bounded smooth domain and let $A,B,F \in C^{\infty}((0,T) \times \overline{\Omega} \times \R^n)$ and $A$ be uniformly elliptic. Let $\cM \in C^{\infty}(\gamma_-)$. Let $f$ be a weak solution to \eqref{eq:FPK} with diffuse reflection condition \eqref{eq:diffuse},
    and assume that $f, F$, and $\cM$ have fast decay\footnote{This means that $(1+|v_0|)^k f$, $(1+|v_0|)^k F$, and $(1+|v_0|)^k \partial^{\beta} \cM$ are bounded for any $k \in \N$ and any multi-index $\beta$.} as $|v| \to \infty$. 
    
    Then, for any $[t_0,t_1] \in (0,T)$, it holds
    \begin{itemize}
        \item[(i)] $f$ is smooth away from the grazing set, i.e. $f \in C^{\infty}(( [t_0,t_1] \times \overline{\Omega} \times \R^n) \setminus \gamma_0)$,
        \item[(ii)] $f \in C^{1/2}([t_0,t_1] \times \overline{\Omega} \times \R^n)$ with the estimate
        \begin{align*}
           \qquad \Vert f \Vert_{C^{1/2}([t_0,t_1] \times \overline{\Omega} \times \R^n)} \le C \big( \Vert (1 + |v|)^q f \Vert_{L^{\infty}((0,T) \times \Omega \times \R^n)} + \Vert (1 + |v|)^q F \Vert_{L^{\infty}((0,T) \times \Omega \times \R^n)} \big),
        \end{align*}
        where $q,C > 0$ depend only on $n,t_0,t_1,\Omega,A,B,\cM$.
    \end{itemize}
\end{theorem}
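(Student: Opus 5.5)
The plan is to reduce the diffuse reflection problem to an in-flow problem with a boundary datum that is as regular as the solution allows, and then to prove the regularity for the in-flow problem with a boundary analysis near $\gamma_0$. The key observation is that the in-flow datum $g := \cN f$ has a product structure. It equals $\cM(t,x,v)$ times the outgoing flux $m(t,x) := \int f(t,x,w)(w\cdot n_x)_-\d w$, so $g$ is as smooth in $v$ as $\cM$. Its regularity in $(t,x)$ is exactly that of the scalar function $m$ on $(0,T)\times\partial\Omega$. The proof will therefore alternate between regularity of $f$ for prescribed in-flow $g$ and regularity of the moment $m$, in a bootstrap.

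\textbf{Step 1 (in-flow theory).} After flattening $\partial\Omega$ locally with kinetic-invariant changes of variables, I work in a half-space $\{x_n>0\}$. Here the grazing set is $\{x_n=0,\ v_n=0\}$. The aim is two estimates.

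First, away from $\gamma_0$, $f$ is smooth whenever $g$ and $F$ are. On $\gamma_-$ the boundary is non-characteristic from the hypoelliptic viewpoint, since transport enters the domain. Local Schauder estimates of Dirichlet type in kinetic Hölder spaces, followed by differentiation in tangential $(t,x',v)$ directions and recovery of $\partial_{x_n}$ via the equation $v_n\partial_{x_n}f = \dots$, give $C^\infty$ there. On $\gamma_+$ there is no boundary condition at all. Regularity comes from interior-type estimates propagated along outgoing characteristics.

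Second, near $\gamma_0$ I aim for a $C^{1/2}$ estimate, with $C^{1/2}$ measured in the kinetic scaling. This I would obtain by a Campanato/blow-up argument. Subtract the boundary datum's contribution, rescale $(t,x,v)\mapsto(r^2t,r^3x,rv)$ around grazing points, and use a Liouville theorem. The theorem asserts that global solutions of the Kolmogorov equation in the half-space with zero in-flow and growth $\lesssim (1+|z|)^{1/2+\eps}$ are trivial, or are multiples of the explicit $1/2$-homogeneous profile. The critical exponent $1/2$ comes from the known explicit solution $\Phi$, which behaves like $x_n^{1/6}$ and $(v_n)_+^{1/2}$ at grazing.

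\textbf{Step 2 (regularity of the flux).} Given $f\in C^{\alpha}$ uniformly up to the boundary, with $\alpha>0$ small from known results, I show that $m$ gains regularity in $(t,x)$. The factor $(w\cdot n_x)_-$ vanishes linearly at grazing, which compensates for the singular $C^{1/2}$ behavior. Moreover, outgoing velocities $w\cdot n_x>0$ do not contribute. On $\gamma_-$, away from grazing, $f=g$ is smooth once $m$ is. A careful splitting of $\int$ into $|w\cdot n_x|<\delta$ and its complement gives $m\in C^{\beta}$ for some $\beta>\alpha$. Iterating Steps 1 and 2 should yield $m$ smooth: the tangential derivatives of $f$ satisfy the same equation with diffuse-type data plus lower-order terms.

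\textbf{Step 3 (conclusion).} With $m$ smooth, $g=\cM m$ is smooth with fast decay. Step 1 then gives (i) and the $C^{1/2}$ estimate in (ii). The weights $(1+|v|)^q$ are needed to control the $v$-integral in $m$ and the dependence of constants on $|v|$ through the transport term. The main obstacle is the Liouville classification at the grazing set in Step 1, together with showing the bootstrap in Step 2 actually closes. I would first try to do the latter by treating tangential difference quotients of $f$ as solutions of the same boundary problem.
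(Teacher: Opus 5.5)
Your Step 1 follows the paper's route: blow-up at grazing points plus a Liouville theorem whose $\tfrac12$-homogeneous element is $\phi_0$. Steps 2 and 3, however, contain genuine gaps.

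\textbf{The flux step is circular as written.} You assert that only velocities with $w\cdot n_x<0$ enter $m$, so that away from grazing the integrand is $f=g=\cM m$. Then the part $|w\cdot n_x|\ge\delta$ of the integral is just $m(t,x)\int_{w\cdot n_x\le-\delta}\cM\,|w\cdot n_x|\d w$. That has exactly the regularity of $m$. The part $|w\cdot n_x|<\delta$ carries only the $C^\alpha$ regularity of $f$. So no $\beta>\alpha$ can come out. Indeed, read this way and with $\int\cM(w\cdot n_x)_-\d w=1$, the condition only forces $f|_{\gamma_-}=\cM\mu$ with $\mu(t,x)$ arbitrary.

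The flux that makes the theorem true is the \emph{outgoing} one. Compare \eqref{defn.nf}, where $\Omega=\{x_n>0\}$ and the integral runs over $w_n<0$. The gain comes precisely from $\gamma_+$, where no boundary condition is imposed. At $(t,x',0,w)$ with $w_n<0$, a cylinder of radius $\sim\min\{|w_n|,\langle w\rangle^{-p}\}$ avoids $\gamma_0\cup\gamma_-$. The estimates at $\gamma_+$ therefore give $[f]_{C^{2-\eps}}\lesssim|w_n|^{-2+\eps}\|f\|_{L^\infty}$ there, up to powers of $\langle w\rangle$. Integrate this against the weight $|w_n|$ for $|w_n|\gtrsim r$, and use only the $L^\infty$ bound on the strip $|w_n|\lesssim r$. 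This yields $\cN f$ with kinetic H\"older exponent close to $\tfrac43$ on $\gamma_-$, in terms of weighted $L^\infty$ norms of $f$ and $F$ alone (see \autoref{lem.reg.nf.diff}).

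This single step is what gives (ii) with only $L^\infty$ norms on the right-hand side. No a priori $C^\alpha$ bound and no iteration in $\alpha$ are needed. Routing (ii) through ``$m$ smooth'' would instead bring higher norms of $F$ into the estimate.

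\textbf{The bootstrap for (i) does not close with the tools of Step 1.} The tangential increments $\delta_h f$ do not satisfy the same problem up to lower-order terms. After flattening a curved $\partial\Omega$, the coefficients depend on $(t,x)$ and the new drift grows like $|v|^2$. So $\delta_h f$ solves the equation with additional sources $\mathrm{div}_v(\delta_hA\,\nabla_vf)$ and $\delta_hB\cdot\nabla_vf$. Since $f$ is only $C^{1/2}$ at $\gamma_0$, $\nabla_vf$ blows up like $\mathrm{dist}_{\mathrm{kin}}^{-1/2}$. These are therefore singular divergence-form sources, and the $C^{1/2}$ theory of Step 1, which needs $F\in L^\infty$, does not apply to the differentiated problem.

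The paper closes this in three steps:
\begin{enumerate}
\item[(a)] It combines $C^{1/2-\delta}$ bounds at $\gamma_0$ with $C^{1,\eps}$ bounds away from $\gamma_0$ to get $|\nabla_v\delta_h^lf|\lesssim\mathrm{dist}_{\mathrm{kin}}^{-1/2-\delta}$, hence $L^{8-\delta_0}$ integrability in $(x_n,v_n)$.
\item[(b)] It proves a separate $C^{1/2-\delta}$ estimate up to $\gamma_0$ for in-flow problems with such $L^{8-\delta_0}$ divergence-form sources, with explicit $\langle v\rangle$-dependence (\autoref{lem.bdry.hol.bdep}).
\item[(c)] It runs an induction on fractional increments $\delta_h^{l+1}f/|h|^{l+\alpha}$ with $\alpha<\tfrac16$, repeated several times. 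This is needed because a $C^{1/2-\delta}$ bound only controls increments in $(t,x')$ of order less than $\tfrac16$.
\end{enumerate}
Without some substitute for (a)--(c), Step 3 is never reached and (i) remains unproved.
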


As the following remark clarifies, \autoref{thm:main-diffuse} is a simplified version of our main result on the diffuse reflection condition.

\begin{remark}
 We actually show the following more general statements:
    \begin{itemize}
        \item (ii) remains true when $\partial \Omega \in C^{2,\eps}$, $A \in C^{\eps}$ for some $\eps > 0$ and $B,F \in L^{\infty}$ (see \autoref{thm:hol12.gen.diff}).
        \item We prove Schauder-type estimates of order $k + \alpha$ away from $\gamma_0$ in localized kinetic cylinders under suitable assumptions on the coefficients $A,B,F,\cM$ (see \autoref{thm.diff}(ii)).
    \end{itemize}
\end{remark}

The $C^{1/2}$ regularity is already new for solutions to the Kolmogorov equation \eqref{eq:Kolmogorov}, and it is sharp in the sense that we construct an explicit solution $f$ to \eqref{eq:Kolmogorov} in the half-space with $h, \cM \in C^{\infty}$ and such that $f \in C^{1/2}$, but $f \not\in C^{1/2 + \eps}$ for every $\eps > 0$ (see \autoref{example:diffuse-optimal}). Moreover, note that also the $C^{\infty}$ regularity away from $\gamma_0$ is highly non-trivial due to the nonlocal dependence of the boundary data on the solution itself. We refer to Section \ref{sec:prelim} for the precise definition of the kinetic H\" older spaces.

Prior to our result, it was only known that solutions to \eqref{eq:FPK} with diffuse reflection condition \eqref{eq:diffuse} are $C^{\alpha}$ up to the boundary for some small $\alpha > 0$ (see \cite{Zhu24}), both at $\gamma_0$ and at $\gamma_-$. Proving regularity results up to the optimal order requires completely different techniques compared to \cite{Zhu24}. Moreover, unlike specular reflection, \eqref{eq:diffuse} does not resemble a Neumann boundary condition. Hence, we cannot use the approach from \cite{RoWe25}, and significant new ideas are required, as we explain below.

Instead, in this article we demonstrate that diffuse reflection shares the same singular boundary behavior of solutions near the grazing set $\gamma_0$ as the in-flow boundary condition
\begin{align}
\label{eq:in-flow}
    f = g \quad \text{in } \gamma_-,
\end{align}
where regularity is limited to $C^{1/2}$. Indeed, our second main result establishes the optimal $C^{1/2}$ regularity for prescribed in-flow from which we ultimately derive \autoref{thm:main-diffuse} by setting $g = \cN f$. The in-flow condition, which prescribes the distribution of particles with incoming velocities, is another fundamental and physically relevant boundary condition in kinetic theory (see \cite{Vil02}). 

The following theorem characterizes the boundary regularity of solutions to \eqref{eq:FPK} with prescribed in-flow and it is the second main result of this article.

\begin{theorem}
\label{thm:in-flow}
   Let $\Omega \subset \R^n$ be a bounded smooth domain and let $A,B,F \in C^{\infty}((0,T) \times \overline{\Omega} \times \R^n)$ and $A$ be uniformly elliptic. Let $g \in C^{\infty}(\gamma_-)$. Let $f$ be a weak solution to \eqref{eq:FPK} with in-flow condition \eqref{eq:diffuse},
    and assume that $f,g$, and $F$ have fast decay as $|v| \to \infty$.  
    
    Then, for any $[t_0,t_1] \in (0,T)$, it holds
    \begin{itemize}
        \item[(i)] $f$ is smooth away from the grazing set, i.e. $f \in C^{\infty}(( [t_0,t_1] \times \overline{\Omega} \times \R^n) \setminus \gamma_0)$,
        \item[(ii)] $f \in C^{1/2}([t_0,t_1] \times \overline{\Omega} \times \R^n)$ with the estimate
        \begin{align*}
            \Vert f \Vert_{C^{1/2}([t_0,t_1] \times \overline{\Omega} \times \R^n)} \le C \left( \Vert (1 + |v|)^q f \Vert_{L^{1}((0,T) \times \Omega \times \R^n)} + [g]_{C^{1/2 + \eps}_q(\gamma_-)} + \Vert (1 + |v|)^q F \Vert_{L^{\infty}((0,T) \times \Omega \times \R^n)} \right),
        \end{align*}
        where $q,C > 0$ depend only on $n,t_0,t_1,\Omega,A,B$. \footnote{We write $[g]_{C^{\alpha}_{q}(\gamma_-)} = \sup_{z_0 \in \gamma_-} (1 + |v_0|)^q [g]_{C^{\alpha}(\mathcal{Q}_1(z_0) \cap \gamma_-)}$. See \eqref{eq:kinetic-cylinder} for the definition of $\mathcal{Q}_1(z_0)$.}
    \end{itemize}
\end{theorem}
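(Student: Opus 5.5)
The plan is to localize and flatten, and then reduce the statement to estimates for a model problem in the half-space. Near a boundary point $x_0 \in \partial\Omega$ we straighten the boundary by a smooth change of variables adapted to the kinetic structure: $x \mapsto (x', x_n)$ with $\Omega \to \{x_n > 0\}$, and the velocity transformed by the Jacobian so that the transport part stays of the form $\partial_t + v\cdot\nabla_x$ up to lower-order terms in $\nabla_v$. This turns \eqref{eq:FPK} with \eqref{eq:in-flow} into a kinetic Fokker--Planck equation in $\{x_n>0\}$. The new coefficients are smooth, and the in-flow condition is prescribed on $\{x_n = 0, v_n > 0\}$. Interior regularity (known Schauder theory for kinetic equations, e.g.\ \cite{ImMo21}) handles points away from $\partial\Omega$. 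The velocity weights $(1+|v|)^q$ compensate for the fact that kinetic cylinders centered at large $|v_0|$ translate quickly in $x$, so constants must be tracked polynomially in $|v_0|$. Before starting, we subtract from $f$ a smooth extension of $g$; this reduces the problem to zero in-flow with a modified source, at the cost of the $C^{1/2+\eps}$ norm of $g$.

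For part (i), consider a point of $\gamma_+$ or $\gamma_-$, where $v_n \neq 0$. Locally the boundary is non-characteristic for the transport. The kinetic Dirichlet problem (on $\gamma_-$), or the no-condition outflow side (on $\gamma_+$), then behaves like a parabolic problem in the direction of transport. We obtain higher regularity by differentiating in the tangential variables $t, x', v$, which commute with the flattened operator up to lower-order terms. The normal derivative $\partial_{x_n}$ is recovered from the equation, since $v_n \partial_{x_n} f = F + \Delta_v f - \ldots$ with $v_n$ bounded away from $0$. Iterating Schauder estimates in cylinders avoiding $\gamma_0$ yields $C^\infty$.

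Part (ii) is the core. At the grazing set we use barriers built from the explicit $1/2$-homogeneous solution of the model Kolmogorov equation in the half-space. This is a function $\Phi(x_n, v_n)$ that is $1/2$-homogeneous under the kinetic scaling $x_n \sim r^3$, $v_n \sim r$, vanishes on $\{x_n=0, v_n>0\}$, and is positive inside; it is the stationary solution of $v_n\partial_{x_n}\Phi - \partial_{v_n}^2\Phi = 0$ with absorbing condition, whose classical expression involves confluent hypergeometric functions. Comparing $f$ with $C\Phi$ plus lower-order corrections, via the comparison principle for the in-flow problem, gives the growth bound $|f(z)| \le C d(z,\gamma_0)^{1/2}$ at grazing boundary points. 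Combining this growth bound with the interior and non-grazing boundary estimates rescaled in kinetic cylinders $\mathcal{Q}_r(z_0)$, via the standard Campanato/covering argument, yields the global $C^{1/2}$ seminorm. The $L^1$ norm on the right-hand side enters through an $L^1\to L^\infty$ bound (De Giorgi/Moser up to the boundary, as in \cite{Zhu24}).

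The main obstacle is the barrier step. We must construct a supersolution that is $1/2$-homogeneous and robust under variable coefficients $A$, the drift $B$, and the curvature terms produced by flattening. For this we would perturb $\Phi$ to $\Phi(1 + c\, d^{\delta})$ with a small $\delta>0$, which absorbs the error terms. Alternatively, we would run a compactness/blow-up argument: assuming the $C^{1/2}$ bound fails, rescale to obtain a Liouville problem for the constant-coefficient Kolmogorov equation in the half-space with zero in-flow, whose solutions of growth below $|z|^{1/2+\eps}$ must be multiples of $\Phi$ or vanish. Proving that Liouville theorem, which relies on classifying homogeneous solutions near $\gamma_0$, is where I expect the real difficulty to lie.
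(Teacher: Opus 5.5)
Your main route breaks at two load-bearing steps, and these are precisely the two obstructions the paper lists in Subsection~\ref{subsec:strategy} as its reasons for not using barriers.

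\emph{The reduction to zero in-flow.} It is not available at the cost you state. Subtracting an extension $G$ of $g$ replaces $F$ by $F-(\partial_t+v\cdot\nabla_x)G+\ddiv(A\nabla_vG)+B\cdot\nabla_vG$. This is controlled only by kinetic $C^2$-type norms of $g$. An extension built from $[g]_{C^{1/2+\eps}}$ alone produces a source that is in general of order $\mathrm{dist}^{-3/2+\eps}$ near the boundary. The qualitative smoothness of $g$ in the hypotheses does not help, because the constant in (ii) may depend only on $[g]_{C^{1/2+\eps}_q}$.

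\emph{The barrier with coefficients.} As proposed, it does not close. Freeze $A$ at $z_0$ and write $\rho=\mathrm{dist}(z,\gamma_0)$. The error $(A_{nn}(z)-A_{nn}(z_0))\partial^2_{v_n}\Phi$ has no sign and is of size up to $|z-z_0|^{\eps}\rho^{-3/2}$. It therefore blows up near grazing points other than $z_0$. There, with $d=|z-z_0|$, your corrector $\Phi\,d^{\delta}$ only contributes terms of order $\rho^{1/2}d^{\delta-2}+\rho^{-1/2}d^{\delta-1}$, which cannot absorb the error. Correctors that can absorb it have lower homogeneity and cost exponent. An example is $\Phi^{1-\delta}$, for which $L\Phi^{1-\delta}=\delta(1-\delta)\Phi^{-1-\delta}|\nabla_v\Phi|^2$ with $L=\partial_t+v\cdot\nabla_x-\Delta_v$. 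It yields only $C^{1/2-\delta/2}$, which is exactly the $\eps$-loss the paper warns about.

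\emph{The blow-up fallback.} This is the paper's actual route, but it must be set up differently.

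\begin{itemize}
\item A contradiction argument aimed at $[f]_{C^{1/2}}$ itself does not close. The blow-up limit may be $c\,\phi_0\neq 0$, which has exactly the critical growth. This is why \autoref{lem.exp.nd.a} excludes expansion orders in $\frac12+\N\cup\{0\}$.
\item The paper instead proves, at each $z_0\in\gamma_0$, the expansion $|f-g(z_0)-c\,\phi_{0,a}|\le Cr^{1/2+\eps}$ on $\mathcal{H}_r(z_0)$, with $a=\sqrt{A_{nn}(z_0)}$. It blows up $f$ minus its $L^2(\mathcal{H}_r(z_0))$-projection onto $\mathrm{span}\{1,\phi_{0,a}\}$.
\item Compactness comes from boundary $C^\alpha$ estimates with singular sources (\autoref{lem.bdry.hol}, building on \cite{Sil22,Zhu24}).
\item The remaining datum $g-g(z_0)=O(r^{1/2+\eps})$ disappears in the limit. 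This is why no extension of $g$ is needed, and why $g\in C^{1/2+\eps}$ rather than $C^{1/2}$ is assumed.
\item The limit lies in the span by the Liouville theorem, yet is orthogonal to it, so it vanishes.
\end{itemize}

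The $C^{1/2}$ bound then follows from this expansion, from $\phi_0\in C^{1/2}$, and from interior and non-grazing estimates in the three regimes $x_n\gtrsim|v_n|^3$ and $\pm v_n\gtrsim x_n^{1/3}$ (\autoref{thm.hol12}), followed by flattening and covering. Part (i) is simply taken from \cite[Theorem~5.6]{RoWe25}.

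\emph{The Liouville theorem.} This is the step you leave open, and it is not settled by classifying homogeneous solutions; Kummer's equation gives $\phi_0$ directly. The difficulty is showing that every solution with the given growth is homogeneous, and no Sturm--Liouville argument is available for this. The paper does it in three steps:
\begin{itemize}
\item a boundary Harnack principle for $h/\phi_0$ in 1D, built from explicit barriers and exponential decay at $\gamma_-$ (\autoref{lemma:1D.expansion.gamma0});
\item an upgrade of the admissible growth order by a maximum-principle comparison with $\phi_0+\partial_x\phi_0$ (\autoref{lem.liou2});
\item a reduction of the $n$-dimensional problem to 1D via difference quotients in $t,x',v'$ (\autoref{lemma:nD-classification-inflow}).
\end{itemize}
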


\begin{remark}
We comment on more precise versions of \autoref{thm:in-flow}.
    \begin{itemize}
        \item (ii) remains true when $\partial \Omega \in C^{2,\eps}$, $A \in C^{\eps}$, and $g \in C^{1/2 + \eps}$ for some $\eps > 0$, and $B,F \in L^{\infty}$ (see \autoref{thm:hol12.gen}). We expect there to be non $C^{1/2}$ solutions if $g \in C^{1/2} \setminus C^{1/2 + \eps}$.
        \item We also establish localized $C^{1/2}$ estimates up to $\gamma_0$ for solutions in the intersection of $(-1,1) \times \Omega \times \R^n$ with a kinetic cylinder (see \autoref{thm.hol12}).
        \item Schauder-type estimates of order $k+\alpha$ away from $\gamma_0$ have already been established in \cite{RoWe25}.
    \end{itemize}
\end{remark}

The $C^{1/2}$ regularity up to the grazing set is already new for the Kolmogorov equation \eqref{eq:Kolmogorov}. Prior to our result, as for diffuse reflection, it was only known that solutions are $C^{\alpha}$ for some small $\alpha > 0$ in case $g \not\equiv 0$ (see \cite{Sil22,Zhu24}). Moreover, in the literature, there are several results on the boundary regularity for absorbing boundary conditions, i.e. $g \equiv 0$. Let us comment on these results and on how \autoref{thm:in-flow} extends them:

\begin{remark}
    \begin{itemize}
        \item In \cite{HJV14,HJJ15,HJJ18}, the authors prove that $f \in C^{1/2 - \eps}_v \cap C^{1/6 -\eps}_x$ for any $\eps > 0$ up to the grazing set. Their results do not establish any regularity in $t$ and they work with H\"older spaces that do not capture the transport structure of \eqref{eq:Kolmogorov}, compared to \autoref{thm:in-flow}.
        \item \cite{Zhu25} establishes \emph{a priori} $C^{1/2}$ estimates for solutions. This means that $C^{1/2}$ regularity of solutions up to the boundary has to be \emph{assumed} in order for estimate to hold (see the absorption argument in \cite[Proposition 4.3, Lemma 4.7]{Zhu25}). In all our results, we merely assume $f$ to be a weak solution, i.e. $f \in H^1_v$ up to the boundary. 
        \item The results in \cite{Zhu25} do not yield global estimates in $v$ for global solutions to \eqref{eq:Kolmogorov}, such as \autoref{thm:in-flow}, since the $|z_0|$-dependence of the constants is not explicit.
           \end{itemize}
\end{remark}

Moreover, we refer to \cite{LitNys21,AAMN24,AveHou26,Hou26} for further results on variational methods and trace theorems for kinetic equations with in-flow boundary conditions \eqref{eq:in-flow}.

The sharpness of our result \autoref{thm:in-flow} follows from well-known explicit $C^{1/2}$ solutions to \eqref{eq:Kolmogorov} with $g \equiv 0$ in 1D, which go back to work by probabilists on first passage times of integrated Brownian motion from the last century (see \cite{GJW99}, and also \cite{McK63,Gol71,Sin92,IsWa94}, and the more recent articles \cite{HJV14,Zhu24}). 

Finally, while our results are new already for absorbing boundaries,  extending these optimal regularity estimates to general non-zero in-flow data is a non-trivial task. This is mainly because standard barrier arguments do not seem to apply so easily in the kinetic setting. We refer to Subsection \ref{subsec:strategy} for a more detailed discussion of our approach.

\subsection{Higher order estimates near the grazing set}

The low $C^{1/2}$ regularity of solutions at the grazing set stands in stark contrast to the smoothness of solutions near all other boundary points. This distinct behavior at $\gamma_0$ arises because the transport operator degenerates for velocities tangent to the boundary, which breaks the hypoelliptic regularization of solutions. This phenomenon motivates a detailed analysis of the exact solution behavior near $\gamma_0$. In this context, the following question arises naturally:
\begin{align*}
    \textit{Can we characterize the higher order asymptotics of solutions near } \gamma_0?
\end{align*}

It is another major goal of this article to resolve this question. Moreover, it turns out that higher order asymptotics are crucial even in order to prove the $C^{1/2}$ regularity, as we will explain below.

As a starting point of our analysis, we recall the explicit solution 
\begin{align*}
\phi_0(x,v) = 
\begin{cases}
    x^{1/6}U\left(-\frac16,\frac23,-\frac{v^3}{9x}\right)&\quad\text{if }v<0,\\
             c_0 x^{1/6}e^{-\frac{v^3}{9x}}U\left(\frac56,\frac23,\frac{v^3}{9x}\right)&\quad\text{if }v\geq0
             \end{cases}
\end{align*}
to the stationary Kolmogorov equation in 1D with absorbing boundaries
\begin{equation}
\label{eq:stat-Kolmo}
\left\{
\begin{alignedat}{3}
    v \partial_x \phi_0 &= \partial_{vv} \phi_0 \qquad &&\text{in } (0,\infty) \times \R ,\\
    \phi_0 &= 0  &&\text{on } \{ 0 \} \times (0,\infty),
\end{alignedat} \right.
\end{equation}
which goes back to \cite{Gol71,GJW99}. Here, $c_0 = \frac{\Gamma(7/6)}{\Gamma(1/6)}$ and $U$ denotes the Tricomi confluent hyper-geometric function. The precise boundary behavior of $\phi_0$ can be computed explicitly \cite{Gol71,Sin92,IsWa94,GJW99}, revealing three characteristic regions with distinct asymptotic profiles:
\begin{align}
\label{eq:phi_0-behavior}
\phi_0(x,v)\eqsim\begin{cases}
       x^{\frac16}\left(\frac{v^3}x\right)^{-\frac56}e^{-\frac{v^3}{9x}}\quad&\text{if } x < v^3,\\
        x^{\frac16}\quad&\text{if } x \ge |v|^3,\\
        |v|^{\frac12}\quad&\text{if } x < - v^3.
        \end{cases}
\end{align}

It is natural to suspect, (and, indeed, this is an adequate reformulation of the aforementioned question) that \textit{all solutions} to the Kolmogorov equation
\begin{equation}
\label{eq:Kolmo-localized}
\left\{
\begin{alignedat}{3}
( \partial_t + v \cdot \nabla_x ) f - \Delta_v f &= F \quad && \text{in } \big((-1,1) \times \Omega \times \R^n \big) \cap \cQ_1(z_0) =: \cH_1(z_0),\\
        f &= g && \text{on } \gamma_- \cap \cQ_1(z_0),
\end{alignedat} \right.
\end{equation}
where $\mathcal{Q}_1(z_0)$ denotes a standard kinetic cylinder (see \eqref{eq:kinetic-cylinder}), behave like the function $\phi_0$ up to a smooth error term locally at any boundary point in $\gamma_0$, at least if $g = 0$. Our main results confirm this conjecture, however they also demonstrate that the situation is significantly more complex in general domains and in the presence of source terms. 

Our findings yield a precise description of the anisotropic nature of the singularity of solutions at $\gamma_0$. In order to state them, we need to decompose the domain near $z_0 \in \gamma_0$ into several different regions, similar to \eqref{eq:phi_0-behavior}. We denote $d_{\Omega}(x) = \mathrm{dist}(x,\partial \Omega)$ and write for $\eps \in (0,1]$:
\begin{align*}
    \mathcal{R}_-^{\eps} := \big\{ d_{\Omega}^{\eps}(x) \le (n_x \cdot v)^{3} \big\}, \qquad \mathcal{R}_+ = \{ d_{\Omega}(x) \le - (n_x \cdot v)^3\}, \qquad \mathcal{R}_0 = \{ d_{\Omega}(x) \ge |n_x \cdot v|^3\}.
\end{align*}
Here, if $x \in \Omega$, we denote by $n_x$ the outer unit normal vector at the projection $\bar{x} \in \partial \Omega$ of $x$ to $\partial \Omega$, and without loss of generality, we assume that the projection is unique\footnote{We can always find a tubular neighborhood of $z_0$ where the projection to the boundary is unique.} in $\mathcal{Q}_1(z_0)$ .

Our third main result shows that solutions to the Kolmogorov equation maintain \textit{higher regularity of order less than three up to the grazing set} $\gamma_0$ near the incoming boundary $\gamma_-$, i.e. as $v \cdot n_x \nearrow 0$.

\begin{theorem}
\label{thm:C3}
    Let $\Omega \subset \R^n$ be a bounded smooth domain and let $z_0 \in \gamma_0$ and $\eps \in (0,1)$. Let $F \in C^{5/2}(\mathcal{H}_1(z_0))$ and $g \in C^{2,1}(\mathcal{H}_1(z_0) \cap \gamma_-)$. Let $f$ be a solution to \eqref{eq:Kolmo-localized} in $\mathcal{H}_1(z_0)$.

    Then, it holds $f \in C^{3 - \eps}(\mathcal{H}_{1/2}(z_0) \cap \mathcal{R}_-^{\eps})$, and 
    \begin{align*}
        \Vert f \Vert_{C^{3-\eps}(\mathcal{H}_{1/2}(z_0) \cap \mathcal{R}_-^{\eps})} \le C \big( \Vert f \Vert_{L^1(\mathcal{H}_1(z_0))} +  [F]_{C^{5/2}(\mathcal{H}_1(z_0))} +  [g]_{C^{2,1}(\mathcal{H}_1(z_0) \cap \gamma_-)} \big),
    \end{align*}
    where $C$ depends only on $n,\eps,|z_0|,\Omega$.
\end{theorem}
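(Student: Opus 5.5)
The plan is to combine a higher-order boundary expansion at the grazing point with rescaled interior/in-flow Schauder estimates inside the cone-like region $\mathcal{R}_-^{\eps}$.

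\textbf{Step 1: expansion at the grazing set.} First I would establish, for every boundary point $\bar z\in\gamma_0\cap\mathcal{Q}_{1/2}(z_0)$, a pointwise expansion of order $3-\eps$ at $\bar z$,
\begin{align*}
|f(z)-P_{\bar z}(z)| \le C\big(\Vert f\Vert_{L^1(\mathcal{H}_1(z_0))}+[F]_{C^{5/2}}+[g]_{C^{2,1}}\big)\,\mathrm{dist}_{\mathrm{kin}}(z,\bar z)^{3-\eps} \quad\text{for } z\in \mathcal{R}_-^{\eps}.
\end{align*}
Here $P_{\bar z}$ is a kinetic polynomial of degree $<3$ plus a multiple of the singular profile $\phi_0$ (rotated to the normal direction at $\bar z$). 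In $\mathcal{R}_-^{\eps}$ one has $d_\Omega\le (n_x\cdot v)^{3/\eps}$ with $n_x\cdot v>0$, so by \eqref{eq:phi_0-behavior} the $\phi_0$-contribution is exponentially small, of size $e^{-c (n_x\cdot v)^{3}/d_\Omega}$. It is therefore absorbed into an error of arbitrarily high order. Thus only the polynomial part matters in this region.

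To prove the expansion I would use a blow-up/compactness argument in the spirit of \cite{RoWe25}:
\begin{itemize}
\item Assume the expansion fails along a sequence of scales, and rescale kinetically.
\item Flatten $\partial\Omega$. Subtract the polynomial approximating $g$ (of order $C^{2,1}$) and the polynomial approximating $F$ (of order $C^{5/2}$, which fits a degree-$3$ expansion since $F$ sits two orders below).
\item Pass to a limit: an entire solution of the half-space Kolmogorov equation with zero in-flow and polynomial growth of order $<3$.
\end{itemize}

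\textbf{Step 2: Liouville theorem.} The key ingredient is a Liouville theorem classifying such limits as $a\,\phi_0$-type profiles plus kinetic polynomials of degree $<3$. This can be obtained by reducing via Fourier/tangential differentiation to the 1D problem \eqref{eq:stat-Kolmo}, using that tangential derivatives and $\partial_t$ of solutions are again solutions. Compactness of the rescaled sequence comes from the uniform $C^{1/2}$ estimates up to $\gamma_0$ (\autoref{thm.hol12}), applied to suitable incremental quotients.

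\textbf{Step 3: from pointwise expansions to $C^{3-\eps}$.} For $z\in\mathcal{H}_{1/2}(z_0)\cap\mathcal{R}_-^{\eps}$, take $r\eqsim$ the kinetic distance to $\gamma_0$, with $\bar z$ the nearest grazing point. In $\mathcal{R}_-^{\eps}$ the point lies near $\gamma_-$, where $n_x\cdot v\gtrsim r$. Hence the cylinder $\mathcal{Q}_{cr}(z)$ intersects $\partial\Omega$ only in $\gamma_-$, with uniformly non-degenerate transversality after rescaling. Apply the in-flow Schauder estimates away from $\gamma_0$ (\cite{RoWe25}) to $f-P_{\bar z}$ there, and rescale. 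The expansion gives $\Vert f-P_{\bar z}\Vert_{L^\infty(\mathcal{Q}_{cr}(z))}\lesssim r^{3-\eps}$, which yields $[f-P_{\bar z}]_{C^{3-\eps}(\mathcal{Q}_{cr/2}(z))}\le C$. Finally, a standard covering argument combining these local estimates with the dependence of $P_{\bar z}$ on $\bar z$ (also controlled by the expansion) gives the global $C^{3-\eps}$ bound.

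\textbf{Main obstacle.} I expect the obstacle to be Step 1, and specifically the Liouville classification above order $1/2$. One must rule out other homogeneous solutions of degree in $(1/2,3)$ vanishing on $\gamma_-$; there are likely additional singular profiles $\phi_k$ of homogeneity $1/6+k$ that are not smooth. I would try first to show that every such profile also decays exponentially in the region $\{x\le v^3\}$, just like $\phi_0$, so it is harmless in $\mathcal{R}_-^{\eps}$. The loss of $\eps$ in the region $d^\eps\le (n\cdot v)^3$ is exactly what compensates the curvature of $\partial\Omega$ and the resulting errors in this decay.
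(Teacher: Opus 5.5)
Your overall architecture (a grazing expansion obtained by blow-up and a Liouville theorem reduced to 1D, followed by Schauder estimates near $\gamma_-$) matches the paper's. The gap is that the profile class in Steps 1--2 is too small, and the missing profile is exactly the one that causes the $\eps$-loss.

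\textbf{The missing profile $\psi_0$.} Subtracting the Taylor polynomial of $g$ kills the in-flow but creates a constant source (e.g.\ from $\Delta_v(v_n^2)=2$), and the Taylor polynomial of $F$ contributes constants as well. No polynomial can absorb a constant source while keeping zero in-flow: kinetic polynomials of degree $\le 2$ involve only $(t,v)$, so no nonzero one vanishes on $\{x_n=0,\ v_n>0\}$. The blow-up limit therefore generally solves the equation with a constant right-hand side and zero in-flow, or equivalently with zero source and in-flow $c\,v_n^2$. Its classification (\autoref{lem.exiinflow}, \autoref{thm.liou.inflow}, \autoref{lemma:nD-classification-inflow}) contains the $2$-homogeneous function $\psi_0$, which solves the homogeneous equation with in-flow $c\,v_n^2$; up to normalization, $\psi_0-c\,v_n^2$ is the solution of \eqref{psi0.sol}.

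Unlike the $\phi$-type profiles, $\psi_0$ does not decay exponentially in $\mathcal{R}_-$. By \eqref{psi0.asy.v} one only has $\psi_0=c\,v_n^2+O(x_n/v_n)$ for $x_n\ll v_n^3$. By \autoref{lemma:psi-reg} and \eqref{psi.sharp}, its regularity on $\mathcal{R}^{\eps}_-$ is exactly $C^{3-\eps}$. This is the source of the loss. It already occurs in the flat half-space with $F\equiv 0$, $g=c\,v_n^2$ (\autoref{ex.sharp.3eps}), so attributing the loss to curvature is wrong. The ingredient missing from your plan is the fine regularity of $\psi_0$ in $\mathcal{R}^{\eps}_-$, in the form \eqref{taylor.psi0}. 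This is what lets the paper replace the $\psi_0$-part of the expansion by a Taylor polynomial in that region.

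\textbf{Step 3.} After flattening, $\mathcal{R}^{\eps}_-=\{0<x_n\le v_n^{3/\eps}\}$ and you work at scale $r\approx v_n$. But $\mathcal{Q}_{cr}(z)$ contains points with $x_n\sim c^2r^3\gg v_n^{3/\eps}$, outside the region where your expansion is claimed. There $\phi_0$ is of order $r^{1/2}$ (with a $c$-dependent constant) and $\psi_0-c\,v_n^2$ is of order $r^2$. So the profiles cannot be discarded before applying the $\gamma_-$ Schauder estimate on that cylinder. In \autoref{thm.3reggamma-} the paper instead does the following:
\begin{itemize}
\item It subtracts the full $P_{\widetilde z_1}\in\overline{\mathcal{T}}$ and proves the grazing expansion to an order strictly above the target.
\item It applies \autoref{lem.reg.gamma+} at the projection $\widehat z_1\in\gamma_-$ on the polynomially smaller scale $\widehat r_1\approx v_{1,n}^{\frac12(3/\delta-1)}$, with $\delta<1$ close to $1$.
\item On that scale the $\phi$-profiles have bounded derivatives by \eqref{est.smooth.phim}, and $\psi_0$ has a controlled $C^{2,\delta'}$ norm.
\item The surplus in the expansion order absorbs the change of scale.
\end{itemize}

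\textbf{Variable coefficients and compactness.} Flattening produces variable coefficients, so in the blow-up the profile terms generate unbounded sources such as $\partial_{v_n}\phi_0$ and $\partial_{v_n}^2\phi_0$. This forces three things:
\begin{itemize}
\item the enlarged family $v_n^i\partial_{v_n}^{(j)}\phi_0$, $\partial_{v_n}\phi_1$, $v_n\partial_{v_n}\psi_0$;
\item a Liouville theorem with such sources (\autoref{lem.liou.inflow2.nd});
\item compactness via \autoref{lem.bdry.hol} and \autoref{lem.bdry.hol.final}, since \autoref{thm.hol12} requires $F\in L^\infty$.
\end{itemize}

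\textbf{The 1D Liouville theorem.} Even with zero data, the hard part is not ruling out other homogeneous profiles. It is showing that every solution of sub-cubic growth is a multiple of $\phi_0$, which is the content of \autoref{lem.liou2}.
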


This result remains true for solutions to kinetic Fokker-Planck equations \eqref{eq:FPK} with smooth enough coefficients (see \autoref{thm:C3-coeff}).

It is important to emphasize that, in contrast to \autoref{thm:in-flow}(i), the norms in \autoref{thm:C3} remain uniformly bounded up to the grazing set. This fact crucially sets \autoref{thm:C3} apart from previous higher order regularity results in $\gamma_-$ \cite[Theorem 1.3]{Sil22}, \cite[Proposition 4.5]{RoWe25}, and \cite[Theorem 1.2]{Zhu25}. Hence, to the best of our knowledge, \autoref{thm:C3} is the first result confirming higher regularity up to $\gamma_0$ in specific directions.

We point out that the regularity established in \autoref{thm:C3} is sharp in the sense that solutions in general do not exceed $C^{3-\eps}$ regularity. Indeed, we construct an explicit function which is globally $C^{1,1}$, and $C^{3-\eps}$ in $\mathcal{R}_-^{\eps}$, and it solves \eqref{eq:Kolmogorov} with $F \equiv 1$ subject to absorbing boundaries (see \autoref{ex.sharp.3eps}). Note that in the homogeneous case $F \equiv 0$, one could even show that solutions are $C^{\infty}$ up to $\gamma_0$ near $\gamma_-$ by combining \cite[Theorem 1.3]{Sil22} with interior estimates.

Regularity estimates up to $\gamma_0$ above the critical regularity threshold $\frac{1}{2}$ cannot hold in any other region than in $\mathcal{R}_-^{\eps}$ for $\eps \in (0,1)$, which is the region considered in \autoref{thm:C3}. In fact, such behavior is ruled out by the explicit solution $\phi_0$, which is merely $C^{1/2}$ along the trajectory $x = v^3$, i.e. on the boundary of $\mathcal{R}_-^1$, and in $\mathcal{R}^0 \cup \mathcal{R}^+$. However, as our next main result asserts, in these regions, all solutions (subject to absorbing boundary conditions) behave asymptotically like $\phi_0$ up to an error term that is Lipschitz continuous. 

\begin{theorem}
\label{thm:f/phi}
    Let $\Omega \subset \R^n$ be a bounded smooth domain and let $z_0 \in \gamma_0$. Let $F \in L^{\infty}(\mathcal{H}_1(z_0))$ and $g \equiv 0$. Let $f$ be a solution to \eqref{eq:Kolmo-localized} in $\cH_1(z_0)$ and define $\Phi(t,x,v) = \phi_0(\mathrm{dist}(x,\partial \Omega) , v \cdot n_x)$.
    
    Then, it holds
    \begin{align*}
        \frac{f}{\Phi} \in C^{0,1} \big(\mathcal{H}_{1/2}(z_0) \setminus \mathcal{R}^1_- \big).
    \end{align*}
    Moreover, we have the estimate
    \begin{align*}
        \Vert f / \Phi \Vert_{C^{0,1}(\mathcal{H}_{1/2}(z_0) \setminus \mathcal{R}_-^{1})} \le C \big( \Vert f \Vert_{L^1(\mathcal{H}_1(z_0))} +  \Vert F \Vert_{L^{\infty}(\mathcal{H}_1(z_0))} \big) ,
    \end{align*}
    where $C$ depends only on $n,|z_0|,\Omega$.
\end{theorem}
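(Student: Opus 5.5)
We will prove \autoref{thm:f/phi} by a boundary Harnack / Liouville-type expansion at grazing points: we show that at every $z_1 \in \gamma_0 \cap \mathcal{H}_{1/2}(z_0)$ the solution satisfies $f = c_{z_1}\Phi + O(\Phi \cdot r)$ at kinetic scale $r$. We then combine these pointwise expansions with interior estimates to obtain Lipschitz regularity of $f/\Phi$ away from $\mathcal{R}_-^1$.

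\emph{Step 1: blow-up classification.} First we flatten $\partial\Omega$ near $z_0$ by a kinetic change of variables that preserves the transport structure. This produces a Fokker--Planck equation with smooth coefficients and a lower-order error. We then prove a Liouville theorem: every global solution of $(\partial_t + v\cdot\nabla_x) u - \Delta_v u = 0$ in the half-space $\{x_n>0\}$ with $u=0$ on $\gamma_-$ and growth $|u| \le C(1+\|z\|)^{1/2+\delta}$, $\delta<1/2$, is of the form $u = c\,\phi_0(x_n,v_n)$. The argument uses the $C^{1/2}$ estimate of \autoref{thm:in-flow}(ii) (in its localized form, \autoref{thm.hol12}) at all scales. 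This gives compactness, and tangential difference quotients in $(t,x',v')$ satisfy the same equation with smaller growth, so they vanish. The problem then reduces to the 1D stationary problem \eqref{eq:stat-Kolmo}, where uniqueness of the positive $1/2$-homogeneous solution, via the explicit Tricomi representation \eqref{eq:phi_0-behavior}, gives $\phi_0$.

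\emph{Step 2: expansion at grazing points.} We argue by contradiction and compactness, as in Campanato-type schemes. Suppose that for each $r$ there is no constant $c$ with
\begin{align*}
\|f - c\Phi\|_{L^\infty(\mathcal{H}_r(z_1))} \le C r^{3/2}\big(\|f\|_{L^1}+\|F\|_{L^\infty}\big).
\end{align*}
Rescaling and normalizing then produces, via Step 1 and the uniform $C^{1/2}$ bounds, a blow-up limit that is a global solution with growth at most $r^{3/2-}$ that is orthogonal to $\phi_0$. The Liouville theorem, extended to growth below $3/2$, rules this out. This extension requires classifying the next homogeneous solutions, of degree $3/2$, such as $t$- and tangential-linear multiples of $\phi_0$, and showing they are again captured by the $\Phi$-factor up to the Lipschitz error. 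The bounded source $F$ and the curvature of $\partial\Omega$ contribute errors of order $r^2$ and $r^{3/2}\cdot r^{1/2}$ respectively, which are admissible.

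\emph{Step 3: from expansion to $C^{0,1}$ of $f/\Phi$.} For $z \notin \mathcal{R}_-^1$, let $r$ be its kinetic distance to $\gamma_0$ and let $z_1$ be its projection. In $\mathcal{R}_0\cup\mathcal{R}_+$ we have $\Phi \gtrsim r^{1/2}$ on a kinetic cylinder of size $\sim r$ around $z$ by \eqref{eq:phi_0-behavior}, and $\Phi$ is a smooth positive solution there. Hence $(f-c_{z_1}\Phi)/\Phi$ solves a drift-perturbed equation with $L^\infty$ size $O(r)$, and interior Schauder estimates give a gradient bound of $O(1)$ at scale $r$. Gluing these local bounds with the Lipschitz dependence $|c_{z_1}-c_{z_2}| \lesssim \|z_1-z_2\|$, which also follows from Step 2, yields the global Lipschitz estimate.

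\emph{Main obstacle.} The hardest part is Step 2 in the region near the boundary of $\mathcal{R}_-^1$, i.e.\ $x\approx v^3$, and near $\mathcal{R}_+$. There $\Phi$ has no uniform lower bound relative to its derivatives, so the quotient estimate degenerates. We would first attempt to establish a boundary Harnack inequality $f \le C\Phi$ using $\Phi$ and small perturbations $\Phi(1\pm \kappa d)$ as barriers. This requires checking that $\mathcal{L}\Phi$, computed in curved coordinates, has a controlled sign.
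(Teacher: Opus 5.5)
\textbf{There is a genuine gap in your Step 1, which is the heart of the proof.} Your overall architecture matches the paper's: flatten with $d_\Omega$, expand at grazing points by blow-up plus a Liouville theorem, then run a Campanato argument for $f/\Phi$. The missing piece is the 1D classification. After reducing to $v\partial_x h-\partial_{vv}h=0$ in $\{x>0\}$, with $h=0$ on $\{x=0,\,v>0\}$ and $|h|\lesssim(1+|x|^{1/3}+|v|)^{\beta}$, you conclude $h=c\phi_0$ ``by uniqueness of the positive $\frac12$-homogeneous solution''. Nothing in your argument makes a blow-up limit positive or homogeneous, and forcing homogeneity is exactly the difficulty:
\begin{itemize}
\item the Tricomi/ODE computation only classifies homogeneous solutions;
\item the Sturm--Liouville route used for elliptic problems fails because $\phi_0$ is not in the corresponding weighted $L^2$ space;
\item the $C^{1/2}$ bound only gives $[h]_{C^{1/2}(H_R)}\lesssim R^{\beta-1/2}$, which yields compactness but no rigidity (and in the paper it is itself derived from this Liouville theorem).
\end{itemize}
There is also a smaller flaw in the reduction: shifts in $v'$ do not commute with $v\cdot\nabla_x$. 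You must first obtain polynomial dependence in $(t,x')$, as in \autoref{lemma:nD-classification-inflow}.

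The paper closes this gap with a boundary Harnack argument, in two stages.
\begin{itemize}
\item First, the exponential-decay barrier of \autoref{lem.quasi} and the supersolution $c\phi_0+2ve^{-1/(8C_1x)}$ give $|h|\le C\phi_0$ near $(0,0)$. The subsolution $\phi_0(v-2)^2$ together with Harnack chains gives the reverse comparison for $h\ge0$. Iterating yields $h/\phi_0\in C^\alpha$ at the origin (\autoref{lemma:1D.expansion.gamma0}). This alone gives a Liouville theorem only for growth below $\frac12+\alpha$.
\item Second, \autoref{lem.liou2} raises this to growth below $\frac72$. It uses a maximum-principle comparison of $\partial_x(h-\kappa_0\phi_0)$ with $\phi_0+\partial_x\phi_0$, together with the asymptotics of $\phi_0,\phi_{-1}$ and decay estimates near $\gamma_\pm$.
\end{itemize}
Your ``main obstacle'' paragraph does invoke barriers, but for the wrong purpose. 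In the flattened variables, off $\mathcal{R}^1_-$ one has either $w_n\le0$ or $w_n^3<y_n$, so $\Phi\asymp\max\{y_n^{1/6},|w_n|^{1/2}\}$ and $\Phi$ never degenerates there.

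\textbf{Step 2 also miscounts the errors.} Flattening with $d_\Omega$ produces a drift whose normal component contains the curvature term $-D^2d_\Omega(x)[v,v]$. At grazing points this is of size $\langle v_0\rangle^2$, not small. Applied to $c\phi_0$, it gives a source of size $\partial_{v_n}\phi_0\sim r^{-1/2}$, hence an error of order exactly $r^{3/2}$ at scale $r$, not $r^2$.

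Moreover, degree $\frac32$ is itself critical. The homogeneous solutions $v_i\phi_0$ ($i<n$) live there, and the drift forces the degree-$\frac32$ term $v_n\phi_0$ (which solves the equation with source $-2\partial_{v_n}\phi_0$) into the expansion. So a compactness argument aimed exactly at $\inf_c\|f-c\Phi\|_{L^\infty(\mathcal{H}_r)}\lesssim r^{3/2}$ has no gap to exploit. Note also that $t\phi_0$ is neither a solution nor of degree $\frac32$.

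The paper instead runs the blow-up at order $\frac32+\eps$ in the enlarged space $\overline{\mathcal{T}}_{1,\eps}$. This requires the Liouville theorem with unbounded sources $\partial_{v_n}\phi_0$ and $\partial_{v_nv_n}\phi_0$ (\autoref{lem.liou.inflow2.nd}), and the H\"older estimate \autoref{lem.bdry.hol} for such sources. Only afterwards does it read off $|f-c_0\Phi|\lesssim r^{3/2}$.

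Your Step 3 is essentially the paper's Campanato argument. The one correction is that in $\mathcal{R}_+$ the relevant cylinders reach $\gamma_+$, so you need the boundary estimates of \autoref{lem.reg.gamma+} rather than interior Schauder estimates.
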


A variant of this theorem remains true for solutions to kinetic Fokker-Planck equations \eqref{eq:FPK} with sufficiently smooth coefficients (see \autoref{thm.h/phi} for a version in the half-space and \autoref{thm:f/phi-coeff} for a result in a smooth convex domain $\Omega \subset \R^n$). In this case, the function $\Phi$ has to be modified appropriately. Note that $\Phi^2$ is always comparable to the kinetic distance to $\gamma_0$, denoted by $\mathrm{dist}_{\text{kin}}$ (see \eqref{eq:kinetic-dist}), up to higher order in the sense that
    \begin{align*}
        \Phi \asymp \mathrm{dist}_{\text{kin}}^{1/2}, \qquad D^{\beta} \Phi \lesssim \mathrm{dist}_{\text{kin}}^{1/2 - |\beta|}\quad\text{in }\mathcal{H}_1(z_0)\setminus \mathcal{R}^1_-.
    \end{align*}

\autoref{thm:f/phi} confirms that the function $\phi_0$ precisely characterizes the boundary behavior of solutions to \eqref{eq:Kolmogorov} near $\gamma_0$ away from the incoming boundary. Note, however, that the asymptotic description of \autoref{thm:f/phi} breaks down near the incoming boundary. Indeed, the function $\phi_{-1}$ constructed in Appendix \ref{sec:appendix} is a solution to \eqref{eq:stat-Kolmo} and $\phi_{-1}/\phi_0$ fails to be bounded on $\gamma_-$ up to $\gamma_0$ by \eqref{phi-1.asymptotic}.

\begin{remark}
In the setting of \autoref{thm:f/phi}, the Lipschitz regularity is sharp in some sense, as we discuss in \autoref{ex.sharp.f/phi}. However, in certain special cases, the optimal regularity exponent is higher and regularity estimates can be achieved by following our technique, as we explain carefully in \autoref{rem:higher-quotient-reg}:
\begin{itemize}
    \item In the half-space, one can show that the optimal regularity is $C^{3/2}$ instead of $C^{0,1}$. This indicates that the presence of coefficients heavily complicates the boundary behavior of solutions.
    \item In the half-space in case $F \equiv 0$, one can even prove that the optimal regularity is even $C^{1,1}$ instead of $C^{0,1}$.
\end{itemize}
\end{remark}

We emphasize that \autoref{thm:C3} and \autoref{thm:f/phi} seem to be the first results establishing regularity above the critical threshold of $\frac{1}{2}$ for solutions to the Kolmogorov equation with absorbing boundaries. The work most closely related to ours is probably \cite{HLW24}, where the authors examine the precise boundary behavior of solutions to the Kolmogorov equation in the half-space in terms of $\phi_0$. However, \cite{HLW24} does not address the smoothness of the remainder terms. Conversely, their results include sharp time decay estimates of solutions.

\subsection{Strategy of the proof}
\label{subsec:strategy}

The common key difficulty in the proofs of all our main results arises from the singular solution behavior at $\gamma_0$. The central novelty of this article is to characterize the higher order asymptotics of the solution near $\gamma_0$ subject to the prescribed in-flow boundary condition. Note that this is more complicated for the in-flow condition than for specular reflection (see \cite{RoWe25}), since solutions exhibit a much more singular behavior.

\subsubsection{Higher order expansions at $\gamma_0$}

The main idea to show \autoref{thm:in-flow}, \autoref{thm:f/phi}, and \autoref{thm:C3} is to establish that all solutions to \eqref{eq:FPK} behave like the explicit solution $\phi_0$ at boundary points in $\gamma_0$ up to error terms which possess higher smoothness. 

Let us explain this result in the simplest case, i.e when $\Omega = \{ x_n > 0 \}$ is a half-space. We consider solutions in the intersection of $(-1,1) \times \Omega \times \R^n$ with a kinetic cylinder 
\begin{align}
\label{eq:kinetic-cylinder}
    \mathcal{Q}_R(z_0) = \big\{ (t,x,v) \in (t_0 - R^2 , t_0 + R^2) \times B_{R^3}(x_0 + (t-t_0) v_0) \times B_R(v_0) \big\}.
\end{align}

Note that in the flat case, it holds $\gamma = (-1,1) \times \{ x_n = 0 \} \times \R^n$, and
\begin{align*}
    \gamma_0 = (-1,1) \times \{ x_n = 0 \} \times \{ v_n = 0 \}, \quad
    \gamma_{\pm} = (-1,1) \times \{ x_n = 0 \} \times \{ \pm v_n < 0 \}.
\end{align*}

Let us assume that $f$ solves the Kolmogorov equation with prescribed in-flow
\begin{equation}
\label{eq:Kolmo-loc-flat}
\left\{
\begin{alignedat}{3}
    \partial_t f + v \cdot \nabla_x f - \Delta_v f &= F \quad &&\text{in } Q_1(z_0) \cap \big( (-1,1) \times \{ x_n > 0 \} \times \R^n \big) =: \mathcal{H}_1(z_0),\\
    f &= g &&\text{in } \mathcal{Q}_1(z_0) \cap \big( (-1,1) \times \{ x_n = 0 \} \times \{ v_n > 0 \} \big)
\end{alignedat} \right.
\end{equation}
for some $z_0 \in \gamma_0$, where $F \in L^{\infty}(\mathcal{H}_1(z_0))$ and $g \in C^{\frac{1}{2} + \eps}(\mathcal{Q}_1(z_0) \cap \gamma_-)$ for some $\eps \in (0,\frac{1}{2})$.

We establish the following expansion of order $\frac{1}{2} + \eps$ at the boundary point $z_0$ for some $c_0 \in \R$:
\begin{align}
\label{eq:expansion-1}
    |f(z) - g(z_0) - c_0 \phi_0(x_n,v_n)| \lesssim r^{\frac{1}{2} + \eps}  ~~ \forall z = (t,x,v) \in \mathcal{H}_{r}(z_0).
\end{align}

Since the function $\phi_0$ is $C^{1/2}$, by combination of \eqref{eq:expansion-1} with interior estimates and boundary estimates away from the grazing set (see \cite{RoWe25}), we manage to deduce that $f \in C^{1/2}$ up to the grazing set. This proves our main result \autoref{thm:in-flow} on the optimal regularity for the in-flow boundary condition.

We emphasize that \eqref{eq:expansion-1} should be interpreted as a regularity estimate of order strictly greater than $\frac{1}{2}$, since it indicates that the function $z \mapsto f(z) - g(z_0) - c_0 \phi_0(z)$ is $C^{\frac{1}{2} + \eps}$ locally at the point $z_0$. 

Although it might be tempting to prove \autoref{thm:in-flow} solely relying on barrier arguments based on the explicit solutions $\phi_0$, we could not reach the optimal $C^{1/2}$ regularity via this approach due to the following reasons:
\begin{itemize}
    \item By perturbation arguments one would naturally lose $\eps$ derivatives for a small $\eps > 0$, when considering equations with coefficients \eqref{eq:FPK}.
    \item It is unclear how to treat non-zero boundary data $g$. Since $g$ is in general not $C^2$, one cannot simply ``subtract'' it from the solution to reduce the problem to a homogeneous one. 
\end{itemize}

The expansion \eqref{eq:expansion-1} allows to surpass all of these obstacles, while giving us higher order information on the solution behavior at the same time. 

In order to prove our main results \autoref{thm:C3} and \autoref{thm:f/phi} on the precise solution behavior near $\gamma_0$, we require asymptotic expansions of even higher order $\beta < 3 + \frac{1}{2}$. In fact, our most general result in this direction is \autoref{lem.exp.nd.a}, where we show that there exist polynomials
$P \in \cP_{\lfloor \beta \rfloor}$, $p_{\phi_0} \in \cP_{\lfloor \beta - \frac{1}{2} \rfloor}$, $p_{\psi_0} \in \cP_{\lfloor \beta - 2 \rfloor}$, and $p_{\phi_1} \in \cP_0$ such that
    \begin{align}
    \label{eq:expansion-2}
        | f(z) - P(z) - p_{\phi_0}(z) \phi_0(x_n,v_n) - p_{\psi_0}(z) \psi_0(x_n,v_n) - p_{\phi_1} \partial_{v_n} \phi_1(x_n,v_n) | \lesssim r^{\beta} ~~ \forall z \in \cH_r(z_0).
    \end{align}

This higher order expansion reveals a key feature of kinetic equations: the explicit 1D function $\phi_0$ alone is \emph{not sufficient to fully capture the boundary behavior} of solutions at the grazing set. Indeed, there are two more explicit solutions $\psi_0$ and $\phi_1$ to the stationary Kolmogorov equation, which have to be included in the expansion as soon as $\beta > 2$ and $\beta > 2 + \frac{1}{2}$, respectively. The function $\phi_1$ satisfies $\partial_x \phi_1 = c_1 \phi_0$ for some $c_1 \in \R$ and is yet another explicit solution of \eqref{eq:stat-Kolmo}, which is homogeneous of degree $3 + \frac{1}{2}$, and has very similar asymptotic behavior as the function $\phi_0$ (see \eqref{phim.asymptotic}). On the other hand, the function $\psi_0$ solves 
\begin{equation}
\label{psi0.sol}
\left\{
\begin{alignedat}{3}
    v \partial_x \psi_0 - \partial_{vv} \psi_0 &= 1 \quad &&\text{in } (0,\infty) \times \R ,\\
    \psi_0 &= 0 &&\text{on } \{ 0 \} \times (0,\infty),
\end{alignedat} \right.
\end{equation}
and its boundary behavior is fundamentally different from the one of $\phi_0$. For instance, it turns out that $\psi_0$ is homogeneous of degree two (instead of $\frac{1}{2}$), which implies that $\psi_0 \in C^{1,1}$ (instead of $C^{1/2}$) up to $\gamma_0$. On the other hand, the function $\psi_0$ does not exhibit exponential decay at $\gamma_-$, but decays only polynomially. Precisely, its sharp behavior as $v \cdot n_x \nearrow 0$ is such that $\psi_0 \in C^{3-\eps}(\{ x_n^{\eps} \le v_n^3 \})$ for $\eps \in (0,1)$ (see \autoref{lemma:psi-reg}). Hence, due to \eqref{eq:expansion-2}, we are able to deduce that any solution to \eqref{eq:Kolmo-loc-flat} exhibits the same regularity properties as $v \cdot n_x \nearrow 0$, which yields \autoref{thm:C3}. 

\begin{remark}
Note that \eqref{eq:expansion-2} needs to be modified in case $\beta > 3 + \frac{1}{2}$. Indeed, one can find a countable family of solutions $\phi_m$ to \eqref{eq:stationary-intro} that are homogeneous of degree $\frac{1}{2} + 3m$ and satisfy $\partial_x \phi_m = c_m \phi_{m-1}$ for some constant $c_m \in \R$. Analogously, one can find higher order versions of the functions $\psi_0$. We compute these solutions explicitly in \autoref{sec:appendix} and establish several properties that might be of independent interest. The higher the value of $\beta$, the more terms involving these functions (and their derivatives) would have to be included in the expansion. Although or approach would allow for it, we decided not to include expansions of order $\beta > 3 + \frac{1}{2}$ in this paper.
\end{remark}

\subsubsection{Liouville theorem}

Already in the simplest case, i.e. when $f$ solves \eqref{eq:Kolmo-loc-flat}, the proof of the expansions \eqref{eq:expansion-1} and \eqref{eq:expansion-2} is far from trivial.
We prove them by a blow-up argument at the boundary point $z_0 \in \gamma_0$, which heavily relies on the following  Liouville-type classification result of solutions in the half-space (see also \autoref{lemma:nD-classification-inflow}):

\begin{theorem}
\label{thm:Liou-intro}
    Let $\beta \in (0,3 + \frac{1}{2})$ and $q$, $Q$ be polynomials. Let $f$ be a solution to 
\begin{equation*}
\left\{
\begin{alignedat}{3}
        (\partial_t + v \cdot \nabla_x ) f - \Delta_v f &= Q \quad &&\text{in } \R \times \{ x_n > 0 \} \times \R^n, \\
        f &= q \quad &&\text{in } \R \times \{ x_n = 0 \} \times \{ v_n > 0 \},
\end{alignedat} \right.
\end{equation*}
such that
    \begin{align}
    \label{eq:growth-Liouville-intro}
        |f(t,x,v)| \le C (1 + |t|^{1/2} + |x|^{1/3} + |v| )^{\beta} \qquad \forall (t,x,v) \in \R \times \{ x_n \ge 0 \} \times \R^n.
    \end{align}
    Then, there exist polynomials $P$, $p_{\phi_0}$, $p_{\psi_0}$, and $p_{\phi_1}$ such that $f$ is of the form 
    \begin{align*}
        f = P + p_{\phi_0} \phi_0 + p_{\psi_0} \psi_0 + p_{\phi_1} \partial_{v_n} \phi_1.
    \end{align*}
\end{theorem}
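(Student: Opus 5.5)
We reduce the problem to the homogeneous case with zero boundary data, and then classify by differentiating in the directions that respect the half-space structure.

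\textbf{Step 1: reduction to homogeneous data.} First we subtract a polynomial particular solution. Since $Q$ and $q$ are polynomials, we look for a polynomial $P_0$ with $(\partial_t + v\cdot\nabla_x - \Delta_v)P_0 = Q$ and $P_0 = q$ on $\{x_n=0, v_n>0\}$. This is a finite-dimensional linear algebra problem, solved degree by degree in the kinetic homogeneity.

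Part of $Q$ cannot be absorbed this way, because the trace condition is incompatible with it. A constant source together with zero data is exactly what $\psi_0$ handles. We therefore allow polynomial multiples of $\psi_0$ in the particular solution, using $\partial_t$, $\nabla_{x'}$, $\nabla_{v'}$-invariance and the fact that $\psi_0$ depends only on $(x_n,v_n)$. The growth bound \eqref{eq:growth-Liouville-intro} with $\beta<7/2$ restricts the relevant degrees, so only finitely many cases arise.

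After this step we are left with a solution $h$ of the homogeneous equation with zero in-flow and the same growth.

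\textbf{Step 2: classifying the homogeneous solution.} We use that the equation and the boundary condition are invariant under translations in $t$, in $x'$, and under the Galilean shifts $(x',v') \mapsto (x'+tw, v'+w)$. They are also invariant under the kinetic scaling $h_R(t,x,v) = h(R^2t, R^3x, Rv)$.

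We apply the boundary $C^{1/2+\eps}$-type estimates available earlier in the paper, i.e.\ the localized estimates at $\gamma_0$ and \cite{RoWe25} away from $\gamma_0$, to rescaled functions. Combined with the growth bound, this shows that incremental quotients in the tangential variables $t, x', v'$ of high enough order grow with exponent below $0$, hence vanish. So $h$ is a polynomial in $(t,x',v')$ with coefficients that are functions of $(x_n,v_n)$; more precisely, $h$ is polynomial in the tangential variables after the Galilean change of variables.

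Peeling off the top tangential degree, the top coefficient $a(x_n,v_n)$ solves the stationary 1D problem $v_n\partial_{x_n}a = \partial_{v_nv_n}a$ with zero in-flow. The lower coefficients solve the same problem with sources given by higher coefficients. These sources again produce polynomial times $\phi_0$, $\psi_0$, and $\partial_{v_n}\phi_1$ terms.

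\textbf{Step 3: the 1D Liouville theorem (main obstacle).} It remains to show that every solution of \eqref{eq:stat-Kolmo} on the half-plane with growth $(|x|^{1/3}+|v|)^{\beta}$, $\beta<7/2$, is a linear combination of $\phi_0$, $\partial_x$-primitives such as $\phi_1$, $\partial_{v}\phi_1$, and, with constant source, $\psi_0$ and polynomials.

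The approach is a homogeneity decomposition. Using the scaling $(x,v)\mapsto(\lambda^3x,\lambda v)$, we decompose the solution into homogeneous pieces via a Mellin transform, or via monotonicity/compactness of blow-downs. We then solve the ODE in the self-similar variable $v^3/(9x)$: this is the confluent hypergeometric equation, and the zero in-flow condition forces the Tricomi branch. This yields admissible homogeneities in the families $1/2+3m$ and $1/2+3m-1$ (the $v$-derivatives), together with integers (polynomials). Below $7/2$ this leaves exactly $\phi_0$ (degree $1/2$), $\partial_v\phi_1$ (degree $5/2$), and polynomial/$\psi_0$ terms.

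The hardest part is the rigorous justification that a solution of polynomial growth is a finite sum of homogeneous solutions. The first thing we would try is the following: prove that $x$-derivative-like quotients lower the homogeneity by $3$, iterate until the growth is below $1/2$, show that such solutions vanish using a comparison/maximum principle with $\phi_0$ as a barrier, and then integrate back. Each integration step adds only the explicit kernel elements identified above.
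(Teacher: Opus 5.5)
Your Steps 1 and 2 follow the paper's reduction in \autoref{lemma:nD-classification-inflow}. High-order increments in $t$, $x'$ and then $v'$, together with a H\"older estimate and the growth bound, give polynomial dependence on the tangential variables. Your Galilean shifts are a fine substitute for the paper's $v'$-increments on the coefficients. The coefficients in $(x_n,v_n)$ then solve 1D problems whose sources are higher coefficients. This is where $\partial_{v_n}\phi_1$ enters, since $(v\partial_x-\partial_{vv})\partial_v\phi_1=-\partial_x\phi_1\propto\phi_0$. One correction: you cannot use the estimates at $\gamma_0$ proved in this paper (\autoref{thm.hol12}, \autoref{lem.exp.nd.a}), because they are derived by blow-up from this very Liouville theorem. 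The increment argument only needs a small-$\alpha$ boundary H\"older estimate such as \cite[Lemma 2.25]{RoWe25}.

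The genuine gap is Step 3, which is the core of the theorem. First, your list of homogeneous solutions is wrong. With zero source and zero in-flow, matching the two Tricomi branches to first order across $\{v=0\}$ forces the degree to be $\frac12+3m$ (the computation in the Remark following \autoref{lemma:1D-sol-Tricomi}). The function $\partial_v\phi_1$ does not solve the homogeneous equation. A nonzero polynomial solution has nonzero trace $cv^\lambda$ on $\{x=0,\,v>0\}$. So below $\frac72$ only $\phi_0$ remains.

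More seriously, you give no argument that a polynomially growing solution splits into homogeneous pieces, and the usual tools are unavailable. The paper notes that Sturm--Liouville-type arguments break down because $\phi_0$ is not in the relevant weighted $L^2$ space, and you have no monotonicity formula for blow-downs. Your fallback fails as stated. $\partial_x$ lowers the growth at infinity by $3$, but it creates a singularity at the grazing point, and in that class growth below $\frac12$ does \emph{not} force vanishing. Indeed, by \eqref{lemma:derx.phim}, $\partial_x\phi_0=\frac1{36}\phi_{-1}$ solves the equation in $\{x>0\}\times\R$, vanishes on $\{x=0,\,v>0\}$, is homogeneous of degree $-\frac52$, and is nonzero. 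No comparison with $\phi_0$ alone can exclude it. Even for bounded solutions, comparing with $C\phi_0$ is not straightforward, since $\phi_0$ vanishes exponentially at $\gamma_-$.

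What is missing is control at the origin \emph{before} differentiating, which the paper supplies in \autoref{lem.liou2}.
\begin{enumerate}
\item Prove a boundary Harnack-type expansion at the grazing point, $\sup_{H_r}|h/\phi_0-\kappa_0|\lesssim r^{\alpha}$ (\autoref{lemma:1D.expansion.gamma0}). This needs sub- and supersolutions built from $\phi_0$, Harnack chains whose direction depends on the sign of $v$, and the exponential decay at $\gamma_-$ from \autoref{lem.quasi}, applied at a small scale so as to match the decay of $\phi_0$.
\item Show that $H=\partial_x(h-\kappa_0\phi_0)$ is $o(\Phi)$ both as $(x,v)\to0$ and as $|(x,v)|\to\infty$, where $\Phi=\phi_0+\frac1{36}\phi_{-1}$. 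This is checked region by region: near $\gamma_-$ with the extra exponential factor from \autoref{lemma.1D.grax.est}, and near $\gamma_+$ with \autoref{lemma.1D.grax.est2}.
\item Apply a maximum principle/Harnack-chain argument to $c_0H-\Phi$ to get $H\equiv0$. Then $h-\kappa_0\phi_0$ is affine in $v$ and vanishes by the boundary condition.
\end{enumerate}
Without the expansion at $(0,0)$ and the singular comparison function $\phi_{-1}$, your Step 3 does not close.
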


Note that a blow-up procedure based on a Liouville theorem to show boundary regularity for kinetic equations was recently developed in \cite{RoWe25} for solutions with specular reflection condition. Yet, our proof of the Liouville theorem has to be fundamentally different from the one in \cite{RoWe25}, making \autoref{thm:Liou-intro} one of the main contributions of this article. In fact, the specular reflection condition resembles a Neumann-type boundary condition in the sense that for $Q=0$ solutions in $\{ x_n > 0 \}$ can be extended to global solutions by mirror reflection, and hence they must be polynomials. Such a powerful tool is not available in our setting, so that we have to develop an entirely different approach, already for stationary solutions with $q=Q=0$ in 1D, i.e.
\begin{align}
\label{eq:stationary-intro}
v \partial_x  f - \partial_{vv} f = 0 \quad \text{in } (0,\infty) \times \R, \qquad f = 0 \quad \text{on } \{ 0 \} \times (0,\infty).
\end{align}
In this case, all solutions satisfying \eqref{eq:growth-Liouville-intro} for $\beta < 3 + \frac{1}{2}$ are given by $\phi_0$ (up to a multiplicative constant). To prove it, we proceed as follows: First, by solving an ODE problem, we show that the only \emph{homogeneous} solution (up to a multiplying constant) to \eqref{eq:stationary-intro} is given by $\phi_0$. From here, it remains to prove that all solutions to \eqref{eq:stationary-intro} are homogeneous, however this is far from obvious in our setting. While for elliptic equations or nonlocal problems, this can often be deduced from Sturm-Liouville type theorems (see \cite{RoSe16,FaRo22}), such a tool is unavailable here since $\phi_0$ does not belong to the corresponding weighted $L^2$ space. Instead, our proof relies on the construction of explicit sub- and supersolutions which yield sharp upper and lower bounds for solutions to \eqref{eq:stationary-intro} near the boundary. These bounds imply a boundary Harnack-type principle for solutions to \eqref{eq:stationary-intro} and yield a 1D Liouville theorem of order $\beta < \frac{1}{2} + \eps$, which we can raise subsequently with the help of the maximum principle and a precise analysis of the asymptotic behavior of $\phi_0$.

\subsubsection{Equations with coefficients}

In this paper we develop a unified approach which also allows us to establish expansions such as \eqref{eq:expansion-2} for solutions to general linear kinetic Fokker-Planck equations with coefficients \eqref{eq:FPK}. Note that it is already necessary to consider non-translations invariant equations in order to prove results for the Kolmogorov equation \eqref{eq:Kolmogorov} in general domains $\Omega \subset \R^n$.  

In that case the expansion \eqref{eq:expansion-2} and its proof have to undergo substantial modifications, some of which we comment on below:

\begin{itemize}
    \item At any point $z_0$, the function $\phi_0$ (and also $\psi_0$) has to be rescaled in such a way that $v \partial_x \phi_{0} = A_{n,n}(z_0) \partial_{vv} \phi_0$. Hence, the regularity of the coefficient matrix $A$ is crucial in order to deduce regularity of solutions from expansions at boundary points $z_0 \in \gamma_0$.
        
    \item Due to the $z$-dependence of the coefficients and the non-diagonal structure of $A$, the (rescaled) function $\phi_0$ will solve \eqref{eq:FPK} with unbounded source terms $F$ behaving like $\partial_v \phi_0$ and $\partial_{vv} \phi_0$ at the boundary. In order to compensate for this, we also have to incorporate functions of the following form in the expansion \eqref{eq:expansion-2}, which are not present in the translation invariant case,
\begin{align*}
v_n^i \partial_{v_n}^{(j)} \phi_0, \quad v_n^i \partial_{v_n}^{(j)} \phi_1, \quad v_n^i \partial_{v_n}^{(j)} \psi_0, \qquad \text{ for } i,j \in \{0,1,2\}.
\end{align*}    
Since all of these functions create further error terms, it is a key difficulty in the proof to balance them out appropriately.
\end{itemize}

The aforementioned modifications in \eqref{eq:expansion-2} arising due to the presence of coefficients lead to several non-trivial challenges in the proofs of the expansions and in our main results. For instance, we have to generalize the $C^{\alpha}$ estimates from \cite{Sil22,Zhu24} to equations with source terms that might be in divergence-form or unbounded (see \autoref{lem.bdry.hol}). We discuss all of these complications in full detail in Section \ref{sec:grazing-expansions}. Our most general expansion result is given by \autoref{lem.exp.nd.a}.

\subsubsection{Diffuse reflection}

We close this subsection by making several comments on the proof of our main result on the diffuse reflection condition (see \autoref{thm:main-diffuse}).

As we already mentioned, solutions satisfying the diffuse reflection condition have the same singular boundary behavior as solutions with prescribed in-flow due to \autoref{example:diffuse-optimal}. Hence, in order to prove that solutions are globally $C^{1/2}$ and $C^{\infty}$ locally in $\gamma_-$, we need to show 
\begin{itemize}
    \item[(a)] $\cN f \in C^{1/2+\eps}$ in $\gamma_-$ up to $\gamma_0$ for some $\eps > 0$,
    \item[(b)]  $\cN f \in C^{\infty}$ locally in $\gamma_-$ (away from $\gamma_0$),
\end{itemize}
so that \autoref{thm:main-diffuse} follows by application of \autoref{thm:in-flow} with $g = \cN f$.

Verifying (a) and (b) is a nontrivial task, since the quantity depends on the values of the solution $f$ in $\gamma_+$ in an integrated way, i.e.
\begin{align*}
    \cN f(t,x,v) = \cM(t,x,v) h(t,x,v), \qquad \text{where} \qquad h(t,x,v) = \int_{\R^n} f(t,x,w) (w \cdot n_x)_- \d w.
\end{align*}

In order to prove (a), we apply global regularity estimates for $f$ of order $\alpha \in (0,1)$ as they were established in \cite{Sil22,Zhu24} and combine them with regularity estimates away from $\gamma_0$ (see \cite[Proposition 4.5]{RoWe25}), making use of the weight $(w \cdot n_x)_-$, which vanishes at $\gamma_0$ (see \autoref{lem.reg.nf.diff}(i)). 

The proof of (b) is significantly more involved, since the function $f$ is in general only $C^{1/2}$ up to $\gamma_0$, and therefore, in principle, higher derivatives of order $\beta$ explode as $D^{\beta} f \sim (w \cdot n_x)_-^{- \beta + 1/2}$ as $(w \cdot n_x)_- \to 0$.
We circumvent this issue by flattening the boundary, i.e. considering the problem for $\Omega = \{ x_n > 0 \}$, and observing that in this case $n_x \equiv - e_n$ becomes constant. Hence, the function $h$ only depends on $(t,x')$, which are tangential to the boundary. This means that we only need to prove higher regularity of $f$ in $(t,x')$ on $\gamma_+$ up to $\gamma_0$, in order to get higher regularity of $h$ (and thus of $\cN f$). We achieve this by taking increments $\delta_{h}f$ of $f$ in the tangential directions $(t,x')$, and analyzing the linear kinetic Fokker-Planck equation satisfied by those increments  (see \eqref{defn.diff.quot} for their precise definition), which is (roughly) of the form
\begin{equation*}
\left\{
\begin{alignedat}{3}
(\partial_t + v \cdot \nabla_x f) \delta_{h} f - \mathrm{div}_v(A \nabla \delta_{h} f ) &= \delta_h F + \mathrm{div}_v (A \nabla_v f) \quad && \text{in } \{ x_n > 0 \},\\
    \delta_h f &= \delta_h (\cN f) && \text{on } \gamma_-
\end{alignedat} \right.
\end{equation*}
(see also \eqref{diff1.eq.diffuse.ind}). Here, a key difficulty comes from the source term $\mathrm{div}_v(A \nabla_v f)$, since the function $A \nabla_v f$ is in general unbounded at $\gamma_0$ since $f$ is not better than $C^{1/2}$. We solve this issue by using the expansion \eqref{eq:expansion-2} combined with higher order regularity results away from $\gamma_0$ (see \autoref{lem.gra.bdep} and \autoref{lem.bdry.hol.bdep}) to deduce that $A \nabla_v f$ belongs to a suitable $L^p$ space so that regularity estimates for $\delta_h f$ follow from \autoref{thm:in-flow} by perturbation arguments. This procedure becomes increasingly more involved for higher order increments, and we have to develop a delicate inductive argument to deduce $C^{\infty}$ regularity of $\cN f$ (see \autoref{lem.reg.nf.diff}(ii)).

\subsection{Outline}

This article is structured as follows. In Section \ref{sec:prelim} we introduce the necessary function spaces and weak solutions concepts required for this work. Section \ref{sec:Liouville-1D} is dedicated to the proof of a Liouville theorem for stationary solutions in 1D with prescribed in-flow condition and in Section \ref{sec:Liouville-half-space}, we generalize this result by proving a Liouville theorem in the half-space (see \autoref{thm:Liou-intro} for a simplified version). Next, in Section \ref{sec:grazing-expansions}, we establish expansions at grazing boundary points $z_0 \in \gamma_0$ and prove \autoref{lem.exp.nd.a}, which implies in particular \eqref{eq:expansion-1}, \eqref{eq:expansion-2}. In Sections \ref{sec:inf-low} and \ref{sec:diffuse}, we establish our main results for solutions to \eqref{eq:FPK} with coefficients in the special case $\Omega = \{ x_n > 0 \}$, and discuss their sharpness. In Section \ref{sec:main-proof}, we extend these results to general domains, thereby establishing \autoref{thm:main-diffuse}, \autoref{thm:in-flow}, \autoref{thm:f/phi}, and \autoref{thm:C3}. Finally, this article comes with four appendices, the most notable one being Appendix \ref{sec:appendix}, where we find explicit countable families of homogeneous solutions to \eqref{eq:stat-Kolmo} with absorbing and prescribed in-flow conditions and derive fine estimates on their behavior close to $\gamma_0$. We believe these results to be of independent interest.

\subsection{Acknowledgments}

Kyeongbae Kim and Marvin Weidner were supported by the Deutsche Forschungsgemeinschaft (DFG,
German Research Foundation) under Germany's Excellence Strategy - EXC-2047/1 - 390685813 and through the CRC 1720.

\section{Preliminaries}
\label{sec:prelim}

In this section, we introduce some notation, function spaces, introduce our notion of weak solutions, and provide several auxiliary lemmas.

In what follows, we denote general constants by $c$. When relevant, its dependence on parameters is specified in parentheses; for example, $c=c(n,\Lambda)$ indicates that the constant $c$ depends only on $n$ and $\Lambda$. In addition, we write $a\eqsim b$, when $\frac1c a\leq b\leq cb$ for some universal constant $c\geq1$.

First, let us introduce some geometric notation:
\begin{itemize}
    \item We write points $X_0\in \bbR^{2n}$ and $z_0\in \bbR^{2n+1}$ as
    \begin{align*}
        X_0=(x_0,v_0)\in\bbR^n\times \bbR^n,\quad z_0=(t_0,x_0,v_0)\in \bbR\times \bbR^n\times \bbR^n.
    \end{align*}
    In addition, we denote by $x_0'\in\bbR^{n-1}$ the first $n-1$ components of $x_0=(x_0',x_{0,n})\in \bbR^{n-1}\times \bbR$. In particular, for any $z_0=(t_0,x_0,v_0)\in\bbR^{2n+1}$, we write $z_0'=(t_0,x_0',v_0')\in \bbR^{2(n-1)+1}$, where $x_0=(x_0',x_{0,n}),v_0=(v_0',v_{0,n})$.
    \item We recall the left group action defined by 
    \begin{align*}
        z_0\circ z=(t_0+t,x_0+x+tv_0,v+v_0),\quad z_0^{-1}=(-t_0,-x_0+t_0v_0,-v_0).
    \end{align*}
    In particular, we write 
    \begin{equation*}
        |z_0-z_1|=|z_{1}^{-1}\circ z_0|\coloneqq \max\{|t_0-t_1|^{\frac12},|x_0-(x_1+(t_0-t_1)v_1)|^{\frac13},|v_0-v_1|\}
    \end{equation*}
    and
    \begin{equation*}
        |X_0-X_1|=\max\{|x_0-x_1|^{\frac13},|v_0-v_1|\}.
    \end{equation*}
    \item For any $r>0$ and $z\in\bbR^{2n+1}$, we write $S_rz\coloneqq (r^2t,r^3x,rv)$.
    \item For any $v_0\in\bbR^n$, we write $\langle v_0\rangle\coloneqq 1+|v_0|$.
    \item Kinetic cylinders and their intersection with a domain are defined as follows
    \begin{align*}
    &{\mathcal{Q}}_R(z_0)=\left\{(t,x,v)\,:\,t\in (t_0-R^2,t_0+R^2) =: I_R(t_0), ~~ x\in B_{R^3}(x_0+(t-t_0)v_0), ~~ v\in B_R(v_0)\right\},\\
    &{\mathcal{H}}_R(z_0)\coloneqq{\mathcal{Q}}_R(z_0)\cap (\bbR\times \{x_n>0\}\times \bbR).
    \end{align*}
    \item Throughout the paper, we always consider a coefficient matrix $A(z):\bbR^{2n+1}\to \bbR^{n^2}$ such that $A$ is uniformly elliptic with ellipticity constant $\Lambda \ge 1$ in the sense that
    \begin{align*}
        \Lambda^{-1}I_n\leq A\leq \Lambda I_n.
    \end{align*}
    \item Given a domain $(a,b)\times \Omega\times V\subset \bbR\times \bbR^n\times \bbR^n$ with $\Omega $ and $V$ being bounded sets, we define the kinetic boundary of $(a,b)\times \Omega\times V$ by 
    \begin{align*}
        \partial_{\mathrm{kin}}((a,b)\times \Omega\times V)=(\{t=a\}\times \Omega\times V) \cup((a,b)\times \Omega\times\{v\in\partial V\})\cup \gamma_-.
    \end{align*}
\end{itemize}

We recall the kinetic distance and kinetic H\"older spaces as in \cite{ImSi21}. For any $z_1,z_2$, we write 
\begin{align*}
    \mathrm{dist}(z_1,z_2)\coloneqq \min_{w\in\bbR^n}\left\{\max\left\{|t_1-t_2|^{\frac12}, |x_1-(x_2+(t_1-t_2)w|^{\frac13},|v_1-w|,|v_2-w|\right\}\right\}.
\end{align*}
Then we observe $|z_1-z_2|\eqsim  \mathrm{dist}(z_1,z_2)$.

For any $k\in\N\cup \{0\}$ and $\eps\in(0,1]$, we write
\begin{align*}
    C^{k,\eps}(D) &\coloneqq \{f:D\to\bbR\,:\,\|f\|_{C_{\ell}^{k,\eps}(D)}\coloneqq \|f\|_{L^\infty(D)}+[f]_{C^{k,\eps}(D)}<\infty\},\\
    [f]_{C^{k,\eps}(D)} &\coloneqq \sup_{z_1\in D}\sup_{p\in \cP_k}\sup_{z_1\neq z_2\in D}\frac{|(f-p)(z_2)|}{\mathrm{dist}(z_1,z_2)^{k+\eps}},
\end{align*}
and we denote by $\mathcal{P}_k$ the space of kinetic polynomials of kinetic degree $k$ (see \cite{RoWe25} for the precise definition).
Note that we use the following convention. When we specify $f\in C^{\alpha}(\mathcal{H}_R(z_0))$ with $\alpha\leq0$, it means $f\in L^\infty(\mathcal{H}_R(z_0))$. In the same way, $[f]_{C^{\alpha}(\mathcal{H}_R(z_0))}=\|f\|_{L^\infty(\mathcal{H}_R(z_0))}$ when $\alpha\leq 0$. 

Moreover, we will use the notation $C^{k,\alpha} = C^{k+\alpha}$ interchangeably whenever $k+\alpha \not\in \N$.

In addition, we denote by a $k$-the order differential operator as ${D}^k$. For example ${D}^2$ can be $\partial_{v_i,v_j}$ or $\partial_t+v\cdot\nabla_x$, and $D^3$ can be $\partial_{v_i,v_j,v_k}$, $\partial_{v_i}(\partial_t+v\cdot\nabla_x)$ or $\partial_{x_i}$.

\subsection{Weak solution concept}

We now describe the definition of a weak solution to kinetic Fokker-Planck equations at the boundary.
\begin{definition}
Let $\mathcal{Q}\subset \bbR\times \Omega\times \bbR^n$ be an open set. Let $B\in L^\infty(\mathcal{Q})$, $F \in L^{2}(\mathcal{Q})$ and $g \in L^{\infty}(\gamma_-)$. We say that $f$ is a weak solution to 
\begin{equation}\label{eq.def.para}
\left\{
\begin{alignedat}{3}
(\partial_t+v\cdot\nabla_x)f-\ddiv(A\nabla_vf)&=B\cdot\nabla_vf+F&&\qquad \mbox{in $\mathcal{Q}$}, \\
f&=g&&\qquad  \mbox{in $ \partial_{\mathrm{kin}}\mathcal{Q}$},
\end{alignedat} \right.
\end{equation}
if $ \nabla_v f\in L^2(\mathcal{Q})$ and $f$ satisfies
\begin{align*}
    -\int_{\mathcal{Q}}f(\partial_t+v\cdot\nabla_x)\psi \,dz+\int_{\mathcal{Q}}A\nabla_vf\nabla_v\psi\,dz=\int_{\mathcal{Q}}B\cdot\nabla_vf\psi\,dz-\int_{\gamma_-}g\psi (v\cdot n)\,d\gamma+\int_{\mathcal{Q}}F\psi\,dz
\end{align*}
for any $\psi\in C_c^\infty(\bbR^{1+2n})$ with $\supp \psi\cap \partial Q\subset \gamma_-\cup \gamma_0$. 

When $\mathcal{Q}=\bbR\times \{x_n>0\}\times\bbR^n$, $h$ is a weak solution to \eqref{eq.def.para} if $h\in L^2_{\mathrm{loc}}(\bbR\times \{x_n>0\};H^1_{\mathrm{loc}}(\bbR^n))$, and
\begin{align*}
   -\int_{\mathcal{Q}}f(\partial_t+v\cdot\nabla_x)\psi \,dz+\int_{\mathcal{Q}}A\nabla_vf\nabla_v\psi\,dz=\int_{\mathcal{Q}}B\cdot\nabla_vf\psi\,dz-\int_{\gamma_-}g\psi (-v_n)\,d\gamma+\int_{\mathcal{Q}}F\psi\,dz
\end{align*}
for any $\psi\in C_c^\infty(\bbR^{1+2n})$ with $\supp \psi\cap \partial \mathcal{Q}\subset \{x_n=0\}\times \{v_n\geq0\}$.
\end{definition}

Note that the definition implies that $v \cdot \nabla_x f \in L^2_{t,x} H_v^{-1}(\mathcal{Q})$ by the same argument as in \cite[p.15-16]{Sil22}.

\begin{remark}\label{rmk.equ.weak.str.par}
    Note that if the function $f\in C(\overline{\mathcal{Q}})$ with $\nabla_v f\in L^2(\mathcal{Q})$, $v\cdot\nabla_xf\in L^2(\mathcal{Q})$  and $f=g$ on $\partial_{\mathrm{kin}}\mathcal{Q}$ satisfies in the classical sense
    \begin{equation*}
        (\partial_t+v\cdot\nabla_x)f-\Delta_vf=F\quad\text{in }\mathcal{Q},
    \end{equation*}
     then it is a weak solution to \eqref{eq.def.para}. This was already observed in [Definition 5.1 Sil 22].
\end{remark}

\begin{lemma}\label{lem.classical}
    Let $f\in C(\overline{\mathcal{H}_R})$ be a classical solution to 
    \begin{equation*}
\left\{
\begin{alignedat}{3}
(\partial_t+v\cdot\nabla_x)f-\Delta_vf&=F-\ddiv(G)&&\qquad \mbox{in $\mathcal{H}_R$}, \\
f&=0&&\qquad  \mbox{in $ \gamma_-\cap\mathcal{Q}_R$},
\end{alignedat} \right.
\end{equation*}
where $F,G\in L^2(\mathcal{H}_R)$. If $\nabla_vf\in L^2(\mathcal{H}_R)$, then $f$ is a weak solution.
\end{lemma}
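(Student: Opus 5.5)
The plan is to verify the weak formulation directly: for a test function $\psi\in C_c^\infty(\R^{1+2n})$ with $\supp\psi\cap\partial\mathcal{H}_R\subset\gamma_-\cup\gamma_0$, multiply the equation by $\psi$ and integrate by parts. Since $f$ is only continuous up to the boundary and $\nabla_v f\in L^2$, the integration by parts cannot be done directly on $\mathcal{H}_R$. We therefore cut away a neighborhood of the boundary. Concretely, let $\eta_\delta(x_n)$ be a smooth cutoff with $\eta_\delta=0$ for $x_n<\delta$, $\eta_\delta=1$ for $x_n>2\delta$, and $|\eta_\delta'|\lesssim\delta^{-1}$. Then $\psi\eta_\delta$ is compactly supported in the interior of $\mathcal{H}_R$, away from $\{x_n=0\}$. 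Because $f$ is a classical (hence distributional) solution in the interior, and $\nabla_v f\in L^2$, we obtain
\begin{align*}
-\int f(\partial_t+v\cdot\nabla_x)(\psi\eta_\delta)+\int\nabla_v f\cdot\nabla_v(\psi\eta_\delta)=\int F\psi\eta_\delta+\int G\cdot\nabla_v(\psi\eta_\delta).
\end{align*}
Here we use that $\eta_\delta$ depends only on $x_n$, so $\nabla_v(\psi\eta_\delta)=\eta_\delta\nabla_v\psi$.

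Next we let $\delta\to0$. Every term not involving a derivative of $\eta_\delta$ converges by dominated convergence, using $f\in C(\overline{\mathcal{H}_R})$ and $F,G,\nabla_v f\in L^2$. The only remaining term is
\begin{align*}
-\int f\psi\, v_n\eta_\delta'(x_n)\,dz=-\int\!\!\int \Big(\int_\delta^{2\delta}\eta_\delta'(x_n)\,(f\psi v_n)(t,x',x_n,v)\,dx_n\Big)\,dt\,dx'\,dv.
\end{align*}
Since $\int\eta_\delta'\,dx_n=1$ and $f\psi v_n$ is continuous up to $\{x_n=0\}$ and bounded on the compact support, this converges to $-\int_{\{x_n=0\}} f\psi v_n$. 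This limit is a boundary integral, and we evaluate it in two parts.

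On $\gamma_-$, that is $v_n>0$, we have $f=0$, so that part vanishes; this would equally give the term $-\int_{\gamma_-}g\psi(-v_n)$ for general data $g$. On $\gamma_+$, that is $v_n<0$, we have $\psi=0$ by the support assumption. The set $\gamma_0$ has measure zero. Hence the boundary term is zero, which is exactly the weak formulation with $g=0$.

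The main obstacle is the justification of the interior identity. This is standard if $f$ is $C^2$ in $v$ and $C^1$ along $\partial_t+v\cdot\nabla_x$ in the interior, with $F,G$ regular enough that $\ddiv G$ makes sense classically. If instead $G$ is only in $L^2$, we interpret the term by pairing with the test function, i.e. integrating the $\ddiv G$ term by parts against $\psi\eta_\delta$, which is legitimate on interior compact sets. A minor point is the cylinder's lateral and temporal boundaries: test functions there are supported inside $\mathcal{Q}_R$, so no contributions arise.
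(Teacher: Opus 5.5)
Your proof is correct and follows the same route as the paper. Both arguments test the equation against $\psi$, integrate by parts, and kill the boundary contribution on $\{x_n=0\}$ using $f=0$ on $\gamma_-$ and $\psi=0$ on $\gamma_+$. Your cutoff $\eta_\delta(x_n)$ and the limit $\delta\to0$ justify the boundary integration by parts, which the paper performs directly on $\mathcal{H}_R$ citing only $f\in C(\overline{\mathcal{H}_R})$ and $\nabla_v f\in L^2$.
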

\begin{proof}
    Let us take a function $\psi\in C_c^\infty(\bbR^{1+2n})$ with $\supp \psi\cap \partial\mathcal{H}_R\subset \{x_n=0\}\times \{v_n\geq0\}$. Since $f$ is a classical solution and $F\in L^2(\mathcal{H}_R)$, we have 
    \begin{align}\label{j.weak.classical}
        \int_{\mathcal{H}_R}F\psi+G\nabla_v\psi\,dz=\int_{\mathcal{H}_R}(\partial_t+v\cdot\nabla_x-\Delta_v)f\psi\,dz\eqqcolon J.
    \end{align}
    Now using the integration by parts together with the fact that $f\in C(\overline{H_R})$, $\nabla_vf\in L^2(\mathcal{H}_R)$, $\supp \psi\cap \partial\mathcal{H}_R\subset \{x_n=0\}\times \{v_n\geq0\}$ and $f\equiv 0$ in $ \gamma_-\cap\mathcal{Q}_R$, we estimate $J$ as 
    \begin{align*}
        J=\int_{\mathcal{H}_R}-f(\partial_t+v\cdot\nabla_x)\psi+\nabla_vf\nabla_v\psi\,dz.
    \end{align*}
    Now plugging this into \eqref{j.weak.classical} shows that $f$ is a weak solution. This completes the proof.
\end{proof}

\subsection{Interior regularity} 
Here, we present two interior regularity results for kinetic Fokker-Planck equations.
First, we recall the interior regularity estimates from \cite[Corollary 3.3]{ImMo21}.
\begin{lemma}\label{lem.int.grad}
    Let $g$ be a weak solution to 
    \begin{align*}
        (\partial_t+v\cdot\nabla_x)g-\Delta_vg=0\quad\text{in }\mathcal{Q}_R.
    \end{align*}
    Then for any nonnegative integer $k$, 
    \begin{align*}
        \|D^kg\|_{L^\infty(\mathcal{Q}_{R/2})}\leq cR^{-k}\|g\|_{L^\infty(\mathcal{Q}_R)}
    \end{align*}
    for some constant $c=c(n,k)$.
\end{lemma}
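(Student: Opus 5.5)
The estimate in \autoref{lem.int.grad} is cited from \cite[Corollary 3.3]{ImMo21}, but it can be reconstructed from the scaling and translation invariance of the Kolmogorov operator together with a local $L^\infty$ bound for derivatives. The plan has three steps.

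\textbf{Step 1: reduction to $R=1$ and $z_0=0$.} Set $\tilde g(z)=g(z_0\circ S_R z)$. The operator $(\partial_t+v\cdot\nabla_x)-\Delta_v$ commutes with left translations $z\mapsto z_0\circ z$. Under $S_R$ it scales by $R^{2}$: the transport part picks up $R^2$ from $t\mapsto R^2 t$, and $v\cdot\nabla_x$ picks up $R\cdot R^{-3}\cdot R^{3}\cdots$, which again gives $R^2$ in total. Hence $\tilde g$ is a weak solution in $\mathcal{Q}_1$. A $k$-th order operator $D^k$ in the kinetic grading ($\partial_v$ of order $1$, $\partial_t+v\cdot\nabla_x$ of order $2$, $\partial_x$ of order $3$) scales like $R^{k}$. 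Therefore it suffices to prove
\[
\|D^k\tilde g\|_{L^\infty(\mathcal{Q}_{1/2})}\le c\|\tilde g\|_{L^\infty(\mathcal{Q}_1)}.
\]

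\textbf{Step 2: smoothness of weak solutions.} Hörmander's theorem gives hypoellipticity of $\partial_t+v\cdot\nabla_x-\Delta_v$, so a weak solution (in $L^2$ with $\nabla_v g\in L^2$) is $C^\infty$ in the interior. This step is qualitative only. A quantitative alternative is the route below.
\begin{enumerate}
\item Take a cutoff $\eta$ supported in $\mathcal{Q}_1$ and use energy estimates for $\eta g$ to get $\nabla_v g\in L^2$ locally, controlled by $\|g\|_{L^\infty}$.
\item Apply hypoelliptic (Bouchut-type averaging) gains to obtain fractional regularity in $(t,x)$.
\item Iterate this, after first commuting the equation with $\partial_{v_i}$.
\end{enumerate}

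\textbf{Step 3: the commutator induction.} Since $\partial_t$ and $\partial_{x_i}$ commute with the operator, the functions $\partial_t g$ and $\partial_{x_i} g$ are again solutions. For $\partial_{v_i}$ we have the commutator
\[
[\partial_{v_i},\,v\cdot\nabla_x]=\partial_{x_i},
\]
so $\partial_{v_i} g$ solves the equation with source $-\partial_{x_i}g$. This is a solution of lower "cost": $\partial_{x_i}$ is a higher-order operator, but it is itself a solution, so it can be controlled first. Induct on the total kinetic order $k$ using a nested family of cylinders between $\mathcal{Q}_1$ and $\mathcal{Q}_{1/2}$. At each stage, combine local $L^2\to L^\infty$ bounds (De Giorgi/Moser from \cite{GIMV19}) with $L^2$ energy estimates for the differentiated solution, where the source is the already-controlled term.

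\textbf{Main obstacle.} The hard part is controlling $\partial_x g$ by $\|g\|_{L^\infty}$ in the first place, because the equation gives no direct regularization in $x$. I would handle it with Kolmogorov's explicit fundamental solution $\Gamma$. Represent $\eta g$ via $\Gamma$, with source $[\mathcal{L},\eta]g$; this source is supported away from $\mathcal{Q}_{1/2}$. Differentiate $\Gamma$ off the diagonal, using that $|D^k\Gamma|$ is integrable away from the singularity. This gives all derivatives in $\mathcal{Q}_{1/2}$ bounded by $\|g\|_{L^\infty(\mathcal{Q}_1)}+\|\nabla_v g\|_{L^2}$. The last term is in turn bounded by the energy estimate, which completes the argument.
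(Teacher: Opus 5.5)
The paper does not prove this lemma; it quotes it from \cite[Corollary 3.3]{ImMo21}. Any self-contained argument is therefore a different route, and yours is essentially correct.

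\textbf{The decisive step.} The step that carries the proof is your last paragraph, and after Step 1 it suffices on its own. Step 1 is what makes $c$ independent of $z_0$, in particular of $v_0$. Take $\eta\in C_c^\infty(\mathcal{Q}_1)$ with $\eta\equiv1$ near $\overline{\mathcal{Q}_{1/2}}$, and write $\eta g=\Gamma*S$ with $S=g(\partial_t+v\cdot\nabla_x-\Delta_v)\eta-2\nabla_v\eta\cdot\nabla_v g$. Since $\Gamma$ is smooth off the diagonal, $D^k_z\Gamma(z,\zeta)$ is \emph{bounded} uniformly for $z\in\mathcal{Q}_{1/2}$ and $\zeta\in\supp\nabla\eta$; boundedness, not just integrability, is what you need to pair it with $\nabla_v g\in L^2$. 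This controls every $D^kg$ on $\mathcal{Q}_{1/2}$ at once.

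\textbf{What becomes redundant.} Steps 2--3 (H\"ormander's theorem, averaging lemmas, the commutator induction and De Giorgi) are then unnecessary. Note also that the induction, in the order you describe it, needs $\partial_xg$ before $\partial_vg$, and that is exactly what the representation provides. You can drop the energy estimate too. Write $-2\nabla_v\eta\cdot\nabla_vg=-2\,\mathrm{div}_v(g\nabla_v\eta)+2g\Delta_v\eta$ and move $\nabla_v$ onto $\Gamma$; only $\|g\|_{L^\infty(\mathcal{Q}_1)}$ then remains on the right.

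\textbf{What needs justification.} The one step that needs a line is the representation itself. The function $\eta g-\Gamma*S$ solves the homogeneous equation on $\mathbb{R}^{1+2n}$ and vanishes for $t<-1$. It is therefore zero by uniqueness for the Cauchy problem, for example in the class of bounded functions.

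\textbf{Correction to Step 1.} Your scaling bookkeeping is garbled: $R\cdot R^{-3}\cdot R^3$ is not $R^2$. The correct identity is $(\partial_t+v\cdot\nabla_x-\Delta_v)\tilde g=R^2\bigl[(\partial_t+v\cdot\nabla_x-\Delta_v)g\bigr](z_0\circ S_Rz)$, where the Galilean term $v_0\cdot\nabla_xg$ produced by $\partial_t$ recombines with the transport term. Only the left-invariant fields $\partial_{v_i}$, $\partial_t+v\cdot\nabla_x$, $\partial_{x_i}$ scale this cleanly, which matches the paper's convention for $D^k$. A bare $\partial_t$ would give a $v_0$-dependent constant.

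\textbf{Comparison.} Citing the literature buys brevity. Your route is self-contained and, through the explicit kernel, gives the estimate for every $k$ in a single stroke, which is all this constant-coefficient lemma requires.
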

Next, we provide interior Schauder type estimates in divergence form.
\begin{lemma}\label{lem.sch.divergence}
    Let $f$ be a weak solution to 
    \begin{align*}
         (\partial_t+v\cdot\nabla_x)f-\ddiv(A\nabla_vf)=B\cdot\nabla_vf+F-\ddiv(G)\quad\text{in }\mathcal{Q}_1.
    \end{align*}
    If $A\in C^{\eps}(\mathcal{Q}_1)$, $B\in L^\infty(\mathcal{Q}_1)$, $F\in L^\infty(\mathcal{Q}_1)$, and $G\in C^{\eps}(\mathcal{Q}_1)$, then we have 
\begin{align*}
    \|f\|_{C^{1,\eps}(\mathcal{Q}_{\frac12})}\leq c(\|f\|_{L^1(\mathcal{Q}_1)}+\|F\|_{L^\infty(\mathcal{Q}_1)}+[G]_{C^\eps}(\mathcal{Q}_1)),
\end{align*}
where $c=c(n,\Lambda,\eps,\|A\|_{C^\eps(\mathcal{Q}_1)},\|B\|_{L^\infty(\mathcal{Q}_1)})$.
\end{lemma}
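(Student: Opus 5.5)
The natural route is a perturbation (Campanato-type) argument: freeze the coefficient and compare $f$ at every scale with a solution of the constant-coefficient Kolmogorov equation, for which \autoref{lem.int.grad} gives arbitrarily high regularity. First I normalize. By translation and the kinetic scaling $S_r$, the estimate reduces to showing that for every $z_1\in\mathcal{Q}_{1/2}$ there is an affine-in-$v$ kinetic polynomial $p_{z_1}\in\cP_1$, i.e. $p(z)=a+b\cdot(v-v_1)$, with
\begin{align*}
\|f-p_{z_1}\|_{L^\infty(\mathcal{Q}_r(z_1))}\le c\, r^{1+\eps}\big(\|f\|_{L^1(\mathcal{Q}_1)}+\|F\|_{L^\infty}+[G]_{C^\eps}\big),\qquad r\in(0,1/4).
\end{align*}
Together with an $L^\infty$ bound on $\mathcal{Q}_{3/4}$, this characterizes $C^{1,\eps}$ in the sense of the seminorm defined above. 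The $L^\infty$ bound follows from De Giorgi--Nash--Moser for kinetic equations (Golse--Imbert--Mouhot--Vasseur). That theory gives $L^1\to L^\infty$ and a $C^\alpha$ bound for divergence-form equations with bounded measurable $A$, a bounded drift $B$, and $F,G\in L^\infty$. So I may replace $\|f\|_{L^1}$ by $\|f\|_{L^\infty}$ and also use $\nabla_v f\in L^2$ with Caccioppoli bounds.

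The core step is the comparison at a single scale. Fix $z_1$ and $r$, and set $A_1=A(z_1)$ and $G_1=G(z_1)$. Since $\ddiv G_1=0$, we may subtract it. Let $h$ solve $(\partial_t+v\cdot\nabla_x)h-\ddiv(A_1\nabla_v h)=0$ in $\mathcal{Q}_r(z_1)$ with $h=f$ on $\partial_{\mathrm{kin}}\mathcal{Q}_r(z_1)$. After a linear change of the $v$ variable (and the corresponding one in $x$), this equation is the Kolmogorov equation, so \autoref{lem.int.grad} applies to $h$. The difference $w=f-h$ solves
\[
(\partial_t+v\cdot\nabla_x)w-\ddiv(A_1\nabla_v w)=\ddiv((A-A_1)\nabla_v f)+B\cdot\nabla_v f+F-\ddiv(G-G_1)
\]
with zero kinetic boundary data. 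The energy estimate, tested with $w$, gives
\[
\|\nabla_v w\|_{L^2}\lesssim r^{\eps}\|\nabla_v f\|_{L^2(\mathcal{Q}_r)}+r^{\eps}[G]_{C^\eps}|\mathcal{Q}_r|^{1/2}+r\,(\|B\|_\infty\|\nabla_v f\|_{L^2}+\|F\|_{L^2}).
\]
I then need to upgrade this to an $L^\infty$ (or $L^2$-average) bound on $w$. Boundary-value $L^\infty$ bounds for the kinetic equation with divergence-form right-hand side (De Giorgi at the kinetic boundary, or an $L^2$ Poincaré-type estimate on cylinders in the spirit of Kinetic Campanato theory) give $\big(\dashint_{\mathcal{Q}_r}|w|^2\big)^{1/2}\lesssim r\cdot(\text{the above})/|\mathcal{Q}_r|^{1/2}$.

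Next comes the iteration. Define the excess $\Phi(r)=\inf_{p\in\cP_1}\big(\dashint_{\mathcal{Q}_r(z_1)}|f-p|^2\big)^{1/2}$. Using the $C^{2}$-type estimate for $h$ from \autoref{lem.int.grad}, namely $h$ minus its first-order polynomial is $O((\rho/r)^2)$ on $\mathcal{Q}_\rho$ (note $\cP_1$ consists of the terms of kinetic degree at most $1$, and the next order is $2$), I obtain
\[
\Phi(\rho)\le c(\rho/r)^{2}\Phi(r)+c(r/\rho)^{(4n+2)/2}\Big(r^{\eps}\big(\dashint_{\mathcal{Q}_r}|r\nabla_v f|^2\big)^{1/2}+r^{1+\eps}[G]_{C^\eps}+r^2(\|F\|_\infty+\dots)\Big).
\]
The Caccioppoli inequality bounds $r\|\nabla_v f\|$ by the excess at scale $2r$ plus data. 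I first run the iteration with a crude bound $|\nabla_v f|$ controlled via a preliminary $C^\alpha$ step. Taking $\rho=\theta r$ with $\theta$ small, the standard iteration lemma yields $\Phi(r)\lesssim r^{1+\eps}$, and Campanato's characterization then converts this into the pointwise statement.

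The main obstacle is the term $\ddiv((A-A_1)\nabla_v f)$, which requires a Morrey-type bound $\big(\dashint_{\mathcal{Q}_r}|\nabla_v f|^2\big)^{1/2}\lesssim 1$ that is only known once we are close to $C^{1}$. I would handle this by bootstrapping. First, run the argument with target exponent $\alpha<1$ (excess relative to constants, comparison with $h$ in $C^1$), which gives $f\in C^\alpha$ for all $\alpha<1$ and hence $\dashint|\nabla_v f|^2\lesssim r^{2\alpha-2}$. Feeding this back makes the perturbation of size $r^{\eps+\alpha}$, and choosing $\alpha>1-\eps/2$ closes the $C^{1,\eps'}$ iteration and finally the $C^{1,\eps}$ one. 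A secondary technical point is the drift $B\cdot\nabla_v f$, which I absorb into $F$ using this gradient bound; it contributes $r^{1+\alpha}$, which is harmless.
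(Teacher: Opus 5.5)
Your argument is correct, but it obtains the crucial gradient bound by a different route from the paper.

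\textbf{The paper's route.} The paper compares $f$ in two steps. First it compares $f$ with $f_1$, which solves the variable-coefficient homogeneous problem. Then it compares $f_1$ with $f_2$, which solves the frozen problem, using the comparison lemma \cite[Lemma 4.6]{KLN25}. It then runs a Campanato iteration directly on the mean oscillation of $\nabla_v f$, as in \cite[Lemma 5.7]{KLN25}, which gives $\|\nabla_v f\|_{L^\infty}$ in one stroke. Only after that does it run the excess decay for $f-l$ with $l=f_2(0)+\nabla_v f_2(0)\cdot v$; at that point all perturbation terms are controlled by $r^{1+\eps}\|\nabla_v f\|_{L^\infty}$.

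\textbf{Your route.} You compare once with the frozen problem and only ever track the excess of $f$ itself:
\begin{itemize}
\item relative to constants, giving $C^\alpha$ for every $\alpha<1$;
\item then, via the kinetic Caccioppoli inequality, the Morrey bound $\dashint_{\mathcal{Q}_r}|\nabla_v f|^2\lesssim r^{2\alpha-2}$;
\item then relative to $\cP_1$, giving $C^{1,\alpha+\eps-1}$ and hence bounded $\nabla_v f$;
\item finally once more relative to $\cP_1$, giving $C^{1,\eps}$.
\end{itemize}

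\textbf{Comparison.} Your approach is more self-contained. It needs only \autoref{lem.int.grad} (after a linear change of variables), the energy estimate, Poincar\'e in $v$, and Caccioppoli. It needs neither H\"older estimates for $\nabla_v$ of the frozen solution nor a separate comparison lemma between two homogeneous problems. It is the natural kinetic analogue of the ``Morrey first, then Campanato'' scheme for divergence-form equations, which fits here because the oscillation of $f$ cannot be bounded by $\nabla_v f$ alone. The price is one extra bootstrap stage. The paper's gradient-level iteration avoids that stage but leans on the machinery of \cite{KLN25}.

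\textbf{A point of wording.} In your iteration paragraph you say that Caccioppoli bounds $r\|\nabla_v f\|$ by ``the excess at scale $2r$''. With $\Phi$ measured relative to $\cP_1$, this is not true as stated: applying Caccioppoli to $f-p$ with $p=a+b\cdot v$ leaves an extra term $r|b|$. It is true for the excess relative to constants, which is exactly how your stage-one bootstrap uses it. Read that way, the argument closes.
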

Since the proof closely follows the argument in \cite{KLN25}, we defer the details to \autoref{rmk.l8delta}.

\section{A Liouville theorem for stationary solutions in 1D}
\label{sec:Liouville-1D}
The goal of this section is to prove a Liouville theorem in the half-space in 1D (see \autoref{thm.liou.inflow}). Its proof is split into several steps. 
\begin{itemize}
    \item First, in Subsection \ref{subsec:expansion} we establish an expansion of order $\frac{1}{2} + \alpha$ for some small $\alpha > 0$ at $(0,0)$ in terms of $\phi_0$. In particular, it yields H\"older continuity of $h/\phi$ in $\mathcal{R}_0$ and already implies a Liouville theorem for functions that do not grow faster than $1 + |z|^{\frac{1}{2} + \alpha}$.

    \item Second, in Subsection \ref{subsec:higher-Liouville} we improve the order of the Liouville theorem by a delicate argument based on the maximum principle and the asymptotic behavior of $\phi_0$ to functions that do not grow faster than $1 + |z|^{\frac{1}{2} + 3 - \eps}$.

    \item Third, in Subsection \ref{subsec:inflow-Liouville} we find explicit homogeneous solutions to inhomogeneous equations in the half-space with in-flow condition. A combination with the Liouville theorem from the previous step concludes the proof of \autoref{thm.liou.inflow}.
\end{itemize}

We start by introducing stationary kinetic cylinders in dimension one, as follows:
\begin{equation}\label{defn.staty.cylinder}
\begin{aligned}
     &Q_{R}(x_0,v_0)=(x_0-R^3,x_0+R^3)\times (v_0-R,v_0+R),\\
    &H_{R}(x_0,v_0)=(x_0-R^3,x_0+R^3)\times (v_0-R,v_0+R)\cap \{x>0\}\times \bbR,\\
    &H_{R}^{+}\coloneqq ((R/2)^3,R^3)\times (-R,R).
\end{aligned}
\end{equation}

\subsection{Harnack's type inequality for the stationary case}
First, let us introduce a suitable notion of weak solutions to the stationary version of the kinetic Fokker-Planck equation in align with \eqref{eq.def.para}

\begin{definition}
\label{deff.weak.stationary}
Let $Q=(a,b)\times (c,d)$ and $g\in C(\overline{Q})$. We say that $h$ is a weak solution to 
\begin{equation}\label{eq.def}
\left\{
\begin{alignedat}{3}
v\partial_xh-\partial_{vv}h&=F&&\qquad \mbox{in $Q$}, \\
h&=g&&\qquad  \mbox{in $ \partial_{\mathrm{kin}}Q$},
\end{alignedat} \right.
\end{equation}
if $ h\in L^2((a,b);H^1(c,d))$ and $h$ satisfies
\begin{align*}
    -\iint_{Q}h(v\partial_x\psi )\,dx\,dv+\iint_{Q}\partial_vh\partial_v\psi\,dx\,dv=-\int_{\gamma_-}g\psi (v\cdot n)\,d\gamma+\int_{Q}F\psi\,dx\,dv
\end{align*}
for any $\psi\in C_c^\infty(\bbR^{2})$ with $\supp \psi\cap \partial Q\subset \gamma_-\cup \gamma_0$. 

When $Q=(0,\infty)\times \bbR$, we say that $h$ is a weak solution to \eqref{eq.def} if $h\in L^2_{\mathrm{loc}}(0,\infty;H^1_{\mathrm{loc}}(\bbR))$, and
\begin{align}\label{def.weak.stationary}
    -\iint_{Q}h(v\partial_x\psi )\,dx\,dv+\iint_{Q}\partial_vh\partial_v\psi\,dx\,dv=-\int_{\{x=0\}\times \{v>0\}}g\psi (-v)\,d\gamma
\end{align}
for any $\psi\in C_c^\infty(\bbR^{2})$ with $\supp \psi\cap \partial Q\subset \{x=0\}\times \{v\geq0\}$.
\end{definition}
\begin{remark}\label{rmk.exist.stationary}
Now, we give two remarks about the existence and regularity of solutions to \eqref{eq.def}.
\begin{itemize}
\item Note that when $Q$ is a bounded domain and $g\in C^\infty(\overline{Q})$, by \cite[Theorem 1, Theorem 3 and Definition 3.3]{AveHou26}, there is a unique weak solution $h$ to \eqref{eq.def}. Note that, by the remark given after \cite[Definition 5.1]{Sil22} and by \cite[Lemma 3.1, Definition 3.3]{AveHou26}, the notion of a weak solution given in \cite{AveHou26} coincides with the one in \autoref{deff.weak.stationary}. 
\item By considering $h(t,x,v)=h(x,v)$, we observe that $h$ is also a weak solution to 
\begin{align*}
    (\partial_t+v\cdot\nabla_x)h-\partial_{vv}h=F\quad\text{in }(-1,0]\times Q
\end{align*}
with $h=g$ on $I\times \partial_{\mathrm{kin}}Q$. Thus, by \cite{Sil22} and \autoref{lem.int.grad}, if $g\in  C^\alpha(\overline{Q})$ and $F\in C^\infty(Q)$, then  we have $h\in C_{\mathrm{loc}}^\infty(Q)\cap C^\alpha(\overline{Q})$.  
\end{itemize}
\end{remark}
In this subsection, we denote by $\widetilde{\mathcal{Q}}$ the one-sided kinetic cylinder defined by 
\begin{align*}
    \widetilde{\mathcal{Q}}_r(z_0)\coloneqq \left\{z\,:\, t\in (t_0-r^2,t_0],\, v\in B_{r}(v_0),\, x\in B_{r^3}(x_0+(t-t_0)v_0)\right\}.
\end{align*}

First, based on the parabolic version of Harnack's inequality given in \cite{GIMV19}, we provide Harnack's inequality for the stationary case. The cylinders are slanted in $x$ due to the underlying kinetic geometry.
\begin{lemma}\label{lem.har.sta}
    Let $h\geq0$ be a weak solution to 
\begin{equation}\label{eq.har.sta}
v \partial_xh-\partial_{vv}h=0\quad\text{in }Q.
\end{equation}
There are constants $\theta,\gamma\in(0,1/2)$, and $c \ge 1$ such that for any $R \in (0,2]$,
\begin{align*}
\sup_{A_\gamma\times B_{\theta R}(v_0)}h\leq c\inf_{A_0\times B_{\theta R}(v_0)}h,
\end{align*}
where
\begin{align*}
    A_\gamma &\coloneqq \{x\,:\,|x-x_0-v_0(t-R^2\gamma)|<(\theta R)^3,\,t\in(-(R\theta)^2,0]\} \\
    &= \begin{cases}
    \big( x_0 - v_0 R^2 (\gamma + \theta^2) - (\theta R)^3 , x_0 - v_0 R^2 \gamma + (\theta R)^3\big) ~~ \text{ if } v_0 > 0,\\
    \big( x_0 - v_0 R^2 \gamma - (\theta R)^3 , x_0 - v_0 R^2 (\theta^2 + \gamma) + (\theta R)^3 \big) ~~ \text{ if } v_0 < 0
    \end{cases},
\end{align*} whenever $(x_0,v_0)$ is such that $\{x\,:\,|x-(x_0+v_0t)|< R^3\,:t\in(-R^2,0]\}\times B_{R}(v_0)\subset Q$. 
\end{lemma}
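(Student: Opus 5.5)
We will obtain this stationary Harnack inequality from the time-dependent kinetic Harnack inequality of \cite{GIMV19}, using the observation in \autoref{rmk.exist.stationary}: a stationary weak solution $h(x,v)$ of \eqref{eq.har.sta} gives a nonnegative weak solution $\tilde h(t,x,v) := h(x,v)$ of $(\partial_t + v\partial_x)\tilde h - \partial_{vv}\tilde h = 0$. The hypothesis says that the slanted cylinder $\{x : |x - (x_0+v_0t)| < R^3,\ t\in(-R^2,0]\}\times B_R(v_0)$ lies in $Q$. Hence $\tilde h$ solves the equation in the one-sided kinetic cylinder $\widetilde{\mathcal{Q}}_R(z_0)$ with $z_0 = (0,x_0,v_0)$: for every $t\in(-R^2,0]$, the $x$-section $B_{R^3}(x_0+tv_0)$ is contained in the $x$-projection of $Q$.

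We then apply the parabolic Harnack inequality of \cite{GIMV19}, scaled by $S_R$ and translated by the group action $z_0\circ\cdot$. By invariance of the Kolmogorov equation, it reads
\begin{align*}
\sup_{\widetilde{\mathcal{Q}}_{\theta R}(z_0\circ S_R(-\gamma,0,0))}\tilde h \le c \inf_{\widetilde{\mathcal{Q}}_{\theta R}(z_0)} \tilde h
\end{align*}
for universal $\theta,\gamma\in(0,1/2)$ and $c\ge1$. The parameters must be chosen so that the past cylinder $\widetilde{\mathcal Q}_{\theta R}(z_0\circ S_R(-\gamma,0,0))$ is contained in $\widetilde{\mathcal Q}_R(z_0)$ and lies strictly earlier in time, which holds once $\gamma+\theta^2<1$ and $\theta$ is small.

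Next we compute the projections of these cylinders onto $(x,v)$. Since $\tilde h$ does not depend on $t$, the supremum over a cylinder equals the supremum of $h$ over its $(x,v)$-projection, and likewise for the infimum. The point $z_0\circ S_R(-\gamma,0,0)$ equals $(-R^2\gamma, x_0 - R^2\gamma v_0, v_0)$. Its one-sided cylinder consists of $t\in(-R^2\gamma-(\theta R)^2,-R^2\gamma]$, $v\in B_{\theta R}(v_0)$, and $|x - x_0 - v_0 t| < (\theta R)^3$. Reparametrizing $t = s - R^2\gamma$ with $s\in(-(\theta R)^2,0]$, the $x$-projection is exactly $A_\gamma$ as defined in the statement. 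For $\gamma=0$ the same computation gives $A_0$. Writing out the union of intervals $B_{(\theta R)^3}(x_0+v_0(s-R^2\gamma))$ over $s$ yields the explicit interval formulas, with the endpoints depending on the sign of $v_0$.

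The main obstacle we expect is one technical point: the $(x,v)$-projection of a slanted cylinder is only a product set $A\times B_{\theta R}(v_0)$ up to the slant. Here this causes no trouble, because the slant is governed by the fixed velocity $v_0$ rather than the variable $v$: each time slice is a product, so the projection is exactly $A\times B_{\theta R}(v_0)$. A second point to check is that the constants are independent of $R\in(0,2]$ and of $(x_0,v_0)$. This follows from the scaling and translation invariance of the constant-coefficient equation.
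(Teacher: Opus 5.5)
Your proposal is correct and follows essentially the same route as the paper. Both arguments lift $h$ to the $t$-independent solution $\tilde h(t,x,v)=h(x,v)$ on $(-R^2,0]\times Q$, apply the interior Harnack inequality of \cite{GIMV19} rescaled by $S_R$ and translated by $z_0=(0,x_0,v_0)$, and read off the $(x,v)$-projections of the two cylinders.

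Your identification of the past cylinder's center as $z_0\circ S_R(-\gamma,0,0)=(-R^2\gamma,x_0-R^2\gamma v_0,v_0)$ is more careful than the paper's proof. The paper writes this center as $(-R^2\gamma,x_0,v_0)$, which taken literally would project to $A_0$. It is your transported center that actually produces the set $A_\gamma$ in the statement.
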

\begin{proof}
  Note that if $\{x\,:\,|x-(x_0+v_0t)|< R^3\,:t\in(-R^2,0]\}\times B_{R}(v_0)\subset Q$, then $\widetilde{\mathcal{Q}}_R(0,x_0,v_0)\subset (-R^{2},0]\times Q$.  
    Next, we observe that $h=h(t,x,v)$ is a weak solution to 
    \begin{align*}
        (\partial_t+v\cdot\partial_x)h-\partial_{vv}h=0\quad\text{in }(-R^2,0]\times Q.
    \end{align*}
    Now, applying the interior Harnack inequality given in \cite{GIMV19} that there are constants $\theta,\gamma\in(0,1/2)$, and $c \ge 1$ such that
\begin{align*}
    \sup_{\widetilde{\mathcal{Q}}_{\theta R}\left(-R^2\gamma,x_0,v_0\right)}h\leq c\inf_{\widetilde                                     {\mathcal{Q}}_{\theta R}(0,x_0,v_0)}h
\end{align*}
for some constant $c\geq1$. Therefore, we deduce the desired estimate.
\end{proof}

As usual, the Harnack inequality can be applied iteratively (Harnack chains) to compare the values of the solution also in more general domains. In particular, we can show a comparability result for the values of $h(\cdot,v_0)$ for any fixed $v_0$. Due to the underlying kinetic geometry, the ''direction`` of the comparability depends heavily on the sign of $v_0$.
\begin{lemma}\label{lem.strmax}
    Let $h\geq0$ be a weak solution to 
    \begin{equation*}
v\partial_xh-\partial_{vv}h=0\quad \mbox{in  $H_{\mathcal{R}}$}.
\end{equation*}
If $v_0\in[0,\mathcal{R}/2]$, then for any $0<x_0 \le x_1<\mathcal{R}^3/2$, we have 
\begin{align}\label{v0non.strmax}
    h(x_0,v_0)\leq ch(x_1,v_0),
\end{align}
where the constant $c$ depends only on $x_0$, $v_0$, and $\mathcal{R}$. 

If $v_0\in[-\mathcal{R}/2,0)$, then for any $0\leq x_0\leq x_1<\mathcal{R}^{3}/2$, we have
\begin{align}\label{v0neg.strmax}
    h(x_1,v_0)\leq ch(x_0,v_0),
\end{align}
where the constant $c$ depends only on $|v_0|$ and $\mathcal{R}$.
\end{lemma}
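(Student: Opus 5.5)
Both estimates come from Harnack chains built out of \autoref{lem.har.sta}. For a fixed $v_0$, a single application moves information along the $x$-axis in a direction fixed by the sign of $v_0$: the supremum over the "earlier" set $A_\gamma$ is controlled by the infimum over the "later" set $A_0$. When $v_0>0$, the set $A_\gamma$ lies to the left of $A_0$, that is, at smaller $x$. When $v_0<0$, it lies to the right, at larger $x$. So chaining in the direction of transport gives \eqref{v0non.strmax} for $v_0\ge 0$ and \eqref{v0neg.strmax} for $v_0<0$.

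\textbf{Case $v_0>0$.} Fix a radius $R\le \min\{x_0^{1/3},\mathcal R\}/C$, which is small compared with both the distance to $\{x=0\}$ and the size of $H_{\mathcal R}$. Then every slanted cylinder centred at $(y,v_0)$ with $y\in[x_0,x_1]$ lies inside $H_{\mathcal R}$, because its $x$-extent is $|v_0|R^2+R^3\lesssim R^2$. Each application of \autoref{lem.har.sta} at a centre $(y,v_0)$ gives $h(y',v_0)\le c\,h(y'',v_0)$ whenever $y'\in A_\gamma(y)$ and $y''\in A_0(y)$, and these sets are intervals of length at least $(\theta R)^3$. I would choose centres $y_k$ so that $A_0(y_k)$ and $A_\gamma(y_{k+1})$ overlap. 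Each step then advances $x$ by an amount comparable to $v_0R^2\gamma$ (or $(\theta R)^3$), which depends only on $v_0$ and $R$. The number of steps is therefore at most $\mathcal R^3/\min\{v_0\gamma R^2,(\theta R)^3\}$, and the resulting constant is $c^N$ with $N$ depending only on $x_0,v_0,\mathcal R$.

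There is one complication. For small $v_0$ the drift $v_0R^2\gamma$ is tiny, so I rely on the $(\theta R)^3$ width instead. The sets $A_0$ and $A_\gamma$ overlap when $v_0$ is small, so a single Harnack application can also compare points in either order within a window of size $\sim(\theta R)^3$. Chaining these windows handles the case $v_0= 0$ as well.

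\textbf{Case $v_0<0$.} The chain now runs from larger $x$ toward smaller $x$. The new difficulty is that $x_0$ may equal $0$, or be very small, so the constant cannot depend on $x_0$. Here the kinetic geometry helps. For $v_0<0$ the point $(0,v_0)$ lies in $\gamma_+$, which is outgoing. A cylinder centred at $(y,v_0)$ with current time $0$ is slanted so that at earlier times $t\in(-R^2,0]$ it sits at $x\approx y-v_0t=y+|v_0||t|$, strictly inside $\{x>0\}$. I would therefore choose $R\sim\min\{|v_0|,\mathcal R\}$ small enough that $R^3\le |v_0|R^2/4$. Then for $y\ge 0$ the backward cylinder lies in $\{x>0\}$ except possibly near $t=0$, where $x\in B_{R^3}(y)$. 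This last piece would be handled by applying the Harnack inequality at a centre $y$ slightly larger than the target and using that $A_0$ sits at the left end of the cylinder; up to the closure and continuity of $h$ at $x=0$ (see \autoref{rmk.exist.stationary}), this reaches $x_0=0$. The step length is $\sim|v_0|R^2\gamma$, so the number of steps and hence $c$ depend only on $|v_0|$ and $\mathcal R$.

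The main obstacle is this bookkeeping near $x=0$ in the case $v_0<0$. One must verify that the cylinder required by \autoref{lem.har.sta} fits in $H_{\mathcal R}$ even when its endpoint $A_0$ touches the boundary, or else pass to the limit $x_0\downarrow0$ by continuity. The rest is routine overlap-counting.
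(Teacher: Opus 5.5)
Your argument for $v_0\ge 0$ is fine. One adjustment: the radius must also satisfy $R\lesssim (x_0/v_0)^{1/2}$, because ``$|v_0|R^2+R^3\lesssim R^2$'' alone does not keep the backward cylinder in $\{x>0\}$ when $v_0\gg x_0^{1/3}$. This only costs a dependence on $x_0,v_0$, which is allowed. Your use of overlapping windows of width $(\theta R)^3$ even covers $v_0=0$, where the paper's choice of $R$ through $v_0R^2\gamma=(x_1-x_0)/(k_0+1)$ degenerates.

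The genuine gap is in the case $v_0<0$, at exactly the point you flag, and neither of your proposed fixes closes it. \autoref{lem.har.sta} requires the whole set $\{|x-(y+v_0t)|<R^3,\ t\in(-R^2,0]\}\times B_R(v_0)$ to lie where the equation holds. At $t=0$ this set is $B_{R^3}(y)$, whereas $A_0(y)$ only reaches down to $y-(\theta R)^3$. Hence if $x_0\in A_0(y)$ and the cylinder lies in $\{x>0\}$, then $x_0>(1-\theta^3)R^3$; moving the centre ``slightly'' does not change this. So near the boundary you are forced to take $R\lesssim x_0^{1/3}$. A single step then links $x_0$ only to points within distance $\lesssim x_0+|v_0|x_0^{2/3}$. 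Iterating, the number of steps needed to reach a fixed distance from $\{x=0\}$ is at least of order $\log\log(1/x_0)$, so the chain constant blows up as $x_0\downarrow 0$. Continuity of $h$ up to $x=0$ cannot rescue this, since there is no uniform constant to pass to the limit. As written, your argument gives \eqref{v0neg.strmax} only with $c$ depending on $x_0$, and not for $x_0=0$.

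The missing idea is to extend the solution across the outgoing boundary before running the chain. Since $\{x=0\}\times[-2|v_0|,-|v_0|/4]\subset\gamma_+$, boundary regularity at $\gamma_+$ (from \cite{Sil22,RoWe25}) makes $h$ smooth up to $x=0$ there. It therefore has a smooth extension $H\ge 0$ to $(-3\mathcal R^3/4,3\mathcal R^3/4)\times(-3|v_0|/2,-|v_0|/2)$. One then solves $v\partial_xF-\partial_{vv}F=0$ in this enlarged box with $F=H$ on its kinetic boundary. For $v<0$ that boundary consists only of $\{x=3\mathcal R^3/4\}$ and the faces $v=-|v_0|\pm|v_0|/2$.
\begin{itemize}
\item The maximum principle gives $F\ge 0$.
\item Since $\{x=0\}$ is outgoing, it is not part of the kinetic boundary of the sub-box $(0,3\mathcal R^3/4)\times(-3|v_0|/2,-|v_0|/2)$. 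Uniqueness then gives $F=h$ there.
\end{itemize}
The Harnack chain is then run for $F$ with a radius depending only on $|v_0|$ and $\mathcal R$, and the cylinders are allowed to cross $x=0$. This is what makes $c$ independent of $x_0$ and admits $x_0=0$.
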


\begin{remark}
    Note that it is possible to choose $x_0 = 0$ in case $v_0 < 0$. In addition, the constant $c$ of \eqref{v0non.strmax} and \eqref{v0neg.strmax} also depends on $\theta ,\gamma\in(0,1/2)$ which are determined in \autoref{lem.har.sta}. However, both constants $\theta,\gamma$ are universal so that we just write the constant $c$ depends only on $x_0,v_0,\mathcal{R}$ or $|v_0|,\mathcal{R}$. 
\end{remark}

\begin{proof}
    We prove the statements for $v_0 > 0$ and $v_0 < 0$ separately.
    \begin{itemize}
        \item Let $v_0\geq0$ and $0<x_0<x_1$. First, note that there is $R_0\leq \mathcal{R}/2$ such that 
        \begin{align}\label{ineq.s0v0.har}
            x_0-v_0(2R_0)^2-(2R_0)^3=0,
        \end{align}
        as $g(r)=x_0-v_0(2r)^2-(2r)^3$ is non-increasing when $r\geq0$ and 
        \begin{align}\label{g.inc.strmax}
            g\left(\left(\frac{x_0}{16v_0}\right)^{\frac12}\right)>0>g\left(\left(\frac{x_0}{4v_0}\right)^{\frac12}\right).
        \end{align}
        Moreover, \eqref{g.inc.strmax} yields
        \begin{align}
        \label{g.R_0-bd}
            \left(\frac{x_0}{16v_0}\right)^{\frac12}<R_0<\left(\frac{x_0}{4v_0}\right)^{\frac12},
        \end{align}
        which will ensure that \eqref{v0non.strmax} only depends on $R_0$ through $x_0$ and $v_0$. We now choose 
        \begin{equation*}
            R_1\coloneqq {\min\left\{R_0,\frac{\sqrt{x_1-x_0}}{2(1+\sqrt{v_0})}\right\}}
        \end{equation*}
        to see that there is a nonnegative integer $k$ such that
        \begin{align*}
            \frac{x_1-x_0}{k+1}\leq v_0R_1^2\gamma<\frac{x_1-x_0}{k},
        \end{align*}
        where the constant $\gamma\in(0,1/2)$ is determined in \autoref{lem.har.sta}. Note that by the definition of $R_1$ and due to \eqref{g.R_0-bd}, there is some $k_0 > 0$, depending only on $x_0,v_0$, and $\mathcal{R}$, such that $k \le k_0$. Thus, there is $R\leq R_1$ such that 
        \begin{align}\label{equl.strmax}
            \frac{x_1-x_0}{{k_0+1}}=v_0R^2\gamma.
        \end{align}
        We now define
        \begin{align}\label{defn.yj.strmax}
            y_j\coloneqq x_1-(\theta R)^3-v_0R^2\gamma j
        \end{align}
        to see that 
        \begin{equation}\label{equl2.strmax}
            \mbox{$x_1=y_0+(\theta R)^3$ and $x_0=y_{k_0+1}+(\theta R)^3$}.
        \end{equation}
        In addition, we observe from \eqref{ineq.s0v0.har} and \eqref{equl.strmax} that
        \begin{align*}
            \{x\,:\,|x-(y_j+v_0t)|<R^3\,:\,t\in(-R^2,0]\}\subset (0,\mathcal{R}^3)
        \end{align*}
        for any $j=0,1,\ldots , k_0+1$.
        Now using \eqref{defn.yj.strmax} and applying \autoref{lem.har.sta} with $x_0$ replaced by $y_j$, we deduce
        \begin{align*}
            \sup_{(y_{j+1}-v_0\theta^2 R^2-(\theta R)^3,y_{j+1}+(\theta R)^3)\times B_{\theta R}(v_0)}h&= \sup_{(y_j-v_0(\theta^2+\gamma) R^2-(\theta R)^3,y_j-v_0R^2\gamma+(\theta R)^3)\times B_{\theta R}(v_0)}h\\
            &\leq c\inf_{(y_j-v_0\theta^2 R^2-(\theta R)^3,y_j+(\theta R)^3)\times B_{\theta R}(v_0)}h
        \end{align*}
        Now using a Harnack chain together with \eqref{equl2.strmax}, we derive 
        \begin{equation}\label{goal1.strmax}
        \begin{aligned}
            h(x_0,v_0)&\leq \sup_{(y_{k+1}-v_0\theta^2 R^2-(\theta R)^3,y_{k+1}+(\theta R)^3)\times B_{\theta R}(v_0)}h\\
            &\leq \cdots
            \leq c^{k+1}\inf_{(y_0-v_0\theta^2 R^2-(\theta R)^3,y_0+(\theta R)^3)\times B_{\theta R}(v_0)}h\leq c^{k_0+1}h(x_1,v_0).
        \end{aligned}
        \end{equation}
       \item Let $v_0<0$ and $0\leq x_0\leq x_1$. Note that $h=h(t,x,v)$ is a weak solution to 
       \begin{align*}
           (\partial_t+v\partial_x)h-\partial_{vv}h=0\quad\text{in }(-\mathcal{R}^2,0]\times H_{\mathcal{R}}.
       \end{align*}
       Since $\{x=0\}\times [-2|v_0|,-|v_0|/4]\subset \gamma_+$, by \cite{Sil22} and \cite{RoWe25}, $h$ is smooth in $[0,7\mathcal{R}^3/8]\times [-2|v_0|,-|v_0|/4]$. Thus, there is a smooth extension $H\geq0$ of $h$  that defined in $(-3\mathcal{R}^3/4,3\mathcal{R}^3/4)\times (-3|v_0|/2,-|v_0|/2)$ such that $H=h$ in $[0,3\mathcal{R}^3/4]\times [-3|v_0|/2,-|v_0|/2]$. By \autoref{rmk.exist.stationary}, there is a unique weak solution $F\geq0$ such that 
       \begin{equation*}
\left\{
\begin{alignedat}{3}
v\partial_xF-\partial_{vv}F&=0&&\qquad \mbox{in  $(-3\mathcal{R}^3/4,3\mathcal{R}^3/4)\times (-3|v_0|/2,-|v_0|/2)$}, \\
F&=H&&\qquad  \mbox{on $ \{x=3\mathcal{R}^3/4\}\times[-3|v_0|/2,-|v_0|/2]\cup [-3\mathcal{R}^3/4,3\mathcal{R}^3/4]\times\{v=-|v_0|\pm |v_0|/2\}$}.
\end{alignedat} \right.
\end{equation*}
Since $h$ is a smooth solution to 
\begin{equation*}
\left\{
\begin{alignedat}{3}
v\partial_xh-\partial_{vv}h&=0&&\qquad \mbox{in  $(0,3\mathcal{R}^3/4)\times (-3|v_0|/2,-|v_0|/2)$}, \\
h&=H&&\qquad  \mbox{on $ \{x=3\mathcal{R}^3/4\}\times[-3|v_0|/2,-|v_0|/2]\cup [0,3\mathcal{R}^3/4]\times\{v=-|v_0|\pm |v_0|/2\}$},
\end{alignedat} \right.
\end{equation*}
by \autoref{rmk.exist.stationary}, $F=h$ in $[0,3\mathcal{R}^3/4]\times [-3|v_0|/2,-|v_0|/2]$. As $|v_0|\leq \mathcal{R}/2$ and $x_1\leq\mathcal{R}^3/2$, the constant 
\begin{equation*}
    R_0\coloneq\frac{\mathcal{R}}{4(\sqrt{|v_0|}+1)}
\end{equation*}satisfies
\begin{equation*}
    x_1-v_0(2R_0)^2+(2R_0)^3\leq 3\mathcal{R}^3/4.
\end{equation*}
We now proceed in a similar way as in the first step to find a suitable sequence as in \eqref{defn.yj.strmax}. 

First, we select
        \begin{equation*}
            R_1\coloneqq \min\left\{R_0,\frac{\sqrt{x_1-x_0}}{2(1+\sqrt{|v_0|})}\right\}
        \end{equation*}
        to see that there is a nonnegative integer $k$ such that
        \begin{align*}
            \frac{x_1-x_0}{{k+1}}\leq -v_0R_1^2\gamma<\frac{x_1-x_0}{k}.
        \end{align*}
        As before, there is $k_0 > 0$, depending only on $v_0$ and $\mathcal{R}$ such that $k \le k_0$.
        Moreover, there is $R$ such that
        \begin{align*}
            \frac{x_1-x_0}{{k_0+1}}=-v_0R^2\gamma.
        \end{align*}
        By taking $y_j\coloneqq x_0+(\theta R)^3 - v_0R^2\gamma j$, we get
        \begin{align}\label{equl3.strmax}
            x_0=y_{0}-(\theta R)^3\quad\text{and}\quad x_1=y_{k_0+1}-(\theta R)^3
        \end{align}
        and
        \begin{align*}
    \{x\,:\,|x-(y_j+v_0t)|<R^3\,:\,t\in(-R^2,0]\}\times B_{\theta R}(v_0)\subset Q
\end{align*}
for any $j=0,1,\ldots, k_0$, where we write $Q = (-3\mathcal{R}^3/4,3\mathcal{R}^3/4)\times (-3v_0/2,-v_0/2)$.

We are now going to apply \autoref{lem.har.sta} with $x_0$ replaced by $y_j$ to the function $F$ to see that 
\begin{equation}\label{ineq2.strmax}
\begin{aligned}
   \sup_{(y_{j+1}-(\theta R)^3,y_{j+1}-v_0R^2\theta^2+(\theta R)^3)\times B_{\theta R}(v_0)}F&= \sup_{(y_j-v_0R^2\gamma-(\theta R)^3,y_j-v_0R^2(\theta^2+\gamma)+(\theta R)^3)\times B_{\theta R}(v_0)}F\\
    &\leq c\inf_{(y_j-(\theta R)^3,y_j-v_0R^2\theta^2+(\theta R)^3)\times B_{\theta R}(v_0)}F,
\end{aligned}
\end{equation}
where we have used \eqref{equl3.strmax} for the first equality.

As in \eqref{goal1.strmax}, along with \eqref{equl3.strmax} and \eqref{ineq2.strmax}, we deduce
        \begin{equation}\label{goal2.strmax}
        \begin{aligned}
            h(x_1,v_0)=F(x_1,v_0)&\leq \sup_{(y_{k_0+1}-(\theta R)^3,y_{k_0+1}-v_0R^2\theta^2+(\theta R)^3)\times B_{\theta R}(v_0)}F\\
            &\leq \cdots
            \leq c^{k_0+1}\inf_{(y_0-(\theta R)^3,y_0-v_0R^2\theta^2+(\theta R)^3)\times B_{\theta R}(v_0)}F\\
            &\leq c^{k_0+1}F(x_0,v_0)=c^{k_0+1}h(x_0,v_0).
        \end{aligned}
        \end{equation}
    \end{itemize}
    By \eqref{goal1.strmax} and \eqref{goal2.strmax}, we get the desired result.
\end{proof}

\subsection{An expansion in 1D}
\label{subsec:expansion}

Throughout this subsection, we denote $\phi := \phi_0$, where $\phi_0$ is defined as
\begin{align*}
        \phi_0(x,v) = \begin{cases}
            x^{\frac16}U\left(-\frac16,\frac23,-\frac{v^3}{9x}\right)&\quad\text{if }v<0,\\
            \frac16x^{\frac16}e^{-\frac{v^3}{9x}}U\left(\frac56,\frac23,\frac{v^3}{9x}\right)&\quad\text{if }v\geq0.
        \end{cases}
\end{align*}
See also \eqref{rel.phiandU}. The goal of this subsection is to prove the following oscillation estimate for $h/\phi$ at $(0,0)$, where $h$ is a solution to the stationary Kolmogorov's equation with absorbing boundary condition in 1D:

\begin{proposition}
\label{lemma:1D.expansion.gamma0}
    Let $R \ge 1$ and $h$ be a weak solution to 
    \begin{equation*}
\left\{
\begin{alignedat}{3}
v \partial_xh-\partial_{vv}h&=0&&\qquad \mbox{in  $H_R$}, \\
h&=0&&\qquad  \mbox{in $ \{x=0\}\times (0,R)$}.
\end{alignedat} \right.
\end{equation*}
Then we have for some constants $c\geq1$ and $R_0\geq1$,
\begin{align}\label{bdd.h/phi}
    |h/\phi|\leq cR^{-\frac12}\|h\|_{L^\infty(H_R)}\quad\text{in }H_{R/R_0}
\end{align}
and
\begin{align}\label{hol.h/phi}
    \sup_{H_{rR}}\frac{h}{\phi}-\inf_{H_{rR}}\frac{h}{\phi}\leq cr^{\alpha}R^{-\frac12}\|h\|_{L^\infty(H_R)}\quad\text{for any }r\leq 1/R_0.
\end{align}
\end{proposition}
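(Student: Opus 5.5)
By scaling we may take $R$ large and normalize $\|h\|_{L^\infty(H_R)}=1$. The equation is invariant under $h_r(x,v)=h(r^3x,rv)$, and $\phi$ is homogeneous of degree $\frac12$, i.e. $\phi(r^3x,rv)=r^{1/2}\phi(x,v)$. Hence it suffices to prove both estimates at unit scale (i.e. on $H_1$, with the solution defined on $H_{R_0}$) and then rescale. This rescaling produces the factor $R^{-1/2}$. The plan is a boundary Harnack argument in the style of De Giorgi/Krylov--Safonov: we show $h/\phi$ is bounded, and then that its oscillation decays geometrically on dyadic cylinders $H_{2^{-k}}$.

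\textbf{Step 1 (upper bound, \eqref{bdd.h/phi}).} We build a barrier from $\phi$ itself. Let $\bar\phi$ be $\phi$ recentred or slightly dilated, plus a correction, so that $\bar\phi$ is a supersolution in $H_{1}$ that vanishes on $\{x=0,v>0\}$ and dominates $1$ on the remaining kinetic boundary of $H_{1}$ (namely $\{x=1\}\times(-1,0)$ and $v=\pm1$). Since $\phi>0$ in the interior, one can use $C\phi$ plus a small multiple of a function like $x^{1/3}$-type growth in the region $\mathcal{R}_0$, using the asymptotics \eqref{eq:phi_0-behavior}. The comparison principle (weak maximum principle, valid via the existence/uniqueness in \autoref{rmk.exist.stationary}) applied to $\pm h$ then gives $|h|\le C\phi$ in a smaller cylinder. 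The delicate region is $\mathcal{R}_-$, where $\phi$ decays exponentially. There we need $h$ to decay at least as fast. We get this by comparing $h$ with $C\phi$ on a cylinder whose boundary data come from the outflow side only, since along characteristics with $v>0$ information flows from $x=0$.

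\textbf{Step 2 (lower bound / interior positivity).} For $h\ge0$, we want $h\ge c\,h(\bar X)\phi$ in $H_{1/2}$, where $\bar X=(1/2,0)$, say. The Harnack inequality (\autoref{lem.har.sta}) and the chains of \autoref{lem.strmax} give $h\ge c h(\bar X)$ on a fixed compact set $K\subset\{x>0\}$. A subsolution comparison with $\phi$, cut off to vanish outside $K$'s vicinity, then propagates this down to the boundary, again using the explicit profile \eqref{eq:phi_0-behavior}. Combining Steps 1 and 2 gives the boundary Harnack principle: for $h_1,h_2\ge0$ solutions vanishing on $\{x=0,v>0\}$, we have $h_1/h_2\asymp h_1(\bar X)/h_2(\bar X)$ in $H_{1/2}$. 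This holds in particular with $h_2=\phi$.

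\textbf{Step 3 (oscillation decay, \eqref{hol.h/phi}).} Set $M_k=\sup_{H_{\rho^k}}h/\phi$ and $m_k=\inf_{H_{\rho^k}}h/\phi$. Both $h-m_k\phi$ and $M_k\phi-h$ are nonnegative solutions in $H_{\rho^k}$ vanishing on the inflow boundary. Applying the rescaled Step 2 to each, and adding the two resulting inequalities, gives $M_{k+1}-m_{k+1}\le(1-c)(M_k-m_k)$. Iterating yields the Hölder estimate with $\alpha=\log(1-c)/\log\rho$.

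\textbf{Main obstacle.} The main difficulty is the barrier construction in Steps 1--2 near the incoming set $\mathcal{R}_-$. There $\phi$ is exponentially small, so the ratio $h/\phi$ can only be controlled if we prove exponential decay of $h$ along $v>0$ characteristics. For this I would first try the explicit supersolutions $x^{1/6}e^{-\lambda v^3/x}$-type functions, obtained from the Tricomi asymptotics, as barriers in $\mathcal{R}_-$. These would then be glued to $C\phi$ in $\mathcal{R}_0\cup\mathcal{R}_+$ via the maximum principle.
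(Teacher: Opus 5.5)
Your overall architecture is the paper's: an upper bound for $|h|/\phi$, a lower bound $h\gtrsim(\inf_{H_2^+}h/\phi)\,\phi$ for nonnegative solutions via a subsolution built from $\phi$, and geometric decay of the oscillation of $h/\phi$ obtained by applying that lower bound to $h-m_k\phi$ and $M_k\phi-h$. Your Step~3 is essentially the paper's induction, and you correctly locate the difficulty in $\mathcal{R}_-$.

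However, Step~1 has a genuine gap: the mechanism you propose is circular.
\begin{itemize}
\item Since the diffusion acts in $v$, the kinetic boundary of every cylinder around $(0,0)$ contains a lateral side $\{v=v_0\}$ with $v_0>0$. That side meets $\{x=0\}$ at a point of $\gamma_-$. So there is no cylinder ``whose boundary data come from the outflow side only''.
\item On that side, $\phi(x,v_0)\sim x\,v_0^{-5/2}e^{-v_0^3/(9x)}$ is exponentially small as $x\to0$, whereas all you know is $|h|\le\|h\|_{L^\infty}$.
\item Hence neither $C\phi$ nor your $x^{1/6}e^{-\lambda v^3/x}$ can be placed above $h$ there. (The latter are indeed supersolutions in $\{v>0\}$ for $\lambda\le\frac19$; the problem is the boundary data, not the sign.) Doing so presupposes the exponential decay of $h$ at $\gamma_-$ that the comparison is supposed to produce.
\item Gluing to $C\phi$ across $\{x=v^3\}$ does not help either. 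It would require $|h|\lesssim x^{1/6}$ on that curve, which is the estimate itself.
\end{itemize}

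The paper closes this gap with two ingredients that are absent from your plan.
\begin{itemize}
\item \autoref{lem.quasi} proves, independently, that $|h(x,v_0)|\le 2e^{-v_0R^2/(C_1x)}\|h\|_{L^\infty}$ for $v_0\ge R$. It uses the barrier $\Psi(\rho^2)$, with $\Psi\propto\int_0^{\xi}e^{-v_0R^4/(c_2\tau)}\,d\tau$ and a quadratic quasi-distance $\rho$, on thin boxes $[0,r]\times[v_0-\frac{\sqrt2}{8}R,v_0+\frac{\sqrt2}{8}R]$ with $r\le R^3/2^{10}$. On these boxes $\Psi(\rho^2)\ge1$ on the lateral sides, so only $\|h\|_{L^\infty}$ is needed there.
\item This decay rate is weaker than that of $\phi$ when $v$ is not small. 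So \autoref{lemma:1D-supsolution} compares $h$ on $H_{1/2}$ with the supersolution $c\phi+2ve^{-1/(8C_1x)}$, using the decay on $\{v=\frac12\}$. Note that $(v\partial_x-\partial_{vv})(2ve^{-a/x})=2av^2x^{-2}e^{-a/x}\ge0$.
\item It then restricts to $H_{1/R_0}$ with $R_0$ large, where $v\le 1/R_0$ forces $e^{-1/(8C_1x)}\lesssim\phi$. This is why \eqref{bdd.h/phi} holds only on $H_{R/R_0}$.
\end{itemize}

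A smaller point concerns Step~2. A cut-off subsolution of the form $\phi\psi(v)$ cannot vanish on both lateral sides: since $\partial_v\phi<0$, it forces $\psi''>0$ wherever $\psi'>0$. The paper therefore takes $\psi=(v-2)^2$. It then needs $h\gtrsim m_0$ on all of $[0,1]\times\{v=-2\}$, down to $x=0$, not merely on a compact set in $\{x>0\}$. This comes from the negative-velocity Harnack chain in \autoref{lem.strmax}, which relies on regularity at $\gamma_+$.
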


The proof of \autoref{lemma:1D.expansion.gamma0} goes in several steps.

First, we prove the following lemma which yields a pointwise lower bound for any nonnegative solution $h$ in the half-space in terms of $\phi$.

\begin{lemma}\label{lemma:1D-expansion:stationary}
    Let $h\geq0$ be a weak solution to 
    \begin{equation}\label{eq.1d.expansion}
\left\{
\begin{alignedat}{3}
v \partial_xh-\partial_{vv}h&=0&&\qquad \mbox{in  $(0,4^{3})\times (-4,4)$}, \\
h&=0&&\qquad  \mbox{on $ \{x=0\}\times (0,4)$}.
\end{alignedat} \right.
\end{equation}
Then we have 
\begin{align}\label{goal.lem.expansion.sta}
    \inf_{H^+_{2}}\frac{h}{\phi}\leq c\inf_{H_1}\frac{h}{\phi}.
\end{align}
\end{lemma}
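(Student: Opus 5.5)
The plan is a comparison argument with an explicit subsolution built from $\phi$ itself. Set $m:=\inf_{H^+_2}h/\phi$; we may assume $m>0$. Since $\phi$ is continuous and positive on $\overline{H^+_2}=[1,8]\times[-2,2]$ (by \eqref{eq:phi_0-behavior} it is comparable to a positive constant there), we get $h\ge m\phi\ge c_1 m$ on $H^+_2$. It suffices to construct a function $w$ with the following properties:
\begin{itemize}
\item $v\partial_x w-\partial_{vv}w\le 0$ in the box $D:=(0,x_*)\times(-2,2)$, for a fixed $x_*\in(1,8)$;
\item $w\le 0$ on the parts of $\partial_{\mathrm{kin}}D$ other than $\{x=x_*\}\times(-2,0)$;
\item $w\le C\phi$ on $\{x=x_*\}\times(-2,0)\subset H_2^+$;
\item $w\ge c_2\phi$ in $H_1$.
\end{itemize}
The weak maximum principle for the stationary problem (uniqueness from \autoref{rmk.exist.stationary} applied to $h-\tfrac{m}{C}w$ on $D$) then gives $h\ge \tfrac{m}{C}w\ge \tfrac{c_2}{C}m\phi$ in $H_1$. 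This is exactly \eqref{goal.lem.expansion.sta}.

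To build $w$, take a cutoff $\zeta=\zeta(v)\in C^\infty_c(-2,2)$ with $0\le\zeta\le1$ and $\zeta\equiv1$ on $[-3/2,3/2]$, and first consider $\zeta\phi$. Since $v\partial_x\phi=\partial_{vv}\phi$, we have
\[
v\partial_x(\zeta\phi)-\partial_{vv}(\zeta\phi)=-2\zeta'\partial_v\phi-\zeta''\phi=:E,
\]
which is supported in $\{3/2\le|v|\le2\}$. By the explicit asymptotics of $\phi$ it is bounded there uniformly up to $x=0$. For $v<0$ this uses $\phi\sim|v|^{1/2}$ near $x=0$; for $v>0$ it uses that $\phi$ and $\partial_v\phi$ are exponentially small as $x\to0$, both from the formula via $U$. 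Moreover $\zeta\phi$ vanishes on the lateral sides $v=\pm2$ and on $\{x=0,\,v>0\}$.

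Next, correct the error $E$ by subtracting $K\Theta$, where $\Theta\ge0$ is the weak solution in $D$ of $v\partial_x\Theta-\partial_{vv}\Theta=\mathbf 1_{\{3/2\le|v|\le2\}}$ with zero kinetic boundary data (existence by \autoref{rmk.exist.stationary}), and $K=\|E\|_\infty$. Then $w:=\zeta\phi-K\Theta$ is a subsolution with the right boundary behaviour, and $w\le\phi$ on $\{x=x_*\}$.

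The main obstacle is the last requirement, $w\ge c_2\phi$ in $H_1$, i.e.\ $K\Theta\le\tfrac12\phi$ in $H_1$. The difficulty is concentrated near the grazing point $(0,0)$ and near $\{x=0,\,v>0\}$, where $\phi$ degenerates like $x^{1/6}$ and is exponentially small for $v>0$. The source of $\Theta$ lives at $|v|\ge3/2$, far from $H_1$, so $\Theta$ solves the homogeneous equation in $(0,x_*)\times(-3/2,3/2)$. I would bound $\Theta$ from above there by a supersolution of the form $\varepsilon\phi+\delta\,\Psi$, where:
\begin{itemize}
\item $\Psi$ is a nonnegative supersolution vanishing on $\{x=0,\,v>0\}$ and large on $|v|=3/2$;
\item $\varepsilon$ is taken small by choosing $x_*$ and the cutoff so that $\Theta$ is small, for instance by first shrinking the $v$-support of $E$ toward $|v|=2$ so that $\|\Theta\|_\infty$ is small by the maximum principle.
\end{itemize}
If this direct barrier fails, my fallback is to handle the region $v<0$ with the Harnack chain of \autoref{lem.strmax}, which propagates lower bounds of $h$ from $x=x_*$ down to $x=0$ along each fixed $v_0<0$. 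On the region $v\ge0$, $x\lesssim v^3$ I would then use the explicit exponential profile of $\phi$ to check that the subtracted term is dominated. I expect this step, controlling the correction by $\phi$ uniformly up to $\gamma_0$, to be the heart of the proof.
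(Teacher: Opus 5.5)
There is a genuine gap, and it sits exactly at the step you single out. The bound $K\Theta\le\frac12\phi$ in $H_1$ (or even $w\ge c_2\phi$) is never proved, and it cannot be obtained the way you suggest.

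First, the reduction relocates the difficulty rather than solving it. Since $K\mathbf 1_{\{3/2\le|v|\le2\}}\ge|E|$, comparison gives $K\Theta\ge u_E$, where $u_E$ solves the equation with source $E$ and zero kinetic data. Write $\zeta\phi=W+u_E$, with $W$ the solution of the homogeneous problem in $D$ with data $\zeta\phi$; then $w\le W$. Hence $w\ge c_2\phi$ in $H_1$ would force $W\ge c_2\phi$ in $H_1$ for a nonnegative solution $W$ vanishing on $\{x=0,\,v>0\}$ and on $\{v=\pm2\}$. That is a lower bound by $\phi$ up to the grazing point of exactly the kind the lemma asserts.

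Second, shrinking the transition layer of $\zeta$ to width $\eta$ near $|v|=2$ creates no smallness. Since $\phi\asymp1$ near $v=-2$, $K$ is of order $\eta^{-2}$. Comparison with the $x$-independent function $\bar\Theta(v)$ solving $-\bar\Theta''=\mathbf 1_{\{2-\eta\le|v|\le2\}}$, $\bar\Theta(\pm2)=0$, only gives $\Theta\le\eta^2/2$, so $K\Theta$ stays of order one.

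Third, the barrier $\varepsilon\phi+\delta\Psi$ would need $\Psi\lesssim\phi$ with a small constant on all of $H_1$. This must hold uniformly up to $(0,0)$ and up to $\{x=0\}\times(0,1)$, where $\phi$ decays like $e^{-v^3/(9x)}$. Not even the qualitative bound is available there, because \autoref{lem.quasi} gives decay with a non-sharp exponent; this is why \autoref{lemma:1D-supsolution} is only stated in $H_{1/R_0}$.

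Fourth, your fallback does not close the gap. The Harnack-chain constants in \autoref{lem.strmax} depend on $|v_0|$ and blow up as $v_0\to0^-$, and they say nothing about $v\ge0$.

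The root of the problem is your requirement $w\le0$ on all of $\{v=-2\}$. A product $\zeta(v)\phi$ with $\zeta\ge0$, $\zeta(\pm2)=0$, $\zeta\not\equiv0$ is never a subsolution. At any fixed $x>0$, to the right of a maximum of $\zeta$ the condition $E\le0$ reads $(-\zeta')'\le 2(-\zeta')|\partial_v\phi|/\phi$. Gronwall then forces $\zeta'\ge0$ there, contradicting $\zeta(2)=0$. So your construction is pushed into an order-one correction.

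The paper instead accepts positive values on $\{v=-2\}$ and pays for them with a single Harnack chain at the fixed velocity $v_0=-2$. There \eqref{v0neg.strmax} gives $h(x,-2)\ge c^{-1}h(1,-2)\gtrsim m$ for $x\in[0,1]$, with a universal constant. Then $\varphi=\phi\,\psi$ with $\psi(v)=(v-2)^2$ works with no correction at all. Indeed, $v\partial_x\varphi-\partial_{vv}\varphi=-2\psi'\partial_v\phi-\psi''\phi\le0$ on $\{v\le2\}$, because $\partial_v\phi\le0$ (read off from the explicit formula), $\psi'\le0$ and $\psi''\ge0$. Moreover, $\varphi$ vanishes on $\{v=2\}$ and on $\{x=0,\,v>0\}$. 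It is at most $16\phi\lesssim1$ on $([0,1]\times\{-2\})\cup(\{1\}\times[-2,0])$, where $h\gtrsim m$. Comparison on $(0,1)\times(-2,2)$ then yields $h\gtrsim m\varphi\ge m\phi$ in $H_1$, since $\psi\ge1$ on $[-1,1]$. The monotonicity $\partial_v\phi\le0$ is the structural input your proposal never uses.
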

\begin{proof}
Let us denote 
\begin{align*}
    m_0\coloneqq \inf_{H_2^+}\frac{h}{\phi}
\end{align*}
and assume $m_0>0$, otherwise the proof is done. Then we have $ h\geq m_0\phi$ in $H_2^+$, which implies 
\begin{align}\label{fir.lem.expansion.sta}
   h\geq \frac{m_0}{c_0}\quad\text{in }H_2^{+}
\end{align}
for some constant $c_0\geq1$, where we have also used that $\phi \ge c_0^{-1}$ in $H_2^+$ by \eqref{phi.asymptotic}.

We now divide the remaining proof into three steps.

\textbf{Step 1.} First, we are going to construct a suitable barrier. To do so, let us define a function ${\psi}$ by 
    \begin{align*}
        {\psi}(x,v)\coloneqq (v-2)^2,
    \end{align*}
    and observe that
    \begin{align}\label{propsi.expansion.sta}
        {\psi(v)}\in[1,16] \quad\text{on }[-2,1],\quad {\psi}'(v)\leq0\quad\text{on }(-\infty,2],\quad {\psi}''(v)\geq0,\quad\text{and}\quad {\psi}(2)=0.
    \end{align}
    We now want to prove that the function $\varphi\coloneqq \phi \psi$ is a subsolution to 
    \begin{align}
    \label{psisol.lem.expansion.sta}
        v\partial_x \varphi-\partial_{vv} \varphi=0\quad\text{in }(0,2^3]\times (-2,2).
    \end{align}
    To this end, we use the terms $J_1$ and $J_2$ given in \eqref{appen.defn.j1}, and \eqref{appen.defn.j2} with $\lambda_m=\frac16$, $b=\frac23$ and $a_m=\frac56$, along with \eqref{eq:multiple-U} and \eqref{explicit.U} to see that
    \begin{align*}
        \partial_v \phi(x,v)= \begin{cases}
            \frac16x^{\frac16} \left(-\frac{v^2}{3x}\right) U\left(-\frac16+1,\frac23+1,-\frac{v^3}{9x}\right)<0&\quad\text{if }v<0,\\
            \frac16x^{\frac16}e^{-\frac{v^3}{9x}}\left(-\frac{v^2}{3x}\right) \left[U\left(\frac56,\frac23,\frac{v^3}{9x}\right) + \frac56 U\left(\frac56+1,\frac23+1,\frac{v^3}{9x}\right)\right]<0&\quad\text{if }v\geq0.
        \end{cases}
    \end{align*}
    In light of the fact that $\partial_v \phi \leq0$ and $\phi\geq0$, which follows from \eqref{phi.asymptotic}, we derive
    \begin{align*}
        v\partial_x\varphi-\partial_{vv}\varphi=(v\partial_x\phi-\partial_{vv}\phi)\psi-2\partial_{v}\psi\partial_{v}\phi-\partial_{vv}\psi\phi= -2\partial_{v}\psi\partial_{v}\phi-\partial_{vv}\psi\phi\leq0,
    \end{align*}
    when $v\leq 2$. This implies \eqref{psisol.lem.expansion.sta}.

\textbf{Step 2.} We next prove that for any $x\in[0,1]$,  
\begin{align}\label{secgoal.lem.expansion.sta}
    {h(1,-2)}\leq ch(x,-2).
\end{align}
This follows from \eqref{v0non.strmax} in \autoref{lem.strmax} with $x_1=1$, $v_0=-2$, and $x_0=x\in[0,1]$. In addition, by following the proof of \eqref{v0non.strmax}, we deduce that the constant $c$ is universal.

\textbf{Step 3.} We now use the supersolution $\varphi$ given in \eqref{psisol.lem.expansion.sta} and the estimate \eqref{secgoal.lem.expansion.sta} to derive the desired result with the help of the maximum principle. By \eqref{fir.lem.expansion.sta} and \eqref{secgoal.lem.expansion.sta} we get 
\begin{align*}
    h\geq \frac{m_0}{c_1}\quad\text{on }\left([0,1]\times\{v=-2\}\right)\cup \left(\{x=1\}\cup[-2,0]\right)\eqqcolon \Gamma_1
\end{align*}
for some constant $c_1\geq1$. By \eqref{phi.asymptotic} there is a constant $c_2\geq1$ such that $\frac{\phi}{c_2}\leq \frac{1}{16c_1}$ on $\Gamma_1$. Therefore, we derive 
\begin{align*}
    \frac{m_0\varphi}{c_2}=\frac{m_0\phi\psi}{c_2}\leq \frac{16m_0\phi}{c_2}\leq \frac{m_0}{c_1}\leq h \quad\text{on }\Gamma_1,
\end{align*}where we have also used the first observation in \eqref{propsi.expansion.sta}. In addition, since $\psi(2)=0$ and $\phi = 0$ on $\{ x = 0 \} \times (0,2]$, we get
\begin{align*}
   \frac{m_0\varphi}{c_2}=\frac{m_0\phi\psi}{c_2}=0\leq h \quad\text{on } ([0,1]\times \{v=2\}) \cup (\{ x = 0 \} \times [0,2]) = \Gamma_2.
\end{align*} Using this and \eqref{psisol.lem.expansion.sta}, we have 
\begin{equation*}
\left\{
\begin{alignedat}{3}
(v\partial_x-\partial_{vv})(m_0\varphi/c_2-h)&\leq0&&\qquad \mbox{in  $(0,1)\times(-2,2)$}, \\
m_0\varphi/c_2&\leq h&&\qquad  \mbox{in $ \partial_{\mathrm{kin}}((0,1)\times(-2,2))=\Gamma_1\cup\Gamma_2$},
\end{alignedat} \right.
\end{equation*}
which implies $m_0\varphi/c_2\leq h$ in $H_1$ by the maximum principle (see \cite[Theorem 3]{AveHou26}). Using the fact that $\psi(v)\in [1,9]$ for $v \in [-1,1]$ by \eqref{propsi.expansion.sta},  the desired estimate follows. This completes the proof.
\end{proof}
We next provide a variant of the interior Harnack estimate.
\begin{lemma}\label{lemma:1D.harnack.barrier}
    Let $h\geq0$ be a weak solution to \eqref{eq.1d.expansion}. Then we have
    \begin{align}\label{goal.lem.varhar}
        \sup_{H_{2\theta}^{+}}\frac{h}{\phi}\leq c\inf_{H_{2\theta}^{+}}\frac{h}{\phi},
    \end{align}
    where the constant $\theta\in(0,1)$ is determined in \autoref{lem.har.sta}.
\end{lemma}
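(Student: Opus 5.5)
The plan is to reduce \eqref{goal.lem.varhar} to a Harnack inequality for $h$ itself on $H_{2\theta}^+$ and to prove that by a Harnack chain built from \autoref{lem.har.sta}.

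\textbf{Step 1: reduction to a Harnack inequality for $h$.} On $H^+_{2\theta}=(\theta^3,(2\theta)^3)\times(-2\theta,2\theta)$ we have $x\ge\theta^3$ and $|v|^3\le 8\theta^3$. Hence $v^3/(9x)$ ranges over a compact interval, and $x$ stays in a compact subset of $(0,\infty)$. By the explicit formula for $\phi$ and \eqref{phi.asymptotic}, $\phi$ is continuous and strictly positive on $\overline{H^+_{2\theta}}$. So $c^{-1}\le\phi\le c$ there, with $c$ depending only on $\theta$, which is universal. It therefore suffices to show
\begin{align*}
\sup_{H^+_{2\theta}}h\le c\inf_{H^+_{2\theta}}h .
\end{align*}
This is an interior statement, because $H^+_{2\theta}$ lies at distance comparable to $\theta^3$ from $\{x=0\}$ and well inside $(0,4^3)\times(-4,4)$. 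The boundary condition will not be used at all.

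\textbf{Step 2: elementary Harnack moves.} Fix a small radius $R=\kappa\theta$, with $\kappa$ universal and chosen so that the slanted cylinders $\{|x-(y+wt)|<R^3,\ t\in(-R^2,0]\}\times B_R(w)$ are contained in $(\theta^3/2,4^3)\times(-4,4)$ for every center $(y,w)$ in a slight enlargement of $\overline{H^+_{2\theta}}$. \autoref{lem.har.sta} with center $(y,w)$ then gives
\begin{align*}
\sup_{A_\gamma\times B_{\theta R}(w)}h\le c\inf_{A_0\times B_{\theta R}(w)}h .
\end{align*}
Read through stationarity, this yields three kinds of moves.
\begin{itemize}
\item[(a)] If $w=0$, then $A_\gamma=A_0=(y-(\theta R)^3,y+(\theta R)^3)$. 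This is a genuine two-sided local Harnack inequality on a box around $(y,0)$, so we can move freely in $x$ and in $v$ near $v=0$.
\item[(b)] If $w>0$, the box $A_\gamma$ lies to the left (smaller $x$) of $A_0$. Values at smaller $x$ are controlled by values at larger $x$.
\item[(c)] If $w<0$, the box $A_\gamma$ lies to the right. Values at larger $x$ are controlled by values at smaller $x$.
\end{itemize}
In all three moves the velocity ball $B_{\theta R}(w)$ is shared by the sup-box and the inf-box. So every move also permits a velocity change of size about $\theta R$, in either direction, within a common $x$-window.

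\textbf{Step 3: the chain.} Take two points $p,q\in\overline{H^+_{2\theta}}$; we need $h(p)\le c\,h(q)$. We go from $p$ to the line $\{v=0\}$, move along it, and then leave it towards $q$.
\begin{itemize}
\item Leaving $p=(x_1,v_1)$: we bound $h(p)$ by values at velocities closer to $0$, one step of size $\theta R$ at a time. Each step uses a center $w$ between $v_1$ and $0$. Each step also shifts $x$ by $O(|w|R^2)$, in the direction allowed by the sign of $w$ (moves (b) or (c)).
\item Along $\{v=0\}$: moves (a) carry us to the $x$-coordinate we need.
\item Reaching $q$: we run the same procedure in reverse, using the inf-side of the inequalities.
\end{itemize}
Since $|v|\le2\theta$ and each velocity step has size $\sim\kappa\theta^2$, the number of steps is bounded by a universal constant $N$. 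The total $x$-drift is $O(N\theta R^2)$. With $\kappa$ small, this drift keeps all centers inside the admissible region, which is essentially $x\in(\theta^3/2,2(2\theta)^3)$; if needed, it can be compensated by moves (a) along $v=0$. This gives $h(p)\le c^{N}h(q)$, hence the claim. Alternatively, when $v_1\ne0$ one could first use \autoref{lem.strmax} to shift $x$ at fixed velocity, and then apply moves (a) near $v=0$.

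\textbf{Main obstacle.} The delicate point is the bookkeeping in Step 3. Kinetic Harnack is one-directional in $x$, with the direction set by the sign of $v$, so the chain has to be arranged so that every inequality points the right way. At the same time, all slanted cylinders must stay inside the domain, away from $\{x=0\}$ and $\{x=4^3\}$, and the number of links must stay universal. I would handle this by first fixing $\kappa$ so that $R^3\ll\theta^3$ and the drift per step is $\ll\theta^3$. Then I would write the chain explicitly, in the style of the sequences $y_j$ in the proof of \autoref{lem.strmax}, treating $v_1\ge0$ and $v_1<0$ separately.
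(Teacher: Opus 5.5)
Your argument is correct. The reduction in Step~1 is the same as the paper's, which also just uses $c^{-1}\le\phi\le c$ on $H^+_{2\theta}$ from \eqref{phi.asymptotic}. For the Harnack inequality for $h$, however, you take a different route.

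The paper makes a single application of \autoref{lem.har.sta} with $v_0=0$, $R=2$, $x_0=9\theta^3$. When $v_0=0$ one has $A_\gamma=A_0=(\theta^3,17\theta^3)$, so one two-sided box $(\theta^3,17\theta^3)\times B_{2\theta}\supset H^+_{2\theta}$ is enough. None of your directional bookkeeping is needed there.

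What your route buys is that it actually respects the hypothesis of \autoref{lem.har.sta}. The paper only checks that the inner box lies in $H_4$. The lemma, however, requires the outer cylinder $\{|x-9\theta^3|<8\}\times B_2$ to lie in $(0,4^3)\times(-4,4)$, and this fails because $9\theta^3<8$.

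In fact no single application can work. Covering $(-2\theta,2\theta)$ by $B_{\theta R}(v_0)$ with $R\le 2$ forces $v_0=0$ and $R=2$. Containing $(\theta^3,8\theta^3)$ in $A_0$ then forces $x_0\le 9\theta^3<R^3$, so the outer cylinder leaves $\{x>0\}$.

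A chain of small cylinders such as yours is therefore genuinely needed. Your choice $R=\kappa\theta$ with $\kappa\ll\theta$ keeps the accumulated drift $O(N\theta R^2)=O(\kappa\theta^2)$ well below $\theta^3$ and makes it work with a universal number of links.

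One small inaccuracy: in a move with $w\neq 0$ the windows $A_\gamma$ and $A_0$ need not overlap, so there is no ``common $x$-window''. This is harmless, since a move only needs the sup-point in $A_\gamma\times B_{\theta R}(w)$ and the inf-point in $A_0\times B_{\theta R}(w)$.
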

\begin{proof}
    By \autoref{lem.har.sta} with $v_0=0$, $R=2$, and $x_0=9\theta^3$, we have 
    \begin{align*}
        \sup_{(\theta^3,8\theta^3)\times B_{2\theta}} h\leq \sup_{\{|x-9\theta^3|<(2\theta)^3\}\times B_{2\theta}} h\leq c\inf_{\{|x-9\theta^3|<(2\theta)^3\}\times B_{2\theta}}h \leq   c\inf_{(\theta^3,8\theta^3)\times B_{2\theta}} h,
    \end{align*}
    as $\{|x-9\theta^3|<(2\theta)^3\}\times B_{2\theta}\subset H_4$. Since $\theta$ is a universal constant, we get $\frac1c\leq \phi\leq c$ in $H_{2\theta}^+$ by \eqref{phi.asymptotic} for some constant $c\geq1$. Therefore, the desired estimate follows immediately.
\end{proof}

Our next goal is to prove a suitable upper bound for solutions $h$ to \eqref{eq.1d.expansion} in terms of $\phi$ (see \autoref{lemma:1D-supsolution}). As a first step, we show that $h$ vanishes to infinite order at $\gamma_-$. The proof follows the same arguments as the ones given in \cite[Section 3]{Zhu25}.
\begin{lemma}\label{lem.quasi}
    Let $\mathcal{R} > 0$ and $h$ be a weak solution to 
   \begin{equation}\label{eq.inf.van}
\left\{
\begin{alignedat}{3}
v \partial_xh-\partial_{vv}h&=0&&\qquad \mbox{in  $(0,(2\mathcal{R})^3)\times (-2\mathcal{R},2\mathcal{R})$}, \\
h&=0&&\qquad  \mbox{on $ \{x=0\}\times (0,(2\mathcal{R})^3)$}.
\end{alignedat} \right.
\end{equation}
Fix $v_0,R>0$ such that $v_0\geq R$. Then there is a constant $C_1\geq1$ such that for any $x\in[0,R^3/2^{10}]$,
\begin{align}\label{goal.quasi}
    |h(x,v_0)|\leq 2e^{-\frac{1}{C_1}\frac{v_0R^2}{x}}\|h\|_{L^\infty(Q)},
\end{align}
whenever $Q\coloneqq [0,x]\times [v_0-\sqrt{2}R/8,v_0+\sqrt{2}R/8]\subset (0,(2\mathcal{R})^3)\times (-2\mathcal{R},2\mathcal{R})$.

\end{lemma}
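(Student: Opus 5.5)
We argue by comparison with an explicit barrier on the strip $Q$, exploiting that the characteristics with $v \approx v_0 > 0$ enter from the incoming boundary $\{x=0\}$, where $h=0$. The exponential factor $e^{-v_0R^2/(C_1x)}$ is the probability that the velocity of a Brownian particle started near $v_0$ leaves the window $[v_0 - \sqrt2 R/8, v_0 + \sqrt2R/8]$ during the time $\approx x/v_0$ the particle needs to travel back to $\{x=0\}$. We therefore build a supersolution adapted to this heuristic, following \cite[Section 3]{Zhu25}.

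Set $M := \|h\|_{L^\infty(Q)}$, $\rho := \sqrt2 R/8$, and fix $x_* \in (0,R^3/2^{10}]$; we work in $Q_* := (0,x_*)\times(v_0-\rho,v_0+\rho)$. Note that $v \ge v_0-\rho \ge v_0/2$ in $Q_*$, since $v_0 \ge R$. The kinetic boundary of $Q_*$ consists of $\{x=0\}$, where $h=0$, together with the two lines $v = v_0\pm\rho$, where $|h|\le M$. The outflow side $x = x_*$ is not part of $\partial_{\mathrm{kin}}$ because $v>0$.

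We use the barrier
\begin{align*}
\Psi(x,v) = M\Big( e^{\lambda x}\,\frac{\cosh(\mu (v-v_0))}{\cosh(\mu\rho)} \Big), \qquad \text{with } \mu^2 \approx \lambda v_0 \text{ chosen suitably}.
\end{align*}
A cleaner alternative is $\Psi = M\exp\big(-\tfrac{c v_0 (\rho^2-(v-v_0)^2)}{x}\big)$ corrected so that it is $\ge M$ on $v=v_0\pm\rho$, vanishes at $x=0$, and satisfies
\[
v\partial_x\Psi - \partial_{vv}\Psi \ge 0 .
\]
We take the second form. Write $\Psi = M e^{-a(v)/x}$ with $a(v) = c\,v_0(\rho^2-(v-v_0)^2)\ge 0$, which gives $\Psi = M$ on $v = v_0\pm\rho$ and $\Psi\to 0$ as $x\to0$.

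The supersolution computation is
\[
v\partial_x\Psi - \partial_{vv}\Psi = \Psi\Big( \frac{v a}{x^2} + \frac{a''}{x} - \frac{a'^2}{x^2}\Big).
\]
Here $a'' = -2cv_0$, $a'^2 = 4c^2v_0^2(v-v_0)^2$, and $va \ge \tfrac{v_0}{2} a$. Near the edges $v-v_0 \approx \pm\rho$ the quantity $a$ degenerates while $a'^2$ does not, so the pure Gaussian profile fails there. We fix this by using $a(v)=c v_0(\rho^2 - (v-v_0)^2) + \text{(linear-in-}x\text{ correction)}$, or more simply by adding $M\,\ell(x,v)$ with $\ell$ a linear-in-$x$ supersolution that is $\ge M$ on the lateral sides. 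The cleanest fix is to replace the Gaussian by $\Psi = M\big(e^{-a/x} + \tfrac{\kappa x}{\rho^3}\big)$-type corrections and use the smallness $x \le R^3/2^{10} \ll \rho^2 v_0$. With $c$ small, the term $\frac{v_0 a}{2x^2}$ dominates both $\frac{2cv_0}{x}$ and $\frac{a'^2}{x^2}$ on the inner region $|v-v_0|\le \rho/\sqrt2$, because $x \lesssim \rho^2 v_0$.

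To handle the edges, we apply the comparison only on the inner strip $|v-v_0| < \rho/\sqrt2$ (i.e. $R/8$) with lateral data bounded by $M$, and there use the Gaussian $e^{-a/x}$ with $a = c v_0(\rho^2 - (v-v_0)^2)$. On the inner strip $a \ge c v_0\rho^2/2$, and we add the constant $M e^{-cv_0\rho^2/(2x_*)}$ to cover the lateral boundary; this only worsens the bound by a factor $2$, which accounts for the $2$ in \eqref{goal.quasi}.

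The main obstacle is making this barrier a genuine supersolution uniformly for all $x\le x_*$ while keeping it $\ge M$ on the lateral boundary, i.e. balancing $a'^2$ against $va$. The standard remedy is to let the exponent be of the form $\frac{c v_0 R^2}{x_*}\,\eta(v)$ plus a transport term $\lambda (x_*-x)$, i.e. a travelling-wave barrier $M\exp\big(\lambda(x-x_*) \big)\,\eta$-type function with $\lambda \sim v_0R^2/x_*^{\,}$ scaled. We would try the separable form
\[
\Psi = M\,e^{\lambda (x - x_*)}\cosh(\mu(v-v_0))/\cosh(\mu\rho), \qquad \text{with } \lambda v_0/2 \ge \mu^2 .
\]
Its supersolution inequality reduces to $v\lambda \ge \mu^2$, which is easy to check. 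On $x=0$ it is $\ge 0 = h$, and on the lateral sides it equals $Me^{\lambda(x-x_*)}$. Since that is $\le M$, we compare instead on $(0,x_*)$ with lateral data allowed to be $M$ and argue iteratively (a Phragmén–Lindelöf / iteration in $x$ over $\sim v_0R^2/x$ steps of length $x/k$), each step gaining a fixed factor $<1$ at the centre. Iterating $k\approx v_0R^2/(C_1x)$ times, we obtain $|h(x,v_0)|\le 2e^{-k}M$, which gives \eqref{goal.quasi}; the comparison in each step uses the maximum principle \cite[Theorem 3]{AveHou26}.
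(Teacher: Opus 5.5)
As written, your proposal never exhibits a function that is simultaneously a supersolution, at least $M$ on the lateral sides $v=v_0\pm\rho$, and exponentially small at $(x_*,v_0)$. So the decisive comparison step is missing.

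The inner-strip repair of the Gaussian fails. On $|v-v_0|=\rho/\sqrt2$ the function $Me^{-a/x}$ equals $Me^{-cv_0\rho^2/(2x)}$, which tends to $0$ as $x\to0$. To dominate lateral data of size $M$ you would have to add a constant of order $M$, and that destroys the exponential bound at $(x_*,v_0)$. The constant $Me^{-cv_0\rho^2/(2x_*)}<M$ you add does not dominate, and the factor $2$ is not obtained this way.

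Your final barrier $Me^{\lambda(x-x_*)}\cosh(\mu(v-v_0))/\cosh(\mu\rho)$ is strictly below $M$ on $v=v_0\pm\rho$ for $x<x_*$, so the maximum principle cannot be applied either. The proposed iteration in $x$ does not rescue this. A bound at the single point $v=v_0$ provides no data on the incoming face of a further slab. Moreover, the lateral data stays of size $M$ at every step, so comparison gives no fixed multiplicative gain per step. The estimate $|h(x,v_0)|\le 2e^{-k}M$ is therefore unsupported.

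What is missing is only the observation that your separable barrier works in one stroke once the shift $-x_*$ is dropped and $\lambda$ is optimized. Take $\rho=\sqrt2R/8$, $M=\|h\|_{L^\infty(Q)}$, $\mu^2=\lambda v_0/2$ and $\Psi=Me^{\lambda x}\cosh(\mu(v-v_0))/\cosh(\mu\rho)$. Then:
\begin{itemize}
\item $v\partial_x\Psi-\partial_{vv}\Psi=(v\lambda-\mu^2)\Psi\ge0$, since $v\ge v_0-\rho>v_0/2$;
\item $\Psi=Me^{\lambda x}\ge M$ on the lateral sides;
\item $\Psi>0=h$ on $\{x=0\}$.
\end{itemize}
Comparison with $\pm h$ on $(0,x_*)\times(v_0-\rho,v_0+\rho)$ gives $|h(x_*,v_0)|\le 2M\exp\bigl(\lambda x_*-\rho\sqrt{\lambda v_0/2}\bigr)$. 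Choosing $\lambda=v_0\rho^2/(8x_*^2)$ yields $|h(x_*,v_0)|\le 2Me^{-v_0\rho^2/(8x_*)}=2Me^{-v_0R^2/(256x_*)}$. This is the lemma with $C_1=256$, and it does not even use $x_*\le R^3/2^{10}$.

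The paper instead uses the barrier $\Phi(\varrho^2)/\Phi(2R^2r)$. Here $\varrho^2$ is a quadratic quasi-distance in $(x+r,v-v_0)$ with $\varrho^2\ge 2R^2r$ on the lateral sides and $\varrho^2(r,v_0)=R^2r$, and $\Phi(\xi)=\int_0^\xi e^{-v_0R^4/(c_2\tau)}\,d\tau$. The exponential factor there comes from the ratio $\Phi(R^2r)/\Phi(2R^2r)$, and the smallness $r\le R^3/2^{10}$ is needed to verify the supersolution inequality. The corrected separable barrier is simpler and gives explicit constants.
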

\begin{proof}
Let us fix $r\leq R^3/C_0$, where the constant $C_0$ will be determined later (see \eqref{c_0.cond.barr}). Define a quasi-distance function by
    \begin{align*}
        \rho(x,v)\coloneqq \left(\frac{R^2}{4r}(x+r)^2-R(x+r)(v-v_0)+64r(v-v_0)^2\right)^{\frac12}.
    \end{align*}
    Then we observe
    \begin{align*}
        \rho(0,v_0)=\frac{Rr^{\frac12}}2\quad\text{and}\quad \rho(r,v_0)=Rr^{\frac12},
    \end{align*}
    and
    \begin{align}\label{bdry.quasi}
         [0,r]\times \left\{v=v_0\pm \frac{\sqrt{2}}{8}R\right\}\subset \left\{(x,v)\,:\,2R^2r\leq\rho^2(x,v)\leq 4R^2r\quad\text{and}\quad x\geq0\right\}.
    \end{align}
    We are now going to prove that
    \begin{align}\label{goal.quasi.exponential}
    h(x,v)\leq \left(\frac{v_0}{4c_2r^2}+\frac1{R^2r}\right)\rho^2(x,v)e^{-\frac{v_0R^4}{c_2\rho^2}+\frac{v_0R^2}{2c_2r}}\|h\|_{L^\infty(Q_r)}\quad\text{in }Q_r
\end{align}
for some constant $c_2\geq 1$, where
\begin{equation}\label{qr.quasi}
    Q_r\coloneqq [0,r]\times \left[v_0-\frac{\sqrt{2}}{8}R,v_0+\frac{\sqrt{2}}{8}R\right].
\end{equation}
To do this, we divide the proof into three steps.

    \textbf{Step 1. }Construct a suitable increasing convex function.
    To do this, we fix a constant $c_2\geq1$ which will be determined later (see at the end of \textbf{Step 2}). Let us define for $\xi \ge 0$,
    \begin{align}\label{defn.Phi.quasi}
        \Phi(\xi)\coloneqq \int_{0}^{\xi}e^{-\frac{v_0R^4}{c_2\tau}}\,d\tau
    \end{align}
to see that 
\begin{align}\label{eq.quasi}
    \tau^2\Phi''(\tau)-\frac{v_0R^4}{c_2}\Phi'(\tau)=0.
\end{align}
Since 
\begin{align*}
    (\tau^2\Phi'(\tau))'=\tau^2\Phi''(\tau)+2\tau\Phi'(\tau)=(c_2^{-1}v_0R^4+2\tau)\Phi'(\tau),
\end{align*}
we get
\begin{align}\label{ineq211.quasi}
    \tau^2\Phi'(\tau)=\int_{0}^{\tau}(c_2^{-1}v_0R^4+2s)\Phi'(s)\,ds\leq (c_2^{-1}v_0R^4+2\tau)\Phi(\tau),
\end{align}
where we have used the fundamental theorem of calculus together with the fact that $\Phi'(s)\geq0$ and $\Phi(0)=0$. Thus, by setting 
\begin{align}\label{defn.psi.quasi}
    \Psi(\xi)\coloneqq\frac{\Phi(\xi)}{\Phi(2R^2r)},
\end{align} we derive 
\begin{equation}\label{ineq21.quasi}
\begin{aligned}
    \Psi(\xi)=\frac{\Phi(\xi)-\Phi(0)}{\Phi(2R^2r)}\leq \frac{c_2^{-1}v_0R^4+4R^2r}{(2R^2r)^2}\frac{\xi\Phi'(\xi)}{\Phi'(2R^2r)}&=\frac{c_2^{-1}v_0R^4+4R^2r}{(2R^2r)^2}\frac{\xi e^{-\frac{v_0R^4}{c_2\xi}}}{\Phi'(2R^2r)}\\
    &\leq \left(\frac{v_0}{4c_2r^2}+\frac{1}{R^2r}\right)\xi e^{-\frac{v_0R^4}{c_2\xi}+\frac{v_0R^2}{2c_2r}},
\end{aligned}
\end{equation}
where we have also used the mean value theorem, \eqref{ineq211.quasi} with $\tau=2R^2r$ and the fact that $\Phi'\geq0$ is non-decreasing.

\textbf{Step 2.} Construct a suitable barrier function. More precisely, we want to prove that if $C_0=2^{10}$ and $c_2$ is sufficiently large, then
\begin{align}\label{eq2.quasi}
    v\partial_x (\Phi(\rho^2))-\partial_{vv}(\Phi(\rho^2))\geq 0\quad\text{in }Q_r,
\end{align}
where the domain $Q_r$ is given in \eqref{qr.quasi}.

To do so, we note
\begin{align*}
    \partial_x (\Phi(\rho^2))=\left(\frac{R^2}{2r}(x+r)-R(v-v_0)\right)\Phi'(\rho^2)
\end{align*}
and
\begin{align*}
    \partial_{vv}(\Phi(\rho^2))=128r \Phi'(\rho^2)+\Phi''(\rho^2)\left(-R(x+r)+128r(v-v_0)\right)^2.
\end{align*}
Thus, we get for any $(x,v)\in Q_r$,
\begin{align*}
    v\partial_x (\Phi(\rho^2))-\partial_{vv}(\Phi(\rho^2))\geq \left[vR^2\left(\frac12-\frac{\sqrt{2}}8\right)-128r\right]\Phi'(\rho^2)-c(Rr)^2\Phi''(\rho^2)
\end{align*}
for some constant $c\geq1$. We now choose $C_0=2^{10}$ so that 
\begin{equation}\label{c_0.cond.barr}
    \left[vR^2\left(\frac12-\frac{\sqrt{2}}8\right)-128r\right]\geq \left[v_0R^2\left(1-\frac{\sqrt{2}}8\right)\left(\frac{1}{2}-\frac{\sqrt{2}}{8}\right)-128r\right]\geq   \frac{v_0R^2}{4}-128r\geq \frac{v_0R^2}{8},
\end{equation}
which implies, 
\begin{align*}
    v\partial_x (\Phi(\rho^2))-\partial_{vv}(\Phi(\rho^2))\geq \frac{v_0R^2}{8}\Phi'(\rho^2)-c(Rr)^2\Phi''(\rho^2),
\end{align*}
in $Q_r$ for some constant $c$. Indeed, we have used the fact that $v_0\geq R$.
Moreover, we get
\begin{equation}\label{ineq.quasi}
\begin{aligned}
    \frac{R^2r}{16}\leq R^2(x+r)\left(\frac14-\frac{\sqrt{2}}{8}\right)&\leq R(x+r)\left(\frac{R(x+r)}{4r}-(v-v_0)\right)\\
    &\leq\rho^2(x,v) \leq \rho^2\left(r,v_0-\sqrt{2}R/8\right)\leq 4rR^2,
\end{aligned}
\end{equation}
i.e. it holds $r\eqsim R^{-2}\rho^2$ in $Q_r$. Thus, we obtain
\begin{align*}
    v\partial_x (\Phi(\rho^2))-\partial_{vv}(\Phi(\rho^2))\geq \frac{v_0R^2}{8}\Phi'(\rho^2)-c\rho^2r\Phi''(\rho^2)\geq cR^{-2}\left(\frac{v_0R^4}{8c}\Phi'(\rho^2)-\rho^4\Phi''(\rho^2)\right).
\end{align*}
We now take $c_2=8c\geq1$ and use \eqref{eq.quasi} to deduce \eqref{eq2.quasi}.

\textbf{Step 3. } We now want to prove for any $r\leq R^3/2^{10}$, we have 
\begin{align}\label{ste4.goal.quasi}
    h(x,v)\leq\|h\|_{L^\infty(Q_r)} \Psi(\rho^2)\quad\text{in }Q_r.
\end{align}
To this end, we note from \eqref{bdry.quasi} and \eqref{defn.psi.quasi} that
\begin{align*}
    \Psi(\rho^2)\geq 1\quad\text{on }[0,r]\times \left\{v=v_0\pm \frac{\sqrt{2}}{8}R\right\},
\end{align*}
and
\begin{align*}
    \Psi(\rho^2)\geq 0\quad\text{on }\{x=0\}\times \left[v_0-\frac{\sqrt{2}}{8}R,v_0+\frac{\sqrt{2}}{8}R\right],
\end{align*}
as the function $\Phi\geq0$ given in \eqref{defn.Phi.quasi} is non-decreasing.
Using these observations and \eqref{eq2.quasi}, we obtain
\begin{equation*}
\left\{
\begin{alignedat}{3}
v \partial_x(\Psi(\rho^2))-\partial_{vv}(\Psi(\rho^2))&\geq0&&\qquad \mbox{in  $Q_r$}, \\
\Psi(\rho^2)&\geq \frac{h}{\|h\|_{L^\infty(Q_r)}}&&\qquad  \mbox{in $ \partial_{\mathrm{kin}}Q_r$}.
\end{alignedat} \right.
\end{equation*}
By the maximum principle, we derive \eqref{ste4.goal.quasi} for any $r\leq R^3/2^{10}$. Due to \eqref{ste4.goal.quasi}, \eqref{ineq21.quasi}, we get \eqref{goal.quasi.exponential}.

Finally, for any $r\leq R^3/2^{10}$ and $C_1= 4c_2$ together with the fact that $\rho^2(r,v_0)=R^2r$, where the constant $c_2\geq1$ is determined in \textbf{Step 2}, we use \eqref{goal.quasi.exponential} to get 
\begin{align*}
    h(r,v_0)\leq \left(\frac{v_0}{4c_2r^2}+\frac{1}{R^2r}\right)R^2re^{-\frac{v_0R^2}{2c_2 r}}\|h\|_{L^\infty(Q_r)}\leq \left(\frac{v_0R^2}{4c_2r}+1\right)e^{-\frac{v_0R^2}{2c_2 r}}\|h\|_{L^\infty(Q)}\leq 2e^{-\frac{v_0R^2}{C_1r}}\|h\|_{L^\infty(Q)},
\end{align*}
where we have used the fact that $\frac{\xi}{2}e^{-\xi}\leq e^{-\frac{\xi}{2}}$ for any $\xi\in\bbR$. Note that we can establish the same estimate for $(-h)$. Altogether, by choosing $x=r$, we derive \eqref{goal.quasi} whenever $x\leq R^3/2^{10}$.
\end{proof}

Now, we are in a position to prove boundedness of the function $h/\phi$ at $(0,0)$, where $h$ is a solution to \eqref{eq.1d.expansion}. Here, the key difficulty is that $\phi$ vanishes exponentially near $\gamma_-$, whereas \autoref{lem.quasi} yields exponential decay at a rate that is suboptimal for large $v > 0$. We circumvent this issue by application of \autoref{lem.quasi} on a suitably small scale and introducing another barrier function.

\begin{lemma}\label{lemma:1D-supsolution}
    Let $h$ be a weak solution to \eqref{eq.1d.expansion}. Then there are constants $R_0 , c \geq1$ such that
    \begin{align*}
        |h/\phi|\leq c\|h\|_{L^\infty(H_{{1}})}\quad\text{in }H_{1/R_0}.
    \end{align*}
\end{lemma}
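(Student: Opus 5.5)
The plan is to bound $|h|\le c\|h\|_{L^\infty(H_1)}\phi$ near $(0,0)$ by a comparison argument: build a supersolution that behaves like $\phi$ near $\gamma_0$ and dominates $|h|$ on the kinetic boundary of a small box $(0,r_0^3)\times(-r_0,r_0)$. Normalize $\|h\|_{L^\infty(H_1)}=1$; the same argument applied to $-h$ gives the two-sided bound. The natural candidate is $\Theta:=\phi\,\psi$ with $\psi$ a function of $v$ alone. By the computation in Step 1 of \autoref{lemma:1D-expansion:stationary}, this gives
\[
v\partial_x\Theta-\partial_{vv}\Theta=-2\psi'\partial_v\phi-\psi''\phi .
\]
Since $\partial_v\phi\le0$ and $\phi\ge0$, we need $\psi'\ge0$ and $\psi''\le0$ on the relevant range, together with $\psi\eqsim1$. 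We take $\psi(v)=1-(v-2)^2/16$ on $[-1,1]$, or simply $\psi(v)=2+v$. Then $\Theta\eqsim\phi$ in the box, and the maximum principle \cite[Theorem 3]{AveHou26} will yield $h\le C\Theta\le C'\phi$ once boundary domination holds.

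The boundary pieces of the box are handled as follows.
\begin{itemize}
\item On $\{x=0\}\times(0,r_0)$, both $h$ and $\Theta$ vanish.
\item On $\{x=r_0^3\}\times[-r_0,r_0]$, the asymptotics \eqref{phi.asymptotic} give $\phi\gtrsim r_0^{1/2}$, so $C\Theta\ge 1\ge h$ for $C\gtrsim r_0^{-1/2}$.
\item On the side $v=-r_0$, $x\in[0,r_0^3]$, \eqref{phi.asymptotic} in the region $x<-v^3$ gives $\phi\eqsim |v|^{1/2}=r_0^{1/2}$, so this side is also fine.
\item On the side $v=+r_0$, $x\in(0,r_0^3]$, lies the real difficulty. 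Here $\phi(x,r_0)\eqsim x^{1/6}(r_0^3/x)^{-5/6}e^{-r_0^3/(9x)}$, which is exponentially small as $x\to0$. We therefore need $|h(x,r_0)|$ to decay at the same exponential rate $e^{-r_0^3/(9x)}$, or faster.
\end{itemize}

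\autoref{lem.quasi} with $v_0=r_0$, $R=r_0$ gives only $|h(x,r_0)|\le 2e^{-r_0^3/(C_1x)}$, with an unknown, presumably large, $C_1$. This is the main obstacle, as the paper warns. My remedy is to apply \autoref{lem.quasi} not at $v_0=r_0$ but at a much larger velocity, on a smaller scale. The constraint $v_0\ge R$ leaves freedom: for $v_0=V$ with $V\gg r_0$ and $R$ chosen so that $VR^2$ is large compared with $r_0^3$, we get $|h(x,V)|\le 2e^{-VR^2/(C_1x)}$ for $x\le R^3/2^{10}$. Choosing the box's upper velocity side at $v=V$ fixed (say $V=1/2$) and the box width $r_0^3=R^3/2^{10}$ small, i.e. $R_0$ large, we need the decay to beat $\phi(x,V)\approx e^{-V^3/(9x)}$. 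This requires $VR^2/C_1\ge V^3/9$, which fails for small $R$.

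So instead I will use a second barrier in the intermediate strip, as the paper hints. The idea is to take a Gaussian-type exponential supersolution that is exact for the transport part, such as $x^{a}e^{-v^3/(9x)}$-type functions, or directly a multiple of $\phi$ itself. Concretely:
\begin{itemize}
\item On $\{x=\delta\}$ for fixed small $\delta$, compare $h$ with $M\phi$ using the interior bound $h\lesssim1\lesssim\phi/\inf\phi$ there.
\item On $\{v=V\}$ for $x\le\delta$, use \autoref{lem.quasi} at a velocity where its decay rate beats $\phi$'s. Exploit that $\phi$'s rate at velocity $V$ is $V^3/(9x)$, while the lemma with $R=V$ gives $V^3/(C_1x)$. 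To close the gap, apply the maximum principle on the region $(0,\delta)\times(v_1,V)$ with $v_1$ small, comparing $h$ with $M\phi+\eta$, where $\eta$ is an auxiliary supersolution of the form $\Phi(\rho^2)$ from the proof of \autoref{lem.quasi}, which is negligible near $v=v_1$ relative to $\phi(\cdot,v_1)$.
\end{itemize}
Since $\phi(x,v_1)\approx e^{-v_1^3/(9x)}$ decays much more slowly than $e^{-V^3/(C_1x)}$ when $v_1^3\ll V^3/C_1$, on the line $v=v_1$ the small exponential error from \autoref{lem.quasi} is absorbed into $\phi$. Thus picking $v_1^3= 9V^3/(2C_1)$ and $R_0$ with $1/R_0\le v_1$ gives $|h|\le C\phi$ on $v=v_1$ for $x$ small. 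We then run the first comparison in the box $(0,r_0^3)\times(-r_0,r_0)$ with $r_0=v_1$ to conclude.
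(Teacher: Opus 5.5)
Your comparison framework is sound, and you correctly isolate the only real difficulty: on $\{v>0\}$ near $x=0$ one has $\phi\approx e^{-v^3/(9x)}$, while \autoref{lem.quasi} only gives the weaker rate $e^{-v_0R^2/(C_1x)}$. (The factor $\psi$ is superfluous for an upper bound, since $\phi$ itself is an exact solution.) However, the step meant to overcome this difficulty does not work, so the key estimate ``$|h|\le C\phi$ on $v=v_1$'' is asserted rather than proved. There are three problems.

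\textbf{Circularity.} The kinetic boundary of $(0,\delta)\times(v_1,V)$ contains the lower side $(0,\delta)\times\{v_1\}$. Comparing $h$ with $M\phi+\eta$ there therefore already requires $h\le M\phi+\eta$ on $v=v_1$. Since you want $\eta$ negligible at $v_1$, this is exactly the bound you are trying to establish.

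\textbf{The barrier $\Phi(\rho^2)$.} It is only shown to be a supersolution in the thin box $Q_r$, where $|v-v_0|\le\sqrt2R/8$ and $x\le r\le R^3/2^{10}$. Outside it, the transport term degenerates as $v\downarrow0$ while $\partial_{vv}\Phi(\rho^2)>0$. Moreover, $\rho^2$ contains $-R(x+r)(v-v_0)$, so $\Phi(\rho^2)$ \emph{increases} as $v$ decreases below $v_0$. It is thus neither a supersolution on your strip nor small at $v=v_1$.

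\textbf{No transport of the decay.} The decay $e^{-V^3/(C_1x)}$ is available only on the line $v=V$, and nothing in your argument carries it down to $v=v_1$. In fact a bound $|h(x,v_1)|\lesssim e^{-V^3/(C_1x)}$ is false in general: for $h=\phi$ and your choice $v_1^3=9V^3/(2C_1)$, \eqref{phi.asymptotic} gives $\phi(x,v_1)\gg e^{-V^3/(C_1x)}$ as $x\to0$.

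The missing ingredient is a supersolution defined on a box that also contains negative velocities, so that its lower side is controlled by $\phi\gtrsim1$. It must carry the decay rate from the line $v=V$ down to all smaller velocities without loss. The paper takes $V=R=\tfrac12$ in \autoref{lem.quasi} and $\eta(x,v)=2ve^{-1/(8C_1x)}$, up to a harmless constant factor.
\begin{itemize}
\item Since $\partial_{vv}\eta=0$, one has $v\partial_x\eta-\partial_{vv}\eta=\frac{v^2}{4C_1x^2}e^{-1/(8C_1x)}\ge0$ for \emph{all} $v$.
\item $\eta(x,\tfrac12)$ dominates $h(x,\tfrac12)$ for $x\le2^{-13}$, $\eta(0,\cdot)=0$, and $|\eta|\le1$.
\item Hence $h\le c\phi+\eta$ on $\partial_{\mathrm{kin}}H_{1/2}$ for $c$ large, since $\phi$ is bounded below on the remaining pieces. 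The maximum principle gives the same inequality in $H_{1/2}$.
\end{itemize}
Your rate comparison then closes the argument. In $H_{1/R_0}\cap\mathcal{R}^-$ one has $v^3\le R_0^{-3}$, so $|\eta|\le e^{-1/(8C_1x)}\lesssim\phi$ as soon as $9R_0^3>8C_1$. In $\mathcal{R}^0\cup\mathcal{R}^+$ this is immediate from \eqref{phi.asymptotic}. With this single barrier, your second-stage box and the function $\psi$ are no longer needed, and applying the argument to $-h$ gives the two-sided bound.
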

\begin{proof}
First, fix a constant $c\geq1$, which will be determined later (see \eqref{ineq3.1d-supsol}) and define 
    \begin{align}\label{defn.psi.1d-supsol}
        \psi(x,v)\coloneqq c\phi(x,v)+2ve^{-\frac1{C_1}\frac{1}{8x}},
    \end{align}
    where the constant $C_1$ is given in \autoref{lem.quasi}. Then we observe 
    \begin{align}\label{ineq1.1d-supsol}
        v\partial_x\psi-\partial_{vv}\psi\geq v^2\frac1{C_1}\frac{1}{8x^2}e^{-\frac1{C_1}\frac{1}{8x}}\geq0,
    \end{align}
    and 
    \begin{align}\label{ineq2.1d-supsol}
        \psi(x,v)\geq e^{-\frac1{C_1}\frac{1}{8x}} \quad\text{on }[0,2^{-13}]\times\{v=1/2\}\quad\text{and}\quad \psi(0,v)=0\quad\text{for }v\in(0,1/2].
    \end{align}
    Moreover, we get by taking $c\geq1$ sufficiently large,
    \begin{equation}\label{ineq3.1d-supsol}
    \begin{aligned}
    \psi(x,v)\geq  1\quad\text{on }([2^{-13} , 1/8]\times \{v=1/2\})\cup(\{x=1/8\}\times [-1/2,0])\cup([0,1/8]\times \{v=-1/2\}),
    \end{aligned}
    \end{equation}
    where we have used \eqref{phi.asymptotic}. 
    Now note from \autoref{lem.quasi} with $\mathcal{R}=1$, $v_0=1/2$, and $R=v_0$ that
\begin{align}\label{ineq.1d-supsol}
    h(x,1/2)\leq e^{-\frac1{C_1}\frac{1}{8x}}\|h\|_{L^\infty(H_{1})}\quad\text{for any }x\in[0,2^{-13}].
\end{align}
Thus by \eqref{ineq.1d-supsol}, \eqref{ineq2.1d-supsol}, and \eqref{ineq3.1d-supsol}, we have 
    \begin{align}\label{ineq5.1d-supsol}
        \psi\geq \frac{h}{\|h\|_{L^\infty(H_{1})}}\quad\text{on }\partial_{\mathrm{kin}}H_{1/2}.
    \end{align}
Thus, by applying maximum principle to $\psi$ and $h$, which is justified by \eqref{ineq1.1d-supsol} and \eqref{ineq5.1d-supsol}, we deduce 
    \begin{align}\label{ineq.lem.supsol}
        h\leq \|h\|_{L^\infty(H_{1})}\psi\quad\text{in }H_{1/2}.
    \end{align}
We are now going to prove that when $R_0\geq2$ is sufficiently large, then 
\begin{align}
\label{eq:phi-est-at-zero}
    e^{-\frac{1}{C_1}\frac1{8x}}\leq c\phi(x,v) \quad \text{in } H_{1/R_0}.
\end{align}
To this end, note that there is a sufficiently large $R_0\geq1$ such that 
\begin{equation*}
\begin{aligned}
     e^{-\frac1{C_1}\frac{1}{8x}}\leq xR_0^{\frac52}e^{-\frac1{9R_0^3x}}\leq xv^{-\frac52}e^{-\frac{v^3}{9x}}\quad\text{for any }(x,v)\in H_{1/R_0}\cap \mathcal{R}^-,\\
     e^{-\frac1{C_1}\frac{1}{8x}}\leq x^{\frac16}\quad\text{for any } (x,v)\in H_{1/R_0}\cap \mathcal{R}^0,\\
    e^{-\frac1{C_1}\frac{1}{8x}}\leq |v|^{\frac12}\quad\text{for any }(x,v)\in H_{1/R_0}\cap \mathcal{R}^+.
\end{aligned}
\end{equation*}
Hence, we deduce \eqref{eq:phi-est-at-zero} from \eqref{phi.asymptotic}. By \eqref{eq:phi-est-at-zero} along with \eqref{defn.psi.1d-supsol}, we further estimate \eqref{ineq.lem.supsol} by 
\begin{align}\label{ineq.lem.supsol-2}
        h\leq c\|h\|_{L^\infty(H_{1})}\phi\quad\text{in }H_{1/R_0}
    \end{align}
for some constant $c\geq1$. Similarly, we prove
    \begin{align}\label{ineq7.1d-supsol}
        (-h)\leq c\|h\|_{L^\infty(H_{1})}\phi\quad\text{in }H_{1/R_0}
    \end{align}
    by considering the function $(-h)$ instead of $h$. Since $\phi$ is nonnegative, a combination of \eqref{ineq.lem.supsol-2} and \eqref{ineq7.1d-supsol} yields the desired estimate. 
\end{proof}

We are now ready to prove \autoref{lemma:1D.expansion.gamma0}.

\begin{proof}[Proof of \autoref{lemma:1D.expansion.gamma0}]
First, note that it suffices to prove the result on scale one, namely that any weak solution $h$ to \eqref{eq.1d.expansion} satisfies
    \begin{align}
    \label{eq:1D.expansion.gamma0-scale1}
        \sup_{H_{r}}\frac{h}{\phi}-\inf_{H_{r}}\frac{h}{\phi}\leq cr^\alpha\|h\|_{L^\infty(H_1)}
    \end{align}
for any $r\leq 1/R_0$, where $c, R_0 \geq 1$ are determined by \autoref{lemma:1D-supsolution}.
Indeed, by taking $h_R(x,v)\coloneqq h(R^3x,Rv)$ and $\phi_R(x,v)\coloneqq \phi(R^3x,Rv)/R^{\frac12}$ to ensure that $\phi_R(x,v)=\phi(x,v)$, we have \eqref{goal.lem.expansion.sta} and \eqref{goal.lem.varhar} with $h$ and $\phi$ replaced by $h_R$ and $\phi_R$, respectively. By scaling back, we get the desired estimates Moreover, \eqref{bdd.h/phi} follows by rescaling \autoref{lemma:1D-supsolution}.

We prove the estimate \eqref{eq:1D.expansion.gamma0-scale1} on scale one by an induction argument based on \autoref{lemma:1D-supsolution}, \autoref{lemma:1D-expansion:stationary}, and \autoref{lemma:1D.harnack.barrier}. More precisely, we find constants $M_i$, $m_i$, $K\geq1$, and $\alpha\in(0,1)$ such that 
\begin{align*}
    m_i\leq \frac{h}{\phi}\leq M_i\quad\text{in }H_{\mathcal{R}_i},
\end{align*}
and
\begin{align}\label{ineq.lem.expansion.1dgamma0}
    M_i-m_i={K\|h\|_{L^\infty(H_1)}}{\mathcal{R}_i^{\alpha}},
\end{align}
where $\mathcal{R}_i\coloneqq (1/R_0)^{i+1}$. Moreover, we may assume $R_0\geq \frac{8}{\theta}$, where the constant $\theta\in(0,1/2)$ is given in \autoref{lemma:1D.harnack.barrier}.
By \autoref{lemma:1D-supsolution}, we have $|h/\phi|\leq c\|h\|_{L^\infty(H_1)}$ in $H_{\mathcal{R}_0}$. Thus, we take
\begin{align}\label{ineq1.lem.expansion.1dgamma0}
    K := 2c, \quad\text{and}\quad m_0\coloneqq \inf_{H_{\mathcal{R}_0}}\frac{h}{\phi},\quad\text{and}\quad M_0\coloneqq \sup_{H_{\mathcal{R}_0}}\frac{h}{\phi}
\end{align}
to see that \eqref{ineq.lem.expansion.1dgamma0} and \eqref{ineq1.lem.expansion.1dgamma0} hold for $i=0$. We now assume that \eqref{ineq.lem.expansion.1dgamma0} and \eqref{ineq1.lem.expansion.1dgamma0} hold for $i=0,1,\ldots , j$ for some integer $j\geq0$. Let us consider a function $h_j\coloneqq h-m_j\phi\geq0$ on $H_{\mathcal{R}_j}$ to see that 
\begin{equation*}
\left\{
\begin{alignedat}{3}
v \partial_xh_j-\partial_{vv}h_j&=0&&\qquad \mbox{in  $(0,\mathcal{R}_j^{3})\times (-\mathcal{R}_{j},\mathcal{R}_j)$}, \\
h_j &=0&&\qquad  \mbox{in $ \{x=0\}\times (0,\mathcal{R}_j)$}.
\end{alignedat} \right.
\end{equation*}
By using a rescaled version of \autoref{lemma:1D.harnack.barrier} and \autoref{lemma:1D-expansion:stationary}, we derive
\begin{align}\label{ineq2.lem.expansion.1dgamma0}
    \sup_{H_{\mathcal{R}_j^+\theta/4}} \left[ h/\phi - m_j \right] = \sup_{H_{\mathcal{R}_j^+\theta/4}}h_j/\phi\leq c\inf_{H_{\mathcal{R}_j^+\theta/4}}h_j/\phi\leq C \inf_{H_{\mathcal{R}_j\theta/8}}h_j/\phi = C\inf_{H_{\mathcal{R}_j\theta/8}} [h/\phi - m_j]
\end{align}
for some constants $c,C\geq1$. Similarly, by following the same lines as in the proof of \eqref{ineq2.lem.expansion.1dgamma0} with $h_j$ replaced by $H_j\coloneqq M_j\phi-h\geq0$, we derive
\begin{align}\label{ineq3.lem.expansion.1dgamma0}
    \sup_{H_{\mathcal{R}_j^+\theta/4}} \left[ M_j -  h/\phi \right]  = \sup_{H_{\mathcal{R}_j^+\theta/4}}H_j/\phi\leq c\inf_{H_{\mathcal{R}_j^+\theta/4}}H_j/\phi\leq C\inf_{H_{\mathcal{R}_j\theta/8}}H_j/\phi = C\inf_{H_{\mathcal{R}_j\theta/8}}[M_j - h/\phi].
\end{align}
We now combine \eqref{ineq2.lem.expansion.1dgamma0} and \eqref{ineq3.lem.expansion.1dgamma0} to deduce that
\begin{align*}
    M_{j}-m_j&\le \sup_{H_{\mathcal{R}_j^+\theta/4}} \left[ M_j - \frac{h}{\phi} \right] + \sup_{H_{\mathcal{R}_j^+\theta/4}} \left[ \frac{h}{\phi} - m_j \right] \\
    &\leq C\left(\inf_{H_{\mathcal{R}_j\theta/8}} \left[M_j-\frac{h}{\phi}\right] + \inf_{H_{\mathcal{R}_j\theta/8}}\left[\frac{h}{\phi}-m_j\right] \right)\\
    &= C\left(\inf_{H_{\mathcal{R}_j\theta/8}}\frac{h}{\phi}-\sup_{H_{\mathcal{R}_j\theta/8}}\frac{h}{\phi}-m_j+M_j\right).
\end{align*}
By the choice of $\mathcal{R}_j=(1/R_0)^{j+1}$ and the inductive assumption \eqref{ineq.lem.expansion.1dgamma0}, we obtain
\begin{align*}
    \sup_{H_{\mathcal{R}_{j+1}}}\frac{h}{\phi}-\inf_{H_{\mathcal{R}_{j+1}}}\frac{h}{\phi}\leq \sup_{H_{\mathcal{R}_j\theta/8}}\frac{h}{\phi}-\inf_{H_{\mathcal{R}_j\theta/8}}\frac{h}{\phi}\leq \frac{C-1}{C}(M_j-m_j)\leq \frac{C-1}{C}K\|h\|_{L^\infty(H_1)}\mathcal{R}_j^{\alpha}.
\end{align*}
We now choose $\alpha\in(0,1)$ sufficiently small so that 
\begin{align*}
     \sup_{H_{\mathcal{R}_{j+1}}}\frac{h}{\phi}-\inf_{H_{\mathcal{R}_{j+1}}}\frac{h}{\phi}\leq K\|h\|_{L^\infty(H_1)}\mathcal{R}_{j+1}^{\alpha}.
\end{align*}
By taking 
\begin{align*}
    m_{j+1}\coloneqq \inf_{H_{\mathcal{R}_{j+1}}}\frac{h}{\phi} \quad\text{and}\quad M_{j+1}\coloneqq \sup_{H_{\mathcal{R}_{j+1}}}\frac{h}{\phi},
\end{align*}
we get \eqref{ineq.lem.expansion.1dgamma0} and \eqref{ineq1.lem.expansion.1dgamma0} with $i=j+1$. Hence, by induction, \eqref{ineq.lem.expansion.1dgamma0} and \eqref{ineq1.lem.expansion.1dgamma0} hold for any $j\geq0$, which implies the desired result.
\end{proof}

\subsection{Liouville theorem involving $\phi_0$}
\label{subsec:higher-Liouville}
In this subsection, we prove a preliminary Liouville theorem in the half-space in 1D with absorbing condition. It allows solutions to grow up to order $3+\frac12-\epsilon$. 

\begin{lemma}\label{lem.liou2}
    Let $h$ be a weak solution to 
    \begin{equation}\label{eq.liou2}
\left\{
\begin{alignedat}{3}
v \partial_xh-\partial_{vv}h&=0&&\qquad \mbox{in  $\{x>0\}\times \bbR$}, \\
h&=0&&\qquad  \mbox{in $ \{x=0\}\times \{v>0\}$},\\
|h(x,v)|&\leq M\left(1+(|x|^{\frac13}+|v|)^{\frac12+3-\epsilon}\right)&&\qquad\mbox{in $\{x>0\}\times \bbR$},
\end{alignedat} \right.
\end{equation}
where $M\geq1$ and $\epsilon\in(0,3)$. Then we have $h=c\phi_0$ for some $c \in \R$.
\end{lemma}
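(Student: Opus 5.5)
The plan is to lower the growth order by differentiating in $x$, and then to close the argument with \autoref{lemma:1D.expansion.gamma0}, which acts as a Liouville theorem of order $\frac12+\alpha$. The equation $v\partial_x-\partial_{vv}$ and the boundary portion $\{x=0\}\times\{v>0\}$ are invariant under the kinetic scaling $(x,v)\mapsto(R^3x,Rv)$. The operator $\partial_x$ lowers the kinetic degree by $3$. Hence, formally, $w:=\partial_x h$ solves the same absorbing problem with growth of order $\frac12-\epsilon$ at infinity. On $\gamma_-$ this is justified as follows: $h$ is smooth up to $\{x=0\}\times\{v>0\}$ away from $(0,0)$ by \cite{Sil22,RoWe25}, and $h=0$ there forces $\partial_{vv}h=0$, hence $v\partial_xh=0$.

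The first step is to get bounds for $w$ away from the origin. For $z$ with $|z|\sim\rho\ge1$, I apply interior estimates (\autoref{lem.int.grad}) and the boundary estimates away from $\gamma_0$ on kinetic cylinders of size $\sim\rho$, using the growth assumption in \eqref{eq.liou2}. Near the grazing set I argue by scaling: rescaling \autoref{lemma:1D.expansion.gamma0} on $H_R$ with $R\sim\rho$ gives $|h|\lesssim R^{3-\epsilon}\phi_0$ in $H_{R/R_0}$. Combining this with the explicit derivative bound $|\partial_x\phi_0|\lesssim \mathrm{dist}_{\mathrm{kin}}^{-5/2}$ and the comparison principle on annuli then yields $|w|\lesssim R^{-\frac12-\epsilon}\,\rho^{\frac12}$-type control at scale $R$. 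Note that this blows up like $\partial_x\phi_0$ near $(0,0)$.

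\textbf{Main obstacle.} Near the origin, $w$ need not be bounded, since $\partial_x\phi_0$ is homogeneous of degree $-\frac52$. So \autoref{lemma:1D.expansion.gamma0} cannot be applied to $w$ directly. To get around this, I first normalize $h$. The rescaled \eqref{hol.h/phi} shows that $h/\phi_0$ has a limit $c:=\lim_{z\to0}(h/\phi_0)(z)$, with $|h/\phi_0-c|\lesssim r^{\alpha}$ on $H_r$ for $r\le1$. Set $k:=h-c\phi_0$. Then $k$ solves \eqref{eq.liou2} and satisfies $|k|\lesssim r^\alpha\phi_0$ near $0$; it remains to show $k\equiv0$. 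Instead of working with $\partial_xk$, I would use the increments $\delta_sk(x,v)=k(x+s,v)-k(x,v)$ on $\{x>0\}$. These are genuine bounded weak solutions. Their in-flow data on $\{x=0\}\times\{v>0\}$ is $k(s,v)$, which is exponentially small, $\lesssim e^{-cv^3/s}$ by \autoref{lem.quasi}, together with the bound $|k|\lesssim \phi_0$.

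Next I would compare $\delta_sk$ with barriers of the form $\phi_0\psi$ as in \autoref{lemma:1D-expansion:stationary} and \autoref{lemma:1D-supsolution}, using the maximum principle on $H_R$. This should give $|\delta_s k|\lesssim s\,R^{-\frac52-\epsilon}\phi_0+(\text{in-flow error})$ on $H_{R/R_0}$. Dividing by $s$ and sending $R\to\infty$ along the growth bound, I expect $\delta_sk=o(s)$ in $L^\infty_{\mathrm{loc}}$. This would mean $\partial_xk=0$ away from the origin, so $k=k(v)$ with $\partial_{vv}k=0$. Thus $k$ is affine in $v$ and vanishes for $v>0$, hence $k\equiv0$ and $h=c\phi_0$.

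If the barrier comparison for the increments turns out to lose too much, the fallback is to use the generator of dilations $Z=3x\partial_x+v\partial_v$. It satisfies $(v\partial_x-\partial_{vv})Z=(Z+2)(v\partial_x-\partial_{vv})$ and preserves the absorbing condition. Using it, I would decompose $h$ into generalized eigenfunctions of $Z$ with eigenvalues in $(-\infty,3+\frac12)$, and then invoke the ODE classification of homogeneous solutions: up to degree $<\frac72$, the only one is $\phi_0$.
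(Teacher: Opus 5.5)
The gap is in the barrier step for the increments. Your in-flow datum $\delta_sk(0,v)=k(s,v)$ is exponentially small only for $v^3\gg s$. For $0<v\lesssim s^{1/3}$, the best information available from \eqref{hol.h/phi} and \eqref{phi.asymptotic} is $|k(s,v)|\lesssim s^{\alpha/3}\phi_0(s,v)\sim s^{1/6+\alpha/3}$, which is far larger than $s$. The barriers of \autoref{lemma:1D-expansion:stationary} and \autoref{lemma:1D-supsolution} vanish on $\{x=0\}\times\{v>0\}$. With them, the maximum principle can absorb this datum only through a term comparable to its supremum. After dividing by $s$, that error is of order $s^{-5/6+\alpha/3}$ and does not tend to zero, so you cannot conclude $\delta_sk=o(s)$.

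To close the step you need a positive solution that is of size $s^{1/6+\alpha/3}$ on $\{0\}\times(0,s^{1/3})$ but only $O(s^{1+\alpha/3})$ at unit distance. In other words, it must decay away from the grazing point like a solution homogeneous of degree $-\frac52$. This is $\phi_{-1}=36\,\partial_x\phi_0$ (see \eqref{lemma:derx.phim}). Indeed, \eqref{phi.asymptotic}, \eqref{phi-1.asymptotic} and \eqref{hol.h/phi} give $|k(s,v)|\lesssim s^{1+\alpha/3}\phi_{-1}(s,v)$ for small $v>0$. So adding $Cs^{1+\alpha/3}\phi_{-1}(x+s,v)$ to your barrier would repair the argument. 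Separately, the outer-boundary contribution is of size $sR^{-\epsilon}\phi_0$, not $sR^{-5/2-\epsilon}\phi_0$, since $\partial_xk=O(R^{\frac12-\epsilon})$ at scale $R$. This still decays in $R$, so it is harmless.

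Once $\phi_{-1}$ is available, the increments are unnecessary, and this is exactly the paper's route. It works directly with $H=\partial_x(h-\kappa_0\phi_0)$. Near the origin $H$ is of order $r^{-5/2+\alpha}$, i.e.\ $o(\phi_{-1})$; at infinity it is of order $|z|^{\frac12-\epsilon}$, i.e.\ $o(\phi_0)$. These bounds come from \autoref{lemma.1D.grax.est} near $\gamma_-$, \autoref{lemma.1D.grax.est2} near $\gamma_+$, and interior estimates elsewhere. Hence, with $\Phi=\phi_0+\frac1{36}\phi_{-1}$, one gets $c_0H-\Phi\le\frac12$ outside a compact set for every $c_0\in\R$. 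A Harnack-chain strong maximum principle (\autoref{lem.strmax}) then forces $H\equiv0$.

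Your fallback via $Z=3x\partial_x+v\partial_v$ is not an argument as it stands. No spectral decomposition of an arbitrary solution into generalized eigenfunctions of $Z$ is available; as the introduction points out, the Sturm--Liouville route fails because $\phi_0$ does not lie in the natural weighted $L^2$ space. Moreover, showing that every solution is built from homogeneous ones is precisely the content of the lemma.
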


\begin{remark}\label{rmk.liou2}
    Note from \autoref{lemma:1D.expansion.gamma0} that 
    \begin{align*}
        \frac{h(0)}{\phi_0(0)}=\lim_{(x,v)\to0}\frac{h(x,v)}{\phi_0(x,v)}
    \end{align*}
    is well defined. Thus, we obtain that the constant $c$ given in \autoref{lem.liou2} coincides with $h(0)/\phi_0(0)$.
\end{remark}

Note that by using the expansion from \autoref{lemma:1D-expansion:stationary}, it is possible to derive a Liouville theorem of order $\frac{1}{2} + \alpha$, where $\alpha > 0$ is as in \autoref{lemma:1D-expansion:stationary}. We are able to upgrade the order to $\frac{1}{2} + 3 - \eps$ by tracking carefully the asymptotic behavior of $h - \frac{h(0)}{\phi_0(0)} \phi_0$ and the $x$-derivative of this function near $\{0\}\times \R$ and as $|(x,v)| \to \infty$. Then, we can compare it to the asymptotic behavior of $\phi_0$ and $\partial_x \phi_0$ and conclude the proof by a contradiction argument based on the maximum principle.

As explained before, our proof is based on boundary regularity estimates of solutions to \eqref{eq.liou2} away from $\gamma_0$. Near $\gamma_+$ we use the regularity results in \cite{RoWe25}, and near $\gamma_-$, we combine suitable interior estimates with the upper bound in \autoref{lem.quasi}.

The first lemma is about the boundary behavior of solutions near $\gamma_-$. The exponential factor $e^{-\frac{1}{C} \frac{v_0}{R}}$ shows that all derivatives vanish to infinite order.

\begin{lemma}\label{lemma.1D.grax.est}
Let $h$ be a weak solution to 
    \begin{equation*}
v \partial_xh-\partial_{vv}h=0\quad \mbox{in  $H_{2R}(0,v_0)$}
\end{equation*}
where $v_0 > 0$ and $H_{2R}(0,v_0)\subset \{x>0\}\times \{v>0\}$.  Let $k$ be a nonnegative integer. Then, there are constants $c=c(k)$ and $C\geq1$ such that
\begin{align}\label{main.grax.est}
    \|D^k h\|_{L^\infty(H_R(0,v_0))}\leq cR^{-k}e^{-\frac1C\frac{v_0}{R}}\|h\|_{L^\infty(H_{2R}(0,v_0))}.
\end{align}
\end{lemma}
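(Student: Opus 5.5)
We first reduce to $R\le v_0/4$ or so; if $R$ is comparable to or larger than $v_0$, the exponential factor $e^{-v_0/(CR)}$ is bounded below by a constant and the estimate reduces to a standard boundary estimate. Note that $H_{2R}(0,v_0)\subset\{v>0\}$ forces $2R\le v_0$, so $v_0/R\ge 2$ always. The idea is to combine the exponential decay from \autoref{lem.quasi} with interior derivative estimates from \autoref{lem.int.grad}, applied on cylinders whose size is adapted to the distance to the boundary $\{x=0\}$.

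\textbf{Step 1 (pointwise decay).} Apply \autoref{lem.quasi}, after translating and rescaling so that the hypotheses hold, with velocity center $v\in(v_0-R,v_0+R)$ and radius comparable to $R$ (so $v\ge R$ up to constants). For $x\le cR^3$ this gives
\begin{align*}
|h(x,v)|\le 2e^{-\frac{1}{C_1}\frac{vR^2}{x}}\|h\|_{L^\infty(H_{2R}(0,v_0))}.
\end{align*}
Since $v\gtrsim v_0$ and $x\lesssim R^3$, we have $vR^2/x\gtrsim v_0/R$. For $x\gtrsim R^3$ the points of $H_R(0,v_0)$ do not exist, since $x<R^3$ there. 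In either case, $|h|\le 2e^{-\frac{1}{C}\frac{v_0 R^2}{x}}\|h\|_{L^\infty}$ in a slightly enlarged region $H_{3R/2}(0,v_0)$.

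\textbf{Step 2 (derivatives).} Fix $(x,v)\in H_R(0,v_0)$ and view $h$ as time-independent, as in \autoref{rmk.exist.stationary}. Use an interior kinetic cylinder centered at $(x,v)$ of radius $\rho$ small enough to stay in $\{x>0\}$: a backward cylinder of radius $\rho$ spans $x$-values $x-v\rho^2\pm\rho^3$, so we need $\rho\approx \min\{R,(x/v)^{1/2}\}$. Applying \autoref{lem.int.grad} gives $|D^kh(x,v)|\le c\rho^{-k}\sup_{\text{cyl}}|h|$. On that cylinder the $x$-coordinates are comparable to $x$, so Step 1 yields $\sup|h|\lesssim e^{-\frac{1}{C}\frac{v_0R^2}{x}}\|h\|$.

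\textbf{Step 3 (absorbing the factor $\rho^{-k}$).} We have $\rho^{-k}\le R^{-k}+(v_0/x)^{k/2}$. Then
\begin{align*}
(v_0/x)^{k/2}e^{-\frac{1}{2C}\frac{v_0R^2}{x}}=R^{-k}\Big(\tfrac{v_0R^2}{x}\Big)^{k/2}e^{-\frac{1}{2C}\frac{v_0R^2}{x}}\le c(k)R^{-k}.
\end{align*}
The remaining factor satisfies $e^{-\frac{1}{2C}\frac{v_0R^2}{x}}\le e^{-\frac{1}{2C}\frac{v_0}{R}}$ because $x<R^3$. This gives \eqref{main.grax.est}.

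The main obstacle is the bookkeeping in Step 2. Backward cylinders in the stationary setting are slanted by $v\rho^2$ in $x$, and one must check that they stay inside $H_{2R}(0,v_0)\cap\{x>0\}$ while $x$ remains comparable throughout, so that Step 1 applies uniformly. If the backward cylinder is awkward near $x\approx 0$, one can instead use the fact that $v>0$ and transport moves mass to larger $x$: choosing the cylinder whose base point lies at smaller $x$ keeps everything within $\{0<x'\le x\}$, where the decay bound is even stronger.
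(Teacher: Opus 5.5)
Your overall route is the paper's: decay from \autoref{lem.quasi}, \autoref{lem.int.grad} on a cylinder of radius $\rho\approx\min\{R,\sqrt{x/v}\}$ (the paper's $r_1$, defined by $x_1-(2r_1)^2v_1=(2r_1)^3$), and absorption of $\rho^{-k}\lesssim R^{-k}(v_0R^2/x)^{k/2}$ into the exponential. Steps 2 and 3 are fine. Both you and the paper tacitly use $h=0$ on $\{x=0\}\times\{v>0\}$. \autoref{lem.quasi} requires this, and without it the statement fails for $h\equiv 1$.

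The gap is in Step 1. At velocity $v$ with parameter $R'$, \autoref{lem.quasi} needs $v\ge R'$ and the window $[v-\sqrt2R'/8,\,v+\sqrt2R'/8]$ inside $(v_0-2R,v_0+2R)$. It then gives decay only for $x\le R'^3/2^{10}$. For $v\in(v_0-\frac32R,v_0+\frac32R)$ the window forces $R'\le 2\sqrt2R$, so you obtain the bound only for $x\le cR^3$ with a small absolute $c$.

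Your reason for ignoring larger $x$ is wrong. Every $x\in[cR^3,R^3)$ occurs in $H_R(0,v_0)$, and in $H_{3R/2}(0,v_0)$ the range extends to $(3R/2)^3$. This is not a harmless corner. The regime $x\sim R^3$ is where $e^{-v_0/(CR)}$ is sharp, and it is exactly where the lemma is applied in the proof of \autoref{lem.liou2} (at $x_0=R^3$). The paper's reduction ``by a standard covering argument'' to $H_{R/2^5}(0,v_0)$ only addresses $x<(R/2^5)^3$ explicitly, so the same point must be handled there too.

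The repair is short. In the proof of \autoref{lem.quasi}, the restriction $r\le R^3/2^{10}$ is used only to obtain $128r\le v_0R^2/8$ in \eqref{c_0.cond.barr}. The barrier, the boundary comparison, and the final evaluation at $(r,v_0)$ work verbatim for all $r\le v_0R^2/2^{10}$.

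Apply this with $R'=R/2$ at velocities $v\ge v_0/4$, which holds because $v_0\ge 2R$. This gives $|h(x,v)|\le 2e^{-\frac{1}{16C_1}\frac{v_0R^2}{x}}\|h\|_{L^\infty(H_{2R}(0,v_0))}$ for all $x\le v_0R^2/2^{14}$. That range contains $(0,(3R/2)^3)$ once $v_0/R\ge K$ for a fixed absolute constant $K$.

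If $v_0/R<K$, then $e^{-v_0/(CR)}\ge e^{-K/C}$, and the claim reduces to the zero-data boundary estimate near $\gamma_-$. This follows from \autoref{lem.reg.gamma+} together with \autoref{lem.int.grad}, applied on cylinders of size $\approx R/\sqrt K$, as you indicated at the start. With Step 1 repaired this way, your Steps 2 and 3 complete the proof.
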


\begin{proof}
By a standard covering argument it suffices to prove
\begin{align}\label{goal1.ineq1.grax}
    \Vert D^k h \Vert_{L^{\infty}(Q_{R/2^5}(0,v_0))} \le c R^{-k}e^{-\frac1c\frac{v_0}{R}} \|h\|_{L^\infty(Q_{2R}(0,v_0))}
\end{align}
for some constant $c\geq1$. Let us fix $(x_1,v_1)\in Q_{R/2^5}(0,v_0)$. First, we see that 
\begin{equation}\label{rel.x1v1.grax}
    x_1<(R/2^5)^3 \quad\text{and}\quad v_1>v_0-R/2^5>R.
\end{equation}In addition, as in \eqref{g.inc.strmax}, we find $r_1$ such that
\begin{equation}\label{rel.x1r1.grax}
    x_1-(2r_1)^2v_1=(2r_1)^3 \quad\text{and}\quad \left(\frac{x_1}{16v_1}\right)^{\frac12}<r_1< \left(\frac{x_1}{4v_1}\right)^{\frac12}\leq \frac{R}{8}.
\end{equation}
The value $r_1$ is the largest radius $r$ for which $\mathcal{Q}_{r}(0,x_1,v_1) \subset \R \times \{ x > 0 \} \times \R$.
Note that $\widetilde{h}(t,x,v)=h(x,v)$ is a weak solution to 
\begin{align*}
    (\partial_t+v\partial_x-\partial_{vv})\widetilde{h}=0\quad\text{in }\bbR\times \{x>0\}\times \bbR.
\end{align*}
Thus, by the interior estimates in \autoref{lem.int.grad}, and since $\widetilde{h}$ is independent of $t$, we have
\begin{align*}
    \sup_{t\in(-(r_1/2)^2,(r_1/2)^2)} & \sup_{x\in B_{(r_1/2)^3}(x_1+tv_1)}\|D^kh(x,\cdot)\|_{L^\infty(B_{r_1/2}(v_1))} = \| D^k\widetilde{h}\|_{L^\infty(\mathcal{Q}_{r_1/2}(0,x_1,v_1))}\\
    &\leq cr_1^{-k}\|\widetilde{h}\|_{L^\infty(\mathcal{Q}_{r_1}(0,x_1,v_1))} = cr_1^{-k}\sup_{t\in(-r_1^2,r_1^2)}\sup_{x\in B_{r_1^3}(x_1+tv_1)}\|h(x,\cdot)\|_{L^\infty(B_{r_1}(v_1))}.
\end{align*}
In light of this and the fact that $2x_1>x_1+r_1^2v_1+r_1^3$, which follows from \eqref{rel.x1r1.grax}, we get 
\begin{align}\label{ineq.j.grax}
    \|D^kh\|_{L^\infty(Q_{r_1/2}(x_1,v_1))}\leq cr_1^{-k}\|h\|_{L^\infty\left((0,2x_1)\times B_{r_1}(v_1)\right)}\eqqcolon J
\end{align}
for some constant $c=c(k)$.

Now we employ \autoref{lem.quasi} to further estimate the term $J$. First, fix $(x_2,v_2)\in (0,2x_1)\times B_{r_1}(v_1)$. Then we have 
\begin{align*}
    v_2>v_1-r_1>R-(x_1/(4v_1))^{\frac12}>R/2,
\end{align*}
where we have used \eqref{rel.x1v1.grax} and \eqref{rel.x1r1.grax}.
We now apply \autoref{lem.quasi} with $v_0$ and $R$ replaced by $v_2$ and $R/2$, respectively, to see that 
\begin{align*}
    |h(x,v_2)|\leq 2e^{-\frac{1}{C_1}\frac{v_2(R/2)^2}{x}}\|h\|_{L^\infty(Q)}\quad\text{for any }x\in\left[0,R^3/2^{13}\right]
\end{align*}
for some constant $C_1\geq1$, where 
\begin{align*}
    Q\coloneqq\left[0,R^3/2^{13}\right]\times\left[v_2-\frac{\sqrt{2}}8\frac{R}{2},v_2+\frac{\sqrt{2}}8\frac{R}{2}\right].
\end{align*}
Since $Q\subset Q_{2R}(0,v_0)$ and $v_2< v_1< 4v_2$, we have 
\begin{align*}
    |h(x,v_2)|\leq 2e^{-\frac{1}{4C_1}\frac{v_1R^2}{x}}\|h\|_{L^\infty(Q_{2R}(0,v_0))}\quad\text{for any }x\in\left[0,R^3/2^{13}\right].
\end{align*}
Using \eqref{rel.x1v1.grax}, we get $x_2\leq 2x_1\leq 2\left({R}/{2^5}\right)^{3}\leq R^3/2^{13}$ and
\begin{align}\label{pt.ineq1.grax}
    |h(x_2,v_2)|\leq 2e^{-\frac{1}{4C_1}\frac{v_1R^2}{x_2}}\|h\|_{L^\infty(Q_{2R}(0,v_0))}\leq 2e^{-\frac{1}{8C_1}\frac{v_1R^2}{x_1}}\|h\|_{L^\infty(Q_{2R}(0,v_0))}.
\end{align}
As \eqref{pt.ineq1.grax} holds for any $(x_1,v_1)\in (0,2x_1)\times B_{r_1}(v_1)$, we further estimate $J$ given in \eqref{ineq.j.grax} as 
\begin{align*}
    \|D^kh\|_{L^\infty(Q_{r_1/2}(x_1,v_1))}&\leq c R^{-k} e^{-\frac{1}{8C_1}\frac{v_1R^2}{x_1}}\|h\|_{L^\infty(Q_{2R}(0,v_0))} \leq cR^{-k}e^{-\frac1{32C_1}\frac{v_0R^2}{R^3}}\|h\|_{L^\infty(Q_{2R}(0,v_0))},
\end{align*}
for some constant $c=c(k)$, where we have used \eqref{rel.x1r1.grax} and the fact that $x_1<R^3$, as well as $v_0/2<v_1$.
Thus, \eqref{goal1.ineq1.grax} follows, as the previous estimate holds for any $(x_1,v_1)\in H_{R/2^5}(0,v_0)$, as desired.
\end{proof}

Next, we prove gradient estimates near $\gamma_+$. Note that the factor $(v_0/R)^{k/2}$ can in general not be improved since it arises from the underlying kinetic scaling, when applied to a stationary equation.

\begin{lemma}\label{lemma.1D.grax.est2}
    Let $h$ be a weak solution to 
    \begin{equation*}
v \partial_xh-\partial_{vv} h=0 \quad \text{in } H_{2R}(0,v_0),
\end{equation*}
where $v_0 < 0$ and $R > 0$ are such that $H_{2R}(0,v_0)\subset \{x>0\}\times \{v<0\}$. 
Let $k$ be a nonnegative integer. Then, there is a constant $c=c(k)$ such that 
\begin{align}\label{main.grax.est2}
    \|D^k h\|_{L^\infty(H_R(0,v_0))}\leq cR^{-k}\left(\frac{v_0}{R}\right)^{\frac{k}2}\|h\|_{L^\infty(H_{2R}(0,v_0))},
\end{align}

\end{lemma}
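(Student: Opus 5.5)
We argue as in the proof of \autoref{lemma.1D.grax.est}: lift $h$ to the time-independent function $\widetilde h(t,x,v)=h(x,v)$, which solves $(\partial_t+v\partial_x-\partial_{vv})\widetilde h=0$ in $\R\times\{x>0\}\times\R$. We then combine interior estimates (\autoref{lem.int.grad}) with the boundary regularity near $\gamma_+$ from \cite{RoWe25}. The key observation is that, since $v_0<0$, every point of $H_{2R}(0,v_0)$ has velocity $v\approx v_0$ pointing out of the domain. Consequently the boundary $\{x=0\}\times B_{2R}(v_0)$ lies entirely in $\gamma_+$, where no boundary condition is imposed and solutions are smooth. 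The factor $(|v_0|/R)^{k/2}$ should come from the geometry of kinetic cylinders: a cylinder $\mathcal{Q}_r(0,x_1,v_1)$ has $x$-extent of order $r^3+r^2|v_1|$. The time variable is free, so we may always trade $x$-extent for time-extent.

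\textbf{Step 1 (a kinetic cylinder fitting in the domain).} Fix $(x_1,v_1)\in H_R(0,v_0)$, so $|v_1|\eqsim |v_0|$ (we may assume $|v_0|\ge 2R$ since $H_{2R}(0,v_0)\subset\{v<0\}$). We use one-sided cylinders $\widetilde{\mathcal Q}_r(0,x_1,v_1)$ with $t\in(-r^2,0]$. Along the past trajectory, $x=x_1+tv_1=x_1+|t||v_1|\ge x_1$ stays in $\{x>0\}$, so the only constraint is that $x$ stay below $(2R)^3$. This requires $r^2|v_0|+r^3\lesssim R^3$, i.e.\ $r\eqsim R(R/|v_0|)^{1/2}$ when $|v_0|\ge R$. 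For points with $x_1$ away from $0$ we apply \autoref{lem.int.grad}. For points with $x_1$ close to $0$ we apply the boundary estimates near $\gamma_+$ of \cite{RoWe25} (as already invoked in \autoref{lem.strmax}) on the half-cylinder $\mathcal{H}_r(0,x_1,v_1)$, whose boundary portion lies in $\gamma_+$. Either way we get
$$|D^k h(x_1,v_1)|\le c\,r^{-k}\|h\|_{L^\infty(H_{2R}(0,v_0))},$$
and $r^{-k}=R^{-k}(|v_0|/R)^{k/2}$ gives \eqref{main.grax.est2}. The estimates from \cite{RoWe25} are stated for cylinders centered at boundary points, so we translate and rescale them, using the left group action and $S_r$, to scale $r$.

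\textbf{Step 2 (checking containment).} We verify that $\widetilde{\mathcal Q}_r(0,x_1,v_1)$, or the corresponding half-cylinder, is contained in $\R\times H_{2R}(0,v_0)$. The $v$-extent is $r\le R$, and the $x$-extent is $x_1+r^2(|v_1|+r)+r^3\le R^3+cr^2|v_0|\le (2R)^3$ after shrinking the constant in $r$. Time is unconstrained, since $\widetilde h$ is stationary.

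\textbf{Main obstacle.} The delicate point is justifying the boundary estimate near $\gamma_+$ with constants uniform after rescaling. At scale $r$ the rescaled velocity $v_1/r\eqsim (|v_0|/R)^{3/2}$ can be large, and the constants in \cite{RoWe25} may depend on $|v_0|$. If that dependence is problematic, we will instead extend $h$ smoothly across $x=0$, as in the second part of the proof of \autoref{lem.strmax}. Concretely, we solve a Dirichlet problem in the enlarged box, which gives a solution $F=h$ via the uniqueness in \autoref{rmk.exist.stationary}, and then apply purely interior estimates to $F$ in cylinders of radius $r$. The $L^\infty$ norm of $F$ is controlled by that of $h$ through the maximum principle.
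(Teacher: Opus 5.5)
Your primary route is the paper's. The paper also lifts $h$ to $\widetilde h(t,x,v)=h(x,v)$ and applies the $\gamma_+$ boundary estimates of \cite{RoWe25} at the scale fixed by $r^2|v_1|+r^3=R^3$, i.e.\ $r\eqsim R^{3/2}|v_1|^{-1/2}$. It differs from you in two small ways:
\begin{itemize}
\item It centers the half-cylinders at boundary points $(0,0,v_1)$, so one cylinder already sweeps out all $x\in(0,R^3)$ and no interior/boundary split is needed.
\item It explicitly passes from the higher-order H\"older seminorm bounds of \cite[Lemma 4.4]{RoWe25} to sup bounds on $D^kh$ by interpolating against a $C^\alpha$ bound via \cite[Proposition 2.10]{ImSi21}. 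Your ``either way we get'' hides this step.
\end{itemize}
On the obstacle you flag, the paper simply asserts that rerunning the proof of \cite[Lemma 4.4]{RoWe25} with only the expansion of \cite[Lemma 4.1]{RoWe25} gives a constant independent of $|v_0|$.

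Your fallback is a genuinely different route, and it is the more transparent way to settle exactly that point. Once $F=h$ on $\{x>0\}$ by uniqueness, only interior estimates are used. These are invariant under $z\mapsto z_0\circ z$, which preserves $\partial_v$, $\partial_x$ and $\partial_t+v\partial_x$, so $c=c(k)$ is manifestly independent of $v_1$. In fact, $|v|\eqsim|v_0|$ on $H_{3R/2}(0,v_0)$, so the equation is the uniformly parabolic equation $|v|\partial_s h=\partial_{vv}h$ in $s=-x$. For it, $\{x=0\}$ is just a later time slice and carries no condition.

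Two details make the fallback airtight:
\begin{itemize}
\item If you use the two-sided cylinders of \autoref{lem.int.grad}, the enlarged box must reach $x\approx -cR^3$. Cylinders of radius $r$ centered at points with small $x_1$ have $x$-extent of order $r^2|v_1|\eqsim R^3$ on both sides; past cylinders would only need $x\approx -r^3$.
\item The data you prescribe on the added lateral pieces $(-cR^3,0)\times\{v=v_0\pm\tfrac32R\}$ must be bounded by $\|h\|_{L^\infty}$. A generic smooth extension does not guarantee this, but any bounded choice, e.g.\ a reflection-type extension, works, since these values do not affect $F$ on $\{x>0\}$. The maximum principle then gives $\|F\|_{L^\infty}\le c\|h\|_{L^\infty(H_{2R}(0,v_0))}$.
\end{itemize}
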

\begin{proof}
We fix $v_1\in (v_0-R,v_0+R)$ to see that $|v_1|>R$. Note that there is $r \in (0,R)$ such that 
    \begin{align}\label{eqr2v0.grax.est2}
        -r^2v_1+r^3=R^3,
    \end{align}
    as $g(\xi)\coloneq -\xi^2v_1+\xi^3$ is increasing when $\xi\geq0$ and 
    \begin{align*}
        g\left( \frac{R^{\frac32}}{2|v_1|^{\frac12}}\right)<R^3<g\left(\frac{R^{\frac32}}{|v_1|^{\frac12}}\right).
    \end{align*}
    Thus, $r$ satisfies
    \begin{align}
    \label{ineqr2v12.grax.est2}
        \frac{R^{\frac32}}{2|v_1|^{\frac12}}<r<\frac{R^{\frac32}}{|v_1|^{\frac12}} \quad\text{and} \quad -(3r/2)^2v_1+(3r/2)^3<(2R)^3.
    \end{align}
    Next, we observe that by following the lines of the proof of \cite[Lemma 4.4]{RoWe25} but using only the expansion from \cite[Lemma 4.1]{RoWe25}, we deduce for any $\epsilon\in(0,1)$ and $l\geq3$,
    \begin{align}\label{ineq1.par.grax2}
        [D^l h]_{C^{\epsilon}({\mathcal{H}}_r(0,0,v_1))}\leq c(k,\epsilon)r^{-l+\epsilon}\|h\|_{L^\infty({\mathcal{H}}_{3r/2}(0,0,v_1))}
    \end{align}
    for some constant $c=c(l,\epsilon)$, where we write 
    \begin{align*}
        {\mathcal{H}}_r(0,0,v_1)\coloneqq \left\{(t,x,v)\,:\,t\in (-r^2,r^2],\, v\in B_r(v_1),\, x\in B_{r^3}(tv_1)\right\}\cap \{x\geq0\}.
    \end{align*}
    Here, the only difference compared to the statement of \cite[Lemma 4.4]{RoWe25} is that the constant in \eqref{ineq1.par.grax2} is independent of $|v_0|$.
    Moreover, by \cite[Lemma 2.5]{RoWe25} we get 
   \begin{align}\label{ineq2.par.grax2}
        [ h]_{C^{\alpha}(\widetilde{\mathcal{H}}_r(0,0,v_1))}\leq cr^{-\alpha}\|h\|_{L^\infty(\widetilde{\mathcal{H}}_{3r/2}(0,0,v_1))},
    \end{align}
    for some $\alpha\in(0,1)$.
    We now use a rescaled version of the interpolation inequality given in \cite[Proposition 2.10]{ImSi21} to get that 
    \begin{align*}
[r^{k}D^{k-1} h]_{C^{0,1}(\widetilde{\mathcal{H}}_r(0,0,v_1))} \leq c\left([r^{k+2+\alpha}D^{k+2} h]^{\theta}_{C^{\alpha}(\widetilde{\mathcal{H}}_r(0,0,v_1))}[r^{\alpha}h]^{1-\theta}_{C^\alpha(\widetilde{\mathcal{H}}_r(0,0,v_1))}+r^{\alpha}[h]_{C^\alpha(\widetilde{\mathcal{H}}_r(0,0,v_1))}\right)
    \end{align*}
    for some constant $c=c(k)$, where $\theta\in(0,1)$ satisfies
    \begin{align*}
        k=\theta (k+2+\alpha)+(1-\theta)\alpha.
    \end{align*}
    Using Young's inequality, \eqref{ineq1.par.grax2} with $\epsilon=\alpha$ and \eqref{ineq2.par.grax2}, we deduce
    \begin{align*}
        \|r^kD^{k} h\|_{L^\infty(\widetilde{\mathcal{H}}_r(0,0,v_1))}=[r^{k}D^{k-1} h]_{C^{0,1}(\widetilde{\mathcal{H}}_r(0,0,v_1))} \leq c\|h\|_{L^\infty(\widetilde{\mathcal{H}}_{3r/2}(0,0,v_1))}.
    \end{align*}
    From this, we get
    \begin{align*}
        \sup_{x\in (0,-r^{2}v_1+r^3)}\|D^kh(x,\cdot)\|_{L^\infty(B_{r}(v_1))}&=\|D^k h\|_{L^\infty(\widetilde{\mathcal{H}}_r(0,0,v_1))}\\
        &\leq cr^{-k}\|h\|_{L^\infty(\widetilde{\mathcal{H}}_{3r/2}(0,0,v_1))}\\
        &=cr^{-k}\sup_{x\in(0,-(3r/2)^2v_1+(3r/2)^3)}\|h\|_{L^\infty(B_{3r/2}(v_1))}
    \end{align*}
    for some constant $c=c(k)$. Using this together with \eqref{eqr2v0.grax.est2} and \eqref{ineqr2v12.grax.est2}, we derive
    \begin{align*}
        \|D^kh\|_{L^\infty((0,R^3)\times (v_1-r,v_1+r))}\leq cr^{-k}\|h\|_{L^\infty((0,(2R)^3)\times (v_1-2r,v_1+2r))}\leq cR^{-k}\left(\frac{v_1}{R}\right)^{\frac{k}2}\|h\|_{L^\infty(H_{2R}(0,v_0))}
    \end{align*}
    for some constant $c=c(k)$. Since this holds for any $v_1\in (v_0-R,v_0+R)$, we derive
    \begin{align*}
    \|D^k h\|_{L^\infty(H_R(0,v_0))}\leq cR^{-k}\left(\frac{v_0}{R}\right)^{\frac{k}2}\|h\|_{L^\infty(H_{2R}(0,v_0))},
\end{align*}
where we have used the fact that $|v_1|\leq 2|v_0|$. This completes the proof.
\end{proof}
We are now ready to prove \autoref{lem.liou2}.
\begin{proof}[Proof of \autoref{lem.liou2}.]
   First, we observe that
\begin{align}
\label{kappa0.boundedness}
    \kappa_0\coloneqq \lim_{(x,v)\to0}\frac{h(x,v)}{\phi_0(x,v)} \quad\text{satisfies}\quad  \left|\kappa_0\right|\leq c\|h\|_{L^\infty(H_1)}\leq c(M),
\end{align}
where $\kappa_0$ is well defined by \autoref{rmk.liou2} and we have used the growth condition in \eqref{eq.liou2} for the last inequality.
Next, we define
    \begin{align*}
        H\coloneqq \partial_x(h-\kappa_0\phi_0) \quad \text{and} \quad
        \Phi\coloneqq \phi_0+\partial_x\phi_0=\phi_0+\frac1{36}\phi_{-1},
    \end{align*}
where the last equality follows from \eqref{lemma:derx.phim}. 

Note that the proof is complete, once we establish that $H \equiv 0$. To deduce this, we will first prove that for any $c_0\in\bbR$, there is a large constant $\mathcal{M}_0=\mathcal{M}_0(c_0,M,\epsilon)\geq4$ such that
\begin{align}\label{goal0.liou2}
    (c_0H-\Phi)(x_0,v_0)\leq \frac12\quad\text{if }(x_0,v_0)\notin (A_1\cup A_2),
\end{align}
where we write
\begin{align*}
    A_1\cup A_2 := \Big( \left[(\mathcal{M}_0/4)^{-3},(\mathcal{M}_0/4)^3\right]\times[-\mathcal{M}_0\times \mathcal{M}_0] \Big) \cup \Big(  \left[0,(\mathcal{M}_0/4)^{-3}\right]\times \left[-\mathcal{M}_0,-\mathcal{M}_0^{-1}\right] \Big).
\end{align*}
To this end, first, we fix $c_0\in\bbR$. We now divide the proof of \eqref{goal0.liou2} into several cases depending on the location of the point $(x_0,v_0)$.
\begin{itemize}
    \item Suppose $4x_0^{\frac13}\leq v_0$. Since $h-\kappa_0\phi_0$ solves \eqref{eq.liou2}, by \autoref{lemma.1D.grax.est}, we derive
    \begin{equation*}
    \begin{aligned}
        |H(x_0,v_0)|&\leq {\|\partial_{x}(h-\kappa_0\phi_0)\|_{L^\infty({H}_{{R}}(0,v_0))}}\leq cR^{-3}e^{-\frac1c\frac{v_0}{R}}\|h-\kappa_0\phi_0\|_{L^\infty({Q}_{2R}(0,v_0))},
    \end{aligned}
    \end{equation*}
     where we set $R=x_0^{\frac13}$. If $x_0, v_0\leq 1$, then we obtain
    \begin{equation}\label{ineq1.liou2}
    \begin{aligned}
        |H(x_0,v_0)|&\leq cR^{-3}e^{-\frac1c\frac{v_0}{R}}\|\phi_0\|_{L^\infty([0,8x_0]\times [v_0/2,3v_0/2])}\left\|\frac{h}{\phi_0}-\kappa_0\right\|_{L^\infty(H_{2v_0})}\\
        &\leq cx_0^{-1}x_0v_0^{-\frac52}e^{-\frac{v_0^3}{9x_0}}\left(v_0^{\alpha}\|h\|_{L^\infty(H_{2R_0})}\right)\\
        &\leq c(M)v_0^{-\frac52+\alpha}e^{-\frac{v_0^3}{9x_0}},
    \end{aligned}
    \end{equation}
    where we have used \eqref{phi.asymptotic} and \eqref{hol.h/phi}, and the constant $R_0\geq1$ is determined in \autoref{lemma:1D-supsolution}. Moreover, we have from \eqref{phi-1.asymptotic} 
    \begin{align}\label{ineq11.liou2}
        \Phi(x_0,v_0) \geq c^{-1} x_0^{-1}v_0^{\frac12}e^{-\frac{v_0^3}{9x_0}}.
    \end{align}
    Using this and \eqref{ineq1.liou2}, we obtain that if $x_0,v_0\leq 1$ and $v_0\geq 4x_0^{\frac13}$, then
    \begin{align}\label{ineq20.liou2}
        c_0H(x_0,v_0)-\Phi(x_0,v_0)\leq e^{-\frac{v_0^3}{9x_0}}v_0^{-\frac52+\alpha}\left(c_0 C-c^{-1}\frac{v_0^{3-\alpha}}{x_0}\right)\leq e^{-\frac{v_0^3}{9x_0}}v_0^{-\frac52+\alpha}\left(c_0 C -c^{-1}x_0^{-\frac{\alpha}3}\right).
    \end{align}
    We now consider the case when $v_0 \ge 1$.
    Then using the first line given in \eqref{ineq1.liou2} together with \eqref{phi.asymptotic}, \eqref{hol.h/phi}, and the third condition in \eqref{eq.liou2}, we get for any nonnegative integer $k$,
    \begin{equation}\label{ineq2.liou2}
    \begin{aligned}
        |H(x_0,v_0)|&\leq cx_0^{-1}e^{-\frac{v_0x_0^{-\frac13}}c}x_0v_0^{-\frac52}e^{-\frac{v_0^3}{9x_0}}(v_0^{-\frac12}\|h\|_{L^\infty(H_{2R_0v_0})})\\
        &\leq c(M,\epsilon)e^{-\frac{v_0x_0^{-\frac13}}c}v_0^{\frac12-\epsilon}e^{-\frac{v_0^3}{9x_0}}\\
        &\leq c(k,M,\epsilon)\left(v_0x_0^{-\frac13}\right)^{-k}v_0^{\frac12-\epsilon}e^{-\frac{v_0^3}{9x_0}},
    \end{aligned}
    \end{equation}
    where we have also used the fact that for any $k$,
    \begin{align}
    \label{eq:exp-help}
        e^{-\frac{v_0x_0^{-\frac13}}c}\leq c(k)\left(v_0x_0^{-\frac13}\right)^{-k}.
    \end{align} 
    In case $x_0 \le 1 \le v_0$, we apply this fact with $k=0$ and \eqref{ineq11.liou2}. This yields 
    \begin{align*}
        c_0H(x_0,v_0)-\Phi(x_0,v_0)\leq v_0^{\frac12-\epsilon}e^{-\frac{v_0^3}{9x}}\left(c_0 C-\frac{v_0^\epsilon}{cx_0}\right)\leq v_0^{\frac12-\epsilon}e^{-\frac{v_0^3}{9x_0}}\left(c_0 C-\frac{x_0^{-1+\frac{\epsilon}{3}}}{c}\right).
    \end{align*}
    By this and \eqref{ineq20.liou2}, we have
    \begin{align}\label{ineq21.liou2} 
        c_0H(x_0,v_0)-\Phi(x_0,v_0)\leq \frac12\quad\text{in }x_0\in[0,(\mathcal{M}_0/4)^{-3}],
    \end{align}
    whenever $\mathcal{M}_0=\mathcal{M}_0(M,\epsilon)$ is sufficiently large.
    
    If $1\leq x_0,v_0$, then by \eqref{phi.asymptotic} and \eqref{phi-1.asymptotic}, we have 
    \begin{equation}\label{case.Phi}
    \begin{aligned}
        \Phi(x_0,v_0)\eqsim\begin{cases}
            x_0^{-1}v_0^{\frac12}e^{-\frac{v_0^3}{9x_0}}&\quad\text{if }v_0^3\geq x_0^2,\\
            x_0v_0^{-\frac52}e^{-\frac{v_0^3}{9x_0}}&\quad\text{if }v_0^3\leq x_0^2.
        \end{cases}
    \end{aligned}
    \end{equation}
    Note from \eqref{ineq2.liou2} and \eqref{eq:exp-help} with $k=3$ that 
    \begin{align*}
        |H(x_0,v_0)|\leq c(M,\epsilon)  x_0 v_0^{-\frac52-\epsilon}e^{-\frac{v_0^3}{9x_0}}.
    \end{align*}
    Using this and \eqref{case.Phi} leads to 
    \begin{align*}
        (c_0H-\Phi)(x_0,v_0)\leq x_0v_0^{-\frac52-\epsilon}e^{-\frac{v_0^3}{9x_0}}\left(c_0 C-\frac{v_0^{\epsilon}}{c}\right)\leq \frac12,
    \end{align*}
    if $v_0\geq\mathcal{M}_0$, where $\mathcal{M}_0=\mathcal{M}_0(c_0,M,\epsilon)\geq1$ is sufficiently large. Using this and \eqref{ineq21.liou2}, we obtain by taking $\mathcal{M}_0$ sufficiently large, 
    \begin{align}\label{goal1.liou2}
        c_0H(x_0,v_0)-\Phi(x_0,v_0)\leq \frac12 \quad\text{in }\mathcal{A}^-,
    \end{align}
    where 
    \begin{align*}
        \mathcal{A}^-\coloneqq\{v\geq 4x^{\frac13}\,:\,\,x\in[0,(\mathcal{M}_0/4)^{-3}]\}\cup \{v\geq 4x^{\frac13}\,:\,\,v\in[\mathcal{M}_0,\infty)\}.
    \end{align*}
    This concludes the proof of \eqref{goal0.liou2} in case $v_0 \ge 4 x_0^{\frac{1}{3}}$.
\item Suppose $4x_0^{\frac13}\geq |v_0|$. By \autoref{lem.int.grad}, we deduce 
\begin{align}\label{ineq3.liou2}
    |H(x_0,v_0)|\leq cR^{-3}\|h-\kappa_0\phi_0\|_{L^\infty(\mathcal{Q}_{R/4}(0,x_0,v_0))}\leq cR^{-3}\|h-\kappa_0\phi_0\|_{L^\infty(H_{R}(x_0,v_0))},
\end{align}
where $R=\frac{2x_0^{\frac13}}{3}$. If $x_0\leq 1$, then we have 
\begin{align*}
    |H(x_0,v_0)|\leq cx_0^{-1}\|\phi_0\|_{L^\infty([0,8x_0]\times [-5x_0^{\frac13},5x_0^{\frac13}])}\left\|\frac{h}{\phi_0}-\kappa_0\right\|_{L^\infty(H_{5x_0^{1/3}})}&\leq cx_0^{-\frac56}\left(x_0^{\frac{\alpha}{3}}\|h\|_{L^\infty(H_{5R_0})}\right)\\
        &\leq c(M)x_0^{-\frac56+\frac\alpha3},
\end{align*}
where we have used \eqref{phi.asymptotic}, \eqref{hol.h/phi} and the third condition given in \eqref{eq.liou2}.
Using \eqref{phi-1.asymptotic} and \eqref{phi.asymptotic}, we get that if $x_0\leq 1$, then 
\begin{align}\label{ineq31.liou2}
        (c_0H-\Phi)(x_0,v_0)\leq x_0^{-\frac56+\frac{\alpha}3}\left(c_0C-\frac{x_0^{-\frac{\alpha}3}}{c}\right).
    \end{align}
    If $1\leq x_0$, then using \eqref{ineq3.liou2}, \eqref{phi.asymptotic}, \eqref{hol.h/phi}, and the third condition in \eqref{eq.liou2}, we get
\begin{align*}
    |H(x_0,v_0)|\leq cx_0^{-1}\|\phi_0\|_{L^\infty([0,8x_0]\times [-5x_0^{\frac13},5x_0^{\frac13}])}\left\|\frac{h}{\phi_0}-\kappa_0\right\|_{L^\infty(H_{5x_0^{1/3}})} \leq cx_0^{-\frac56}x_0^{-\frac16}x_0^{1+\frac16-\frac{\epsilon}3}\leq cx_0^{\frac16-\frac\epsilon3}.
\end{align*}
Using \eqref{phi-1.asymptotic} and \eqref{phi.asymptotic} we get $\Phi(x_0,v_0)\geq\frac1c{x_0^{\frac16}}$, which gives
\begin{align*}
        (c_0H-\Phi)(x_0,v_0)\leq x_0^{\frac16-\frac{\epsilon}3}\left(c_0C-\frac{x_0^{\frac{\epsilon}3}}{c}\right).
    \end{align*}
With this and \eqref{ineq31.liou2}, when $\mathcal{M}_0$ is sufficiently large, then we have
 \begin{align}\label{goal2.liou2}
        c_0H(x_0,v_0)-\Phi(x_0,v_0)\leq \frac12 \quad\text{in }\mathcal{A}^0,
    \end{align}
    where 
    \begin{align*}
        \mathcal{A}^0\coloneqq\{|v|\leq 4x^{\frac13}\,:\,\,x\in[0,(\mathcal{M}_0/4)^{-3}]\}\cup \{|v|\leq 4x^{\frac13}\,:\,\,x\in[(\mathcal{M}_0/4)^{3},\infty)\}.
    \end{align*}
    This concludes the proof of \eqref{goal0.liou2} in case $|v_0| \le 4 x_0^{\frac{1}{3}}$.
\item Suppose $-v_0\geq 4x_0^{\frac13}$. By \autoref{lemma.1D.grax.est2} with $R=|v_0|/4$, we have 
\begin{align}\label{ineq4.liou2}
    |H(x_0,v_0)|\leq \|H\|_{L^\infty({H}_R(0,v_0))}\leq c|v_0|^{-3}\|h-\kappa_0\phi_0\|_{L^\infty(H_{2R}(0,v_0))}.
\end{align}
If $|v_0|\leq 1$, then we further estimate
\begin{align*}
    |H(x_0,v_0)|&\leq c|v_0|^{-3}\left\|\frac{h}{\phi_0}-\kappa_0\right\|_{L^\infty(H_{2R}(0,v_0))}\|\phi_0\|_{L^\infty([0,|v_0|^{3}/2]\times [-|v_0|/2,-3|v_0|/2])}\\
    &\leq c|v_0|^{-3}\left\|\frac{h}{\phi_0}-\kappa_0\right\|_{L^\infty(H_{2|v_0|})}|v_0|^{\frac12}\\
    &\leq c(M)|v_0|^{-\frac52+\alpha},
\end{align*}
where we have used \eqref{phi.asymptotic} and \eqref{hol.h/phi}. Using \eqref{phi-1.asymptotic}, we have 
\begin{align}\label{ineq5.liou2}
    c_0H(x_0,v_0)-\Phi(x_0,v_0)\leq |v_0|^{-\frac52+\alpha}\left(c_0 C-\frac{|v_0|^{-\alpha}}{c}\right).
\end{align}On the other hand, when $|v_0|\geq1$, we get 
\begin{align*}
    |H(x_0,v_0)|&\leq c|v_0|^{-3}\|\phi_0\|_{L^\infty([|v_0|^3/2]\times [3v_0/2,v_0/2])}\left\|\frac{h}{\phi_0}-\kappa_0\right\|_{L^\infty(H_{4|v_0|})}\\
    &\leq c|v_0|^{-\frac52}|v_0|^{-\frac12}\|h\|_{L^\infty(H_{4R_0|v_0|})}\\
    &\leq c|v_0|^{\frac12-\epsilon},
\end{align*}
where we have used \eqref{phi.asymptotic} and \eqref{hol.h/phi}. 
Using this, \eqref{phi.asymptotic} and \eqref{phi-1.asymptotic}, we derive
\begin{align*}
    c_0H(x_0,v_0)-\Phi(x_0,v_0)\leq |v_0|^{\frac12-\epsilon}\left(c_0C-\frac{|v_0|^{\epsilon}}{c}\right).
\end{align*}
This and \eqref{ineq5.liou2} imply
\begin{align}\label{goal3.liou2}
        c_0H(x_0,v_0)-\Phi(x_0,v_0)\leq \frac12 \quad\text{in }\mathcal{A}^+,
    \end{align}
    if $\mathcal{M}_0$ is sufficiently large, where 
    \begin{align*}
        \mathcal{A}^+\coloneqq\{-v\geq 4x^{\frac13}\,:\,\,v\in[0,\mathcal{M}_0^{-1}]\}\cup \{-v\geq 4x^{\frac13}\,:\,\,v\in[\mathcal{M}_0,\infty)\}.
    \end{align*}
    This concludes the proof of \eqref{goal0.liou2} in case $-v_0 \ge 4 x_0^{\frac{1}{3}}$.
\end{itemize}
Therefore, combining \eqref{goal1.liou2}, \eqref{goal2.liou2}, and \eqref{goal3.liou2} leads to \eqref{goal0.liou2}.

Now using \eqref{goal0.liou2}, we prove $H\equiv 0$. Suppose $H$ is nonzero, then there is a point $H(x_0,v_0)\neq 0$ with $(x_0,v_0)\in \{x>0\}\times \bbR$. Thus there is a constant $c_0\in\bbR$ such that $(c_0H-\Phi)(x_0,v_0)\geq1$. By \eqref{goal0.liou2}, there must be a strict maximum point $(x_1,v_1)\in A_1\cup A_2$ such that $(c_0H-\Phi)(x_1,v_1)\geq1$ and $(c_0H-\Phi)(x_1,v_1)\geq (c_0H-\Phi)(x,v)$ for any $(x,v)\in \{x>0\}\times \bbR$. 

We prove that this leads to a contradiction. Indeed, consider the function 
\begin{align*}
    F\coloneqq (c_0H-\Phi)(x_1,v_1)-(c_0 H-\Phi)(x,v)\geq0,
\end{align*}
which is a weak solution to 
\begin{align*}
    v\partial_xF-\partial_{vv}F=0\quad\text{in } \mathcal{D},
\end{align*}
where we write
\begin{align*}
    \mathcal{D}\coloneqq \Big( \left[(\mathcal{M}_0/2)^{-3},(\mathcal{M}_0/2)^3\right]\times[-2\mathcal{M}_0\times 2\mathcal{M}_0] \Big) \cup \Big(  \left[0,(\mathcal{M}_0/2)^{-3}\right]\times \left[-2\mathcal{M}_0,-(2\mathcal{M}_0)^{-1}\right] \Big).
\end{align*}
\begin{itemize}
    \item Let $v_1\geq0$. Indeed, by \eqref{v0non.strmax} in \autoref{lem.strmax}, which remains valid (with the same proof, using only interior Harnack) if we replace $x_0$, $v_0$, and ${H}_{\mathcal{R}}$ by $(\mathcal{M}_0/4)^{-3}$, $v_1$, and $\left[(\mathcal{M}_0/2)^{-3},(\mathcal{M}_0/2)^3\right]\times[-2\mathcal{M}_0\times 2\mathcal{M}_0]$, we deduce
\begin{align*}
    \frac12\leq(c_0H-\Phi)(x_1,v_1)-(c_0 H-\Phi)\left((\mathcal{M}_0/4)^{-3},v_1\right)=F\left( (\mathcal{M}_0/4)^{-3},v_1\right)\leq cF(x_1,v_1)=0,
\end{align*}
where we have also used \eqref{goal0.liou2}. This is a contradiction. 
\item Let $v_1<0$. Similarly, by modifying the proof of \eqref{v0neg.strmax} of \autoref{lem.strmax} with $x_0$, $x_1$, $v_0$, and $H_{\mathcal{R}}$ replaced by $x_1$, ${(\mathcal{M}_0/4)^3}$, $v_1$, and $\left[0,(\mathcal{M}_0/2)^{-3}\right]\times \left[-2\mathcal{M}_0,-(2\mathcal{M}_0)^{-1}\right]$,  we deduce the following contradiction
\begin{align*}
    \frac12\leq(c_0H-\Phi)(x_1,v_1)-(c_0 H-\Phi)\left((\mathcal{M}_0/4)^3,v_1\right)=F\left((\mathcal{M}_0/4)^3,v_1\right)\leq cF(x_1,v_1)=0.
\end{align*}
\end{itemize}
{Therefore, it must be $H\equiv 0$, which gives $\partial_{vv}(h-\kappa_0\phi_0)=0$ by $vH-\partial_{vv}(h-\kappa_0\phi_0)=0$. This yields $h=\kappa_0\phi_0+c_1+c_2v$ for some constants $c_1,c_2$.
Lastly, by the boundary condition given in \eqref{eq.liou2}, we get $c_1=c_2=0$.} Hence, $h = \kappa_0 \phi_0$, as desired. This completes the proof.
\end{proof}

\subsection{Liouville theorem with in-flow condition}
\label{subsec:inflow-Liouville}
As a last step, it remains to derive a 1D Liouville theorem for solutions to inhomogeneous equations from \autoref{lem.liou2}, where all solutions to homogeneous equations were classified.
In this subsection we consider solutions satisfying the in-flow condition with a polynomial source term.
\begin{theorem}\label{thm.liou.inflow}
    Let $l\in\{-1,0\}$ and let $p\in \mathcal{P}_{3l+1}$. Assume that $h$ is a weak solution to 
\begin{equation}\label{eq.liou.inflow1}
\left\{
\begin{alignedat}{3}
v \partial_xh-\partial_{vv}h&=p&&\qquad \mbox{in  $\{x>0\}\times \bbR$}, \\
h&=\sum_{i=0}^{3(l+1)}b_iv^{i}&&\qquad  \mbox{in $ \{x=0\}\times \{v>0\}$},\\
|h(x,v)|&\leq c\left(1+(|x|^{\frac13}+|v|)^{\frac12+3(l+1)-\epsilon}\right)&&\qquad\mbox{in $\{x>0\}\times \bbR$}
\end{alignedat} \right.
\end{equation}
for some $c\geq1$ and $\epsilon\in(0,1/2)$. Then there exist $P\in\mathcal{P}_{3(l+1)}$, $\mu_j \in \R$, and $\kappa_i\in\bbR$, such that
\begin{equation}\label{form.thm}
    h=P+\sum_{j=0}^{l}\mu_j\psi_{j}+\sum_{i=0}^{l}\kappa_i\phi_i,
\end{equation}
where the functions $\phi_i$ and $\psi_j$ are defined in \eqref{rel.phiandU} and \eqref{defn.psilambda}, respectively.
\end{theorem}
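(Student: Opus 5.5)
The plan is to reduce \autoref{thm.liou.inflow} to the homogeneous Liouville theorem \autoref{lem.liou2} by subtracting explicit particular solutions, and then to identify what remains. I treat the two cases $l=-1$ and $l=0$ separately.

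\emph{Case $l=-1$.} Here $p\in\mathcal{P}_{-2}$, so $p=0$. The boundary datum is the constant $b_0$, and the growth bound has order $\frac12-\epsilon$. Set $\tilde h:=h-b_0$. It solves \eqref{eq.liou2} with growth of order $\frac12-\epsilon<\frac72-\epsilon$, so \autoref{lem.liou2} gives $\tilde h=\kappa\phi_0$. But $\phi_0$ is homogeneous of degree $\frac12$ and not identically zero. Hence $|\phi_0(Rx,R^3... )|$ grows like $R^{1/2}$ along rays, which is incompatible with growth $R^{1/2-\epsilon}$ unless $\kappa=0$. So $h=b_0=P\in\mathcal{P}_0$, consistent with \eqref{form.thm} where both sums are empty.

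\emph{Case $l=0$.} Now $p\in\mathcal{P}_1$, which in the stationary 1D setting means $p=a_0+a_1v$ (since $x$ has kinetic degree $3$). The boundary datum is $b_0+b_1v+b_2v^2+b_3v^3$, and the growth bound has order $\frac72-\epsilon$. I proceed in three steps.

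\textbf{Step 1.} Find a kinetic polynomial $P_1\in\mathcal{P}_3$ with $v\partial_xP_1-\partial_{vv}P_1=a_1v$. The choice $P_1=a_1x$ works.

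\textbf{Step 2.} Handle the constant source $a_0$ with the function $\psi_0$ from \eqref{defn.psilambda}, using \eqref{psi0.sol}: $\psi_0$ solves $v\partial_x\psi_0-\partial_{vv}\psi_0=1$ with zero in-flow and is homogeneous of degree $2$. Note that the naive polynomial $-a_0v^2/2$ also solves the equation but has nonzero boundary values. Since the boundary data are free polynomials anyway, I may instead absorb the constant source into a polynomial correction. Concretely, I choose
$$Q(x,v)=a_1x-\tfrac{a_0}{2}v^2+c_1xv+c_2v^3/6\cdot 0+\dots,$$
so that $Q$ solves the equation with source $p$ and matches as much of the in-flow data as possible. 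The polynomial solutions of the homogeneous equation up to degree $3$ are $1$, $v$, and $x+v^3/6$; note that $v^2$ is not one of them, since $\partial_{vv}v^2=2$. Their traces on $\{x=0\}$ are $1$, $v$, $v^3/6$. The trace $v^2$ can only be produced through the source: the polynomial $-v^2/2$ carries source $1$, and $\psi_0$ then cancels that source. Hence $\mu_0(\psi_0+v^2/2)$ has zero source and boundary trace $\mu_0v^2/2$, which produces the $b_2$ term. The equation has a constant source, so this sets $\mu_0=2b_2$. The requirement $\mu_0 = a_0$-consistency is resolved by using both $\psi_0$ and the polynomial $-v^2/2$ independently. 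Writing $P=\alpha+\beta v+\gamma(x+v^3/6)+a_1x+\delta v^2$ and using the term $\mu_0\psi_0$, the equation and boundary conditions become linear constraints in $(\alpha,\beta,\gamma,\delta,\mu_0)$:
$$-2\delta+\mu_0=a_0,\qquad \alpha=b_0,\quad \beta=b_1,\quad \delta=b_2,\quad \gamma/6=b_3.$$
These are always solvable.

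\textbf{Step 3.} The difference $\tilde h:=h-P-\mu_0\psi_0$ solves \eqref{eq.liou2}. It has growth of order $\frac72-\epsilon$, since $\psi_0$ grows like degree $2$ and $P$ like degree $3$. By \autoref{lem.liou2}, $\tilde h=\kappa_0\phi_0$, which gives \eqref{form.thm}.

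\emph{Main obstacle.} The delicate point is Step 2, namely justifying that $\psi_0$ is a genuine weak solution in the sense of \autoref{deff.weak.stationary}, with zero in-flow and growth of exactly degree $2$. I would take this from the explicit formula and asymptotics in the appendix, together with \autoref{lem.classical} in its stationary version, provided $\partial_v\psi_0\in L^2_{\mathrm{loc}}$ up to the boundary. The latter follows from the $C^{1,1}$ homogeneity. It is also important that $P$ lies in $\mathcal{P}_{3(l+1)}$ and that every subtracted term is compatible with the growth exponent required by \autoref{lem.liou2}.
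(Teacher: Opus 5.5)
Your proposal is correct and follows the paper's proof: subtract polynomial particular solutions and a multiple of $\psi_0$ so that the remainder solves the homogeneous problem with zero in-flow and growth of order $\frac72-\epsilon$, then apply \autoref{lem.liou2}. The paper obtains the particular solutions from \autoref{lem.exiinflow} rather than from your explicit linear system, and for $l=-1$ it reduces to the case $l=0$ instead of using the $\frac12$-homogeneity of $\phi_0$. Two cosmetic points: your $\psi_0$ is normalized as in \eqref{psi0.sol}, which differs from the one in \eqref{defn.psilambda} by a multiple of $v^2$ that is absorbed into $P\in\mathcal{P}_3$; and the stray terms in your first attempt at $Q$ (e.g.\ $xv\notin\mathcal{P}_3$) should simply be dropped in favor of your final linear system, which is correct.
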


In order to prove \autoref{thm.liou.inflow}, we have to find weak solutions satisfying the in-flow boundary condition with a right-hand side being a homogeneous polynomial.
\begin{lemma}\label{lem.exiinflow}
Let $\lambda \in \N \cup \{0\}$, $p \in \cP_{\lambda-2}$ be a homogeneous polynomial of degree $\lambda-2$ and $a \in \R$ be such that either $p\neq0$ or $a\neq0$. Then there is a weak solution $h$ to
    \begin{align*}
    \begin{cases}
        v \partial_x h - \partial_{vv} h &= p ~~ \text{ in } (0,\infty) \times \R, \\
        h(x,v) &= a v^{\lambda} ~~ \text{ on } \{ 0 \} \times \{ v > 0 \}.
    \end{cases}
    \end{align*}
Indeed, there exist $P\in \cP_\lambda$ and $\mu=\mu(\lambda,a,p) \in \R$ such that
\begin{align*}
    h=P+\mu\psi_{\left[\frac\lambda3\right]}.
\end{align*}
Moreover, if $\lambda=3l$ or $3l+1$ for some nonnegative integer $l$, then $\mu=0$.
\end{lemma}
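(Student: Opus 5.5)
The plan is to build the solution as an explicit homogeneous kinetic polynomial and then correct it with $\psi_{[\lambda/3]}$ if necessary. The equation is invariant under $(x,v)\mapsto(r^3x,rv)$, and $v\partial_x-\partial_{vv}$ lowers kinetic degree by two. So we look for $h$ homogeneous of degree $\lambda$. Write $L:=[\lambda/3]$ and make the ansatz
\begin{align*}
P(x,v)=\sum_{k=0}^{L} c_k\, x^k v^{\lambda-3k}, \qquad p(x,v)=\sum_{j} p_j\, x^j v^{\lambda-2-3j},
\end{align*}
where in $p$ the index runs over all $j\ge 0$ with $\lambda-2-3j\ge 0$. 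Since $P(0,v)=c_0v^\lambda$, the in-flow condition forces $c_0=a$.

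Computing $v\partial_xP-\partial_{vv}P$ and matching the coefficient of the monomial $x^jv^{\lambda-2-3j}$ gives the triangular recursion
\begin{align*}
(j+1)c_{j+1}-(\lambda-3j)(\lambda-3j-1)\,c_j = p_j .
\end{align*}
This determines $c_1,c_2,\dots$ successively from $c_0=a$. The recursion is needed exactly for those $j$ with $\lambda-2-3j\ge 0$, since only these monomials occur.

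Next we split into cases according to $\lambda \bmod 3$.
\begin{itemize}
\item If $\lambda=3l$ or $\lambda=3l+1$, then the last admissible index is $j=l-1$. The recursion then produces $c_l$, which multiplies the legitimate monomial $x^lv^{\lambda-3l}$. No equation is left over, so $h=P$ and $\mu=0$.
\item If $\lambda=3l+2$, the index $j=l$ gives the monomial $x^l v^0$. Its equation would require a coefficient $c_{l+1}$ of $x^{l+1}v^{-1}$, which is not a polynomial term.
\end{itemize}
In the second case we set $c_{l+1}=0$. The resulting polynomial $P$ then solves the equation up to the residual $\big(p_l+2c_l\big)x^l$. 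This residual is removed by adding $\mu\psi_l$ with $\mu=p_l+2c_l$ (up to the normalization in \eqref{defn.psilambda}). Here $\psi_l$ is the appendix function, homogeneous of degree $3l+2$, which solves $v\partial_x\psi_l-\partial_{vv}\psi_l=x^l$ with zero in-flow, generalizing \eqref{psi0.sol}. Adding it does not change the boundary values on $\{x=0,v>0\}$.

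It remains to check that $h$ is a weak solution in the sense of \autoref{deff.weak.stationary}. For the polynomial part this follows from integration by parts as in \autoref{lem.classical}, since polynomials are smooth up to the boundary. For $\psi_l$ we need the regularity and growth properties established in the appendix: continuity up to $\{x=0\}$ and $\partial_v\psi_l\in L^2_{\mathrm{loc}}$. With these, the same integration by parts applies.

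The main obstacle is the case $\lambda\equiv 2 \pmod 3$, i.e. identifying the obstruction and absorbing it with $\psi_l$. This step relies on the appendix construction of $\psi_l$ (an ODE in the self-similar variable $v^3/x$) being a genuine weak solution with the stated homogeneity. The recursion itself is routine.
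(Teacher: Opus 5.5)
Your triangular recursion is correct, and it handles the polynomial cases $\lambda\equiv 0,1 \pmod 3$ and the obstruction for $\lambda=3l+2$ cleanly. The gap is in the last step. The function $\psi_l$ of \eqref{defn.psilambda} is \emph{not} a zero-in-flow solution of $v\partial_x\psi_l-\partial_{vv}\psi_l=x^l$. By \eqref{eq.reg.psilambda} it solves the \emph{homogeneous} equation with in-flow data $c_\lambda v^{3l+2}$, where $c_\lambda\neq0$. Only for $l=0$ does the paper switch normalization: in \eqref{psi0.sol} it writes $\psi_0$ for $\psi_0-c\,v^2$. So adding $\mu\psi_l$ to your $P$ does not cancel the residual $(p_l+2c_l)x^l$. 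It also changes the boundary values, contrary to what you assert.

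The corrector you want has to be assembled, and your own recursion does it. Run the recursion with right-hand side $0$ and $c_0=c_\lambda$. This gives $Q=\sum_{k=0}^{l}q_kx^kv^{3l+2-3k}$ with $Q(0,v)=c_\lambda v^{3l+2}$ and $v\partial_xQ-\partial_{vv}Q=-2q_l\,x^l$, where $q_l=c_\lambda\prod_{j=0}^{l-1}\frac{(\lambda-3j)(\lambda-3j-1)}{j+1}\neq0$. Then $\Psi:=(\psi_l-Q)/(2q_l)$ is a weak solution of $v\partial_x\Psi-\partial_{vv}\Psi=x^l$ with zero in-flow. Hence $h=P+\mu\Psi=\big(P-\tfrac{\mu}{2q_l}Q\big)+\tfrac{\mu}{2q_l}\psi_l$ has the required form.

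The ingredient you did not address is $c_\lambda\neq0$, i.e.\ that $\psi_l$ has nonvanishing trace on $\{x=0,\,v>0\}$. This is indispensable. In degree $3l+2$ the map $R\mapsto v\partial_xR-\partial_{vv}R$ on homogeneous polynomials is bijective, and the unique $R$ with $v\partial_xR-\partial_{vv}R=x^l$ has $R(0,v)\neq0$. So only a non-polynomial homogeneous solution with nonzero trace can absorb the obstruction.

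With this repair, your argument is a valid alternative to the paper's. The paper instead subtracts $av^\lambda$ and takes a polynomial particular solution without boundary condition from \cite[Lemma 3.7]{RoWe25}. It then corrects the trace by solving Kummer's equation for homogeneous solutions of the homogeneous problem. These are polynomial when $\lambda\equiv 0,1\pmod 3$, and a multiple of $\psi_l$ when $\lambda=3l+2$, using the explicitly computed nonzero trace. Your recursion makes the polynomial cases more elementary, but it still needs that same trace computation in the remaining case.
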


The proof of this lemma will be provided in Section \ref{sec:appendix}.

With \autoref{lem.exiinflow} and \autoref{lem.liou2}, we are able to prove \autoref{thm.liou.inflow}.
\begin{proof}[Proof of \autoref{thm.liou.inflow}]
First, we consider the case $l=0$. We observe
    \begin{equation*}
\left\{
\begin{alignedat}{3}
v \partial_x(h-G)-\partial_{vv}(h-G)&=p-\partial_{vv}G\eqqcolon q&&\qquad \mbox{in  $\{x>0\}\times \bbR$}, \\
(h-G)&=0&&\qquad  \mbox{in $ \{x=0\}\times \{v>0\}$}, \\
|(h-G)(x,v)|&\leq M\left(1+(|x|^{\frac13}+|v|)^{\frac12+3-\epsilon}\right)&&\qquad\mbox{in $\{x>0\}\times \bbR$},
\end{alignedat} \right.
\end{equation*}
where we write 
\begin{align}\label{eq.G.liou.inflow1} 
    G\coloneqq \sum_{i=0}^{3}b_iv^i \quad \text{and} \quad q=\sum_{i=2}^{3}c_iq_i,
\end{align}
    where $c_i\in \bbR$ and the $q_i$ are homogeneous polynomials of degree $i-2$.
    Now by using \autoref{lem.exiinflow}, there is a weak solution $h_i$ to 
    \begin{equation*}
\left\{
\begin{alignedat}{3}
v \partial_xh_i-\partial_{vv}h_i&=c_iq_i&&\qquad \mbox{in  $\{x>0\}\times \bbR$}, \\
h_i&=0&&\qquad  \mbox{in $ \{x=0\}\times \{v>0\}$}
\end{alignedat} \right.
\end{equation*}
with $h_i$ being of the form 
\begin{align*}
    h_i=p_{i}+\nu_{i}\psi_{[i/3]},
\end{align*}
where $p_{i}\in \cP_{i}$ and $\nu_i\in\bbR$ with $\nu_{i}=0$ when $i= 3l$ or $i=3l+1$ for some nonnegative integer $l$. Therefore, we get that
\begin{align}\label{eq.H.liou.inflo1}
    H\coloneqq h-G-\sum_{i=0}^{3}(p_i+\nu_i\psi_{[i/3]})
\end{align}
is a weak solution to 
\begin{equation}\label{eq2.liou.inflow1}
\left\{
\begin{alignedat}{3}
v \partial_xH-\partial_{vv}H&=0&&\qquad \mbox{in  $\{x>0\}\times \bbR$}, \\
H&=0&&\qquad  \mbox{in $ \{x=0\}\times \{v>0\}$},\\
|H(x,v)|&\leq M\left(1+(|x|^{\frac13}+|v|)^{\frac12+3-\epsilon}\right)&&\qquad\mbox{in $\{x>0\}\times \bbR$},
\end{alignedat} \right.
\end{equation}
where we have used the fact that 
\begin{align*}
    |p_i(x,v)|+|\nu_i\psi_{[i/3]}(x,v)|\leq c\left(1+(|x|^{\frac13}+|v|)^{\frac12+3-\epsilon}\right)
\end{align*}
for some constant $c$, as $i\leq 3<\frac12+3-\epsilon$. Therefore, we can apply \autoref{lem.liou2} to \eqref{eq2.liou.inflow1} and see that
\begin{align*}
    H=\kappa_0\phi_0
\end{align*}
for some $\kappa_0\in\bbR$. By this, \eqref{eq.G.liou.inflow1}, and \eqref{eq.H.liou.inflo1}, we have
\begin{align*}
    h=\sum_{i=0}^{3}b_iv_i+\sum_{i=0}^{3}(p_i+\nu_i\psi_{[i/3]})+\kappa_0\phi_i=P+\mu_{0}\psi_{0}+\kappa_0\phi_0
\end{align*}
for some $P\in \cP_{3}$, $\mu_0\in\bbR$.
This gives the desired result. In case $l = -1$, we observe that $h$ also satisfies the third condition given in \eqref{eq.liou.inflow1} with $l=0$ so that $h$ is of the form \eqref{form.thm}. By the growth condition, we get $h=c$ for some constant $c$. This completes the proof.
\end{proof}

\subsection{Liouville theorem with unbounded source terms}
In this subsection, we classify solutions in the half-space whose equation involves unbounded source terms, such as
\begin{align*}
    v \partial_x h - \partial_{vv} h = \partial_{vv} \phi_0.
\end{align*}
We will actually consider more general source terms of the form \eqref{zero.liou.inflow2.Q}.
Such a classification result is needed in order to prove expansions for kinetic equations with \emph{non-constant coefficients} via a blow up argument (see \autoref{lem.exp.nd.a}).

Solutions to such equations are built from functions of the following form
\begin{align}\label{defn.Phii}
    {\pmb{\Phi}}_{i}=v^{i}\partial_{v}^{(i)}\phi_0,\,
    {\pmb{\Phi}}_{j+3}=v^{j}\partial_{v}^{(j)}\psi_0,\,
    {\pmb{\Phi}}_{5}=\partial_v\phi_1
\end{align}
for $i\in\{0,1,2\}$ and $j\in\{0,1\}$. Given $\lambda\in\{0,1,2,3\}$ and $\epsilon\in(0,1)$ with $\lambda+\epsilon< 3+\frac12$, we define the following space, which contains all solutions to equations involving unbounded source terms of the form \eqref{zero.liou.inflow2.Q} with suitable growth at infinity:
\begin{align*}
    \overline{\mathcal{T}}_{\lambda,\epsilon}\coloneqq\mathrm{span}\left\{p,p_i{\pmb{\Phi}_{i}},p_{j+3}{\pmb{\Phi}_{j+3}},p_5{\pmb{\Phi}}_{5}\,\Bigg|\begin{array}{l} p\in \cP_{\lambda}\,,p_i\in \cP_{[\lambda+\epsilon-\frac12]},\, p_{j+3}\in\cP_{\lambda-2},\,p_5\in\cP_{[\lambda+\epsilon-\frac52]}\\
    p_1(0)=p_{2}(0)= Dp_{2}(0)=p_4(0)=0
    \end{array}\right\},
\end{align*}
where $i\in\{0,1,2\}$ and $j\in\{0,1\}$.

When $\lambda+\epsilon<\frac32$, then the space $\overline{\mathcal{T}}_{\lambda,\epsilon}$ simplifies significantly, since any $P\in \overline{\mathcal{T}}_{\lambda,\epsilon}$ is of the form $P=p+c\phi_{0}$ for some $p\in\cP_1$ and $c\in\bbR$.

Next, we define the subspace ${\mathcal{T}}_{\lambda,\epsilon}$ of the vector space
$\overline{\mathcal{T}}_{\lambda,\epsilon}$ by
\begin{align}\label{defn.vectorT}
    {\mathcal{T}}_{\lambda,\epsilon}\coloneqq\left\{P\in \overline{T}_{\lambda,\epsilon}\,\Bigg|\, P= 0\text{ on }\{x=0\}\times \{v>0\}\right\}.
\end{align}

Note that $\pmb{\Phi}_{0},\pmb{\Phi}_{1},\pmb{\Phi}_{2}$ have homogeneity $\frac{1}{2}$, while $\pmb{\Phi}_{3},\pmb{\Phi}_{4}$ have homogeneity $2$, and $\pmb{\Phi}_{5}$ has homogeneity $\frac{5}{2}$. Hence, the polynomials $p_k$ are chosen precisely such that the products $p_k \pmb{\Phi}_k$ do not grow faster than $|z|^{\lambda + \eps}$ at infinity for any $k \in \{ 0 , \dots, 5\}$. Moreover, the assumptions $p_1(0)=p_{2}(0)= Dp_{2}(0)=p_4(0)=0$ (together with \eqref{zero.liou.inflow2.a}) guarantee that the space $\overline{\mathcal{T}}_{\lambda,\epsilon}$ remains ''invariant under the blow-up proof of \autoref{lem.exp.nd.a}`` in the sense that applying rescaled elements from $\overline{\mathcal{T}}_{\lambda,\epsilon}$ to an equation with suitably regular coefficients leads to a limiting equation that is again solved by an element from $\overline{\mathcal{T}}_{\lambda,\epsilon}$.

Now we provide the Liouville theorem with possibly unbounded source terms.
\begin{theorem}\label{thm.liou.inflow2}
    Let $\lambda \in \{0,1,2,3\}$, $\epsilon\in(0,1)$ with $\lambda+\epsilon<3+\frac12$, and $p \in \cP_{\lambda-2}$. Let $h$ be a weak solution to
    \begin{equation*}
\left\{
\begin{alignedat}{3}
v \partial_xh-\partial_{vv}h&=p+Q&&\qquad \mbox{in  $\{x>0\}\times \bbR$}, \\
h&=0&&\qquad\mbox{in $\{x=0\}\times \{v>0\}$},\\
|h(x,v)|&\leq c\left(1+(|x|^{\frac13}+|v|)^{\lambda+\epsilon}\right)&&\qquad\mbox{in $\{x>0\}\times \bbR$},
\end{alignedat} \right.
\end{equation*}where
    \begin{align}\label{zero.liou.inflow2.Q}
    {Q}&\coloneqq \sum_{i=0}^{2}{q}_{0,i}{\pmb{\Phi}}_{i}+\sum_{k=0}^{4}{q}_{1,k}\partial_{v}{\pmb{\Phi}}_{k}+\sum_{k=0,1,3}{q}_{2,k}\partial_{vv}{\pmb{\Phi}}_{k},
\end{align}
and  for $i\in\{0,1,2\}$ and $j\in\{0,1\}$,
\begin{equation}\label{zero.liou.inflow2.a}
    \begin{aligned}
        &{q}_{0,i}\in\cP_{[\lambda+\epsilon-\frac52]},\, {q}_{1,i}\in \cP_{[\lambda+\epsilon-\frac32]},\, {q}_{1,j+3}\in\cP_{\lambda-3},\, {q}_{2,i}\in \cP_{[\lambda+\epsilon-\frac12]}, \,{q}_{2,3}\in \cP_{\lambda-2},\\
        &{q}_{1,2}(0)={{q}}_{2,0}(0)={{q}}_{2,1}(0)={{q}}_{2,3}(0)=D{q}_{2,1}(0)=0.
    \end{aligned}
\end{equation}
Then, $h\in \mathcal{T}_{\lambda,\epsilon}$.
\end{theorem}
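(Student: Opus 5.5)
The approach is to subtract an explicit particular solution lying in $\overline{\mathcal{T}}_{\lambda,\epsilon}$ and then apply \autoref{thm.liou.inflow} to the remainder.

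\textbf{Step 1: the homogeneous pieces.} We first compute the action of $L := v\partial_x - \partial_{vv}$ on products $m\,\pmb{\Phi}_k$, where $m$ is a monomial in $(x,v)$. We use that $L\phi_0 = 0$, $L\psi_0 = 1$, $L\phi_1 = 0$, and $\partial_x\phi_1 = c_1\phi_0$. Since $[L,v^i\partial_v^{(i)}]$ and $[L,\partial_v]=-\partial_x$ are explicit, we obtain for instance
\[
L(\pmb{\Phi}_1) = -2\partial_{vv}\phi_0 - v\partial_x\phi_0 + \dots = -2\partial_{vv}\pmb{\Phi}_0 + \dots .
\]
Using the ODE $v\partial_x\phi_0=\partial_{vv}\phi_0$, all such images can be expressed in terms of $\pmb{\Phi}_k$, $\partial_v\pmb{\Phi}_k$, and $\partial_{vv}\pmb{\Phi}_k$. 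Multiplying by a polynomial $m$ produces additional terms $(Lm)\pmb{\Phi}_k - 2\partial_v m\,\partial_v\pmb{\Phi}_k$.

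\textbf{Step 2: solving for the source.} The aim is to show that for each admissible $Q$ as in \eqref{zero.liou.inflow2.Q}–\eqref{zero.liou.inflow2.a} there is $P_Q\in\overline{\mathcal{T}}_{\lambda,\epsilon}$ with $LP_Q = Q + \tilde p$, where $\tilde p$ is a polynomial of degree at most $\lambda-2$. This is done by triangular elimination, proceeding from the highest polynomial degree downward in the coefficients $q_{2,k}$, then $q_{1,k}$, then $q_{0,i}$. At each stage we pick an ansatz $m\,\pmb{\Phi}_{k'}$ whose leading error matches the current term, and we push lower-order remainders into later stages. The vanishing conditions $q_{1,2}(0)=q_{2,0}(0)=q_{2,1}(0)=q_{2,3}(0)=Dq_{2,1}(0)=0$ are exactly what is needed so that the resulting $p_k$ satisfy $p_1(0)=p_2(0)=Dp_2(0)=p_4(0)=0$. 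A term $q_{2,0}(0)\,\partial_{vv}\phi_0$ would require a $\log$-type corrector that is not in the space, and similarly for the other excluded constants. The main obstacle is expected here: the bookkeeping must close, and in particular the $\partial_v\pmb{\Phi}_{3,4}$ terms and $\partial_x\phi_1$ must feed back consistently. A homogeneity count (each term has degree $\lambda+\epsilon-2$, which is noninteger away from $\tfrac12+\mathbb{Z}$) should guarantee that only finitely many, explicitly invertible, linear systems arise.

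\textbf{Step 3: boundary values and the Liouville theorem.} The function $P_Q$ does not vanish on $\{x=0,v>0\}$. Its trace there is a combination of $v^j$ times the traces of $\pmb{\Phi}_k$. Since $\phi_0$ and $\phi_1$ vanish there, as do all their $v$-derivatives, only the $\psi_0$-terms and polynomial parts contribute, so the trace is a polynomial in $v$ of degree at most $3$. Then $\tilde h := h - P_Q$ solves
\[
L\tilde h = p - \tilde p \ \text{ in } \{x>0\}\times\mathbb{R}, \qquad \tilde h = \text{polynomial in } v \ \text{ on } \{x=0\}\times\{v>0\},
\]
and satisfies the growth bound $\lesssim 1+|z|^{\lambda+\epsilon}$, with $\lambda+\epsilon<3+\tfrac12$.

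\textbf{Step 4: conclusion.} By \autoref{thm.liou.inflow}, with $l=0$ or $l=-1$ according to whether $\lambda+\epsilon>\tfrac12$, we get
\[
\tilde h = P + \mu_0\psi_0 + \kappa_0\phi_0 ,
\]
and the growth bound forces $\deg P\le\lambda$, with $\mu_0=0$ unless $\lambda\ge2$. Hence $h\in\overline{\mathcal{T}}_{\lambda,\epsilon}$. Since $h=0$ on $\{x=0\}\times\{v>0\}$, the definition \eqref{defn.vectorT} gives $h\in\mathcal{T}_{\lambda,\epsilon}$.
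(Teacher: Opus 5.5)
Your proposal follows the paper's proof: build an explicit $H\in\overline{\mathcal{T}}_{\lambda,\epsilon}$ with $(v\partial_x-\partial_{vv})H=Q$ (modulo $\cP_{\lambda-2}$) and polynomial trace on $\{x=0\}\times\{v>0\}$, apply \autoref{thm.liou.inflow} to $h-H$, and use $h=0$ on $\{x=0\}\times\{v>0\}$ to land in $\mathcal{T}_{\lambda,\epsilon}$. Your homogeneity argument bounding $\deg P$ and removing $\mu_0$ and $\kappa_0$ covers a point the paper passes over.

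One side remark is wrong, though nothing in the argument depends on it. The vanishing conditions on $q_{1,2},q_{2,0},q_{2,1},q_{2,3}$ do not prevent log-type correctors; for instance $\partial_{vv}\pmb{\Phi}_0$ is a constant multiple of $(v\partial_x-\partial_{vv})\pmb{\Phi}_1$. They instead mirror the normalization $p_1(0)=p_2(0)=Dp_2(0)=p_4(0)=0$, which excludes exactly those homogeneous correctors from $\overline{\mathcal{T}}_{\lambda,\epsilon}$. Separately, taking $l=0$ throughout in \autoref{thm.liou.inflow} avoids your borderline case $\lambda+\epsilon=\tfrac12$.
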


\begin{proof}
We claim that for any $Q$ as in \eqref{zero.liou.inflow2.Q}, \eqref{zero.liou.inflow2.a}, we can find $H \in \overline{\mathcal{T}}_{\lambda,\epsilon}$ such that
 \begin{equation}\label{eq.liou.inflow2}
\left\{
\begin{alignedat}{3}
v \partial_x H-\partial_{vv}H&=Q&&\qquad \mbox{in  $\{x>0\}\times \bbR$}, \\
H&=b_0v^2+b_1v^3&&\qquad\mbox{in $\{x=0\}\times \{v>0\}$},\\
|H(x,v)|&\leq c\left(1+(|x|^{\frac13}+|v|)^{\lambda+\epsilon}\right)&&\qquad\mbox{in $\{x>0\}\times \bbR$}
\end{alignedat}\right\}
\end{equation}
for some constant $b_0,b_1,c\in\bbR$. Note that once this claim is proved, we can deduce the desired result by applying \autoref{thm.liou.inflow} to $h-H$, which yields 
\begin{align*}
    h-H=P+c_0\phi_0+c_1\psi_0
\end{align*}
for some constants $c_0,c_1\in \bbR$ and $P\in \cP_{\lambda}$. Since $P,\phi_0,\psi_0\in \overline{\mathcal{T}}_{\lambda,\epsilon}$ and $h=H+P+c_0\phi_0+c_1\psi_0\equiv0$ on $\{x=0\}\times \{v>0\}$, we get $h\in \mathcal{T}_{\lambda,\epsilon}$, and conclude the proof.

Hence, it remains to prove the claim, i.e. to find $H \in \overline{\mathcal{T}_{\lambda,\epsilon}}$ such that \eqref{eq.liou.inflow2} holds true. To prove it, we note that by elementary computations using the fact that  \eqref{lemma:derx.phim} and \eqref{pde.phim}, it holds
    \begin{equation}\label{basic.formula.liou.inflow2}
    \begin{aligned}
        &(v\partial_x-\partial_{vv})(v{\pmb{\Phi}}_{0})=-2\partial_{v}{\pmb{\Phi}}_{0},\,(v\partial_x-\partial_{vv})(v^2{\pmb{\Phi}_{0}})=-2{\pmb{\Phi}}_{0}-4{\pmb{\Phi}}_{1},\,(v\partial_x-\partial_{vv})(v^2{\pmb{\Phi}}_{1})={\pmb{\Phi}}_{2},\\
        &(v\partial_x-\partial_{vv})(v{\pmb{\Phi}_{1}})=3\partial_v{\pmb{\Phi}}_{0}-5\partial_v{\pmb{\Phi}}_{1},\,(v\partial_x-\partial_{vv})(v^2{\pmb{\Phi}}_{1})=-6{\pmb{\Phi}}_{1}-7{\pmb{\Phi}}_{2},\\
        &(v\partial_x-\partial_{vv})(v^2{\pmb{\Phi}}_{2})=-10{\pmb{\Phi}}_{2}-10v\partial_v{\pmb{\Phi}}_{2},\,(v\partial_x-\partial_{vv}){\pmb{\Phi}}_{5}=c_0{\pmb{\Phi}}_{0},\\
        &v\partial_{v}{\pmb{\Phi}}_0={\pmb{\Phi}}_1,\,v\partial_{v}{\pmb{\Phi}}_1={\pmb{\Phi}}_1+{\pmb{\Phi}}_2,\, v\partial^2_v{\pmb{\Phi}}_0=\partial_v{\pmb{\Phi}}_1-\partial_v{\pmb{\Phi}}_0,\,v^2\partial_{v}^2{\pmb{\Phi}}_0={\pmb{\Phi}}_2,\,v^2\partial_v^2{\pmb{\Phi}}_1=-{\pmb{\Phi}}_2+v\partial_v{\pmb{\Phi}}_2.
    \end{aligned}
    \end{equation}
    Similarly, the same observations given in \eqref{basic.formula.liou.inflow2} hold with ${\pmb{\Phi}}_{0}$ and ${\pmb{\Phi}}_{1}$ replaced by ${\pmb{\Phi}}_{3}$ and ${\pmb{\Phi}}_{4}$, respectively, by \eqref{eq.der.psilambda} and \eqref{eq.reg.psilambda}. By using these formulas \eqref{basic.formula.liou.inflow2}, we can easily construct $H \in \overline{\mathcal{T}}_{\lambda,\epsilon}$ such that \eqref{eq.liou.inflow2} holds true, given any $Q$ as in \eqref{zero.liou.inflow2.Q}, \eqref{zero.liou.inflow2.a}. The proof is complete.
\end{proof}
\section{Liouville theorem in the half-space for in-flow}
\label{sec:Liouville-half-space}
In this section, we prove the higher dimensional Liouville theorem in the half-space for in-flow. 
\subsection{Liouville theorem with polynomials}
Let us recall the space $\mathcal{P}_\lambda$ containing all polynomials of degree less or equal than $\lambda$. Next for any $a>0$, $\epsilon\in(0,1)$, and $\lambda\in\{0,1,2,3\}$, we define vector spaces
\begin{align*}
    \mathcal{T}_{\lambda,\epsilon,a}\coloneqq  \left\{ \begin{array}{l}p_0+p_{0,1}\phi_{0,a}+p_{0,2}\partial_{v_n}\phi_{1,a}+q_{0,1}\psi_{0,a}\mbox{ with} \\
   p_0\in\cP_\lambda,\, p_{0,1}\in \cP_{[\lambda+\epsilon-\frac12]},\,q_{0,1}\in \cP_{\lambda-2},\,
   p_{0,2}\in \cP_{[\lambda+\epsilon-\frac52]}\end{array}\right\},
\end{align*}
where the functions $\phi_{m,a}(x_n,v_n)\coloneqq\phi_{m}(x_n/a,v_n/a)$ for $m\in\{0,1\}$ and $\psi_{0,a}\coloneqq\psi_{0}(x_n/a,v_n/a)$ solve the equation $(v_n\partial_{x_n,x_n}-a^2\partial_{v_n,v_n})f=0$.

We are now ready to prove a general Liouville theorem in the half-space with in-flow boundary condition when the data are given by polynomials.
\begin{theorem}\label{lemma:nD-classification-inflow}
    Let $\lambda\in \{0,1,2,3\}$, $\eps\in(0,1)$ with $\lambda+\eps<3+\frac12$, $p\in \cP_{\lambda-2}$, and $q\in \mathcal{P}_{\lambda}$. Let $f$ be a weak solution to
    \begin{equation}\label{eq.nd.inflow}
\left\{
\begin{alignedat}{3}
(\partial_t+v\cdot\nabla_x)f-a^{i,j}\partial_{v_i,v_j}f&=p&&\qquad \mbox{in  $\bbR\times\{x_n>0\}\times \bbR^n$}, \\
f&=q&&\qquad  \mbox{in $\bbR\times\{x_n=0\}\times \{v_n>0\}$},
\end{alignedat} \right.
\end{equation}
where $a^{i,j}$ is a constant matrix satisfying an uniformly ellipticity condition. Suppose that there is a constant $c\geq1$ such that for any $R>0$,
\begin{align}\label{cond.nd-inflow-poly}
    \|f\|_{L^\infty(\mathcal{H}_R)}\leq c(1+R)^{\lambda+\epsilon}.
\end{align}
Then, $f=P$ for some $P\in {\mathcal{T}}_{\lambda,\epsilon,a}$ with $a\coloneqq \sqrt{a^{n,n}}$. 
\end{theorem}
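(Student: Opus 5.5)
We reduce to the case $a^{i,j}$ with $a^{n,n}=1$ and $a^{n,i}=0$ for $i<n$, and then to the 1D classification \autoref{thm.liou.inflow}. The reduction is a linear change of variables preserving the half-space $\{x_n>0\}$ and the incoming set $\{v_n>0\}$. We set $\tilde v' = v' - (a^{n,\cdot}/a^{n,n}) v_n$ and correspondingly $\tilde x' = x' - (a^{n,\cdot}/a^{n,n})x_n$, which commutes with the transport operator. We then rescale $(x_n,v_n)\mapsto (x_n/a,v_n/a)$ and the tangential variables so that the diffusion becomes block diagonal. This is why the statement is phrased with $\phi_{m,a},\psi_{0,a}$. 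The growth condition \eqref{cond.nd-inflow-poly} and the polynomial data $p,q$ are preserved by this affine change, so it suffices to treat $a^{i,j}=\delta_{ij}$, i.e.\ the Kolmogorov equation.

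The main step is to show that suitable tangential derivatives of $f$ kill the $(t,x',v')$-dependence, by induction on $\lambda$. The equation is invariant under the tangential group translations $(t,x',v')$ (the left action with $v_{0,n}=0$, $x_{0,n}=0$), since these preserve $\{x_n>0\}$ and $\gamma_-$. Consider the increments $\delta_h f = f(z_h\circ z)-f(z)$ for tangential $z_h$; these solve the same problem with data $\delta_h p$, $\delta_h q$ of lower kinetic degree. By boundary H\"older estimates up to $\gamma_0$ (\cite{Sil22,Zhu24}, rescaled) together with the growth bound, $\delta_h f$ grows like $|h|^{\alpha}(1+R)^{\lambda+\eps-\alpha}$. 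Iterating difference quotients in the style of the Liouville proofs in \cite{RoWe25}, we deduce that suitable high-order tangential difference quotients are globally bounded solutions with zero data. Using the base case $\lambda+\eps<\tfrac12+\alpha$ (handled below), they are constants, hence $f$ is polynomial in the tangential variables $(t,x',v')$ of controlled kinetic degree, and the coefficients $f_\beta(x_n,v_n)$ satisfy the triangular system
\[
v_n\partial_{x_n} f_\beta - \partial_{v_nv_n} f_\beta = p_\beta - (\text{terms from } \partial_t, v'\cdot\nabla_{x'}, \Delta_{v'} \text{ acting on higher } f_{\beta'}).
\]
We solve this from the top degree down. The top coefficients solve the homogeneous 1D problem with polynomial in-flow data, and each lower level has a source that is a polynomial plus products of polynomials with $\phi_0,\psi_0,\partial_{v_n}\phi_1$.

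The base case is a bounded solution (growth $<\tfrac12+\alpha$) with zero data in the half-space. Here we use the $t,x',v'$ H\"older estimate at scale $R\to\infty$: the oscillation in tangential directions on $\mathcal{H}_R$ is at most $CR^{-\alpha}(1+R)^{\lambda+\eps}\to0$. Hence $f=f(x_n,v_n)$, and \autoref{lem.liou2} together with \autoref{thm.liou.inflow} applies.

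At each level of the triangular system we apply \autoref{thm.liou.inflow} (or \autoref{thm.liou.inflow2} when the source contains $\phi_0,\psi_0$ terms), after checking that the growth of $f_\beta$ is at most $\lambda+\eps-|\beta|_{\mathrm{kin}}$. Collecting the terms gives $f=p_0+p_{0,1}\phi_0+p_{0,2}\partial_{v_n}\phi_1+q_{0,1}\psi_0$, with polynomial degrees dictated by homogeneity, as in $\mathcal{T}_{\lambda,\eps,a}$.

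The main obstacle is making the tangential-regularity step rigorous with only weak-solution information. We need that the growth bound on increments transfers correctly through the kinetic group law, since $x$-translations mix with $v_n$ via $x_n + t v_n$. In particular the $t$-increment is not purely tangential unless we translate along the flow with $v_{0,n}=0$, which is fine. A second concern is that the source terms generated at lower levels fall within the admissible class \eqref{zero.liou.inflow2.a}. The first thing to try is an induction on kinetic degree using only first-order increments and the a priori $C^\alpha$ bound.
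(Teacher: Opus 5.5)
Your proposal is correct and follows the paper's overall strategy. In both, tangential increments, combined with the boundary $C^\alpha$ estimate and the growth bound, force $f$ to be polynomial in $(t,x',v')$. The coefficients $h_\beta(x_n,v_n)$ are then classified from the top kinetic degree downward, using \autoref{thm.liou.inflow} together with explicit particular solutions.

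Your route differs from the paper's in two places.

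\emph{Reduction of the diffusion matrix.} The paper only normalizes $a^{n,n}=1$ and keeps the cross terms $a^{i,n}\partial_{v_iv_n}$. These generate extra sources $\partial_{v_n}h_\beta$ in \eqref{ind.step}, which the paper absorbs with the identities \eqref{basic.formula.liou} and \eqref{basic.formula.liou.inflow2}; this is the origin of the terms $v_n^i\phi_0$. Your shear $x'\mapsto x'-(a^{n,\cdot}/a^{n,n})x_n$, $v'\mapsto v'-(a^{n,\cdot}/a^{n,n})v_n$, followed by rescaling, is legitimate. The same linear map acting on $x$ and $v$ preserves $v\cdot\nabla_x$, $\{x_n>0\}$, $\gamma_-$, kinetic degrees, and the cylinders up to constants, so it reduces the problem to $A=I$. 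After that, the lower-level sources are just linear combinations of the higher $h_{\beta'}$. For $\lambda\le 3$ the only non-polynomial source that can occur is $\phi_0$, whose particular solution is $\partial_{v_n}\phi_1$. The terms $v_n^i\phi_{0,a}$ reappear as polynomial coefficients when you undo the change of variables. So your reduction buys a much simpler triangular system, while the paper's version stays in the original coordinates at the cost of bookkeeping the cross terms.

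\emph{Tangential increments.} The paper first uses pure $x'$- and $t$-shifts, and only afterwards pure $v'$-shifts, applied to the $(t,x')$-independent top coefficients with a downward induction. Your Galilean boosts handle all tangential directions at once. The concern you raise about the group law is settled by a direct computation: for $z_h=(0,0,he_i)$,
\[
z^{-1}\circ(z_h\circ z)=(0,the_i,he_i).
\]
Hence on $\mathcal{H}_R$ a boost has kinetic size $\lesssim R^{2/3}|h|^{1/3}+|h|$, and each increment gains only $R^{-\alpha/3}$ instead of $R^{-\alpha}$. For time shifts the gain is $R^{-2\alpha/3}$. Either way, finitely many iterations still suffice.

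Your base case, however, is unnecessary and misstated. The bound $CR^{-\alpha}(1+R)^{\lambda+\eps}\to 0$ requires growth below $\alpha$, not below $\tfrac12+\alpha$. Instead, iterate the increments until the growth exponent is negative; they then vanish identically, exactly as in the paper. In any case, a constant with zero in-flow data is zero, so the increments could never be nonzero constants.
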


\begin{proof}
We may assume $a^{n,n}=1$. Next since $f-q$ solves 
\begin{equation*}
\left\{
\begin{alignedat}{3}
(\partial_t+v\cdot\nabla_x)(f-q)-a^{i,j}\partial_{v_i,v_j}(f-q)&=p+\widetilde{p}&&\qquad \mbox{in  $\bbR\times\{x_n>0\}\times \bbR^n$}, \\
f-q&=0&&\qquad  \mbox{in $\bbR\times\{x_n=0\}\times \{v_n>0\}$}
\end{alignedat} \right.
\end{equation*}
for some $\widetilde{p}\in \cP_{\lambda-2}$ and satisfies \eqref{cond.nd-inflow-poly} with $f$ replaced by $f-q$, we may assume $q=0$.

Now let us fix $h>0$ and $y=hy_i$ with $i\neq n$, where $y_i$ denotes the $i$-th unit vector in the $x$-coordinates.  Let us consider a function  
    \begin{align*}
        \delta_{h,y_i}f(t,x,v)\coloneqq f(t,x+y,v)-f(t,x,v)
    \end{align*}
    to see that $\delta_{h,y_i}f$ is a solution to 
    \begin{equation*}
\left\{
\begin{alignedat}{3}
(\partial_t+v\cdot\nabla_x)\delta_{h,y_i}f-a^{i,j}\partial_{v_i,v_j}\delta_{h,y_i}f&=\delta_{h,y_i}p&&\qquad \mbox{in  $\bbR\times\{x_n>0\}\times \bbR^n$}, \\
\delta_{h,y_i}f&=0&&\qquad  \mbox{in $\bbR\times\{x_n=0\}\times \{v_n>0\}$}.
\end{alignedat} \right.
\end{equation*}
 By $C^\alpha$-estimates for in-flow given in \cite[Lemma 2.25]{RoWe25}, we have 
\begin{align*}
    \|\delta_{h,y_i}f\|_{L^\infty(\mathcal{H}_R)}\leq c|h|^\alpha\left[[f]_{C^{{\alpha}}(\mathcal{H}_{2R})}\right]
    &\leq c|h|^{\alpha}R^{-\alpha}\left(\|f\|_{L^\infty(\mathcal{H}_{4R})}+\|R^2\delta_{h,y_i}p\|_{L^\infty(\mathcal{H}_{4R})}\right)\leq c|h|^\alpha R^{\lambda+\epsilon-\alpha},
\end{align*}
where we have chosen $R^3>2|h|$ and used the fact that $p\in \cP_{\lambda-2}$. By iterating this procedure $j$-times, we deduce
\begin{align*}
    \|\delta^j_{h,y_i}f\|_{L^\infty(\mathcal{H}_R)}\leq c|h|^{j\alpha}R^{\lambda+\epsilon-j\alpha},
\end{align*}
where we write $\delta^j_{h,y_i}f=\delta_{h,y_i}\circ \delta_{h,y_i}^{j-1}f$. Taking $j$ sufficiently large and $R\to\infty$, we deduce that $f$ is a polynomial with respect to the $x_i$ variables for any $i=1,\ldots,n-1$.  Similarly, applying the same argument to $\delta_{h,\tau}f(t,x,v)\coloneqq f(t+\tau,x,v)-f(t,x,v)$ yields that $f$ is also a polynomial with respect to the $t$-variable, and this gives 
\begin{align*}
    f=
    \sum_{\substack{\beta=(\beta_t,\beta_{x'},0)\in\mathcal{I}\\
    |\beta|\leq \lambda}}f_\beta(x_n,v)t^{\beta_t}\prod_{j=1}^{n-1}x_j^{\beta_{x_j}}\coloneqq \sum_{\beta\in \mathcal{I}}g_\beta(t,x,v),
\end{align*}
where we denote by $\mathcal{I} = (\N \cup \{ 0 \})^{1+2n}$ with $|\beta| = 2|\beta_t| + 3|\beta_x| + |\beta_v|$ being the kinetic degree of a multi-index  $\beta \in \cI$, and we used \eqref{cond.nd-inflow-poly} to ensure $|\beta|\leq \lambda$. 

\textbf{Step 1. }We aim to prove that for each $\beta\in\mathcal{I}$, 
\begin{equation}\label{goal1.nd.inflow}
    g_\beta(t,x,v)\coloneqq t^{\beta_t}\prod_{j=1}^{n-1}x_j^{\beta_{x_j}}\sum_{\gamma_\beta\in\mathcal{I}_\beta}h_{\gamma_\beta}(x_n,v_n)v_1^{\gamma_{\beta_1}}\cdots v_{n-1}^{\gamma_{\beta_{n-1}}}.
\end{equation} 

Let $M_0$ be the largest value $|\beta|$ in $\mathcal{I}$ so that $\delta_{h,\tau}^{\beta_t}\delta^{\beta_1}_{h,y_1}\cdots \delta^{\beta_{n-1}}_{h,y_{n-1}}f=c_\beta |h|^{\beta}f_\beta(x_n,v)$ for some constant $c_\beta\in\bbR$, and 
\begin{equation}\label{eq2.nd.inflow}
\left\{
\begin{alignedat}{3}
v_n\partial_{x_n}f_\beta(x_n,v)-a^{i,j}\partial_{v_i,v_j}f_\beta(x_n,v)&=\widetilde{p}_\beta&&\qquad \mbox{in  $\{x_n>0\}\times \bbR^n$}, \\
f_\beta(x_n,v)&=0&&\qquad  \mbox{in $\{x_n=0\}\times \{v_n>0\}$}
\end{alignedat} \right.
\end{equation}
for some polynomial $\widetilde{p}_\beta\in \cP_{\lambda-2-\beta}$. This follows by applying the operator $\delta_{h,\tau}^{\beta_t}\delta^{\beta_1}_{h,y_1}\cdots \delta^{\beta_{n-1}}_{h,y_{n-1}}$ to \eqref{eq.nd.inflow}. Since we can take $\delta^j_{h,w_i}$ for any $i=1,2,\ldots, n-1$ in \eqref{eq2.nd.inflow}, we derive 
\begin{equation*}
\left\{
\begin{alignedat}{3}
v_n\partial_{x_n}\delta^j_{h,w_i}f_\beta(x_n,v)-a^{i,j}\partial_{v_i,v_j}\delta^j_{h,w_i}f_\beta(x_n,v)&=\delta^j_{h,w_i}\widetilde{p}_\beta&&\qquad \mbox{in  $\{x_n>0\}\times \bbR^n$}, \\
\delta^j_{h,w_i}f_\beta(x_n,v)&=0&&\qquad  \mbox{in $\{x_n=0\}\times \{v_n>0\}$},
\end{alignedat} \right.
\end{equation*}
where we write
\begin{align*}
    \delta_{h,w_i}f(t,x,v)\coloneqq f(t,x,v+hw_i)-f(t,x,v)\quad\text{and}\quad \delta_{h,w_i}^jf=\delta_{h,w_i}^{j-1}\circ \delta_{h,w_i}f.
\end{align*}
Since $\delta^j_{h,w_i}f_\beta$ is also a weak solution to \eqref{eq.nd.inflow} with $p=\delta^j_{h,w_i}\widetilde{p}_\beta$ and $q=0$, by applying the $C^\alpha$- estimates for in-flow, we get that $f_{\beta}$ is a polynomial with respect to the $v_i$ variables. This gives \eqref{goal1.nd.inflow}. Inductively, we assume for any $\beta\in \mathcal{I}$ such that $|\beta|\geq M_0-j$ that $g_\beta$ is of the form given by \eqref{goal1.nd.inflow}. Let us choose $\beta'\in\mathcal{I}$ such that $|\beta'|=M_0-(j+1)$. Then we get 
\begin{align*}
    \frac{\delta_{h,\tau}^{\beta'_t}\delta^{\beta'_1}_{h,y_1}\cdots \delta^{\beta'_{n-1}}_{h,y_{n-1}}f}{|h|^{\beta'}}&=c_{\beta'}f_{\beta'}(x_n,v)\\
    &\quad+\underbrace{\sum_{\substack{\beta\in\mathcal{I}\\
    |\beta|>|\beta'|}}c_\beta t^{\beta_t-\beta'_t}x_1^{\beta_{x_1}-\beta'_{x_1}}\cdots x_{n-1}^{\beta_{x_{n-1}}-\beta'_{x_{n-1}}}\sum_{\gamma_\beta\in\mathcal{I}_\beta}c_{\gamma\beta}h_{\gamma_\beta}(x_n,v_n)v_1^{\gamma_{\beta_1}}\cdots v_{n-1}^{\gamma_{\beta_{n-1}}}}_{\eqqcolon \overline{f}(t,x,v)}
\end{align*}
and
\begin{align}\label{eq31.nd.inflow}
    v_n\partial_{x_n}f_{\beta'}-a^{i,j}\partial_{v_i,v_j}f_{\beta'}=\widetilde{p}_{\beta'}-{\underbrace{(\partial_t+v\cdot\nabla_x-a^{i,j}\partial_{i,j})\overline{f}}_{\eqqcolon F}}\quad\text{in }\bbR\times\{x_n>0\}\times \bbR^n,
\end{align}
where $\widetilde{p}_{\beta'}\in \cP_{\lambda-2-\beta'}$. Note that $F$ is a polynomial with respect to the $v_i$ variables for each $i=1,\ldots, n-1$ by the induction assumption. Thus, applying $\delta_{h,w_i}^{j}$ to \eqref{eq31.nd.inflow} for sufficiently large $j$, we get 
\begin{align*}
    v_n\partial_{x_n}\delta_{h,w_i}^jf_{\beta'}-a^{i,j}\partial_{v_i,v_j}\delta_{h,w_i}^jf_{\beta'}=\delta_{h,w_i}^j\widetilde{p}_{\beta'}\quad\text{in }\{x_n>0\}\times \bbR^n.
\end{align*}
Similarly, we deduce that $f_{\beta'}(x_n,v)$ is polynomial with respect to the $v_i$ variables for any $i=1,\ldots, n-1$. This implies that $g_{\beta'}$ is also of the form as in \eqref{goal1.nd.inflow}, and by induction, we get \eqref{goal1.nd.inflow} for any $\beta\in\mathcal{I}$.

Hence, we may write 
\begin{align*}
    f(t,x,v)\coloneq \sum_{\substack{\beta=(\beta_t,\beta_{x'},\beta_{v'})\in\mathcal{I},\\
    |\beta|\leq \lambda}}t^{\beta_t}\prod_{j=1}^{n-1}x_j^{\beta_{x_j}}\prod_{j=1}^{n-1}v_j^{\beta_{v_j}}h_{\beta}(x_n,v_n)=\sum_{\beta}F_\beta(t,x,v).
\end{align*}

\textbf{Step 2. }
Next, we want to prove that for each $\beta\in\mathcal{I}$, $h_\beta$ is of the form
\begin{equation}\label{goal2.nd.inflow}
\begin{aligned}
    h_\beta&=\widetilde{p}_{\beta}+\sum_{i=0}^2a_{\beta,i}v_n^{i}\phi_{m}+b_{\beta}\partial_{v_n}\phi_{1}+\sum_{i=0}^1c_{\beta,i}v_n^{i}\psi_{0},
\end{aligned}
\end{equation}
where $\widetilde{p}_\beta(x_n,v_n)\in \cP_\lambda$, $a_{\beta,m,i},b_{\beta,m,i},c_{\beta,l,j},d_{\beta,l,j}\in\bbR$, and by the growth condition given in \eqref{cond.nd-inflow-poly},
\begin{align}\label{cond.const.nd.inflow}
    \begin{cases}
        a_{\beta,i}=0\quad&\text{if }\frac12+i>\lambda+\epsilon,\\
        b_{\beta}=0\quad&\text{if }\frac52+j>\lambda+\epsilon,\\
        c_{\beta,i}=0\quad&\text{if }2+i>\lambda.
    \end{cases}
\end{align}
Let $M_1$ be the largest value of $|\beta|$, where $\beta\in\mathcal{I}$. Then we get
$\partial_t^{\beta_t}\partial_{x_1}^{\beta_{x_1}}\cdots \partial^{\beta_{x_{n-1}}}_{x_{n-1}}\partial_{v_1}^{{\beta_{v_1}}}\cdots \partial_{v_{n-1}}^{{\beta_{v_{n-1}}}}f=c_\beta h_{\beta}$,
\begin{equation}\label{eq3.nd.inflow}
\left\{
\begin{alignedat}{3}
v_n\partial_{x_n}h_{\beta}-a^{n,n}\partial_{v_n,v_n}h_{\beta}&=\widetilde{p}&&\qquad \mbox{in  $\{x_n>0\}\times \bbR$}, \\
f_\beta&=0&&\qquad  \mbox{in $\{x_n=0\}\times \{v_n>0\}$}
\end{alignedat} \right.
\end{equation}
for some polynomial $\widetilde{p} = \widetilde{p}(x_n,v_n)\in \cP_{\lambda}$. This follows by applying $\delta_{h,t}^{\beta_t}\delta_{h,x_1}^{\beta_{x_1}}\cdots \delta^{\beta_{x_{n-1}}}_{h,x_{n-1}}\delta_{h,v_1}^{{\beta_{v_1}}}\cdots \delta_{h,v_{n-1}}^{{\beta_{v_{n-1}}}}$ to the equation and then taking $h\to0$. By \autoref{thm.liou.inflow}, we obtain
\begin{align}\label{hbeta.nd.inflow}
    h_\beta=\widetilde{p}_\beta+a_{\beta,0}\phi_0+c_{\beta,0}\psi_{0}
\end{align}
for some constants $a_{\beta,j},c_{\beta,i}\in\bbR$ and $\widetilde{p}_\beta(x_n,v_n)\in \cP_{\lambda-2}$. This proves \eqref{goal2.nd.inflow} with $|\beta|=M_1$. Now we assume that \eqref{goal2.nd.inflow} with $|\beta|=M_1-k$ holds for some nonnegative integer $k$, and then we want to show \eqref{goal2.nd.inflow} with $|\beta|=M_1-(k+1)$. To do so, we observe
\begin{align*}
    p&=\sum_{\beta\in\mathcal{I}}(\partial_t+v\cdot\nabla_x-a^{i,j}\partial_{v_i,v_j})F_\beta\\
    &=\sum_{\beta\in\mathcal{I}}(\partial_t+v\cdot\nabla_x-a^{i,j}\partial_{v_i,v_j})\left(t^{\beta_t}\prod_{j=1}^{n-1}x_j^{\beta_{x_j}}\prod_{j=1}^{n-1}v_j^{\beta_{v_j}}h_{\beta}(x_n,v_n)\right)\\
    &=\sum_{\beta\in\mathcal{I}}h_{\beta}(x_n,v_n)\left(\partial_t+v'\cdot\nabla_{x'}-\sum_{i,j\neq n}a^{i,j}\partial_{v_i,v_j}\right)\left(t^{\beta_t}\prod_{j=1}^{n-1}x_j^{\beta_{x_j}}\prod_{j=1}^{n-1}v_j^{\beta_{v_j}}\right)\\
    &\quad-\sum_{\beta\in\mathcal{I}}\left(t^{\beta_t}\prod_{j=1}^{n-1}x_j^{\beta_{x_j}}\right)\sum_{\substack{i=n\text{ or }j=n\\
    (i,j)\neq (n,n)}}a^{i,j}\partial_{v_i,v_j}\left(\prod_{j=1}^{n-1}v_j^{\beta_{v_j}}h_\beta\right)\\
    &\quad+\sum_{\beta\in \mathcal{I}}\left(t^{\beta_t}\prod_{j=1}^{n-1}x_j^{\beta_{x_j}}\prod_{j=1}^{n-1}v_j^{\beta_{v_j}}\right) (v_n\partial_{x_n}-a^{n,n}\partial_{v_n,v_n})h_{\beta}.
\end{align*}
Let us now fix $\beta' \in \cI$ with $|\beta'| = M_1 - (k+1)$. As before, by applying the differential operator $\partial_{t}^{\beta'_t}\partial^{\beta'_1}_{x_1}\cdots \partial^{\beta'_{n-1}}_{x_{n-1}}\partial_{v_1}^{\gamma_{\beta'_1}}\cdots \partial_{v_{n-1}}^{\gamma_{\beta'_{n-1}}}$ to the previous display, we deduce for some $\overline{p} \in \cP_{\lambda - 2}$.
\begin{align*}
    \overline{p}&=\sum_{\substack{\beta\in\mathcal{I}\\
    |\beta|\geq M_1-k+1}}c_{\beta,1}h_{\beta}\left(\partial_t+v'\cdot\nabla_{x'}-\sum_{i,j\neq n}a^{i,j}\partial_{v_i,v_j}\right)\left(t^{\beta_t-\beta_t'}\prod_{j=1}^{n-1}x_j^{\beta_{x_j}-\beta'_{x_j}}\prod_{j=1}^{n-1}v_j^{\beta_{v_j}-\beta'_{v_j}}\right)\\
    &\quad-\sum_{\substack{\beta\in\mathcal{I}\\
    |\beta|\geq M_1-k}}c_{\beta,2}\left(t^{\beta_t-\beta_t'}x_1^{\beta_{x_1}-\beta'_{x_1}}\cdots x_{n-1}^{\beta_{x_{n-1}}-\beta'_{x_{n-1}}}\right)\sum_{\substack{i=n\text{ or }j=n\\
    (i,j)\neq (n,n)}}a^{i,j}\partial_{v_i,v_j}\left(v_1^{{\beta_{v_1}-\beta'_{v_1}}}\cdots v_{n-1}^{{\beta_{v_{n-1}}-\beta'_{v_n-1}}}h_\beta\right)\\
    &\quad+\sum_{\substack{\beta\in\mathcal{I}\\
    |\beta|\geq M_1-k}}c_{\beta,3}\left(t^{\beta_t-\beta_t'}x_1^{\beta_{x_1}-\beta'_{x_1}}\cdots x_{n-1}^{\beta_{x_{n-1}}-\beta'_{x_{n-1}}}v_1^{{\beta_{v_1}-\beta'_{v_1}}}\cdots v_{n-1}^{{\beta_{v_{n-1}}-\beta'_{v_n-1}}}\right)(v_n\partial_{x_n}-\partial_{v_n,v_n})h_\beta\\
    &\quad+c_4(v_n\partial_{x_n}-a^{n,n}\partial_{v_n,v_n})h_{\beta'}.
\end{align*}
Moreover, we observe that $h_{\beta'}(0,v_n)\equiv0$ when $v_n>0$. Then using this and the fact that $h_{\beta'}$ is independent of $t,x',v'$, we derive that for some $\widetilde{p} = \widetilde{p}(x_n,v_n)\in \cP_{\lambda-2}$,
\begin{align}\label{ind.step}
    (v_n\partial_{x_n}-a^{n,n}\partial_{v_n,v_n})h_{\beta'}&=\widetilde{p}+\sum_{\substack{\beta\in \mathcal{I}\\
    |\beta|=M_1-k+1}}c'_{\beta,1}h_\beta+\sum_{\substack{\beta\in \mathcal{I}\\
    |\beta|=M_1-k}}c'_{\beta,2}\partial_{v_n}h_\beta.
\end{align}
Before proceeding further, as in \eqref{basic.formula.liou.inflow2}, we observe
\begin{equation}\label{basic.formula.liou}
\begin{aligned}
    &(v_n\partial_{x_n}-\partial_{v_n,v_n})(c_0v_n^2\partial_{v_n}\phi_1+c_1v_n\phi_1)=v_n^2\phi_0,\,(v_n\partial_{x_n}-\partial_{v_n,v_n})(c_3v_n\partial_{v_n}\phi_1)=v_n\phi_0,\\
    &\,(v_n\partial_{x_n}-\partial_{v_n,v_n})(c_4v^3\phi_0+c_5v\partial_{v}\phi_1)=v_n^2\phi_1,\, \partial^2_{v_n}\phi_0=v_n\partial_{x_n}\phi_1=c_6v_n\phi_0.
\end{aligned}
\end{equation}
By the induction assumption \eqref{goal2.nd.inflow}, as well as by the observation \eqref{basic.formula.liou} and \eqref{basic.formula.liou.inflow2} together with the growth condition \eqref{cond.nd-inflow-poly}, we can deduce
\begin{equation*}
\left\{
\begin{alignedat}{3}
(v_n\partial_{x_n}-a^{n,n}\partial_{v_n,v_n})(h_{\beta'}-g)&=\widetilde{p}&&\qquad \mbox{in  $\{x_n>0\}\times \bbR$}, \\
(h_{\beta'}-g)&=\sum_{j\geq0}b_jv^j&&\qquad  \mbox{in $\{x_n=0\}\times \{v_n>0\}$},
\end{alignedat} \right.
\end{equation*}
where $b_j=0$ when $j>\lambda+\epsilon$ and
\begin{align*}
    g&=\widetilde{p}_{\beta}+\sum_{i=0}^2a_{i}v_n^{i}\phi_{m}+b_{0}\partial_{v_n}\phi_{1}+\sum_{i=0}^1c_{i}v_n^{i}\psi_{0}.
\end{align*}
In particular, the constants $a_{i},b_{i},c_{i}\in\bbR$ satisfy \eqref{cond.const.nd.inflow} with $a_{\beta,i},b_{\beta},c_{\beta,i}$ replaced by $a_{i},b_{0},c_{i}$, respectively. Applying \autoref{thm.liou.inflow}, we obtain \eqref{goal2.nd.inflow} for $\beta' \in \cI$. The inductive argument leads to \eqref{goal2.nd.inflow}.

A combination with \eqref{goal1.nd.inflow} and \eqref{goal2.nd.inflow} produces the desired result, which completes the proof.
\end{proof}

\subsection{Liouville theorem with unbounded right-hand sides}
In this subsection, we prove the Liouville theorem under the assumption that the source term is given by a confluent hypergeometric function and that the boundary data is zero. We define the following $n$-dimensional analogs of the spaces $\overline{\mathcal{T}}_{\lambda,\epsilon}$ and ${\mathcal{T}}_{\lambda,\epsilon}$ from the previous section for $\lambda\in\{0,1,2,3\}$ and $\epsilon\in(0,1)$ with $\lambda+\epsilon< 3+\frac12$,
\begin{align*}
    \overline{\mathcal{T}}_{\lambda,\epsilon}\coloneqq\mathrm{span}\left\{p,p_i{\pmb{\Phi}_{i}},p_{j+3}{\pmb{\Phi}_{j+3}},p_5{\pmb{\Phi}}_{5},\,\Bigg|\begin{array}{l} p\in \cP_{\lambda}\,,p_i\in \cP_{[\lambda+\epsilon-\frac12]},\, p_{j+3}\in\cP_{\lambda-2},\,p_5\in\cP_{[\lambda+\epsilon-\frac52]}\\
    p_1(0)=p_{2}(0)= Dp_{2}(0)=p_4(0)=0
    \end{array}\right\},
\end{align*}
and
\begin{align}\label{defn.vector.ndim}
    {\mathcal{T}}_{\lambda,\epsilon}\coloneqq\left\{P\in \overline{T}_{\lambda,\epsilon}\,\Bigg|\, P(z)\equiv 0\text{ on }\{x_n=0\}\times \{v_n>0\}\right\},
\end{align}
where the functions ${\pmb{\Phi}}_k(z)={\pmb{\Phi}}_k(x_n,v_n)$ are defined in \eqref{defn.Phii} for $k \in\{0,1,2,3,4,5\}$. 

In light of \autoref{lemma:nD-classification-inflow} and \autoref{thm.liou.inflow2}, we get the following.
\begin{lemma}\label{lem.liou.inflow2.nd}
    Let $\lambda\in \{0,1,2,3\}$ and $\epsilon\in(0,1)$ with $\lambda+\eps<3+\frac12$. Let $f$ be a weak solution to
    \begin{equation*}
\left\{
\begin{alignedat}{3}
(\partial_t+v\cdot\nabla_x)f-a^{i,j}\partial_{v_i,v_j}f&=p+Q&&\qquad \mbox{in  $\bbR\times\{x_n>0\}\times \bbR^n$}, \\
f&=0&&\qquad  \mbox{in $\bbR\times\{x_n=0\}\times \{v_n>0\}$},
\end{alignedat} \right.
\end{equation*}
where $a^{i,j}$ is a constant uniformly elliptic matrix with $a^{n,n}=1$ and where
\begin{align}
\label{eq.Q.half-space}
    {Q}&\coloneqq \sum_{i=0}^{2}{q}_{0,i}{\pmb{\Phi}}_{i}+\sum_{k=0}^{4}q_{1,k}\partial_{v_n}{\pmb{\Phi}}_{k}+\sum_{k=0,1,3}{q}_{2,k}\partial_{v_n,v_n}{\pmb{\Phi}}_{k},
\end{align}
and for $i\in\{0,1,2\}$ and $j\in\{0,1\}$,
\begin{equation}\label{zero.liou.inflow2-hs}
    \begin{aligned}
        &{q}_{0,i}\in\cP_{[\lambda+\epsilon-\frac52]},\, {q}_{1,i}\in \cP_{[\lambda+\epsilon-\frac32]},\, {q}_{1,j+3}\in\cP_{\lambda-3},\, {q}_{2,i}\in \cP_{[\lambda+\epsilon-\frac12]}, \,{q}_{2,3}\in \cP_{\lambda-2},\\
        &{q}_{1,2}(0)={q}_{2,0}(0)={q}_{2,1}(0)={q}_{2,3}(0)=D{q}_{2,1}(0)=0.
    \end{aligned}
\end{equation}
Suppose that is $c\geq1$ such that for any $R>0$,
\begin{align*}
    \|f\|_{L^\infty(\mathcal{H}_R)}\leq c(1+R)^{\lambda+\epsilon}.
\end{align*}
Then, $f \in {\mathcal{T}}_{\lambda,\epsilon}$.
\end{lemma}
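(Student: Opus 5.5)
The plan is to reduce \autoref{lem.liou.inflow2.nd} to the two classification results already proved, by peeling off the unbounded part of the source term with an explicit particular solution. The first step is to produce a function $H \in \overline{\mathcal{T}}_{\lambda,\epsilon}$ that depends on $(t,x,v)$ only polynomially in the tangential variables $(t,x',v')$. It should satisfy $(\partial_t+v\cdot\nabla_x)H - a^{i,j}\partial_{v_i,v_j}H = Q + \widetilde p$ for some $\widetilde p \in \cP_{\lambda-2}$, have polynomial boundary values on $\{x_n=0\}\times\{v_n>0\}$, and obey the growth bound $|H|\lesssim (1+R)^{\lambda+\epsilon}$ on $\mathcal{H}_R$. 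Granting this, $f-H$ solves an equation with polynomial right-hand side $p-\widetilde p\in\cP_{\lambda-2}$ and polynomial in-flow data $-H|_{\gamma_-}\in\cP_\lambda$, with the same growth. \autoref{lemma:nD-classification-inflow} (with $a^{n,n}=1$, so $\phi_{m,a}=\phi_m$, $\psi_{0,a}=\psi_0$) then gives $f-H\in\mathcal{T}_{\lambda,\epsilon,1}\subset\overline{\mathcal{T}}_{\lambda,\epsilon}$. The polynomial coefficients of $\phi_0,\psi_0,\partial_{v_n}\phi_1$ there have exactly the degrees allowed in $\overline{\mathcal{T}}_{\lambda,\epsilon}$ with $i=0$ or $k=3,5$, so no vanishing constraints are needed. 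Hence $f\in\overline{\mathcal{T}}_{\lambda,\epsilon}$, and since $f=0$ on $\{x_n=0\}\times\{v_n>0\}$, we get $f\in\mathcal{T}_{\lambda,\epsilon}$ by \eqref{defn.vector.ndim}.

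To construct $H$, I write each coefficient $q_{0,i},q_{1,k},q_{2,k}$ as a sum of monomials $m(t,x',v')\,\mu(x_n,v_n)$ and induct downward on the kinetic degree of the tangential monomial $m$, exactly as in Step 2 of the proof of \autoref{lemma:nD-classification-inflow}. For the top tangential degree, the operator acting on $m\,\Psi(x_n,v_n)$ gives $m\,(v_n\partial_{x_n}-\partial_{v_nv_n})\Psi$. This is plus lower tangential-degree terms from $(\partial_t+v'\cdot\nabla_{x'}-\sum_{i,j\neq n}a^{ij}\partial_{v_iv_j})m$, and mixed terms $a^{in}\partial_{v_i}m\,\partial_{v_n}\Psi$. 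Thus for each fixed $m$ I must solve the 1D problem $(v_n\partial_{x_n}-\partial_{v_nv_n})\Psi = \mu\,\pmb{\Phi}_k$, $\mu\,\partial_{v_n}\pmb{\Phi}_k$ or $\mu\,\partial_{v_nv_n}\pmb{\Phi}_k$, with $\Psi$ in the 1D space $\overline{\mathcal{T}}$ and polynomial boundary values. The 1D part is precisely the claim \eqref{eq.liou.inflow2} inside the proof of \autoref{thm.liou.inflow2}, obtained from the identities \eqref{basic.formula.liou.inflow2} and \eqref{basic.formula.liou}. The error terms generated at each stage are again of the form \eqref{eq.Q.half-space} with strictly lower tangential degree, or with one more $\partial_{v_n}$ but a coefficient of degree lowered by one. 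They are absorbed at the next stage of the induction, which terminates because all degrees are bounded by $\lambda+\epsilon<3+\frac12$.

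The main obstacle is bookkeeping of the degree and vanishing conditions \eqref{zero.liou.inflow2-hs}. The mixed terms $a^{in}\partial_{v_i}m\,\partial_{v_n}\Psi$ must produce sources with coefficients in the classes allowed by \eqref{zero.liou.inflow2.a}, including $q_{1,2}(0)=0$ and the vanishing of $q_{2,\cdot}(0)$, $Dq_{2,1}(0)$. Likewise, the constructed $H$ must respect $p_1(0)=p_2(0)=Dp_2(0)=p_4(0)=0$, so that the total growth stays below $(1+R)^{\lambda+\epsilon}$. I would check this first: differentiating in $v'$ lowers the polynomial degree by one while adding $\partial_{v_n}$ lowers the homogeneity by one. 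So the total homogeneity $\deg m + \text{hom}(\Psi)$ is preserved, and the homogeneity constraint is exactly what the conditions encode. The vanishing conditions then follow because a nonzero constant term could only arise from a top-degree coefficient that is already assumed to vanish.
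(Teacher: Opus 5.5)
Your proposal is correct and is essentially the paper's proof: the paper also builds a particular solution $H\in\overline{\mathcal{T}}_{\lambda,\epsilon}$ with polynomial in-flow data, then applies \autoref{lemma:nD-classification-inflow} to $f-H$. It uses \autoref{thm.liou.inflow2} for sources depending only on $(x_n,v_n)$ and the identities \eqref{basic.formula.liou.inflow2}, \eqref{basic.formula.liou.inflow4} for the finitely many tangential monomials; your downward induction on tangential degree is a systematic version of that case list.

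One small correction: after you factor out a tangential monomial, the 1D source may violate \eqref{zero.liou.inflow2.a} (e.g.\ $v_iv_j\partial_{v_nv_n}\pmb{\Phi}_0$ leads to $\Psi=c\,\pmb{\Phi}_1$). So the conditions $p_1(0)=p_2(0)=Dp_2(0)=p_4(0)=0$ hold for the product $m\Psi$, not for $\Psi$ itself. They are needed for $f=H+(f-H)$ to lie in $\overline{\mathcal{T}}_{\lambda,\epsilon}$, not for the growth bound, which already follows from the degree restrictions.
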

\begin{proof}
First, we claim that for any $Q$ as in \eqref{eq.Q.half-space}, \eqref{zero.liou.inflow2-hs}, there is a function $H\in \overline{T}_{\lambda,\epsilon}$ such that $(\partial_t+v\cdot\nabla_x)H-a^{i,j}\partial_{v_i,v_j}H=Q$ and $H = q$ on $\bbR \times \{ x_n = 0 \} \times \{ v_n > 0 \}$ for some $q \in \cP_{\lambda}$. Once the claim is proved, we deduce the desired result by applying \autoref{lemma:nD-classification-inflow} to $f-H$.

By \autoref{thm.liou.inflow2}, the claim is already proved whenever $Q = Q(x_n,v_n)$. Due to \eqref{zero.liou.inflow2-hs} and the linearity of the equation, it remains to find $H\in \overline{\mathcal{T}}_{\lambda,\epsilon}$ in case 
\begin{align*}
    Q \in \{ v_i \partial_{v_n} \pmb{\Phi}_0 , v_i \partial_{v_n} \pmb{\Phi}_1, {v_i \partial_{v_n} \pmb{\Phi}_2},{v_i\partial_{v_n,v_n} \pmb{\Phi}_0} , v_iv_j \partial_{v_n,v_n} \pmb{\Phi}_0 , t \partial_{v_n,v_n} \pmb{\Phi}_0 ,  v_iv_j \partial_{v_n,v_n} \pmb{\Phi}_1 , t \partial_{v_n,v_n} \pmb{\Phi}_1 , v_i \partial_{v_n,v_n} \pmb{\Phi}_3\},
\end{align*}
where $i, j \in \{1 , \dots , n\}$ and $i \not= n$. Before proceeding further, as in \eqref{basic.formula.liou.inflow2}, we observe 
\begin{equation}\label{basic.formula.liou.inflow4}
\begin{aligned}
    &(v_n\partial_{x_n}-\partial_{v_n,v_n})(v_n{\pmb{\Phi}}_2)=-12\partial_{v_n}{\pmb{\Phi}}_0+12\partial_{v_n}{\pmb{\Phi}}_1-8\partial_{v_n}{\pmb{\Phi}}_2,\\
    &(v_n\partial_{x_n}-\partial_{v_n,v_n}){\pmb{\Phi}}_1=-4\partial_{v_n,v_n}{\pmb{\Phi}}_0,\,(v_n\partial_{x_n}-\partial_{v_n,v_n}){\pmb{\Phi}}_2=-24\partial_{v_n,v_n}{\pmb{\Phi}}_0-6\partial_{v_n,v_n}{\pmb{\Phi}}_1.
\end{aligned}
\end{equation}
In case $Q \in \{ v_i \partial_{v_n} \pmb{\Phi}_k \}$ for some $k \in \{0,1,2\}$, we can deduce the claim from \eqref{basic.formula.liou.inflow2} and \eqref{basic.formula.liou.inflow4} , since
\begin{align}\label{basic.formula.liou.inflow5}
    (\partial_t+v\cdot\nabla_x-a^{i,j}\partial_{v_i,v_j}) \left(v_i \sum_{l = 0}^2 c_{k,l} v_n  \pmb{\Phi}_l \right)-v_i\partial_{v_n}\pmb{\Phi}_k = \left(\sum_{l=0}^2a^{i,n}c_{k,l}(\pmb{\Phi}_l+v_n\partial_{v_n}\pmb{\Phi}_l)\right).
\end{align}
Moreover, we get from \eqref{basic.formula.liou.inflow4} that
\begin{align*}
    (\partial_t+v\cdot\nabla_x-a^{i,j}\partial_{v_i,v_j})(c_0v_i{\pmb{\Phi}}_1)-v_i\partial_{v_n,v_n}\pmb{\Phi}_0=c_1\partial_{v_n}\pmb{\Phi}_1,
\end{align*}
which proves the claim for the case $Q=v_i\partial_{v_n,v_n}\pmb{\Phi}_0$.
To prove the claim for $Q = q_{2,1}\partial_{v_n,v_n}{\pmb{\Phi}}_1$, where $q_{2,1} \in \{t , v_iv_j \}$, we observe from \eqref{basic.formula.liou.inflow2} that
    \begin{align*}
        (\partial_t+v\cdot\nabla_x-a^{i,j}\partial_{v_i,v_j})\left(q_{2,1}\sum_{k=1}^{2}c_{7,k}{\pmb{\Phi}_k}\right)-q_{2,1}\partial_{v_n,v_n}{\pmb{\Phi}}_1&=\partial_tq_{2,1}\sum_{k=1}^{2}c_{7,k}{\pmb{\Phi}_k}-a^{i,j}\partial_{v_i,v_j}q_{2,1}\left(\sum_{k=1}^{2}c_{7,k}{\pmb{\Phi}_k}\right)\\
        &\quad-a^{i,n} \partial_{v_i}q_{2,1} \sum_{k=1}^{2}\partial_{v_n}{\pmb{\Phi}_k} \eqqcolon \sum_{l=0}^{2}Q_l.
    \end{align*}
    Since $\mathrm{deg}(q_{2,1})\leq 2$, we note $Q_{l}=Q_{l}(x_n,v_n)$ for $l \in \{0,1\}$. To treat $Q_2$, we use \eqref{basic.formula.liou.inflow5} to see
    \begin{align*}
        &(\partial_t+v\cdot\nabla_x-a^{i,j}\partial_{v_i,v_j})\left(\partial_{v_m}q_{2,1}\sum_{l=0}^{2}c_{5,k}v_n{\pmb{\Phi}_l}\right)-\partial_{v_m}q_{2,1}\sum_{k=1}^2\partial_{v_n}{\pmb{\Phi}}_k\\
        &=-a^{i,n}\partial_{v_i,v_m}q_{2,1}\sum_{l=0}^{2}c_{5,k}({\pmb{\Phi}_k}+v_n\partial_{v_n}{\pmb{\Phi}_k})\eqqcolon Q_3(x_n,v_n).
    \end{align*}
    Since $Q_0+Q_1+ Q_3$ is of the form given in \eqref{zero.liou.inflow2.Q}, by \autoref{thm.liou.inflow2}, there is a function $H\in\overline{\mathcal{T}}_{\lambda,\epsilon}$ such that $(\partial_t+v\cdot\nabla_x-a^{i,j}\partial_{v_i,v_j})H=q_{2,1}\partial_{v_n v_n}{\pmb{\Phi}}_1$.
    The proof of the claim in the remaining cases $Q \in \{v_iv_j \partial_{v_n,v_n} \pmb{\Phi}_0 , t \partial_{v_n,v_n} \pmb{\Phi}_0, v_i \partial_{v_n,v_n} \pmb{\Phi}_3 \}$ goes in an analogous way. The proof is complete.
\end{proof}
\section{Expansion at grazing boundary points}
\label{sec:grazing-expansions}
In this section, we prove expansion estimates at the grazing set. To this end, for any  $z_0$ with $(x_{0})_n=(v_{0})_n=0$ and $a>0$, we introduce two vector spaces 
\begin{align*}
    \overline{\mathcal{T}}_{\lambda,\epsilon,z_0,a}\coloneqq\mathrm{span}\left\{p,p_i{\pmb{\Phi}_{i,a}},p_{j+3}{\pmb{\Phi}_{j+3,a}},p_5{\pmb{\Phi}}_{5,a}\,\Bigg|\begin{array}{l} p\in \cP_{\lambda}\,,p_i\in \cP_{[\lambda+\epsilon-\frac12]},\, p_{j+3}\in\cP_{\lambda-2},\,p_5\in\cP_{[\lambda+\epsilon-\frac52]}\\
    p_1(z_0)=p_{2}(z_0)= Dp_{2}(z_0)=p_4(z_0)=0
    \end{array}\right\},
\end{align*}
and
\begin{align}\label{defn.vector.ndimen.z_0.a}
    {\mathcal{T}}_{\lambda,\epsilon,z_0,a}\coloneqq\left\{P\in \overline{\mathcal{T}}_{\lambda,\epsilon,z_0,a}\,\Bigg|\, P(z)\equiv 0\text{ on }\{x_n=0\}\times \{v_n>0\}\right\},
\end{align}
where $i\in\{0,1,2\}$, $j\in\{0,1\}$ and  ${\pmb{\Phi}_{i,a}}(x_n,v_n)\coloneqq{\pmb{\Phi}}_i(x_n/a,v_n/a)$ are determined in \eqref{defn.Phii}. In case $a=1$, we denote ${\mathcal{T}}_{\lambda,\epsilon,z_0,a} = {\mathcal{T}}_{\lambda,\epsilon,z_0}$ and $\overline{{\mathcal{T}}}_{\lambda,\epsilon,z_0,a} = \overline{{\mathcal{T}}}_{\lambda,\epsilon,z_0}$. The reason to consider these rescaled functions is that they satisfy the equation $(v_n\partial_{x_n}-a^{2}\partial_{v_n,v_n}) \pmb{\Phi}_{0,a}=0$.

Now we are ready to provide the main result of this section.
\begin{proposition}\label{lem.exp.nd.a}
Let $\lambda\in\{0,1,2,3\}$ and $\epsilon\in(0,1)$ with $\lambda+\epsilon<3+\frac12$ and $\lambda+\epsilon - \frac12 \not \in \N \cup \{ 0 \}$.  Let $f$ be a weak solution to 
   \begin{equation*}
\left\{
\begin{alignedat}{3}
(\partial_t+v\cdot\nabla_x)f-\ddiv(A\nabla_vf)&=B\cdot\nabla_vf+F&&\qquad \mbox{in  $\mathcal{H}_R(z_0)$}, \\
f&=g&&\qquad  \mbox{in $\gamma_-\cap \mathcal{Q}_R(z_0)$},
\end{alignedat} \right.
\end{equation*}
where $(x_{0})_n=(v_{0})_n=0$, $R\leq1$, $A\in C^{\lambda-\frac12+\epsilon}(\mathcal{H}_R(z_0))$, $B\in C^{\lambda-\frac32 + \epsilon}(\mathcal{H}_R(z_0))$, $ F\in C^{\lambda-2 + \epsilon}(\mathcal{H}_R(z_0))$. Then there exists $P_{z_0,a}\in \overline{\mathcal{T}}_{\lambda,\epsilon,z_0,a}$ such that for any $r\in(0,R/2]$ and $z\in \mathcal{H}_r(z_0)$,
\begin{equation}\label{first.exp.nd}
\begin{aligned}
    |f(z)-P_{z_0,a}(z)|&\leq c\left(\frac{r}R\right)^{\lambda+\epsilon}\bigg(\|f\|_{L^\infty(\mathcal{H}_R(z_0))}+R^{\lambda+\epsilon}[F]_{C^{\lambda-2 + \epsilon}(\mathcal{H}_R(z_0))}+R^{\lambda+\eps}[g]_{C^{\lambda + \epsilon}(\gamma_-\cap \mathcal{Q}_R(z_0))}\bigg),
\end{aligned}
\end{equation}
where $a\coloneqq \sqrt{(A(z_0))_{n,n}}$ and $c=c(n,\Lambda,\lambda,\epsilon,\|A\|_{C^{\lambda-\frac12+\epsilon}(\mathcal{H}_R(z_0))},\|{B}\|_{C^{\lambda-\frac32+\epsilon}(\mathcal{H}_R(z_0))},{|v_0|{\mbox{\Large $\chi$}}_{\lambda\geq2}})$. In particular, when $\lambda\leq 1$, then 
\begin{equation}\label{estw/oB.exp.nd}
\begin{aligned}
    |f(z)-P_{z_0,a}(z)|&\leq c\left(1+ \|B\|_{C^{\lambda-\frac32+\epsilon}(\mathcal{H}_R(z_0))}\right)^{\lambda+\eps}\left(\frac{r}R\right)^{\lambda+\epsilon}\Bigg(\|f\|_{L^\infty(\mathcal{H}_R(z_0))}+R^2\|F\|_{L^{\infty}(\mathcal{H}_R(z_0))}\\
    &\qquad\qquad\qquad\qquad\qquad\qquad\qquad\qquad\quad\quad+R^{\lambda+\eps}[g]_{C^{\lambda + \epsilon}(\gamma_-\cap \mathcal{Q}_R(z_0))}\Bigg)
\end{aligned}
\end{equation}
for some constant $c=c(n,\Lambda,\epsilon,\|A\|_{C^{\epsilon}(\mathcal{H}_R(z_0))})$.
\end{proposition}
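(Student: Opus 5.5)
The plan is a blow-up and compactness argument in the spirit of \cite{RoWe25}, with \autoref{lem.liou.inflow2.nd} (and \autoref{lemma:nD-classification-inflow} for the constant coefficient part) playing the role of the Liouville theorem. After translating via the group action and rescaling with $S_R$, we may assume $z_0=0$ and $R=1$. Since $\lambda+\eps-\tfrac12\notin\N\cup\{0\}$, we can also assume $a=1$ by a linear change of variables in $v_n$ and the corresponding change in $x_n$. Write $\delta:=\|f\|_{L^\infty}+[F]_{C^{\lambda-2+\eps}}+[g]_{C^{\lambda+\eps}}$.

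The first step is the standard reduction to a best-approximation statement. For each $r$, let $P_r\in\overline{\mathcal{T}}_{\lambda,\eps,0}$ be the $L^2(\mathcal{H}_r)$-projection of $f$, and set $\theta(r):=\sup_{\rho\ge r}\rho^{-\lambda-\eps}\|f-P_\rho\|_{L^\infty(\mathcal{H}_\rho)}$. It suffices to prove $\theta(r)\le c\delta$. Given this bound, the usual telescoping estimate for $P_{2\rho}-P_\rho$ makes the coefficients of $P_r$ converge. This uses the fact that all elements of $\overline{\mathcal{T}}_{\lambda,\eps,0}$ are finite combinations of homogeneous functions with homogeneity at most $\lambda+\eps$, and that these are linearly independent on every $\mathcal{H}_\rho$. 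We may therefore take $P_{z_0,a}=\lim P_r$, and \eqref{first.exp.nd} follows.

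To prove $\theta(r)\le c\delta$ we argue by contradiction. Suppose there are sequences $f_k,A_k,B_k,F_k,g_k$ with $\delta_k\to0$ relative to $\theta_k(r_k)\to\infty$. We may pick $r_k\to0$ with $\theta_k(r_k)\le 2r_k^{-\lambda-\eps}\|f_k-P_{r_k}\|$, and define the blow-ups
\[
w_k(z):=\frac{(f_k-P_{k,r_k})(S_{r_k}z)}{r_k^{\lambda+\eps}\theta_k(r_k)}.
\]
These satisfy $\|w_k\|_{L^\infty(\mathcal{H}_1)}\ge\tfrac12$, they are orthogonal to $\overline{\mathcal{T}}_{\lambda,\eps,0}$ on $\mathcal{H}_1$, and they obey the growth bound $\|w_k\|_{L^\infty(\mathcal{H}_\rho)}\le c\rho^{\lambda+\eps}$ for $\rho\ge1$. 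The central computation is the equation for $w_k$. Applying the operator with coefficients $A_k,B_k$ to $P_{k,r_k}$ produces the terms $(A_k-A_k(0))\partial_{v_n v_n}\pmb{\Phi}_i$, $\partial_{v_j}A_k\,\partial_{v_n}\pmb{\Phi}_i$, and so on. We Taylor expand $A_k$ to order $\lambda-\tfrac12+\eps$ and $B_k$ to order $\lambda-\tfrac32+\eps$, and $F_k$ to order $\lambda-2+\eps$. The Taylor polynomials, multiplied by the profiles, give exactly sources of the form $p+Q$ as in \eqref{eq.Q.half-space}, and the vanishing conditions in \eqref{zero.liou.inflow2-hs} come from $p_1(0)=p_2(0)=Dp_2(0)=p_4(0)=0$ together with $A_k(0)=I$ in the $nn$ entry. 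These polynomial-profile sources are absorbed by redefining $P$ using the constructions of \autoref{thm.liou.inflow2} and \autoref{lem.liou.inflow2.nd}. What remains are Taylor remainders. Because the profiles satisfy $D^\beta\pmb{\Phi}\lesssim \mathrm{dist}_{\rm kin}^{1/2-|\beta|}$, these remainders are of order $r_k^{\lambda+\eps}$ times an integrable singular weight. After dividing by $\theta_k(r_k)$ they tend to zero in a divergence-form $L^p$ sense. The boundary datum $g_k$ minus its Taylor polynomial likewise tends to zero in $C^{\alpha}$.

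We then pass to the limit. Uniform boundary $C^\alpha$ estimates for in-flow problems with divergence-form or unbounded sources (the generalization of \cite{Sil22,Zhu24} announced as \autoref{lem.bdry.hol}), combined with interior estimates, give local uniform convergence $w_k\to w$ on compacts of $\overline{\{x_n>0\}}$. The limit $w$ solves the constant-coefficient problem with source $p+Q$, has zero in-flow data, and satisfies growth of order $\lambda+\eps$. By \autoref{lem.liou.inflow2.nd}, $w\in\mathcal{T}_{\lambda,\eps}$. Orthogonality passes to the limit, so $w\equiv0$, which contradicts $\|w\|_{L^\infty(\mathcal{H}_1)}\ge\tfrac12$.

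The variant \eqref{estw/oB.exp.nd} for $\lambda\le1$ follows in the same way. In that case only $\phi_0$ and polynomials appear, $B$ enters only through a lower-order term, and the $B$-dependence is made explicit by a preliminary rescaling to the radius $(1+\|B\|)^{-1}$.

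I expect the main obstacle to be the bookkeeping in the rescaled equation. One has to check that every error produced by the coefficients acting on $P_{k,r_k}$ is either exactly of the form \eqref{eq.Q.half-space} with the required vanishing conditions, so that the space $\overline{\mathcal{T}}$ is invariant, or genuinely small in a norm that the boundary Hölder estimate can handle despite the singularity of $\partial_{v_n v_n}\phi_0$ at $\gamma_0$. A related difficulty is controlling the coefficients of $P_r$, which multiply these errors, by $\theta$ rather than by $\theta$ times a growing factor. I would handle this first through the inductive bound $|P_{2\rho}-P_\rho|\lesssim\theta\rho^{\lambda+\eps}$, accepting the dependence on $|v_0|$ that appears when $\lambda\ge2$.
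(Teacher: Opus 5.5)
Your overall scheme is the paper's: normalize to $z_0=0$, $R=1$, $a=1$; take the $L^2$-projection onto $\overline{\mathcal T}$; blow up along $\theta$; get compactness from boundary H\"older estimates; classify the limit with \autoref{lem.liou.inflow2.nd}; and conclude by orthogonality. The gap is in the part of the rescaled right-hand side that comes from the operator acting on $P_{k,r_k}$, and from the Taylor polynomial of $F_k$, once $\lambda+\eps>\frac32$. These terms are not small, and they are not a priori bounded.

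Take $\lambda=2$, $A\equiv I$, $B\equiv0$. Then $P_{k,r_k}$ may contain $b_k v_n\phi_0$, which is allowed since $v_n\in\cP_{[\lambda+\eps-\frac12]}$. Because $(v_n\partial_{x_n}-\partial_{v_nv_n})(v_n\phi_0)=-2\partial_{v_n}\phi_0$, the equation for $w_k$ contains the source
\[
\frac{2b_k}{\theta_k(r_k)}\,r_k^{-\frac12-\eps}\,\partial_{v_n}\phi_0 .
\]
Similar terms arise elsewhere:
\begin{itemize}
\item $F_k(0)$ contributes $F_k(0)\,\theta_k(r_k)^{-1}r_k^{-\eps}$.
\item The first-order Taylor term of $A_{nn}$ acting on $c_0\phi_0$ contributes $c_0\,\theta_k(r_k)^{-1}r_k^{-\frac12-\eps}(\nabla_vA_{nn}(0)\cdot v)\,\partial_{v_nv_n}\phi_0$.
\item In general, a term of homogeneity $h<\lambda+\eps$ in $P_{k,r_k}$ with coefficient $c$ produces a source coefficient of size $c\,\theta_k(r_k)^{-1}r_k^{h-\lambda-\eps}$.
\end{itemize}
Your telescoping bound $|P_{2\rho}-P_\rho|\lesssim\theta\rho^{\lambda+\eps}$ only shows that these coefficients are $o(\theta_k(r_k))$, and typically $b_k=O(1)$. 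That does not bound the quantities above. So you have neither the $L^\infty$ bounds on the source coefficients that \autoref{lem.bdry.hol} needs for compactness, nor convergence of the source to some $p+Q$.

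``Absorbing'' these terms into $P$ does not help. In the example the corrector is $b_kv_n\phi_0$ itself, so removing it just moves the same unproven comparison ($|b_k|r_k^{3/2}$ versus $r_k^{2+\eps}\theta_k(r_k)$) into $f_k-\tilde P$. It also destroys the orthogonality you use at the end.

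The missing ingredient is \autoref{lem.bdry.hol.rep}, modelled on \cite[Lemma 2.27]{RoWe25}. It says that for a solution with source $F\pmb\Phi$, $\|F\|_{L^\infty}$ is controlled by $\|f\|_{L^1}$ plus a \emph{top-order} H\"older seminorm of $F$. The paper never claims the terms above are small. It shows only that the top-order seminorms of the rescaled coefficients $F_{m,l,k}$ and of $\widetilde F_m$ tend to zero. These seminorms scale exactly, because the polynomial parts have zero seminorm; for instance $[\widetilde F_m]_{C^{\lambda-2+\eps}}\le c(R)/\theta(r_m)$ in \eqref{conv.gm.exp.nd}.

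The paper then combines the boundedness of the blow-ups with \autoref{lem.bdry.hol.rep} to get two things. First, the uniform $C^\beta$ bound, via \autoref{lem.bdry.hol.final}, which replaces \autoref{lem.bdry.hol} when $\lambda+\eps>\frac32$. Second, local uniform convergence $F_{m,l,k}\to p_{\infty,l,k}$, with the vanishing conditions \eqref{zero.liou.inflow2} inherited from \eqref{ass.poly.exp.nd}. So the cancellation among the large terms is extracted from the equation through the boundedness of $w_k$, not from the structure of $P$.

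For $\lambda+\eps<\frac32$ your argument works as written, since then $P=p+c\phi_0$ with $p\in\cP_1$ and no such terms arise.

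A minor point: the hypothesis $\lambda+\eps-\frac12\notin\N\cup\{0\}$ is unrelated to the reduction to $a=1$. Its role is to make every homogeneity in $\overline{\mathcal T}$ strictly smaller than $\lambda+\eps$, which your telescoping and the Liouville step both need.
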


We provide the proof only for the case $(A(z_0))_{n,n}=1$ since the result in the general case can easily be deduced using scaling arguments. Moreover, we will explain below how to deduce \eqref{estw/oB.exp.nd} from \eqref{first.exp.nd}.

To see that it is sufficient to consider $(A(z_0))_{n,n}= 1$, take $a\coloneqq \sqrt{(A(z_0))_{n,n}}\in[\sqrt{\Lambda^{-1}},\sqrt{\Lambda}]$ and define
\begin{align*}
    &f_a(z)\coloneqq f(t,x',ax_n,v',av_n), \,F_a(z)\coloneqq F(t,x',ax_n,v',av_n),\,
    g_a(z)\coloneqq g(t,x',ax_n,v',av_n)
\end{align*}
to see that 
\begin{equation*}
\left\{
\begin{alignedat}{3}
(\partial_t+v\cdot\nabla_x)f_a-\ddiv(\widetilde{A}\nabla_vf_a)&=\widetilde{B}\cdot\nabla_vf+F_a&&\qquad \mbox{in  $\widetilde{{H}}_{R/a}(z_0)$}, \\
f_a&=g_a&&\qquad  \mbox{in $\gamma_-\cap \widetilde{{Q}}_{R/a}(z_0)$},
\end{alignedat} \right.
\end{equation*}
where 
\begin{align*}
    &\widetilde{Q}_{R/a}(z_0)\coloneqq\{(t,x',x_n,v',v_n)\,:\, t\in I_{R}(t_0),\, |(v',av_n)|<R,\, |(x'-x_0'-v_0'(t-t_0),ax_n)|<R^3\}, \\
    &\widetilde{H}_{R/a}(z_0)\coloneqq\widetilde{Q}_{R/a}(z_0)\cap (\bbR\times \{x_n>0\}\times \bbR^n),\\
    &\widetilde{A}_{i,j}(z)\coloneqq\begin{cases}
        A_{i,j}(t,x',ax_n,v',av_n)&\quad\text{if }i,j\neq n,\\
        A_{i,n}(t,x',ax_n,v',av_n)/a&\quad\text{if }i\neq n,\\
        A_{n,n}(t,x',ax_n,v',av_n)/a^2,
    \end{cases}  
    \quad \widetilde{B}_{i}(z)\coloneqq\begin{cases}B(t,x',ax_n,v',av_n)\quad ~~~\text{if }i\neq n,  \\
     B(t,x',ax_n,v',av_n)/a\quad\text{if }i=n.
    \end{cases}
\end{align*}
Now, we consider two cases depending on the range of $a$.
\begin{itemize}
    \item $a\geq1$. Then by the estimates given in \autoref{lem.exp.nd.a} with $(A(z_0))_{n,n}=1$ together with the fact that $\mathcal{Q}_{R/a}(z_0)\subset \widetilde{{Q}}_{R/a}(z_0)$ and $\mathcal{H}_{R/a}(z_0)\subset \widetilde{{H}}_{R/a}(z_0)$, there is a function $P\in \mathcal{\overline{T}}_{\lambda,\epsilon,z_0}$ such that for any $ra\leq R/(2a)$,
\begin{align*}
    \|f_a-P\|_{L^\infty(\mathcal{H}_{ra}(z_0))}&\leq c\left(\frac{2ra^2}{R}\right)^{\lambda+\epsilon}\Bigg(\|f_a\|_{L^\infty(\mathcal{H}_{R/a}(z_0))}+R^{\lambda+\epsilon}[F_a]_{C^{\lambda-2 + \epsilon}(\mathcal{H}_{R/a}(z_0))}\\
    &\qquad\qquad\qquad\qquad+R^{\lambda+\eps}[g_a]_{C^{\lambda+\epsilon}(\gamma_-\cap \mathcal{Q}_{R/a}(z_0))}\Bigg),
\end{align*}
where $c=c(n,\Lambda,\epsilon,\|\widetilde{A}\|_{C^{\lambda-\frac12+\epsilon}(\mathcal{H}_R(z_0))},\|\widetilde{B}\|_{C^{\lambda-1+\epsilon}(\mathcal{H}_R(z_0))},{|v_0|{\mbox{\Large $\chi$}}_{\lambda\geq2}})$. Therefore, by the change of variables, we have for any $r\leq R/(2a^2)$
\begin{align*}
    \|f-P_a\|_{L^\infty(\mathcal{H}_r(z_0))}&\leq c\left(\frac{r}{R}\right)^{\lambda+\epsilon}\bigg(\|f\|_{L^\infty(\mathcal{H}_{R}(z_0))}+R^{\lambda+\epsilon}[F]_{C^{\lambda-2 + \epsilon}(\mathcal{H}_{R}(z_0))}\\
    &\qquad\qquad\qquad\qquad+R^{\lambda+\eps}[g]_{C^{\lambda + \epsilon}(\gamma_-\cap \mathcal{Q}_{R}(z_0))}\bigg)\eqqcolon L_1(r),
\end{align*}
where $c=c(n,\Lambda,\epsilon,\|{A}\|_{C^{\lambda-\frac12+\epsilon}(\mathcal{H}_R(z_0))},\|{B}\|_{C^{\lambda-1+\epsilon}(\mathcal{H}_R(z_0))},{|v_0|{\mbox{\Large $\chi$}}_{\lambda\geq2}})$, as $a\in [\sqrt{\Lambda^{-1}},\sqrt{\Lambda}]$, and
\begin{align*}
    P_a(t,x',x_n,v',v_n) \coloneqq P(t,x',x_n/a,v',v_n/a)\in \mathcal{\overline{T}}_{\lambda,\eps,z_0,a}.
\end{align*}
To prove the expansion for $r \in ( R/(2a^2) , R/2)$, we observe that by taking $r=R/(2a^2)$, we get 
\begin{align*}
    \|P_a\|_{L^\infty(\mathcal{H}_{R/(2a)^2}(z_0))}\leq cL_1(R)
\end{align*}
for some constant $c=c(n,\Lambda,\epsilon,\|{A}\|_{C^{\lambda-\frac12+\epsilon}(\mathcal{H}_R(z_0))},\|{B}\|_{C^{\lambda-1+\epsilon}(\mathcal{H}_R(z_0))},{|v_0|{\mbox{\Large $\chi$}}_{\lambda\geq2}})$.
Since the vector space $\mathcal{\overline{T}}_{\lambda,\eps,z_0,a}$ is finite dimensional and $a \in [\Lambda^{-1} , \Lambda]$, we have for any $r \in (R/(2a^2) , R/2)$,
\begin{align*}
    \|P_a\|_{L^\infty(\mathcal{H}_{r}(z_0))}\leq c \|P_a\|_{L^\infty(\mathcal{H}_{R/(2a^2)}(z_0))},
\end{align*}
where $c = c(\Lambda)$. These two estimates imply the desired result when $r>R/(2a)^2$.

\item $a<1$. Then we have $\mathcal{Q}_{R}(z_0)\subset \widetilde{{Q}}_{R/a}(z_0)$ and $\mathcal{H}_{R}(z_0)\subset \widetilde{{H}}_{R/a}(z_0)$, and there is a function $P\in \mathcal{\overline{T}}_{\lambda,\epsilon,z_0}$ such that for any $r/a\leq R/2$,
\begin{align*}
    \|f_a-P\|_{L^\infty(\mathcal{H}_{r/a}(z_0))}&\leq c\left(\frac{2r}{R/a}\right)^{\lambda+\epsilon}\Bigg(\|f_a\|_{L^\infty(\mathcal{H}_{R}(z_0))}+R^{\lambda+\epsilon}[F_a]_{C^{\lambda-2 + \epsilon}(\mathcal{H}_{R}(z_0))}\\
    &\qquad\qquad\qquad\qquad+R^{\lambda+\eps}[g_a]_{C^{\lambda + \epsilon}(\gamma_-\cap \mathcal{Q}_{R}(z_0))}\Bigg),
\end{align*}
where $c=c(n,\Lambda,\epsilon,\|\widetilde{A}\|_{C^{\lambda-\frac12+\epsilon}(\mathcal{H}_R(z_0))},\|\widetilde{B}\|_{C^{\lambda-1+\epsilon}(\mathcal{H}_R(z_0))},{|v_0|{\mbox{\Large $\chi$}}_{\lambda\geq2}})$. Similarly, by the change of variables when $r\leq aR/2$ and by taking the constant $c$ sufficiently large when $r>aR/2$, we get the desired estimate.
\end{itemize}
Next, we derive the estimate \eqref{estw/oB.exp.nd} from \eqref{first.exp.nd} when $\lambda\leq1$. To this end, for any $\mathcal{B}>0$, we define
\begin{equation}\label{scaling.argument}
\begin{aligned}
    &f_{\mathcal{B}}(z)\coloneqq f( S_{\mathcal{B}}z),\, F_{\mathcal{B}}(z)\coloneqq \mathcal{B}^2F(S_{\mathcal{B}}z),\, g_{\mathcal{B}}(z)\coloneqq g( S_{\mathcal{B}}z),\\
    &\widetilde{A}\coloneqq A( S_{\mathcal{B}}z),\,\widetilde{B}(z)\coloneqq\mathcal{B}{B( S_{\mathcal{B}}z)},\, z_{\mathcal{B}}\coloneqq S_{\mathcal{B}^{-1}}z_0
\end{aligned}
\end{equation}
to see that
\begin{equation*}
\left\{
\begin{alignedat}{3}
(\partial_t+v\cdot\nabla_x)f_{\mathcal{B}}-\ddiv(\widetilde{A}\nabla_vf_{\mathcal{B}})&=\widetilde{B}\cdot\nabla_vf_{\mathcal{B}}+F_{\mathcal{B}}&&\qquad \mbox{in  ${\mathcal{H}}_{R/{\mathcal{B}}}(z_{\mathcal{B}})$}, \\
f_{\mathcal{B}}&=g_{\mathcal{B}}&&\qquad  \mbox{in $\gamma_-\cap {\mathcal{Q}}_{R/\mathcal{B}}(z_{\mathcal{B}})$}
\end{alignedat} \right.
\end{equation*}
with $z_{\mathcal{B}}\in \gamma_0$. Since $\Lambda^{-1}I\leq \widetilde{A}\leq \Lambda I$, by \eqref{first.exp.nd}, there is a function $\widetilde{P}_{0,a}\in\mathcal{\overline{T}}_{\lambda,\eps,0,a}$ with $a=\sqrt{(\widetilde{A}(z_{\mathcal{B}}))_{n,n}}$ such that for any $\rho\leq \min\left\{\frac{R}{\mathcal{B}},1\right\}\eqqcolon R_0\leq 1$ and $z\in\mathcal{H}_\rho(z_{\mathcal{B}})$,
\begin{align*}
    |f_{\mathcal{B}}(z)-\widetilde{P}_{0,a}(z)|&\leq c\left(\frac{\rho}{R_0}\right)^{\lambda+\epsilon}\Bigg(\|f_{\mathcal{B}}\|_{L^\infty(\mathcal{H}_{R_0}(z_{\mathcal{B}}))}+R_0^{\lambda+\epsilon}[F_{\mathcal{B}}]_{C^{\lambda-2 + \epsilon}(\mathcal{H}_{R_0}(z_{\mathcal{B}}))}\\
    &\qquad\qquad\qquad\qquad+R_0^{\lambda+\epsilon}[g_{\mathcal{B}}]_{C^{\lambda + \epsilon}(\gamma_-\cap \mathcal{Q}_{R}(z_{\mathcal{B}}))}\Bigg),
\end{align*}
where $c=c(n,\Lambda,\lambda,\epsilon,\|\widetilde{A}\|_{C^{\lambda-\frac12+\epsilon}(\mathcal{H}_{R_0}(z_{\mathcal{B}}))},\|\widetilde{B}\|_{C^{\lambda-\frac32+\epsilon}(\mathcal{H}_{R_0}(z_\mathcal{B}))})$. By choosing 
\begin{equation*}
\mathcal{B}\coloneqq \left(1+\|B\|_{C^{\lambda-\frac32+\eps(\mathcal{H}_R(z_0))}}\right)^{-1}\leq 1,
\end{equation*} we have
\begin{equation}\label{widea.wideb.exp.nd}
\begin{aligned}
    &\|\widetilde{A}\|_{C^{\lambda-\frac12+\epsilon}(\mathcal{H}_{R_0}(z_{\mathcal{B}}))}\leq\|{A}\|_{C^{\lambda-\frac12+\epsilon}(\mathcal{H}_{R_0\mathcal{B}}(z_{0}))}\leq \|A\|_{C^{\lambda-\frac12+\epsilon}(\mathcal{H}_{R}(z_0))},\\
    &\|\widetilde{B}\|_{C^{\lambda-\frac32+\epsilon}(\mathcal{H}_{R_0}(z_{\mathcal{B}}))}\leq\mathcal{B}\|{B}\|_{C^{\lambda-\frac32+\epsilon}(\mathcal{H}_{R_0\mathcal{B}}(z_{0}))}\leq \mathcal{B}\|B\|_{C^{\lambda-\frac32+\epsilon}(\mathcal{H}_{R}(z_{0}))}\leq c_B.
\end{aligned}
\end{equation}
Next, by scaling back together with the fact that $(\widetilde{A}(0))_{n,n}=(A(z_0))_{n,n}$ and by the homogeneity of the functions $\pmb{\Phi}_k$, there is ${P}_{0,a}\in\mathcal{\overline{T}}_{\lambda,\eps,0,a}$ such that for any $\rho\leq R_0 $ and $z\in\mathcal{H}_{\rho\mathcal{B}}(z_0)$, we have 
\begin{equation*}
\begin{aligned}
    |f(z)-{P}_{0,a}(z)|&\leq c\left(\frac{\rho}{R_0}\right)^{\lambda+\epsilon}\Bigg(\|f\|_{L^\infty(\mathcal{H}_{R}(z_0))}+{(R_0\mathcal{B})}^{\lambda+\epsilon}[F]_{C^{\lambda-2 + \epsilon}(\mathcal{H}_{R}(z_0))}\\
    &\qquad\qquad\qquad\qquad+{(R_0\mathcal{B})}^{\lambda+\epsilon}[g]_{C^{\lambda + \epsilon}(\gamma_-\cap \mathcal{Q}_{R}(z_0))}\Bigg),
\end{aligned}
\end{equation*}
where $c=c(n,\Lambda,\lambda,\epsilon,\|{A}\|_{C^{\lambda-\frac12+\epsilon}(\mathcal{H}_{R}(z_0))},c_B)$ by \eqref{widea.wideb.exp.nd}. Thus, for any $z\in \mathcal{H}_\rho(z_0)$, we have 
\begin{equation}\label{scaling.gamma0}
\begin{aligned}
    |f(z)-{P}_{0,a}(z)|&\leq c\left(\frac{\rho}{R}\right)^{\lambda+\epsilon}\Bigg(\|f\|_{L^\infty(\mathcal{H}_{R}(z_0))}+{R}^{\lambda+\epsilon}[F]_{C^{\lambda-2 + \epsilon}(\mathcal{H}_{R}(z_0))}+{R}^{\lambda+\epsilon}[g]_{C^{\lambda + \epsilon}(\gamma_-\cap \mathcal{Q}_{R}(z_0))}\Bigg),
\end{aligned}
\end{equation}
where $c=c(n,\Lambda,\lambda,\epsilon,\|{A}\|_{C^{\lambda-\frac12+\epsilon}(\mathcal{H}_{R}(z_0))},c_B)$, whenever $R\leq \mathcal{B}$.

 On the other hand, if $\frac{R}{\mathcal{B}}\geq1$, then we have for any $\rho \leq \mathcal{B}$ and $z\in\mathcal{H}_{\rho}(z_0)$,
\begin{equation}\label{scaling.gamma1}
\begin{aligned}
    |f(z)-{P}_{0,a}(z)|&\leq c\left(\frac{\rho}{\mathcal{B}}\right)^{\lambda+\epsilon}\Bigg(\|f\|_{L^\infty(\mathcal{H}_{R}(z_0))}+{\mathcal{B}}^{\lambda+\epsilon}[F]_{C^{\lambda-2 + \epsilon}(\mathcal{H}_{R}(z_0))}\\
    &\qquad\qquad\qquad\qquad+{\mathcal{B}}^{\lambda+\epsilon}[g]_{C^{\lambda + \epsilon}(\gamma_-\cap \mathcal{Q}_{R}(z_0))}\Bigg)\eqqcolon L_2(\rho).
\end{aligned}
\end{equation}
Since $P_{0,a}(z)=\sum\limits_{\beta\in\mathcal{I}}p_{0,a,\beta}$, where the $p_{0,a,\beta}$ are homogeneous functions of degree $\beta$ with $\beta\leq 3$, we have
\begin{align*}
    \|p_{0,a,\beta}(z)\|_{L^\infty(\mathcal{H}_{r}(z_0))}\leq c\left(\frac{r}{\mathcal{B}}\right)^{\beta}L_2(\mathcal{B}),
\end{align*}
whenever $r\geq \mathcal{B}$.
Therefore, using this and the fact that the set $\mathcal{I}$ is finite, and that $\mathcal{B}\leq r$, $R\leq 1$, we derive for $\mathcal{B}\leq\ r \leq \frac{R}2$,
\begin{equation*}
\begin{aligned}
    |f(z)-{P}_{0,a}(z)|&\leq \sum_{\beta\in\mathcal{I},|\beta|\leq 3}c\left(\frac{r}{\mathcal{B}}\right)^{\beta}L_2(\mathcal{B})\\
    &\leq c\left(\frac{\rho}{R}\right)^{\lambda+\eps}\frac{1}{\mathcal{B}^{\lambda+\eps}}\Bigg(\|f\|_{L^\infty(\mathcal{H}_{R}(z_0))}+{R}^{\lambda+\epsilon}[F]_{C^{\lambda-2 + \epsilon}(\mathcal{H}_{R}(z_0))}+{R}^{\lambda+\epsilon}[g]_{C^{\lambda + \epsilon}(\gamma_-\cap \mathcal{Q}_{R}(z_0))}\Bigg).
\end{aligned}
\end{equation*}
Therefore, using this, \eqref{scaling.gamma1}, and \eqref{scaling.gamma0}, we get \eqref{estw/oB.exp.nd}. Thus, it remains to prove only \eqref{first.exp.nd} with $(A(z_0))_{n,n}=1$.

\subsection{Basic lemmas} Before proving \autoref{lem.exp.nd.a}, we prove several lemmas which provide global uniform bounds of solutions when the source term is given by a multiple of a smooth function and another function from the following set
\begin{align}\label{defn.mathcalg3}
    \mathcal{G} \coloneqq\{1,\pmb{\Phi}_{k},\partial_{v_n}\pmb{\Phi}_{k},\partial_{v_n,v_n}\pmb{\Phi}_{k}\,:\,k\in\{0,1,2,3,4,5\} \}.
\end{align}

Recall that the functions $\pmb{\Phi}_k$ are defined in \eqref{defn.Phii} and source terms of this form occur when applying for instance $\phi_0$ to an equation with coefficients. Hence, the following results will be crucial in the proof of \autoref{lem.exp.nd.a} which is based on a blow-up argument.

The following lemma yields a H\"older regularity estimate. We will prove it by Campanato-type perturbation arguments. We will use crucially that for any $\pmb{\Phi}\in\mathcal{G}$, it holds $\pmb{\Phi} = \pmb{\Phi}(x_n,v_n) \in L^{2+\epsilon_0}(H_R)$ and $\partial_{v_n} \pmb{\Phi}_0 \in L^{4+\epsilon_0}(H_R)$ for any $R>0$ with $\epsilon_0=\frac13$ (see \autoref{rmk.integrable.derv.phim}).

\begin{lemma}\label{lem.bdry.hol}
    Let $f$ be a weak solution to 
    \begin{equation}\label{eq.bdry.hol}
\left\{
\begin{alignedat}{3}
(\partial_t+v\cdot\nabla_x)f-\ddiv(A\nabla_vf)&=B\cdot\nabla_vf+F\pmb{\Phi}-\ddiv(G\partial_{v_n}{\pmb{\Phi}}_0)&&\qquad \mbox{in  $\mathcal{H}_1$}, \\
f&=g&&\qquad  \mbox{in $\gamma_-\cap \mathcal{Q}_1$},
\end{alignedat} \right.
\end{equation}
where $B\in L^{\infty}(\mathcal{H}_1;\bbR^n)$, $F\in L^{\infty}(\mathcal{H}_1)$, $G\in L^\infty(\mathcal{H}_1;\bbR^n)$, $g\in C^{\alpha}(\gamma_-\cap \mathcal{Q}_1)$, and $\pmb{\Phi} \in \mathcal{G}$. Then there is a small constant $\beta=\beta(n,\Lambda,\alpha)$ such that 
\begin{align*}
    [f]_{C^{\beta}(\mathcal{H}_{1/2})}&\leq c\left(\|f\|_{L^2(\mathcal{H}_{1})}+[g]_{C^{\alpha}(\gamma_-\cap \mathcal{Q}_{1})}+\|F\|_{L^\infty(\mathcal{H}_1)}+\|G\|_{L^\infty(\mathcal{H}_1)}\right)
\end{align*}
for some constant $c=c(n,\Lambda,\alpha,\|B\|_{L^\infty(\mathcal{H}_1)})$.
\end{lemma}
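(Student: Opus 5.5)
The plan is to reduce to the known boundary $C^\alpha$ estimates for in-flow problems (from \cite{Sil22,Zhu24}, in the form of \cite[Lemma 2.25]{RoWe25}), which hold for equations with bounded right-hand side and divergence-form sources $\ddiv(G)$ with $G\in L^\infty$. Those estimates are obtained via De Giorgi/Moser iteration, and such iterations in fact only need $F\in L^{p}$ and $G\in L^{q}$ with $p,q$ above the critical kinetic integrability threshold, with Hölder exponent depending on how far above. Our sources are $F\pmb{\Phi}$ with $\pmb{\Phi}\in\mathcal{G}$ and $G\partial_{v_n}\pmb{\Phi}_0$. They are unbounded near $\{x_n=0,v_n=0\}$, but by \autoref{rmk.integrable.derv.phim} they lie in $L^{2+\epsilon_0}(\mathcal{H}_1)$ and $L^{4+\epsilon_0}(\mathcal{H}_1)$ respectively. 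Moreover, they depend only on $(x_n,v_n)$, so this integrability is scale-invariant in the appropriate sense.

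I will argue by Campanato-type perturbation at every point $z_1\in\mathcal{H}_{1/2}$ (interior, near $\gamma_\pm$, or near $\gamma_0$) and every scale $r\le 1/4$. On $\mathcal{H}_r(z_1)$ (or $\mathcal{Q}_r(z_1)$ in the interior) I split $f=f_1+f_2$. Here $f_1$ solves the equation with $B\cdot\nabla_v f$ and the singular sources removed, with the same boundary data (in-flow data $g$ on $\gamma_-$ and $f$ on the remaining kinetic boundary). The remainder $f_2$ has zero kinetic boundary data and right-hand side $B\cdot\nabla_v f+F\pmb{\Phi}-\ddiv(G\partial_{v_n}\pmb{\Phi}_0)$; existence of $f_1$ comes from \cite{AveHou26}.

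For $f_1$ the known estimate gives an oscillation decay
\[
\operatorname{osc}_{\mathcal{H}_{\rho}(z_1)} f_1\le c(\rho/r)^{\beta_0}\big(\operatorname{osc}_{\mathcal{H}_r(z_1)}f+r^{\alpha}[g]_{C^\alpha}\big).
\]
For $f_2$, I test the equation with $f_2$ itself, which is allowed by the energy identity with zero boundary data, where the $\gamma_+$ trace term has a good sign. Combining Hölder's inequality with the kinetic Sobolev/Poincaré gain ($L^2_{t,x}H^1_v$ together with transport controlling $L^{2+\delta}$, as in \cite{GIMV19}), this gives
\[
\|f_2\|_{L^2(\mathcal{H}_r)}\lesssim r^{\,\frac{4n+2}{2}+\sigma}\big(\|B\|_\infty\|\nabla_v f\|_{L^2}r^{-\dots}+\|F\|_\infty+\|G\|_\infty\big)
\]
for some $\sigma>0$ coming from $\epsilon_0$. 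To see this, I estimate $\int F\pmb{\Phi}f_2\le\|F\|_\infty\|\pmb{\Phi}\|_{L^{2+\epsilon_0}}\|f_2\|_{L^{(2+\epsilon_0)'}}$ and $\int G\partial_{v_n}\pmb{\Phi}_0\partial_{v_n}f_2\le\|G\|_\infty\|\partial_{v_n}\pmb{\Phi}_0\|_{L^{2}(\mathcal{H}_r)}\|\nabla_v f_2\|_{L^2}$, and use the homogeneity of $\pmb{\Phi}_k$ (degree $\ge 0$ after differentiation, at worst $-\frac32$) to compute $\|\pmb{\Phi}\|_{L^{2+\epsilon_0}(\mathcal{H}_r(z_1))}\lesssim r^{(4n+2)/(2+\epsilon_0)-3/2}$ uniformly in $z_1$. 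The drift term $B\cdot\nabla_v f$ is absorbed using Caccioppoli for $f$ at scale $r$.

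The decay estimates for $f_1$ and $f_2$ then combine in the usual iteration in kinetic Campanato spaces, which yields $f\in C^\beta$ with $\beta<\min\{\beta_0,\alpha,\sigma\}$ and the stated bound with $\|f\|_{L^2(\mathcal{H}_1)}$. A preliminary local $L^\infty$ bound, obtained by De Giorgi with $L^{2+\epsilon_0}$ sources, converts $L^2$ control into the oscillation control used above.

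The main obstacle is the singular divergence term $\ddiv(G\partial_{v_n}\pmb{\Phi}_0)$. Since $\partial_{v_n}\pmb{\Phi}_0$ has homogeneity $-\frac12$, it is only borderline in the energy estimate, and we need the extra integrability $L^{4+\epsilon_0}$ to produce a positive power $r^\sigma$. I would first try to verify that the scale-invariant norm $r^{-(4n+2)/q}\|\partial_{v_n}\pmb{\Phi}_0\|_{L^q(\mathcal{H}_r(z_1))}$ with $q=4+\epsilon_0$ is $\lesssim r^{-1/2}$. Since $-\frac12>-1$, this is subcritical compared to a divergence source $G\in L^\infty$ at scale $r$ (which corresponds to degree $-1$), which gives the gain $r^{1/2}$ in the oscillation bound.
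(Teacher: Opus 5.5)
Your strategy is the paper's. You compare $f$ on $\mathcal{H}_r(z_1)$ with a solution of the source-free problem that has the same kinetic boundary data. You bound the difference by testing its equation with itself, using Poincar\'e in $v$ and the integrability of $\pmb{\Phi}$ and $\partial_{v_n}\pmb{\Phi}_0$. The comparison function is handled by the known boundary $C^{\alpha}$ estimate, and you iterate.

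Two deviations are harmless. First, you put $B\cdot\nabla_v f$ into the perturbation and pay with a Caccioppoli inequality for $f$; the paper instead keeps the drift in the comparison problem and absorbs $r^2\|B\|_{L^\infty}^2$. Second, you use the exact homogeneity of the profiles, which is sharper than the paper's bound. It gives $\dashint_{\mathcal{H}_r(z_1)}|f_2|^2\lesssim r\,(\|F\|_{L^\infty}+\|G\|_{L^\infty})^2$, plus the drift contribution, uniformly in $z_1$, and needs only $L^2$ integrability. The paper bounds $\dashint_{B_{r^3}\times B_r}|\cdot|^q$ crudely by $r^{-4}\|\cdot\|^q_{L^q(H_1)}$. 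That is why it needs $q>4$ for $\partial_{v_n}\pmb{\Phi}_0$ and only obtains a small power of $r$. So your closing claim, that $L^{4+\epsilon_0}$ is what produces the gain, does not apply to your own scheme.

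\textbf{The step that fails as written is the bridge between norms.} You state the decay of $f_1$ for oscillations but control $f_2$ only in $L^2$. You close this with a local $L^\infty$ bound ``by De Giorgi with $L^{2+\epsilon_0}$ sources''.
\begin{itemize}
\item In $\R^{1+2n}$, with homogeneous dimension $4n+2$, the scaling threshold for De Giorgi/Moser is $F\in L^p$ with $p>2n+1\ge 3$ and $G\in L^q$ with $q>4n+2\ge 6$. The exponents you quote are below this.
\item For $F\pmb{\Phi}$ no better exponent exists: $\partial_{v_nv_n}\pmb{\Phi}_0\sim|v_n|^{-3/2}$ on $\{x_n\le -v_n^3\}$, so it is not in $L^{8/3}(\mathcal{H}_1)$.
\item The sources are subcritical only because they depend on $(x_n,v_n)$ alone and have homogeneity $>-2$ (resp.\ $>-1$). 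This is a Morrey-type property that $L^p$-based De Giorgi does not see, so the $L^\infty$ bound would need a separate argument.
\item Even granting it, an oscillation iteration needs $\|f_2\|_{L^\infty(\mathcal{H}_\rho)}$ at every scale, not just a bound on $f$.
\end{itemize}

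None of this is needed. Iterate the $L^2$ excess $\dashint_{\mathcal{H}_\rho(z_1)}|f-(f)_{\mathcal{H}_\rho(z_1)}|^2$ instead. For the source-free comparison function, local boundedness and the boundary $C^{\beta_0}$ estimate give
$\dashint_{\mathcal{H}_\rho}|f_1-(f_1)_{\mathcal{H}_\rho}|^2\le c(\rho/r)^{2\beta_0}\dashint_{\mathcal{H}_r}|f_1-(f_1)_{\mathcal{H}_r}|^2+c\,r^{2\alpha}[g]^2_{C^\alpha}$.
This holds once $\|g-(f_1)_{\mathcal{H}_r}\|_{L^\infty}$ is controlled by the $L^2$ excess and $r^{\alpha}[g]_{C^\alpha}$, which is \autoref{lem.rep.hol2}. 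Combining this with your $L^2$ bound for $f_2$ and the kinetic Campanato characterization is exactly the paper's argument. It yields the estimate with $\|f\|_{L^2(\mathcal{H}_1)}$ directly.
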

\begin{proof}
Let $z_0\in \gamma$ and $r\leq 1/16$. We want to prove that
\begin{align}\label{goal1.bdry.hol}
    \dashint_{\mathcal{H}_{r}(z_0)}|f-(f)_{\mathcal{H}_r(z_0)}|\leq cr^{\beta}(M_0+[g]_{C^{\alpha}(\gamma_-\cap\mathcal{Q}_{r}(z_0))})
\end{align}
for some $\beta=\beta(n,\Lambda)$, where $c=c(n,\Lambda,\|B\|_{L^\infty(\mathcal{H}_1)})$ and $M_0\coloneqq \|f\|_{L^2(\mathcal{H}_1)}+\|F\|_{L^\infty(\mathcal{H}_1)} +\|G\|_{L^\infty(\mathcal{H}_1)}$.
Let us assume $z_0\in \gamma_0$.
First, by \cite[Corollary 2.9]{Zhu24}, there is a unique weak solution $h$ to 
\begin{equation}\label{comp.eq.bdry.hol}
\left\{
\begin{alignedat}{3}
(\partial_t+v\cdot\nabla_x)h-\ddiv(A\nabla_v h)&=B\nabla_v h&&\qquad \mbox{in  $\mathcal{H}_r(z_0)$}, \\
h&=f&&\qquad  \mbox{in $\partial_{\mathrm{kin}}\mathcal{H}_r(z_0)$}.
\end{alignedat} \right.
\end{equation}
Let us write $\widetilde{F}\coloneqq -F\pmb{\Phi}$ and $\widetilde{G}\coloneqq G{\pmb{\Phi}}_0$, and denote $x_0=(x_0',0)$ and $v_0=(v_0',0)$ to see that
\begin{equation}\label{ineq1.bdry.hol}
\begin{aligned}
    \dashint_{\mathcal{H}_r(z_0)}|r^2\widetilde{F}|^2 &+|r\widetilde{G}|^2\,dz
    \\
    &\leq r^4\|F\|^2_{L^\infty(\mathcal{H}_1)}\left(\sup_{t\in I_r(t_0)}\dashint_{B_{r^3}}\dashint_{{B}_r}|\pmb{\Phi}|^{2+\epsilon_0}\,dx_n\,dv_n\right)^{\frac2{2+\epsilon_0}}\\
    &\,+r^2\|G\|^2_{L^\infty(\mathcal{H}_1)}\left(\sup_{t\in I_r(t_0)}\dashint_{B_{r^3}}\dashint_{{B}_r}|{\pmb{\Phi}}_0|^{4+\epsilon_0}\,dx_n\,dv_n\right)^{\frac2{4+\epsilon_0}}\\
    &\leq cr^{\frac{2\epsilon_0}{4+\epsilon_0}}(M_0)^2,
\end{aligned}
\end{equation}
where we have used H\"older's inequality,  $\|\pmb{\Phi}\|_{L^{2+\epsilon_0}(H_1)},\|\partial_{v_n}{\pmb{\Phi}}_0\|_{L^{4+\epsilon_0}(H_1)}\leq c$ with $\epsilon_0=\frac13$.
Since $f-h$ solves the following equation
\begin{equation*}
\left\{
\begin{alignedat}{3}
(\partial_t+v\cdot\nabla_x)(f-h)-\ddiv(A\nabla_v(f-h))&=B\nabla_v(f-h)+\widetilde{F}-\ddiv(\widetilde{G})&&\qquad \mbox{in  $\mathcal{H}_r(z_0)$}, \\
f-h&=0&&\qquad  \mbox{in $\partial_{\mathrm{kin}}\mathcal{H}_r(z_0)$}
\end{alignedat} \right.
\end{equation*}
we can test with $f-h$, and deduce
\begin{equation}\label{ineq2.bdry.hol}
\begin{aligned}
    \frac{1}{\Lambda}\dashint_{\mathcal{H}_r(z_0)}|\nabla_v(f-h)|^2\,dz& \leq \|B\|_{L^\infty(\mathcal{H}_r(z_0))}\dashint_{\mathcal{H}_r(z_0)}|\nabla_v(f-h)|f-h|\\
    &\quad+\dashint_{\mathcal{H}_r(z_0)}|\widetilde{F}||f-h|^2+\dashint_{\mathcal{H}_r(z_0)}|\widetilde{G}||\nabla_v(f-h)|.
\end{aligned}
\end{equation}
Next note from the Poincar\'e inequality with respect to the $v$ variable and using also \eqref{ineq1.bdry.hol}, \eqref{ineq2.bdry.hol}, and Young's inequality, we get 
\begin{align*}
   \dashint_{\mathcal{H}_r(z_0)}|f-h|^2\,dz\leq c \dashint_{\mathcal{H}_r(z_0)}r^2|\nabla_v(f-h)|^2\,dz\leq c\left(\|B\|_{L^\infty(\mathcal{H}_1)}^2 r^2\dashint_{\mathcal{H}_r(z_0)}|f-h|^2\,dz+cr^{\frac{2\epsilon_0}{8+\epsilon_0}}M_0^2\right)
\end{align*}
for some constant $c=c(n,\Lambda)$. Hence, there is a small constant $r_0=r_0(n,\Lambda,\|B\|_{L^\infty(\mathcal{H}_1)})$ such that when $r\leq r_0$, then it holds
\begin{align*}
     \dashint_{\mathcal{H}_r(z_0)}|f-h|^2\,dz\leq cr^{\frac{2\epsilon_0}{8+\epsilon_0}}M_0^2. 
\end{align*}
Note that when $r>r_0$, then \eqref{goal1.bdry.hol} follows directly. Thus, we assume $r\leq r_0$. Next, by \cite[Lemma 2.25]{RoWe25} together with \autoref{lem.rep.hol2}, we derive
\begin{align*}
    [h]_{C^{\alpha}(\mathcal{H}_{r/16}(z_0))}\leq c\left( r^{-(2n+1)}\|r^{-\alpha}h\|_{L^2(\mathcal{H}_{r/8}(z_0))}+[g]_{C^{\alpha}(\gamma_-\cap \mathcal{Q}_{r/8}(z_0))}\right),
\end{align*}
as $f=g$ on $\gamma_-\cap \mathcal{Q}_{r/8}(z_0)$. This together with the fact that $h-(h)_{\mathcal{H}_{r/8}(z_0)}$ solves \eqref{comp.eq.bdry.hol} with $f$ replaced by $f-(h)_{\mathcal{H}_{r/8}(z_0)}$ implies that for any $\rho\in (0,1/16]$,
\begin{align*}
    \dashint_{\mathcal{H}_{\rho r}(z_0)}|h-(h)_{\mathcal{H}_{\rho r}(z_0)}|^2 \leq c \rho^{2\alpha}\dashint_{\mathcal{H}_{ r}(z_0)}|h-(h)_{\mathcal{H}_{r/8}(z_0)}|^2\,dz+ c r^{2\alpha}[g]_{C^{\alpha}(\gamma_-\cap \mathcal{Q}_{r/8}(z_0))}^2.
\end{align*}

Therefore, we have 
\begin{align*}
    \dashint_{\mathcal{H}_{\rho r}(z_0)}|f-(f)_{\mathcal{H}_{\rho r}(z_0)}|^2\,dz&\leq c \dashint_{\mathcal{H}_{\rho r}(z_0)}|h-(h)_{\mathcal{H}_{\rho r}(z_0)}|^2\,dz+ c\dashint_{\mathcal{H}_{\rho r}(z_0)}|f-h|^2\,dz \\
    &\leq c \rho^{2\alpha}\dashint_{\mathcal{H}_{ r/8}(z_0)}|h-(h)_{\mathcal{H}_{ r/8}(z_0)}|^2+cr^{2\alpha}[g]^2_{C^\alpha(\gamma_-\cap \mathcal{Q}_{r/8}(z_0))} \\
    &\quad +c\rho^{-(4n+2)}\dashint_{\mathcal{H}_{ r/8}(z_0)}|f-h|^2\,dz\\
    &\leq c \rho^{2\alpha}\dashint_{\mathcal{H}_{ r/8}(z_0)}|f-(f)_{\mathcal{H}_{ r/8}(z_0)}|^2+cr^{2\alpha}[g]^2_{C^\alpha(\gamma_-\cap \mathcal{Q}_{r/8}(z_0))} \\
    &\quad +c\rho^{-(4n+2)}r^{\frac{2\epsilon_0}{8+\epsilon_0}}M_0^2
\end{align*}
for some constant $c=c(n,\Lambda)$.
Now, taking $\rho^{2(\alpha-\beta)}\leq \frac1c$ with $\beta = \frac12\min\left\{\alpha,\frac{ \epsilon_0}{8 + \epsilon_0}\right\}$ and following the standard iteration as in \cite[Lemma 5.6]{KLN25}, we get \eqref{goal1.bdry.hol} when $z_0\in \gamma_0$. 

We now assume $z_0\in \gamma\setminus \gamma_0$. If $\mathcal{H}_{2r}(z_0)\cap \gamma_0\neq\emptyset$, then there is a point $z_1\in\gamma_0$ such that $\mathcal{H}_{2r}(z_0)\subset\mathcal{H}_{8r}(z_1)$. Indeed, we can choose $z_1=(t_0,x_1',0,v_1',0)$, where $(t_1,x_1',0,v_1',0)\in\mathcal{H}_{2r}(z_0)\cap \gamma_0$. In addition, in this case, $|v_{0,n}|\leq 2r$, and we have $|\mathcal{H}_{r}(z_0)|\eqsim r^{4n+2}$. Therefore, we get 
\begin{align}\label{ineq.almost.bdry.hol}
    \dashint_{\mathcal{H}_r(z_0)}|f-(f)_{\mathcal{H}_r(z_0)}|^2\,dz\leq c\dashint_{\mathcal{H}_{8r}(z_1)}|f-(f)_{\mathcal{H}_{8r}(z_1)}|^2\,dz\leq cr^\beta,
\end{align}
where we have also used \eqref{goal1.bdry.hol} together with the fact that $z_1\in\gamma_0$. We assume $\mathcal{H}_{2r}(z_0)\cap \gamma_0=\emptyset$ so that $2r<|v_{0,n}|$. Then by \cite[Lemma 2.25]{RoWe25}, we have for $r_0\coloneqq |v_{0,n}|/2$,
\begin{align*}
    [f]_{C^{\alpha}(\mathcal{H}_{r_0}(z_0))}\leq c\left(|\mathcal{H}_{r_0}(z_0)|^{-\frac12}r_0^{-\alpha}\|f-(f)_{\mathcal{H}_{3r_0/2}(z_0)}\|_{L^2(\mathcal{H}_{3r_0/2}(z_0))}+r_0v_0^{-\frac12}M_0+[g]_{C^\alpha(\gamma_-\cap\mathcal{Q}_{3r_0/2}(z_0))}\right),
\end{align*}
where we have also used the fact that $ |\pmb{\Phi}(z)|,|\partial_{v_n} \pmb{\Phi}_0(z)|\leq cv_0^{-\frac12}$ when $z\in \mathcal{H}_{3r_0/2}(z_0)$, as $|v_{0,n}|-3r_0/2\geq |v_{0,n}|/4$.
Thus, we deduce 
\begin{align}\label{ineq2.almost.bdry.hol}
    \dashint_{\mathcal{H}_r(z_0)}|f-(f)_{\mathcal{H}_{r}(z_0)}|^2\leq c\left(\frac{r}{r_0}\right)^{\beta}\dashint_{\mathcal{H}_{3r_0/2}(z_0)}|f-(f)_{\mathcal{H}_{3r_0/2}(z_0)}|^2+cr^\beta(M_0+[g]_{C^\alpha(\gamma_-\cap\mathcal{Q}_1)}).
\end{align}
Since $\mathcal{H}_{3r_0(z_0)}\cap \gamma_0\neq\emptyset$, plugging  \eqref{ineq.almost.bdry.hol} into \eqref{ineq2.almost.bdry.hol} leads to \eqref{goal1.bdry.hol} when $z_0\in\gamma\setminus \gamma_0$.
Now, combining \eqref{goal1.bdry.hol} and the interior regularity results from \cite{GIMV19} leads to the desired estimate.
\end{proof}

Based on \cite[Lemma 2.27]{RoWe25}, we now use a compactness argument to replace  the $L^\infty$-norm  by the H\"older seminorm. As we will use this result only for equations without source terms in divergence form, we will prove the result only in this setting.
\begin{lemma}\label{lem.bdry.hol.rep}
    Let $f$ be a weak solution to 
    \begin{equation*}
(\partial_t+v\cdot\nabla_x)f-\ddiv(A\nabla_vf)=B\cdot\nabla_vf+F\pmb{\Phi}\quad \mbox{in  $\mathcal{H}_1$},
\end{equation*}
where $A\in C^{\alpha}(\mathcal{H}_1)$ and $B\in C^{\alpha}(\mathcal{H}_1;\bbR^n)$ for some $\alpha>0$,  $F\in C^{\lambda, \eps}(\mathcal{H}_1)$ for some nonnegative integer $\lambda \geq 0$ and $\epsilon \in (0,1)$, and $\pmb{\Phi} \in \mathcal{G}$. Then, 
\begin{align*}
    \|F\|_{L^\infty(\mathcal{H}_1)}
    &\leq c\left(\|f\|_{L^1(\mathcal{H}_1)}+[F]_{C^{\lambda , \epsilon}(\mathcal{H}_1)}\right),
\end{align*}
where $c$ depends only on $n,\Lambda,\lambda,\epsilon,\alpha,\|A\|_{C^{\alpha}(\mathcal{H}_1)}, \|B\|_{C^{\alpha}(\mathcal{H}_1)}$. 
\end{lemma}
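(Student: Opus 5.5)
The plan is a contradiction-compactness argument, in the spirit of \cite[Lemma 2.27]{RoWe25}: we show that the solution $f$ controls the size of $F$ up to the seminorm $[F]_{C^{\lambda,\eps}}$. Write $F = p + (F-p)$, where $p \in \cP_\lambda$ is the Taylor polynomial of $F$ at $0$. Then $\|F-p\|_{L^\infty(\mathcal{H}_1)} \le c[F]_{C^{\lambda,\eps}(\mathcal{H}_1)}$. Since $\cP_\lambda$ is finite dimensional, it suffices to prove
\begin{align*}
\|p\|_{L^\infty(\mathcal{H}_1)} \le c\big(\|f\|_{L^1(\mathcal{H}_1)} + [F]_{C^{\lambda,\eps}(\mathcal{H}_1)}\big).
\end{align*}

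Suppose this fails. Then there are sequences $A_k,B_k$ with uniformly bounded $C^\alpha$ norms, $F_k = p_k + (F_k - p_k)$, and solutions $f_k$ such that, after normalizing, $\|p_k\|_{L^\infty(\mathcal{H}_1)} = 1$ while
\begin{align*}
\|f_k\|_{L^1(\mathcal{H}_1)} + [F_k]_{C^{\lambda,\eps}(\mathcal{H}_1)} \to 0.
\end{align*}
The statement assumes no boundary condition, so all estimates must be interior: we work on compact subsets of the open set $\mathcal{H}_1$, away from $\{x_n = 0\}$. There $\pmb{\Phi}$ is smooth and bounded on each compact subset, so the source $F_k\pmb{\Phi}$ is locally bounded uniformly in $k$.

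The interior Schauder estimate of \autoref{lem.sch.divergence}, together with the $L^1$-to-$L^\infty$ interior bounds from \cite{GIMV19}, gives local $C^{1,\eps}$ bounds on $f_k$. By Arzel\`a--Ascoli we extract a subsequence with $f_k \to 0$ locally uniformly and $\nabla_v f_k \to 0$ locally uniformly. Along a further subsequence $A_k \to A_\infty$, $B_k \to B_\infty$ locally uniformly and $p_k \to p_\infty$ with $\|p_\infty\|_{L^\infty(\mathcal{H}_1)} = 1$. Passing to the limit in the weak formulation gives $p_\infty \pmb{\Phi} = 0$ in the distributional sense on $\mathcal{H}_1$.

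The element $\pmb{\Phi} \in \mathcal{G}$ is not identically zero on any open subset of $\{x_n > 0\}$: it is either the constant $1$ or a nontrivial real-analytic function of $(x_n,v_n)$, being built from confluent hypergeometric functions. Hence $p_\infty = 0$ on an open set, so $p_\infty \equiv 0$, which contradicts $\|p_\infty\|_{L^\infty(\mathcal{H}_1)} = 1$.

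The main obstacle is the passage to the limit. The compactness has to come entirely from interior estimates, and we need $\nabla_v f_k \to 0$ strongly enough to kill the term $A_k\nabla_v f_k\cdot\nabla_v\psi$ in the weak formulation. We handle this by testing only with $\psi$ compactly supported in $\mathcal{H}_1$, where the $C^{1,\eps}$ bounds are available.

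A secondary point is confirming that no $\pmb{\Phi}\in\mathcal{G}$ vanishes on an open set. For $\partial_{v_n}\pmb{\Phi}_k$ and $\partial_{v_n,v_n}\pmb{\Phi}_k$ this follows from analyticity together with the explicit nonzero asymptotics of $\phi_0$, $\phi_1$ and $\psi_0$ recorded in the appendix.
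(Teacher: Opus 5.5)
Your proposal is correct and follows essentially the same route as the paper. Both arguments normalize the polynomial part $p_k$ of $F_k$ to unit norm and use interior $C^{1,\alpha}$ Schauder bounds for local compactness (the paper cites \cite[Theorem 1.15]{KLN25}, you use \autoref{lem.sch.divergence}), so that the limit is $f_\infty=0$ and passing to the limit in the weak formulation gives $p_\infty\pmb{\Phi}\equiv 0$. Your explicit check that this forces $p_\infty\equiv 0$, because $\pmb{\Phi}$ is continuous and not identically zero on $\mathcal{H}_1$, fills in a step the paper leaves implicit. The only slip is notational: the interior Hölder exponent comes from $A\in C^{\alpha}$ and is therefore $\alpha$, not $\eps$, which does not affect the argument.
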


\begin{proof}
    Let us fix any polynomial $p\in \cP_{\lambda}$ and write $F=(F-p)+p$.
    We want to show that 
    \begin{equation}\label{goal.bdry.hol.rep}
    \begin{aligned}
        \|p\|_{L^\infty(\mathcal{H}_1)}
       &\leq c\|f\|_{L^1(\mathcal{H}_1)}+\|F-p\|_{C^\alpha(\mathcal{H}_1)}.
    \end{aligned}
    \end{equation}
    Now by following the argument given in the proof of \cite[Lemma 2.7]{RoWe25}, we derive that if \eqref{goal.bdry.hol.rep} is not true, then there are sequences $(A_l)_l$, $(B_l)_l$, $(F_l)_l$, $(f_l)_l$, $(p_l)_{l}$ such that 
\begin{equation*}
(\partial_t+v\cdot\nabla_x)f_l-\ddiv(A_l\nabla_vf_l)=B_l\nabla_vf_l+F_l\pmb{\Phi}\qquad \mbox{in  $\mathcal{H}_1$},
\end{equation*}
where 
\begin{align*}
    \|A_l\|_{C^{\alpha}(\mathcal{H}_1)} + \|B_l\|_{C^\alpha(\mathcal{H}_1)}\leq \Lambda\quad\text{and}\quad p_l\in \cP_\lambda,
\end{align*}
as well as
\begin{align}\label{contr.assm.bdry.hol.rep}
        \|p_l\|_{L^\infty(\mathcal{H}_1)}=1,
    \end{align}
and
\begin{align}\label{ineq.bdry.hol.rep}
    \|f_l\|_{L^1(\mathcal{H}_1)}+\|F_l-p_l\|_{C^\alpha(\mathcal{H}_1)}\to 0.
\end{align}
By \eqref{defn.mathcalg3} and the explicit formulae of ${\pmb{\Phi}} \in \mathcal{G}$ given in the appendix, we observe $[\pmb{\Phi}]_{C^{\alpha}(Q_r(z_0))}\leq c(\alpha,r,z_0)$ for any $Q_r(z_0)\Subset \{x_n>0\}\times \bbR$ for each $i=1,2$. Thus, using \cite[Theorem 1.15]{KLN25}, we get
\begin{align*}
    \|f_l\|_{C^{1,\alpha}(\mathcal{Q}_r(z_0))}\leq c(\alpha,r,z_0)
\end{align*}
for any $\mathcal{Q}_r(z_0)\Subset \mathcal{H}_1$, where the constant $c$ is independent of $l$. This implies that there is a function $f_\infty$ such that $f_l\to f_\infty$ in $C^{1,\alpha}(\mathcal{Q}_r(z_0))$, whenever $\mathcal{Q}_r(z_0)\Subset \mathcal{H}_1$. Using this and \eqref{ineq.bdry.hol.rep}, we get
\begin{align*}
    \int_{\mathcal{Q}_r(z_0)}-f_\infty(\partial_t+v\cdot\nabla_x)\psi&=-\int_{\mathcal{Q}_r(z_0)}A_\infty\nabla_vf_\infty\nabla_v\psi+\int_{\mathcal{Q}_r(z_0)}B_\infty\cdot\nabla_v\psi\,dz
    +\int_{\mathcal{Q}_r(z_0)}p_\infty\pmb{\Phi}\psi
\end{align*}
for any $\psi\in C^\infty_c(\mathcal{Q}_r(z_0))$. 
At the same time, by \eqref{ineq.bdry.hol.rep} it holds $f_\infty=0$, but this is a contradiction with \eqref{contr.assm.bdry.hol.rep}.

Hence, we have shown \eqref{goal.bdry.hol.rep}. From the characterization of kinetic H\"older spaces via polynomials, we deduce the desired result.
\end{proof}

By combining \autoref{lem.bdry.hol} and \autoref{lem.bdry.hol.rep} with a scaling argument, we derive the following lemma, which will be used to obtain compactness of the blow-up sequence in the proof of \autoref{lem.exp.nd.a}. Recall that the functions ${\pmb{\Phi}}_k$ are determined in \eqref{defn.Phii}.
\begin{lemma}\label{lem.bdry.hol.final}
   Let $f$ be a weak solution to 
    \begin{equation*}
\left\{
\begin{alignedat}{3}
(\partial_t+v\cdot\nabla_x)f-\ddiv(A\nabla_vf)&=B\cdot\nabla_vf+F_0+\sum_{k=0}^{5}\sum_{j=0}^{2}F_{k,j}\partial_{v_n}^{(j)}{\pmb{\Phi}_k}&&\qquad \mbox{in  $\mathcal{H}_R$}, \\
f&=g&&\qquad  \mbox{in $\gamma_-\cap \mathcal{Q}_R$},
\end{alignedat} \right.
\end{equation*}
where $A\in C^{\alpha}(\mathcal{H}_R)$, $F_0\in C^{\lambda,\alpha}(\mathcal{H}_R)$, $F_{i,j}\in C^{\lambda_{i,j},\alpha}(\mathcal{H}_R)$, $g\in C^{\alpha}(\gamma_-\cap \mathcal{Q}_R)$ for some $\alpha\in(0,1)$ and $\lambda,\lambda_{k,j}\in \N\cup \{0\}$. Then there is a small constant $\beta=\beta(n,\Lambda,\alpha)$ such that 
\begin{align}\label{est.bdry.hol.final}
    [f]_{C^\beta(\mathcal{H}_{R/2})}&\leq c\left(\|f\|_{L^\infty(\mathcal{H}_R)}+[F_0]_{C^{\lambda,\alpha}(\mathcal{H}_R)}+\sum_{k=0}^{5}\sum_{j=0}^{2}[F_{k,j}]_{C^{\lambda_{k,j},\alpha}(\mathcal{H}_R)}+[g]_{C^{\alpha}(\gamma_-\cap\mathcal{Q}_R)}\right)
\end{align}
for some constant $c=c(n,\Lambda,\lambda,\lambda_{k,j},\alpha,\|A\|_{C^\alpha(\mathcal{H}_R)},\|B\|_{C^{\alpha}(\mathcal{H}_R)},R)$.
\end{lemma}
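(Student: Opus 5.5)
The plan is to first reduce to the scale $R=1$ and then combine \autoref{lem.bdry.hol} with \autoref{lem.bdry.hol.rep}. Both ingredients need the source in the special form treated there.

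\textbf{Step 1: rescaling.} Set $f_R(z)=f(S_Rz)$. Then $f_R$ solves an equation of the same type in $\mathcal{H}_1$ with
\begin{itemize}
\item coefficients $A(S_R\cdot)$ and $R\,B(S_R\cdot)$,
\item source $R^2F_0(S_R\cdot)+\sum R^2F_{k,j}(S_R\cdot)\,\partial_{v_n}^{(j)}\pmb{\Phi}_k(S_R\cdot)$.
\end{itemize}
Each $\partial_{v_n}^{(j)}\pmb{\Phi}_k$ is homogeneous, so $\partial_{v_n}^{(j)}\pmb{\Phi}_k(S_R\cdot)$ equals a power of $R$ times $\partial_{v_n}^{(j)}\pmb{\Phi}_k$. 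All $R$-dependence therefore goes into the constant, which the statement allows. From now on take $R=1$.

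\textbf{Step 2: source terms as in \autoref{lem.bdry.hol}.} Every $\partial^{(j)}_{v_n}\pmb{\Phi}_k$ with $j\le 2$ lies in $\mathcal{G}$, so each term $F_{k,j}\partial^{(j)}_{v_n}\pmb{\Phi}_k$ is of the form $F\pmb{\Phi}$ with $\pmb{\Phi}\in\mathcal{G}$. The term $F_0$ corresponds to $\pmb{\Phi}=1\in\mathcal{G}$. \autoref{lem.bdry.hol} is stated for a single $F\pmb{\Phi}$ plus a divergence term, but its Campanato proof is linear in the source. It only uses the integrability $\pmb{\Phi}\in L^{2+\epsilon_0}$ in \eqref{ineq1.bdry.hol} and the pointwise bound $|\pmb{\Phi}|\lesssim |v_{0,n}|^{-1/2}$ away from $\gamma_0$. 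Both hold for every element of $\mathcal{G}$ (away from $\gamma_0$ this is at least a power bound). So the same argument, or a splitting by linearity, applies to a finite sum, with $G=0$. This gives
\begin{align*}
[f]_{C^\beta(\mathcal{H}_{1/2})}\le c\Big(\|f\|_{L^2(\mathcal{H}_1)}+\|F_0\|_{L^\infty(\mathcal{H}_1)}+\sum_{k,j}\|F_{k,j}\|_{L^\infty(\mathcal{H}_1)}+[g]_{C^\alpha}\Big).
\end{align*}
Here $\|f\|_{L^2}\le c\|f\|_{L^\infty}$. If $\alpha$ is small relative to $\beta$, apply the lemma with $\alpha$ replaced by $\min\{\alpha,\cdot\}$.

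\textbf{Step 3: replacing $L^\infty$ norms by seminorms.} It remains to bound $\|F_0\|_{L^\infty}$ and $\|F_{k,j}\|_{L^\infty}$ by $\|f\|_{L^\infty}$ plus the seminorms. This is exactly what \autoref{lem.bdry.hol.rep} does for a single term. With several terms, the compactness argument runs the same way: assume polynomials $p^l_{0},p^l_{k,j}$ with $\sum\|p^l\|_{L^\infty}=1$, while $\|f_l\|_{L^1}$ and the seminorm deviations tend to zero. Interior $C^{1,\alpha}$ compactness via \cite{KLN25} still holds, since every function in $\mathcal{G}$ is smooth inside $\{x_n>0\}$. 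In the limit one gets
\[
0=p_0+\sum_{k,j}p_{k,j}\,\partial^{(j)}_{v_n}\pmb{\Phi}_k \quad\text{in } \mathcal{H}_1 .
\]
To reach a contradiction, all the polynomials must vanish.

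\textbf{Main obstacle.} The hardest point is exactly this linear independence. The family is not literally independent over polynomials. For example, $\partial_{vv}\phi_0=c\,v_n\phi_0$, and \eqref{basic.formula.liou.inflow2} contains further relations such as $v\partial_v\pmb{\Phi}_0=\pmb{\Phi}_1$. I would handle this by first rewriting the source in a reduced basis. Using these relations, every term becomes a polynomial multiple of one of finitely many functions that are genuinely independent over polynomials near $\gamma_0$; they are distinguished by their distinct homogeneities and by their asymptotics in $\mathcal{R}_+$, $\mathcal{R}_0$ and $\mathcal{R}_-$ from the appendix. The Hölder norms of the new coefficients are controlled by those of the old ones. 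After this reduction the compactness contradiction goes through. Combining Steps 2 and 3 then gives \eqref{est.bdry.hol.final}.
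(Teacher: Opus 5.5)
Your overall architecture matches the paper's. The paper proves this lemma in one sentence: combine \autoref{lem.bdry.hol} and \autoref{lem.bdry.hol.rep} with a scaling argument. Your Steps~1--2 carry out the first and third parts correctly. You are also right that \autoref{lem.bdry.hol.rep}, which treats a single term $F\pmb{\Phi}$, cannot simply be summed over $(k,j)$; the paper passes over this point.

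The gap is your remedy in Step~3, which does not work as described. Combining $\partial_{v_nv_n}\pmb{\Phi}_0=v_n\partial_{x_n}\pmb{\Phi}_0$ with Euler's identity $3x_n\partial_{x_n}\pmb{\Phi}_0+v_n\partial_{v_n}\pmb{\Phi}_0=\tfrac12\pmb{\Phi}_0$ (homogeneity of degree $\tfrac12$) gives
\[
3x_n\,\partial_{v_nv_n}\pmb{\Phi}_0+v_n^2\,\partial_{v_n}\pmb{\Phi}_0-\tfrac12 v_n\,\pmb{\Phi}_0=0 .
\]
So three elements of $\mathcal{G}$, with different homogeneities and different asymptotics, are dependent over polynomials. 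Homogeneity and asymptotics therefore cannot certify independence. On the other hand, $\partial_{v_nv_n}\pmb{\Phi}_0$ is not a polynomial combination of $\pmb{\Phi}_0$ and $\partial_{v_n}\pmb{\Phi}_0$. That would make $\partial_{v_n}\pmb{\Phi}_0/\pmb{\Phi}_0$ rational in $v_n$, which is impossible: $\pmb{\Phi}_0\asymp|v_n|^{1/2}$ as $v_n\to-\infty$, but it decays like $e^{-v_n^3/(9x_n)}$ (up to powers) as $v_n\to+\infty$. Consequently you cannot eliminate $\partial_{v_nv_n}\pmb{\Phi}_0$ without a coefficient $1/x_n$, and you cannot keep all three functions. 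The reduced family you describe does not exist.

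There is a second problem. Even where a rewriting exists, e.g.\ $F_{1,0}\pmb{\Phi}_1=(v_nF_{1,0})\,\partial_{v_n}\pmb{\Phi}_0$, the new coefficient is in general not in $C^{\lambda_{1,0}+1,\alpha}$. Its $C^{\lambda_{1,0},\alpha}$-seminorm is controlled only by the full norm $\|F_{1,0}\|_{C^{\lambda_{1,0},\alpha}}$; already for $\lambda_{1,0}=0$ one has $[v_n]_{C^{\alpha}}\neq0=[1]_{C^{\alpha}}$. You would end with full norms on the right-hand side. That is exactly what \eqref{est.bdry.hol.final} must avoid, since in the blow-up of \autoref{lem.exp.nd.a} only seminorms of the coefficients are small. (Also, the relation you quote should read $\partial_{v_nv_n}\phi_1=c\,v_n\phi_0$.)

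The missing idea is to control only the size of the source, not the individual coefficients. The coefficients cannot be controlled: the identity above with $F_{0,0}=-\tfrac12Mv_n$, $F_{0,1}=Mv_n^2$, $F_{0,2}=3Mx_n$ (and $\lambda_{0,0}\ge1$, $\lambda_{0,1}\ge2$, $\lambda_{0,2}\ge3$) gives zero source and zero seminorms for every $M$.

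\begin{enumerate}
\item Write $F_0=p_0+e_0$ and $F_{k,j}=p_{k,j}+e_{k,j}$, with $p$ the polynomial expansion at the centre. Then $\|e_{k,j}\|_{L^\infty(\mathcal{H}_1)}\le c[F_{k,j}]_{C^{\lambda_{k,j},\alpha}(\mathcal{H}_1)}$. Feed the remainders directly into the linear version of \autoref{lem.bdry.hol}.
\item The polynomial part $S=p_0+\sum_{k,j}p_{k,j}\partial^{(j)}_{v_n}\pmb{\Phi}_k$ ranges over a finite-dimensional space $V$ of functions on $\mathcal{H}_1$. Pick any basis of $V$ made of products of monomials with elements of $\mathcal{G}$ (recall $1\in\mathcal{G}$), and let $\|S\|_V$ be the largest coordinate.
\item \autoref{lem.bdry.hol} then gives $[f]_{C^\beta(\mathcal{H}_{1/2})}\le c\bigl(\|f\|_{L^\infty}+\sum\|e\|_{L^\infty}+\|S\|_V+[g]_{C^\alpha}\bigr)$.
\item Run the contradiction argument of \autoref{lem.bdry.hol.rep} with $\|S_l\|_V=1$, $\|f_l\|_{L^1}\to0$ and $\|e^l\|_{L^\infty}\to0$. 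It produces $S_\infty\in V$ with $\|S_\infty\|_V=1$ but $S_\infty=0$ a.e., which is absurd.
\end{enumerate}
This bounds $\|S\|_V$ by $\|f\|_{L^1}$ plus the seminorms and closes the proof, with no independence over polynomials needed.
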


Note that it is possible to compute the precise dependence of the constant $c$ on $R$ in \eqref{est.bdry.hol.final}. However, for our purpose it will be sufficient to use the fact that $c$ depends only on $n,\Lambda,\|A\|_{C^\alpha(\mathcal{H}_R)},\|B\|_{C^{\alpha}(\mathcal{H}_R)},R$.

\subsection{Proof of expansion estimates}
Now we are going to prove \autoref{lem.exp.nd.a} when $(A(z_0))_{n,n}=1$.

\begin{proof}[Proof of \autoref{lem.exp.nd.a} with $(A(z_0))_{n,n}=1$.]
We may assume $R=1$ and 
    \begin{align}\label{ass.g.bdry}
        \|g\|_{L^\infty(\gamma_-\cap\mathcal{Q}_\rho(z_0))}+\rho^{\epsilon}[g]_{C^{\eps}(\gamma_-\cap\mathcal{Q}_\rho(z_0))}\leq \rho^{\lambda+\epsilon}[g]_{C^{\lambda,\epsilon}(\gamma_-\cap\mathcal{Q}_\rho(z_0))}\quad\text{for any }\rho\leq 1
    \end{align}
    for some constant $c=c(n,\lambda,\epsilon$), by considering $(f-P[g]_{z_0,\lambda})$ as a solution instead of $f$ itself, where $P[g]_{z_0,\lambda}$ is the Taylor expansion of the function $g$ of order $\lambda$ at $z_0$. {This creates an explicit $|v_0|$ dependence of the constant in the estimate when $\lambda\geq2$, since the source term now depends on $v_0$.}
    
    By contradiction, we suppose that there are sequences $(f_l)_l$,  $(z_{0,l})_l$, $(A_l)_l$, $(B_l)$, $(F_l)_l$, and $(g_l)_l$ with
     \begin{equation*}
\left\{
\begin{alignedat}{3}
(\partial_t+v\cdot\nabla_x)f_{l}-\ddiv(A_l\nabla_vf_l)&=B_l\cdot\nabla_vf_l+F_l&&\qquad \mbox{in  $\mathcal{H}_1(z_{0,l})$}, \\
f_l&=g_l&&\qquad  \mbox{in $\gamma_-\cap \mathcal{Q}_1(z_{0,l})$}
\end{alignedat} \right.
\end{equation*}
with $(A_{l}(z_{0,l}))_{n,n}=1$, as well as $(x_{0,l})_n=(v_{0,l})_n=0$ such that
\begin{equation}\label{ass.Fl.exp.nd}
\begin{aligned}
&    \|f_l\|_{L^\infty(\mathcal{H}_1(z_{0,l}))},\quad[F_l]_{C^{\lambda-2,\epsilon}(\mathcal{H}_1(z_{0,l}))}\leq 1,\quad \|A\|_{C^{\lambda-\frac12+\epsilon}(\mathcal{H}_1(z_{0,l}))}, \|B\|_{C^{\lambda-\frac32+\epsilon}(\mathcal{H}_1(z_{0,l}))}\leq\Lambda,\\
   & \|g_l\|_{L^\infty(\mathcal{H}_\rho(z_{0,l}))}+\rho^{\epsilon}[g_l]_{C^\epsilon(\mathcal{H}_{\rho}(z_{0,l}))}\leq \rho^{\lambda+\epsilon}\quad\text{for any }\rho\leq1,
\end{aligned}
\end{equation}
but
\begin{align*}
     \sup_{l\in\N}\inf_{P\in \mathcal{T}_{\lambda,\epsilon,l} }\sup_{\rho\in(0,1/2]}\rho^{-(\lambda+\epsilon)}\|f_l-P\|_{L^\infty(H_\rho(z_{0,l}))}=\infty,
\end{align*}
where we denote from now on $\mathcal{T}_{\lambda,\epsilon,l} \coloneq \mathcal{T}_{\lambda,\epsilon,z_{0,l}}$. 

Since $\mathcal{T}_{\lambda,\epsilon,l}$ is a finite dimensional vector space, for any $l\in\N$ and $\rho\in(0,1/2]$, there is a unique $P_{l,\rho}\in \mathcal{T}_{\lambda,\epsilon,l}$ such that 
\begin{align*}
    \|f_l-P_{l,\rho}\|_{L^2(H_{\rho}(z_{0,l}))}\leq \|f_l-P\|_{L^2(H_{\rho}(z_{0,l}))}\quad\text{for any }P\in \mathcal{T}_{\lambda,\epsilon,l},
\end{align*}
and
\begin{align}\label{ineq0.cont.exp.nd}
    \int_{\mathcal{H}_\rho(z_{0,l})}(f_{l}-P_{l,\rho})P\,dz=0\quad\text{for any }P\in \mathcal{T}_{\lambda,\epsilon,l}.
\end{align}
Let us define
\begin{align*}
    \theta(r)\coloneqq \sup_{l\in\N}\sup_{\rho\in[r,1/2]}\rho^{-(\lambda+\epsilon)}\|f_l-P_{l,\rho}\|_{L^\infty(\mathcal{H}_\rho(z_{0,l}))}.
\end{align*}
Then we have $\theta(r)\to\infty$ as $r\to0$ by \cite[Lemma 2.14]{RoWe25}, using the same argument as in the proof of \cite[Lemma 4.1]{RoWe25}. 
Since $\theta(r)$ is non-increasing, there are sequences $(l_m)_m$ and $(r_{l_m})_{m}$ such that $r_{l_m}\to 0$, as $m\to\infty $, and 
\begin{align}\label{linfty.comp.exp.nd}
    \theta(r_{l_m})\geq  r_{l_m}^{-(\lambda +\epsilon)}\|f_{l_m}-P_{l_m,r_{l_m}}\|_{L^\infty(\mathcal{H}_{r_{l_m}}(z_{0,l_m}))}\geq \theta(r_{l_m})/2.
\end{align}
We may write from now on $l_m=m$. We deduce from \cite[Lemma 2.15]{RoWe25} that
\begin{align}\label{lbdd.fl.exp.nd}
    \|f_m-P_{m,r_m}\|_{L^\infty(\mathcal{H}_{Rr_m}(z_{0,m}))}\leq c(Rr_m)^{\lambda+\eps}\theta(r_m)
\end{align}
and
\begin{align}\label{limit.coeffpoly.exp.nd}
    \frac{C_m}{\theta(r_m)}\coloneqq\frac{(\|p_m\|_{L^\infty(\mathcal{Q}_1(z_{0,m}))} + \|p_{m,k}\|_{L^\infty(\mathcal{Q}_1(z_{0,m}))})}{\theta(r_{m})}\to0
\end{align}
for each $k\in\{0,\ldots,5\}$. Now define 
\begin{align}\label{defn.gm}
    \widetilde{f}_m(z)\coloneqq \frac{f_m(z_{0,m}\circ S_{r_m}\,z)-P_{m,r_m}(z_{0,m}\circ S_{r_m}\,z)}{r_m^{\lambda+\epsilon}\theta(r_m)}
\end{align}
and
\begin{align}\label{widetildeab.exp.nd}
\widetilde{A}_m(z)\coloneqq A_m(z_{0,m}\circ S_{r_m}z),\quad \widetilde{B}_m(z)\coloneqq r_mB_m(z_{0,m}\circ S_{r_m}z).
\end{align}
Then, we recall from \eqref{defn.vector.ndimen.z_0.a} that $P_{m,r_m}$ is of the form 
\begin{align*}
    P_{m,r_m}(z)\coloneqq p_{m}+\sum_{k=0}^{5}p_{m,k}{\pmb{\Phi}}_k
\end{align*}
to see that $\widetilde{f}_m$ solves 
 \begin{equation}\label{eq.gm.exp.nd}
\left\{
\begin{alignedat}{3}
(\partial_t+v\cdot\nabla_x)\widetilde{f}_{m}-\ddiv(\widetilde{A}_m\nabla_v\widetilde{f}_m)&=\widetilde{B}_m \cdot\nabla_v\widetilde{f}_m+\widetilde{F}_m+\widetilde{Q}_m&&\qquad \mbox{in  $\mathcal{H}_{1/r_m}$}, \\
\widetilde{f}_m&=\widetilde{g}_m&&\qquad  \mbox{in $\gamma_-\cap \mathcal{Q}_{1/r_m}$},
\end{alignedat} \right.
\end{equation}
where 
\begin{align*}
    &\widetilde{F}_m(z)\coloneqq \frac{F_m(z_{0,m}\circ S_{r_m}\,z)-\widetilde{p}_m(z)}{r_m^{\lambda-2+\epsilon}\theta(r_m)}, ~~ \widetilde{g}_m(z)\coloneqq \frac{g_m(z_{0,m}\circ S_{r_m}\,z)}{r_m^{\lambda+\epsilon}\theta(r_m)},\\
    &\widetilde{p}_m(z)\coloneqq (\partial_t+v\cdot\nabla_x)(\overline{p}_{m}(z))+\ddiv(\widetilde{A}_m\nabla_v(\overline{p}_m(z)))-\widetilde{B}_m\cdot\nabla_v(\overline{p}_m(z)),\\
    & P_m(z)\coloneqq \sum_{k= 0}^5 \frac{(p_{m,k}\pmb{\Phi}_k)(z_{0,m}\circ S_{r_m}z)}{r_m^{\lambda+\epsilon}\theta(r_m)},\\
    &\widetilde{Q}_m(z)\coloneqq (\partial_t+v\cdot\nabla_x)(P_m)(z) -\ddiv(\widetilde{A}_m\nabla_v P_{m}(z))-\widetilde{B}_m \cdot\nabla_v P_m(z)
\end{align*}
and $\overline{p}_m(z)\coloneqq {p}_{m}(z_{0,m}\circ S_{r_m}\,z)$.
In order to conclude the proof, our goal is to establish compactness of the sequence $(\widetilde{f}_m)$ so that we can pass the equation for $\widetilde{f}_m$ to the limit $m \to \infty$. Then, we will apply the Liouville theorem from \autoref{lem.liou.inflow2.nd} to obtain a contradiction with \eqref{linfty.comp.exp.nd}. To do so, the main work is to carefully track the structural properties of the terms $\widetilde{Q}_m$. In fact, we can already observe that $p_m\in \cP_\lambda$, $p_{m,i}\in\cP_{[\lambda+\epsilon-\frac12]}$, $p_{m,j+3}\in\cP_{\lambda-2}$ , $p_{m,5}\in\cP_{[\lambda+\epsilon-\frac52]}$ for $i\in\{0,1,2\}$ and $j\in\{0,1\}$ with
\begin{equation}\label{ass.poly.exp.nd}
    p_{m,1}(z_0)=p_{m,2}(z_0)= Dp_{m,2}(z_0)=p_{m,4}(z_0)=0.
\end{equation}

Hence, after a few computations, using  \eqref{ass.poly.exp.nd}, we derive
\begin{align*}
    \widetilde{Q}_m(z)&= \sum_{i=0}^{2}\left({\pmb{\Phi}}_{i}(r_mX_n)\left[\frac{c_{m,i}}{r_m^{\lambda+\epsilon-2}\theta(r_m)}+\frac{\widetilde{B}_m\cdot\widetilde{P}_{m,i}}{r_m^{\lambda+\epsilon-1}\theta(r_m)}\right]-\frac{\ddiv(\widetilde{A}_m\widetilde{P}_{m,i}{\pmb{\Phi}}_{i}(r_mX_n))}{r_m^{\lambda+\epsilon-1}\theta(r_m)}\right)\\
    &\quad+\sum_{j=0}^{1}\left({\pmb{\Phi}}_{j+3}(r_mX_n)\frac{\widetilde{B}_m\cdot \widetilde{P}_{m,j+3}}{r_m^{\lambda+\epsilon-1}\theta(r_m)}-\frac{\ddiv(\widetilde{A}_m\widetilde{P}_{m,j+3}{\pmb{\Phi}}_{j+3}(r_mX_n))}{r_m^{\lambda+\epsilon-1}\theta(r_m)}\right)\\
    &\quad+\sum_{i=0}^{2}{r_mv_n(\partial_{x_n}{\pmb{\Phi}}_{i})(r_mX_n)}\frac{\overline{p}_{m,i}}{r_m^{\lambda+\eps-2}\theta(r_m)}+\sum_{j=0}^{1}{r_mv_n(\partial_{x_n}{\pmb{\Phi}}_{j+3})(r_mX_n)}\frac{\overline{p}_{m,i}}{r_m^{\lambda+\eps-2}\theta(r_m)}\\
    &\quad-\sum_{i=0}^{2}\frac{\ddiv(\widetilde{A}_m\overline{p}_{m,i}(\nabla_v{\pmb{\Phi}}_{i})(r_mX_n) )}{r_m^{\lambda+\eps-1}\theta(r_m)}-\sum_{j=0}^{1}\frac{\ddiv(\widetilde{A}_m\overline{p}_{m,j}(\nabla_v{\pmb{\Phi}}_{j+3})(r_mX_n) )}{r_m^{\lambda+\eps-1}\theta(r_m)}\\
    &\quad-\sum_{i=0}^{2}{(\partial_{v_n}{\pmb{\Phi}}_{i})(r_mX_n)}\frac{(\widetilde{B}_m)_n\overline{p}_{m,i}}{r_m^{\lambda+\epsilon-1}\theta(r_m)}-\sum_{j=0}^{1}{(\partial_{v_n}{\pmb{\Phi}}_{j+3})(r_mX_n)}\frac{(\widetilde{B}_m)_n\overline{p}_{m,j}}{r_m^{\lambda+\epsilon-1}\theta(r_m)}\\
    &\quad+{\overline{p}_{m,5}}\left(\frac{r_mv_n(\partial_{x_n}{\pmb{\Phi}}_{5})(r_mX_n)}{r_m^{\lambda+\epsilon-2}\theta(r_m)}-\frac{\ddiv(\widetilde{A}_m(\nabla_v{\pmb{\Phi}}_{5})(r_mX_n))}{r_m^{\lambda+\epsilon-1}\theta(r_m)}\right)\eqqcolon \sum_{N=1}^{6}\widetilde{Q}_{m,N}(z),
\end{align*}
where we write $X_n = (x_n,v_n)$ and $r_m X_n = (r_m^3 x_n , r_m v_n)$, and
\begin{align*}
    &c_{m,i}\coloneqq \partial_tp_{m,i}, \quad \widetilde{P}_{m,i}(z)\coloneqq (\nabla_vp_{m,i})(z_{0,m}\circ S_{r_m}z), \quad \overline{p}_{m,i}(z)\coloneqq p_{m,i}(z_{0,m}\circ S_{r_m}z).
\end{align*}
Before estimating the terms $\widetilde{Q}_{m,N}$, by the homogeneity of the functions $\pmb{\Phi}_k$, we note 
\begin{align*}
    &{\pmb{\Phi}}_{k}(r_mX_n)=r_m^{\alpha_k}{\pmb{\Phi}}_{k}(X_n),\, (\partial_{v_n}{\pmb{\Phi}}_{k})(r_mX_n)=r_m^{-1+\alpha_k}(\partial_{v_n}{\pmb{\Phi}}_{k})(X_n),\\
    &r_mv_n\partial_{x_n}{\pmb{\Phi}}_{k}(r_mX_n)=r_m^{-2+\alpha_k}v_n\partial_{x_n}{\pmb{\Phi}}_{k}(X_n)
\end{align*}
with $\alpha_i=\frac12$, $\alpha_{j+3}=2$ for $i\in\{0,1,2\},j\in\{0,1\}$ and $\alpha_5=\frac52$.
\begin{itemize}
    \item The case $N=1,2$. Using \eqref{ass.Fl.exp.nd}, \eqref{widetildeab.exp.nd}, and \eqref{limit.coeffpoly.exp.nd}, we get
    \begin{align*}
        \widetilde{Q}_{m,1}+\widetilde{Q}_{m,2}=\sum_{i=0}^{4}\left({\pmb{\Phi}}_{i}F_{m,0,i}-\partial_{v_n}{\pmb{\Phi}}_{i}F_{m,1,i}\right),
    \end{align*}
    where for any $R>0$,
    \begin{align*}
        [F_{m,0,i}]_{C^{\lambda+\epsilon-\frac52}(\mathcal{H}_R)} + [F_{m,1,i}]_{C^{\lambda+\epsilon-\frac32}(\mathcal{H}_R)} + \|F_{m,0,j+3}\|_{L^{\infty}(\mathcal{H}_R)} + [F_{m,1,j+3}]_{C^{\lambda+\epsilon-3}(\mathcal{H}_R)}\leq \frac{c(R)C_m}{\theta(r_m)},
    \end{align*}
    which goes to 0, for $i\in\{0,1,2\}$ and $j\in\{0,1\}$.
    In particular, by \eqref{ass.poly.exp.nd}, we get
    \begin{align}\label{zero0.exp.nd}
        F_{m,1,2}(0)=0.
    \end{align}
    \item The case $N=3,4,5$.
    Using that the functions $\pmb{\Phi}_k$ solve a translation-invariant equation (see \eqref{basic.formula.liou.inflow4}) and the following observation
\begin{align}\label{minusdiv.exp.nd}
    \ddiv(\widetilde{A}_m\overline{p}_{m,i}(\nabla_v{\pmb{\Phi}}_{i})(r_mX_n) )=\ddiv((\widetilde{A}_m-I)\overline{p}_{m,i}(\nabla_v{\pmb{\Phi}}_{i})(r_mX_n))+\ddiv(\overline{p}_{m,i}(\nabla_v{\pmb{\Phi}}_{i})(r_mX_n)),
\end{align}
we derive when $\lambda+\eps<\frac32$,
\begin{align*}
    \widetilde{Q}_{m,3}+\widetilde{Q}_{m,4}+\widetilde{Q}_{m,5}=-\ddiv(G_{m,0,0}\partial_{v_n}{\pmb{\Phi}}_0)+F_{m,1,0}\partial_{v_n}{\pmb{\Phi}}_0
\end{align*}
with 
\begin{align*}
    \|G_{m,0,0}\|_{L^\infty(\mathcal{H}_R)}+\|F_{m,1,0}\|_{L^\infty(\mathcal{H}_R)}\leq \frac{c(R)C_m}{\theta(r_m)} \to 0,
\end{align*}
where we used that $\overline{p}_{m,0}$ is constant and $\overline{p}_{m,k} = 0$ for any $k \ge 1$ in that case. 

Moreover, when $\lambda+\eps>\frac32$, after several elementary computations we can write
\begin{align*}
    \widetilde{Q}_{m,3}+\widetilde{Q}_{m,4}+\widetilde{Q}_{m,5}=\sum_{k=0}^{4}F_{m,2,k}\partial_{v_n,v_n}{\pmb{\Phi}}_{k}+\sum_{k=0}^{4}F_{m,1,k}\partial_{v_n}{\pmb{\Phi}}_{k},
\end{align*}
where 
\begin{align}
\label{zero1.exp.nd.2}
    [F_{m,l,i}]_{C^{\lambda+\epsilon-(2-l)-\frac12}(\mathcal{H}_R)} + [F_{m,l,j+3}]_{C^{\lambda+\epsilon-(2-l)-2}(\mathcal{H}_R)}\leq \frac{c(R)C_m}{C_{v_{0,m}}\theta(r_m)} \to 0
\end{align}
for $l\in\{1,2\}$, $i\in\{0,1,2\}$, and $j\in\{0,1\}$. Moreover, in light of \eqref{ass.poly.exp.nd} together with the fact that $D(fg)(0)=0$ whenever $f(0)=g(0)=0$, we observe that for any $k\in\{0,1,2,3,4\}$,
    \begin{equation}\label{zero1.exp.nd}
    \begin{aligned}
        &F_{m,1,2}(0)=F_{m,2,k}(0)=DF_{m,2,1}(0)=DF_{m,2,2}(0)=DF_{m,2,4}(0)=D^2F_{m,2,2}(0)=0.
    \end{aligned}
    \end{equation}
\item The case $N=6$. Similarly, by \eqref{minusdiv.exp.nd} and the fact that  $\partial_{v_n,v_n}\phi_1=v_n\partial_{x_n}\phi_1=c_0v_n\phi_0$ for some constant $c_0$ by \eqref{lemma:derx.phim}, we get 
\begin{align*}
    \widetilde{Q}_{m,6}=F_{m,0,5}{\pmb{\Phi}_{0}}(X_n)-F_{m,1,5}\partial_{v_n}{\pmb{\Phi}}_{1},
\end{align*}
where 
\begin{align}
\label{zero1.exp.nd.3}
    [F_{m,0,5}]_{C^{\lambda+\epsilon-\frac52}(\mathcal{H}_R)} + \|F_{m,1,5}\|_{L^{\infty}(\mathcal{H}_R)}\leq \frac{c(R)c_m}{\theta(r_m)} \to 0.
\end{align}
\end{itemize}
Therefore, combining all the estimates for the terms $\widetilde{Q}_{m,N}$ together with the fact that 
\begin{align}
\label{conv.gm.exp.nd}
    [\widetilde{F}_m]_{C^{\lambda-2+\epsilon}(\mathcal{H}_R)} + \|\widetilde{g}_m\|_{L^\infty(\mathcal{H}_R)}\leq \frac{c(R)}{\theta(r_m)} \to 0,
\end{align} 
which can be seen from the same arguments as in \cite[Lemma 4.1]{RoWe25} by using \eqref{ass.Fl.exp.nd}, we can apply \autoref{lem.bdry.hol} and \autoref{lem.bdry.hol.final} when $\lambda+\epsilon<\frac32$ and $\lambda+\epsilon>\frac32$, respectively, to get that
\begin{align*}
    \|\widetilde{f}_m\|_{C_{\ell}^{\beta}(\mathcal{H}_R)}\leq c(R).
\end{align*}
In addition, provided that $z_0$ is such that $\mathcal{Q}_1(z_0)\Subset \mathcal{H}_{1/r_m}$, by the interior regularity given in \autoref{lem.sch.divergence} together with a rescaled version of \autoref{lem.bdry.hol.rep}, we deduce
\begin{align*}
    \|\widetilde{f}_m\|_{C_{\ell}^{1,\epsilon}(\mathcal{Q}_1(z_0))}\leq c(\lambda,\epsilon,z_0),
\end{align*}
where the constant $c$ is independent of $m$. Therefore, we get 
\begin{align}\label{conv.grad.exp.nd}
    \widetilde{f}_m\to\widetilde{f}_\infty\quad\text{in }C_{\ell}^{1,\epsilon}(\mathcal{Q}_1(z_0))
\end{align}
for any $\mathcal{Q}_1(z_0)\Subset \{x_n>0\}$.
Moreover, by \cite[Lemma 5.9]{Sil22}, we get 
\begin{align*}
    \int_{\mathcal{H}_{R/2}}|\nabla_v(\widetilde{f}_m-\max_{\mathcal{H}_{R/2}}\widetilde{g}_m)_+|^2+\int_{\mathcal{H}_{R/2}}|\nabla_v(\widetilde{f}_m-\min_{\mathcal{H}_{R/2}}\widetilde{g}_m)_-|^2\leq c(R,\lambda,\epsilon).
\end{align*}
Since $\nabla_v\widetilde{f}_m\to\nabla_v\widetilde{f}_\infty$ pointwise (see \eqref{conv.grad.exp.nd}) and $\min_{\mathcal{H}_{R/2}}\widetilde{g}_m \to 0$, $\max_{\mathcal{H}_{R/2}}\widetilde{g}_m \to 0$ (see \eqref{conv.gm.exp.nd}), by Fatou's lemma, we have 
\begin{align}\label{bdd.grad.exp.nd}
    \int_{\mathcal{H}_R}|\nabla_v\widetilde{f}_\infty|^2=\int_{\mathcal{H}_R}|\nabla_v(\widetilde{f}_\infty)_+|^2+|\nabla_v(\widetilde{f}_\infty)_-|^2\leq c(R,\lambda,\epsilon).
\end{align}

We are now ready to apply \autoref{lem.limit.app} in order to derive that $\widetilde{f}_\infty$ is a weak solution to 
\begin{equation}\label{eq2.gm.exp.nd}
\left\{
\begin{alignedat}{3}
(\partial_t+v\cdot\nabla_x)\widetilde{f}_{\infty}-\ddiv(\widetilde{A}_\infty\nabla_v\widetilde{f}_\infty)&=\widetilde{p}_\infty+\widetilde{Q}_\infty&&\qquad \mbox{in  $\bbR\times\{x_n>0\}\times \bbR$}, \\
\widetilde{f}_\infty&=0&&\qquad  \mbox{in $\{x_n=0\}\times \{v_n>0\}$},\\
|\widetilde{f}_\infty(z)|&\leq c(1+|z|)^{\lambda+\epsilon}&&,
\end{alignedat} \right.
\end{equation}
where $\widetilde{p}_\infty\in \cP_{\lambda-2}$ and by \eqref{zero0.exp.nd} and \eqref{zero1.exp.nd}, we also know
\begin{align*}
    \widetilde{Q}_\infty&\coloneqq \sum_{i=0}^{2}{p}_{\infty,0,i}{\pmb{\Phi}}_{i}+\sum_{k=0}^{4}p_{\infty,1,k}\partial_{v_n}{\pmb{\Phi}}_{k}+\sum_{k=0,1,3}{p}_{\infty,2,k}\partial_{v_n,v_n}{\pmb{\Phi}}_{k},
\end{align*}
with 
\begin{equation}\label{zero.liou.inflow2}
    \begin{aligned}
        &{p}_{\infty,0,i}\in\cP_{[\lambda+\epsilon-\frac52]},\, p_{\infty,1,i}\in \cP_{[\lambda+\epsilon-\frac32]},\, p_{1,j+3}\in\cP_{\lambda-3},\, p_{\infty,2,i}\in \cP_{[\lambda+\epsilon-\frac12]}, \,p_{2,3}\in \cP_{\lambda-2},\\
        &p_{\infty,1,2}(0)={p}_{\infty,2,0}(0)={p}_{\infty,2,1}(0)={p}_{\infty,2,3}(0)=Dp_{\infty,2,1}(0)=0
    \end{aligned}
\end{equation}
for $i\in\{0,1,2\}$ and $j\in\{0,1\}$. Here, we have also used \autoref{lem.bdry.hol.rep} to prove that for all functions $F_{m,l,k}$ it holds $F_{m,l,k} \to p_{\infty,l,k}$ locally uniformly. Moreover, we used that $F_{m,2,2}, F_{m,2,4} \to 0$ by \eqref{zero1.exp.nd.2} and \eqref{zero1.exp.nd}, and that $F_{m,1,5} \to 0$ by \eqref{zero1.exp.nd.3}.

By \autoref{lem.liou.inflow2.nd}, we get $\widetilde{f}_\infty=P_\infty$ for some $P_\infty\in\mathcal{T}_{\lambda,\epsilon}$. However we observe from \eqref{ineq0.cont.exp.nd} after a change of variables that
\begin{align*}
\lim_{m\to\infty} \int_{\mathcal{H}_1}|\widetilde{f}_m |^2= \lim_{m\to\infty} \int_{\mathcal{H}_1}\widetilde{f}_mP_\infty = 0.
\end{align*}
This contradicts the fact that $\|\widetilde{f}_m\|_{L^\infty(\mathcal{H}_1)}\geq 1/2$, which is an immediate consequence of \eqref{linfty.comp.exp.nd} and the definition of $\widetilde{f}_m$. This completes the proof.
\end{proof}

\section{Optimal boundary regularity with in-flow condition}
\label{sec:inf-low}
In this section, using \autoref{lem.exp.nd.a}, we derive several fine boundary regularity results for kinetic equations with in-flow condition.

First, we provide the boundary regularity result in $\gamma_-$ and $\gamma_+$, away from $\gamma_0$, which was essentially already proved in \cite{RoWe25}.

\begin{lemma}\label{lem.reg.gamma+}
    Let $\lambda\in\N\cup\{0\}$ and $\epsilon\in(0,1)$, $z_0\in\gamma_+\cup\gamma_-$ and $\mathcal{Q}_{2R}(z_0)\cap \gamma_0=\emptyset$ with $R\leq1$. Let $f$ be a weak solution to 
    \begin{equation}
\left\{
\begin{alignedat}{3}
(\partial_t+v\cdot\nabla_x)f-\ddiv(A\nabla_vf)&=B\cdot\nabla_vf+F&&\qquad \mbox{in  $\mathcal{H}_R(z_0)$}, \\
f&=g&&\qquad  \mbox{in $\gamma_-\cap\mathcal{Q}_R(z_0)$}.
\end{alignedat} \right.
\end{equation}
Let $A\in C^{\lambda-1,\epsilon}(\mathcal{H}_R(z_0))$, $B\in C^{\lambda-2,\epsilon}(\mathcal{H}_R(z_0);\bbR^n)$, $F\in C^{\lambda-2,\epsilon}(\mathcal{H}_R(z_0))$. Then there is a polynomial $p_{z_0}\in\cP_{[\lambda+\epsilon]}$ such that for any $r \le R/2$
\begin{align*}
    \Vert f-p_{z_0} \Vert_{L^{\infty}(\mathcal{H}_r(z_0))}&\leq c\left(\frac{r}{R}\right)^{\lambda+\epsilon}\Bigg(\|f\|_{L^\infty(\mathcal{H}_R(z_0))}+R^{\lambda + \epsilon}([F]_{C^{\lambda-2,\epsilon}(\mathcal{H}_R(z_0))}+[g]_{C^{\lambda,\eps}(\gamma_-\cap\mathcal{Q}_R(z_0))})\Bigg),
\end{align*}
where $c=c(n,\lambda,\epsilon,\Lambda,\|A\|_{C^{\lambda-1,\epsilon}(\mathcal{H}_R(z_0))},\|B\|_{C^{\lambda - 2,\epsilon}(\mathcal{H}_R(z_0))})$.
\end{lemma}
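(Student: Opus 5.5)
We would prove this by the same blow-up/compactness scheme as in \autoref{lem.exp.nd.a}, but with the Liouville input replaced by the much simpler classification near non-grazing boundary points. This is essentially the content of \cite[Lemma 4.1, Proposition 4.5]{RoWe25}, which we follow.

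\textbf{Step 1: reductions.} Since $\mathcal{Q}_{2R}(z_0)\cap\gamma_0=\emptyset$, the normal velocity satisfies $|v_n|\gtrsim R$ on $\mathcal{Q}_{R}(z_0)$ and has a fixed sign there. So in $\mathcal{H}_R(z_0)$ the boundary is either purely incoming or purely outgoing. We first subtract the Taylor polynomial of $g$ of order $\lambda$ at $z_0$ (after extending $g$ smoothly off $\gamma_-$ in the incoming case). This reduces to $g$ vanishing to order $\lambda+\epsilon$, exactly as in \eqref{ass.g.bdry}, at the cost of a polynomial source term in $\cP_{\lambda-2}$. By the scaling $S_R$ we may take $R=1$.

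\textbf{Step 2: contradiction setup.} Suppose the estimate fails. We take sequences $f_l,A_l,B_l,F_l,g_l$ normalized as in \eqref{ass.Fl.exp.nd} and $L^2$-projections $P_{l,\rho}\in\cP_{[\lambda+\epsilon]}$. We define the monotone quantity $\theta(r)$, which blows up as $r\to0$ by \cite[Lemma 2.14]{RoWe25}. We then form the rescalings $\widetilde f_m$ as in \eqref{defn.gm} with $P_{m,r_m}$ now a kinetic polynomial; there are no $\pmb{\Phi}_k$ terms, since there is no grazing singularity. The errors from the coefficients, namely $(\widetilde A_m - A_m(z_{0,m}))$ acting on the polynomial and $\widetilde B_m\cdot\nabla_v P_{m,r_m}$, are controlled by $A\in C^{\lambda-1,\epsilon}$ and $B\in C^{\lambda-2,\epsilon}$ together with \cite[Lemma 2.15]{RoWe25}, as in \eqref{limit.coeffpoly.exp.nd}. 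They tend to $0$ in the relevant $C^{\lambda-2,\epsilon}$ seminorms.

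\textbf{Step 3: compactness and the blow-up limit.} Uniform $C^\beta$ bounds follow from the boundary $C^\alpha$ estimates \cite[Lemma 2.25]{RoWe25} for in-flow data, plus interior estimates (\autoref{lem.sch.divergence}). Compactness then follows, and by Fatou (\eqref{bdd.grad.exp.nd}) the limit is a weak solution. After rescaling, the domain becomes a half-space, but the base point $z_{0,m}$ has $|v_{0,n}|/r_m\to\infty$. So the blow-up limit solves a constant-coefficient Kolmogorov equation with polynomial source on a set that exhausts $\R^{1+2n}$ in the relevant sense, or is a half-space with a boundary that is purely in-flow with zero data or purely outflow. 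The limit also has growth at most $|z|^{\lambda+\epsilon}$.

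\textbf{Step 4: Liouville classification (main obstacle).} We must show the limit is a polynomial in $\cP_{[\lambda+\epsilon]}$ and then contradict the orthogonality \eqref{ineq0.cont.exp.nd}. Since the rescaled boundary recedes, the cleanest route is to avoid moving it at all: apply the non-grazing Liouville theorem of \cite{RoWe25}. There, incremental quotients in $t,x',v$ give polynomiality in all tangential and velocity directions. The transport in $x_n$, with the sign of $v_n$ fixed, then forces polynomiality in $x_n$ as well, because solutions vanish to infinite order in the incoming case (compare \autoref{lemma.1D.grax.est}) and are smooth up to $\gamma_+$ (compare \autoref{lemma.1D.grax.est2}). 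The delicate point is making the constant uniform in $|v_0|$. To handle it, I would use the stationary-type rescaling in \autoref{lemma.1D.grax.est2}: the estimate depends only on the ratio $r/R$ and not on $|v_{0,n}|$, because the cylinder $\mathcal{Q}_{2R}(z_0)$ already stays at kinetic distance $\gtrsim R$ from $\gamma_0$.
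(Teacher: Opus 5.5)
Your outer strategy is the same as the paper's, and Steps 1--2 and the compactness part of Step 3 are consistent with it. The paper proves this lemma in two sentences: it defers to the blow-up arguments of \cite[Lemma 4.1 and Lemma 4.3]{RoWe25}, which follow the same scheme as \autoref{lem.exp.nd.a}, and notes that only minor changes are needed for the divergence-form operator. The problem is that your Liouville step in Step 4 rests on a wrong identification of the blow-up limit.

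After normalizing $R=1$, the assumption $\mathcal{Q}_{2}(z_0)\cap\gamma_0=\emptyset$ forces $|v_{0,n}|\ge 2$. In the coordinates $z\mapsto z_{0,m}\circ S_{r_m}z$ of \eqref{defn.gm}, the $n$-th spatial component equals $r_m^2(r_m x_n+t\,v_{0,n,m})$. So $\{x_n>0\}$ becomes $\{r_m x_n+t\,v_{0,n,m}>0\}$, and on bounded sets its boundary converges to the time slice $\{t=0\}$. The limit domain is therefore:
\begin{itemize}
\item $\{t>0\}\times\R^{2n}$ when $z_0\in\gamma_-$, with the in-flow datum turning into a zero initial condition;
\item $\{t<0\}\times\R^{2n}$ when $z_0\in\gamma_+$.
\end{itemize}
The boundary never recedes, since it always passes through the origin, and the domain never exhausts $\R^{1+2n}$. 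Nor does any boundary $\{x_n=0\}$ with a fixed sign of $v_n$ survive, because the rescaled velocities range over all of $\R^n$. If you drop the Galilean shift to keep $\{x_n>0\}$ fixed, the transport term acquires the drift $r_m^{-1}v_0\cdot\nabla_x$, which diverges, so there is no limit equation at all. Consequently, your Step 4 argument (increments in $t,x',v$, then ``transport in $x_n$ with $v_n$ of fixed sign'', infinite-order vanishing or smoothness up to $\gamma_+$) has nothing to act on. In any case, neither infinite-order vanishing nor smoothness up to $\gamma_+$ would by itself force polynomiality in $x_n$.

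The correct Liouville input is simpler, and in each case you first subtract a polynomial particular solution, since the limit source lies in $\cP_{\lambda-2}$.
\begin{itemize}
\item On $\{t>0\}$ the limit solves a constant-coefficient equation with zero initial datum and growth $(1+|z|)^{\lambda+\epsilon}$. By uniqueness for the Cauchy problem among polynomially growing functions, it equals the polynomial solution that vanishes at $t=0$, which has kinetic degree at most $\lambda$.
\item On $\{t<0\}$, apply interior estimates on backward cylinders contained in $\{t<0\}$ with radii tending to infinity. They show that all kinetic derivatives of order larger than $\lambda+\epsilon$ vanish.
\end{itemize}
Either way the limit lies in $\cP_{[\lambda+\epsilon]}$, which contradicts the orthogonality exactly as in \autoref{lem.exp.nd.a}.

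Uniformity in $|v_0|$ then comes for free: $|v_{0,n,m}|/r_m\to\infty$ along every sequence, whether or not $v_{0,m}$ stays bounded. \autoref{lemma.1D.grax.est2} is not the right tool for this. It concerns stationary solutions on stationary cylinders, and its bound carries exactly the $|v_0|$-dependent factor $(v_0/R)^{k/2}$.
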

\begin{proof}
    First note that the regularity at $\gamma_+$ and $\gamma_-$ of kinetic equations in  non-divergence form was established in \cite[Lemma 4.1 and Lemma 4.3]{RoWe25} by using the blow-up argument as in the proof of \autoref{lem.exp.nd.a}. Therefore, just a few simple modifications of the proof are required in order to deal with divergence operators (see \autoref{lem.exp.nd.a}).
\end{proof}

\subsection{Optimal regularity up to the grazing set}

Now we prove the optimal $C^{1/2}$ global regularity for solutions with in-flow boundary condition.
\begin{theorem}\label{thm.hol12}
    Let $f$ be a weak solution to 
    \begin{equation}\label{eq.hol12}
\left\{
\begin{alignedat}{3}
(\partial_t+v\cdot\nabla_x)f-\ddiv(A\nabla_vf)&=B\cdot\nabla_vf+F&&\qquad \mbox{in  $\mathcal{H}_R(z_0)$}, \\
f&=g&&\qquad  \mbox{in $\gamma_-\cap \mathcal{Q}_R(z_0)$}.
\end{alignedat} \right.
\end{equation}
If $A\in C^{\epsilon}(\mathcal{H}_{R}(z_0))$, $F\in L^{\infty}(\mathcal{H}_R(z_0))$, and $g\in C^{\frac12+\epsilon}(\gamma_-\cap\mathcal{Q}_R(z_0))$ for some $\epsilon>0$, then we have 
\begin{align*}
    R^{\frac12}[f]_{C^{\frac12}(\mathcal{H}_{R/2}(z_0))}&\leq c\mathbf{B}(z_0,R)^{\frac12}\Bigg(\|f\|_{L^\infty({\mathcal{H}}_R(z_0))}+R^2\|F\|_{L^{\infty}({\mathcal{H}}_R(z_0))}+{R^{\frac12+\epsilon}[g]_{C^{\frac12+\epsilon}(\gamma_-\cap {\mathcal{Q}}_R(z_0))}}\Bigg),
\end{align*}
where $c=c(n,\epsilon, \Lambda, \|A\|_{C^\epsilon(\mathcal{H}_R(z_0)})$ and we write $\mathbf{B}(z_0,R)\coloneqq 1+\|B\|_{L^\infty(\mathcal{H}_R(z_0))}$.
\end{theorem}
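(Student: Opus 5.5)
By scaling (replacing $f$ by $f(z_0\circ S_R\,\cdot)$, which sends $B$ to $RB$ and $F$ to $R^2F$), it suffices to treat $R=1$ with $z_0$ a boundary point, possibly off $\gamma$. Normalize the right-hand side so that $\|f\|_{L^\infty(\mathcal{H}_1(z_0))}+\|F\|_{L^\infty}+[g]_{C^{1/2+\epsilon}}\le 1$. The factor $\mathbf{B}(z_0,R)^{1/2}$ is exactly what \eqref{estw/oB.exp.nd} with $\lambda=0$ produces, since it gives $(1+\|B\|)^{\lambda+\eps}$ with $\lambda+\epsilon$ close to $\frac12$. So the plan is to take $\lambda=0$ and $\epsilon'<\epsilon$ with $\epsilon'+ \frac12<1$, and to use the expansion of order $\frac12+\epsilon'$.

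\textbf{Step 1: pointwise expansion at grazing points.} For every $z_1\in\gamma_0\cap\mathcal{H}_{3/4}(z_0)$, Proposition~\ref{lem.exp.nd.a} with $\lambda=0$, applied in $\mathcal{H}_{1/4}(z_1)$, gives $P_{z_1}=p+c_{z_1}\pmb{\Phi}_{0,a(z_1)}$. Here $p$ is constant, because $\overline{\mathcal{T}}_{0,\epsilon'}$ only contains constants and multiples of $\phi_0$. The expansion satisfies $|f-P_{z_1}|\le c\,r^{1/2+\epsilon'}$ on $\mathcal{H}_r(z_1)$, and $|c_{z_1}|\le c$. Since $\phi_0$ is homogeneous of degree $\frac12$ and belongs to $C^{1/2}$, this already gives the $C^{1/2}$ oscillation bound $\operatorname{osc}_{\mathcal{H}_r(z_1)}f\le c\,r^{1/2}$ at grazing points.

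\textbf{Step 2: points away from $\gamma_0$.} Take $z\in\mathcal{H}_{1/2}(z_0)$ and let $d$ be its kinetic distance to $\gamma_0$, realized (up to constants) by some $z_1\in\gamma_0$.
\begin{itemize}
\item \emph{Interior points and points near $\gamma_\pm$.} If $\mathcal{Q}_{d/2}(z)$ lies in the interior or touches only $\gamma_\pm$, apply the interior Schauder estimate (Lemma~\ref{lem.sch.divergence}) or Lemma~\ref{lem.reg.gamma+} with $\lambda=0$ on scale $d/2$ to $f-f(z_1)-c_{z_1}\pmb{\Phi}_{0,a}$. Note that $\pmb{\Phi}_{0,a}$ solves the frozen equation, so it generates source terms $\mathrm{div}_v((A-A(z_1))\nabla_v\pmb{\Phi}_{0,a})$ and $B\cdot\nabla_v\pmb{\Phi}_{0,a}$, which are of size $d^{\epsilon-1/2}$ on that cylinder. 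By Step 1 this function is bounded by $c\,d^{1/2+\epsilon'}$ on $\mathcal{Q}_{d}(z)\cap\mathcal{H}$. Rescaling then gives $[f]_{C^{1/2}(\mathcal{Q}_{d/4}(z))}\le c$ after adding back $c_{z_1}[\pmb{\Phi}_{0,a}]_{C^{1/2}}\le c$.
\item \emph{Points near $\gamma_-$.} Here the boundary datum $g$ must also be handled, using $[g]_{C^{1/2+\epsilon}}$.
\end{itemize}

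\textbf{Step 3: patching.} Combine the local estimates by the standard argument: for two points $z,z'$, either $\mathrm{dist}(z,z')\le d/4$ and Step 2 applies, or both points lie in $\mathcal{H}_{c\,\mathrm{dist}(z,z')}(z_1)$ and Step 1 applies.

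\textbf{Main obstacle.} The delicate part is Step 2 near $\gamma_\pm$. The error terms generated by $\pmb{\Phi}_{0,a}$ (its $v$-derivatives blow up like $d^{-1/2}$) must be controlled in the norms required by Lemma~\ref{lem.reg.gamma+}, which asks for $F\in L^\infty$ but not for divergence-form terms. I would first try keeping these terms in divergence form and using a divergence version of Lemma~\ref{lem.reg.gamma+}, as remarked in its proof. The alternative is to freeze $a$ at the boundary point, so that the $C^\epsilon$ coefficient error is only $d^{\epsilon}\cdot d^{-3/2}$. After rescaling to unit size this becomes an $O(d^{1/2+\epsilon})$ perturbation, which is exactly compatible with the expansion rate.
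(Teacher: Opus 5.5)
Your proposal is correct in outline and shares the paper's architecture. You use \autoref{lem.exp.nd.a} with $\lambda=0$ at grazing points, local estimates at a scale comparable to the distance $d$ to $\gamma_0$ elsewhere, and a Campanato-type patching. The paper splits into the three regions $x_{1,n}\ge(|v_{1,n}|/4)^3$, $v_{1,n}>0$, $v_{1,n}<0$. Near $\gamma_-$ it recentres at the kinetic projection $\hat z_1=z_1\circ(-x_{1,n}/v_{1,n},0,0)$, because \autoref{lem.reg.gamma+} is stated only at boundary points; your sketch should make that recentring explicit.

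The genuine difference is in your Step~2, and it is the source of your ``main obstacle''. You subtract $f(z_1)+c_{z_1}\pmb{\Phi}_{0,a}$. The paper subtracts only the constant $c_0=f(\tilde z_1)$ at the nearby grazing point $\tilde z_1$. Constants solve the equation, so $f-c_0$ satisfies the original problem with the same $B$, $F$ and datum $g-c_0$. As you observe at the end of Step~1, $\|f-c_0\|_{L^\infty(\mathcal{H}_r(\tilde z_1))}\le c\,r^{1/2}$, because $\pmb{\Phi}_{0,a}\in C^{1/2}$ vanishes at $\tilde z_1$. Feeding this into the interior estimates and into \autoref{lem.reg.gamma+} with exponent $\frac12$, on a cylinder of size $\sim d$, gives directly
\[
d^{1/2}[f]_{C^{1/2}}\lesssim d^{1/2}+d^{2}\|F\|_{L^\infty}+d^{1/2}[g]_{C^{1/2}}.
\]
Since the target is only $C^{1/2}$, the $\frac12+\eps'$ decay of the remainder is never used, and no $\pmb{\Phi}$-generated source terms appear.

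Your route can still be closed, but only via your first option. Keep the coefficient error as $\mathrm{div}_v(G)$ with $G=c_{z_1}(A-A(z_1))\nabla_v\pmb{\Phi}_{0,a}$, so $\|G\|_{L^\infty}\lesssim d^{\eps-1/2}$, together with the $L^\infty$ term $c_{z_1}B\cdot\nabla_v\pmb{\Phi}_{0,a}=O(d^{-1/2})$. Then use the divergence-source $C^{1/2}$ estimate of \autoref{rmk.c12.bdep}, which is proved like \autoref{lem.gra.bdep}. The comment in the proof of \autoref{lem.reg.gamma+} concerns divergence-form operators, not divergence-form source terms. What this buys is a small $C^{1/2}$ seminorm for $f-c_{z_1}\pmb{\Phi}_{0,a}$ near $z$; that is the kind of information needed for \autoref{thm.h/phi}, not here.

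Your second option fails under the stated hypotheses. Rewriting $\mathrm{div}_v((A-A(z_1))\nabla_v\pmb{\Phi}_{0,a})$ as $(A-A(z_1)):D_v^2\pmb{\Phi}_{0,a}$ requires $\nabla_v A$, and only $A\in C^\eps$ is assumed.

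A minor point on the drift factor. \eqref{estw/oB.exp.nd} at order $\frac12+\eps'$ produces $(1+\|B\|)^{\frac12+\eps'}$, not $\mathbf{B}(z_0,R)^{1/2}$. The paper first rescales as in \eqref{scaling.argument} to $\|B\|_{L^\infty}\le 1$, and the power $\frac12$ then comes from the $\frac12$-homogeneity of the seminorm under that rescaling.
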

\begin{proof}
By the same scaling argument as in \eqref{scaling.argument}, we may assume $\|B\|_{L^\infty(\mathcal{H}_R(z_0))}\leq 1$.  Now we assume
    \begin{align*}
        \|f\|_{L^\infty({\mathcal{H}}_R(z_0))}+R^2\|F\|_{L^{\infty}({\mathcal{H}}_R(z_0))}+R^{\frac12+\eps}[g]_{C^{\frac12+\epsilon}(\gamma_-\cap {\mathcal{Q}}_R(z_0))}\leq 1.
    \end{align*}
    Suppose $\mathcal{Q}_{3R/4}(z_0)\cap \gamma_0\neq\emptyset$, otherwise the results follow from a combination of interior estimates and \autoref{lem.reg.gamma+}.
First, we want to prove that for any $z_1\in \mathcal{Q}_{\frac{R}4}(z_0)$ and $r \le \frac{R}{32}$, there is a constant $c_{z_1}$ such that
    \begin{align}\label{first.goal.hol12}
        J\coloneqq\|f-c_{z_1}\|_{L^\infty(\mathcal{H}_r(z_1))}\leq cr^{\frac12}
    \end{align}
    for some constant $c=c(n,\epsilon,\Lambda,\|A\|_{C^\epsilon(\mathcal{H}_R(z_0))},c_B)$.
    
     To do so, we divide the proof into three cases
     \begin{itemize}
         \item Let $\left(\frac{|v_{1,n}|}{4}\right)^3\leq x_{1,n}$. When $x_{1,n}=0$, then using \autoref{lem.exp.nd.a}, we observe that there are constants $c_0 ,c_1$ such that
\begin{align*}
    \|f-c_0-c_1{\pmb{\Phi}}_{0,a}\|_{L^\infty(\mathcal{H}_r({z}_1))}\leq cr^{\frac12}
\end{align*}
for any $r\leq\frac12$, where $a\coloneqq \sqrt{(A(z_1))_{n,n}}$ and $c=c(n,\eps,\Lambda,\|A\|_{C^\eps(\mathcal{H}_R(z_0))},\|B\|_{L^\infty(\mathcal{H}_R(z_0))})$. Thus, by taking $c_{z_1}\coloneqq c_0$, we derive
    \begin{equation}\label{ineq11.hol12}
    \begin{aligned}
       \|f-c_0\|_{L^\infty(\mathcal{H}_r(z_1))}&\leq \|f-c_0-c_1{\pmb{\Phi}}_{0,a}\|_{L^\infty(\mathcal{H}_{r}(z_1))}+c_1\|{\pmb{\Phi}}_{0,a}-{\pmb{\Phi}}_{0,a}(z_1)\|_{L^\infty(\mathcal{H}_r(z_1))}\leq cr^{\frac12}
    \end{aligned}
    \end{equation}
    for some constant $c=c(n,\epsilon,\|A\|_{C^\epsilon(\mathcal{H}_1(z_0))},c_B)$, where we have used the fact that ${\pmb{\Phi}}_{0,a}(z_1)=0$, $\|{\pmb{\Phi}}_{0,a}\|_{C^{\frac12}(\mathcal{H}_r(z_1))}=\|{\pmb{\Phi}}_{0,a}\|_{C^{\frac12}({H}_r)}\leq cr^{\frac12}$.

Now we assume $x_{1,n}>0$. Then there are constants $r_1\coloneqq \frac{|x_{1,n}|^{\frac13}}{8}$, $r_2=8r_1$ and $\widetilde{z}_1=(t_1,x_1',0,v_1',0)\in\gamma_0\cap \mathcal{Q}_{\frac{3R}4}(z_0)$ such that 
         \begin{align*}
             \mathcal{Q}_{2r_1}({z}_1)\cap\gamma=\emptyset,\quad \mathcal{Q}_{2r_1}(z_1)\subset \mathcal{H}_{r_2}(\widetilde{z_1})\quad\text{and}\quad\mathcal{H}_{2r_2}(\widetilde{z_1})\subset\mathcal{H}_{R}(z_0).
        \end{align*}
    We further assume $r\leq r_1$ to see that for any $c_0\in\bbR$,
    \begin{align*}
        \|f-c_0\|_{L^\infty(\mathcal{H}_{r}(z_1))}\leq c\left(\frac{r}{r_1}\right)^{\frac12}\|f-c_0\|_{L^\infty(\mathcal{Q}_{2r_1}(z_1))}+cr^{\frac12},
    \end{align*}
        where we have used interior regularity results given in \cite[Lemma 5.7]{KLN25} with $G=0$. We now choose $c_0\coloneqq c_{\widetilde{z}_1}$ determined in \eqref{ineq11.hol12} to derive that 
        \begin{align}\label{ineq.int2.hol12}
            \|f-c_0\|_{L^\infty(\mathcal{H}_{r}(z_1))}\leq c\left(\frac{r}{r_1}\right)^{\frac12}r_1^{\frac12}\leq cr^{\frac12}.
        \end{align}
   On the other hand, if $r\geq r_1$, then we have $\mathcal{H}_{r}(z_1)\subset \mathcal{H}_{8r}(\widetilde{z}_1)$ and 
    \begin{align}\label{ineq1.rbig.hol12}
        \|f-c_0\|_{L^\infty(\mathcal{H}_{r}(z_1))}\leq \|f-c_0\|_{L^\infty(\mathcal{H}_{8r}(\widetilde{z}_1))}\leq cr^{\frac12},
    \end{align}
where $c_0\coloneqq c_{\widetilde{z}_1,4r}$ is determined in \eqref{ineq11.hol12}. Therefore, we have proved \eqref{first.goal.hol12} when $\left(\frac{|v_{1,n}|}{4}\right)^3\leq x_{1,n}$.
\item Let $\left(\frac{v_{1,n}}{4}\right)^3\geq x_{1,n}$. When $v_{1,n}=0$, \eqref{first.goal.hol12} follows from the previous case, so that we may assume $v_{1,n}>0$. If $r\geq \frac{v_{1,n}}4$, then 
\begin{align*}
    \mathcal{H}_r(z_1)\subset \mathcal{H}_{8r}(\widetilde{z}_1),
\end{align*}
where $\widetilde{z}_1\coloneqq (t_1,x_1',0,v_1',0)$. Thus, we get the desired estimate as in \eqref{ineq1.rbig.hol12}. Now suppose $r\leq \frac{v_{1,n}}4$. When $r\leq \frac12\left(\frac{x_{1,n}}{v_{1,n}}\right)^{\frac12}\eqqcolon r_1$, then we observe
\begin{align*}
    \mathcal{H}_{r_1}(z_1)\cap\gamma =\emptyset,\quad \mathcal{H}_{2r_1}(z_1)\subset \mathcal{H}_{8r_1}(\widehat{z}_1)\subset \mathcal{H}_{\frac{v_{1,n}}4}(\widehat{z}_1),\quad \mathcal{H}_{\frac{v_{1,n}}4}(\widehat{z}_1)\subset \mathcal{H}_{v_{1,n}}(\widetilde{z}_1),
\end{align*}
where we write 
\begin{align*}
    \widehat{z}_1\coloneqq \left(t_1-\frac{x_{1,n}}{v_{1,n}},x_1'-\frac{x_{1,n}}{v_{1,n}}v_1',0,v_1\right)\quad\text{and}\quad \widetilde{z}_1\coloneqq \left(t_1-\frac{x_{1,n}}{v_{1,n}},x_1'-\frac{x_{1,n}}{v_{1,n}}v_1',0,v_1',0\right).
\end{align*}
Note that $\widehat{z}_1=z_1\circ (-\frac{x_{1,n}}{v_{1,n}},0,0)$ is a natural projection of $z_1$ to $\gamma_-$ in the kinetic geometry, and similarly $\widetilde{z}_1=\widehat{z}_1\circ (0,0,-v_{1,n})$ is the projection of $\widehat{z}_1$ to $\gamma_0$.

By \autoref{lem.reg.gamma+}, there is a constant $c_1$ such that 
\begin{align}\label{ineq2.v11.12}
    \|f-c_0-c_1\|_{L^\infty(\mathcal{H}_{8r_1}(\widehat{z}_1))}\leq c\left(\left(\frac{r_1}{v_{1,n}}\right)^{\frac12}\|f-c_0\|_{L^\infty(\mathcal{H}_{\frac{v_{1,n}}4}(\widehat{z}_1))}+r_1^{\frac12}\right),
\end{align}
where the constant $c_0=c_{\widetilde{z}_1}$ is determined as in \eqref{ineq.int2.hol12}. Therefore, we have 
\begin{align}\label{ineq3.v11.12}
    \|f-c_0-c_1\|_{L^\infty(\mathcal{H}_{8r_1}(\widehat{z}_1))}\leq cr_1^{\frac12}.
\end{align}
On the other hand, by \cite{KLN25}, there is a constant $c_2$ such that 
\begin{align}\label{ineq4.v11.12}
    \|f-c_0-c_1-c_2\|_{L^\infty(\mathcal{H}_{r}(z_1))}\leq c\left(\left(\frac{r}{r_{1}}\right)^{\frac12}\|f-c_0-c_1\|_{L^\infty(\mathcal{H}_{r_1}({z}_1))}+r^{\frac12}\right).
\end{align}
Thus, combining two estimates \eqref{ineq2.v11.12} and \eqref{ineq4.v11.12} leads to the desired estimate if $r\leq  r_1$. On the other hand,  when $r> r_1$, then $\mathcal{H}_{r}(z_1)\subset\mathcal{H}_{4r}(\widehat{z}_1)$. Therefore, by \eqref{ineq2.v11.12} and \eqref{ineq3.v11.12} with $r_1$ replaced by $r$, we get 
\begin{align*}
    \|f-c_0-c_1\|_{L^\infty(\mathcal{H}_{r}({z}_1))}\leq \|f-c_0-c_1\|_{L^\infty(\mathcal{H}_{4r}(\widehat{z}_1))}\leq cr^{\frac12},
\end{align*}
which completes the proof of \eqref{first.goal.hol12} when $\left(\frac{v_{1,n}}{4}\right)^3\geq x_{1,n}$.
    \item Let $\left(\frac{-v_{1,n}}{4}\right)^3\geq x_{1,n}$. Similarly, we may assume $v_{1,n}<0$ and $r\leq v_{1,n}/4$. Now using \autoref{lem.reg.gamma+} and interior regularity results by \cite{KLN25}, we derive \eqref{first.goal.hol12} when $\left(\frac{-v_{1,n}}{4}\right)^3\geq x_{1,n}$, as in \eqref{ineq2.v11.12}-\eqref{ineq4.v11.12}.
     \end{itemize}
Therefore, we have proved \eqref{first.goal.hol12}, which completes the proof.

\end{proof}

\subsection{Higher order asymptotics in $\mathcal{R}^0\cup\mathcal{R}^+$}

Next we prove the Lipschitz estimate of the function $f/{\pmb{\Phi}}$, where ${\pmb{\Phi}}\eqsim \phi_0$.
To this end, for any $z_0\in\gamma_0$ with $(x_0)_n=(v_0)_n=0$ and $M\geq1$, let us define
\begin{equation}\label{defn.h0m}
\begin{aligned}
    &\mathcal{H}^{0}_{\rho,M}(z_0)\coloneqq \left\{z=(t,x',x_n,v',v_n)\in \mathcal{H}_\rho(z_0)\,:\, \frac{v_n^{3}}{x_n}\leq M^3\right\},\\ &\mathcal{H}^{+}_{\rho}(z_0)\coloneqq \left\{z=(t,x',x_n,v',v_n)\in \mathcal{H}_\rho(z_0)\,:\, v_n<0\right\}.
\end{aligned}
\end{equation}

\begin{theorem}\label{thm.h/phi}
   Let $M\geq1$ and let $f$ be a weak solution to 
    \begin{equation}\label{eq.h/phi}
\left\{
\begin{alignedat}{3}
(\partial_t+v\cdot\nabla_x)f-\ddiv(A\nabla_vf)&=B\cdot\nabla_vf+F&&\qquad \mbox{in  $\mathcal{H}_R(z_0)$}, \\
f&=0&&\qquad  \mbox{in $\gamma_-\cup \mathcal{Q}_R(z_0)$},
\end{alignedat} \right.
\end{equation}
where $(x_0)_n=(v_0)_n=0$ and $R\leq1$.
If $A\in C^{1,\epsilon}(\mathcal{H}_{R}(z_0))$, $F\in L^{\infty}(\mathcal{H}_R(z_0))$, $B\in C^{\epsilon}(\mathcal{H}_R(z_0))$ for some $\epsilon\in(0,1)$, then for any $z,{z'}\in \mathcal{H}^{0}_{\frac{R}2,M}(z_0)\cap\mathcal{H}_{\frac{R}2}^+(z_0)$, 
\begin{align*}
   \left|\frac{f}{{\pmb{\Phi}}_{0,a}}(z)-\frac{f}{{\pmb{\Phi}}_{0,a'}}(z')\right| &\leq c\frac{|z-z'|}{R}\Bigg(\|R^{-\frac12}f\|_{L^\infty({\mathcal{H}}_R(z_0))}+R^{\frac32}\|F\|_{L^{\infty}({\mathcal{H}}_R(z_0))}\Bigg)
\end{align*}
for some constant $c=c(n,\epsilon,\Lambda,\|A\|_{C^{1,\epsilon}(\mathcal{H}_R(z_0))},\|B\|_{C^{\epsilon}(\mathcal{H}_R(z_0))},M)$, where $a\coloneqq \sqrt{(A(z))_{n,n}}$ and $a'\coloneqq \sqrt{(A(z'))_{n,n}}$.
\end{theorem}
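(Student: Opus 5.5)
The plan is to derive the Lipschitz bound for $f/\pmb{\Phi}_{0,a}$ from the expansion \autoref{lem.exp.nd.a} at grazing points, applied with $\lambda=1$. Combined with interior and $\gamma_+$ estimates, this is the same patching scheme used in the proof of \autoref{thm.hol12}.

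By scaling (as in \eqref{scaling.argument}) we normalize $R=1$ and assume
\[
\|f\|_{L^\infty}+\|F\|_{L^\infty}\le 1 .
\]
The first step is pointwise. Fix a grazing point $\tilde z\in\gamma_0$ and apply \autoref{lem.exp.nd.a} with $\lambda=1$ and $\epsilon$ close to $1$ (so that $\lambda+\epsilon-\frac12\notin\N$). Since $g=0$, this gives
\[
P_{\tilde z}=p_0+p_{0,1}\pmb{\Phi}_{0,a}+p_{1}\pmb{\Phi}_{1,a}\in\mathcal T_{1,\epsilon,\tilde z,a},
\]
where $p_0\in\cP_1$ and $p_{0,1}\in \cP_{1}$ is affine in $v$, with $p_1(\tilde z)=0$. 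The condition $P=0$ on $\gamma_-$ forces $p_0=0$, because the $\pmb{\Phi}$'s vanish there. Hence
\[
|f-P_{\tilde z}|\lesssim r^{1+\epsilon}\quad\text{in }\mathcal H_r(\tilde z),\qquad a=a(\tilde z).
\]

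In $\mathcal{H}^0_{M}\cap\mathcal H^+$ we have $\pmb{\Phi}_{0,a}\gtrsim_M \mathrm{dist}_{\rm kin}^{1/2}$, using \eqref{phi.asymptotic}, and $\pmb{\Phi}_1=v_n\partial_{v_n}\pmb{\Phi}_0\lesssim \pmb{\Phi}_0$. Dividing therefore yields, at a point $z$ with $\mathrm{dist}_{\rm kin}(z,\gamma_0)\approx d$ and $\tilde z$ its kinetic projection,
\[
\Big|\frac{f}{\pmb{\Phi}_{0,a}}-q_{\tilde z}\Big|\lesssim d^{1/2+\epsilon}\quad\text{on }\mathcal H_{d}(\tilde z)\cap\mathcal H^0_M,
\]
where $q_{\tilde z}=p_{0,1}+p_1\,\pmb{\Phi}_1/\pmb{\Phi}_0$ is Lipschitz, with bounded coefficients by \autoref{lem.exp.nd.a}.

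The second step upgrades the exponent at scale $d$. Near $z$ (in a cylinder $\mathcal Q_{cd}(z)$ away from $\gamma$, or touching $\gamma_+$ when $v_n<0$ and $x_n\ll |v_n|^3$), the function $w=f-P_{\tilde z}$ solves an equation whose right-hand side comes from the coefficient errors $(A-A(\tilde z))D^2\pmb{\Phi}$, which lie in $C^{1,\epsilon}$. Interior Schauder estimates (\autoref{lem.sch.divergence}, rescaled), together with \autoref{lem.reg.gamma+} near $\gamma_+$, then give
\[
|\nabla w|\lesssim d^{\epsilon},\qquad |D w|_{\mathrm{kin}}\lesssim d^{1+\epsilon-1}
\]
in kinetic units. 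Using the derivative bounds $D^\beta\pmb{\Phi}_0\lesssim d^{1/2-|\beta|}$ and the lower bound on $\pmb{\Phi}_0$, the quotient rule shows that $D(w/\pmb{\Phi}_{0,a})$ is bounded. Moreover, $a(z)$ replacing $a(\tilde z)$ costs $|A(z)-A(\tilde z)|\,\partial_a\pmb{\Phi}\lesssim d\cdot d^{1/2}$, which is admissible since $A\in C^{1,\epsilon}$.

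The third step passes to general pairs. For $z,z'$ with $|z-z'|\le c\min(d,d')$, we use the local gradient bound from the second step. Otherwise $|z-z'|\gtrsim d$, and we compare both points through $q_{\tilde z}$ and $q_{\tilde z'}$. The difference $|q_{\tilde z}-q_{\tilde z'}|\lesssim |\tilde z-\tilde z'|$ follows by comparing the two expansions on a common cylinder of radius $\approx|z-z'|$ and using the equivalence of norms on the finite-dimensional space $\mathcal T$, exactly as in the proof of \autoref{thm.hol12}.

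The main obstacle is the restriction to $\mathcal{H}^0_M\cap\mathcal H^+$ and its boundary layer near $\gamma_+$. There $\pmb{\Phi}_0\sim|v_n|^{1/2}$ while $x_n\to0$, so the cylinder at $z$ reaches $\gamma_+$ rather than staying interior. This requires the boundary Schauder estimate at $\gamma_+$ with constants uniform as $v_n\to0$, which we would obtain from \autoref{lemma.1D.grax.est2}-type scaling, i.e. cylinders of radius comparable to $|v_n|$ reflected onto $\gamma_+$. We expect careful bookkeeping of the $a$-dependence to be the delicate part.
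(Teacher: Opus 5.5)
Your argument is correct in substance but patches differently from the paper. The ingredients are the same:
\begin{itemize}
\item the expansion of order $\frac32+\epsilon$ at grazing points from \autoref{lem.exp.nd.a}, with $p_0=0$, $p_{0,1}$ affine in $v$ and $p_1(\tilde z)=0$;
\item Lipschitz bounds for $f-c_0\pmb{\Phi}_{0,a}$ at the scale of the distance to $\gamma_0$, from \autoref{lem.sch.divergence} and, near $\gamma_+$, \autoref{lem.reg.gamma+};
\item the bound $|\pmb{\Phi}_{0,a}-\pmb{\Phi}_{0,a'}|\lesssim|a-a'|\,d^{1/2}$, which lets $a$ vary with the point.
\end{itemize}
The paper proves a Campanato bound $\dashint_{\mathcal{H}_\rho(z_1)\cap\overline{\mathcal{H}}_{1/2}(z_0)}|f/\pmb{\Phi}_{0,a(z_1)}-c_{z_1}|\,dz\le c\rho$ for every center and radius. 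It then replaces $a(z_1)$ by $a(z)$ using $|f(z)|\lesssim|X|^{1/2}$ from \autoref{thm.hol12}, and concludes by Campanato's characterization. Averaging buys two things:
\begin{itemize}
\item At a grazing center only $\sup_{\mathcal{H}_\rho(z_1)}|f-c_0\pmb{\Phi}_{0,a_1}|\lesssim\rho^{3/2}$ is needed, because $\pmb{\Phi}_0^{-1}\lesssim x_n^{-1/6}$ has average $\lesssim\rho^{-1/2}$. So no pointwise division near $\gamma_0$ is required.
\item Comparing leading coefficients at different grazing points is hidden in the chaining of Campanato's theorem.
\end{itemize}
Your two-point argument avoids Campanato theory in the kinetic geometry. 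In exchange, you must prove by hand that $\tilde z\mapsto c_0(\tilde z)$ is Lipschitz along $\gamma_0$. You may also divide only where $\pmb{\Phi}_0\gtrsim d^{1/2}$, namely at $z$ and on $\mathcal{Q}_{cd}(z)$, not on all of $\mathcal{H}_d(\tilde z)\cap\mathcal{H}^0_M$ as you write.

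Several statements need fixing, though none is fatal.
\begin{itemize}
\item With $\lambda=1$ and exponent $\epsilon'$, \autoref{lem.exp.nd.a} requires $A\in C^{\frac12+\epsilon'}$ and $B\in C^{\epsilon'-\frac12}$. So $\epsilon'$ close to $1$ is not available under the hypotheses; take $\epsilon'=\frac12+\epsilon$, as the paper does.
\item $q_{\tilde z}$ is not Lipschitz. The ratio $\pmb{\Phi}_1/\pmb{\Phi}_0$ is $0$-homogeneous and jumps at $\gamma_0$: it equals $0$ on $\{v_n=0\}$ and $\frac12$ on $\{x_n=0,\,v_n<0\}$. Hence $p_1\pmb{\Phi}_1/\pmb{\Phi}_0$ is discontinuous at grazing points where $p_1\neq0$. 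What holds, and suffices, is that $q_{\tilde z}$ is Lipschitz on $\mathcal{Q}_{cd}(z)$ and oscillates by $O(\rho)$ on $\mathcal{H}_\rho(\tilde z)$, since $|p_1|\lesssim\rho$ there.
\item For far pairs, compare the constants $c_0(\tilde z)$ and $c_0(\tilde z')$ rather than invoking norm equivalence on one finite-dimensional space. No single such space exists, since $a(\tilde z)\neq a(\tilde z')$. Read the constants off from $|P_{\tilde z}-P_{\tilde z'}|\lesssim\rho^{3/2}$ on a common cylinder, at a point where $\pmb{\Phi}_{0,a}\sim\rho^{1/2}$.
\item In your second step the forcing for $w$ is only $O(d^{-1/2})$. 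It comes from $B\cdot\nabla_vP$, $\partial_vA\,\partial_{v_n}\pmb{\Phi}_0$, $(A_{nn}-A_{nn}(\tilde z))\partial_{v_n}^2\pmb{\Phi}_0$ and $p_1\partial^2_{v_n}\pmb{\Phi}_0$. Schauder therefore gives $|Dw|\lesssim d^{1/2}$, not $d^{\epsilon}$. This still yields $D(w/\pmb{\Phi}_{0,a})=O(1)$.
\item The layer near $\gamma_+$ is not a real obstacle. \autoref{lem.reg.gamma+}, applied at the projection onto $\gamma_+$ with radius comparable to $|v_n|$, is already scale invariant with constants independent of $|v_0|$. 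This is what the paper uses. \autoref{lemma.1D.grax.est2} concerns only the constant-coefficient stationary 1D equation and is not the right tool here.
\end{itemize}
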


\begin{proof}
We may assume $R=1$ and 
\begin{align*}
    \|f\|_{L^\infty({\mathcal{H}}_1(z_0))}+\|F\|_{L^{\infty}({\mathcal{H}}_1(z_0))} \leq 1.
\end{align*}
In addition, to simplify the notation, we omit the dependence of the constant when it depends only on $n,\epsilon,\Lambda,\|A\|_{C^{1,\epsilon}(\mathcal{H}_1(z_0))},\|B\|_{C^{\epsilon}(\mathcal{H}_1(z_0))},M$. Furthermore, we write
\begin{align}\label{defn.overline.h}
    \overline{\mathcal{H}}_{\frac12}(z_0)\coloneqq \mathcal{H}^{0}_{\frac{1}{2},M}(z_0)\cap \mathcal{H}_{\frac12}^+(z_0).
\end{align}

First, we show that for each $z_1\in \overline{\mathcal{H}}_{\frac12}(z_0)$ and $\rho\leq \frac{1}{16}$, there is a constant $c_{z_1}$ such that
\begin{align}\label{goal0.h/phi}
    J \coloneqq \dashint_{\mathcal{H}_\rho(z_1)\cap\overline{\mathcal{H}}_{\frac12}(z_0)}\left|\frac{f}{{\pmb{\Phi}}_{0,a_1}}(z)-c_{z_1}\right|\,dz\leq c\rho,
\end{align}
where $a_1\coloneqq\sqrt{(A(z_1)_{n,n})}$.
Note that for any $z_1\in \gamma_0\cap\mathcal{Q}_{\frac58}(z_0)$, by \autoref{lem.exp.nd.a}, there are constants $c_{0},c_1,c_2$ such that for any $\rho<\frac1{32}$,
\begin{align}\label{ineq0.h/phi}
    \sup_{\mathcal{H}_{\rho}(z_1)}|f-(c_{0}+\mathbf{c}_{1}\cdot (v-v_{1})){\pmb{\Phi}}_{0,a_1}-\mathbf{c}_{2}\cdot(v-v_{1}){\pmb{\Phi}}_{1,a_1}|\leq c\rho^{\frac32+\epsilon}
\end{align}
for some $i,j\in\{1,2,\ldots,n\}$, where $\mathbf{c}_1,\mathbf{c}_2\in \bbR^n$ and we have also used that $\mathcal{H}_{\frac1{32}}(z_1)\subset\mathcal{H}_{1}(z_0)$.

To this end, we split the proof into three cases.
\begin{itemize}
    \item Let $z_1\in\gamma_0$, i.e., $z_1=(t_1,x_1',0,v_1',0)$. Take $z\in \mathcal{H}_{\rho}(z_1)\cap\overline{\mathcal{H}}_{\frac12}(z_0)$ and $c_{z_1}\coloneqq c_0$, where the constant $c_0$ is given in \eqref{ineq0.h/phi}. Then by the fact that ${\pmb{\Phi}_{0,a_1}}(z)\geq \frac1c|x_n|^{\frac16}$ in $\overline{\mathcal{H}}_{\frac12}(z_0)$, by \eqref{phi.asymptotic} and \eqref{defn.h0m}, we have
\begin{equation}\label{ineq1.h/phi}
\begin{aligned}
    J &=\dashint_{\mathcal{H}_\rho(z_1)\cap\overline{\mathcal{H}}_{\frac12}(z_0)}|{\pmb{\Phi}}_{0,a_1}(z)|^{-1}\left|f(z)-c_{0}{\pmb{\Phi}}_{0,a_1}(z)\right|\,dz\\
    &\leq c\dashint_{\mathcal{H}_\rho(z_1)\cap\overline{\mathcal{H}}_{\frac12}(z_0)}|x_n|^{-\frac16}\,dz\left(\sup_{\mathcal{H}_\rho(z_1)}\left|f(z)-c_{0}{\pmb{\Phi}}_{0,a_1}(z)\right|\right)\\
    &\leq c\rho^{-\frac12}\sup_{\mathcal{H}_\rho(z_1)}\left|f(z)-c_{0}{\pmb{\Phi}}_{0,a_1}(z)\right|,
\end{aligned}
\end{equation}
where we have also used the fact that 
\begin{align}\label{ineq11.h/phi}
    \dashint_{\mathcal{H}_\rho(z_1)\cap\overline{\mathcal{H}}_{\frac12}(z_0)}|x_n|^{-\frac16}\,dz\leq \dashint_{H_\rho}|x_n|^{-\frac16}\,dx_n\,dv_n\leq c\rho^{-\frac12}.
\end{align}
With the help of \eqref{ineq0.h/phi} and the fact that $|{\pmb{\Phi}}_{i,a}|\leq c|z|^{\frac12}$, we further estimate the term $J$ as 
\begin{equation}\label{ineq111.h/phi}
\begin{aligned}
    J&\leq c\rho^{-\frac12}\sup_{\mathcal{H}_{\rho}(z_1)}\Bigg(|f-(c_{0}+\mathbf{c}_{1}\cdot(v-v_{1})){\pmb{\Phi}}_{0,a_1}-\mathbf{c}_{2}\cdot(v_j-v_{1}){\pmb{\Phi}}_{1,a_1}|\\
    &\qquad\qquad\qquad\quad+|\mathbf{c}_{1}\cdot(v-v_{1}){\pmb{\Phi}}_{0,a_1}-\mathbf{c}_{2}\cdot(v_j-v_{1}){\pmb{\Phi}}_{1,a_1}|\Bigg)\leq c\rho.
\end{aligned}
\end{equation}
Thus, we have proved \eqref{goal0.h/phi} when $z_1\in \gamma_0$.
\item Let $z_1\in (\mathcal{H}^{0}_{\frac12,M}\setminus \gamma_0)$ and $z_1=(t_1,x_1',x_{1,n},v_1',v_{1,n})$ with $x_{1,n}\neq0$.  By \eqref{defn.h0m}, $z_1\notin \gamma_0\cup \gamma_-$. Next, we have for $\widetilde{z_1}\coloneqq (t_1,x_1',0,v_1',0)$ and $r_1\coloneqq \frac{(x_{1,n})^{\frac13}}{8\sqrt{M}}$ and $r_2=4(x_{1,n})^{\frac13}+|v_{1,n}|$,
\begin{align*}
    \mathcal{Q}_{2r_1}(z_1)\cap\gamma=\emptyset,\quad \mathcal{Q}_{2r_1}(z_1)\subset \mathcal{H}_{r_2}(\widetilde{z_1})\quad\text{and}\quad \mathcal{H}_{2r_2}(\widetilde{z_1})\subset\mathcal{H}_1(z_0).
\end{align*}
By \eqref{ineq0.h/phi}, there is a constant $c_{i}$ such that 
\begin{align}\label{ineq3.h/phi}
    \left\|f-(c_{0}+\mathbf{c}_1\cdot(v-\widetilde{v}_{1}){\pmb{\Phi}}_{0,\widetilde{a}}-\mathbf{c}_2\cdot(v-\widetilde{v}_{1}){\pmb{\Phi}}_{1,\widetilde{a}}\right\|_{L^\infty(\mathcal{H}_{r}(\widetilde{z_1}))}\leq cr^{\frac32+\epsilon}
\end{align}
for any $r\leq \frac1{32}$, where $\widetilde{v_1}=(\widetilde{v}_{1,1}\ldots,\widetilde{v}_{1,n})=(v_1',0)$ and $\widetilde{a}\coloneqq\sqrt{(A(\widetilde{z_1}))_{n,n}}$. On the other hand, we employ  ${\pmb{\Phi}}_{0,a}(x_n,v_n)=\phi_0(x_n/a,v_n/a)$ and the fundamental theorem of the calculus, \eqref{phi.asymptotic}, and $A\in C^{1+\epsilon}(\mathcal{H}_1(z_0))$ to see that for any $z\in \mathcal{H}_{\rho}(\widetilde{z}_1)$,
\begin{equation}\label{ineq4.h/phi}
\begin{aligned}
|\pmb{\Phi}_{0,a_1}(z)-\pmb{\Phi}_{0,\widetilde{a}}(z)|&\leq \left|\int_{0}^{1}x_n\partial_{x_n}\phi_0(x_n(\xi /a_1+(1-\xi)/\widetilde{a}),v_n/a_1)\,d\xi (a_1-\widetilde{a})\right|\\
&\quad +c\left|\int_{0}^{1}v_n\partial_v\phi_0(x_n/\widetilde{a},v_n(\xi/ a_1+(1-\xi)/\widetilde{a}))\,d\xi(a_1-\widetilde{a})\right|\\
&\leq c\|\phi_0\|_{L^\infty({H}_{\rho})}|z_1-\widetilde{z}_1|.
\end{aligned}
\end{equation}
To proceed further, first we consider the case $\rho\geq r_1$. In this case, we have $\mathcal{H}_\rho(z_1)\subset\mathcal{H}_{\rho_0}(\widetilde{z}_1)$ with $\rho_0\coloneqq 16M^{\frac32}\rho$ and 
\begin{align*}
    J\coloneqq\dashint_{\mathcal{H}_\rho(z_1)\cap\overline{\mathcal{H}}_{\frac12}(z_0)}\left|\frac{f}{{\pmb{\Phi}}_{0,a_1}}(z)-c_{z_1}\right|\,dz\leq c\rho^{-\frac12}\sup_{\mathcal{H}_{\rho}({z}_1)}|f(z)-c_{z_1}{\pmb{\Phi}_{0,a_1}}(z)|,
\end{align*}
where we have used \eqref{ineq11.h/phi}. By taking $c_{z_1}=c_0$ and following the same lines as in \eqref{ineq111.h/phi} and \eqref{ineq4.h/phi}, we have
\begin{align}\label{ineq30.h/phi}
    J\leq c\rho^{-\frac12}\left(\sup_{\mathcal{H}_{\rho_0}(\widetilde{z}_1)}|f(z)-c_0{\pmb{\Phi}_{0,\widetilde{a}}}(z)|+\sup_{\mathcal{H}_{\rho_0}(\widetilde{z}_1)}|{\pmb{\Phi}_{0,a_1}}(z)-{\pmb{\Phi}_{0,\widetilde{a}}}(z)|\right)\leq c\rho.
\end{align}
Thus, we now assume $\rho\leq r_1$. We fix $c_{z_1}\coloneqq \frac{f(z_1)}{\pmb{\Phi}_{0,a_1}(z_1)}$ to observe
\begin{equation}\label{defn.j1j2.h/phi}
\begin{aligned}
   J&\coloneqq\dashint_{\mathcal{H}_\rho(z_1)\cap\overline{\mathcal{H}}_{\frac12}(z_0)}\left|\frac{f}{{\pmb{\Phi}}_{0,a_1}}(z)-c_{z_1}\right|\,dz\\
   &=\dashint_{\mathcal{H}_\rho(z_1)\cap\overline{\mathcal{H}}_{\frac12}(z_0)}\left|\frac{f(z)-c_{0}{\pmb{\Phi}}_{0,a_1}(z)}{\pmb{\Phi}_{0,a_1}(z)}-\frac{f(z_1)-c_{0}\pmb{\Phi}_{0,a_1}(z_1)}{\pmb{\Phi}_{0,a_1}(z_1)}\right|\,dz\\
   &\leq c\rho\left[\frac{f-c_{0}{\pmb{\Phi}}_{0,a_1}}{\pmb{\Phi}_{0,a_1}}\right]_{C^{0,1}(\mathcal{H}_\rho(z_1)\cap\overline{\mathcal{H}}_{\frac12}(z_0))}\\
   &\leq c\rho[f-c_{0}\pmb{\Phi}_{0,a_1}]_{C^{0,1}(\mathcal{H}_\rho(z_1)\cap\overline{\mathcal{H}}_{\frac12}(z_0))}\|\pmb{\Phi}_{0,a_1}^{-1}\|_{L^\infty(\mathcal{H}_\rho(z_1)\cap\overline{\mathcal{H}}_{\frac12}(z_0))}\\
   &\quad+c\rho[\pmb{\Phi}_{0,a_1}^{-1}]_{C^{0,1}(\mathcal{H}_\rho(z_1)\cap\overline{\mathcal{H}}_{\frac12}(z_0))}\|f-c_{0}\pmb{\Phi}_{0,a_1}\|_{L^\infty(\mathcal{H}_\rho(z_1)\cap\overline{\mathcal{H}}_{\frac12}(z_0))}\eqqcolon J_1+J_2,
\end{aligned}
\end{equation}
where we have used the product rule of the H\"older space for the last inequality.
Next, we observe that  $\mathcal{Q}_{2r_1}(z_1)\cap \gamma_-=\emptyset$ and $\widetilde{f}(z)\coloneqq f(z)-c_{0}{\pmb{\Phi}}_{0,{a}_1}(z)$ solves
\begin{equation*}
(\partial_t+v\cdot\nabla_x)\widetilde{f}-\ddiv(A\nabla_v\widetilde{f})=B\cdot \nabla_v\widetilde{f}+\widetilde{F}\quad\text{in }\mathcal{Q}_{r_1}(z_1),
\end{equation*}
where
\begin{align*}
    \widetilde{F}&= F+c_0\left(v_n\partial_{x_n}{\pmb{\Phi}}_{0,{a}_1}-\partial_{v_i}A_{i,n}\partial_{v_n}{\pmb{\Phi}}_{0,{a}_1}-B_n\partial_{v_n}{\pmb{\Phi}}_{0,{a}_1}+A_{n,n}\partial_{v_n,v_n}\partial_{v_n,v_n}{\pmb{\Phi}}_{0,{a}_1}\right)\\
    &=F-c_0\left(\partial_{v_i}A_{i,n}\partial_{v_n}{\pmb{\Phi}}_{0,{a}_1}+B_n\partial_{v_n}{\pmb{\Phi}}_{0,{a}_1}+(A_{n,n}({z}_1)-A_{n,n}(z))\partial_{v_n,v_n}{\pmb{\Phi}}_{0,{a}_1}\right),
\end{align*}
which follows from the fact that ${\pmb{\Phi}}_{0,{a}_1}$ solves the equation 
\begin{align*}
    (v_n\partial_{x_n}-{a}_1^2\partial_{v_n,v_n})f=0.
\end{align*}
Therefore, by \autoref{lem.sch.divergence}, we have 
\begin{equation}\label{ineq31.h/phi}
\begin{aligned}
    [\widetilde{f}]_{C^{0,1}(\mathcal{Q}_\rho(z_1))}&\leq cr_1^{-1}(\|\widetilde{f}\|_{L^\infty(\mathcal{Q}_{r_1}(z_1))}+r_1^2\|\widetilde{F}\|_{L^\infty(\mathcal{Q}_{r_1}(z_1))})\\
    &\leq cr_1^{-1}(\|\widetilde{f}\|_{L^\infty(\mathcal{Q}_{r_1}(z_1))}+r_1^{\frac32}),
\end{aligned}
\end{equation}
where we have used the fact that for any $z\in \mathcal{Q}_{r_1}(z_1)$,
\begin{align}\label{ineq321.h/phi}
    \partial_{v_n}{\pmb{\Phi}}_{0,{a}_1}(z)\leq c|X_n|^{-\frac12}\leq cr_1^{-\frac12},\quad |A_{n,n}(z_1)-A_{n,n}(z)||\partial_{v_n,v_n}{\pmb{\Phi}}_{0,{a}_1}|\leq c|z_1-z||X_n|^{-\frac32}\leq cr_1^{-\frac12}
\end{align}
with $X_n=(x_n,v_n)$, which follows from \eqref{phi.asymptotic}. 

Thus, using \eqref{ineq4.h/phi} with $\rho $ replaced by $ 2r_2$ and the fact that $\mathcal{Q}_{r_1}(z_1)\subset \mathcal{H}_{2_r2}(\widetilde{z}_1)$ and $|v_{1,n}|^3\leq Mx_{1,n}$, we get
\begin{equation}\label{ineq32.h/phi}
\begin{aligned}
    \|\widetilde{f}\|_{L^{\infty}(\mathcal{Q}_{r_1}(z_1))}
    &\leq c\left(\|f-c_{0}\pmb{\Phi}_{0,\widetilde{a}}\|_{L^{\infty}(\mathcal{Q}_{r_1}(z_1))}+\|\pmb{\Phi}_{0,a_1}-\pmb{\Phi}_{0,\widetilde{a}}\|_{L^\infty(\mathcal{Q}_{r_1}(z_1))}\right) \\
    &\leq c\left(\|f-c_{0}\pmb{\Phi}_{0,\widetilde{a}}\|_{L^\infty(\mathcal{Q}_{r_1}(z_1))}+r_1^{\frac32}\right).
\end{aligned}
\end{equation}
Now by \eqref{ineq3.h/phi} together with the fact that $\mathcal{Q}_{2r_1}(z_1)\subset \mathcal{H}_{r_2}(\widetilde{z_1})$ and $|v_{1,n}|^3\leq Mx_{1,n}$, we get 
\begin{equation}\label{ineq5.h/phi}
\begin{aligned}
    &\|f-c_0\pmb{\Phi}_{0,\widetilde{a}}\|_{L^\infty(\mathcal{H}_{r_1}(z_1))}\\
    &\leq \|f-(c_{0}+\mathbf{c}_2\cdot(v-\widetilde{v}_{1})){\pmb{\Phi}}_{0,\widetilde{a}}-\mathbf{c}_3\cdot(v-\widetilde{v}_{1}){\pmb{\Phi}}_{1,\widetilde{a}}\|_{L^\infty(\mathcal{H}_{r_2}(\widetilde{z_1}))}\\
    &\quad+\|\mathbf{c}_2\cdot(v-\widetilde{v}_{1})){\pmb{\Phi}}_{0,\widetilde{a}}-\mathbf{c}_3\cdot(v-\widetilde{v}_{1}){\pmb{\Phi}}_{1,\widetilde{a}}\|_{L^\infty(\mathcal{H}_{r_2}(\widetilde{z_1}))}\leq cr_1^{\frac32}.
\end{aligned}
\end{equation}

Combining \eqref{ineq31.h/phi}, \eqref{ineq32.h/phi}, \eqref{ineq5.h/phi} and
\begin{equation}\label{ineq6.h/phi}
\|\pmb{\Phi}_{0,a_1}\|_{L^\infty(\mathcal{H}_\rho(z_1)\cap\overline{\mathcal{H}}_{\frac12}(z_0))}\geq \|\pmb{\Phi}_{0,a_1}\|_{L^\infty(\mathcal{H}_{r_1}(z_1)\cap\overline{\mathcal{H}}_{\frac12}(z_0))}\geq c^{-1}{r_1^{\frac12}}
\end{equation}leads to 
\begin{align*}
    J_1\leq c\rho.
\end{align*}
On the other hand, for the estimate of $J_2$, in light of \eqref{phi.asymptotic}, \eqref{ineq4.h/phi} and \eqref{ineq5.h/phi}, we deduce 
\begin{align*}
    J_2&\leq \frac{c\rho}{r_1}  \sup_{z,z'\in\mathcal{H}_{\rho}(z_1)\cap\overline{\mathcal{H}}_{\frac12}(z_0)}\frac{|\pmb{\Phi}_{0,{a}_1}(z)-\pmb{\Phi}_{0,{a}_1}(z')|}{|z-z'|}\left(\|f-c_{0}\pmb{\Phi}_{0,\widetilde{a}}\|_{L^\infty(\mathcal{H}_{\rho}(z_1))}+\|\pmb{\Phi}_{0,{a}_1}-\pmb{\Phi}_{0,\widetilde{a}}\|_{L^\infty(\mathcal{H}_{\rho}(z_1))}\right)\\
    &\leq c\rho r_1^{-\frac32}\left(\|f-c_{0}\pmb{\Phi}_{0,\widetilde{a}}\|_{L^\infty(\mathcal{H}_{\rho}(z_1))}+r_1^{\frac32}\right)\leq c\rho.
\end{align*}
In particular, we have used the fact that 
\begin{align}\label{ineq70.h/phi}
    \sup_{z,z'\in\mathcal{H}_{\rho}(z_1)\cap\overline{\mathcal{H}}_{\frac12}(z_0)}\frac{|\pmb{\Phi}_{0,{a}_1}(z)-\pmb{\Phi}_{0,{a}_1}(z')|}{|z-z'|}\leq c\sup_{|X_n|,|X_n'|\geq \frac{r_1}{16c_M}}\frac{|\pmb{\Phi}_{0}(X_n)-\pmb{\Phi}_{0}(X_n')|}{|X_n-X_n'|}\leq cr_n^{-\frac12}
\end{align}
Since we have proved $J_1+J_2\leq c\rho$, this implies \eqref{goal0.h/phi}.
\item Let $z_1\in \mathcal{H}^+_{\frac12}(z_0)$ and $z_1=(t_1,x_1',x_{1,n},v_1',v_{1,n})$ with $v_{1,n}<0$ and $x_{1,n}\leq \left(\frac{|v_{1,n}|}{4}\right)^{3}$. Then we have for $\widetilde{z_1}\coloneqq (t_1,x_1',0,v_1',0)$, $r_1=|v_{1,n}|/2$ and $r_2\coloneqq 2|v_{1,n}|$,
\begin{align*}
    \mathcal{H}_{r_1}(z_1)\subset\mathcal{H}_{2r_2}(\widetilde{z_1})\subset\mathcal{H}_1(z_0).
\end{align*}
By \eqref{ineq0.h/phi}, there is a constant $c_{i}$ such that 
\begin{align}\label{ineq7.h/phi}
    \left\|f-(c_{0}+\mathbf{c}_1\cdot(v-\widetilde{v}_{1}){\pmb{\Phi}}_{0,\widetilde{a}}-\mathbf{c}_2\cdot(v-\widetilde{v}_{1}){\pmb{\Phi}}_{1,\widetilde{a}}\right\|_{L^\infty(\mathcal{H}_{r}(\widetilde{z_1}))}\leq cr^{\frac32+\epsilon}
\end{align}
for any $r\leq \frac1{32}$, where $\widetilde{v_1}=(\widetilde{v}_{1,1}\ldots,\widetilde{v}_{1,n})$ and $\widetilde{a}\coloneqq\sqrt{(A(\widetilde{z_1}))_{n,n}}$. As in \eqref{ineq30.h/phi}, we get the desired estimate \eqref{goal0.h/phi} when $\rho\geq r_1$, so that we assume $\rho\leq r_1$. Combining interior regularity results given in \autoref{lem.sch.divergence} and \autoref{lem.reg.gamma+} together with a standard covering argument, we have 
\begin{equation*}
    [\widetilde{f}]_{C^{0,1}(\mathcal{Q}_\rho(z_1))}
    \leq cr_1^{-1}(\|\widetilde{f}\|_{L^\infty(\mathcal{Q}_{r_1}(z_1))}+r_1^{\frac32}),
\end{equation*}
where we have also used \eqref{ineq321.h/phi} and $\widetilde{f}\coloneqq f-c_0{\pmb{\Phi}}_{0,a_1}$. Similarly, we get \eqref{ineq6.h/phi} and \eqref{ineq70.h/phi}, as for any $z\in\mathcal{H}_{r_1}(z_1)$, $|v_n|\geq \frac{|v_{1,n}|}{2}$, and $|\phi_0(z)|\eqsim |X_n|^{\frac12}$ on $\overline{\mathcal{H}}^{\frac12}(z_0)$. Therefore, we can follow the same arguments as in the estimates of $J_1$ and $J_2$, to derive that the desired estimate \eqref{goal0.h/phi} when $z_1\in \mathcal{H}^{+}_{\frac12}(z_0)$.
\end{itemize}
Before completing the proof, we note that for any $z_1,z_2\in \overline{\mathcal{H}}_{\frac12}(z_0)$,
\begin{align}\label{fin1.f/phi}
     \left|\frac{f}{\Phi_{a_1}}(z_2)-\frac{f}{\Phi_{a_2}}(z_2)\right|\leq c|f(z_2)||X_2|^{-1}|\Phi_{a_1}(z_2)-\Phi_{a_2}(z_2)|\leq c|f(z_2)||X_2|^{-\frac12}|z_1-z_2|,
\end{align}
where we have used \eqref{ineq6.h/phi} with $r_1=|X_2|$ and \eqref{ineq4.h/phi} with $\rho=|X_2|$. On the other hand, by \autoref{thm.hol12}, we derive
\begin{align}\label{fin2.f/phi}
    |f(z_2)|=|f(z_2)-f(\widetilde{z}_2)|\leq c|z_2-\widetilde{z}_2|\leq c|X_2|^{\frac12},
\end{align}
where $\widetilde{z}_2=(t_2,x_2',0,v_2',0)$ and we have also used the fact that $z_2\in \overline{\mathcal{H}}_{\frac12}(z_0)$ for the last inequality. Thus combining \eqref{fin1.f/phi} and \eqref{fin2.f/phi} with $z_2$ replaced by $z$ yields
\begin{align*}
    \dashint_{\mathcal{H}_r(z_1)\cap\overline{ \mathcal{H}}_{\frac12}(z_0)}\left|\frac{f}{\Phi_a}(z)-\frac{f}{\Phi_{a_1}}(z)\right|\,dz\leq c\dashint_{\mathcal{H}_r(z_1)\cap\overline{ \mathcal{H}}_{\frac12}(z_0)}|z-z_1|\,dz\leq cr,
\end{align*}
where $a=\sqrt{A_{n,n}(z)}$.
Therefore, using this and \eqref{goal0.h/phi} gives 
\begin{align}\label{last.h/phi}
    \dashint_{\mathcal{H}_r(z_1)\cap\overline{ \mathcal{H}}_{\frac12}(z_0)}\left|\frac{f}{\Phi_a}(z)-c_{z_1}\right|\,dz\leq cr.
\end{align}
Now by the classical theory of Campanato's space, we deduce the desired estimate from \eqref{last.h/phi}. This completes the proof.
\end{proof}

In order to discuss the sharpness of our result, we briefly explain how to derive $C^{0,1}$ estimates of $f/{\pmb{\Phi}_{0,a}}$ when the right-hand is given by divergence type data.  To this end, let us consider a weak solution to
   \begin{equation}\label{ex.h/phi}
\left\{
\begin{alignedat}{3}
(\partial_t+v\cdot\nabla_x)f-\ddiv(A\nabla_vf)&=B\cdot\nabla_vf+F-\ddiv(G)&&\qquad \mbox{in  $\mathcal{H}_1(z_0)$}, \\
f&=0&&\qquad  \mbox{in $\gamma_-\cap \mathcal{Q}_1(z_0)$},
\end{alignedat} \right.
\end{equation}
where $(x_{0})_n=(v_{0})_n=0$,  $A\in C^{\frac12+\epsilon}(\mathcal{H}_1(z_0))$, $B\in C^{ \epsilon}(\mathcal{H}_1(z_0))$, $ F\in L^{\infty}(\mathcal{H}_1(z_0))$, $G\in C^{\frac12+\eps}(\mathcal{H}_1(z_0))$. We may assume
\begin{align*}
    \|f\|_{L^\infty(\mathcal{H}_1(z_0)}+\|F\|_{L^\infty(\mathcal{H}_1(z_0)}+[G]_{C^{\frac12+\eps}(\mathcal{H}_1(z_0)} \le 1.
\end{align*}
Then by following the same lines as in the proof of \autoref{lem.exp.nd.a} together with the fact that $-\ddiv(G)=-\ddiv(G-G(z_0))$, which allows us to use $\|G-G(z_0)\|_{L^\infty(\mathcal{H}_r(z_0)}\leq cr^{\frac12+\eps}[G]_{C^{\frac12+\eps}(\mathcal{H}_1(z_0)}$, we deduce 
\eqref{ineq111.h/phi} with $z_1$ replaced by $z_0$, as the expansion order is less than 2 so that we only use \autoref{lem.bdry.hol}. Furthermore, we know interior $C^{1,\eps}$-estimates of the solution to \eqref{ex.h/phi} (see \autoref{lem.sch.divergence}). Therefore, by repeating the argument used in \autoref{thm.h/phi}, we derive that the weak solution $f$ to \eqref{ex.h/phi} satisfies for any 
$z,z'\in\overline{\mathcal{H}}_{\frac12}(z_0)$,
\begin{align*}
   \left|\frac{f}{{\pmb{\Phi}}_{0,a}}(z)-\frac{f}{{\pmb{\Phi}}_{0,a'}}(z')\right| &\leq c{|z-z'|}.
\end{align*}

\begin{example}\label{ex.sharp.f/phi}
    Here, we explain the sharpness of \autoref{thm:f/phi}. Let us consider $n=1$ and $f\coloneqq \phi_0+v^2\partial_v\phi_0$. Then by \eqref{basic.formula.liou.inflow2}, we have that $f$ is a solution to
    \begin{equation*}
\left\{
\begin{alignedat}{3}
(\partial_t+v\partial_x)f-\partial_{vv}f&=-\partial_v(-3\phi_0+5v\partial_v\phi_0)&&\qquad \mbox{in  $\mathcal{H}_1$}, \\
f&=0&&\qquad  \mbox{in $\gamma_-\cap \mathcal{Q}_1$},
\end{alignedat} \right.
\end{equation*}
i.e. it solves \eqref{ex.h/phi} with $G = -3\phi_0 + 5v \partial_v \phi_0$ and $B=F=0$.
 Moreover, by \eqref{defn.gm1.trico} and \eqref{appen.defn.j1} we have that  $\varphi\coloneqq \frac{f}{\phi_0} = 1 + \frac{v^2\partial_v\phi_0}{\phi_0}= 1 + \frac{v^4}{3x}\frac{U(-\frac56,\frac53,-\frac{v^3}{9x})}{U(\frac16,\frac23,-\frac{v^3}{9x})}$. Using the homogeneity as in the proof of \cite[Proposition 3.4]{RoWe25}, we deduce that $\varphi \in C^{0,1} \setminus C^1$. Therefore, in order to get the $C^{0,1}$ regularity, we have to assume $G\in C^{\frac12+\eps}$ for any $\eps\geq0$ in \eqref{ex.h/phi}. In this sense, our result is sharp.
\end{example}

\begin{remark}
\label{rem:higher-quotient-reg}
    We now argue that when $A\equiv1$ and $B\equiv0$ in \eqref{eq.h/phi}, then the regularity of $f/\phi_0$ is better than $C^{0,1}$. Let $f$ be a solution to 
    \begin{equation*}
\left\{
\begin{alignedat}{3}
(\partial_t+v\cdot\nabla_x)f-\Delta_vf&=F&&\qquad \mbox{in  $\mathcal{H}_1(z_0)$}, \\
f&=0&&\qquad  \mbox{in $\gamma_-\cap \mathcal{Q}_1(z_0)$},
\end{alignedat} \right.
\end{equation*}
where $x_{0,n}=v_{0,n}=0$ and $F\in C^{\eps}(\mathcal{H}_1(z_0))$. To show this, first, we note from \autoref{thm:Liou-intro} that we only need two elements $\phi_0$ and $\psi_0$ to prove the expansion estimates of order $2+\frac12-\eps$ given in \autoref{lem.exp.nd.a}. This implies that by repeating the same arguments as in the proof of \autoref{lem.exp.nd.a}, we deduce
\begin{align*}
    \|f-(c_0+\mathbf{c}_1\cdot(v-v_0)+(v-v_0)^T\mathbf{C}_2(v-v_j))\phi_0-c_3\psi_0\|_{L^\infty(\mathcal{H}_\rho(z_0))}\leq c\rho^{2+\frac12-\eps}
\end{align*}
for some constants $\mathbf{c}_1\in\bbR^n, \mathbf{C}_2\in\bbR^{n\times n}, c\in\bbR$,
where we have assumed $\|f\|_{L^\infty(\mathcal{H}_1(z_0))}+[F]_{C^\eps(\mathcal{H}_1(z_0))}\leq 1$. Therefore, as in the proof of \autoref{lem.exp.nd.a} with a few modifications and due to the fact that $\psi_0$ has homogeneity 2, one could show $f/\phi_0\in C^{\frac32}(\overline{\mathcal{H}}_{\frac12}(z_0))$ (see \eqref{defn.overline.h} for the definition of the set $\overline{\mathcal{H}}_{\frac12}(z_0)$). Clearly, this result is sharp, since $f = \psi_0$ is a solution. 

On the other hand, when $F=0$, there is no $\psi_0$ in the expansion, so that we can deduce
\begin{align*}
    \|f-(c_0+\mathbf{c}_1\cdot(v-v_0)+(v-v_0)^T\mathbf{C}_2(v-v_j))\phi_0-c_3\partial_{v_n}\phi_1\|_{L^\infty(\mathcal{H}_\rho(z_0))}\leq c\rho^{2+\frac12+\eps}.
\end{align*}
Since $\partial_{v_n}\phi_1$ has homogeneity $\frac52$, one could prove $f/\phi_0\in C^{1,1}(\overline{\mathcal{H}}_{\frac12}(z_0))$. Clearly, also this result is sharp, since $f = c v_1^2 \phi_0 - \partial_{v_n} \phi_1$ is a solution for some $c \in \R$.
\end{remark}

\subsection{Higher regularity in $\mathcal{R}^-$}

Now we prove the optimal regularity near $\gamma_0\cup\gamma_-$. 
More precisely, for any $\epsilon\in(0,\frac13)$ and $z_0$ with $(x_0)_n=(v_0)_n=0$, we define the set $\mathcal{H}^{-}_{\rho,\epsilon}(z_0)$ by
\begin{align*}
    \mathcal{H}^{-}_{\rho,\epsilon}(z_0)\coloneqq \left\{(t,x',x_n,v',v_n)\in\mathcal{H}_\rho(z_0) \mathcal\,:\, v_n^{\frac{1}\epsilon}\geq x_n \right\},
\end{align*}
and we show that the following fine regularity result in this set. The main point here is that the expansion from \autoref{lem.exp.nd.a} is improved in the sense that the complicated function in $\overline{\mathcal{T}}_{\lambda,\eps,z_0,a}$ can be replaced by a polynomial since all the elements in the expansion are sufficiently smooth themselves, in $\mathcal{H}^{-}_{\rho,\epsilon}(z_0)$.

\begin{theorem}\label{thm.3reggamma-}
   Let $f$ be a weak solution to 
    \begin{equation*}
\left\{
\begin{alignedat}{3}
(\partial_t+v\cdot\nabla_x)f-\ddiv(A\nabla_vf)&=B\cdot\nabla_vf+F&&\qquad \mbox{in  $\mathcal{H}_R(z_0)$}, \\
f&=g&&\qquad  \mbox{in $\gamma_-\cap \mathcal{Q}_R(z_0)$}
\end{alignedat} \right.
\end{equation*}
with $(x_0)_n=(v_0)_n=0$.
Let $A\in C^{2,\frac12}(\mathcal{H}_R(z_0))$, $F\in C^{1,\frac12}(\mathcal{H}_R(z_0))$, $B\in C^{1,\frac12}(\mathcal{H}_R(z_0))$, and $g\in C^{2,1}(\gamma_-\cap \mathcal{Q}_R(z_0))$. For any $\eps\in(0,1)$ and $z_1\in \mathcal{H}^{-}_{\frac{R}2,\epsilon}(z_0)$, whenever $\mathcal{H}_\rho(z_1) \subset  \mathcal{H}^{-}_{\frac{R}2,\epsilon}(z_0)$, then there is a polynomial $p_{z_1}$ such that 
\begin{align*}
   \|f-p_{z_1}\|_{L^\infty(\mathcal{H}_\rho(z_1))}\leq c\left(\frac{\rho}{R}\right)^{3(1-\epsilon)}\left(\|f\|_{L^\infty(\mathcal{H}_R(z_0))}+R^{3}[F]_{C^{2,\frac12}(\mathcal{H}_R(z_0))}+R^3[g]_{C^{2,1}(\gamma_-\cap \mathcal{Q}_R(z_0))}\right),
\end{align*}
for some constant $c=c(n,\epsilon,\Lambda,\|A\|_{C^{2,\frac12}(\mathcal{H}_R(z_0))},\|B\|_{C^{1,\frac12}(\mathcal{H}_R(z_0))},|v_0|)$.
\end{theorem}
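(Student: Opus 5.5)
The plan is to combine the grazing expansion of \autoref{lem.exp.nd.a} at order $\lambda=3$ (with $\lambda+\eps'<3+\frac12$, $\eps'$ small) with the boundary expansions of \autoref{lem.reg.gamma+} at $\gamma_-$ and interior Schauder estimates. After normalizing $R=1$ and the right-hand side to be $\le 1$, for $z_1\in\mathcal{H}^-_{1/2,\epsilon}(z_0)$ we project $z_1$ to $\widetilde z_1=(t_1,x_1',0,v_1',0)\in\gamma_0$. We then apply \autoref{lem.exp.nd.a} at $\widetilde z_1$, which gives $P_{\widetilde z_1,a}\in\overline{\mathcal{T}}_{3,\eps',\widetilde z_1,a}$ with $|f-P_{\widetilde z_1,a}|\le c r^{3+\eps'}$ on $\mathcal{H}_r(\widetilde z_1)$. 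The key observation is that every building block $\pmb{\Phi}_k$ (namely $\phi_0$, its $v_n$-weighted derivatives, $\psi_0$, $v_n\partial_{v_n}\psi_0$, and $\partial_{v_n}\phi_1$) is smooth inside $\{v_n^{1/\epsilon}\ge x_n\}$. More precisely, the $\phi$-type terms decay like $e^{-v_n^3/(9x_n)}\le e^{-c\,v_n^{3-1/\epsilon}}$ there, which vanishes to infinite order. The $\psi_0$ terms are $C^{3-\epsilon}$ in that region by \autoref{lemma:psi-reg}, and we take that result as given. Hence $P_{\widetilde z_1,a}$ restricted to $\mathcal{H}^-_{\cdot,\epsilon}$ is itself $C^{3-\epsilon}$ with controlled norm, and it can be replaced locally by its Taylor polynomial at $z_1$, at the cost of an error $\rho^{3(1-\epsilon)}$.

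The estimate then splits into scales. Let $d:=v_{1,n}$; in $\mathcal{H}^-$ we have $x_{1,n}\le d^{1/\epsilon}\ll d^3$.

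\textbf{Large scales.} When $\rho\gtrsim d$, the cylinder $\mathcal{H}_\rho(z_1)$ sits inside $\mathcal{H}_{C\rho}(\widetilde z_1)$. There we write $f-p_{z_1}=(f-P)+(P-p_{z_1})$. The first term is $O(\rho^{3+\eps'})$ by the expansion. The second is $O(\rho^{3(1-\epsilon)})$ by the $C^{3-\epsilon}$ regularity of $P$ on $\mathcal{H}^-$, which is available because $\mathcal{H}_\rho(z_1)\subset\mathcal{H}^-$ by assumption.

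\textbf{Small scales.} When $\rho\ll d$, we use the kinetic projection $\widehat z_1$ of $z_1$ onto $\gamma_-$, exactly as in the proof of \autoref{thm.hol12}, and apply \autoref{lem.reg.gamma+} to $f-P$ on $\mathcal{H}_{d/4}(\widehat z_1)$. This is legitimate because $f-P$ solves an equation whose data are controlled: the source terms generated by $P$ (through the coefficients applied to the $\pmb\Phi_k$) are smooth away from $\gamma_0$ with bounds in powers of $d$. We combine this with interior estimates (\autoref{lem.sch.divergence} and higher-order interior Schauder) on $\mathcal{Q}_{r_1}(z_1)$, where $r_1\approx(x_{1,n}/d)^{1/2}$, for points not on $\gamma_-$. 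Rescaling from size $d$ gives $\|f-P-q\|_{L^\infty(\mathcal H_\rho)}\le c(\rho/d)^{3+\eps'}d^{3+\eps'}$, after which the Taylor step is repeated.

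The main obstacle is controlling the regularity of $P_{\widetilde z_1,a}$, and of the source terms it generates, uniformly up to $\gamma_0$ inside $\mathcal{H}^-_{\cdot,\epsilon}$. The $\psi_0$-type pieces are the limiting ones: they decay only polynomially, and the loss $3\to3(1-\epsilon)$ must come exactly from the relation $x_n\le v_n^{1/\epsilon}$. The first thing to try is homogeneity. Since $\psi_0(x,v)=v^2\Psi(x/v^3)$ with $\Psi$ smooth near $0$ (from the appendix formulas), derivatives of order $k$ are bounded by $v^{2-k}(1+\dots)$ plus terms in $x/v^3$. Interpolating with $x\le v^{1/\epsilon}$ should then give the $C^{3-\epsilon}$ bound. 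A secondary issue is that the polynomial coefficients in $P$ depend on the base point $\widetilde z_1$; they must vary compatibly, which follows by comparing expansions at nearby grazing points via \autoref{lem.exp.nd.a}.
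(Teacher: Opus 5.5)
The step that fails is the small-scale one. You apply \autoref{lem.reg.gamma+} to $f-P$ on $\mathcal{H}_{d/4}(\widehat z_1)$, with $d=v_{1,n}$, and claim an error $(\rho/d)^{3+\eps'}d^{3+\eps'}$. But that cylinder reaches $x_n\sim d^3$, the transition zone where $v_n^3/x_n=O(1)$. This lies far outside $\{x_n\le v_n^{1/\eps}\}$, the only set on which you (correctly) observed that the $\phi$-type blocks are flat.

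On that cylinder, homogeneity gives $|D^k\phi_0|\sim d^{\frac12-k}$. The source of $f-P$ then contains, for instance, $-c_0\big[(A_{nn}(z)-A_{nn}(\widetilde z_1))\partial^2_{v_n}\phi_0+(\partial_{v_i}A_{in}+B_n)\partial_{v_n}\phi_0\big]$, which has size $d^{-1/2}$. After rescaling, this contributes $(\rho/d)^{\beta}d^{2}d^{-1/2}=\rho^{\beta}d^{\frac32-\beta}$ to the $\gamma_-$-expansion of order $\beta=3(1-\eps)$, and that is unbounded as $d\to0$.

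Your bound would need the full residual $(\partial_t+v\cdot\nabla_x)P-\ddiv(A\nabla_vP)-B\cdot\nabla_vP-F$ to be small at scale $d$ in the scaled H\"older sense. That means the coefficients $p_0,p_1,p_2,\dots$ of $P$ must exactly compensate the Taylor expansions of $A$ and $B$ at $\widetilde z_1$. \autoref{lem.exp.nd.a} is a compactness statement controlling only $\|f-P\|_{L^\infty}$, and it gives no such algebraic information. Your phrase ``bounds in powers of $d$'' hides exactly this point, and supplying it would need a separate cancellation argument.

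The paper avoids the issue by working on a slightly smaller radius, $\widehat r_1=\frac12v_{1,n}^{\frac12(\frac3\delta-1)}$ with $\delta<1$ close to $1$. Then $\mathcal{H}_{4\widehat r_1}(\widehat z_1)\subset\{x_n\le(16v_n)^{3/\delta}\}$, so by \eqref{est.smooth.phim} all $\phi$-type blocks have bounded derivatives there. Only $\psi_0$ needs a scale-$v_{1,n}$ estimate: the bound $[\psi_0]_{C^{2,1-3\eps}}\lesssim v_{1,n}^{-(1-3\eps)}$ is compensated by $|A_{nn}(z)-A_{nn}(\widetilde z_1)|\lesssim v_{1,n}$. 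Hence the source of $f-P$ is uniformly bounded in $C^{1-3\eps}$.

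The price is the factor $\widehat r_1^{-3(1-\eps)}\|f-P\|_{L^\infty(\mathcal{H}_{4\widehat r_1}(\widehat z_1))}\lesssim\widehat r_1^{-3(1-\eps)}v_{1,n}^{3-\frac32\eps}$. This stays bounded for $\delta$ near $1$ only because the grazing expansion has order strictly above the target $3(1-\eps)$.

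Three smaller points. First, under the stated hypotheses ($A\in C^{2,\frac12}$, $B\in C^{1,\frac12}$) you cannot invoke \autoref{lem.exp.nd.a} with $\lambda=3$, which needs $A\in C^{\frac52+\eps'}$ and $B\in C^{\frac32+\eps'}$. The paper uses $\lambda=2$ with exponent $1-\frac32\eps$, i.e.\ order $3-\frac32\eps$. This choice also forces $p_4=0$, so $v_n\partial_{v_n}\psi_0$ never appears; that term is not covered by \autoref{lemma:psi-reg}, which in any case gives $C^{3-3\eps}$, not $C^{3-\eps}$, on $\{x\le v^{1/\eps}\}$. Second, your large-scale case is essentially empty, since $\mathcal{H}_\rho(z_1)\subset\mathcal{H}^-_{\frac R2,\eps}(z_0)$ already forces $\rho\lesssim v_{1,n}^{\frac12(\frac1\eps-1)}$. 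Third, no compatibility between expansions at different base points is needed.
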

\begin{proof}
    We may assume $R=1$ and 
    \begin{align*}
        \|f\|_{L^\infty(\mathcal{H}_1(z_0))}+[F]_{C^{2,\frac12}(\mathcal{H}_1(z_0))}+[g]_{C^{3}(\gamma_-\cap \mathcal{Q}_1(z_0))} \leq 1.
    \end{align*}
    For simplicity of notation, we omit the dependence of the constant when it depends only on the data such as $n,\epsilon,\Lambda,\|A\|_{C^{2,\frac12}(\mathcal{H}_1(z_0))},\|B\|_{C^{1,\frac12}(\mathcal{H}_1(z_0))}$.

    We fix  $z_1=(t_1,x_1',x_{1,n},v_1',v_{1,n})\in \mathcal{H}^{-}_{\frac12,\epsilon}(z_0)$, which implies $x_{1,n}\leq v_{1,n}^{\frac1\epsilon}$, and assume $\mathcal{H}_{\rho}(z_1)\subset \mathcal{H}^{-}_{\frac12,\eps}(z_0)$. 

    First, write $ r_1\coloneqq\frac1{2}\left(\frac{x_{1,n}}{v_{1,n}}\right)^{\frac12}$ to observe that
    \begin{align*}
    \mathcal{H}_{r_1}(z_1)\cap\gamma =\emptyset,\quad \mathcal{H}_{2r_1}(z_1)\subset \mathcal{H}_{8r_1}(\widehat{z}_1)\subset \mathcal{H}_{\frac{v_{1,n}}4}(\widehat{z}_1),\quad \mathcal{H}_{\frac{v_{1,n}}4}(\widehat{z}_1)\subset \mathcal{H}_{v_{1,n}}(\widetilde{z}_1),
\end{align*}
where we write 
\begin{align*}
    \widehat{z}_1\coloneqq \left(t_1-\frac{x_{1,n}}{v_{1,n}},x_1'-\frac{x_{1,n}}{v_{1,n}}v_1',0,v_1\right)\quad\text{and}\quad \widetilde{z}_1\coloneqq \left(t_1-\frac{x_{1,n}}{v_{1,n}},x_1'-\frac{x_{1,n}}{v_{1,n}}v_1',0,v_1',0\right).
\end{align*}
    By \autoref{lem.exp.nd.a}, there is $P_{\widetilde{z}_1}\in \mathcal{\overline{T}}_{2,(1-\frac{3\eps}2),\widetilde{z}_1,\widetilde{a}_1}$ with $\widetilde{a}_1\coloneqq\sqrt{(A(\widetilde{z}_1))_{n,n}} $ such that for any $r\leq\frac1{32}$
\begin{align}\label{ineq1.3reggamma-}
    \|f-P_{\widetilde{z}_1}\|_{L^\infty(\mathcal{H}_r(\widetilde{z}_1))}\leq cr^{3(1-\frac{\eps}{2})}.
\end{align}
Indeed, $P_{\widetilde{z}_1}$ is of the form 
\begin{align*}
    P_{\widetilde{z}_1}\coloneqq p_{\widetilde{z}_1}+\sum_{i=0}^2p_i{\pmb{\Phi}}_{\widetilde{a}_1,i}+b_3{\pmb{\Phi}}_{\widetilde{a}_1,3}+b_5{\pmb{\Phi}}_{\widetilde{a}_1,5},
\end{align*}
where $p_{\widetilde{z}_1},p_i\in \cP_2$ and $p_{1}(\widetilde{z}_1)=p_{2}(\widetilde{z}_1)=Dp_{2}(\widetilde{z}_1)=0$.

Next, let us choose a constant $\delta\in(\frac{\eps}3,1)$, which will be determined later (see \eqref{choi.delta.3reggamma-}). Then we observe for $\widehat{r}_1\coloneqq\frac{1}{2}({v_{1,n}})^{\frac12(\frac{3}{\delta}-1)}$, that there is a constant small $c_\delta=c_\delta(\delta)$ such that when $v_{1,n}\leq c_\delta$, then
\begin{align}\label{v1n.ass.3reggamma-}
    v_{1,n}-4\widehat{r}_1>\frac{v_{1,n}}{2}.
\end{align}
We now assume \eqref{v1n.ass.3reggamma-} so that we have
    \begin{align}\label{delta.cond.3reggamma-}
       (4 \widehat{r}_1)^2v_{1,n}+(4\widehat{r}_1)^3\leq \left(16(v_{1,n}-\widehat{r}_1)\right)^{\frac3{\delta}}\quad\text{and}\quad \mathcal{H}_{{4\widehat{r}_1}}(\widehat{z}_1)\subset \{z\in\mathcal{R}^-\,:\,x_n\leq (16v_n)^{\frac3{\delta}}\}.
    \end{align}
     Then, by \eqref{est.smooth.phim} together with  \eqref{delta.cond.3reggamma-}, we get 
\begin{align}\label{ineq.phi00.3reggamma-}
    \|D^2(\pmb{\Phi}_{\widetilde{a}_1,i})\|_{L^\infty(\mathcal{H}_{4\widehat{r}_1}(\widehat{z}_1))}\leq c
\end{align}
for any $i\in\{0,1,2,5\}$. It remains to estimate $\pmb{\Phi}_{\widetilde{a}_1,i}$. To do so, we use that since $\psi_0$ is a solution to 
 \begin{equation*}
\left\{
\begin{alignedat}{3}
(\partial_t+v\cdot\nabla_x)\psi_0-\Delta_v\psi_0&=0&&\qquad \mbox{in  $\bbR\times \{x_n>0\}\times \bbR^n$}, \\
\psi_0&=c_0v_n^2&&\qquad  \mbox{in $\bbR\times \{x_n=0\}\times \{v_n>0\}$},
\end{alignedat} \right.
\end{equation*}
by \autoref{lem.reg.gamma+}, we get for $\delta'\coloneqq 1-3\eps$,
\begin{align*}
    [\psi_0]_{C^{2,\delta'}(\mathcal{H}_{8\widehat{r}_1}(\widehat{z}_1))}\leq [\psi_0]_{C^{2,\delta'}(\mathcal{H}_{\frac{v_{1,n}}{4}}(\widehat{z}_1))}\leq c({v_{1,n}})^{-(2+\delta')}\|\psi_0\|_{L^\infty(\mathcal{H}_{\frac{v_{1,n}}2}(\widehat{z}_1))}.
\end{align*}

Since $\mathcal{H}_{\frac{v_{1,n}}2}(\widehat{z}_1)\subset  \{x_{n}\leq (2v_n)^3\}$, using \eqref{psi0.asy.v}, we have $\|\psi_0\|_{L^\infty(\mathcal{H}_{\frac{v_{1,n}}2}(\widehat{z}_1))}\leq cv_{1,n}^2$. Thus we deduce
\begin{align*}
    [\psi_0]_{C^{2,\delta'}(\mathcal{H}_{4\widehat{r}_1}(\widetilde{z}_1))}\leq c(v_{1,n})^{-\delta'},
\end{align*}
which implies 
\begin{align}\label{ineq.psi.3reggamm-}
    [{\pmb{\Phi}}_{\widetilde{a}_1,3}]_{C^{2,\delta'}(\mathcal{H}_{4\widehat{r}_1}(\widetilde{z}_1))}\leq c(v_{1,n})^{-\delta'},
\end{align}
Furthermore, we note 
\begin{align*}
    &(\partial_t+v\cdot\nabla_x){\pmb{\Phi}_{\widetilde{a}_1,3}}-\ddiv(A\nabla_v{\pmb{\Phi}_{\widetilde{a}_1,3}})-B\nabla_v{\pmb{\Phi}_{\widetilde{a}_1,3}}\\
    &=\partial_{v_n}(A_{n,n}(z)-A_{n,n}(\widetilde{z}_1))\partial_{v_n}{\pmb{\Phi}_{\widetilde{a}_1,3}}+(A_{n,n}(z)-A_{n,n}(\widetilde{z}_1))\partial^2_{v_n}{\pmb{\Phi}_{\widetilde{a}_1,3}}-B\partial_{v_n}{\pmb{\Phi}_{\widetilde{a}_1,3}},
\end{align*}
and 
\begin{align*}
    \|A_{n,n}(z)-A_{n,n}(\widetilde{z}_1)\|_{L^\infty(\mathcal{H}_{\widehat{r}_1}(4\widehat{z}_1))}\leq \|A_{n,n}(z)-A_{n,n}(\widetilde{z}_1)\|_{L^\infty(\mathcal{H}_{v_{1,n}}(\widetilde{z}_1))} \leq cv_{1,n},
\end{align*}
where we have used the Lipschitz regularity of $A$ and $\mathcal{H}_{v_{1,n}}(\widetilde{z}_1)\supset \mathcal{H}_{8r_1}(\widehat{z}_1)$.
Therefore, using these together with \eqref{ineq.psi.3reggamm-}, we have 
\begin{align*}
    [(\partial_t+v\cdot\nabla_x){\pmb{\Phi}_{\widetilde{a}_1,3}}-\ddiv(A\nabla_v{\pmb{\Phi}_{\widetilde{a}_1,3}})-B\nabla_v{\pmb{\Phi}_{\widetilde{a}_1,3}}]_{C^{1-3\eps}(\mathcal{H}_{4\widehat{r}_1}(\widehat{z}_1))}\leq c(\eps),
\end{align*}
Thus, with this, \eqref{ineq.phi00.3reggamma-} and  $\|p_{\widetilde{z}_1}\|_{L^\infty(\mathcal{H}_{\frac1{32}}(\widetilde{z}_1))}\leq c$ by \eqref{ineq1.3reggamma-}, we have 
\begin{align}\label{ineq.Pz1.3reggamma-}
     [(\partial_t+v\cdot\nabla_x)P_{\widetilde{z}_1}-\ddiv(A\nabla_vP_{\widetilde{z}_1})-B\nabla_vP_{\widetilde{z}_1}]_{C^{1-3\eps}(\mathcal{H}_{4\widehat{r}_1}(\widehat{z}_1))}\leq c(\eps).
\end{align}
Since $\widetilde{f}\coloneqq f-P_{\widetilde{z}_1}$ satisfies
\begin{align*}
    (\partial_t+v\cdot\nabla_x)\widetilde{f}-\ddiv(A\nabla_v\widetilde{f})=B\cdot \nabla_v\widetilde{f}+F+\widetilde{F}\quad\text{in }\mathcal{H}_{4\widehat{r}_1}(\widehat{z}_1)
\end{align*}
with $\widetilde{F}\coloneqq (\partial_t+v\cdot\nabla_x)P_{\widetilde{z}_1}-\ddiv(A\nabla_vP_{\widetilde{z}_1})-B\nabla_vP_{\widetilde{z}_1}$ and $\widetilde{f}=g-\widetilde{p}$ on $\gamma_-\cap\mathcal{Q}_{4\widehat{r}_1}(\widehat{z}_1)$ for some $\widetilde{p}\in\cP_2$, by applying \autoref{lem.reg.gamma+}, there is a polynomial $p_{\widehat{z}_1}\in\cP_2$ such that for any $r\leq 4\widehat{r}_1$,
\begin{align}\label{exp1.3reggamma-}
    \|f-P_{\widetilde{z}_1}-p_{\widehat{z}_1}\|_{L^\infty(\mathcal{H}_{r}(\widehat{z}_1))}\leq c\left(\left(\frac{r}{\widehat{r}_1}\right)^{3-3\eps}\|f-P_{\widetilde{z}_1}\|_{L^\infty(\mathcal{H}_{4\widehat{r}_1}(\widetilde{z}_1))}+r^{3-3\eps}\right),
\end{align}
where we have also used \eqref{ineq.Pz1.3reggamma-}. This gives 
\begin{align*}
    \|p_{\widehat{z}_1}\|_{L^\infty(\mathcal{H}_{4\widehat{r}_1}(\widehat{z}_1))}\leq c(\|f-P_{\widetilde{z}_1}\|_{L^\infty(\mathcal{H}_{4\widehat{r}_1}(\widetilde{z}_1))}+r_1^{3-3\eps}),
\end{align*}
which implies 
\begin{align*}
    [p_{\widehat{z}_1}]_{C^{k,\eps}(\mathcal{H}_{4\widehat{r}_1}(\widetilde{z}_1))}\leq \frac{c}{(\widehat{r}_1)^{k+\eps}}(\|f-P_{\widetilde{z}_1}\|_{L^\infty(\mathcal{H}_{4\widehat{r}_1}(\widetilde{z}_1))}+\widehat{r_1}^{3-3\eps}).
\end{align*}
On the other hand, we observe that $\widehat{f}=f-P_{\widetilde{z}_1}-p_{\widehat{z}_1}$ satisfies
\begin{align*}
    (\partial_t+v\cdot\nabla_x)\widehat{f}-\ddiv(A\nabla_v\widehat{f})=B\cdot \nabla_v\widehat{f}+F+\widetilde{F}+\widehat{G},
\end{align*}
where $\widehat{G}=-\partial_tp_{\widehat{z}_1}+\ddiv(A\nabla_vp_{\widehat{z}_1})+B\cdot\nabla_vp_{\widehat{z}_1}$. Thus, by \autoref{lem.reg.gamma+}, we have for any $\rho\leq r_1$,
\begin{equation}\label{exp2.3reggamma-}
\begin{aligned}
    \|f-P_{\widetilde{z}_1}-p_{\widehat{z}_1}-p_{z_1}\|_{L^\infty(\mathcal{H}_{\rho}(z_1))}&\leq c\left(\frac{\rho}{{r}_1}\right)^{3-3\eps}\|f-P_{\widetilde{z}_1}-p_{\widehat{z}_1}\|_{L^\infty(\mathcal{H}_{r_1}(z_1))}\\
    &\quad+\rho^{3-3\eps}\left[1+\frac{c}{(\widehat{r}_1)^{3-3\eps}}\|f-P_{\widetilde{z}_1}\|_{L^\infty(\mathcal{H}_{4\widehat{r}_1}(z_1))}\right],
\end{aligned}
\end{equation}
where we have also used \eqref{ineq.Pz1.3reggamma-} and  $[\widehat{G}]_{C^{1-3\eps}(\mathcal{H}_{r_1}(z_1))}\leq [\widehat{G}]_{C^{1-3\eps}(\mathcal{H}_{4\widehat{r}_1}(\widehat{z}_1))}\leq \|p_{\widehat{z}_1}\|_{C^{2,1-3\eps}(\mathcal{H}_{4\widehat{r}_1}(\widehat{z}_1))}$. Thus, combining \eqref{exp1.3reggamma-} with $r=4r_1$, \eqref{exp2.3reggamma-} yields
\begin{align}\label{exp3.3reggamma-}
    \|f-P_{\widetilde{z}_1}-p_{\widehat{z}_1}-p_{z_1}\|_{L^\infty(\mathcal{H}_{\rho}(z_1))}\leq c\left( \left(\frac{\rho}{\widehat{r}_1}\right)^{3-3\eps}\|f-P_{\widetilde{z}_1}\|_{L^\infty(\mathcal{H}_{4\widehat{r}_1}(z_1))}+\rho^{3-3\eps}\right),
\end{align}
where we have also used the fact that $\mathcal{H}_{r_1}(z_1)\subset \mathcal{H}_{4{r}_1}(\widehat{z}_1)$. Lastly, combining this, \eqref{ineq1.3reggamma-} with $r=v_{1,n}$ and the fact that $\mathcal{H}_{4\widehat{r}_1}(\widehat{z}_1)\subset \mathcal{H}_{4v_{1,n}}(\widetilde{z}_1)$ leads to 
\begin{align*}
     \|f-P_{\widetilde{z}_1}-p_{\widehat{z}_1}-p_{z_1}\|_{L^\infty(\mathcal{H}_{\varrho}(z_1))}\leq c\left(\left(\frac{\rho}{\widehat{r}_1}\right)^{3(1-\eps)}v_{1,n}^{3(1-\frac\eps2)}+\rho^{3(1-\eps)}\right),
\end{align*}
where $c=c(\delta)$. Now by taking $\delta=\delta(\eps)$ very close to 1 so that 
\begin{align}\label{choi.delta.3reggamma-}
-\frac12(\frac3{\delta}-1)(3-3\eps)+(3-\frac32\eps)\geq0,    
\end{align}
we have $\widehat{r}_1^{-3(1-\eps)}v_{1,n}^{3-\frac32\eps}\leq c$, 
as $\widehat{r}_1=\frac{1}{2}({v_{1,n}})^{\frac12(\frac{3}{\delta}-1)}$. This implies that 
\begin{align*}
     \|f-P_{\widetilde{z}_1}-p_{\widehat{z}_1}-p_{z_1}\|_{L^\infty(\mathcal{H}_{\varrho}(z_1))}\leq c\rho^{3(1-\eps)}.
\end{align*}
Now using \eqref{est.smooth.phim} and \eqref{taylor.psi0} together with the fact that $\mathcal{H}_{\rho}(z_1)\subset \mathcal{H}^-_{1,\eps}(z_0)$, there is the second order Taylor polynomial of $P_{\widetilde{z_1}}$, $\widetilde{p}_{\widetilde{z}_1}$ such that 
\begin{align*}
    \|P_{\widetilde{z}_1}-\widetilde{p}_{\widetilde{z}_1}\|_{L^\infty(\mathcal{H}_\rho(z_1))}\leq c\rho^{3-3\epsilon}.
\end{align*}

Therefore, we have 
\begin{align*}
    \|f-\widetilde{p}_{\widetilde{z}_1}-p_{\widehat{z}_1}-p_{z_1}\|_{L^\infty(\mathcal{H}_\rho(z_1))}\leq c\rho^{3(1-\eps)},
\end{align*}
whenever $v_{1,n}\leq c_\delta$ by \eqref{v1n.ass.3reggamma-}. However,  if $v_{1,n}> c_\delta$, then by \autoref{lem.reg.gamma+}, we get the desired estimate by a combination of interior regularity estimates and the boundary estimates at $\gamma_-$, as $z_1$ is away from $\gamma_0$ and $c_\delta>0$ only depends on the constant $\eps$. More precisely, as in \eqref{exp1.3reggamma-} and \eqref{exp2.3reggamma-} with $P_{\widetilde{z}_1}=0$ and $\widehat{r}_1=\frac{v_{1,n}}{8}$, we get \eqref{exp3.3reggamma-} with $P_{\widetilde{z}_1}=0$ and $\widehat{r}_1=\frac{v_{1,n}}{8}$. Then the fact that $v_{1,n}\geq c_\delta$ yields the desired estimate. This completes the proof.
\end{proof}
We end this section with an explicit example verifying the optimality of \autoref{thm.3reggamma-} (and therefore of \autoref{thm:C3}).
\begin{example}\label{ex.sharp.3eps}
    Let $n=1$ and $f\coloneqq\psi_0$. Then we have 
    \begin{equation*}
\left\{
\begin{alignedat}{3}
(\partial_t+v\partial_x)f-\partial_{vv}f&=0&&\qquad \mbox{in  $\mathcal{H}_1$}, \\
f&=c_0v^2&&\qquad  \mbox{in $\gamma_-\cap\mathcal{Q}_1$}.
\end{alignedat} \right.
\end{equation*}
By \eqref{psi.sharp}, we have to assume $D\subset \{x\leq v^{\frac1\eps}\}$ to prove $f\in C^{3-3\eps}(D)$. This implies that our result given in \autoref{thm.3reggamma-} is optimal.
\end{example}

\section{Optimal boundary regularity with diffuse reflection condition}
\label{sec:diffuse}
In this section, we prove global regularity with diffuse reflection condition in the halfspace $\Omega = \{ x_n > 0 \}$. Throughout this section, we denote
\begin{align}\label{defn.nf}
    \mathcal{N}f(t,x,v) \coloneqq \mathcal{M}(z)g(t,x') \coloneqq \mathcal{M}(z) \int_{\bbR^n\cap\{w_n<0\}}f(t,x',0,w',w_n)(w_n)_-\,dw
\end{align}
and assume that $\mathcal{M}(z)$ has fast decay as $|v|\to\infty$ and is sufficiently smooth. Later, by flattening the boundary and a covering argument, we can derive global estimates for a general domain. Moreover, \autoref{example:diffuse-optimal} shows that the $C^{\frac{1}{2}}$ regularity is optimal.

The following is the main result of this section and it is our most general result on diffuse reflection and yields \autoref{thm:main-diffuse}. We define 
\begin{align*}
    \mathcal{H}_{R,\infty} = (-R^2,R^2) \times \big(B_{R^3}'\times (0,R^3)\big) \times \bbR^n.
\end{align*}

\begin{theorem}
\label{thm.diff}
Let $f$ be a weak solution to
    \begin{equation}\label{sub.eq.diffuse}
\left\{
\begin{alignedat}{3}
(\partial_t+v\cdot\nabla_x)f-\ddiv(A\nabla_vf)&=B\cdot\nabla_vf+F&&\qquad \mbox{in  $\mathcal{H}_{2,\infty}$}, \\
f&=\mathcal{N}f&&\qquad  \mbox{in $\gamma_-\cap \overline{\mathcal{H}_{2,\infty}}$},
\end{alignedat} \right.
\end{equation}
where $\mathcal{N}f$ is defined as in \eqref{defn.nf}. Let us fix $p\geq\frac12$ and write for any $v_0\in\bbR$, $r_0\coloneqq\frac{\langle v_0\rangle ^{-p}}{64}$. Let $\eps \in (0,1)$, $\lambda \in \N$, and $M >0$. Then we have the following.
\begin{itemize}
    \item Let us fix $\eps \in (0,1)$ and assume that for some $c_B, M > 0$,
    \begin{align}
    \label{b.infty.nf}
           \sup_{z_0\in\mathcal{H}_{2,\infty}}\left( \langle v_0\rangle^{M}\|\mathcal{M}\|_{C^{\frac12+\eps}(\mathcal{H}_{r_0}(z_0))} +  \|A\|_{C^{\eps}(\mathcal{H}_{r_0}(z_0))}+\langle v_0\rangle^{-2}\|B\|_{L^\infty(\mathcal{H}_{r_0}(z_0))}\right)\leq c_B.
        \end{align}
        Then there is $M_0=M_0(n,p) > 0$ such that if $M > M_0$, then for any $z_0\in\mathcal{H}_{1,\infty}$, we have 
\begin{align*}
    \|f\|_{C^{\frac12}(\mathcal{H}_{r_0}(z_0))}&\leq c\langle v_0\rangle(\|f\|_{L^{\infty}(\mathcal{H}_{2r_0}(z_0))}+r_0^2\|F\|_{L^\infty(\mathcal{H}_{2r_0}(z_0))})\\
    &\quad+c\langle v_0\rangle\|\mathcal{M}\|_{C^{\frac12+\eps}(\mathcal{H}_{r_0}(z_0))}\left(\|\langle \cdot\rangle^{q}f(t,x,\cdot)\|_{L^\infty(\mathcal{H}_{2,\infty})}+\left\|\langle\cdot\rangle^{q}F(t,x,\cdot)\right\|_{L^{\infty}(\mathcal{H}_{2,\infty})}\right),
\end{align*}
for some constant $c=c(n,\Lambda,c_B,p)$ and $q=q(n,p)$.
\item Let us fix $\lambda\in\N$, $\eps\in(0,1)$ and assume that for some $c_B, M > 0$,
\begin{align}
\label{b.hol.nf}
            \sup_{z_0\in\mathcal{H}_{2,\infty}}\left( \langle v_0\rangle^{M}\|\mathcal{M}\|_{C^{6\lambda+\frac76}(\mathcal{H}_{r_0}(z_0))} +  \|A\|_{C^{6\lambda+1}(\mathcal{H}_{r_0}(z_0))}+\langle v_0\rangle^{-2}\|B\|_{C^{6\lambda}(\mathcal{H}_{r_0}(v_0))}\right)\leq c_B.
    \end{align}
        Then, there is $M_1=M_1(n,\lambda,\eps,p)$, such that if $M > M_1$, then for any $z_0\in\mathcal{H}_{1,\infty}$
        and $\mathcal{H}_{r_0}(z_0)\cap \gamma_0=\emptyset$, we have 
        \begin{align*}
            \|f\|_{C^{\lambda,\eps}(\mathcal{H}_{r_0}(z_0))}\leq c\left(\|\langle \cdot\rangle^{q}f(t,x,\cdot)\|_{L^\infty(\mathcal{H}_{2,\infty})}+\left\|\langle\cdot\rangle^{q}F(t,x,\cdot)\right\|_{C^{6\lambda}(\mathcal{H}_{2,\infty})}\right),
        \end{align*}
        for some constant $c=c(n,\Lambda,\lambda,\eps,c_B,p)$ and $q=q(n,\lambda,\eps,p)$, where we write
        \begin{align}\label{space.hol.weight}
            \left\|\langle\cdot\rangle^{q}F(t,x,\cdot)\right\|_{C^{6\lambda}(\mathcal{H}_{2,\infty})}\coloneqq \sup_{z_0\in\mathcal{H}_{2,\infty}}\langle v_0\rangle^q\|F\|_{C^{6\lambda}(\mathcal{H}_{1}(z_0)\cap\mathcal{H}_{2,\infty})}.
        \end{align}
\end{itemize}
\end{theorem}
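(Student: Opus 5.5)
The plan is to reduce both parts to the in-flow results of Section~\ref{sec:inf-low}, applied with boundary datum $g=\mathcal{N}f=\mathcal{M}(z)\,h(t,x')$. Here $h(t,x')=\int f(t,x',0,w)(w_n)_-\,dw$ depends only on the tangential variables $(t,x')$. The weight $r_0=\langle v_0\rangle^{-p}/64$ is chosen so that on $\mathcal{H}_{r_0}(z_0)$ the transport term $v_0\cdot\nabla_x$ is comparable to a unit-size problem. After the kinetic change of variables centred at $z_0$, this gives rescaled coefficients whose norms are controlled by \eqref{b.infty.nf} and \eqref{b.hol.nf} up to powers of $\langle v_0\rangle$. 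The factor $\langle v_0\rangle$ in the final estimates should come from the rescaling, since $r_0^{-1/2}\lesssim\langle v_0\rangle^{p/2}$ and one adjusts with $p$.

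\textbf{Part (i).} The key is to show $h\in C^{1/2+\eps'}$ in $(t,x')$, with a weighted bound. First, by the global $C^\alpha$ estimates of \cite{Sil22,Zhu24} (localized, with $\mathcal{N}f$ as an $L^\infty$ datum with decay), $f$ is $C^{\alpha}$ up to the boundary with weighted control. Second, split the $w$-integral into $\{|w_n|<\delta\}$ and $\{|w_n|\ge\delta\}$, and the second piece further into $|w|$ bounded and $|w|$ large.
\begin{itemize}
\item On $\{|w_n|<\delta\}$, the weight $(w_n)_-\le\delta$ together with the $C^\alpha$ bound gives an increment contribution $\lesssim\delta\,|\tau|^{\alpha}$.
\item On $\{|w_n|\ge\delta\}$ and $|w|$ bounded, the points lie in $\gamma_+$ away from $\gamma_0$. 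There \autoref{lem.reg.gamma+} gives $C^{1}$ bounds in $(t,x')$, scaling like $\delta^{-k}$ (cf. \autoref{lemma.1D.grax.est2}, with the factor $(v_0/R)^{k/2}$).
\item The large-$|w|$ tail is controlled by the fast decay $\langle\cdot\rangle^{q}f$.
\end{itemize}
Optimizing $\delta$ against the increment size $|\tau|$ yields $h\in C^{\beta}$ with $\beta>1/2$ provided $\alpha$ is not too small. If $\alpha$ is too small, I would bootstrap instead: insert the resulting $C^{\beta}$ datum into \autoref{thm.hol12}, which makes $f$ exactly $C^{1/2}$ on $\gamma_+$ near $\gamma_0$. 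Repeating the splitting now gives $h\in C^{1/2+1/2\cdot\theta}$ for some $\theta>0$, since the weight $(w_n)_-$ adds a full extra power. Then $\mathcal{N}f\in C^{1/2+\eps'}$ by the product rule with $\mathcal{M}$, and \autoref{thm.hol12} on $\mathcal{H}_{2r_0}(z_0)$ gives the stated estimate. The decay condition $M>M_0$ absorbs the polynomial losses in $v$.

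\textbf{Part (ii).} This is the main obstacle: proving $C^\infty$-type regularity of $h$, despite $f$ being only $C^{1/2}$ at $\gamma_0$. I would follow the strategy sketched in the introduction. Take tangential increments $\delta_\tau f$ in the directions $(t,x')$. These solve an equation of the same type, with right-hand side $\delta_\tau F+\mathrm{div}_v((\delta_\tau A)\nabla_v f)+(\delta_\tau B)\cdot\nabla_v f$ and boundary datum $\delta_\tau\mathcal{N}f$. By the expansion \autoref{lem.exp.nd.a}, $\nabla_v f\sim\partial_{v_n}\pmb\Phi_0\in L^{4+\eps_0}$ near $\gamma_0$, and the source is of the form treated in \autoref{lem.bdry.hol} with $G\partial_{v_n}\pmb\Phi_0$. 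Hence $\delta_\tau f/|\tau|^{s}$ is $C^{\beta}$, and the argument of part (i) upgrades this to $C^{1/2}$. Summing over increments yields $h\in C^{1/2+\cdots}$, with each step gaining roughly $1/2$ or a fixed fraction of a derivative in $(t,x')$.

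Induct on the number of increments. At each stage, expand the $k$-th increment equation using \autoref{lem.exp.nd.a}, with the $\pmb\Phi_k$ source terms absorbed via \autoref{lem.bdry.hol.final}. The loss of a fixed fraction (about $1/6$ in the kinetic scaling) per order is what forces the $6\lambda$ regularity in \eqref{b.hol.nf}. Once $h\in C^{\lambda+\eps+\cdots}$ in $(t,x')$, hence $\mathcal{N}f\in C^{\lambda,\eps}$, apply \autoref{lem.reg.gamma+} on $\mathcal{H}_{r_0}(z_0)$, which is away from $\gamma_0$, together with interior estimates to conclude. The delicate point is bookkeeping: tracking the growing powers of $\langle v_0\rangle$ and the weights $q$ through the induction, and checking that the unbounded terms $\nabla_v\delta^{j}f$ stay in the admissible $L^p$ classes at every step.
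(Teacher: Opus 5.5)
Your part (i) is essentially the paper's argument. In \eqref{widehatg.hol} the paper splits the $w$-integral at $|w_n|\sim r+r_{w'}$. It uses only $L^\infty$ bounds of $f$ on the thin part, and kinetic H\"older bounds on $\gamma_+$ at scale $|w_n|$, with the corresponding blow-up, on the rest. This gives $\cN f\in C^{\frac12+}$, and then \autoref{thm.hol12} is applied. Three corrections are needed.

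First, the weight gives $\int_{-\delta}^{0}|w_n|\,dw_n\sim\delta^2$, and the thin part has $|w_n|\lesssim r^{2/3}\langle w'-v_0'\rangle^{1/3}$, so it contributes $O(r^{4/3})$ from $L^\infty$ alone. The exponent is therefore automatically above $\frac12$, no $C^\alpha$ input is needed, and your bootstrap branch is superfluous. As written it is also circular, since \autoref{thm.hol12} needs a datum above $C^{\frac12}$.

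Second, with $A\in C^\eps$ you do not have Euclidean $C^1$ bounds in $(t,x')$ on $\gamma_+$, because $\partial_{x'}$ has kinetic order $3$. You must use kinetic $C^{1+\eps}$ seminorms and note that a tangential increment of kinetic size $r$ at velocity $w$ has kinetic length $\lesssim r^{2/3}\langle v_0\rangle^{1/3}\langle w\rangle^{1/3}$, as in \eqref{estj1.diff}.

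Third, the factor $\langle v_0\rangle$ comes from $\mathbf{B}(z_0,R)^{\frac12}$ in \autoref{thm.hol12} with $\|B\|_{L^\infty}\le c_B\langle v_0\rangle^2$, not from $r_0^{-1/2}$.

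Part (ii) follows the paper's strategy only in outline, and the steps you actually specify fail. The quotients $\delta_\tau f/|\tau|^s$ solve in-flow problems whose source contains $\ddiv\bigl(\tfrac{\delta_\tau A}{|\tau|^s}\nabla_v f_\tau\bigr)$ with $|\nabla_v f|\sim\mathrm{dist}_{\mathrm{kin}}^{-1/2}$. This is an unbounded divergence-form term with no $\pmb{\Phi}_k$-structure, and none of the tools you cite applies to it:
\begin{itemize}
\item \autoref{thm.hol12}, your ``argument of part (i)'', requires a bounded source and no divergence term. At this critical homogeneity one should only expect $C^{\frac12-\delta}$, not $C^{\frac12}$.
\item \autoref{lem.exp.nd.a} and \autoref{lem.bdry.hol.final} require bounded or exactly $\pmb{\Phi}_k$-structured sources, so you cannot ``expand the $k$-th increment equation''.
\item The source is not of the form $\ddiv(G\,\partial_{v_n}\pmb{\Phi}_0)$ with $G\in L^\infty$ needed in \autoref{lem.bdry.hol}. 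Indeed $\partial_{v_n}\pmb{\Phi}_0$ is exponentially small near $\gamma_-$ while $\nabla_v f$ is not, and $\nabla_{v'}f$ has no such form.
\end{itemize}
Moreover, every H\"older estimate for a quotient needs its $L^\infty$ norm as input, which you only have for $s<\frac16$. How $s$ is raised to $1$, and then to higher orders, is only gestured at.

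The paper supplies three ingredients that are missing here:
\begin{itemize}
\item \autoref{lem.bdry.hol.bdep}: a $C^{\frac12-\delta}$ estimate up to $\gamma_0$ for sources that are merely in $L^{8-\delta_0}$ in $(x_n,v_n)$ and blow up like $\rho^{-\frac12-\delta}$ off $\gamma_0$. It is proved by Campanato comparison with \autoref{thm.hol12}.
\item The pointwise bound \eqref{gra.point.diff.las0t} on $\nabla_v\delta_{h,l}f/|h|^l$. It is derived from the previous level's bound \eqref{ind.level} and \autoref{lem.gra.bdep}, not from the expansion, and it places the new source in $L^{8-\delta_0}$.
\item The nested iteration. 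Boundedness of $\delta_{h,l+1}f/|h|^{l+\alpha}$ gives a $C^{7/6}$ datum via \eqref{widehatg.hol}, hence $C^{\frac12-\delta}$ of the quotient by \autoref{lem.bdry.hol.bdep}, hence boundedness with $2\alpha$ in place of $\alpha$. This is repeated until $\alpha=1$.
\end{itemize}

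Finally, the $6\lambda$ in \eqref{b.hol.nf} is $3k$ with $k=2\lambda$. One needs $2\lambda$ tangential derivatives of $g$ in $C^{7/6}$ to reach $C^{\lambda,\eps}$ via \eqref{fund.lambda.eps}, and each costs three kinetic orders of the data by \eqref{partialt}. It is not a loss of $\frac16$ per order.
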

Note that we assume $p\geq \frac12$ to ensure that $\mathcal{H}_{4r_0}(z_0)\subset \mathcal{H}_{2,\infty}$ for any $z_0\in\mathcal{H}_{1,\infty}$. In addition, we will select the constant $p\geq\frac12$ in the proof of \autoref{thm:main-diffuse} given in Section \ref{sec:main-proof}, to satisfy assumptions such as \eqref{b.infty.nf} and \eqref{b.hol.nf} after flattening arguments (see the proof of \autoref{thm:main-diffuse} for more details).
           
The key difficulty of the proof is to obtain regularity of the boundary datum $\mathcal{N}f$, which will then allow us to apply known regularity results about in-flow (see \cite{RoWe25}). The following is the main result in this direction.

\begin{proposition}\label{lem.reg.nf.diff}
    Let us fix $p\geq\frac12$ and write $r_0\coloneqq \frac{\langle v_0\rangle^{-p}}{64}$ for any $v_0\in \mathcal{H}_{1,\infty}$. Let $f$ be a weak solution to \eqref{sub.eq.diffuse}. Then we have the following:
    \begin{itemize}
        \item There is $M_0 = M_0(n,p) > 0$ such that if \eqref{b.infty.nf} holds for some $\eps\in(\frac12,1)$ and $M > M_0$, then for any $z_0\in \mathcal{H}_{1,\infty}$,
\begin{align}\label{second.goal.diff}
    [\cN f]_{C^{\eps}(\gamma_-\cap\mathcal{Q}_{r_0}(z_0))}\leq c\left(\|\langle \cdot\rangle^{q}f(t,x,\cdot)\|_{L^\infty(\mathcal{H}_{2,\infty})}+\left\|\langle\cdot\rangle^{q}F(t,x,\cdot)\right\|_{L^{\infty}(\mathcal{H}_{2,\infty})}\right),
\end{align}
where $c=c(n,\Lambda,\eps,c_B,p,\mathcal{M})$, and $q$ depend only on $n$ and $\eps$.
\item There is $M_1 = M_1(n,\lambda,\eps,p) > 0$ such that that if \eqref{b.hol.nf} holds for some $\lambda\geq1$, $\eps\in(0,1)$, and $M \ge M_1$, then for any $z_0\in \mathcal{H}_{1,\infty}$,
\begin{align}\label{first.goal.diff}
    [\cN f]_{C^{\lambda,\eps}(\gamma_-\cap\mathcal{Q}_{r_0}(z_0))}\leq c\left(\|\langle \cdot\rangle^{q}f(t,x,\cdot)\|_{L^\infty(\mathcal{H}_{2,\infty})}+\left\|\langle\cdot\rangle^{q}F(t,x,\cdot)\right\|_{C^{6\lambda}(\mathcal{H}_{2,\infty})}\right),
\end{align}
where $c=c(n,\Lambda,\lambda,\eps,c_B,p,\mathcal{M})$, and $q$ depend only on $n,\lambda$, and $\eps$.
    \end{itemize}
\end{proposition}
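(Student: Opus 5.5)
Since $\cN f=\cM(z)\,g(t,x')$ with $g(t,x')=\int_{\{w_n<0\}} f(t,x',0,w)(w_n)_-\,dw$, and $\cM$ is smooth with fast decay, both estimates reduce to regularity of $g$ in the tangential variables $(t,x')$. Indeed, by the product rule in kinetic H\"older spaces, $[\cN f]_{C^{\lambda,\eps}(\gamma_-\cap\cQ_{r_0}(z_0))}$ is bounded by $\|\cM\|_{C^{\lambda,\eps}}\|g\|_{C^{\lambda,\eps}}$. The factor $\langle v_0\rangle^{M}$ in \eqref{b.infty.nf} and \eqref{b.hol.nf} absorbs any polynomial loss in $\langle v_0\rangle$ that comes from the cylinder sizes $r_0=\langle v_0\rangle^{-p}/64$ and from converting kinetic H\"older norms (whose slanted cylinders depend on $v_0$) into standard increments in $(t,x')$. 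This conversion is what fixes $M_0,M_1$. So the task is to bound increments of $g$ in $(t,x')$ by the weighted norms of $f$ and $F$.

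\textbf{First bullet ($\eps\in(\tfrac12,1)$).} I would split the $w$-integral into the region $\{|w_n|\le \delta\}$ near $\gamma_0$ and its complement, and cut off the tail $|w|\ge R$ using the weight $\langle w\rangle^{q}$ on $f$.
\begin{itemize}
\item For $|w_n|\le\delta$: \autoref{thm.hol12} (applied in cylinders of size $\sim\langle w\rangle^{-p}$, with $B$ rescaled as in \eqref{scaling.argument}) gives $f\in C^{1/2}$ uniformly. The weight $(w_n)_-\le \delta$ then contributes $\delta\cdot h^{1/2}$ for an increment of size $h$ in kinetic distance.
\item For $|w_n|\ge \delta$: \autoref{lem.reg.gamma+} at points of $\gamma_+$ gives $C^{1,\alpha}$ bounds of size $\delta^{-1/2-\alpha}$, obtained by rescaling \autoref{lem.exp.nd.a} to cylinders of radius $\sim|w_n|$. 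This contributes $\delta^{-1/2}h$.
\end{itemize}
Choosing $\delta=h^{1/2}$ yields $h^{1}$ up to logarithms, hence any $\eps<1$. A cleaner alternative is to integrate the pointwise bound $|f(z)-f(z')|\lesssim \min(h^{1/2},|w_n|^{-1/2}h)$ against $|w_n|$ directly, which again gives $h^{1-}$. Since increments of $g$ in $x'$ and $t$ at fixed $w$ correspond to kinetic distances $h^{1/3}$ and $h^{1/2}$, I would check that the resulting exponent in kinetic units exceeds $\eps$. Otherwise I would use the expansion \autoref{lem.exp.nd.a} with $\lambda=1$, which gives $f-c_0-c_1\pmb\Phi_0$ of order $3/2$. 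The key point is that the $\pmb\Phi_0$ part has a coefficient that varies in $(t,x')$ at rate $C^{1/2+}$, while its $w_n$-profile is fixed, so integrating in $w_n$ preserves that regularity.

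\textbf{Second bullet (higher regularity).} Here I would run an induction on tangential difference quotients. After flattening, $\delta_h$ in $(t,x')$ commutes with the transport operator up to coefficient increments, so $\delta_h f$ solves \eqref{diff1.eq.diffuse.ind}. Its in-flow datum is $\delta_h(\cN f)$, which is controlled by the previous induction step, and its source is $\delta_hF+\ddiv((\delta_hA)\nabla_v f)+(\delta_h B)\nabla_v f$. The problem is that $\nabla_v f$ blows up like $\mathrm{dist}_{\rm kin}^{-1/2}$ at $\gamma_0$. To handle this, I would use the expansion \autoref{lem.exp.nd.a} together with \autoref{lem.bdry.hol}, which allows divergence sources of the form $G\,\partial_{v_n}\pmb\Phi_0$ with $G$ bounded. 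Writing $\nabla_vf=(\text{bounded})\cdot\partial_{v_n}\pmb\Phi_0+(\text{better})$ puts the source in exactly this admissible class. This gives $C^{1/2}$ and the $\gamma_0$-splitting estimate for $\delta_h f/|h|^{\alpha}$, and by the argument of the first bullet, $C^{\eps}$ regularity of $\delta_h g$. Iterating about $6\lambda$ times gives $g\in C^{\lambda,\eps}$; each step loses a fixed fraction of a derivative in the coefficients and in $F$, which accounts for the $C^{6\lambda}$-type hypotheses.

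\textbf{Main obstacle.} I expect the hardest step to be controlling the singular source $\ddiv((\delta_hA)\nabla_vf)$ at higher orders. Higher difference quotients generate terms like $\nabla_v\delta_h^{k}f$, which are only as good as the previous step allows. So the inductive hypothesis has to be strengthened to include a structured expansion of $\delta_h^k f$ near $\gamma_0$, not merely a H\"older bound, and the weights $\langle v\rangle^q$ must be tracked so that $q$ and $M_1$ stay finite.
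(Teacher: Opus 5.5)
Your overall architecture matches the paper: reduce to $g(t,x')$, split the $w$-integral according to $|w_n|$, and take tangential difference quotients with the singular source coming from $\delta_hA\,\nabla_vf$. Both bullets, however, contain genuine gaps.

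\textbf{First bullet.} The input you use near $\gamma_0$ is circular. \autoref{thm.hol12} requires the in-flow datum to lie in $C^{1/2+\eps}$, and here the datum is $\cN f$, which is exactly what you are trying to prove. The same objection applies to your gradient bound $|w_n|^{-1/2}$ at $\gamma_+$, since it presupposes the $C^{1/2}$ oscillation at $\gamma_0$. It also applies to your fallback via \autoref{lem.exp.nd.a} with $\lambda=1$, which needs $\cN f\in C^{1+\eps}$.

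The paper's estimate \eqref{widehatg.hol} uses no regularity of $f$ at $\gamma_0$ at all:
\begin{itemize}
\item On $\{|w_n|\lesssim r+(r^2|w'-v_0'|)^{1/3}\}$ it uses only the weighted $L^\infty$ bound. The weight $(w_n)_-$ makes this region cost $r^{4/3}$.
\item For larger $|w_n|$ it uses the $C^{2-\eps}$ bounds \eqref{c2eps.ind.diff} on cylinders of radius $|w_n|/64$ (or $\langle w\rangle^{-p}/64$) centred at $\gamma_+$ points. These cylinders do not meet $\gamma_-$, so the bounds are independent of $\cN f$.
\end{itemize}
The bookkeeping you left open is what makes this work. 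For fixed $w$, an admissible $(t,x')$-increment in $\gamma_-\cap\cQ_r(z_0)$ has kinetic size $\max\{r,(r^2|w'-v_0'|)^{1/3},(r^2|w_n|)^{1/3}\}$, not $r$. This fixes the threshold in $|w_n|$ and produces the factor $r^{2(2-\eps)/3}$. Replacing your $C^{1/2}$ input by the $L^\infty$ bound repairs this bullet.

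\textbf{Second bullet.} The tools you cite do not deliver what you claim.
\begin{itemize}
\item \autoref{lem.bdry.hol} gives only some small exponent $\beta$, not $C^{1/2}$.
\item \autoref{thm.hol12} admits no divergence-form sources.
\item The splitting $\nabla_vf=G\,\partial_{v_n}\pmb{\Phi}_0+(\text{better})$ with bounded $G$ is not available. The remainder in \autoref{lem.exp.nd.a} has unbounded gradient, and both the coefficient and the scaling $a$ of $\pmb{\Phi}_{0,a}$ depend on the base point.
\item For $\delta_{h,i}f$ with $i\ge1$ there is no expansion at all.
\item You give no mechanism to pass from $\delta_hf/|h|^{\alpha}$ to $\delta_hf/|h|$.
\end{itemize}
The missing ingredient is \autoref{lem.bdry.hol.bdep}. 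It is a $C^{1/2-\delta}$ estimate up to $\gamma_0$ for sources $\ddiv(G)+F$, where $G,F$ are merely in $L^{8-\delta_0}$ in $(x_n,v_n)$ with pointwise bounds on cylinders avoiding $\gamma_0$. It is proved by Campanato comparison with the homogeneous in-flow problem, to which \autoref{thm.hol12} applies once the first bullet is known.

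With this lemma no expansion needs to be propagated, only the H\"older bound \eqref{ind.level}. The induction runs as follows.
\begin{itemize}
\item $C^{1/2-\delta}$ of $\delta_{h,l}f/|h|^l$, combined with estimates away from $\gamma_0$, gives $|\nabla_v\delta_{h,l}f|/|h|^l\lesssim\rho^{-1/2-\delta}$. This yields $L^{8-\delta_0}$ integrability of the divergence source for $\delta_{h,l+1}f/|h|^{l+\alpha}$.
\item \eqref{hol.rel.kin} gives $L^\infty$ bounds on this quotient for $\alpha<\frac16$, and \eqref{widehatg.hol} then controls its datum in $C^{7/6}$.
\item \autoref{lem.bdry.hol.bdep} returns $C^{1/2-\delta}$, which allows $\alpha$ to be doubled. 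Finitely many doublings reach $\alpha=1$.
\end{itemize}
Finally, \eqref{fund.lambda.eps} converts $C^{7/6}$ bounds on $\partial_t^j\partial_{x'}^i g$ for $i+j\le 2\lambda$ into $C^{\lambda,\eps}$. This, through the $3k$ coefficient derivatives each step costs, is where the $C^{6\lambda}$ hypotheses come from.
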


The regularity assumptions on the data $A,B,F$ and $\mathcal{M}$ in \eqref{first.goal.diff} are not sharp. In this paper, we focus on proving the $C^\infty$-regularity away from $\gamma_0$ when the given data is smooth. Therefore, we do not proceed to find a sharp relation between the regularity of the solution and the regularity of data.

Since the proof involves a lot of steps, we here briefly explain the strategy of the proof. We will use a difference quotient technique and a bootstrap argument to derive H\"older estimates of 
\begin{align*}
\widehat{g}_k\coloneqq\frac{\delta_{h,k}\mathcal{N}f}{|h|^k} \quad \text{ and } \quad \widehat{f}_k\coloneqq\frac{\delta_{h,k}f}{|h|^k}.
\end{align*}
See \eqref{defn.diff.quot} for the definition of $\delta_{h,k}$. First, we can observe that $L^\infty$ estimates of $\widehat{f}_k$ and $C^{1,\eps}$ estimates of $\widehat{f}_k$ near $\gamma_+$ are enough to get uniform bounds of $\widehat{g}_k$ in H\"older spaces $C^{\frac43-\eps}$ for any $\delta>0$ (see \eqref{widehatg.hol} below). Next, we prove $C^{1,\eps}$ estimates of $\widehat{f}_k$ away from $\gamma_+$ in terms of the supremum norm of $\widehat{f}_k$ (see \eqref{c2eps.ind.diff}). Based on these observations, we will inductively prove $C^{\frac12-\eps}$ estimates of $\widehat{f}_k$ (see \eqref{ind.level} below). 

There are three steps to derive uniform $C^{\frac12-\eps}$ estimates of $\widehat{f}_{k+1}$ from the estimates of $\widehat{f}_{k}$.
\begin{itemize}
    \item[1.] Using \eqref{hol.rel.kin}, we get that $\frac{\delta_{h,k+1}f}{|h|^{k+\alpha}}$ is bounded for any $\alpha<\frac16$. Thus, using this and $C^{2-\eps}$ estimates, we can derive  $\frac{\delta_{h,k+1}\mathcal{N}f}{|h|^{k+\alpha}}\in C^{\frac76}$.

    \item[2.] Note that $\frac{\delta_{h,k+1}f}{|h|^{k+\alpha}}$ satisfies an equation with an unbounded right-hand side, such as for example $-\ddiv\left(\frac{\delta_{h}A}{|h|^\alpha}\nabla_v\widehat{f}_k\right)$, which prevents us from applying global $C^{\frac12}$ regularity estimates. Instead, we prove that $\nabla_v\widehat{f}_k\in L^q$ for a sufficiently large $q$, by obtaining gradient estimates of $\widehat{f}_k$ away from $\gamma_0$, and combining this with $C^{\frac12-\eps}$ estimates of $\widehat{f}_k$ at $\gamma_0$.

    \item[3.] Now we use a perturbation argument together with a higher integrability result of $\nabla_v\widehat{f}_k$ to derive $\frac{\delta_{h,k+1}f}{|h|^{k+\alpha}}\in C^{\frac12-\eps}$, which implies that $\frac{\delta_{h,k+1}f}{|h|^{k+\alpha}}$ is bounded for any $\alpha<\frac26$. Now we iterate this procedures 7-times so that $\alpha$ becomes 1 and we get uniform $C^{\frac12-\eps}$ estimates of $\widehat{f}_{k+1}$.
\end{itemize}

    Therefore, by \eqref{widehatg.hol}, we prove $\widehat{g}_{k}\in C^{\frac43-\eps}$, which gives $\partial^kg\in C^{\frac43-\eps}$. Now using \eqref{fund.lambda.eps}, we can find a number $k$ such that $g\in C^{\lambda,\delta}$. As we mentioned before, we just find a large number $k$, which is not sharp, but is enough to prove $g\in C^{\lambda,\delta}$.

\subsection{Preliminary results: boundary estimates with in-flow}

Before we can prove \autoref{lem.reg.nf.diff}, we need to provide several results on regularity estimates away from $\gamma_0$. They were basically shown in \cite{RoWe25} (see \cite{KLN25} for the interior case), but for the application to diffuse reflection, we need to track the explicit dependence of the constants on the drift and include source terms in divergence form of the form $\mathrm{div}_v(G)$ into our study. Moreover, in \autoref{lem.bdry.hol.bdep} we prove $C^{\frac{1}{2}-\delta}$ estimates up to $\gamma_0$ for unbounded source terms, thereby refining \autoref{thm.hol12}.

\begin{lemma}\label{lem.gra.bdep}
    Let $f$ be a weak solution to 
\begin{equation}
\left\{
\begin{alignedat}{3}
(\partial_t+v\cdot\nabla_x)f-\ddiv(A\nabla_vf)&=B\cdot\nabla_vf-\ddiv(G)+F&&\qquad \mbox{in  $\mathcal{H}_R(z_0)$}, \\
f&=g&&\qquad  \mbox{in $\gamma_-\cap\mathcal{Q}_R(z_0)$}.
\end{alignedat} \right.
\end{equation}
 Suppose $\mathcal{H}_{2R}(z_0)\cap\gamma_0=\emptyset$. For any $\eps\in(0,1)$, we have 
\begin{equation}\label{est.c1eps.awaygamma}
\begin{aligned}
        &R\|\nabla_vf\|_{L^\infty(\mathcal{H}_{R/2}(z_0))}+R^{1+\eps}[\nabla_vf]_{C^{\eps}(\mathcal{H}_{R/2}(z_0))}+R^{1+\eps}[f]_{C^{1,\eps}(\mathcal{H}_{R/2}(z_0))}\\
        &\leq c\mathbf{B}(z_0,R)^{1+\eps}\Bigg(\|f\|_{L^\infty(\mathcal{H}_R(z_0))}+R^{1+\eps}[G]_{C^{\eps}(\mathcal{H}_{R}(z_0))}\\
        &\qquad\qquad\qquad\quad+R^2\|F\|_{L^\infty(\mathcal{H}_R(z_0))}+R^{1+\eps}[g]_{C^{1,\eps}(\gamma_-\cap\mathcal{Q}_R(z_0))}\Bigg),
    \end{aligned}
    \end{equation}
    where $\mathbf{B}(z_0,R)\coloneqq (1+\|B\|_{L^\infty(\mathcal{H}_R(z_0))})$ and
    $c=c(n,\Lambda,\eps,\|A\|_{C^\eps(\mathcal{H}_R(z_0))})$.
\end{lemma}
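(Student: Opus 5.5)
The plan is to reduce \eqref{est.c1eps.awaygamma} to the case $\|B\|_{L^\infty(\mathcal{H}_R(z_0))}\le 1$, $R=1$ by scaling, and then to combine the boundary expansion of \autoref{lem.reg.gamma+} (with $\lambda=1$) at points of $\gamma_\pm$ with the interior divergence-form Schauder estimate \autoref{lem.sch.divergence} via a standard covering/Campanato argument.

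\textbf{Step 1: rescaling.} Exactly as in \eqref{scaling.argument}, set $\mathcal{B}:=\mathbf{B}(z_0,R)^{-1}$ and $f_{\mathcal{B}}(z)=f(z_0\circ S_{\mathcal{B}R}z)$. Then $\widetilde B=\mathcal{B}R B(\cdot)$ has norm at most $1$, $\widetilde G=\mathcal{B}R\,G$ and $\widetilde F=(\mathcal{B}R)^2F$. Moreover $\mathcal{H}_{2\mathcal{B}R}(z_0)\cap\gamma_0=\emptyset$ still holds, and the $C^\eps$ norm of $A$ does not increase. Proving the estimate at unit scale for $f_{\mathcal{B}}$, and then covering $\mathcal{H}_{R/2}(z_0)$ by cylinders of radius $\mathcal{B}R/2$ centered in $\mathcal{H}_{R/2}(z_0)$, recovers the factor $\mathbf{B}^{1+\eps}$. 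The reason is that each seminorm of order $1+\eps$ at scale $\mathcal{B}R$ is $(\mathcal{B}R)^{-(1+\eps)}$ times the normalized one. The gradient term gets the smaller factor $\mathbf{B}\le\mathbf{B}^{1+\eps}$. The data terms $R^2\|F\|$, $R^{1+\eps}[G]$ and $R^{1+\eps}[g]$ acquire the factors $\mathcal{B}^2,\mathcal{B}^{1+\eps}\le1$ after rescaling, so they are harmless.

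\textbf{Step 2: unit-scale estimate.} Take $R=1$ and $\|B\|\le 1$. For $z_1\in\mathcal{H}_{1/2}(z_0)$, let $d=d(z_1)$ be the kinetic distance to $\gamma$. If $d\ge c$ is of unit size, \autoref{lem.sch.divergence} gives $\|f\|_{C^{1,\eps}}$ directly. Otherwise let $\hat z_1\in\gamma$ be its kinetic projection. Because $\mathcal{H}_2(z_0)\cap\gamma_0=\emptyset$, we have $\hat z_1\in\gamma_+\cup\gamma_-$ with $\mathcal{Q}_{c}(\hat z_1)\cap\gamma_0=\emptyset$. \autoref{lem.reg.gamma+} with $\lambda=1$ then yields $p_{\hat z_1}\in\cP_1$ with $\|f-p_{\hat z_1}\|_{L^\infty(\mathcal{H}_r(\hat z_1))}\le c r^{1+\eps}(\ldots)$. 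Here the divergence term $\ddiv G$ must be accommodated; the proof of \autoref{lem.reg.gamma+} already refers to the modifications of \autoref{lem.exp.nd.a}, and $\ddiv G=\ddiv(G-G(\hat z_1))$ with $|G-G(\hat z_1)|\lesssim r^\eps[G]_{C^\eps}$ enters the blow-up at the correct order. Applying \autoref{lem.sch.divergence} to $f-p_{\hat z_1}$ in $\mathcal{Q}_{d/2}(z_1)$ gives $[\nabla_v f]_{C^\eps}$ and $[f]_{C^{1,\eps}}$ bounds near $z_1$ at scale $d$, with constants controlled through the boundary expansion. The usual gluing argument (as in the proof of \autoref{thm.hol12}) then combines the interior and boundary information into a uniform $C^{1,\eps}$ bound. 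The $L^\infty$ bound on $\nabla_v f$ follows from $|\nabla_v p_{\hat z_1}|\lesssim \|f\|_{L^\infty}+\ldots$ and interpolation.

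\textbf{Main obstacle.} The delicate point is checking that the constants in \autoref{lem.reg.gamma+} stay uniform when $\hat z_1\in\gamma_-$ or $\gamma_+$ has normal velocity $|v_{1,n}|$ comparable to the size of the excluded neighborhood of $\gamma_0$. This requires confirming that the blow-up argument only uses $\mathcal{Q}_{2r}(\hat z_1)\cap\gamma_0=\emptyset$ and does not depend on the size of $|v_{1,n}|$. It also requires inserting $\ddiv G$ into that blow-up: the compactness step needs \autoref{lem.bdry.hol} (which already permits divergence data) together with interior $C^{1,\eps}$ estimates from \autoref{lem.sch.divergence}, and in the limit the rescaled $\widetilde G_m\to$ const, which makes the divergence vanish. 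Once these are verified, the Liouville theorem for the limit in the half-space away from $\gamma_0$, as in \cite{RoWe25}, closes the argument.
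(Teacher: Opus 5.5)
Your proposal is correct and is essentially the paper's own argument. The paper likewise reduces to $\|B\|_{L^\infty}\le 1$ via the scaling in \eqref{scaling.argument}; for you as for the paper, this step tacitly needs $R\le 1$. It then obtains interior $C^{1,\eps}$ bounds from \autoref{lem.sch.divergence}, and boundary $C^\alpha$ bounds by rerunning \autoref{lem.bdry.hol} with $\ddiv G=\ddiv(G-G(z_1))$. These serve as compactness for the blow-up of \cite[Lemmas~4.1 and 4.4]{RoWe25}, which gives order-$(1+\eps)$ expansions at points of $\gamma_\pm$ that are glued with the interior estimates.
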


\begin{proof}
The explicit dependence of the constant on $B$ can be obtained by the same scaling argument as in the proof of \eqref{estw/oB.exp.nd}. Hence, it suffices to prove \eqref{est.c1eps.awaygamma}, when $\|{B}\|_{L^\infty(\mathcal{H}_R(z_0))} \leq 1$. 

In case $G = 0$, interior estimates and boundary expansions have been obtained in \cite{KLN25} and \autoref{lem.reg.gamma+} in case $G = 0$. To treat divergence form source terms, first by \autoref{lem.sch.divergence}, we have 
\begin{equation}\label{gra.est.gra.bdep}
\begin{aligned}
        R^{1+\eps}[f]_{C^{1,\eps}(\mathcal{H}_{R/2}(z_0))}&\leq c\left(\|f\|_{L^\infty(\mathcal{H}_R(z_0))}+R^{\eps}[G]_{C^{\eps}(\mathcal{H}_{R}(z_0))})+cR^2\|F\|_{L^\infty(\mathcal{H}_R(z_0))}\right),
    \end{aligned}
    \end{equation}
    where $c=c(n,\Lambda,\eps,\|A\|_{C^\eps(\mathcal{H}_R(z_0))})$, whenever $\mathcal{H}_{2R}(z_0)\cap \gamma=\emptyset$. In addition, for any $z_1\in\mathcal{H}_{2R}(z_0)\cap\gamma$, by following the same lines as in the proof of \autoref{lem.bdry.hol} together with the following observation $-\ddiv(G)=\ddiv(G-G(z_0))$, which allows us to estimate $\|G-G(z_1)\|_{L^\infty(\mathcal{H}_r(z_1))}\lesssim r^\eps[G]_{C^\eps(\mathcal{H}_r(z_1))}$, we get that there is a small constant $\alpha=\alpha(n,\Lambda)$ such that
    \begin{align}\label{boundary.gra.bdep}
        r^\alpha[f]_{C^{\alpha}(\mathcal{H}_{r/2}(z_1))}\leq c(\|f\|_{L^\infty(\mathcal{H}_r(z_1))}+r^\eps[G]_{C^\eps(\mathcal{H}_r(z_1))}+r^2\|F\|_{L^\infty(\mathcal{H}_r(z_1))}+r^\alpha[g]_{C^\alpha(\gamma_-\cap\mathcal{H}_{r}(z_1))}),
    \end{align}
    for some constant $c=c(n,\Lambda,\eps,\|A\|_{C^\eps(\mathcal{H}_r(z_1))})$, whenever $\mathcal{H}_{r}(z_1)\subset \mathcal{H}_R(z_0)$.
    In order to prove expansions at boundary points, when { $\mathcal{H}_{R}(z_0)\cap\gamma_0\neq\emptyset$ }, we use the same arguments as in the proof of \cite[Lemma 4.1 and Lemma 4.4]{RoWe25} (see also \autoref{lem.exp.nd.a}) together with interior $C^{1,\eps}$-estimates \eqref{gra.est.gra.bdep} and boundary $C^\alpha$-estimates \eqref{boundary.gra.bdep},  which will be used in order to guarantee convergence of the blow-up sequence as in \eqref{conv.grad.exp.nd}. It is easy to see that the assumption on $G$ has the right scaling and hence guarantees convergence of the blow-up sequence to a suitable limit.  Altogether, we get
     \begin{equation*}
     \begin{aligned}
       \|f-p_{z_0}\|_{L^\infty(\mathcal{H}_r(z_0))}&\leq c\left(\frac{r}{R}\right)^{1+\eps}\Bigg(\|f\|_{L^\infty(\mathcal{H}_R(z_0))}+R^{1+\eps}[G]_{C^{\eps}(\mathcal{H}_{R}(z_0))}\\
        &\qquad\qquad\qquad\quad+R^2\|F\|_{L^\infty(\mathcal{H}_R(z_0))}+R^{1+\eps}[g]_{C^{1,\eps}(\gamma_-\cap\mathcal{Q}_R(z_0))}\Bigg)
    \end{aligned}
    \end{equation*}
    for some polynomials $p_{z_0}\in\cP_1$, 
    which implies the following estimate upon combination with the interior estimates from \eqref{gra.est.gra.bdep}
    \begin{align*}
        R^{1+\eps}[f]_{C^{1+\eps}(\mathcal{H}_{R/2}(z_0))}&\leq c\Bigg(\|f\|_{L^\infty(\mathcal{H}_R(z_0))}+R^{\eps}[G]_{C^{\eps}(\mathcal{H}_{R}(z_0))}\\
        &\qquad\quad+R^2\|F\|_{L^\infty(\mathcal{H}_R(z_0))}+R^{1+\eps}[g]_{C^{1,\eps}(\gamma_-\cap\mathcal{Q}_R(z_0))}\Bigg),
    \end{align*}for some constant $c=c(n,\Lambda,\eps,\|A\|_{C^\eps(\mathcal{H}_R(z_0))})$. This completes the proof.
\end{proof}
\begin{remark}\label{rmk.c12.bdep}
    Note that that by repeating the same arguments as in the proof \autoref{lem.gra.bdep}, we derive that whenever $\mathcal{H}_R(z_0)\cap\gamma_0=\emptyset$,
    \begin{align*}
        R^{\frac12}[f]_{C^{\frac12}(\mathcal{H}_{R/2}(z_0))}&\leq c\mathbf{B}(z_0,R)^{\frac12}\Bigg(\|f\|_{L^\infty(\mathcal{H}_R(z_0))}+R\|G\|_{L^{\infty}(\mathcal{H}_{R}(z_0))}\\
        &\qquad\qquad\qquad\quad+R^2\|F\|_{L^\infty(\mathcal{H}_R(z_0))}+R^{\frac12}[g]_{C^{\frac12}(\gamma_-\cap\mathcal{Q}_R(z_0))}\Bigg),
    \end{align*}
    for some constant $c=c(n,\Lambda,\|A\|_{C^\eps(\mathcal{H}_R(z_0))})$. 
\end{remark}

Next, we prove H\"older estimates of order $\frac{1}{2} - \delta$ near the boundary and up to $\gamma_0$. 

\begin{lemma}\label{lem.bdry.hol.bdep}
Let $R\leq1$, $z_0\in\gamma_0$ with $x_{0,n}=v_{0,n}=0$ and let $f$ be a weak solution to 
\begin{equation}
\left\{
\begin{alignedat}{3}
(\partial_t+v\cdot\nabla_x)f-\ddiv(A\nabla_vf)&=B\cdot\nabla_vf-\ddiv(G)+F&&\qquad \mbox{in  $\mathcal{H}_R(z_0)$}, \\
f&=g&&\qquad  \mbox{in $\gamma_-\cap\mathcal{Q}_R(z_0)$},
\end{alignedat} \right.
\end{equation}
where $A\in C^\eps(\mathcal{H}_R(z_0))$ for some $\eps>0$.
Let us fix $p\geq\frac12$, $C_1\geq1$ and write $r_1\coloneqq \frac{\langle v_1\rangle^{-p}}{64}$ and $\widehat{r}_1\coloneqq  \frac{\langle v_1\rangle^{-p}}{C_1}$ for any $z_1$. We now fix $\delta<\frac12$. Suppose 
\begin{equation}\label{ass.bdep.bdry.hol}
\begin{aligned}
&\|B\|_{L^\infty(\mathcal{H}_{R}(z_0))}\leq c_B\langle v_0\rangle^2\\
    &\sup_{z_1\in\gamma_0\cap\mathcal{Q}_R(z_0)}\sup_{(t,x',v')\in\mathcal{H}'_{\widehat{r}_1}(z_1')}\|G(t,x',\cdot,v',\cdot)\|_{L^{8-\delta_0}(H_{\widehat{r}_1})}+\|F(t,x',\cdot,v',\cdot)\|_{L^{8-\delta_0}(H_{\widehat{r}_1})}\leq C_0,\\
    &\sup_{z_2\in\mathcal{Q}_R(z_0)}\sup_{\substack{
    \mathcal{Q}_{\rho}(z_2)\cap\gamma_0=\emptyset,\\
    \rho\leq r_2}} \rho^{-\frac12-\delta}(\|G\|_{L^{\infty}(\mathcal{H}_{\rho}(z_2))}+\|F\|_{L^{\infty}(\mathcal{H}_{\rho}(z_2))})\leq C_0
\end{aligned}
\end{equation}
for some constants $c_B$ and $C_0$, where $\delta_0\coloneqq\frac{16\delta}{1+2\delta}$ and we write $\mathcal{H}'_{r}(z_1')$ as the kinetic cylinder defined in $\bbR^{2(n-1)+1}$ and ${H}_{r}$ as the stationary cylinder determined in \eqref{defn.staty.cylinder}. If $2r_0= {R}$, we have 
\begin{align}\label{est.bdep.bdry.hol}
    [f]_{C^{\frac12-\delta}(\mathcal{H}_{r_0}(z_0))}\leq c\langle v_0\rangle^{c_p}(\|f\|_{L^\infty(\mathcal{H}_{2r_0}(z_0))}+[g]_{C^{\frac34}(\gamma_-\cap\mathcal{Q}_{2r_0}(z_0))}+C_0),
\end{align}
where $c_p=c(p)$ and $c=c(n,\Lambda,\eps,\|A\|_{C^\eps(\mathcal{H}_R(z_0))},c_B,\delta,C_1,p)$.
\end{lemma}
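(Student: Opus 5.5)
The proof follows a Campanato-type scheme at every point. We show that for each $z_1\in\mathcal{H}_{r_0}(z_0)$ and each $r$ small there is a constant $c_{z_1,r}$ with
\[
\|f-c_{z_1,r}\|_{L^\infty(\mathcal{H}_r(z_1))}\lesssim \langle v_0\rangle^{c_p} r^{\frac12-\delta}\big(\|f\|_{L^\infty}+[g]_{C^{3/4}}+C_0\big),
\]
with the same three-case split as in \autoref{thm.hol12}, according to the position of $z_1$ relative to $\gamma_0$. First we normalize. By the scaling \eqref{scaling.argument} with $\mathcal{B}\sim\langle v_0\rangle^{-2}$, the drift becomes bounded, $\|B\|\le c_B$. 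This also rescales the cylinder $\mathcal{H}_{\widehat r_1}$ and the constant, which is the source of the factor $\langle v_0\rangle^{c_p}$. All cylinders of radius $\lesssim\widehat r_1\sim\langle v_1\rangle^{-p}$ sit inside $\mathcal{H}_R(z_0)$, and all powers of $\langle v_1\rangle$ produced by shrinking radii are polynomial in $\langle v_0\rangle$.

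The key step is the expansion at points $z_1\in\gamma_0$, of order $\frac12-\delta$ (in fact only a constant is needed, not $c_1\phi_0$). We run the comparison argument from the proof of \autoref{lem.bdry.hol}. On $\mathcal{H}_r(z_1)$ let $h$ solve the homogeneous equation with $h=f$ on $\partial_{\mathrm{kin}}$. Testing the difference $f-h$ and using the $v$-Poincaré inequality gives
\[
\dashint|f-h|^2\lesssim \dashint r^2|G|^2+r^4|F|^2.
\]
The $L^{8-\delta_0}$ assumption in $(x_n,v_n)$, uniform in $(t,x',v')$, combined with Hölder's inequality on $H_r$ (measure $r^4$) gives $\big(\dashint|G|^2\big)^{1/2}\lesssim r^{-4/(8-\delta_0)}C_0$. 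With $\delta_0=\frac{16\delta}{1+2\delta}$ one computes $4/(8-\delta_0)=\frac12+\delta$. Hence
\[
\Big(\dashint|f-h|^2\Big)^{1/2}\lesssim r^{1-\frac12-\delta}C_0=r^{\frac12-\delta}C_0.
\]
For $h$ we use the expansion of \autoref{lem.exp.nd.a} with $\lambda=0$ and exponent $\frac12+\epsilon$: $h$ equals a constant plus $c\,\pmb\Phi_{0,a}$ up to $r^{1/2+\epsilon}$, and $[g]_{C^{3/4}}$ controls the boundary data. Since $\pmb\Phi_{0,a}\in C^{1/2}$, $h$ oscillates like $\rho^{1/2}$ on $\mathcal{H}_{\rho r}$. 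Iterating in $L^2$ and using $\frac12-\delta<\frac12$ yields the Campanato decay $\rho^{\frac12-\delta}$. We then upgrade from $L^2$ to $L^\infty$ by the $L^\infty$–$L^2$ bound, applied to the equation with the same scaling of the integrable right-hand side.

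Away from $\gamma_0$ we use the third assumption in \eqref{ass.bdep.bdry.hol}: on cylinders $\mathcal{Q}_\rho(z_2)$ avoiding $\gamma_0$ we have $\|G\|_\infty+\|F\|_\infty\le C_0\rho^{\frac12+\delta}$. \autoref{rmk.c12.bdep} (resp.\ interior estimates together with \autoref{lem.reg.gamma+} for $\gamma_\pm$) then gives
\[
\rho^{1/2}[f]_{C^{1/2}(\mathcal{H}_{\rho/2}(z_2))}\lesssim \|f-c\|_{L^\infty(\mathcal{H}_\rho(z_2))}+\rho^{\frac32+\delta}C_0+\rho^{1/2}[g]_{C^{1/2}}.
\]
This is exactly the right scaling to glue with the estimate at the projected grazing point $\widetilde z_1$ (or $\widehat z_1$ on $\gamma_-$). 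The gluing is done by the case analysis $x_{1,n}\gtrless(|v_{1,n}|/4)^3$ and $r\gtrless r_1$, verbatim from \autoref{thm.hol12}. The resulting pointwise oscillation bounds then imply \eqref{est.bdep.bdry.hol} by the Campanato characterization.

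The main obstacle is the grazing step. The source terms may be unbounded, like $|X_n|^{-1/2}$, so the expansion lemma cannot be applied to $f$ directly. The exponent bookkeeping $\delta_0\leftrightarrow\delta$ must produce exactly the loss $\delta$. One also has to make sure the energy estimate for $f-h$ is legitimate for the divergence term in a cylinder touching $\gamma_-$, with zero data on $\gamma_-$; for this I would rely on the existence and uniqueness result \cite[Corollary 2.9]{Zhu24}, as in \autoref{lem.bdry.hol}.
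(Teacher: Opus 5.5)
Your architecture matches the paper's proof. At grazing points you use a comparison function $h$ on $\mathcal{H}_r(\widetilde z_0)$, the energy estimate, and the identity $4/(8-\delta_0)=\frac12+\delta$ to get $\dashint|f-h|^2\lesssim C_0^2r^{1-2\delta}$. You control $h$ at order $\frac12$; the paper quotes \autoref{thm.hol12}, which is itself derived from \autoref{lem.exp.nd.a}. This is followed by an $L^2$ Campanato iteration at points of $\gamma_0$, and by gluing with \autoref{rmk.c12.bdep} and the third hypothesis away from $\gamma_0$. The paper restricts to radii $r\lesssim\langle v_0\rangle^{-(2+p)}$ and treats larger $r$ trivially; this is equivalent to your rescaling of $B$.

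The one step that is not supported is the upgrade from $L^2$ to $L^\infty$ oscillation at grazing points, ``by the $L^\infty$--$L^2$ bound applied to the equation with the integrable right-hand side''. This needs local boundedness up to $\gamma_0$ for an equation whose source $-\ddiv G+F$ is only in $L^\infty_{t,x',v'}L^{8-\delta_0}_{x_n,v_n}$. Nothing in the paper provides this. The standard De Giorgi bounds for a divergence-form source need $G$ in isotropic $L^q$ with $q$ above the homogeneous dimension $4n+2$, and $8-\delta_0\le 8$ does not give that in general. A mixed-norm version would have to be proved. In your write-up this step carries weight, because your gluing is done ``verbatim from \autoref{thm.hol12}'', i.e.\ with $L^\infty$ oscillations.

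The fix is what the paper does: never leave mean oscillations $\dashint_{\mathcal{H}_r(z_1)}|f-(f)_{\mathcal{H}_r(z_1)}|^2$.
\begin{itemize}
\item If $\mathcal{H}_{2r}(z_1)\cap\gamma_0\neq\emptyset$, compare with $\mathcal{H}_{8r}(\widetilde z_1)$ for some $\widetilde z_1\in\gamma_0$, using $|\mathcal{H}_r(z_1)|\eqsim r^{4n+2}$.
\item Otherwise set $\rho_1=\frac1{64}\max\{x_{1,n}^{1/3},|v_{1,n}|\}$ and apply \autoref{rmk.c12.bdep} on $\mathcal{H}_{\rho_1}(z_1)$. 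The source is bounded there, so passing between $L^\infty$ and $L^2$ is harmless. The remark also covers cylinders touching $\gamma_\pm$, so no projection onto $\gamma_-$ is needed. Then insert the grazing estimate at scale $\sim\rho_1$.
\end{itemize}
This gives Campanato decay of order $\frac12-\delta$ at every point, hence \eqref{est.bdep.bdry.hol}; $L^\infty$ oscillation bounds then follow afterwards.

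A side remark on the third hypothesis. The gluing only needs $\rho^{\frac12+\delta}\big(\|G\|_{L^\infty(\mathcal{H}_\rho(z_2))}+\|F\|_{L^\infty(\mathcal{H}_\rho(z_2))}\big)\le C_0$, which is what is available in the application (cf.\ \eqref{gra.point.diff.las0t}). It gives $\rho_1^2\|G\|^2\lesssim C_0^2\rho_1^{1-2\delta}$, exactly the scaling required. Your literal reading, with decay $\rho^{\frac12+\delta}$, is stronger and also works, but it is not ``exactly'' the right scaling.
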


To prove this, first we provide the following lemma explaining that the $L^\infty$-norm of the boundary datum can be estimated by its H\"older semi-norm.
\begin{lemma}\label{lem.rep.hol2}
Let $f$ be a weak solution to 
\begin{equation}
\left\{
\begin{alignedat}{3}
(\partial_t+v\cdot\nabla_x)f-\ddiv(A\nabla_vf)&=B\cdot\nabla_vf-\ddiv(G)+F&&\qquad \mbox{in  $\mathcal{H}_R(z_0)$}, \\
f&=g&&\qquad  \mbox{in $\gamma_-\cap\mathcal{Q}_R(z_0)$}
\end{alignedat} \right.
\end{equation}
with $z_0\in\gamma_-\cup\gamma_0$.
If $g\in C^{\eps}(\gamma_-\cap\mathcal{Q}_R(z_0))$ for some $\eps > 0$, then we have
 \begin{align*}
     \|g\|_{L^\infty(\gamma_-\cap\mathcal{Q}_R(z_0))}&\leq c(R^{-(4n+2)}\|f\|_{L^1(\mathcal{H}_R(z_0))}+\|RG\|_{L^\infty(\mathcal{H}_R(z_0))}+\|R^2F\|_{L^\infty(\mathcal{H}_R(z_0))})\\
     &\quad+cR^{\eps}[g]_{C^\eps(\gamma_-\cap\mathcal{Q}_R(z_0))},
 \end{align*}
 where $c=c(n,\Lambda,\eps,\|B\|_{L^\infty(\mathcal{H}_R(z_0))})$.
\end{lemma}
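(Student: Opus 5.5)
We argue by contradiction and compactness, in the same spirit as \autoref{lem.bdry.hol.rep}. By scaling ($f_R(z)=f(z_0\circ S_R z)$, which multiplies $G$ by $R$ and $F$ by $R^2$) we may assume $R=1$, $z_0=0$. The claim then reads $\|g\|_{L^\infty}\le c(\|f\|_{L^1(\mathcal{H}_1)}+\|G\|_{L^\infty}+\|F\|_{L^\infty}+[g]_{C^\eps})$. If this fails, we obtain sequences $f_l,A_l,B_l,G_l,F_l,g_l$ with $\|B_l\|_{L^\infty}$ bounded, $A_l$ uniformly elliptic, $\|g_l\|_{L^\infty(\gamma_-\cap\mathcal{Q}_1)}=1$, and
\begin{align*}
\|f_l\|_{L^1(\mathcal{H}_1)}+\|G_l\|_{L^\infty}+\|F_l\|_{L^\infty}+[g_l]_{C^\eps(\gamma_-\cap\mathcal{Q}_1)}\to0 .
\end{align*}
Since $[g_l]_{C^\eps}\to0$ and $\|g_l\|_\infty=1$, Arzel\`a--Ascoli gives, after passing to a subsequence, $g_l\to \kappa$ uniformly with $|\kappa|=1$ (a constant, since the seminorm vanishes and $\gamma_-\cap\mathcal{Q}_1$ is connected).

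Next we derive uniform H\"older bounds up to the boundary. We apply \autoref{lem.bdry.hol} with the divergence term $\ddiv(G_l)$ in place of $\ddiv(G\partial_{v_n}\pmb{\Phi}_0)$; its proof goes through verbatim, and it is even simpler since $G_l$ is bounded. First we need an $L^2$ bound for $f_l$. Here we use the local $L^\infty$ estimate for in-flow problems (e.g.\ \cite[Lemma 5.9]{Sil22} or the De Giorgi bound of \cite{Zhu24}), which bounds $\|f_l\|_{L^\infty(\mathcal{H}_{3/4})}$ by $\|f_l\|_{L^1}+\|g_l\|_\infty+\|G_l\|_\infty+\|F_l\|_\infty\le C$. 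This gives $[f_l]_{C^\beta(\mathcal{H}_{1/2})}\le C$ uniformly in $l$, so $f_l\to f_\infty$ uniformly on $\overline{\mathcal{H}_{1/2}}$. Because $\|f_l\|_{L^1}\to0$, we get $f_\infty\equiv0$. On the other hand, continuity up to $\gamma_-$ forces $f_\infty=\lim g_l=\kappa$ on $\gamma_-\cap\mathcal{Q}_{1/2}$, so $|f_\infty|=1$ there. This is a contradiction.

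The main obstacle is to make the boundary trace identification rigorous: we need $f_l=g_l$ pointwise on $\gamma_-$ together with equicontinuity up to $\gamma_-$ in a uniform neighbourhood. The uniform $C^\beta$ estimate above provides both, provided that the starting $L^\infty$ bound on $f_l$ is controlled only by $\|f_l\|_{L^1}$ and the data. If the dependence on $\|g_l\|_\infty$ in that bound turns out to be delicate, an alternative is to work with $\tilde f_l=f_l-\kappa$, which solves the same equation with boundary data $g_l-\kappa\to0$ uniformly. The weak maximum principle (\cite[Theorem 3]{AveHou26}) on small boundary cylinders then shows that $\tilde f_l$ is close to $0$ near $\gamma_-$. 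This contradicts $\|f_l\|_{L^1}\to0$, because $|\tilde f_l+\kappa|$ would be close to $1$ on a set of positive measure. Once the claim holds at $R=1$, we undo the scaling to obtain the stated dependence on $R$.
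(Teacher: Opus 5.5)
Your argument is essentially the paper's. Both run a contradiction/compactness scheme in which uniform boundary H\"older bounds together with $\|f\|_{L^1}\to0$ force the limit to vanish, while continuity up to $\gamma_-$ pins it to a boundary value of modulus one; the paper first reduces to bounding $|f(z_0)|$ via $\|g-g(z_0)\|_{L^\infty}\le cR^{\eps}[g]_{C^\eps}$ and normalizes $|f_k(z_k)|=1$, which is equivalent to your normalization $\|g_l\|_{L^\infty}=1$.

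Two cautions. First, do not invoke \autoref{lem.bdry.hol}, because its proof itself uses the lemma you are proving; instead use the boundary $C^\beta$ estimate in terms of the full norm $\|g_l\|_{C^\eps}\le 2$ (from \cite[Lemma 2.25]{RoWe25}, adapted to $\ddiv(G)$), which your normalization makes available. Second, for $z_0\in\gamma_-$ you cannot translate to $z_0=0$, since a Galilean shift with $v_{0,n}\neq0$ maps $\{x_n>0\}$ to $\{x_n+tv_{0,n}>0\}$; as in the paper, you must keep the base points $z_l$ in the contradiction sequence.
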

\begin{proof}
First, by \cite{Sil22}, $f$ is continuous up to the boundary, which implies $f(z_0)=g(z_0)$.
Thus, it suffices to show that 
\begin{align*}
    |f(z_0)|\leq c(R^{-(4n+2)}\|f\|_{L^1(\mathcal{H}_R(z_0))}+\|RG\|_{L^\infty(\mathcal{H}_R(z_0))}+\|R^2F\|_{L^\infty(\mathcal{H}_R(z_0))} + R^{\eps}[g]_{C^{\eps}(\gamma_-\cap\mathcal{Q}_{R}(z_0))}),
\end{align*}
where $c(n,\Lambda,\eps,\|B\|_{L^\infty(\mathcal{H}_R(z_0))})$, as  
\begin{align*}
    \|g-f(z_0)\|_{L^\infty(\gamma_-\cap\mathcal{Q}_R(z_0))}=\|g-g(z_0)\|_{L^\infty(\gamma_-\cap\mathcal{Q}_R(z_0))}\leq  cR^{\eps}[g]_{C^\eps(\gamma_-\cap\mathcal{Q}_R(z_0))}.
\end{align*}
    Suppose not, then there are sequences $(f_k)_k$, $(A_k)_k$, $(B_k)_k$, $(G_k)$, $(F_k)$, $(R_k)_k$, $(z_k)_k$, and $(g_k)_k$ with
    \begin{equation}
\left\{
\begin{alignedat}{3}
(\partial_t+v\cdot\nabla_x)f_k-\ddiv(A_k\nabla_vf_k)&=B_k\cdot\nabla_vf_k-\ddiv(G_k)+F_k&&\qquad \mbox{in  $\mathcal{H}_{R_k}(z_k)$}, \\
f_k&=g_k&&\qquad  \mbox{in $\gamma_-\cap\mathcal{Q}_{R_k}(z_k)$},
\end{alignedat} \right.
\end{equation}
and $\Lambda^{-1}I\leq A_k\leq \Lambda I$ and $\|B_k\|_{L^\infty(\mathcal{H}_{R_k}(z_k))}\leq \Lambda$,
but $|f_k(z_k)|=1$ and
\begin{align*}
   R_k^{-(4n+2)}\|f_k\|_{L^1(\mathcal{H}_{R_k}(z_k))}+\|R_kG_k\|_{L^\infty(\mathcal{H}_{R_k}(z_k))}+\|R_k^2F_k\|_{L^\infty(\mathcal{H}_{R_k}(z_k))}+[R_k^{\eps} g_k]_{C^{\eps}(\gamma_-\cap\mathcal{Q}_{R_k}(z_k))}\to 0.
\end{align*}
After simple modifications of the proof given in \cite[Lemma 2.25]{RoWe25} (see also \autoref{lem.bdry.hol}) to treat the right-hand side $-\ddiv(G_k)$, we deduce that
\begin{align*}
    R_k^\eps[f_k]_{C^\eps(\overline{\mathcal{H}_{R_k/2} (z_k)})}\leq c.
\end{align*}
We now consider the function $\widehat{f}_k\coloneqq f_k(z_k\circ S_{R_k}z)$ to see that 
\begin{align*}
    [\widehat{f}_k]_{C^\eps(\mathcal{H}_{1,k})}\leq c\quad\text{and}\quad \|\widehat{f}_k\|_{L^2(\mathcal{H}_{1,k})}\to0,
\end{align*}
where 
\begin{align*}
    \mathcal{H}_{1,k}\coloneqq \left\{z\,:\, z_k\circ S_{R_k}z\in \mathcal{H}_{R_k}(z_k)\right\}\supset  [0,1)\times B_1'\times[0,1]\times B_1.
\end{align*}
Indeed, we have used the fact that 
\begin{align*}
    |\mathcal{H}_{R_k}(z_k)|\eqsim R_k^{-(4n+2)}\quad\text{and}\quad |\mathcal{H}_{1,k}|\eqsim 1.
\end{align*}
By Arzel\`a-Ascoli's theorem, $\widehat{f}_k $ converges uniformly to $ \widehat{f}_\infty$ in $[0,1)\times (B_1'\times[0,1])\times B_1$ with 
\begin{align*}
    \|\widehat{f}_\infty\|_{L^2([0,1)\times (B_1'\times[0,1])\times B_1)}=0.
\end{align*}
This gives the contradiction $0 = |\widehat{f}_\infty(0)|=\lim\limits_{k\to\infty}|\widehat{f}_k(0)|=\lim\limits_{k\to\infty}|{f}_k(z_k)|=1$, completing the proof.
\end{proof}
Now we are ready to prove \autoref{lem.bdry.hol.bdep}.
\begin{proof}[Proof of \autoref{lem.bdry.hol.bdep}.]
The proof is similar to \autoref{lem.bdry.hol} except that here we want to get the optimal H\"older exponent. We may assume $z_0=0$. Let us fix $\delta<\frac12$.

First, we want to show that for any $\widetilde{z}_0\in\gamma_0\cap\mathcal{Q}_{r_0}(z_0)$ and $r\leq r_0$,
\begin{align}\label{first.goal.hol.bdep}
    \dashint_{\mathcal{H}_r(\widetilde{z}_0)}|f-(f)_{\mathcal{H}_r(\widetilde{z}_0)}|\,dz\leq c\langle v_0\rangle^{c_p} r^{\frac12-\delta} \big(\|f\|_{L^\infty(\mathcal{H}_{2r_0}(z_0))}+[g]_{C^{\frac34}(\gamma_-\cap\mathcal{Q}_{2r_0}(z_0))}+C_0 \big),
\end{align}
where $c=c(n,\Lambda,\eps,\|A\|_{C^\eps(\mathcal{H}_{2r_0}(z_0))},c_B,\delta,C_1,p)$. Let us fix $M_0=M_0(n,\Lambda,c_B)$, which will be determined later (see \eqref{cond.M0.bdep.bdry.hol}). Note that when $r>\frac{\langle v_0\rangle^{-(2+p)}}{M_0}$, then we observe
\begin{align*}
    \dashint_{\mathcal{H}_r(\widetilde{z}_0)}|f-(f)_{\mathcal{H}_r(\widetilde{z}_0)}|\,dz\leq cr^{-(4n+2)-\frac12+\delta}r^{\frac12-\delta}\|f\|_{L^\infty(\mathcal{H}_{2_{r_0}}(z_0))}\leq c\langle v_0\rangle^{-(4n+3)(2+p)}\|f\|_{L^\infty(\mathcal{H}_{2r_0}(z_0))},
\end{align*}
which gives \eqref{first.goal.hol.bdep} with $c_p>(4n+3)(2+p)$ and $c=c(n,M_0)$. Thus, it suffices to prove \eqref{first.goal.hol.bdep} with $r\leq \frac{\langle v_0\rangle^{-(2+p)}}{M_0}$. 

Now let $h$ be a unique weak solution to
\begin{equation*}
\left\{
\begin{alignedat}{3}
(\partial_t+v\cdot\nabla_x)h-\ddiv(A\nabla_vh)&=B\cdot\nabla_vf&&\qquad \mbox{in  $\mathcal{H}_r(\widetilde{z}_0)$}, \\
h&=f&&\qquad  \mbox{in $\partial_{\mathrm{kin}}\mathcal{Q}_R(\widetilde{z}_0)$.}
\end{alignedat} \right.
\end{equation*}
By following the same lines as in the proof of \autoref{lem.bdry.hol} together with the second condition given in \eqref{ass.bdep.bdry.hol}, we deduce
\begin{align*}
    \dashint_{\mathcal{H}_r(\widetilde{z}_0)}|f-h|^2\,dz\leq c \Big(\|B\|^2_{L^\infty(\mathcal{H}_r(\widetilde{z}_0)}r^2\dashint_{\mathcal{H}_r(\widetilde{z}_0)}|f-h|^2\,dz+C_0^2r^{2-2\delta} \Big).
\end{align*}
By taking $M_0=M_0(n,\Lambda,c_B)$ large enough, we have
\begin{align}\label{cond.M0.bdep.bdry.hol}
    r(1+\|B\|_{L^\infty(\mathcal{H}_{r}(\widetilde{z}_0))})\leq \frac{\langle v_0\rangle^{-2}}{M_0}(c_B\langle v_0\rangle^2+1)\leq\frac1{2c},
\end{align}
which implies
\begin{align*}
    \dashint_{\mathcal{H}_r(\widetilde{z}_0)}|f-h|^2\,dz\leq cC_0^2r^{2-2\delta}.
\end{align*}
By a combination of \autoref{thm.hol12}, the second estimates of (2.33) in \cite[Lemma 2.25]{RoWe25} and \autoref{lem.rep.hol2} together with the scaling argument as in \eqref{scaling.argument}, we obtain
\begin{align*}
    r^{\frac12}[h]_{C^{\frac12-\frac{\delta}2}(\mathcal{H}_{\frac{r}2}(\widetilde{z}_0))}&\leq c \big( r^{-(2n+1)}\|h\|_{L^2(\mathcal{H}_{r}(\widetilde{z}_0))}+[r^{\frac34}g]_{C^{\frac34}(\gamma_-\cap\mathcal{Q}_r(\widetilde{z}_0))} \big),
\end{align*}
where $c=c(n,\Lambda,\eps,\|A\|_{C^\eps(\mathcal{H}_{r_0}(z_0))})$.
In particular, for the scaling argument, we have used \eqref{scaling.gamma0} by \eqref{cond.M0.bdep.bdry.hol}.
Thus, as in the proof of \autoref{lem.bdry.hol}, we deduce
\begin{equation*}
\begin{aligned}
    \dashint_{\mathcal{H}_{\rho r}(\widetilde{z}_0)}|f-(f)_{\mathcal{H}_{\rho r}(\widetilde{z}_0)}|^2\,dz&\leq c\rho^{1-{\delta}}\dashint_{\mathcal{H}_{r}(\widetilde{z}_0)}|f-(f)_{\mathcal{H}_{r}(\widetilde{z}_0)}|^2\,dz+cr^{\frac34}[g]_{C^{\frac34}(\gamma_-\cap\mathcal{Q}_r(\widetilde{z}_0))}+cr^{1-2\delta}C_0, 
\end{aligned}
\end{equation*}
where $c=c(n,\Lambda,\eps,\|A\|_{C^\eps(\mathcal{H}_{r_0}(z_0))},\delta)$. Now taking $\rho$ sufficiently small so that $\rho^{{\delta}}\leq \frac{1}{16c}$ and following the standard iterative argument as in \cite[Lemma 5.6]{KLN25}, we have \eqref{first.goal.hol.bdep}. 

Now we prove \eqref{first.goal.hol.bdep} with $\widetilde{z}_0$ replaced by $z_1\in \mathcal{H}_{r_0}(z_0)$. Similarly, we assume $r\leq \frac{\langle v_0\rangle^{-2-p}}{M_0}$. First, suppose  $\mathcal{H}_{2r}(z_1)\cap\gamma_0\neq\emptyset$. Then there is a point $\widetilde{z}_1\in \mathcal{H}_{2r}(z_1)\cap\gamma_0$ such that $\mathcal{H}_{2r}(z_1)\subset\mathcal{H}_{8r}(\widetilde{z}_1)$. Thus, using this, we also get \eqref{first.goal.hol.bdep}. Now we assume $\mathcal{H}_{2r}(z_1)\cap\gamma_0=\emptyset$. Then we have for $\rho_1\coloneqq \frac1{64}\max\{{(x_{1,n})^{\frac13}},|v_{1,n}|\}$, $r\leq 64\rho_1$ and $\mathcal{H}_{\rho_1}(z_1)\cap\gamma_0=\emptyset$. By \autoref{rmk.c12.bdep} together with the scaling arguments as in \eqref{scaling.gamma0} by \eqref{cond.M0.bdep.bdry.hol}, which allows us to remove the dependence of $\|B\|_{L^\infty}$, we have 
\begin{align*}
    J\coloneqq\dashint_{\mathcal{H}_r(z_1)}|f-(f)_{\mathcal{H}_r(z_1)}|^2\,dz&\leq c\left(\frac{r}{\rho_1}\right)^{1-2\delta}\Bigg(\dashint_{\mathcal{H}_{\rho_1}(z_1)}|f-(f)_{\mathcal{H}_{\rho_1}(z_1)}|^2\,dz+\rho_1^2\|G\|^2_{L^\infty(\mathcal{H}_{\rho_1}(z_1)}\\
    &\quad\qquad\qquad\qquad+\rho_1^4\|F\|^2_{L^\infty(\mathcal{H}_{\rho_1}(z_1)}+\rho_1[g]^2_{C^{\frac12}(\gamma_-\cap\mathcal{H}_{\rho_1}(z_1))}\Bigg),
\end{align*}
where $c=c(n,\Lambda,\eps,\|A\|_{C^{\eps}(\mathcal{H}_{r_0}(z_0))})$, whenever $r\leq \rho_1/2$. Now using the third condition given in \eqref{ass.bdep.bdry.hol} together with $\mathcal{H}_{\rho_1}(z_1)\cap\gamma_0=\emptyset$, we further estimate $J$ as
\begin{equation*}
    \begin{aligned}
        J\leq c\left(\left(\frac{r}{\rho_1}\right)^{1-2\delta}\dashint_{\mathcal{H}_{\rho_1}(z_1)}|f-(f)_{\mathcal{H}_{\rho_1}(z_1)}|^2\,dz+r^{1-2\delta}(C_0+[g]^2_{C^{\frac12}(\gamma_-\cap\mathcal{H}_{r_0}(z_0))})\right).
    \end{aligned}
\end{equation*}
When $r\in[\rho_1/2,64\rho_1]$, we have $J\leq c\dashint_{\mathcal{H}_{64\rho_1}(z_1)}|f-(f)_{\mathcal{H}_{64\rho_1}(z_1)}|^2\,dz$, which gives
\begin{equation*}
    \begin{aligned}
        J\leq c\left(\left(\frac{r}{\rho_1}\right)^{1-2\delta}\dashint_{\mathcal{H}_{64\rho_1}(z_1)}|f-(f)_{\mathcal{H}_{64\rho_1}(z_1)}|^2\,dz+r^{1-2\delta}((C_0)^2+[g]^2_{C^{\frac12}(\gamma_-\cap\mathcal{H}_{r_0}(z_0))})\right).
    \end{aligned}
\end{equation*}
Since $\mathcal{H}_{64\rho_1}(z_1)\cap\gamma_0\neq\emptyset$, there is a point $\widetilde{z}_1\in\gamma_0$ such that $\mathcal{H}_{64\rho_1}(z_1)\subset \mathcal{H}_{2^{9}\rho_1}(\widetilde{z}_1)$. Now we plug \eqref{first.goal.hol.bdep} with $\widetilde{z}_0$ and $r$ replaced by $\widetilde{z}_1$ and $64\rho_1$, respectively, into the estimate of $J$ so that we derive
\begin{equation*}
    \begin{aligned}
        J\leq c\langle v_0\rangle^{2c_p} r^{1-2\delta}(\|f\|^2_{L^\infty(\mathcal{H}_{2r_0}(z_0))}+[g]^2_{C^{\frac34}(\gamma_-\cap\mathcal{Q}_{2r_0}(z_0))}+(C_0)^2).
    \end{aligned}
\end{equation*}
Therefore, we have proved \eqref{first.goal.hol.bdep} for any $z\in\gamma_0\cap\mathcal{Q}_{r_0}(z_0)$, which implies the desired $C^{\frac12}$-estimate.
\end{proof}

\subsection{Regularity of the diffuse reflection boundary datum}

In this subsection, we prove \autoref{lem.reg.nf.diff}.
First, we introduce some notation, which will be used frequently.
\begin{align}\label{defn.r0rho0}
    \langle v_0\rangle\coloneqq 1+|v_0|,\quad r_0\coloneqq \frac{\langle v_0\rangle^{-p} }{64},\quad \rho_0\coloneqq \max\left\{\frac{|v_{0,n}|}{2},\left(\frac{|x_{0,n}|}{2}\right)^{\frac13}\right\},
\end{align}
where the constant $p\geq \frac12$ is fixed in \autoref{lem.reg.nf.diff}.

\begin{proof}[Proof of \autoref{lem.reg.nf.diff}.]
First, we prove regularity of the time derivative of $\cN f$. Recall the definition of $g = g(t,x')$ in \eqref{defn.nf}.
For any $h\in \bbR$ and $k\in \N\cup\{0\}$, we write 
\begin{align}\label{defn.diff.quot}
\begin{split}
    \delta_{h,1}f(z) &\coloneqq \delta_hf(z) \coloneqq f_h(z)-f(z),\quad f_h(z) \coloneqq f( (h,0,0)\circ z) = f(t + h , x,v), \\
    \delta_{h,k}f &\coloneqq  \delta_{h,k-1}\circ \delta_hf,\qquad\qquad\qquad\quad \widehat{g}_k\coloneqq \frac{\delta_{h,k}g}{|h|^{k}}.
    \end{split}
\end{align}
It is straightforward to check that $f_h$ is also a weak solution. We assume 
\begin{align}\label{m.ass.diff}
    \sup_{z_0\in\mathcal{H}_{1,\infty}}\langle v_0\rangle^{M}\sum_{i=0}^k\left\|\partial_t^i\mathcal{M}\right\|_{C^{\frac76}(\gamma_-\cap\mathcal{Q}_{32r_0}(z_0))}\leq 1
\end{align}
for some large constant $M$. Let us now fix $k \in \N \cup \{0\}$. We will prove that for any $z_0\in\mathcal{H}_{1,\infty}$, $r\in(0,r_0]$, $\eps\in(0,1)$, and $|h|\leq 2^{-100k}\eqqcolon h_0$, if $M$ is sufficiently large, then it holds
\begin{align}\label{goal.hol.diffuse}
    \|\widehat{g}_k-a_r\|_{L^\infty(\gamma_-\cap\mathcal{Q}_r(z_0))}&\leq cr^{\frac76}\langle v_0\rangle^{c_k}D_k,
\end{align}
where $c_k=c_k(n,k,p)$ and $c=c(n,\Lambda,k,C_k)$, 
\begin{equation}\label{defn.Mk.diff}
\begin{aligned}
    &a_r\coloneqq \int_{\bbR^{n-1}}\int_{w_n<-16r-r_{w'}}\frac{\delta_{h,k}f}{|h|^{k}}(t_0,x_0',0,w',w_n)(w_n)_-\,dw_n\,dw',\\
    &C_k\coloneqq \sup_{z_0\in \mathcal{H}_{\frac32,\infty}}\left\|{A}\right\|_{C^{3k+1}(\mathcal{H}_{32r_0}(z_0))}+\frac{1}{\langle v_0\rangle^2}\left\|B\right\|_{C^{3k}(\mathcal{H}_{32r_0}(z_0))},\\
    &D_k\coloneqq \|\langle \cdot\rangle^{c_k}f(t,x,\cdot)\|_{L^\infty(\mathcal{H}_{2,\infty})}+\left\|\langle\cdot\rangle^{c_k}{F(t,x,\cdot)}\right\|_{C^{3k}(\mathcal{H}_{2,\infty})}
\end{aligned}
\end{equation}
with $r_{w'}\coloneqq 16(r^2|w'-v_0'|)^{\frac13}$, where the space $\|\langle \cdot\rangle^{c_k}F(t,x,\cdot)\|_{C^{3k}}$ is defined in \eqref{space.hol.weight}. The choice of $r_{w'}$ will be required in order to have \eqref{rel.diff.cylinder} and we note for any $j\leq k$ and $\eps\in[0,1]$,
\begin{align}\label{partialt}
    \|{\partial_t^jf}\|_{C^{\eps}(\mathcal{H}_{r}(z_0))}\leq c(k,j,\eps)\|f\|_{C^{3k+\eps}(\mathcal{H}_{r}(z_0))}.
\end{align}

Note that once \eqref{goal.hol.diffuse} is established, the proof can be concluded easily (see Step 4). We now divide the proof of \eqref{goal.hol.diffuse} into three steps.

\textbf{Step 1. Estimates of higher order different quotients of the boundary data.} First, we are going to prove
\begin{equation}\label{widehatg.hol}
\begin{aligned}
    &|h|^{k}\|\widehat{g}_k-a_r\|_{L^\infty(\gamma_-\cap\mathcal{Q}_{r}(z_0))}\\
    &\leq cr^{\frac43}\langle v_0\rangle \|\langle\cdot\rangle^4\delta_{h,k}f(t,x',0,\cdot)\|_{L^\infty(\bbR^n)}\\
    &\quad+cr^{\frac{2(2-\eps)}{3}}\langle v_0\rangle\sup_{(t,x')\in \mathbf{B}_{\frac32}}\underbrace{\int_{ \frac{1}{{\langle w\rangle}^{p}}\leq |w_n|}\int_{{w_n<-64r
   }}\langle w\rangle^5[\delta_{h,k}f]_{C^{2-\eps}(\mathcal{H}_{\frac{1}{64{\langle w\rangle}^p}}(t,x',0,w))}(w_n)_-\,dw}_{\eqqcolon L_1}\\
    &\quad+ cr^{\frac{2(2-\eps)}{3}}\langle v_0\rangle\sup_{(t,x')\in \mathbf{B}_{\frac32}}\underbrace{\int_{\frac{1}{{\langle w\rangle}^{p}}\geq |w_n|}\int_{{w_n<-64r
    }}\langle w\rangle^5[\delta_{h,k}f]_{C^{2-\eps}(\mathcal{H}_{\frac{|w_n|}{64}}(t,x',0,w))}(w_n)_-\,dw}_{\eqqcolon L_2},
\end{aligned}
\end{equation}
where $ \mathbf{B}_{\frac32}\coloneqq I_{(\frac32)^2}\times B'_{(\frac32)^3}$ and $c=c(n,\eps,p)$. Let us fix $(t,x',0,v)\in \gamma_-\cap\mathcal{H}_r(z_0)\subset \mathcal{H}_{1,\infty}$ to observe
\begin{align*}
    |h|^k|\widehat{g}_k(t,x',0,v)-a_r|&\leq \left|\int_{w_n<-64r-r_{w'}}\left(\delta_{h,k}f(t,x',0,w',w_n)-\delta_{h,k}f(t_0,x_0',0,w',w_n)\right)(w_n)_-\,dw\right|\\
    &\quad+\left|\int_{0>w_n>-64r-r_{w'}}\delta_{h,k}f(t,x',0,w',w_n)(w_n)_-\,dw\right|\eqqcolon J_1+J_2.
\end{align*}
To estimate $J_1$, we investigate the term $J$ defined by
\begin{align*}
    J\coloneqq |\delta_{h,k}f(t,x',0,w',w_n)-\delta_{h,k}f(t_0,x_0',0,w',w_n)|.
\end{align*}
Now we split into two cases, as $\mathcal{H}_{\frac{|w_n|}{2}}(t_0,x_0',0,w)\not\subset \mathcal{H}_{\frac32,\infty} $ when $|w_n|\geq \frac{1}{\sqrt{\langle w\rangle}}$ and we want to get H\"older estimates on the cylinder $\mathcal{H}_{r}(z_1)$, where $r\lesssim \langle v_1\rangle ^{-p}$.
\begin{itemize}
\item Assume $\frac{1}{{\langle w\rangle}^p}\geq{|w_n|}$ and note
\begin{align*}
    J
    &\leq |\delta_{h,k}f(t,x',0,w',w_n)-\delta_{h,k}f(t_0,x'+(t_0-t)w',(t_0-t)w_n,w',w_n)|\\
    &\quad+ |\delta_{h,k}f(t_0,x'+(t_0-t)w',(t_0-t)w_n,w',w_n)-\delta_{h,k}f(t_0,x_0',0,w',w_n)|.
\end{align*}
By the choice of $r_{w'}=16(r^2|w'-v_0'|)^{\frac13}$ and $\frac{1}{{\langle w\rangle}^p}\geq{|w_n|}$, we have 
\begin{align}\label{rel.diff.cylinder}
    &(t_0,x'+(t_0-t)w',(t_0-t)w_n,w)\in \mathcal{H}_{R_0}(t,x',0,w)\subset \mathcal{H}_{\frac32,\infty},\nonumber\\
&(t_0,x'+(t_0-t)w',(t_0-t)w_n,w)\in \mathcal{H}_{R_0}(t_0,x_0',0,w)\subset \mathcal{H}_{\frac32,\infty},
\end{align}
where $R_0\coloneqq \frac{|w_n|}{64}$. Therefore, we estimate $J$ as
\begin{equation}\label{estj1.diff}
\begin{aligned}
    J&\leq c(r^{2-\eps}+|x'-x_0'+(t_0-t)w'|^{\frac{2-\eps}3}+|(t_0-t)w_n|^{\frac{2-\eps}3})\sup_{(t,x')\in\mathbf{B}_{\frac32}}[\delta_{h,k}f]_{C^{2-\eps}(\mathcal{H}_{R_0}(t,x',0,w))}\\
    &\leq cr^{\frac{2(2-\eps)}3}\langle v_0\rangle\langle w\rangle\sup_{(t,x')\in\mathbf{B}_{\frac32}}[\delta_{h,k}f]_{C^{2-\eps}(\mathcal{H}_{R_0}(t,x',0,w))},
\end{aligned}
\end{equation}
where we have also used the fact that $(t,x',0,v)\in \mathcal{H}_r(z_0)$, which implies
\begin{equation*}
\begin{aligned}
    |x'-(x_0'+(t_0-t)w')|\leq |x'-x_0'+(t_0-t)v_0'|+|(t_0-t)(v_0'-w')|&\leq r^3+r^2(\langle v_0\rangle+\langle w\rangle)\\
    &\leq c r^2\langle v_0\rangle\langle w\rangle,
\end{aligned}
\end{equation*}
    \item Assume $\frac{1}{{\langle w\rangle}^p}\leq{|w_n|}$. Then we write
    \begin{align*}
    J&\leq |\delta_{h,k}f(t,x',0,w',w_n)-\delta_{h,k}f(t_0,x',0,w',w_n)|+ |\delta_{h,k}f(t_0,x',0,w',w_n)-\delta_{h,k}f(t_0,x_0',0,w',w_n)|\\
    &\eqqcolon J_{1,1}+J_{1,2}.
\end{align*}
We may assume $t_0<t$. Now let us choose $R_0\coloneqq \frac{1}{64{\langle w\rangle}^p}$ and define a sequence $t_i\coloneqq t_0+(16)^2(i-1)(R_0)^5$ to see that
\begin{align*}
    (R_0)^3>(16)^2(R_0)^5\langle w\rangle\quad\text{and}\quad R_0^2\langle w\rangle\leq \frac1{64}.
\end{align*}
Thus, there is a positive integer $m$ such that $t\in(t_{m-1},t_m]$ and
\begin{align*}
    (t_{i-1},x',0,w',w_n)\in \mathcal{H}_{R_0}(t_i,x',0,w)\subset \mathcal{H}_{\frac32,\infty},\quad (16)^2(m-1)(R_0)^5<|t-t_0|.
\end{align*}
Using these, we can estimate $J_{1,1}$ as
\begin{align*}
    J_{1,1}&\leq \sum_{i=1}^{m}|\delta_{h,k}f(t_i,x',0,w)-\delta_{h,k}f(t_{i-1},x',0,w)|\\
    &\leq c\sum_{i=1}^{m}\left(|t_i-t_{i-1}|^{\frac{2-\eps}2}+|(t_i-t_{i-1})w|^{\frac{2-\eps}{3}}\right)[\delta_{h,k}f]_{C^{2-\eps}(\mathcal{H}_{R_0}(t_{i},x',0,w))}\\
    &\leq c\sum_{i=1}^{m}|r^2\langle w\rangle|^{\frac{2(1-\eps)}{3}}[\delta_{h,k}f]_{C^{2-\eps}(\mathcal{H}_{R_0}(t_{i},x',0,w))}\\
    &\leq cm\sup_{i\in[1,m]}r^{\frac{2(2-\eps)}3}\langle w\rangle^{\frac23}[\delta_{h,k}f]_{C^{2-\eps}(\mathcal{H}_{R_0}(t_{i},x',0,w))}\\
    &\leq c\sup_{(t,x')\in\mathbf{B}_{\frac32}}r^{\frac{2(2-\eps)}3}\langle w\rangle^{1+5p}[\delta_{h,k}f]_{C^{2-\eps}(\mathcal{H}_{R_0}(t,x',0,w))}.
\end{align*}
Similarly, by choosing $x_{0,i}'=x_{0}'+(i-1)(R_0)^3$, we estimate $J_{2,2}$ as 
\begin{align*}
    J_{1,2}&\leq \sum_{i=1}^{m}|\delta_{h,k}f(t_0,x_i',0,w)-\delta_{h,k}f(t_0,x_{i-1}',0,w)|\\
     &\leq c\sum_{i=1}^{m}|x'-x'_{0}|^{\frac{2(1-\eps)}3}[\delta_{h,k}f]_{C^{2-\eps}(\mathcal{H}_{R_0}(t_0,x'_{i-1},0,w))}\\
    &\leq cr^{\frac{2(2-\eps)}3}\langle v_0\rangle^{\frac23}\langle w\rangle^{3p}\sup_{(t,x')\in \mathbf{B}_{\frac32}}[\delta_{h,k}f]_{C^{2-\eps}(\mathcal{H}_{R_0}(t,x',0,w))},
\end{align*}
where $m$ is the positive integer such that $x'\in [x_{m-1}',x_{m}']$ and we have used the fact that $|x'-x_0'|\leq r^3+|(t-t_0)v_0'|$ by $(t,x',0,v)\in \mathcal{H}_r(z_0)$. Combining the estimates $J_{1,1}$ and $J_{1,2}$ gives
\begin{align}\label{estj2.diff}
    J\leq cr^{\frac{2(2-\eps)}3}\langle v_0\rangle \langle w\rangle^{10p}\sup_{(t,x')\in \mathbf{B}_{\frac32}}[\delta_{h,k}f]_{C^{2-\eps}(\mathcal{H}_{R_0}(t,x',0,w))}.
\end{align}
\end{itemize}
Next, we estimate $J_1$ as
\begin{align*}
    J_2&\leq \int\int^0_{-16r-r_{w'}}\sup_{w_n<0}\left|\delta_{h,k}f(t,x',0,w',w_n)\right||w_n|\,dw_n\,dw'\\
    &\leq cr^{\frac43}\int_{\bbR^{n-1}}\sup_{w_n<0}\left|\delta_{h,k}f(t,x',0,w',w_n)\right|(1+|w'-v_0'|^{\frac23})\,dw'\\
    &\leq cr^{\frac43}\langle v_0'\rangle ^{\frac23}\int_{\bbR^{n-1}}\sup_{w_n<0}\left|\delta_{h,k}f(t,x',0,w',w_n)\right|\langle w'\rangle^{4} \langle w'\rangle^{-\frac{10}3}\,dw'\\
    &\leq cr^{\frac43}\langle v_0\rangle\|\langle\cdot\rangle^4\delta_{h,k}f(t,x',0,\cdot)\|_{L^\infty(\bbR^n)}.
\end{align*}
Combining the previous estimate with \eqref{estj1.diff} and \eqref{estj2.diff} yields \eqref{widehatg.hol}, as desired.

\textbf{Step 2. Regularity of higher order different quotients of the solution.} 
By taking the different quotient operator to \eqref{sub.eq.diffuse}, we get for any $l\leq k$,
\begin{equation}\label{diff1.eq.diffuse.ind}
\left\{
\begin{alignedat}{3}
(\partial_t+v\cdot\nabla_x)\delta_{h,l}f-\ddiv(A_{lh}\nabla_v\delta_{h,l}f)&=B_{lh}\cdot\nabla_v\delta_{h,l}f+\delta_{h,l}F+G_l&&\quad \mbox{in  $\{x_n>0\}\cap \mathcal{H}_{\frac74,\infty}$}, \\
\delta_{h,l}f&=\delta_{h,l}\mathcal{N}f&&\quad  \mbox{in $\gamma_-$},
\end{alignedat} \right.
\end{equation}
where 
\begin{align*}
    G_l\coloneqq \sum_{i=0}^{l}\ddiv(\delta_{h,l-i}A_{ih}\nabla_v\delta_{h,l}f)+\delta_{h,l-i}B_{ih}\cdot\nabla_v\delta_{h,i}f.
\end{align*}
Then by \autoref{lem.gra.bdep}, we have for any $\mathcal{H}_{2\rho}(z_1)\cap\gamma_0=\emptyset$ with $\rho\leq r_1 =\frac{\langle v_1\rangle^p}{16}$, $x_{1,n}=0$, and $v_{1,n}<0$, and $l\in \{0,\dots,k\}$,
\begin{equation*}
\begin{aligned}
    &\rho^{\eps-1}\|\nabla_v\delta_{h,l}f\|_{L^\infty(\mathcal{H}_\rho(z_1))}+[\delta_{h,l}f]_{C^{2-\eps}(\mathcal{H}_\rho(z_1))}\\
    &\leq 
    c\langle v_1\rangle^{2}(\rho^{-2+\eps}\|\delta_{h,l}f\|_{L^\infty(\mathcal{H}_{2\rho}(z_1))}+\|\delta_{h,l}F\|_{L^\infty(\mathcal{H}_{2\rho}(z_1))})\\
    &\quad+c\langle v_1\rangle^{2}\sum_{i=0}^{l-1}(\|\delta_{h,l-i} A_{ih}\|_{C^{1}(\mathcal{H}_{2\rho}(z_1))}+\|\delta_{h,l-i} B_{ih}\|_{L^{\infty}(\mathcal{H}_{2\rho}(z_1))})\|\nabla_v\delta_{h,i}f\|_{C^{1-\eps}(\mathcal{H}_{2\rho}(z_1))},
\end{aligned}
\end{equation*}
where $c=c(n,\eps,\|A_{lh}\|_{C^{1-\eps}(\mathcal{H}_{2\rho}(z_1))},c_B)$. Therefore, by iterating this estimate $l-1$ times, we deduce
\begin{align}\label{c2eps.ind.diff}
    \left[\frac{\delta_{h,l}f}{|h|^l}\right]_{C^{2-\eps}(\mathcal{H}_\rho(z_1))}&\leq c\langle v_1\rangle^{4(l+1)}\sum_{i=0}^{l}\left(\rho^{-2+\eps} \left\|\frac{\delta_{h,i}f}{|h|^i}\right\|_{L^\infty(\mathcal{H}_{2\rho}(z_1))}+\left\|\frac{\delta_{h,i}F}{|h|^i}\right\|_{L^\infty(\mathcal{H}_{2\rho}(z_1))}\right),
\end{align}
where $c=c(n,l,\eps,c_B,C_l)$ and the constant $C_l$ is determined in \eqref{defn.Mk.diff}. We will use this estimate on the terms $L_1,L_2$ in \eqref{widehatg.hol}. 

\textbf{Step 3. Induction.}  Now, based on an inductive argument, we will prove that for any $j\in\{0,1,\ldots , k\}$ and $\delta\in(0,\frac12)$, there is a constant $N_j=N_j(n,j,p)$ such that for any $z_2\in \mathcal{H}_{\frac32,\infty}$ and $\rho\leq {r_2}=\frac{1}{64{\langle v_2\rangle}^p}$,
\begin{equation}\label{ind.level}
\begin{aligned}
&\left[\frac{\delta_{h,j}f}{|h|^j}\right]_{C^{\frac12-\delta}(\mathcal{H}_{2\rho}(z_2))}+\left\|\frac{\delta_{h,j}f}{|h|^j}\right\|_{L^\infty(\mathcal{H}_{2\rho}(z_2))}\\
&\leq c\langle v_2\rangle^{N_j}\left(\|f\|_{L^\infty(\mathcal{H}_{4r_2}(t,x_2,v_2))}+\left\|{F}\right\|_{C^{3j}(\mathcal{H}_{4r_2}(t,x_2,v_2))}+D_j\left\|{\mathcal{M}}\right\|_{C^{3_j+\frac76}(\gamma_-\cap\mathcal{Q}_{4r_2}(t,x_2,v_2))}\right),
\end{aligned}
\end{equation}
where $c=c(n,\delta,j,C_j,p)$, and $C_j,D_j$ are determined in \eqref{defn.Mk.diff}, provided that the constant $M$ given in \eqref{m.ass.diff} is sufficiently large, which will be used in \eqref{l1.mcon.used} and \eqref{l2.mcon.used}.  Note that we just choose $\frac76\in(1,\frac43)$ for convenience. Once \eqref{ind.level} is proved, we can use it to estimate the first term in \eqref{widehatg.hol}.

Clearly, for $j=0$, combining \eqref{c2eps.ind.diff} and \eqref{widehatg.hol}, we have that when $A\in C^{2(\eps-\frac12)}$ for $\eps>\frac12$, then $g\in C^{{\frac{4\eps}3}-\delta}$ for any $\delta>0$. Therefore, we get \eqref{second.goal.diff} and then apply \autoref{thm.hol12} to get \eqref{ind.level} with $j=0$.

Now, under the assumption \eqref{ind.level} for any $j=0,\ldots, l$, we will prove \eqref{ind.level} with $j=l+1$. 

\textbf{Step 3-1. Higher regularity for \mbox{$\frac{\delta_{h,{l+1}f}}{|h|^{l+\alpha}}$ for $\alpha<\frac16$ away from $\gamma_0$}.} Let us fix $\alpha<\frac16$. Then using \eqref{ind.level}, \eqref{hol.rel.kin}, and \eqref{hol.diff.rel}, we derive for any $z_2\in \mathcal{H}_{1,\infty}$
\begin{equation}\label{ind.level.alpha}
\begin{aligned}
    \left\|\frac{\delta_{h,l+1}f}{|h|^{l+\alpha}}\right\|_{L^\infty(\mathcal{H}_{2r_2}(z_2))}&\leq c\langle v_2\rangle^{c_l}\left(\|f\|_{L^\infty(\mathcal{H}_{4r_2}(t,x_2,v_2))}+\left\|{F}\right\|_{C^{3l}(\mathcal{H}_{4r_2}(t,x_2,v_2))}\right)\\
&\quad+cD_l\langle v_2\rangle^{c_l}\left\|{\mathcal{M}}\right\|_{C^{3l+\frac76}(\gamma_-\cap\mathcal{Q}_{4r_2}(t,x_2,v_2))},
\end{aligned}
\end{equation}
 where $c=c(n,\alpha,l,C_l,p)$, $c_l=c_l(n,l,p)$.

Using this and \eqref{c2eps.ind.diff} with $\frac{\delta_{h,l}f}{|h|^l}$ replaced by $\frac{\delta_{h,l+1}f}{|h|^{l+\alpha}}$, we get for any $z_1\in \mathcal{H}_{1,\infty}$ with $\mathcal{H}_{2\rho}(z_1)\cap\gamma_0=\emptyset$ and $\rho\leq r_1$, $x_{1,n}=0$ and $v_{1,n}<0$,
\begin{equation}\label{ind.c2eps}
\begin{aligned}
    \left[\frac{\delta_{h,l+1}f}{|h|^{l+\alpha}}\right]_{C^{2-\eps}(\mathcal{H}_\rho(z_1))}&\leq c\rho^{-2+\eps}\langle v_1\rangle^{c_l}\left(\|f\|_{L^\infty(\mathcal{H}_{4r_1}(t,x_1,v_1))}+\left\|{F}\right\|_{C^{3(l+1)}(\mathcal{H}_{4r_1}(t,x_1,v_1))}\right)\\
    &\quad+c\rho^{-2+\eps}D_l\langle v_1\rangle^{c_l}\left\|{\mathcal{M}}\right\|_{C^{3l+\frac76}(\gamma_-\cap\mathcal{Q}_{4r_1}(t,x_1,v_1))},
\end{aligned}
\end{equation}
where  $c=c(n,\alpha,l,\eps,c_B,C_l,p)$ and $r_1=\frac{\langle v_1\rangle^{-p}}{64}$. Indeed, we have also used some covering argument to get the radius $4r_1$ for the cylinders given in the right-hand side of \eqref{ind.c2eps}. Now plugging this with $z_1$ and $\rho$ replaced by $(t,x',0,w)$ and $\frac{1}{64{\langle w\rangle }^p}\eqqcolon \widetilde{r}$ into ${L_1}$ with $k=l+1$ given in \eqref{widehatg.hol} leads to 
\begin{equation}\label{l1.mcon.used}
    \begin{aligned}
        \frac{L_1}{|h|^{l+\alpha}}&\leq \int_{\frac1{\sqrt{w}} \leq|w_n|}\int_{w_n<-64r}\langle w\rangle^{c_l+12p}\left(\|f\|_{L^\infty(\mathcal{H}_{4\widetilde{r}}(\widetilde{z}))}+\left\|{F}\right\|_{C^{3(l+1)}(\mathcal{H}_{4\widetilde{r}}(\widetilde{z}))}\right)\,dw\\
        &\quad+cD_l\int_{\frac1{\sqrt{w}} \leq|w_n|}\int_{w_n<-64r}\langle w\rangle^{c_l+12p}\left\|{\mathcal{M}}\right\|_{C^{3l+\frac76}(\gamma_-\cap\mathcal{Q}_{4\widetilde{r}}(\widetilde{z}))}\,dw\\
        &\leq cD_{l+1},
    \end{aligned}
\end{equation}
where $\widetilde{z}=(t,x',0,w)$, we choose $M=M(n,l,p)$ large enough, recalling the definition of $D_l$ given in \eqref{m.ass.diff} and using \eqref{defn.Mk.diff}. 
Similarly, using \eqref{ind.c2eps} with $\rho$ and $z_1$ replaced by $\frac{|w_n|}{64}$ and $\widetilde{z}\coloneqq(t,x',0,w)$, we estimate $\frac{L_2}{|h|^{l+\alpha}}$ as 
\begin{equation}\label{l2.mcon.used}
    \begin{aligned}
        \frac{L_2}{|h|^{l+\alpha}}&\leq \int_{\frac1{\sqrt{w}} \geq|w_n|}\int_{w_n<-64r}\langle w\rangle^{10p+c_l}\left(\|f\|_{L^\infty(\mathcal{H}_{4\widetilde{r}}(\widetilde{z}))}+\left\|{F}\right\|_{C^{3(l+1)}(\mathcal{H}_{4\widetilde{r}}(\widetilde{z}))}\right)(w_n)_-^{-1+\eps}\,dw\\
        &\quad+cD_l\int_{\frac1{\sqrt{w}} \geq|w_n|}\int_{w_n<-64r}\langle w\rangle^{10p+c_l}\left\|{\mathcal{M}}\right\|_{C^{3l+\frac76}(\gamma_-\cap\mathcal{Q}_{4\widetilde{r}}(\widetilde{z}))}(w_n)_-^{-1+\eps}\,dw\\
        &\leq cD_{l+1}\int_{w_n<-64r}\langle w'\rangle^{-2}(w_n)_{-}^{-1-\eps}\,dw_n\,dw'\\
        &\leq cD_{l+1}r^{-\eps},
    \end{aligned}
\end{equation}
where $c=c(n,\alpha,l,\eps,c_B,C_l,p,\eps)$. Since \eqref{l1.mcon.used} and \eqref{l2.mcon.used} are true for any $(t,x')\in\mathbf{B}_{\frac32}$, combining these estimates, \eqref{ind.level.alpha} and  \eqref{widehatg.hol} with $k=l+1$ and $\eps=\frac1{10}$ leads to
\begin{equation}\label{boundary.ind0}
    \begin{aligned}
        \left\|\frac{\delta_{h,l+1}g}{|h|^{l+\alpha}}\right\|_{C^{\frac76}(\gamma_-\cap\mathcal{Q}_{r_0}(z_0))}\leq cD_{l+1},
    \end{aligned}
\end{equation}
where $c=c(n,\alpha,l,\eps,c_B,C_l,p)$.
Note that inductively, we also get 
\begin{equation}\label{boundary.ind}
    \begin{aligned}
        \left\|\frac{\delta_{h,j}g}{|h|^{j}}\right\|_{C^{\frac76}(\gamma_-\cap\mathcal{Q}_{r_0}(z_0))}\leq D_j
    \end{aligned}
\end{equation}
for any $j\in\{0,\ldots,l\}$.

\textbf{Step 3-2. Explosion of gradients of \mbox{$\frac{\delta_{h,l}f}{|h|^l}$} near $\gamma_0$.} The results in this step are required in order to treat the source terms $G_l$ in divergence form in \eqref{diff1.eq.diffuse.ind}. We combine $C^{1/2-\delta}$ estimates near $\gamma_0$ with higher order estimates away from $\gamma_0$. First, using \autoref{lem.gra.bdep} with
$\mathcal{H}_{2\rho}(z_1)\cap\gamma_0=\emptyset$ with $\rho\leq1$, and following the same iterative argument as in \eqref{c2eps.ind.diff} with $\delta_{h,l}f$ replaced by $\delta_{h,l}f-\delta_{h,l}f(z_1)$, we deduce
\begin{align*}
   & \rho^{-\frac16}\left\|\frac{\nabla_v\delta_{h,l}f}{|h|^{l}}\right\|_{L^{\infty}(\mathcal{H}_\rho(z_1))}+ \left[\frac{\nabla_v\delta_{h,l}f}{|h|^{l}}\right]_{C^{\frac16}(\mathcal{H}_\rho(z_1))}\\
    &\leq c\langle v_1\rangle^{4(l+1)}\sum_{i=0}^l\rho^{-\frac76} \left\|\frac{\delta_{h,i}f-\delta_{h,i}f_{}(z_1)}{|h|^i}\right\|_{L^\infty(\mathcal{H}_{2\rho}(z_1))}\\
    &\quad+c\langle v_1\rangle^{4(l+1)}\sum_{i=0}^l\left(\left\|\frac{\delta_{h,i}F}{|h|^i}\right\|_{L^\infty(\mathcal{H}_{2\rho}(z_1))}+\left[\frac{\delta_{h,i}\mathcal{N}f}{|h|^i}\right]_{C^{\frac76}(\mathcal{H}_{2\rho}(z_1))}\right),
\end{align*} 
where $c=c(n,l,c_B,C_l)$. Now plugging in the $C^{\frac12-\delta}$ estimates of $\frac{\delta_{h,i}f_{(l-i)h}}{|h|^i}$ given in \eqref{ind.level} with $j\in\{0,\ldots,l\}$ and $z_2$ replaced by $z_1$, and using \eqref{boundary.ind}, we deduce for any $\delta>0$,
\begin{equation}\label{gra.point.diff.las0t}
\begin{aligned}
    \left\|\frac{\delta_{h,l}\nabla_vf}{|h|^l}\right\|_{L^{\infty}(\mathcal{H}_\rho(z_1))}&\leq c\langle v_1\rangle^{c_l}\rho^{-\frac12-\delta}\left(\|f\|_{L^\infty(\mathcal{H}_{4r_1}(t,x_1,v_1))}+\left\|{F}\right\|_{C^{3l}(\mathcal{H}_{4r_1}(t,x_1,v_1))}\right)\\
    &\quad+cD_l\langle v_1\rangle^{c_l}\rho^{-\frac12-\delta}\left\|\mathcal{M}\right\|_{C^{3l+\frac76}(\gamma_-\cap\mathcal{Q}_{4r_1}(t,x_1,v_1))},
\end{aligned}
\end{equation}
where $c_l=c_l(n,l,p)$ and $c=c(n,\alpha,\delta,l,c_B,C_l,p)$.

\textbf{Step 3-3. \mbox{$C^{\frac12-\delta}$ estimates of $\frac{\delta_{h,l+1}f}{|h|^{l+\alpha}}$ up to $\gamma_0$}.} Now we fix $z_2\in\gamma_0$ and $\widehat{r}_2\coloneqq\frac{\langle v_2\rangle^{-p}}{64}(1-2^{-\frac1p})$ to see that for any $z_1\in\mathcal{H}_{\widehat{r}_2}(z_2)$, $r_1=\frac{\langle v_1\rangle^{-p}}{64}\leq 2r_2=\frac{\langle v_2\rangle^{-p}}{32}$, which implies $\mathcal{H}_{4r_1}(z_1)\subset \mathcal{H}_{32r_2}(z_2)$.
First, let us write $\widetilde{G}\coloneqq \sum\limits_{i=0}^{l}\left(\frac{\delta_{h,l-i+1}A}{|h|^{l-i+\alpha}}\frac{\delta_{h,i}\nabla_vf_{(l+1-i)h}}{|h|^{i}}\right)$ to see that
\begin{align*}
   \sup_{(t,x',v')\in\mathcal{H}'_{\widehat{r}_2}(z'_2)} \|\widetilde{G}\|_{L^{8-\delta_0}(H_{\widehat{r}_2})}&\leq c\int_{H_{\widehat{r}_2}}\sup_{z_1\in\mathcal{H}_{\widehat{r}_2}(z_2)}\sum_{i=0}^l\left\|\frac{\delta_{h,l}\nabla_vf}{|h|^l}\right\|_{L^{\infty}(\mathcal{H}_\rho(z_1))}\,dx_{1,n}\,dv_{1,n}
   \\&\leq c\int_{H_r}\max\{|x_{1,n}|^{\frac13},|v_{1,n}|\}^{(-\frac12-\sigma)(8-\delta_0)}\\
   &\times\Bigg(\langle v_2\rangle^{c_l}\left(\|f\|_{L^\infty(\mathcal{H}_{32r_2}(z_2))}+\left\|{F}\right\|_{C^{3l}(\mathcal{H}_{32r_2}(z_2))}\right)\\
    &\quad+cD_l\langle v_2\rangle^{c_l}\left\|{\mathcal{M}}\right\|_{C^{3l+\frac76}(\gamma_-\cap\mathcal{Q}_{32r_2}(z_2))}\Bigg)\,dx_ndv_n,
\end{align*}
where $c_l=c_l(n,l,p)$, $c=c(n,\alpha,\sigma,l,c_B,C_l,p,\delta_0)$ and we have used \eqref{gra.point.diff.las0t} with $\delta=\sigma$ and $\rho=\frac1{2}\max\{|x_{2,n}|^{\frac13},|v_{2,n}|\}$. Then note that for any small $\delta_0$, there is a small constant $\sigma=\sigma(\delta_0)$ such that $(\frac12+\sigma)(8-\delta_0)<4$ so that we have 
\begin{align*}
    \sup_{(t,x',v')\in\mathcal{H}'_{\widehat{r}_2}(z'_2)} \|\widetilde{G}\|_{L^{8-\delta_0}(H_{\widehat{r}_2})}&\leq c\langle v_2\rangle^{c_l}\left(\|f\|_{L^\infty(\mathcal{H}_{32r_2}(z_2))}+\left\|{F}\right\|_{C^{3l}(\mathcal{H}_{32r_2}(z_2))}\right)\\
    &\quad+D_l\langle v_1\rangle^{c_l}\left\|{\mathcal{M}}\right\|_{C^{3l+\frac76}(\gamma_-\cap\mathcal{Q}_{32r_2}(z_2))},
\end{align*}
where $c_l=c_l(n,l,p)$, $c=c(n,\alpha,\sigma,l,c_B,C_l,p,\delta_0)$.
Similarly, we get the same estimates for $\widetilde{F}\coloneqq \frac{\delta_{h,l+1}F}{|h|^{l+\alpha}}+\sum\limits_{i=0}^{l}\frac{\delta_{h,l-i+1}B}{|h|^{l-i+\alpha}}\cdot\frac{\delta_{h,i}\nabla_vf_{(k-i)h}}{|h|^i}$.
Therefore, we can apply \autoref{lem.bdry.hol.bdep} with $f=\frac{\delta_{h,l+1}f}{|h|^{l+\alpha}}$, $G=\widetilde{G}$, and $F=\widetilde{F}$ together with \eqref{gra.point.diff.las0t} and \eqref{boundary.ind0} to derive that for any $z_2\in\gamma_0$ and $\rho\leq r_2$, we have
\begin{equation}\label{gamma0.est.diff.0}
\begin{aligned}
&\left[\frac{\delta_{h,l+1}f}{|h|^{l+\alpha}}\right]_{C^{\frac12-\delta}(\mathcal{H}_{2\rho}(z_2))}+\left\|\frac{\delta_{h,l+1}f}{|h|^{l+\alpha}}\right\|_{L^\infty(\mathcal{H}_{2\rho}(z_2))}\\
&\leq c\langle v_2\rangle^{c_l}\left(\|f\|_{L^\infty(\mathcal{H}_{32r_2}(z_2))}+\left\|{F}\right\|_{C^{3(l+1)}(\mathcal{H}_{32r_2}(z_2))}+D_{l+1}\left\|{\mathcal{M}}\right\|_{C^{3(l+1)+\frac76}(\gamma_-\cap\mathcal{Q}_{32r_2}(t,x_2,v_2))}\right),
\end{aligned}
\end{equation}
where $c_l=c_l(n,l,p)$ and $c=c(n,\alpha,\sigma,l,\|A\|_{C^{1}(\mathcal{H}_{2\rho}(z_1))},c_B,C_l)$, whenever $M=M(n,l,p)$ given in \eqref{m.ass.diff} is sufficiently large (this was needed for \eqref{l1.mcon.used} and \eqref{l2.mcon.used}). Moreover, when $z_2$ is away from $\gamma_0$, then by \eqref{gra.point.diff.las0t} with $\rho=r_2$, we observe that $\widetilde{F}$ and $\widetilde{G}$ are bounded. Now by \autoref{rmk.c12.bdep}, we obtain the same estimate as in \eqref{gamma0.est.diff.0}. Therefore, we have proved \eqref{gamma0.est.diff.0} for any $z_2\in\mathcal{H}_{2,\infty}$.

Thus, taking $\delta=\frac12-3\alpha$ and using some covering argument to reduce the radius of the cylinders given in the right-hand side, we have for any $z_2\in\mathcal{H}_{1,\infty}$,
\begin{align*}
   \left\|\frac{\delta_{h,l+1}f}{|h|^{l+2\alpha}}\right\|_{L^\infty(\mathcal{H}_{2\rho}(z_2))}&\leq c\langle v_1\rangle^{c_l}\left(\|f\|_{L^\infty(\mathcal{H}_{4r_2}(t,x_2,v_2))}+\left\|{F}\right\|_{C^{3(l+1)}(\mathcal{H}_{4r_2}(t,x_2,v_2))}\right)\\
&\quad+cD_{l+1}\langle v_2\rangle^{c_l}\left\|{\mathcal{M}}\right\|_{C^{3(l+1)+\frac76}(\gamma_-\cap\mathcal{Q}_{4r_2}(t,x_2,v_2))}.
\end{align*}
This estimate is a higher order version of \eqref{ind.level.alpha}. Since $\alpha \in (0,\frac16)$ was arbitrary, we can iterate repeat the previous procedure, starting from \eqref{ind.level.alpha} with $\alpha$ replaced by $2\alpha$, six times and finally derive \eqref{ind.level} with $j=l+1$, which implies \eqref{ind.level} for any $j\in \{0,\ldots,k\}$ by the induction.

\textbf{Step 4. Conclusion.}
Combining \eqref{ind.level} with $j=k$, \eqref{c2eps.ind.diff} and \eqref{widehatg.hol} immediately yields \eqref{goal.hol.diffuse}. Then, by \eqref{goal.hol.diffuse}, we have 
\begin{align*}
    \|\partial^k_tg\|_{C^{\frac76}(\gamma_-\cap\mathcal{Q}_{r_0}(z_0))}\leq c\langle v_0\rangle^{c_k}D_k,
\end{align*}
where $c=c(n,\Lambda,k,C_k)$ and $c_k=c_k(n,k,p)$. On the other hand, by considering $\delta^{x_i}_{h}g(z)=g(t,x+he_i,v)-g(z)$ with $i\in\{1,\ldots, k-1\}$ instead of $\delta_hg(z)=g(t+h,x,v)-g(z)$ and observing that \eqref{partialt} with $\partial_t$ replaced by $\partial_{x_i}$ holds, we also deduce \eqref{goal.hol.diffuse} with $\delta_h$ replaced by $\delta^{x_i}_h$, as $\delta^{x_i}_{h}f$ satisfies the equation \eqref{diff1.eq.diffuse.ind} with $\delta_{h,k}$ replaced by $\delta^{x_i}_{h,k}$. Thus, we obtain that for any $i,j$ with $i+j\leq{k}$,
\begin{equation}\label{partial.last}
\begin{aligned}
   \|\partial_x^{i}\partial^{j}_t\mathcal{N}f\|_{C^{\frac76}(\gamma_-\cap\mathcal{Q}_{r_0}(z_0))}&\leq c\|\mathcal{M}\|_{C^{3k+\frac76}(\gamma_-\cap\mathcal{Q}_{r_0}(z_0))}\langle v_0\rangle^{c_k}\bigg(\|\langle \cdot\rangle^{c_k}f(t,x,\cdot)\|_{L^\infty(\mathcal{H}_{2,\infty})}\\
   &\qquad\qquad\qquad\qquad\quad\qquad\qquad\qquad+\|\langle \cdot\rangle^{c_k}F(t,x,\cdot)\|_{C^{3k}(\mathcal{H}_{2,\infty})}\bigg),
\end{aligned}
\end{equation}
where $c=c(n,\Lambda,k,c_B)$ and $c_k=c_k(n,k,p)$, whenever $\langle v_0\rangle^{M}\|\mathcal{M}\|_{C^{3k}(\gamma_-\cap\mathcal{Q}_{r_0}(z_0))}\leq 1$ for large $M=M(n,k,p)$ and $\|B\|_{C^{3k}(\mathcal{H}_{r_0}(z_0))}\leq c_B \langle v_0\rangle^2$. Therefore, by \eqref{fund.lambda.eps}, we directly check that when $k=2\lambda$, 
\begin{align}\label{partial.last2}
    [\mathcal{N}f]_{C^{\lambda,\eps}(\gamma_-\cap\mathcal{Q}_{r_0}(z_0))}\leq c\sum_{i+j\leq 2\lambda}\|\partial_x^{i}\partial^{j}_t\mathcal{N}f\|_{C^{\frac76}(\gamma_-\cap\mathcal{Q}_{r_0}(z_0))}.
\end{align}
Thus, combining \eqref{partial.last} and \eqref{partial.last2} yields \eqref{first.goal.diff}, which completes the proof.
Note that one can slightly improve the value of $k$ here, but we decided to work in a suboptimal framework in order make the proof easier to read. 
\end{proof}

\begin{proof}[Proof of \autoref{thm.diff}]
    Since we have proved the regularity of the boundary data $\mathcal{N}f$ with in-flow, we apply \autoref{thm.hol12} and \cite[Lemma 4.4]{RoWe25} together with \autoref{lem.reg.nf.diff} to deduce the desired result.
\end{proof}

We end this section by proving that the $C^{\frac{1}{2}}$ regularity obtained in \autoref{thm.diff} is optimal. We provide a counterexample in 1D for the canonical choice of $\cM(v) = 2e^{-|v|^2}$ being the appropriately normalized boundary Maxwellian\footnote{Note that $\int_{\R} \cM(w) w_- \d w = 1$}, but our construction also works for more general choices of $\cM$.

\begin{example}
\label{example:diffuse-optimal}
   Let $n = 1$. Take a cutoff function $\psi=\psi(v)\in C_c^\infty(B_4)$ such that $\psi\equiv 1$ in $(-2,2)$.
   Then, $f\coloneqq \phi_0\psi$ is a weak solution to 
    \begin{align*}
        v\partial_x f-\partial_{vv}f=-2\partial_{v}\phi_0\partial_{v}\psi-\phi_0\partial_{vv}\psi\eqqcolon F\quad\text{in }\{x>0\}
    \end{align*}
    Since $\phi_0 \in C^{\infty}_{loc}([0,\infty) \times \{|v| \ge 2\})$, we have $F \in C^{\infty}_{loc}([0,\infty)\times\R)$. Next we select a cut off function $\xi\in C^\infty(\bbR)$ such that $\xi(v)\equiv 2$ on $v\geq -1$ and $\xi(v)\equiv 0$ on $v\leq -2$, and $\int_{\bbR}(\xi(w)e^{-|w|^2}) w_- \d w \neq 1$. Now, we choose $a \not= 0$ such that 
    \begin{align*}
        a  = \left(\int_{\R}f(0,w) w_-\,d w \right)^{-1} \left(1 - \int_{\R} (\xi(w)e^{-|w|^2}) w_- \d w \right),
    \end{align*}
    and define $\widetilde{f}=af+\xi(v)e^{-|v|^2}$. Then, setting $\mathcal{M}(v)={2e^{-|v|^2}}$, by construction, it holds
    \begin{align*}
        \mathcal{N} \widetilde{f}(0,v) = \mathcal{M}(v) \int_{\R}(af(0,w)+\xi(w)e^{-|w|^2})w_-\,d w = \mathcal{M}(v) = \xi(v)e^{-|v|^2}  ~~ \text{ for } v \in  (0,\infty),
    \end{align*}
    and therefore
    \begin{equation*}
\left\{
\begin{alignedat}{3}
v\partial_{x} \widetilde{f}-\partial_{vv}\widetilde{f}&=\widetilde{F}&&\qquad \mbox{in  $\{x>0\}$}, \\
\widetilde{f}&=\mathcal{N}\widetilde{f}&&\qquad  \mbox{in $\{0\}\times \{v>0\}$},
\end{alignedat} \right.
\end{equation*}
where $\widetilde{F}= F - \partial_{vv}(\xi(v)e^{-|v|^2}) \in C^{\infty}_{loc}([0,\infty) \times \R)$, and we used that $\phi_0(0,v) = 0$ for $v > 0$. However,  \eqref{phim.asymptotic} implies that the optimal regularity of the solution is $\widetilde{f} \in C^{\frac12}_{loc}([0,\infty) \times \R)$ is optimal.
\end{example}

\section{Proof of main results in general domains}
\label{sec:main-proof}
In this section, we will generalize our previous results to general domains via a flattening argument (similar to  \cite{DGY22,GHJO20,Sil22,RoWe25}) and prove \autoref{thm:main-diffuse}, \autoref{thm:in-flow}, \autoref{thm:f/phi}, and \autoref{thm:C3}. Note that we also establish \autoref{thm:hol12.gen}, \autoref{thm:hol12.gen.diff}, \autoref{thm:C3-coeff}, and \autoref{thm:f/phi-coeff}, which are generalizations and sharpened versions of \autoref{thm:main-diffuse}, \autoref{thm:in-flow}, \autoref{thm:C3}, and \autoref{thm:f/phi}, respectively, to kinetic Fokker Planck equations with sufficiently smooth coefficients. Since the flattening argument is quite well understood, we will omit most parts and give suitable references instead.

Let us assume that the domain $\Omega$ is of class $C^{\beta}$ with $\beta\geq2$. We now recall several useful facts which are already proved in \cite[Section 2.4]{RoWe25} and \cite[Chapter 3]{Sil22}.
\begin{itemize}
    \item There are constants $R_0,R_1\leq1$ depending only on $\Omega$ such that for any ${x}_1\in\partial\Omega$, there is a $C^{\beta-2}$
    diffeomorphism 
    \begin{equation}\label{defn.Psi}
    \Psi^{-1}_{{x}_1}(t,y,w)\equiv \Psi^{-1}(t,y,w)=(t,\varphi^{-1}(y),\nabla\varphi^{-1}(y)w),    
    \end{equation}
     where $\varphi^{-1}$ is a $C^{\beta} $ diffeomorphism and 
    \begin{equation}\label{after.trans1}
    \begin{aligned}
        ( I\times (B_{{R_0}}({x}_0',0))\times \bbR^n)\cap\{x_n>0\}&\subset \Psi_{}(I\times (\Omega\cap B_{R_1}({x}_1))\times \bbR^n )\\
        &\subset( I\times (B_{2R_0}({x}_0',0))\times \bbR^n)\cap\{x_n>0\},\\
        ( I\times B_{{R_0}}({x}_0',0)\times \bbR^n)\cap\Gamma_-&\subset\Psi_{}((I\times (\Omega\cap B_{R_1}({x}_1))\times \bbR^n)\cap\gamma_- )\\
        &\subset ( I\times B_{2R_0}({x}_0',0)\times \bbR^n)\cap\Gamma_-
    \end{aligned}
    \end{equation}
    with $I$ being a time interval $(a,b)$ and $x_0=(x_0',0)=\Psi(x_1) $. To avoid confusion of the notation on the incoming part, we write $\Gamma_-=\{x_n=0\}\times \{v_n>0\}$.
    \item If $f$ is a weak solution to \eqref{eq:FPK}, then $\widetilde{f}=f\circ\Psi^{-1}_{}(t,y,w)$ is a weak solution to 
    \begin{equation}\label{after.trans2}
\left\{
\begin{alignedat}{3}
(\partial_t+w\cdot\nabla_y)\widetilde{f}-\ddiw(\widetilde{A}\nabla_w\widetilde{f})&=\widetilde{B}\cdot\nabla_w\widetilde{f}+\widetilde{F}&&\qquad \mbox{in  $I\times  (B_{{R_0}}({x}_0',0)\cap\{x_n>0\})\times \bbR^n$}, \\
\widetilde{f}&=\widetilde{g}&&\qquad  \mbox{in $(I\times B_{{R_0}}({x}_0',0)\times \bbR^n)\cap \Gamma_-$},
\end{alignedat} \right.
\end{equation}
where we write $\Gamma_-\coloneqq (\{x_n=0\}\times\{v_n>0\})$ and
\begin{align*}
    &\widetilde{A}_{i,j}(t,y,w)\coloneqq \widehat{A}_{i,l}(y)A_{l,k}(\Psi_{}^{-1}(t,y,w))\widehat{A}_{j,k}(y)\quad\text{with}\quad\widehat{A}\coloneqq D\varphi(\varphi^{-1}(y)),\\
    &\widetilde{B}_i\coloneqq \widehat{A}_{i,j}(y)B_j(\Psi_{}^{-1}(t,y,w))+\widehat{A}^{-1}_{j,s}(y)w_s\widehat{A}^{-1}_{l,k}w_k\partial_{x_j,x_l}\varphi(x), \\
    &\widetilde{F}(t,y,w)\coloneqq F(\Psi_{}^{-1}(t,y,w)),\quad \widetilde{g}(t,y,w)\coloneqq g(\Psi_{}^{-1}(t,y,w)).
\end{align*}

\end{itemize}

We note that when the domain is smooth, then the distance function $d_\Omega(x)=:\mathrm{dist}(x,\partial\Omega)\in C^\infty(\overline{\Omega})$. This allows us to follow every argument given in \cite[Section 2.4]{RoWe25} when a regularized distance is replaced by the distance function. In particular, by using the same notation as in \cite[Section 2.4]{RoWe25}, we observe $\psi^{-1}(x)\coloneqq (x',d_\Omega(x))$ and $\varphi^{-1}(y)\coloneqq \psi(y',0)+y_n(\nabla d_\Omega)(\psi(y))$, where the function $\varphi$ given in \eqref{defn.Psi}. Thus, we have the following.
\begin{equation}\label{property.varphi}
\begin{aligned}
    &y_n=(\varphi(x))_n=\big(\varphi\big(\psi(y',0)+d_\Omega(x)\nabla d_\Omega(\psi(y))\big)_n=d_\Omega(x), \\
    &w_n=(\nabla\varphi^{-1}(y))^{-1}v)_n=(\nabla\varphi(x)v)_n=\nabla d_\Omega(x)\cdot v = n_x\cdot v,\\
    &\widetilde{A}_{n,n}(t,y,w)=(\nabla\varphi(x))_{n,k}A_{k,l}(z)(\nabla\varphi(x))_{n,l},
\end{aligned}
\end{equation}
where $n_x\coloneqq \nabla d_\Omega(x)$ and we have used the fact that for any $x\in \Omega$, there is a boundary point $\psi(y',0)\in \partial\Omega$ so that $\psi(y',d_\Omega(x))=x$ and $x=\psi(y',0)+d_\Omega(x)\nabla d_\Omega(\psi(y))$ with $y=(y',d_\Omega(x))$.

For a more detailed discussion, we refer to \cite[Section 2.4]{RoWe25}.

As we will deal with general domains, we frequently use the following notation: 
\begin{align*}
    \mathbf{H}_r(z_1)\coloneqq \{z\,:\,t\in I_{r^2}(t_1),\,v\in B_r(v_1),\,x\in B_{r^3}(x_1+(t-t_1)v_1)\cap\Omega\}.
\end{align*}

\subsection{Optimal regularity for in-flow and proof of  \texorpdfstring{\autoref{thm:in-flow}}{Theorem 1.3}}

Now we give the global H\"older regularity with in-flow condition for general domains.
\begin{theorem}\label{thm:hol12.gen}
    Let $\Omega$ be a bounded domain of class $C^{2,\eps}$ for some $\eps>0$. Let $f$ be a weak solution to 
     \begin{equation*}
\left\{
\begin{alignedat}{3}
(\partial_t+v\cdot\nabla_x){f}-\ddiv({A}\nabla_vf)&={B}\cdot\nabla_vf+{F}&&\qquad \mbox{in  $(-1,1)\times  \Omega\times \bbR^n$}, \\
{f}&={g}&&\qquad  \mbox{in $\gamma_-$},
\end{alignedat} \right.
\end{equation*}
where \begin{align*}
    \sup_{z_1\in(-1,1)\times  \Omega\times \bbR^n }\|A\|_{C^{\eps}(\mathbf{H}_1(z_1))}+\|B\|_{L^{\infty}(\mathbf{H}_1(z_1))})\leq 1.
\end{align*}
Then we have 
\begin{align*}
    \|f\|_{C^{\frac12}([-1/8,1/8]\times\Omega\times \bbR^n) }\leq c\left(\|\langle v\rangle^qf\|_{L^1((-1,1)\times \Omega\times \bbR^n)}+\|\langle v\rangle^qG\|_{L^{\infty}((-1,1)\times \Omega\times \bbR^n)}+[g]_{C^{\frac12+\eps}_q(\gamma_-)}\right)
\end{align*}
for some constants $c=c(n,\Lambda,\eps)$ and $q=q(n)$, where we write 
\begin{align*}
    [g]_{C^{\frac12+\eps}_q(\gamma_-)}\coloneqq \sup_{z_0\in\gamma_-}\langle v_0\rangle^q[g]_{C^{\frac12+\eps}(\gamma_-\cap{\mathcal{Q}_1(z_0)})}.
\end{align*}
\end{theorem}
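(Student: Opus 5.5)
The plan is to reduce to the half-space result \autoref{thm.hol12} by flattening the boundary locally, and then to patch the local estimates together with interior estimates. The key point is to track how the constants depend on $|v_0|$. First I fix $z_1=(t_1,x_1,v_1)$ with $t_1\in[-1/8,1/8]$ and split into two cases.

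If $x_1$ is at distance at least $R_1/2$ from $\partial\Omega$, I use interior estimates. \autoref{lem.sch.divergence} on $\mathcal{Q}_{r}(z_1)$ with $r\eqsim\langle v_1\rangle^{-1/3}$ (chosen so that the slanted cylinder stays in $\Omega$) together with the standard $L^1\to L^\infty$ bound gives a $C^{1/2}$ bound. This bound involves $\|f\|_{L^1}$ and $\|F\|_{L^\infty}$ on that cylinder, multiplied by a power of $\langle v_1\rangle$; that power is absorbed into $\langle v\rangle^q$ with $q=q(n)$.

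If $x_1$ is near $\partial\Omega$, I use the diffeomorphism $\Psi$ from \eqref{defn.Psi}. With $\beta=2+\eps$, $\varphi\in C^{2,\eps}$, so $\widetilde f=f\circ\Psi^{-1}$ solves \eqref{after.trans2} in the half-space. The coefficient $\widetilde A$ is $C^{\eps}$ since it involves $D\varphi\in C^{1,\eps}$. The new drift $\widetilde B$ contains the term $w_s w_k\,\partial^2\varphi$, which is bounded by $c\langle w\rangle^2$. The new boundary datum $\widetilde g$ is $C^{1/2+\eps}$ with constants polynomial in $\langle w\rangle$, because $\Psi^{-1}$ is linear in $w$ with $C^{1,\eps}$ coefficients. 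I then apply \autoref{thm.hol12} on $\mathcal{H}_R(\widetilde z_1)$ with $R\eqsim\langle v_1\rangle^{-1}$, which keeps the cylinder inside the flattened chart. On that cylinder $\|\widetilde B\|_{L^\infty}\lesssim\langle v_1\rangle^2$, so the prefactor $\mathbf B^{1/2}$ in \autoref{thm.hol12} costs only a power $\langle v_1\rangle$. The factor $R^{-1/2}$ costs another such power. This gives
\[
[\widetilde f]_{C^{1/2}(\mathcal{H}_{R/2}(\widetilde z_1))}\le c\langle v_1\rangle^{c}\Big(\|\widetilde f\|_{L^\infty(\mathcal{H}_R)}+\|\widetilde F\|_{L^\infty}+[\widetilde g]_{C^{1/2+\eps}}\Big).
\]
Next I replace $\|\widetilde f\|_{L^\infty}$ by an $L^1$ norm on a slightly larger cylinder. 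For this I use local boundedness up to the boundary: the De Giorgi estimate of \cite{Sil22,Zhu24}, combined with \autoref{lem.rep.hol2} to handle $g$, again with explicit scaling in $\langle v_1\rangle$. Finally I pull back by $\Psi$. Kinetic Hölder seminorms are preserved up to factors $\langle v\rangle^{c}$, since $\Psi$ distorts the group law $z_0\circ z$ only by amounts controlled by $|v|$ times derivatives of $\varphi$.

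Combining the cases, I get a local bound $|f(z)-f(z')|\le c\langle v_1\rangle^{c}D\,|z-z'|^{1/2}$ for $z,z'$ in a cylinder of radius $\eqsim\langle v_1\rangle^{-1}$ around $z_1$, where $D$ is the right-hand side of the claimed estimate with unweighted norms localized near $v_1$. Choosing $q$ large turns the local weighted norms into the global weighted norms in the statement. For far-apart pairs $z,z'$, the $L^\infty$ bound from the first step gives the estimate directly.

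I expect the main obstacle to be the bookkeeping of the $\langle v\rangle$-dependence: specifically, checking that the velocity-quadratic drift produced by flattening, and the cylinder size required so that the flattened cylinder lies in a single chart, each cost only polynomial factors. This is exactly why \autoref{thm.hol12} was stated with the explicit $\mathbf B(z_0,R)^{1/2}$ dependence, and the Campanato-type patching across scales comparable to $\langle v_1\rangle^{-1}$ is then routine.
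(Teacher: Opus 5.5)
Your overall route is the paper's. You flatten with $\Psi$ and apply \autoref{thm.hol12} on cylinders whose radius shrinks polynomially in $\langle v\rangle$. You absorb the quadratic drift $\|\widetilde B\|_{L^\infty}\lesssim\langle v_1\rangle^2$ through the explicit factor $\mathbf{B}(z_0,R)^{1/2}$. You pass from $L^\infty$ to $L^1$ via the boundary $C^\alpha$/local boundedness estimate and \autoref{lem.rep.hol2}, then pull back and cover with $q$ large.

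One step does not go through as you justify it, and it belongs to exactly the $\langle v\rangle$-bookkeeping you flagged as the main difficulty. The constant in \autoref{thm.hol12} depends on $\|\widetilde A\|_{C^{\eps}(\mathcal{H}_R(z_0))}$ in an unquantified way. You therefore need this kinetic H\"older norm bounded uniformly in $v_1$. Your reason (``$\widetilde A$ involves $D\varphi\in C^{1,\eps}$'') misses the real issue, which is the composition $A\circ\Psi^{-1}$: $\Psi^{-1}$ does not preserve the kinetic distance uniformly in velocity. Two points of $\mathcal{H}_R(\widetilde z_1)$ at kinetic distance $\rho$ can have $|y-y'|\approx\rho^2|w|$. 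Their preimage velocities $\nabla\varphi^{-1}(y)w$ then differ by up to $\rho+\rho^2|w|^2$. At your radius $R\eqsim\langle v_1\rangle^{-1}$ with $\rho\eqsim R$, this discrepancy is of order one. Hence $[\widetilde A]_{C^{\eps}(\mathcal{H}_R)}$ can be as large as $\langle v_1\rangle^{\eps}$, and the constant from \autoref{thm.hol12} is no longer controlled.

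The fix is to shrink the cylinders; for $\rho\lesssim\langle v_1\rangle^{-2}$ the distortion is only of order $\rho$. The paper works on $r\le\langle v_0\rangle^{-N}$ with $N\ge 2/\eps$. There \cite[Lemma 2.17]{RoWe25} gives $\|\widetilde A\|_{C^{\eps/2}(\mathcal{H}_r(z_0))}\le c\|A\|_{C^{\eps}}$ and $[\widetilde g]_{C^{1/2+\eps/2}}\le c[g]_{C^{1/2+\eps}}$ with $c$ independent of $v_0$. The price is halving the H\"older exponents, which is harmless because \autoref{thm.hol12} only needs some positive exponent. The resulting factor $\rho_0^{-1/2}\sim\langle v_0\rangle^{N/2}$ is absorbed into the weight $q$, exactly as you planned.

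Alternatively, you could show that \autoref{thm.hol12} really only sees the scale-invariant quantity $R^{\eps}[\widetilde A]_{C^{\eps}(\mathcal{H}_R)}$, which is bounded at your radius. That requires reopening its proof, since translation plus dilation preserves the half-space only around points with vanishing normal components.

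A smaller slip of the same kind occurs in your interior case. The slanted cylinder $\mathcal{Q}_r(z_1)$ reaches $|x-x_1|\approx r^3+r^2|v_1|$, so $r\eqsim\langle v_1\rangle^{-1/3}$ leaves $\Omega$ for large $|v_1|$; you need $r\lesssim\langle v_1\rangle^{-1/2}$. With these radii adjusted, your argument coincides with the paper's.
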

\begin{proof} First, let us fix $z_1\in \gamma$. By \eqref{after.trans1} and \eqref{after.trans2}, we have $\widetilde{f}=f\circ \Psi_{x_1}^{-1}$ is a solution to \eqref{after.trans2} for some $R_1,R_2\leq 1$. By \cite[Lemma 2.17]{RoWe25} and since the domain is of  class $C^{2,\eps}$, we have 
\begin{equation}\label{coeff.thm12}
\begin{aligned}
    \|\widetilde{A}\|_{C^{\frac{\eps}{2}}(\mathcal{H}_{r}(z_0))}&\leq c\|{A}\|_{C^{{\eps}}(\Psi^{-1}(\mathcal{H}_{r}(z_0)))},\\
     \|\widetilde{B}\|_{L^{\infty}(\mathcal{H}_{r}(z_0))}&\leq c(\|B\|_{L^{{\infty}}(\Psi^{-1}(\mathcal{H}_{r}(z_0)))}+\langle v_0\rangle^{2}),\\
     \|F\|_{L^{\infty}(\mathcal{H}_{r}(z_0))}&\leq c \|F\|_{L^{{\infty}}(\Psi^{-1}(\mathcal{H}_{r}(z_0)))} \\
     [\widetilde{g}]_{C^{\frac12+\frac{\eps}{2}}(\gamma_-\cap\mathcal{Q}_{r}(z_0))}&\leq c[g]_{C^{\frac12+{\eps}}(\Psi^{-1}((\gamma_-\cap\mathcal{Q}_{r}(z_0))))},
\end{aligned}
\end{equation}
where $\Psi\equiv \Psi_{x_1}$, $z_0\coloneqq \Psi(z_1)$ and $c=c(n,\eps,\Lambda)$, whenever $r\leq \langle v_0\rangle^{-N}$ for some large constant $N=N(n,\eps)$. In particular, we choose $N$ large enough so that $N\geq \frac2{\eps}$. Thus, by combining \autoref{thm.hol12}, \cite[Lemma 2.25]{RoWe25} and \autoref{lem.rep.hol2} to estimate the $L^\infty$-norm of $f$ by its $L^1$-norm, we have 
\begin{align*}
    \|\widetilde{f}\|_{C^{\frac12}(\mathcal{H}_{\rho_0}(z_0))}&\leq c\rho_0^{-\frac12}\langle v_0\rangle\left(\rho_0^{-(4n+2)}\|\widetilde{f}\|_{L^1(\mathcal{H}_{2\rho_0}(z_0))}+\rho_0^2\|\widetilde{F}\|_{L^\infty(\mathcal{H}_{2\rho_0}(z_0))}+\rho_0^{\frac12+\frac{\eps}2}[\widetilde{g}]_{C^{\frac12+\frac{\eps}{2}}(\gamma_-\cap\mathcal{Q}_{2\rho_0}(z_0))}\right)\\
    &\leq c\left(\langle v_0\rangle^{p}\|\widetilde{f}\|_{L^1(\mathcal{H}_{2\rho_0}(z_0))}+\|\widetilde{F}\|_{L^\infty(\mathcal{H}_{2\rho_0}(z_0))}+[\widetilde{g}]_{C^{\frac12+\frac{\eps}{2}}(\gamma_-\cap\mathcal{Q}_{2\rho_0}(z_0))}\right),
\end{align*}
where $\rho_0=\frac{R_1}{2}{\langle v_0\rangle^{-N}}$ and $c=c(n,\eps,\Lambda)$. 
Now, by scaling back together with \cite[Lemma 2.17]{RoWe25} and \eqref{after.trans1}, there is a constant $r_1= \frac{\langle v_1\rangle^{-N}}{c_1}$ for some constant $c_1=c_1(\Omega)$ such that
\begin{align*}
    \|f\|_{C^{\frac12}({\mathbf{H}}_{r_1}(z_1))}\leq c\langle v_1\rangle^{q}\left(\|{f}\|_{L^1({\mathbf{H}}_{2r_1}(z_1)}+\|{F}\|_{L^\infty({\mathbf{H}}_{2r_1}(z_1))}+[{g}]_{C^{\frac12+{\eps}}(\gamma_-\cap{{Q}}_{2r_1}(z_1))}\right),
\end{align*}
where $c=c(n,\eps,\Lambda)$ and $q=q(n,\eps)$. Thus, by a covering argument as in \cite[Theorem 5.4]{RoWe25}, we derive the desired estimate.
\end{proof}

\begin{proof}[Proof of \autoref{thm:in-flow}]
    The result follows immediately from \autoref{thm:hol12.gen} and the fact that the smoothness of the solution away from the grazing set is already proved in \cite[Theorem 5.6]{RoWe25}.
\end{proof}

\subsection{Optimal regularity for diffuse reflection and proof of  \texorpdfstring{\autoref{thm:main-diffuse}}{Theorem 1.1}}

Next we provide the proof of  global H\"older regularity with diffuse reflection condition for general domains.
\begin{theorem}\label{thm:hol12.gen.diff}
    Let $\Omega$ be a bounded domain of class $C^{2,\eps}$ for some $\eps>0$. Let $f$ be a weak solution to 
     \begin{equation*}
\left\{
\begin{alignedat}{3}
(\partial_t+v\cdot\nabla_x){f}-\ddiv({A}\nabla_vf)&={B}\cdot\nabla_vf+{F}&&\qquad \mbox{in  $(-1,1)\times  \Omega\times \bbR^n$}, \\
{f}&=\mathcal{N}f&&\qquad  \mbox{in $\gamma_-$},
\end{alignedat} \right.
\end{equation*}
where
\begin{align*}
    \mathcal{N}f(t,x,v)\coloneqq\mathcal{M}(t,x,v)\int_{\bbR^n}f(t,x,w)(w\cdot n_x)_-\,dw.
\end{align*}
Assume 
\begin{align}\label{ass.m.diff.gen}
    \sup_{z_1\in(-1,1)\times  \Omega\times \bbR^n } \big(\langle v_1\rangle^{M}\|\mathcal{M}\|_{C^{\frac12+\eps}(\gamma_-\cap \mathcal{Q}_1(z_1))}+\|A\|_{C^{\eps}(\mathbf{H}_1(z_1))}+\|B\|_{L^{\infty}(\mathbf{H}_1(z_1))} \big) \leq 1
\end{align}
for some $M>0$. There is $M_0=M_0(n,\eps)$ such that if $M > M_0$, then we have 
\begin{align*}
    \|f\|_{C^{\frac12}([-1/8,1/8]\times\Omega\times \bbR^n) }\leq c\left(\|\langle v\rangle^qf\|_{L^1((-1,1)\times \Omega\times \bbR^n)}+\|\langle v\rangle^qG\|_{L^{\infty}((-1,1)\times \Omega\times \bbR^n)}\right)
\end{align*}
for some constants $c=c(n,\Lambda,\eps)$ and $q=q(n)$. 
\end{theorem}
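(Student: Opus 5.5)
The plan is to reduce the problem to the half-space, where the diffuse reflection result \autoref{thm.diff} (first bullet) is already available. After that, the local estimates are patched together exactly as in the proof of \autoref{thm:hol12.gen}.

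First, fix $z_1\in\gamma$ and flatten the boundary with $\Psi=\Psi_{x_1}$ from \eqref{defn.Psi}. Then $\widetilde f=f\circ\Psi^{-1}$ solves \eqref{after.trans2} in a half-space cylinder. The coefficients are controlled through \eqref{coeff.thm12}: $\widetilde A\in C^{\eps/2}$, $|\widetilde B|\lesssim 1+\langle w\rangle^2$ and $\widetilde F$ bounded, valid on cylinders of radius $\lesssim\langle v_0\rangle^{-N}$.

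The key structural point is the boundary operator. By \eqref{property.varphi}, on $\{y_n=0\}$ we have $w_n=n_x\cdot v$ and the velocity change $v=\nabla\varphi^{-1}(y)w$ is linear with Jacobian depending only on $y$. Hence
\[
\mathcal N f\circ\Psi^{-1}(t,y',0,w)=\widetilde{\mathcal M}(t,y',w)\int_{\R^n}\widetilde f(t,y',0,w')(w'_n)_-\,\d w'
\]
with $\widetilde{\mathcal M}=\mathcal M\circ\Psi^{-1}\cdot|\det\nabla\varphi^{-1}(y)|$. This is exactly the form \eqref{defn.nf}, and $\widetilde{\mathcal M}$ inherits the $C^{\frac12+\eps}$ bound with weight $\langle w\rangle^{-M}$, up to losing powers of $\langle w\rangle$ in the polynomial regime $r\le\langle v_0\rangle^{-N}$.

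Next, I would choose $p\ge\max\{\frac12,N\}$ in \autoref{thm.diff}. With this choice, hypothesis \eqref{b.infty.nf} holds with a constant $c_B=c_B(n,\eps,\Lambda,\Omega)$, since $\|\widetilde B\|_{L^\infty}\lesssim\langle v_0\rangle^2$ is precisely the allowed growth.

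One issue is that \autoref{thm.diff} is stated on $\mathcal H_{2,\infty}$, which is global in $w$, while the flattened equation lives on $I\times(B_{R_0}\cap\{y_n>0\})\times\R^n$. This is handled by rescaling space–time: the map $\Psi$ is global in velocity, so a dilation in $(t,y)$ only, with the velocity left untouched, is not needed. Instead, I would apply \autoref{thm.diff} on a translated and shrunk version, using the fact that its proof is local in $(t,x')$ and global in $v$.

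This yields, for $z_0=\Psi(z_1)$,
\[
\|\widetilde f\|_{C^{\frac12}(\mathcal H_{r_0}(z_0))}\lesssim\langle v_0\rangle\Big(\|\langle\cdot\rangle^q\widetilde f\|_{L^\infty}+\|\langle\cdot\rangle^q\widetilde F\|_{L^\infty}\Big).
\]
Note that \autoref{thm.diff} itself already contains the factor $\|\widetilde{\mathcal M}\|_{C^{\frac12+\eps}}\le\langle v_0\rangle^{-M}$, which absorbs the polynomial weights once $M>M_0$.

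To replace the weighted $L^\infty$ norm of $f$ by the weighted $L^1$ norm, I would first use the in-flow estimates together with \autoref{lem.rep.hol2} on small cylinders. Then I would bound $\mathcal N f$ in $L^\infty$ by $\int|f|\langle w\rangle$ through local boundedness, i.e. a De Giorgi $L^1\to L^\infty$ estimate up to $\gamma$ from \cite{Sil22,Zhu24}, applied with weights. Interior points, and boundary points where $\mathcal H_{r}(z_1)\cap\gamma=\emptyset$, are handled by interior estimates. Scaling back and covering $[-1/8,1/8]\times\Omega\times\R^n$ by cylinders of radius $\langle v_1\rangle^{-N}/c_1$, as in \cite[Theorem 5.4]{RoWe25}, finishes the proof.

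The main obstacle is the $L^\infty$-to-$L^1$ reduction. The boundary datum depends on $f$ itself, so \autoref{lem.rep.hol2} cannot be used directly. I would first derive a global weighted $L^\infty$ bound for $f$ via the maximum principle and the normalization of $\mathcal M$: the smallness $\langle v\rangle^{-M}$ makes $\mathcal N$ a contraction on suitable weighted spaces. After that, the local $L^1$ estimate can be used.
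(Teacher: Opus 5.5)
Your main line is exactly the paper's proof. You flatten with $\Psi_{x_1}$ and use \eqref{property.varphi} to see that the boundary operator keeps the form $\widetilde{\mathcal M}(t,y,w)\int\widetilde f(t,y',0,\xi)(\xi_n)_-\,d\xi$, with $\widetilde{\mathcal M}=\mathcal M\circ\Psi^{-1}\,|\det\nabla\varphi^{-1}|$. You then choose $p$ large so that \eqref{coeff.thm12} and \eqref{pr.m} give \eqref{b.infty.nf}, apply the first part of \autoref{thm.diff}, change variables back, and cover as in \cite[Theorem 5.4]{RoWe25}. That part is fine.

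Where you go beyond the paper is the $L^\infty\to L^1$ step, and there your sketch does not close.

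First, the paper never performs this step. Its local bound \eqref{ineq1.diff.thm} has $\|\langle\cdot\rangle^q f\|_{L^\infty}$ on the right, so the covering only gives the estimate with the weighted $L^\infty$ norm of $f$. That is the form in which the result is used in \autoref{thm:main-diffuse}. The $L^1$ norm in the statement is not supported by the paper's argument.

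Second, neither of your two routes works as written.
\begin{itemize}
\item A maximum principle on $(-1,1)\times\Omega\times\R^n$ controls $\langle v\rangle^q f$ only through an $L^\infty$ bound on some time slice $\{t=t_*\}$. Nothing in the hypotheses bounds such a slice by $\|\langle v\rangle^q f\|_{L^1}$, and getting it from local boundedness runs into the circularity below.
\item The De Giorgi route is circular. Local boundedness near $\gamma_-\cup\gamma_0$ requires $\|\mathcal Nf\|_{L^\infty}$, i.e. $\sup_{t,x}\int|f(t,x,w)|\,|w_n|\,dw$. For $w_n\to0^-$ the cylinders avoiding $\gamma_-$ have radius $\lesssim|w_n|$. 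The resulting bound $|w_n|^{-(4n+2)}$ is not integrable against $|w_n|$, so you need $\mathcal Nf$ again.
\end{itemize}

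To close this you would have to absorb quantitatively. Set $\Theta(s)=\sup_{[-s,s]\times\Omega\times\R^n}\langle v\rangle^q|f|$ and prove, for $\frac18\le s<s'<1$,
\[
\Theta(s)\le\theta\,\Theta(s')+C(s'-s)^{-k}\big(\|\langle v\rangle^{q'}f\|_{L^1}+\|\langle v\rangle^{q'}F\|_{L^\infty}\big)
\]
with $\theta<1$. This is possible for three reasons:
\begin{itemize}
\item $|\mathcal M|\le\langle v\rangle^{-M}$ with constant $1$, by \eqref{ass.m.diff.gen};
\item $\int_{\R^n}\langle w\rangle^{1-q}\,dw\to0$ as $q\to\infty$;
\item the local-boundedness constant is independent of $q$, once the cylinders are small enough that $\langle v\rangle^q$ is comparable to a constant on each.
\end{itemize}
So one takes $q$ large and $M\ge q$. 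The standard iteration lemma then applies, provided $\Theta<\infty$ is known a priori. Either supply this argument, or state the result with $\|\langle v\rangle^q f\|_{L^\infty}$, which is what the paper's proof actually establishes.
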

\begin{proof}
    First, let us fix $z_1\in\gamma$. Then by \eqref{after.trans2}, we have that $\widetilde{f}$ is a weak solution to 
    \begin{equation}\label{eq.m.}
\left\{
\begin{alignedat}{3}
(\partial_t+w\cdot\nabla_y)\widetilde{f}-\ddiw(\widetilde{A}\nabla_w\widetilde{f})&=\widetilde{B}\cdot\nabla_w\widetilde{f}+\widetilde{F}&&\qquad \mbox{in  $I\times  (B_{\frac{R_0}{2}}({x}_0',0)\cap\{x_n>0\})\times \bbR^n$}, \\
\widetilde{f}&=\widetilde{\cN}\widetilde{f}&&\qquad  \mbox{in $\Gamma_-\cap(I\times B_{\frac{R_0}{2}}(\overline{x}_0',0)\times \bbR^n)$},
\end{alignedat} \right.
\end{equation}
where $I=(t_0-(R_0/2)^2,t_0+(R_0/2)^2)$ with  $t_0\in I_{\frac12}$, $\Gamma_-=(\{x_n=0\}\times\{v_n>0\})$ and
\begin{align*}
    \widetilde{\cN}\widetilde{f}(t,y,w)\coloneqq \mathcal{M}(t,\varphi^{-1}(y),\nabla\varphi^{-1}(y)w)|\nabla\varphi^{-1}(y)|\int_{\bbR^n}\widetilde{f}(t,y',0,\xi)(\xi_n)_-\,d\xi,
\end{align*}
which follows from the change of variables and \eqref{property.varphi}. Next, as in \eqref{coeff.thm12}, there is a large constant $p=p(n,\eps)$ for which the first three conditions in \eqref{coeff.thm12} are satisfied whenever $r\leq \langle v_0\rangle ^{-p}$ with $z_0\coloneqq \Psi(z_1)$. Moreover, by taking $M=M(n,\eps)\geq1$ large so that we observe from \eqref{ass.m.diff.gen} that
\begin{align}\label{pr.m}
    \langle v_2\rangle^M\|\widetilde{\mathcal{M}}\|_{C^{\frac12+\eps}(\gamma_-\cap \mathcal{Q}_1(z_0))}\leq c(\eps,\Omega),
\end{align}
where we write
\begin{align}\label{tildem.defn}
    \widetilde{\mathcal{M}}(t,y,w)\coloneqq \mathcal{M}(t,\varphi^{-1}(y),\nabla\varphi^{-1}(y)w)|\nabla\varphi^{-1}(y)|.
\end{align}
This together with \eqref{coeff.thm12} and \eqref{ass.m.diff.gen} allows us to apply \autoref{thm.diff}. Hence, we have
\begin{equation}\label{ineq0.diff.thm}
\begin{aligned}
    \|\widetilde{f}\|_{C^{\frac12}(\mathcal{H}_{r_0}(z_0))}&\leq cr_0^{-\frac12}\langle v_0\rangle(\|\widetilde{f}\|_{L^{\infty}(\mathcal{H}_{2r_0}(z_0))}+r_0^2\|\widetilde{F}\|_{L^\infty(\mathcal{H}_{2r_0}(z_0))})\\
    &\quad+c\langle v_0\rangle\|\widetilde{\mathcal{M}}\|_{C^{\frac12+\eps}(\mathcal{H}_{r_0}(z_0))}\left(\|\langle \cdot\rangle^{q}\widetilde{f}(t,x,\cdot)\|_{L^\infty(\widetilde{\mathcal{H}}_{\infty})}+\left\|\langle\cdot\rangle^{q}\widetilde{F}(t,x,\cdot)\right\|_{L^{\infty}(\widetilde{\mathcal{H}}_{\infty})}\right),
\end{aligned}
\end{equation}
where $c=c(n,\eps,\Omega)$, $q=q(n,\eps)$ and we write $\widetilde{\mathcal{H}}_{\infty}\coloneqq I\times  (B_{\frac{R_1}{2}}({x}_1',0)\cap\{x_n>0\})\times \bbR^n$. Using the change of variables along with \cite[Lemma 2.17]{RoWe25}, we can rewrite \eqref{ineq0.diff.thm} as 
\begin{equation}\label{ineq1.diff.thm}
\begin{aligned}
    \|{f}\|_{C^{\frac12}(\mathbf{H}_{\rho_1}(z_1))}&\leq c\langle v_1\rangle(\|{f}\|_{L^{\infty}(\mathbf{H}_{2\rho_1}(z_1))}+\|{F}\|_{L^\infty(\mathbf{H}_{2\rho_1}(z_1))})\\
    &+c\langle v_1\rangle\|{\mathcal{M}}\|_{C^{\frac12+\eps}(\gamma_-\cap\mathcal{Q}_{2\rho_1}(z_1))}\left(\|\langle \cdot\rangle^{q}{f}(t,x,\cdot)\|_{L^\infty(I\times \Omega\times \bbR^n)}+\left\|\langle\cdot\rangle^{q}{F}(t,x,\cdot)\right\|_{L^{\infty}(I\times \Omega\times \bbR^n)}\right)
\end{aligned}
\end{equation}
for some constants $c=c(n,\eps,\Omega)$ and $q=q(n,\eps)$, where $\rho_1=\frac{\langle v_1\rangle^{-p}}{c_1}$ for some constant $c_1=c_1(\Omega)\geq1$. Now using the covering argument as in \cite[Theorem 5.4]{RoWe25} together with \eqref{pr.m}, we derive the desired global $C^{\frac12}$ estimate from \eqref{ineq1.diff.thm}. This completes the proof.
\end{proof}

\begin{remark}
Note that by the same proof, we could establish $C^{\lambda,\eps}$ estimates for diffuse reflection near $\gamma$ (but away from $\gamma_0$), whenever \eqref{b.hol.nf} and \eqref{space.hol.weight} hold for $A,B,F,\cM$, using the second claim in \autoref{thm.diff} together with the flattening argument. Below, we give the proof only for smooth coefficients.
\end{remark}

We now give the proof of \autoref{thm:main-diffuse}.
\begin{proof}[Proof of \autoref{thm:main-diffuse}.]Since the $C^{\frac12}$-regularity follows from \autoref{thm:hol12.gen.diff}, we only give the proof of smoothness of the solution away from $\gamma_0$. For any $z_1\in\gamma_-$, we get that $\widetilde{f}=f\circ \Psi^{-1}$ is a weak solution to \eqref{eq.m.}. Now using the definition \eqref{tildem.defn}, we see that for any $M>0$, $k\in\N\cup\{0\}$, $\eps\in(0,1)$, and $\mathcal{H}_{r_2}(z_2)\subset I\times  (B_{\frac{R_1}{2}}({x}_1',0)\cap\{x_n>0\})\times \bbR^n $,
\begin{align}\label{ineq2.diff.calm}
    \langle v_2\rangle^{M}\|\widetilde{\mathcal{M}}\|_{C^{k,\eps}(\gamma_-\cap\mathcal{Q}_{{r_2}}(z_2))}\leq c(p,k,\eps,\Omega,M),
\end{align}
as $\mathcal{M}$ has fast decay as $|v|\to\infty$. Next, for any $k\in\N\cup\{0\}$ and $\eps\in(0,1)$, there is a large constant $p=p(n,k,\eps,\Omega)$ such that if $r_2\leq \frac{\langle v_2\rangle^{-p}}{c_2}$ for some constant $c_2=c_2(n,k,\eps,\Omega)$, then
\begin{align*}
    \|\widetilde{A}\|_{C^{k,\eps}(\mathcal{H}_{r_2}(z_2))}+\frac{1}{\langle v_2\rangle^2}\|\widetilde{B}\|_{C^{k,\eps}(\mathcal{H}_{r_2}(z_2))}\leq c(k,\eps,\Omega),
\end{align*}
where we have used \cite[Lemma 2.17]{RoWe25}. Now using this, \eqref{ineq2.diff.calm}, and \autoref{thm.diff}, we get $f\in C^{k,\eps}(\mathcal{H}_{r_2}(z_2))$, whenever $\mathcal{H}_{r_2}(z_2)\subset \widetilde{H}_\infty$. This implies that $f$ is smooth away from the grazing set, which completes the proof. 
\end{proof}

\subsection{Higher order estimates near $\gamma_0$ and proofs of \texorpdfstring{\autoref{thm:C3}}{Theorem 1.6} and \texorpdfstring{\autoref{thm:f/phi}}{Theorem 1.7}}

Now we are ready to prove our two remaining main theorems.
\begin{proof}[Proof of \autoref{thm:C3} and \autoref{thm:f/phi}.] 
Let us fix $z_1\in\gamma_0$. We have \eqref{after.trans1} and that $\widetilde{f}$ is a weak solution to \eqref{after.trans2} with $A=I_n$, $B=0$, and $z_0=\Psi(z_1)$. In addition, from \eqref{after.trans1} and \eqref{property.varphi}, we have 
\begin{align*}
    \Psi^{-1}({\mathcal{H}}_{\frac{R_0}{2}}(z_0)\cap\{y_n\leq w_n^{\frac3\eps}\})\subset \mathbf{H}_{R_1}(z_1)\cap\mathcal{R}^{\eps}_-
\end{align*}
for some constants $R_0,R_1\leq 1$ depending on $n,|z_0|,\Omega$.
Therefore, for the proof of \autoref{thm:C3}, it suffices to show that 
\begin{align}\label{last.thm16}
    \|\widetilde{f}\|_{C^{3-\eps}(\mathcal{H}_{\frac{R_0}2}(z_0)\cap\{y_n\leq w_n^{\frac3\eps}\})}\leq c\left( \|\widetilde{f}\|_{L^1(\mathcal{H}_{R_0}(z_0))}+[\widetilde{F}]_{C^{\frac52}(\mathcal{H}_{R_0}(z_0))}+[\widetilde{g}]_{C^{3-\eps}(\mathcal{H}_{R_0}(z_0))}\right)
\end{align}
for some constant $c=c(n,\eps,|z_0|,\Omega)$, as the desired estimate follows by scaling back together with \cite[Lemma 2.17]{RoWe25}. By \autoref{thm.3reggamma-}, for any $\mathcal{H}_r(z_2)\subset \mathcal{H}_{\frac{R_0}2}(z_0)\cap\{y_n\leq w_n^{\frac3\eps}\}$,
\begin{equation}\label{ineq.linftytol1}
\begin{aligned}
    \|\widetilde{f}\|_{C^{3-\eps}(\mathcal{H}_{r}(z_2))}&\leq c\left( \|\widetilde{f}\|_{L^{\infty}(\mathcal{H}_{\frac34R_0}(z_0))}+[\widetilde{F}]_{C^{\frac52}(\mathcal{H}_{\frac34R_0}(z_0))}+[\widetilde{g}]_{C^{3-\eps}(\mathcal{H}_{\frac34R_0}(z_0))}\right)\\
    &\leq c\left( \|\widetilde{f}\|_{L^{1}(\mathcal{H}_{R_0}(z_0))}+[\widetilde{F}]_{C^{\frac52}(\mathcal{H}_{R_0}(z_0))}+[\widetilde{g}]_{C^{3-\eps}(\mathcal{H}_{R_0}(z_0))}\right),
\end{aligned}
\end{equation}
where $c=c(n,\eps,|z_0|,\Omega)$, as $\|\widetilde{A}\|_{C^{\frac52}}$, $\|\widetilde{B}\|_{C^{\frac32}}$ and $R_1$ depend only on $n,|z_0|$ and $\Omega$. In particular, we have also used \cite[Lemma 2.25]{RoWe25} and \autoref{lem.rep.hol2} for the last inequality. Now using the covering argument given in \cite[Lemma 2.9]{RoWe25}, we derive \eqref{last.thm16}, which completes the proof of \autoref{thm:C3}.

Now we are going to prove \autoref{thm:f/phi}. Note from \eqref{property.varphi} that $\widetilde{A}_{n,n}=(\nabla\varphi(x))_{n,k}(\nabla\varphi(x))_{n,k}=|\nabla\varphi(x)|^2\equiv 1$. Therefore, by using \autoref{thm.h/phi} and applying the same argument as in \eqref{ineq.linftytol1}, which allows us to replace the $L^\infty$-norm of $\widetilde{f}$ by its $L^1$-norm, we have 
\begin{align}\label{last.f/phi}
    \left\|\frac{\widetilde{f}}{\phi_0}\right\|_{C^{0,1}(\mathcal{H}_{\frac{R_{0}}2}(z_0)\setminus\{(v_n)^3\geq x_n\})}\leq c\left(\|\widetilde{f}\|_{L^{1}(\mathcal{H}_{R_0}(z_0))}+\|\widetilde{F}\|_{L^{\infty}(\mathcal{H}_{R_0}(z_0))}\right)
\end{align}
for some constant $c=c(n,\Lambda,\Omega,|z_0|)$. On the other hand, we observe from \eqref{property.varphi} that $\varphi(y_n,w_n)=\varphi(d_\Omega(x),(n_x\cdot v))$. Thus using this and \eqref{last.f/phi} together with the change of variables yields the desired estimate. This completes the proof.
\end{proof}

\subsection{Generalization of \texorpdfstring{\autoref{thm:C3}}{Theorem 1.6} and \texorpdfstring{\autoref{thm:f/phi}}{Theorem 1.7} to equations with coefficients}

We end this section by providing a general version of \autoref{thm:C3} and \autoref{thm:f/phi}. We consider the broad class of kinetic Fokker-Planck equations as in \eqref{eq:FPK} with sufficiently smooth coefficients. 

First, we state a comprehensive version of \autoref{thm:C3}. It follows directly from \autoref{thm.3reggamma-} by repeating the same argument as in the proof of \autoref{thm:C3}, and thus we skip the details.
\begin{theorem}
\label{thm:C3-coeff}
     Let $\Omega \subset \R^n$ be a bounded smooth domain and let $z_1 \in \gamma_0$. Let $F \in C^{0,1}(\mathbf{H}_1(z_1))$, $A\in C^{\frac52}(\mathbf{H}_1(z_1))$, $B\in C^{\frac32}(\mathbf{H}_1(z_1))$ and $g\in C^{2,1}(\gamma_-\cap \mathcal{Q}_1(z_1))$. Let $f$ be a solution to 
\begin{equation*}
\left\{
\begin{alignedat}{3}
(\partial_t+v\cdot\nabla_x){f}-\ddiv({A}\nabla_v{f})&={B}\cdot\nabla_v{f}+{F}&&\qquad \mbox{in  $\mathbf{H}_1(z_1)$}, \\
f&=g&&\qquad  \mbox{in $\gamma_-\cap\mathcal{Q}_1(z_1)$}.
\end{alignedat} \right.
\end{equation*}    
Then for any $\eps>0$, we have 
\begin{align*}
    [f]_{C^{3-\eps}(\mathcal{H}_{1/2}(z_1)\cap\mathcal{R}^{\eps}_-)}\leq c(\|f\|_{L^1(\mathbf{H}_1(z_1))}+\|F\|_{C^{0,1}(\mathbf{H}_1(z_1))}+[g]_{C^{2,1}(\mathcal{Q}_1(z_1)\cap\gamma_-)})
\end{align*}
for some constant $c=c(n,\eps,\|A\|_{C^{\frac52}(\mathbf{H}_1(z_1))},\|B\|_{C^{\frac32}(\mathbf{H}_1(z_1))},|z_1|,\Omega)$, where $ \mathcal{R}^{\eps}_-\coloneqq\{d_\Omega(x)^{\eps}\leq (n_x\cdot v)^3\}$.
\end{theorem}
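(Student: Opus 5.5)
The plan is to reduce \autoref{thm:C3-coeff} to the half-space result \autoref{thm.3reggamma-} via the flattening map $\Psi=\Psi_{x_1}$ from \eqref{defn.Psi}, following the proof of \autoref{thm:C3} essentially verbatim. The only change is that the transformed coefficients are no longer the constant identity and zero drift.

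\textbf{Step 1: flatten and control the coefficients.} Set $z_0=\Psi(z_1)$ and $\widetilde f=f\circ\Psi^{-1}$, which solves \eqref{after.trans2} on $\mathcal{H}_{R_0}(z_0)$. The transformed data are $\widetilde A=\widehat A\,(A\circ\Psi^{-1})\,\widehat A^{T}$, $\widetilde B$ containing the curvature term quadratic in $w$, $\widetilde F=F\circ\Psi^{-1}$ and $\widetilde g=g\circ\Psi^{-1}$. Since $\Omega$ is smooth, \cite[Lemma 2.17]{RoWe25} gives, on cylinders of radius $\lesssim\langle v_0\rangle^{-N}$:
\begin{itemize}
\item $\|\widetilde A\|_{C^{5/2}}\le c\|A\|_{C^{5/2}}$ and $\|\widetilde B\|_{C^{3/2}}\le c(\|B\|_{C^{3/2}}+\langle v_0\rangle^2)$;
\item the analogous kinetic H\"older bounds for $\widetilde F$ and $\widetilde g$.
\end{itemize}
Here $|v_0|$ is controlled by $|z_1|$, so all constants depend only on $|z_1|$ and $\Omega$. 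The regularity hypotheses of \autoref{thm.3reggamma-} are $A\in C^{2,1/2}$, $B\in C^{1,1/2}$ and $g\in C^{2,1}$, which match this. The hypothesis on $F$ in \autoref{thm.3reggamma-} is written as $C^{1,1/2}$ with seminorm $[F]_{C^{2,1/2}}$ on the right-hand side. The expansion \autoref{lem.exp.nd.a} with $\lambda=2$, $\lambda+\epsilon<3$ only needs $F\in C^{\lambda-2+\epsilon}=C^{\epsilon'}$, $\epsilon'<1$, and $C^{0,1}$ suffices for that. So I would check that the proof of \autoref{thm.3reggamma-} only uses $F\in C^{1-3\eps}$ away from $\gamma_0$ (via \autoref{lem.reg.gamma+}) and $C^{\epsilon}$ in the expansion, and replace the $F$-seminorm by $\|F\|_{C^{0,1}}$.

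\textbf{Step 2: geometry and localization.} By \eqref{property.varphi} we have $y_n=d_\Omega(x)$ and $w_n=n_x\cdot v$. Hence $\Psi^{-1}$ maps $\mathcal{H}_{R_0/2}(z_0)\cap\{y_n^{\eps}\le w_n^3\}$ onto a neighbourhood of $z_1$ in $\mathcal{R}_-^\eps$, and conversely a small neighbourhood of $z_1$ in $\mathcal{R}_-^\eps$ lies in such a set. \autoref{thm.3reggamma-} applied at every subcylinder $\mathcal{H}_r(z_2)\subset\mathcal{H}^-_{R_0/2,\eps'}(z_0)$, with a suitably adjusted $\eps'$ to match $\mathcal{R}^\eps_-$, gives polynomial approximations of order $3(1-\eps')$. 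The covering lemma \cite[Lemma 2.9]{RoWe25} then yields a $C^{3-\eps}$ bound for $\widetilde f$ on that region. The $L^\infty$ norm of $\widetilde f$ is replaced by its $L^1$ norm via \cite[Lemma 2.25]{RoWe25} and \autoref{lem.rep.hol2}, exactly as in \eqref{ineq.linftytol1}.

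\textbf{Step 3: pull back and cover.} Composing with $\Psi$ preserves kinetic $C^{3-\eps}$ norms on these cylinders, up to constants depending on $\Omega$ and $|z_1|$, again by \cite[Lemma 2.17]{RoWe25}. A finite covering of $\mathcal{H}_{1/2}(z_1)\cap\mathcal{R}^\eps_-$ then gives the estimate. Points near $\gamma_0$ but not on it are handled by choosing the base point in $\gamma_0$ nearby.

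\textbf{Main obstacle.} I expect the main difficulty to be the bookkeeping of the exponent in Step 2. The region $\mathcal{H}^-_{\rho,\epsilon}$ in \autoref{thm.3reggamma-} is $\{x_n\le v_n^{1/\epsilon}\}$ and gives order $3(1-\epsilon)$, while $\mathcal{R}^\eps_-$ is $\{d^\eps\le (n\cdot v)^3\}$, i.e.\ $x_n\le v_n^{3/\eps}$. One must choose $\epsilon'=\eps/3$ and check that the resulting order $3-\eps$ is exactly what is claimed. A second point to check is that the $\langle v_0\rangle^2$ growth of $\widetilde B$ only enters through the $|v_0|$-dependence of the constant, which is allowed here.
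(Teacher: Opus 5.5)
Your proposal follows the paper's route exactly: flatten with $\Psi$, apply \autoref{thm.3reggamma-} with its exponent taken as $\eps/3$ so that $\{y_n\le w_n^{3/\eps}\}$ matches $\mathcal{R}^\eps_-$ and the order is $3-\eps$, pass from $L^\infty$ to $L^1$ via \cite[Lemma 2.25]{RoWe25} and \autoref{lem.rep.hol2}, cover, and pull back with \cite[Lemma 2.17]{RoWe25}; your check that only $F\in C^{1-\delta}$ is actually used (so $C^{0,1}$ suffices) is correct and a useful addition, since the paper only says the result "follows directly." The one caveat is that this reduction needs the exponent of \autoref{thm.3reggamma-} to be below $\frac13$, i.e.\ $\eps\in(0,1)$ as in \autoref{thm:C3}, and this restriction cannot be removed for $\eps\in[1,\frac52)$: there the region contains $\mathcal{R}^1_-$, and $\phi_0$ is only $C^{1/2}$ at $\gamma_0$ along $\partial\mathcal{R}^1_-$.
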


Next, we give a general version of \autoref{thm:f/phi}. To this end, we write for any $z\in I_2\times \Omega\times \bbR^n$
\begin{align}
\label{eq:kinetic-dist}
    \mathrm{dist}_{\mathrm{kin}}(z_1)\coloneqq \max\left\{r\,:\,\mathbf{H}_r(z_1)\cap \gamma_0=\emptyset\right\}\eqsim \min\{\mathrm{dist}(z_1,z)\,:\,z\in\gamma_0\}.
\end{align}
We recall that we have assumed that the projection onto the boundary $\Omega$ is unique in $\mathbf{H}_1(z_0)$. If not, then we can just make the radius smaller. Thus, we may write $n_x$ as the outer unit normal vector at the projection $\overline{x}\in\partial\Omega$ of $x\in\Omega$. 

First, we provide a lemma that describes the behavior of the kinetic distance function near $\gamma_0$.
\begin{lemma}\label{lem.dist.asymp}
    Let $\Omega$ be a bounded convex domain of class $C^2$. We fix $z_0\in (I_1\times \Omega\times \bbR^n)\cap \gamma_0$. Then we have for any $z\in\mathbf{H}_1(z_0)$, 
    \begin{align*}
      \frac1c\max\{d_\Omega(x)^{\frac13},\langle v_0\rangle^{-2}|n_x\cdot v|\}\leq \mathrm{dist}_{kin}(z)\leq c\max\{d_\Omega(x)^{\frac13},|n_x\cdot v|\}
    \end{align*}
    for some constant $c=c(n,\Omega)$.   
\end{lemma}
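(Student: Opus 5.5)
The plan is to reduce the statement to the half-space by the flattening map $\Psi_{x_1}$ from \eqref{defn.Psi}, where the kinetic distance to $\gamma_0$ can be computed by hand, and then transport the bounds back. In flat coordinates $\gamma_0=\{y_n=0,\,w_n=0\}$. By the definition of the kinetic cylinders, $\mathcal{H}_r(z)$ meets $\gamma_0$ exactly when some point with $y_n=0$, $w_n=0$ lies within kinetic distance $r$ of $z$. This forces $r\gtrsim \max\{y_n^{1/3},|w_n|\}$, since the $x_n$-component of the slanted ball satisfies $|y_n-(t-t_1)w_n|<r^3$ with $|t-t_1|<r^2$, and the velocity component satisfies $|w_n|<r$. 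Conversely, the point $(t,y',0,w',0)$ is at kinetic distance $\lesssim \max\{y_n^{1/3},|w_n|\}$. Hence in the half-space
\begin{align*}
\mathrm{dist}_{kin}(z)\eqsim \max\{y_n^{1/3},|w_n|\},
\end{align*}
with a constant depending only on $n$.

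Next I transfer this to $\Omega$ using \eqref{property.varphi}, which gives $y_n=d_\Omega(x)$ and $w_n=n_x\cdot v$. It therefore suffices to compare kinetic cylinders in $\Omega$ with those in flat coordinates under $\Psi$. By \cite[Lemma 2.17]{RoWe25}, $\Psi^{-1}$ maps $\mathcal{H}_{r}(\Psi(z))$ into $\mathbf{H}_{Cr}(z)$ for every $r$. In the other direction, $\Psi$ maps $\mathbf{H}_r(z)$ into $\mathcal{H}_{C\langle v_0\rangle^{2}r}(\Psi(z))$ only after a loss. The reason is that the velocity map $w=\nabla\varphi(x)v$ depends on $x$, and the error $|\nabla\varphi(x)-\nabla\varphi(x_1)|\,|v|$ is amplified by the size of $v$. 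That size is comparable to $\langle v_0\rangle$ on $\mathbf{H}_1(z_0)$.

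Chasing these inclusions gives the two bounds. The upper bound $\mathrm{dist}_{kin}(z)\le c\max\{d_\Omega^{1/3},|n_x\cdot v|\}$ follows from the first inclusion. The lower bound loses the factor $\langle v_0\rangle^{-2}$ on the velocity part. The $d_\Omega(x)^{1/3}$ part of the lower bound I would prove directly, without flattening. If $\mathbf{H}_r(z)$ met $\gamma_0$, it would contain boundary points, and the $x$-projection of $\mathbf{H}_r(z)$ is contained in a set of the form $\{x+sv+O(r^3)\}$ with $|s|<r^2$. Convexity of $\Omega$ lets me control how such boundary points relate to $d_\Omega(x)$. For the velocity part it is enough to show that if $\mathbf{H}_r(z)$ meets $\gamma_0$ at a point $(\bar t,\bar x,\bar v)$, then $|n_x\cdot v|\le |n_{\bar x}\cdot\bar v|+|n_x-n_{\bar x}||v|+|v-\bar v|\lesssim r+\langle v_0\rangle r^{2}\langle v_0\rangle$, since $|x-\bar x|\lesssim r^2\langle v_0\rangle$ and $n$ is Lipschitz.

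I expect the main obstacle to be this step: showing that the displacement $|x-\bar x|$ between $x$ and the boundary point really is $O(r^2\langle v_0\rangle)$, so that the velocity loss is only $\langle v_0\rangle^{2}$. The difficulty is that the transport can carry $x$ tangentially over distances of order $r^2|v|$. Convexity is used here to ensure that the boundary point reached along a nearly tangential trajectory still sits within the projected neighborhood, and that the trajectory does not re-enter $\Omega$.
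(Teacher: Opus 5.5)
Your velocity estimate in the lower bound is the paper's own argument. Two other steps do not work as written.

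First, the upper bound. The inclusion $\Psi^{-1}(\mathcal{H}_r(\Psi(z)))\subset\mathbf{H}_{Cr}(z)$, with $C$ independent of $r$ and of the velocity, is false when $\partial\Omega$ is curved. Inside a flat cylinder centred at $(t_1,y_1,w_1)$ the base point moves by $|y-y_1|\lesssim r^3+r^2|w_1|$. Since $\nabla\varphi^{-1}$ is not constant, $v=\nabla\varphi^{-1}(y)w$ then moves by up to $r+r^2|w_1|^2$, and the spatial component picks up a quadratic error of order $r^4|w_1|^2$. So the change of variables loses powers of $\langle v_0\rangle$ in \emph{both} directions unless $r\lesssim\langle v_0\rangle^{-2}$. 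This is why the paper only invokes \cite[Lemma 2.17]{RoWe25} on cylinders of radius $r\le\langle v_0\rangle^{-N}$. Concretely, the flat grazing point $(t,y',0,w',0)$ pulls back to $(t,\bar x,\bar v_\Psi)$ with $|v-\bar v_\Psi|$ of order $|n_x\cdot v|+d_\Omega(x)|v|$; the second term comes from the curvature of $\partial\Omega$. When $|v|\approx\langle v_0\rangle$ is large, $d_\Omega(x)|v|$ is not bounded by $c(n,\Omega)\max\{d_\Omega(x)^{1/3},|n_x\cdot v|\}$. The fix is to avoid flattening altogether, as the paper does. Let $\bar x$ be the nearest-point projection of $x$ onto $\partial\Omega$ and set $\bar z=(t,\bar x,v-(n_x\cdot v)n_x)$. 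This point lies in $\gamma_0$ because $n_x$ is the normal at $\bar x$, and it has the same time as $z$. Hence its kinetic distance to $z$ is exactly $\max\{d_\Omega(x)^{1/3},|n_x\cdot v|\}$, which gives the upper bound with a uniform constant.

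Second, you have placed convexity in the wrong step. The displacement bound you single out as the main obstacle is immediate from the definition of the cylinder. If $(\bar t,\bar x,\bar v)\in\mathbf{H}_r(z)$, then $|x-\bar x|\le|\bar x-(x+(\bar t-t)v)|+|\bar t-t|\,|v|<r^3+r^2|v|$. No trajectories are followed and nothing can re-enter $\Omega$. Convexity is needed instead for the $d_\Omega(x)^{1/3}$ part of the lower bound, which you leave unspecified. The set $\{x+sv+O(r^3)\}$ you write down is not enough there: knowing only that a boundary point lies in it gives $d_\Omega(x)\le r^3+r^2|v|$, not $d_\Omega(x)\lesssim r^3$. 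The missing idea is to use the grazing velocity $\bar v$.
\begin{itemize}
\item Since $|v-\bar v|<r$ and $|\bar t-t|<r^2$, one gets $|x-(\bar x+(t-\bar t)\bar v)|<2r^3$.
\item Because $n_{\bar x}\cdot\bar v=0$, the point $\bar x+(t-\bar t)\bar v$ lies on the tangent hyperplane at $\bar x$, hence outside $\Omega$ by convexity.
\item Therefore $d_\Omega(x)<2r^3$.
\end{itemize}
Without convexity this tangent segment can penetrate $\Omega$ to a depth of order $r^4|\bar v|^2$, which exceeds $r^3$ for large $|\bar v|$. This is exactly where the hypothesis enters.
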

\begin{proof}
First, we note that for any $z_1\in \mathbf{H}_1(z_0)$,  
\begin{align}\label{proj.gamma0}
        \overline{z}_1\coloneqq(t_1,x_1-(n_{x_1}\cdot x_1)n_{x_1},v_1-(n_{x_1}\cdot v_1)n_{x_1})\in\gamma_0,\quad \mathrm{dist}(z_1,\overline{z}_1)\eqsim
        \max\{d_\Omega(x_1)^{\frac13},|n_{x_1}\cdot v_1|\},
    \end{align}
where we have used the fact that $n_{x_1}\cdot x_1=d_\Omega(x_1)$. This implies that 
\begin{align*}
    \mathrm{dist}_{\mathrm{kin}}(z_1)\lesssim  \max\{d_\Omega(x_1)^{\frac13},|n_{x_1}\cdot v_1|\}.
\end{align*}
For the lower bound, first we use the convexity condition of the domain. In particular, we are going to prove that for any $z_2\in \gamma_0$, when $z_1\in \mathbf{H}_{r}(z_2)$, then $d_\Omega(x_1)\leq r^3$. To do this, we observe  
    \begin{align*}
        r^3>|x_1-(x_2+(t_1-t_2)v_2)|>d_{\Omega}(x_1),
    \end{align*}
    as $x_2+(t_1-t_2)v_2\in\bbR^n\setminus\Omega$ by the fact that  $n_{x_2}\cdot v_2=0$ and since the domain is convex. Therefore, if $r<\frac{d_\Omega(x_1)^{\frac13}}{16}$, then $\mathcal{H}_{r}(z_1)\cap\gamma_0=\emptyset$, which implies $\mathrm{dist}_{\mathrm{kin}}(z_1)\gtrsim {d_\Omega(x_1)^{\frac13}}$. 
    
    On the other hand, since the domain $\Omega$ is smooth, there is a universal constant $c_0=c_0(\Omega)\geq1$ such that $|n_{x_1}-n_{x_2}|\leq c_0|x_1-x_2|$. Using this, we can prove for any $r=\frac{|n_{x_1}\cdot v_1|}{100c_0\langle v_0\rangle^2}$,
        \begin{align}\label{R+-gamm0cap}
            \mathcal{H}_r(z_1)\cap\gamma_0=\emptyset.
        \end{align}
         To prove it, we assume by contradiction that there is $z_2\in \mathcal{H}_r(z_1)\cap\gamma_0$. Now note that
        \begin{align*}
            n_{x_2}\cdot v_2=(n_{x_2}-n_{x_1})\cdot v_2+n_{x_1}\cdot (v_2-v_1)+n_{x_1}\cdot v_1.
        \end{align*}
        We next estimate
        \begin{align*}
            |(n_{x_2}-n_{x_1})\cdot v_2+n_{x_1}\cdot(v_2-v_1)|&\leq c_0|x_2-x_1||v_2|+r\\
            &\leq c_0|x_2-(x_1+(t_1-t_2)v_1)||v_2|+|(t_1-t_2)v_1||v_2|+r\\
            &\leq c_0(r^3\langle v_2\rangle +r^2\langle v_1\rangle ^2+r),
        \end{align*}
        where we have used the fact that $\frac{\langle v_0\rangle}2\leq \langle v_i\rangle\leq 2 \langle v_0\rangle$ for each $i=1,2$. Therefore, using this and the fact that $r\leq 1$ and $r\langle v_1\rangle \leq \frac{|n_{x_1}\cdot v_1|}{50c_0\langle v_0\rangle}\leq1$, we have
        \begin{align*}
              |n_{x_2}\cdot v_2|\geq |n_{x_1}\cdot v_1|-c(r^3\langle v_2\rangle +r^2\langle v_1\rangle ^2+r)\geq |n_{x_1}\cdot v_1|-\frac12\frac{|n_{x_1}\cdot v_1|}{\langle v_1\rangle}>0,
        \end{align*}
        This proves \eqref{R+-gamm0cap} and hence $\mathrm{dist}_{\mathrm{kin}}(z_1)\gtrsim \frac{|n_{x_1}\cdot v_1|}{\langle v_0\rangle^2}$. Therefore, we conclude 
        \begin{equation*}
            \mathrm{dist}_{\mathrm{kin}}(z_1)\gtrsim\max\left\{d_\Omega(x_1)^{\frac13},\frac{|n_{x_1}\cdot v_1|}{\langle v_0\rangle^2}\right\},
        \end{equation*}
        which completes the proof.
\end{proof}

Now, we are in a position to state and prove the main result of this section. It says that a variant of \autoref{thm:f/phi} remains true for equations with coefficients. In this case, the function $\Phi$ does not have an elegant explicit formula, however, we can prove that it behaves like $\mathrm{dist}_{\mathrm{kin}}^{1/2}$ up to higher order if $\Omega$ is a convex domain.

\begin{theorem}
\label{thm:f/phi-coeff}
    Let $\Omega \subset \R^n$ be a bounded smooth and convex domain, and let $z_1 \in \gamma_0$. Let $F \in L^{\infty}(\mathbf{H}_1(z_1))$, $A\in C^{k+\eps}(\mathbf{H}_1(z_1))$, and $B\in C^{\eps}(\mathbf{H}_1(z_1))$ for some $k\in\N$ and $\eps>0$. Let $f$ be a solution to 
\begin{equation*}
\left\{
\begin{alignedat}{3}
(\partial_t+v\cdot\nabla_x){f}-\ddiv({A}\nabla_v{f})&={B}\cdot\nabla_v{f}+{F}&&\qquad \mbox{in  $\mathbf{H}_1(z_1)$}, \\
f&=0&&\qquad  \mbox{in $\gamma_-\cap\mathcal{Q}_1(z_1)$}.
\end{alignedat} \right.
\end{equation*}    
Then, there exists a function $\Phi \in C^{1/2}(\mathbf{H}_1(z_1))$ such that
    \begin{align}
    \label{eq:Phi-prop}
        \Phi \asymp_{\langle v_1\rangle} \mathrm{dist}_{\text{kin}}^{1/2}, \qquad D^{\beta} \Phi \lesssim_{|v_1|}\mathrm{dist}_{\text{kin}}^{1/2 - |\beta|}\quad\text{in }\mathbf{H}_1(z_1)\setminus \mathcal{R}^1_-
    \end{align}
    for any $\beta\in \N$ with $|\beta|\leq k$
    and it holds
    \begin{align*}
        \frac{f}{\Phi} \in C^{0,1}(\mathbf{H}_{1/2}(z_1) \setminus \mathcal{R}^1_-)),
    \end{align*}
    where $\mathcal{R}^1_-\coloneqq\{d_\Omega(x)\leq (n_x\cdot v)^3\}$.
    Moreover, we have the estimate
    \begin{align*}
        \Vert f / \Phi \Vert_{C^{0,1}(\mathbf{H}_{1/2}(z_1) \setminus \mathcal{R}_-^{1})} \le C \big( \Vert f \Vert_{L^1(\mathbf{H}_1(z_1))} +  \Vert F \Vert_{L^{\infty}(\mathbf{H}_1(z_1))} \big) ,
    \end{align*}
    where $C$ depends only on $n,|z_1|,\Omega,\eps,\|A\|_{C^{1,\eps}(\mathbf{H}_1(z_1))},\|B\|_{C^{\eps}(\mathbf{H}_1(z_1))}$.
\end{theorem}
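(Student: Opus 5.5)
The plan is to reduce to the half-space via the flattening map $\Psi=\Psi_{x_1}$ from \eqref{defn.Psi}, apply \autoref{thm.h/phi} there, and transport the statement back. After flattening, $\widetilde f=f\circ\Psi^{-1}$ solves \eqref{after.trans2} with coefficients $\widetilde A,\widetilde B$. By \cite[Lemma 2.17]{RoWe25} these inherit $C^{k+\eps}$ and $C^{\eps}$ regularity on cylinders of radius $\lesssim\langle v_1\rangle^{-N}$, with $\|\widetilde B\|_{L^\infty}\lesssim \langle v_1\rangle^2$. The candidate is
\begin{align*}
\Phi(z):=\pmb{\Phi}_{0,a(\Psi(z))}(\Psi(z)), \qquad a(y,w):=\sqrt{\widetilde A_{n,n}(t,y,w)}.
\end{align*}
By \eqref{property.varphi}, this equals $\phi_0\big(d_\Omega(x)/a,(n_x\cdot v)/a\big)$ with $a=\sqrt{n_x^TA(z)n_x}$. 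This choice matches the quotient $f/\pmb{\Phi}_{0,a}$ with pointwise varying $a$ that \autoref{thm.h/phi} controls. The region $\mathbf{H}_{1/2}(z_1)\setminus\mathcal{R}^1_-$ corresponds under $\Psi$ to $\{w_n^3< y_n\}$, which is contained in $\mathcal{H}^0_{\rho,M}\cup\mathcal{H}^+_{\rho}$ with $M=1$.

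For the Lipschitz estimate, I apply \autoref{thm.h/phi} at every $\widetilde z_0\in\gamma_0$ near $\Psi(z_1)$, on radius $R\simeq\langle v_1\rangle^{-N}$. Its hypotheses require $\widetilde A\in C^{1,\eps}$, which holds since $k\ge1$, and $\widetilde B\in C^\eps$. This gives Lipschitz bounds for $\widetilde f/\pmb{\Phi}_{0,a}$ on each $\mathcal{H}^0_{R/2,1}\cap\mathcal{H}^+_{R/2}$. Points of $\{w_n^3<y_n\}$ far from $\gamma_0$ compared with $R$ I handle with interior and $\gamma_+$ estimates (\autoref{lem.sch.divergence}, \autoref{lem.reg.gamma+}) together with the lower bound on $\Phi$. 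A covering argument as in \cite[Lemma 2.9]{RoWe25} then glues these into a Lipschitz bound on the whole region. Finally, I replace $L^\infty$ norms by $L^1$ using \cite[Lemma 2.25]{RoWe25} and \autoref{lem.rep.hol2}, and undo the change of variables using that $\Psi$ is smooth.

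For \eqref{eq:Phi-prop}, the comparability $\Phi\asymp\mathrm{dist}_{\mathrm{kin}}^{1/2}$ follows from the asymptotics \eqref{phi.asymptotic}. These give $\phi_0(x,v)\asymp x^{1/6}$ when $x\ge|v|^3$ and $\phi_0\asymp|v|^{1/2}$ when $x<-v^3$. Hence $\Phi\asymp\max\{d_\Omega^{1/3},|n_x\cdot v|\}^{1/2}$ off $\mathcal{R}^1_-$, since $a\in[\Lambda^{-1/2},\Lambda^{1/2}]$. \autoref{lem.dist.asymp}, where convexity enters, converts this to $\mathrm{dist}_{\mathrm{kin}}^{1/2}$ up to powers of $\langle v_1\rangle$. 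For the derivative bounds I use the chain rule. The derivatives $\partial^\beta\phi_0$ are homogeneous of degree $\tfrac12-|\beta|$ and bounded on the unit sphere away from $\mathcal{R}^1_-$ (by the appendix formulas, e.g. \eqref{est.smooth.phim}). Derivatives falling on $a$ cost only $\|A\|_{C^{k+\eps}}$ times $x\partial_x\phi_0$ or $v\partial_v\phi_0$, which are again of order $\mathrm{dist}^{1/2}$. Derivatives of $d_\Omega$ and $n_x$ are bounded because $\Omega$ is smooth. Finally, $\Phi\in C^{1/2}$ because $\phi_0\in C^{1/2}$ and $a$ is Hölder.

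The main obstacle I expect is the gluing step near the interface with $\mathcal{R}^1_-$ and across different base points $\widetilde z_0$. \autoref{thm.h/phi} is stated relative to a fixed $z_0$, while the quotient uses the $z$-dependent $a$. I would follow the end of that proof (estimates \eqref{fin1.f/phi}, \eqref{fin2.f/phi}), which already absorbs the variation of $a$ at Lipschitz cost. I would then check that the constants depend only polynomially on $\langle v_1\rangle$, so that the choice $R\simeq\langle v_1\rangle^{-N}$ closes the argument.
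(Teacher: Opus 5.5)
Your proposal is correct and follows the paper's proof essentially step by step. You flatten, apply \autoref{thm.h/phi} with $\Phi=\phi_0\bigl(d_\Omega/a,(n_x\cdot v)/a\bigr)$ and $a=\sqrt{n_x^TAn_x}$, which by \eqref{property.varphi} is exactly the paper's $\phi_0(d_\Omega G,(n_x\cdot v)G)$; you obtain comparability from \eqref{phi.asymptotic} and \autoref{lem.dist.asymp}, and the derivative bounds by the chain rule, where your homogeneity-plus-compactness bound on $D^\beta\phi_0$ is just the unscaled form of the paper's appeal to \autoref{lem.int.grad} and \autoref{lemma.1D.grax.est2}. One small correction: \eqref{est.smooth.phim} concerns the incoming region $\mathcal{R}^-$, which is precisely the excluded set, so the boundedness of $D^\beta\phi_0$ on the relevant compact piece of the unit sphere should instead come from interior smoothness together with smoothness of $\phi_0$ up to $\gamma_+$ (the only boundary point of that piece is $(0,-1)$).
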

\begin{proof}
    As in the proof of \eqref{last.f/phi}, we have for $z_0=\Psi(z_1)$,
    \begin{align*}
        \left\|\frac{\widetilde{f}}{{{\Phi}}}\right\|_{C^{0,1}(\mathcal{H}_{\frac{R_{0}}2}(z_0)\setminus\{(v_n)^3\geq x_n\})}\leq c\left(\|\widetilde{f}\|_{L^{1}(\mathcal{H}_{R_0}(z_0))}+\|\widetilde{F}\|_{L^{\infty}(\mathcal{H}_{R_0}(z_0))}\right),
    \end{align*}
    for some constants $c=c(n,|z_1|,\eps,\|A\|_{C^{1+\eps}(\mathcal{H}_{R_0}(z_0))},\|B\|_{C^{\eps}(\mathcal{H}_{R_0}(z_0))})$ and $R_0=R_0(|z_1|,\Omega)$, where we write  ${{\Phi}}(t,y,w)\coloneqq\phi_0\left(y_n/\sqrt{\widetilde{A}_{n,n}(t,y,w)},w_n/\sqrt{\widetilde{A}_{n,n}(t,y,w)}\right)$. Thus, using the observation $\widetilde{A}_{n,n}(t,y,w)=(\nabla\varphi(x))_{n,k}A_{k,l}(z)(\nabla\varphi(x))_{n,l}$ from \eqref{property.varphi} and the change of variables, we have
\begin{align*}
        \Vert f / \Phi \Vert_{C^{0,1}(\mathbf{H}_{1/2}(z_1) \setminus \mathcal{R}_-^{1})} \le C \big( \Vert f \Vert_{L^1(\mathbf{H}_1(z_1))} +  \Vert F \Vert_{L^{\infty}(\mathbf{H}_1(z_1))} \big),
    \end{align*}
    for some constants $c=c(n,|z_1|,\|A\|_{C^{1,\eps}(\mathbf{H}_{1}(z_1))},\|B\|_{C^{\eps}(\mathbf{H}_{1}(z_1))}),\Omega$. To investigate the behavior $\Phi$, we rewrite $\Phi$ as
    \begin{align*}
        {{\Phi}}(t,x,v)=\phi_0\left(d_\Omega(x)G(z),(n_x\cdot v)G(z)\right)\quad\text{with}\quad G(z)\coloneqq \frac{1}{\sqrt{(\nabla\varphi(x))_{n,k}A_{k,l}(z)(\nabla\varphi(x))_{n,l}}}.
    \end{align*}
    Note that $G(z)\in C^{\infty}(\mathbf{H}_1(z_1))$ with 
    \begin{align}\label{ineq0.kin}
        c(k,\Omega,\Lambda,\|A\|_{C^k(\mathcal{H}_1(z_1))})^{-1}\leq \|G\|_{C^k(\mathbf{H}_1(z_1))}\leq c(k,\Omega,\Lambda,\|A\|_{C^k(\mathcal{H}_1(z_1))}),
    \end{align}
    as $A$ is positive symmetric and $|\nabla\varphi |^2\geq\frac1c$ for some constant $c=c(\Omega)$.
    First, from \eqref{phim.asymptotic} with $m=0$ and \autoref{lem.dist.asymp}, we derive
    \begin{align*}
        {\Phi}(z)\asymp\max\{d_\Omega(x)^{\frac13},|n_x\cdot v|\}^{\frac12} \asymp_{\langle v_1\rangle} \mathrm{dist}^{\frac12}_{\mathrm{kin}}(z)\quad\text{for any }z\in\mathbf{H}_1(z_1)\setminus\mathcal{R}^1_-.
    \end{align*}
    For the second relation given in \eqref{eq:Phi-prop}, we observe that since $\phi_0$ is a weak solution to $(\partial_t+v_n\partial_{x_n})\phi_0-\Delta_{v_n}\phi_0=0$, by \autoref{lem.int.grad}, we have for any $|v_{1,n}|\leq 2(x_{1,n})^{\frac13}$, 
    \begin{align*}
        |D^\beta\phi_0(x_{1,n},v_{1,n})|\leq c\rho^{-\beta}\|\phi_0\|_{L^\infty(\mathcal{H}_\rho(0,x_{1,n},v_{1,n}))},
    \end{align*}
    where $c=c(\beta)$ and $\rho\coloneqq\frac{(x_{1,n})^{\frac13}}{64}$. By \eqref{phi.asymptotic}, we have 
    \begin{align}\label{ineq1.kin}
        |D^\beta\phi_0(x_{1,n},v_{1,n})|\leq c(x_{1,n})^{-\frac{\beta}{3}+\frac16}.
    \end{align}
    Next for any $(-v_{1,n})\geq 2(x_{1,n})^{\frac13}$, by \autoref{lemma.1D.grax.est2} with $v_0$ and $R$ replaced by $v_{1,n}$ and $R=\frac{v_{1,n}}{2}$, respectively, we obtain
    \begin{align}\label{ineq2.kin}
        |D^\beta\phi_0(x_{1,n},v_{1,n})|\leq \|D^\beta\phi_0\|_{L^\infty(H_R(0,v_{1,n}))}\leq c|v_{1,n}|^{-\beta+\frac12},
    \end{align}
    where we have used \eqref{phi.asymptotic}. By \eqref{ineq1.kin} and \eqref{ineq2.kin}, we get 
    \begin{align*}
        |D^\beta\phi_0(x_{1,n},v_{1,n})|\leq c\max\{(x_{1,n})^{\frac13},|v_{1,n}|\}^{-\beta+\frac12}
    \end{align*}
    for some constant $c=c(\beta)$. Now using this and \eqref{ineq0.kin} together with the fact that $d_\Omega(x)$ and $n_x\cdot v$ are smooth, we have 
    \begin{align*}
        |D^\beta \Phi(z)|\leq c \max\{d_\Omega(x)^{\frac13},|(n_x\cdot v)|\}^{-\beta+\frac12},
    \end{align*}
    where $c=c(n,\beta,A,\Omega)$. Combining this and \autoref{lem.dist.asymp} leads to the desired estimate
    \begin{align*}
        D^\beta {\pmb{\Phi}}\leq c\mathrm{dist}_{kin}^{\frac12-\beta}\quad\text{in }\mathbf{H}_1(z_1)\setminus \mathcal{R}_-^1
    \end{align*}
    for some constant $c=c(\beta,n,\Omega,A,\langle v_1\rangle)$,
    which completes the proof.
\end{proof}

\appendix

\section{Functions depending only on time and spatial variables}
Here, we provide some basic facts about functions  $g=g(t,x')$, i.e. which depend only on $t,x'$ (whenever $\Omega = \{x_n > 0\}$), with respect to kinetic H\"older spaces. These results are required for the proof of \autoref{lem.reg.nf.diff}.

\begin{itemize}
    \item For any $\lambda\in\N\cup\{0\}$ and $\eps\in(0,1)$,
\begin{align}\label{fund.lambda.eps}
    [g]_{C^{\lambda,\eps}(\gamma_-\cap\mathcal{Q}_r(z_0))}\leq c\left([\partial_x^lg]_{C_x^{\frac{\lambda+\eps-3l}{3}}(\gamma_-\cap\mathcal{Q}_r(z_0))}+\sum_{j=0}^{l-1}[\partial_x^j\mathcal{D}_t^{k_j}g]_{C^{\lambda+\eps-(2k_j+3j)}(\gamma_-\cap\mathcal{Q}_r(z_0))}\right),
\end{align}
where $\mathcal{D}\coloneqq \partial_t+v'\cdot\nabla_{x'}$, $c=c(n,\lambda,\eps)$, $\lambda-3<3l\leq \lambda$ and $\lambda-2<2k_j+3j\leq \lambda$.

\item For any $\eps\in(0,1)$ and $r\leq1$, 
\begin{align}\label{hol.rel.kin}
    [g]_{C_t^{\frac\eps3}(\mathcal{H}_r(z_0))}+[g]_{C_x^{\frac\eps3}(\mathcal{H}_r(z_0))}\leq c\langle v_0\rangle^{\frac{\eps}3}[g]_{C^{\eps}(\mathcal{H}_r(z_0))},
\end{align}
where $c=c(n,\eps)$ and 
\begin{align*}
    &[g]_{C_t^{\frac\eps3}(\mathcal{H}_r(z_0))}=\sup_{(t,x,v),(\widehat{t},x,v)\in\mathcal{H}_r(z_0)}\frac{|g(t,x',v)-g(\widehat{t},x',v)|}{|t-\widehat{t}|^{\frac\eps3}},\\
    &[g]_{C_x^{\frac\eps3}(\mathcal{H}_r(z_0))}=\sup_{(t,x',v),(t,\widehat{x},v)\in\mathcal{H}_r(z_0)}\frac{|g(t,x',v)-g(t',x',v)|}{|x-\widehat{x}|^{\frac\eps3}}.
\end{align*}
\item For any $\eps\in(0,1)$, we have 
\begin{align}\label{hol.diff.rel}
    \sup_{0<|h|<\frac{r}2}\left\|\frac{g(t+h,x',v)-g(t,x',v)}{|h|^{\frac\eps3}}\right\|_{L^\infty(\mathcal{H}_r(z_0))}\leq c[g]_{C_t^{\frac\eps3}(\mathcal{H}_{2r}(z_0))}.
\end{align}
\end{itemize}
\begin{proof}
To prove \eqref{fund.lambda.eps},  for $v_1'\in\bbR^{n-1}$, $X'=(t,x',v_1')$ and $X_1'=(t_1,x_1',v_1')$, we observe
\begin{align}\label{taylor1.g}
    |(g-p_{X_1'})(X')|\leq |(g-p_{X_1'})(X')-(g-p_{\overline{X}_1'})(X')|+|(g-p_{\overline{X}'_1})(X')|\eqqcolon J_1+J_2,
\end{align}
where $\overline{X}_1'=(t,\overline{x}_1',v_1')$ with $\overline{x}'_1\coloneqq x_1'+(t-t_1)v_1'$, and $p_{X_1'}$ and $p_{\overline{X}_1'}$ are $k$-order Taylor polynomial of $g$ at $X_1'$ and $\overline{X}_1'$, respectively. Note from \cite[(2.3) in Definition 2.3]{ImSi21} that 
\begin{align*}
    p_{(t,x',v')}((t,x',v')\circ (t_2,x_2',0))=\sum_{2i+3j\leq k}\mathcal{D}^i\nabla_{x'}^jg(t,x',v')(t_2)^i\cdot (x_2')^j,
\end{align*}
where $p_{(t,x',v')}$ is $k$-order Taylor polynomial of $g$ at $(t',x',v')$. Using this and the following observations
 $X'=X_1'\circ (t-t_1,x'-\overline{x}_1',0)$ and $X'=\overline{X}_1'\circ (0,x'-\overline{x}_1',0) $, we have 
\begin{equation}\label{taylor2.g}
\begin{aligned}
    J_1&\leq \sum_{j=0}^{l}|x'-\overline{x}_1'|^j|\partial_x^jg(t_1,x_1')-\partial_x^jg(t,\overline{x}_1')-(\sum_{i=1}^{k_j}\partial_x^j\mathcal{D}^ig(t_1,x_1')(t-t_1)^i|,\\
    J_2&\leq |g(t,x')-g(t,\overline{x}_1')-\sum_{j=1}^{m}\partial_x^jg(t,\overline{x}_1')(x-\overline{x}_1')^j|,
\end{aligned}
\end{equation}
where $k-2<2k_j+3j\leq k$ and $k-3< 3m\leq k$. By the fundamental theorem of calculus, we have
\begin{align*}
    J_1+J_2&\leq \sum_{j=0}^{l-1}|t-t_1|^{k_j}|x'-x_1'-(t-t_1')v_1'|^{j+\frac{(k+\eps)-(2k_j+3l)}{2}}[\mathcal{D}^{k_j}\partial_x^jg]_{C^{(k+\eps)-(2k_j+3j)}(\gamma_-\cap\mathcal{Q}_r(z_0))},\\
    &\quad+|x'-x_1'-(t-t_1')v_1'|^{\frac{k+\eps}{3}}[\partial_x^lg]_{C_x^{\frac{k+\eps-3m}{3}}(\gamma_-\cap\mathcal{Q}_r(z_0))},
\end{align*}
which implies \eqref{fund.lambda.eps}.

On the other hand, \eqref{hol.rel.kin} follows from 
\begin{align*}
    \frac{\left(|t-t'|^{\frac12}+|x-x'-(t-t')v|^{\frac13}\right)^{\eps}}{|t-t'|^{\frac{\eps}3}+|x-x'|^{\frac{\eps}3}}\leq c\langle v\rangle^{\frac{\eps}3}.
\end{align*}
This completes the proof.
\end{proof}
\section{Perturbation argument with unbounded right-hand side}\label{rmk.l8delta}
Here, we give the proof of \autoref{lem.sch.divergence} via the perturbation argument based on \cite{KLN25}.
\begin{proof}[Proof of \autoref{lem.sch.divergence}.] We assume that $\mathcal{Q}_1$ is a one-sided cylinder, as the estimates on two-sided cylinders directly follow as in \cite[Lemma 2.23]{RoWe25}. Let us fix $r\leq \frac14$. Let $f_1$ be a weak solution to 
\begin{equation*}
\left\{
\begin{alignedat}{3}
(\partial_t+v\cdot\nabla_x)f_1-\ddiv(A\nabla_vf_1)&=0&&\qquad \mbox{in  $\mathcal{Q}_{{r}}$}, \\
f_1&=f&&\qquad  \mbox{in $\partial_{\mathrm{kin}}\mathcal{Q}_{{r}}$},
\end{alignedat} \right.
\end{equation*}
and let $f_2$ be a weak solution to 
\begin{equation*}
\left\{
\begin{alignedat}{3}
(\partial_t+v\cdot\nabla_x)f_2-\ddiv(A(0)\nabla_vf_2)&=0&&\qquad \mbox{in  $\mathcal{Q}_{\frac{r}{2}}$}, \\
f_2&=f_1&&\qquad  \mbox{in $\partial_{\mathrm{kin}}\mathcal{Q}_{\frac{r}2}$}.
\end{alignedat} \right.
\end{equation*}
Note that $f-f_1$ solves
\begin{equation*}
\begin{aligned}
    (\partial_t+v\cdot\nabla_x)(f-f_1)-\ddiv(A\nabla_v(f-f_1))=B\nabla_vf+F-\ddiv(G)\quad\text{in }\mathcal{Q}_r,
\end{aligned}
\end{equation*}
and satisfies $f-f_1 = 0$ on $\partial_{\mathrm{kin}}\mathcal{Q}_r$. Therefore we get by testing the equation for $f-f_1$ with $f-f_1$ itself, 
\begin{align}\label{firs.comp.est1}
    \dashint_{\mathcal{Q}_r}|\nabla_v(f-f_1)|^2\,dz\leq c\left(r^2\|B\|^2_{L^\infty(\mathcal{Q}_r)}\int_{\mathcal{Q}_r}|\nabla_vf|^2\,dz+r^4\|F\|_{L^\infty(\mathcal{Q}_r)}+r^{2+2\eps}[G]_{C^\eps(\mathcal{Q}_r)}\right),
\end{align}
where we have used $-\ddiv(G)=-\ddiv(G-G(0))$, as well as Poincar\'e's inequality, which yields
\begin{align*}
    \dashint_{\mathcal{Q}_r}|f-f_1|^2\,dz\leq cr^2\dashint_{\mathcal{Q}_r}|\nabla_v(f-f_1)|^2\,dz
\end{align*}
and allows to absorb the term on the right-hand side to the left-hand side in \eqref{firs.comp.est1}.
Next by \cite[Lemma 4.6]{KLN25} and \eqref{firs.comp.est1}, we have 
\begin{equation}\label{sec.comp.est2}
\begin{aligned}
     \dashint_{\mathcal{Q}_{\frac{r}2}}|\nabla_v(f_1-f_2)|^2\,dz\leq cr^{2\eps}\dashint_{\mathcal{Q}_{r}}|\nabla_vf_1|^2\,dz&\leq c(r^{2\eps}+r^2\|B\|_{L^\infty(\mathcal{Q}_r)})\dashint_{\mathcal{Q}_{r}}|\nabla_vf|^2\,dz\\
     &\quad+c\left(r^4\|F\|_{L^\infty(\mathcal{Q}_r)}+r^{2+2\eps}[G]_{C^\eps(\mathcal{Q}_r)}\right),
\end{aligned}
\end{equation}
where $c=c(n,\Lambda,\eps,\|A\|_{C^\eps(\mathcal{Q}_1)})$.
Now using \eqref{firs.comp.est1} and \eqref{sec.comp.est2} and following the same lines as in the proof of \cite[(5.33) in Lemma 5.7]{KLN25} with $w=f_1$, $g=f_2$, and $\alpha=\beta=\eps$, we derive 
\begin{equation*}
\begin{aligned}
    \dashint_{\mathcal{Q}_\rho}|\nabla_vf-(\nabla_vf)_{\mathcal{Q}_\rho}|^2\,dz&\leq c(\rho/r)^{4n+2+2\eps}\dashint_{\mathcal{Q}_r}|\nabla_vf-(\nabla_vf)_{\mathcal{Q}_r}|^2\,dz+c(r^{2\eps}+r^2\|B\|_{L^\infty(\mathcal{Q}_r)})\int_{\mathcal{Q}_r}|\nabla_vf|^2\,dz\\
    &\quad+c\left(r^{4n+2+2\eps}[G]_{C^\eps(\mathcal{Q}_r)}+r^{4n+2+2}\|F\|_{L^\infty(\mathcal{Q}_r)}\right)
\end{aligned}
\end{equation*}
for some constant $c=c(n,\Lambda,\eps,\|A\|_{C^\eps(\mathcal{Q}_1)})$. Now by applying a technical iteration lemma as in \cite[Lemma 5.7]{KLN25}, we get 
\begin{align}\label{gra.bdd.est.sch}
    \|\nabla_vf\|_{L^\infty(\mathcal{H}_{r})}\leq c(\|\nabla_vf\|_{L^2(\mathcal{Q}_{\frac12})}+\|F\|_{L^\infty(\mathcal{Q}_{\frac12})}+[G]_{C^\eps(\mathcal{Q}_{\frac12})}).
\end{align}
On the other hand, by \autoref{lem.int.grad} and \cite{GIMV19}, we have for any $\delta\in(0,1)$ and any affine function $l=a+b\cdot v$,
\begin{align*}
    [f_2]_{C_{\ell}^{1+\delta}(\mathcal{Q}_{\frac{r}{4}})}\leq cr^{-(1+\delta)}\|f_2-l\|_{L^\infty(\mathcal{Q}_{\frac{3r}{8}})}\leq cr^{-(1+\delta)}\dashint_{\mathcal{Q}_{\frac{r}{2}}}|(f_2-l)|^2\,dz,
\end{align*}
where $c=c(n,\Lambda,\delta)$, as $f_2-l$ is also a weak solution. Therefore, by taking $l=f_2(0)+\nabla_vf_2(0)\cdot v$ and using the definition of the kinetic H\"older space $C^{1+\delta}$, we derive for any $\rho\in(0,\frac12)$,
\begin{align}\label{c1eps.est}
    \dashint_{\mathcal{Q}_{\rho r}}|f_2-l|^2\,dz\leq c\rho^{2(1+\delta)}\dashint_{\mathcal{Q}_{\frac{r}{2}}}|f_2-l|^2\,dz.
\end{align}
Now we want to estimate the following terms.
\begin{align*}
    \dashint_{\mathcal{Q}_{\rho r}}|f-l|^2\,dz\leq c\dashint_{\mathcal{Q}_{\rho r}}|f-f_1|^2\,dz+c\dashint_{\mathcal{Q}_{\rho r}}|f_1-f_2|^2\,dz+c\dashint_{\mathcal{Q}_{\rho r}}|f_2-l|^2\,dz\eqqcolon \sum_{i=1}^{3}J_i.
\end{align*}
Using Poincar\'e's inequality and \eqref{firs.comp.est1}, we estimate $J_1$ as 
\begin{align*}
    J_1&\leq \rho^{-(4n+2)}\dashint_{\mathcal{Q}_r}|f-f_1|^2\,dz\\
    &\leq \rho^{-(4n+2)}r^2\dashint_{\mathcal{Q}_r}|\nabla_v(f-f_1)|^2\,dz\\
    &\leq c\rho^{-(4n+2)}r^2\left(r^2\|B\|^2_{L^\infty(\mathcal{Q}_r)}\int_{\mathcal{Q}_r}|\nabla_vf|^2\,dz+r^4\|F\|_{L^\infty(\mathcal{Q}_r)}+r^{2+2\eps}[G]_{C^\eps(\mathcal{Q}_r)}\right).
\end{align*}
The estimate of $J_2$ follows directly from \eqref{sec.comp.est2}. For the term $J_3$, we use \eqref{c1eps.est} to see that
\begin{align*}
    J_3\leq c\rho^{2(1+\delta)}\dashint_{\mathcal{Q}_r}|f_2-l|^2\,dz\leq c\rho^{2(1+\delta)}\left(J_1+J_2+\dashint_{\mathcal{Q}_r}|f-l|^2\,dz\right).
\end{align*}
Altogether, we deduce
\begin{align*}
    \dashint_{\mathcal{Q}_{\rho r}}|f-l|^2\,dz&\leq c\rho^{2(1+\delta)}\dashint_{\mathcal{Q}_r}|f-l|^2\,dz+c\rho^{-(4n+2)}(r^4\|B\|^2_{L^\infty(\mathcal{Q}_r)}+r^{2+2\eps})\|\nabla_vf\|_{L^\infty(\mathcal{Q}_r)}\\
    &\quad+c\rho^{-(4n+2)}r^4\|F\|_{L^\infty(\mathcal{Q}_r)}+r^{2+2\eps}[G]_{C^\eps(\mathcal{Q}_r)}.
\end{align*}
Now we choose $\delta=\frac{1+\eps}{2}$ and $\rho=\rho(n,\Lambda,\eps)$ such that
\begin{align}\label{rho.cho.sch}
    \rho^{2+2\delta-(2+2\eps)}=\rho^{\frac{1-\eps}{2}}\leq \frac{1}{4c}.
\end{align}
Then, as in the proof of \cite[Lemma 5.6]{KLN25} together with \eqref{gra.bdd.est.sch}, we get for any $r\leq \frac14$
\begin{align}\label{l2.c12}
    \dashint_{\mathcal{Q}_r}\frac{|f-l|^2}{r^{2(1+\eps)}}\,dz\leq c(\|\nabla_vf\|_{L^2(\mathcal{Q}_{\frac12})}+\|F\|_{L^\infty(\mathcal{Q}_{\frac12})}+[G]_{C^\eps(\mathcal{Q}_{\frac12})}),
\end{align}
where $c=c(n,\Lambda,\eps,\|A\|_{C^\eps(\mathcal{Q}_1)},\|B\|_{L^\infty(\mathcal{Q}_1)})$. Lastly, note that $\widetilde{f}\coloneqq f-l$ solves
\begin{align*}
    (\partial_t+v\cdot\nabla_v)\widetilde{f}-\ddiv(A\nabla_v\widetilde{f})=B\cdot\nabla_v\widetilde{f}+\widetilde{F}-\ddiv(\widetilde{G}),
\end{align*}
where $\widetilde{F}\coloneqq F+B\cdot\nabla_vf(0)$ and $\widetilde{G}\coloneqq G-A\nabla_vf(0)$. Thus, using \cite{GIMV19}, we derive
\begin{align*}
    \|\widetilde{f}\|^2_{L^\infty(\mathcal{Q}_{\frac{r}2})}&\leq c(\dashint_{\mathcal{Q}_r}|\widetilde{f}|^2\,dz+r^{4}\|\widetilde{F}\|_{L^\infty(\mathcal{Q}_r)}+r^{2+2\eps}[\widetilde{G}]_{C^{\eps}(\mathcal{Q}_r)}).
\end{align*}
Now combining this, \eqref{gra.bdd.est.sch}, and \eqref{l2.c12} together with energy estimates implies that for any $r\leq \frac14$,
\begin{align*}
    \|{f}-l\|_{L^\infty(\mathcal{Q}_{\frac{r}2})}\leq cr^{1+\eps}(\|f\|_{L^1(\mathcal{Q}_{\frac12})}+\|F\|_{L^\infty(\mathcal{Q}_{\frac12})}+[G]_{C^\eps(\mathcal{Q}_{\frac12})}).
\end{align*}
This yields the desired estimate.
\end{proof}

\section{Properties of 1D solutions with in-flow condition}
\label{sec:appendix}

The goal of this section is to obtain an infinite countable family of homogeneous solutions to Kolmogorov's equation in 1D in the half-space with absorbing and in-flow boundary condition and to establish some fine properties of these solutions. For our main theorem, we only rely on these results for $\phi_0,\phi_{\pm 1}, \psi_0$. Although some of the results might have already been established in the literature, we decided to give proofs that are as self-contained as possible since we believe that such a presentation might be of independent interest.

Our analysis is based on the representation of solutions in terms of Tricomi confluent hypergeometric functions $U(a,b,z)$. First, let us recall several basic properties of $U(a,b,z)$, which can be found in \cite{DLMF} and \cite{Abramowitz}:
\begin{align}
\label{eq:multiple-U}
    U'(a,b,z)=-aU(a+1,b+1,z),
\end{align}
\begin{equation}\label{eq.U}
    z(a+1)U(a+2,b+2,z)+(z-b)U(a+1,b+1,z)-U(a,b,z)=0,
\end{equation}
\begin{align}
\label{eq:der-U-formula}
    z U'(a,b,z) = a \big[ (1+a-b) U(a+1,b,z) - U(a,b,z) \big],
\end{align}
\begin{equation}\label{eq2.U}
    a(a-b+1)U(a+1,b,z)+(b-2a-z)U(a,b,z)+U(a-1,b,z)=0,
\end{equation}
\begin{equation}\label{asymp1.U}
    U(a,b,z)=\frac{\Gamma(1-b)}{\Gamma(a-b+1)}+O(z^{1-b})\quad \text{ as }  z \to 0, \quad \text{if }0<b<1, \quad
\end{equation}
\begin{equation}\label{asymp2.U}
    U(a,b,z)=\frac{\Gamma(b-1)}{\Gamma(a)}z^{1-b}+\frac{\Gamma(1-b)}{\Gamma(a-b+1)}+O(z^{2-b}),\quad \text{ as }  z \to 0, \quad \text{if }1 < b<2,
\end{equation}
\begin{equation}\label{asymp3.U}
    U(a,b,z)=\frac{\Gamma(b-1)}{\Gamma(a)}z^{1-b}+O(z^{2-b}),\quad \text{ as }  z \to 0, \quad \text{if }2 < b,
\end{equation}
\begin{equation}\label{asymp4.U}
U(a,b,z)\leq  c(a,b)z^{-a}\quad\text{when }z\gg1,
\end{equation}
\begin{equation}\label{explicit.U}
    U(a,b,z)=\frac{1}{\Gamma(a)}\int_{0}^\infty e^{-z\xi}\xi^{a-1}(1+\xi)^{b-a-1}\,d\xi\quad\text{for any }a > 0, \quad z>0,
\end{equation}
\begin{equation}\label{aplus.U}
    U(a,b,z)=z^{1-b}U(a-b+1,2-b,z),
\end{equation}

Moreover, note that $U(a,b,z)$ is analytic on the complex plane cut along $(-\infty,0]$.

\subsection{1D solutions with absorbing boundary condition}

Using these formulas, we find a family of solutions to Kolmogorov's equation in 1D with absorbing boundary condition. Note that the function $\phi_0$ has already appeared in the literature; see, for instance, \cite{GJW99}.

\begin{lemma}
\label{lemma:1D-sol-Tricomi}
    For any integer $m\geq-1$, let us define 
    \begin{equation}\label{rel.phiandU}
    \begin{aligned}
        \phi_m(x,v)\coloneqq \begin{cases}
            c_mx^{\lambda_m}U\left(-\frac16-m,\frac23,-\frac{v^3}{9x}\right)&\quad\text{if }v<0,\\
            c_mM_mx^{\lambda_m}e^{-\frac{v^3}{9x}}U\left(\frac56+m,\frac23,\frac{v^3}{9x}\right)&\quad\text{if }v\geq0,
        \end{cases}
    \end{aligned}
    \end{equation}
    where we write 
    \begin{equation}\label{cm.defn}
    \begin{aligned}
        c_m\coloneqq\begin{cases}
            1&\quad\text{if }m=-1,\\
            (-1)^m&\quad\text{if }m\geq0,
        \end{cases}
    \end{aligned}
    \end{equation}
    and
\begin{equation*}
    \lambda_m\coloneqq 1/6+m \quad\text{and}\quad M_m=\frac{\Gamma(7/6+m)}{\Gamma(1/6-m)}.
\end{equation*}
Then $\phi_m$ is a classical solution to
\begin{equation}\label{eq.1d.absorb}
\left\{
\begin{alignedat}{3}
v\partial_x\phi_m-\partial_{vv}\phi_m&=0&&\qquad \mbox{in  $\{x>0\}\times \bbR$}, \\
\phi_m&=0&&\qquad  \mbox{in $ \{x=0\}\times \{v>0\}$}.
\end{alignedat} \right.
\end{equation}
In addition, for any $m\geq0$, $\phi_m$ are weak solutions to \eqref{eq.1d.absorb}
 and $\phi_{-1}$ is a weak solution to 
\begin{align}\label{eq.1d.phi-1}
    v\partial_x\phi_{-1}-\partial_{vv}\phi_{-1}=0\quad\text{in }((1/R^3,\infty)\times \bbR)\cup ((0,\infty)\times (-\infty,-1/R))
\end{align}
for any $R>0$.
\end{lemma}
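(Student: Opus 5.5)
The plan is to reduce the PDE to Kummer's equation via a self-similar ansatz, then check matching across $v=0$, the boundary condition, and the weak formulation separately.

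\emph{Reduction to an ODE.} The equation $v\partial_x\phi-\partial_{vv}\phi=0$ is invariant under $(x,v)\mapsto(r^3x,rv)$. So we look for solutions of the form $\phi=x^{\lambda}H(\zeta)$ with $\zeta=-\frac{v^3}{9x}$. Write $\zeta_v=-\frac{v^2}{3x}$ and $\zeta_x=\frac{v^3}{9x^2}$. A direct computation gives
\[
v\partial_x\phi=x^{\lambda-1}\big(\lambda vH-3\zeta vH'\big),
\]
and
\[
\partial_{vv}\phi=x^{\lambda}\big(H''\zeta_v^2+H'\zeta_{vv}\big)=x^{\lambda-1}v\big(-3\zeta H''-2H'\big).
\]
After dividing by $x^{\lambda-1}v$, the equation becomes
\[
\zeta H''+\big(\tfrac23-\zeta\big)H'+\lambda H=0 .
\]
This is Kummer's equation with $b=\frac23$ and $a=-\lambda$. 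For $v<0$ we have $\zeta>0$, and $H(\zeta)=U(-\lambda,\frac23,\zeta)$ solves it.

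For $v>0$ we have $\zeta<0$. Here we use Kummer's transformation: $e^{\zeta}U(b-a,b,-\zeta)$ solves the same ODE. This gives $e^{-v^3/(9x)}U(\frac23+\lambda,\frac23,\frac{v^3}{9x})$, and with $\lambda=\lambda_m$ this is exactly $a=\frac56+m$. Both branches are smooth in $\{x>0\}\times\{v\neq0\}$, and \eqref{eq:multiple-U} together with \eqref{eq.U} can be used to confirm the computation.

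\emph{Matching at $v=0$ (main obstacle).} The crux is to verify that the two branches glue to a $C^2$ (hence classical) solution across $\{v=0,\,x>0\}$. Since $\zeta\to0$ there with $b=\frac23\in(0,1)$, we use \eqref{asymp1.U}, or more precisely the full connection formula $U=\frac{\Gamma(1-b)}{\Gamma(a-b+1)}M(a,b,\zeta)+\frac{\Gamma(b-1)}{\Gamma(a)}\zeta^{1-b}M(a-b+1,2-b,\zeta)$. With $M$ denoting Kummer's function, each branch has the form
\[
x^{\lambda}\big(\alpha\,F_1(\zeta)+\beta\,\zeta^{1/3}F_2(\zeta)\big),\qquad \zeta^{1/3}=-\frac{v}{9^{1/3}x^{1/3}},
\]
where $F_1,F_2$ are entire. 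Both pieces are therefore smooth functions of $v$ near $0$. The solution is then smooth across $v=0$ if and only if the coefficients $\alpha,\beta$ on the two sides agree (the $e^{\zeta}$ factor combines with $M$ through Kummer's identity $M(a,b,\zeta)=e^{\zeta}M(b-a,b,-\zeta)$). Concretely, one equates the value and the $v$-derivative at $v=0$:
\[
\frac{\Gamma(1/3)}{\Gamma(1/6-m)}\quad\text{vs.}\quad M_m\frac{\Gamma(1/3)}{\Gamma(3/2+m)},
\]
and similarly for the $\zeta^{1/3}$-coefficients, using $\Gamma(-\frac13)/\Gamma(a)$. Checking that the single constant $M_m=\Gamma(\frac76+m)/\Gamma(\frac16-m)$ makes both coefficients match (up to the sign $c_m$, which is harmless) is the delicate Gamma-function bookkeeping I expect to be the hardest step. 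I would reduce it to the reflection formula $\Gamma(z)\Gamma(1-z)=\pi/\sin\pi z$. Once the value and first derivative match, uniqueness for the linear ODE in $v$ at fixed $x$, or simply the identical power-series expansions, gives $C^\infty$ matching.

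\emph{Boundary condition and weak formulation.} For the boundary condition, as $x\to0^+$ with $v>0$ fixed, we have $\frac{v^3}{9x}\to\infty$. By \eqref{asymp4.U}, $\phi_m\lesssim x^{\lambda_m}e^{-v^3/(9x)}(v^3/x)^{-a}\to0$, so $\phi_m=0$ on $\{0\}\times(0,\infty)$.

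For the weak formulation with $m\ge0$, we use \autoref{lem.classical}. This requires continuity up to $\{x=0\}$ and local $L^2$ integrability of $\partial_v\phi_m$. Both follow from the homogeneity of degree $\frac12+3m\ge\frac12$: $\phi_m$ is continuous with $\phi_m(0,v)\sim|v|^{1/2+3m}$ for $v<0$ by \eqref{asymp4.U}, and $|\partial_v\phi_m|\lesssim\rho^{-1/2+3m}$ with $\rho=|x|^{1/3}+|v|$, which is locally square integrable in two dimensions.

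For $m=-1$ the degree is $-\frac52$, so $\phi_{-1}$ is singular at the origin and blows up near $\gamma_-$. This is why we restrict to the region in \eqref{eq.1d.phi-1}, which stays away from $(0,0)$ and from $\{x=0,v\ge0\}$. There $\phi_{-1}$ is smooth up to $\{x=0,v<-1/R\}$, so the weak formulation follows from the classical one by integration by parts.
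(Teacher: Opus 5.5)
Your argument is correct, and it differs from the paper's in the two substantive steps.

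\textbf{Gluing across $\{v=0\}$.} The paper keeps the two branches as written. Using the small-argument asymptotics \eqref{asymp1.U}, \eqref{asymp2.U} together with \eqref{eq:multiple-U}, it checks that the values, $\partial_x$ and $\partial_v$ agree at $v=0$, and that $\partial_{vv}=O(v)$ on both sides; this is exactly what a classical solution needs. You instead use the connection formula and Kummer's transformation to write each branch as
\[
x^{\lambda_m}\big[\alpha_\pm M(a,\tfrac23,-w^3)+\beta_\pm\, w\,M(a+\tfrac13,\tfrac43,-w^3)\big],\qquad a=-\tfrac16-m,\quad w=v/(9x)^{1/3}.
\]
Matching the two constants then yields a single real-analytic function on $\{x>0\}\times\R$. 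This is stronger than what the paper proves and makes the $\partial_x$-check automatic. The identity you need for the $\beta$-coefficients is the same one the paper verifies for $\partial_v$-matching; the paper proves it by telescoping Gamma products rather than reflection.

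\textbf{Weak formulation.} The paper bounds $\partial_v\phi_m$ and $\partial_{vv}\phi_m$ in $L^2(H_R)$ region by region and invokes \autoref{rmk.equ.weak.str.par}; the $\partial_{vv}$ bound is reused later, in \autoref{rmk.integrable.derv.phim}. Your use of \autoref{lem.classical} needs only continuity near $\{x=0,\,v\ge 0\}$ and $\partial_v\phi_m\in L^2_{\mathrm{loc}}$. Homogeneity gives both once you observe that $\phi_m$ and $\partial_v\phi_m$ stay bounded on $\{x^{1/3}+|v|=1\}$ up to $x=0$: by \eqref{asymp4.U} for $v<0$, and by the factor $e^{-v^3/(9x)}$ for $v>0$. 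The measure $dx\,dv$ has homogeneous degree $4$, not $2$, but $\rho^{-1}\le x^{-1/3}$ gives the integrability directly.

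\textbf{Slips to fix.}

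First, with $\zeta_v=-\frac{v^2}{3x}$ the correct expressions are $v\partial_x\phi=x^{\lambda-1}v(\lambda H-\zeta H')$ and $\partial_{vv}\phi=x^{\lambda-1}v(-\zeta H''-\tfrac23 H')$. Your displays carry a spurious factor $3$ on the $\zeta$-derivative terms and would produce $\tfrac{\lambda}{3}H$, although the Kummer equation you state is the right one.

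Second, and more importantly, $U(\tfrac56+m,\tfrac23,0)=\Gamma(\tfrac13)/\Gamma(\tfrac76+m)$, since $a-b+1=\tfrac76+m$. It is not $\Gamma(\tfrac32+m)$. With your value the first matching condition fails for the given $M_m$; with the correct one it is literally the definition of $M_m$.

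Third, since $\zeta^{1/3}=-w$ for $v<0$ and $(-\zeta)^{1/3}=w$ for $v>0$, the second condition reads $-1/\Gamma(-\tfrac16-m)=M_m/\Gamma(\tfrac56+m)$. Equivalently, $\Gamma(\tfrac76+m)\Gamma(-\tfrac16-m)=-\Gamma(\tfrac56+m)\Gamma(\tfrac16-m)$. By the reflection formula this is $\sin\pi(\tfrac76+m)=-\sin\pi(\tfrac56+m)$, which holds because the sum of the two sines is $2\sin(\pi(1+m))\cos\tfrac{\pi}{6}=0$.

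Fourth, drop the alternative ``uniqueness for the linear ODE in $v$ at fixed $x$''. The equation is not an ODE in $v$, and $\zeta=0$ is a singular point of Kummer's equation. It is the single-analytic-expression argument that gives smoothness.

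Fifth, $\phi_{-1}$ does not blow up at $\gamma_-$ away from the origin; your own boundary argument shows it decays like $e^{-v^3/(9x)}$ there. Its only singularity is at $(0,0)$, so the restriction to the region in \eqref{eq.1d.phi-1} is unaffected.
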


\begin{remark}
Note that because of the constant $c_m$, it holds $\phi_m\geq0$ when $v\geq0$.
\end{remark}

\begin{remark}
The reader might wonder, whether it is possible to find solutions to \eqref{eq.1d.absorb} with another homogeneity than $\lambda_m = \frac{1}{6} + m$. We show in \autoref{lem.liou2} that this is impossible. However, note that by replacing the values $\lambda_m$ by $a$, as well as $-\frac{1}{6} - m$ and $\frac{5}{6} + m$ in \eqref{rel.phiandU} by $-a < 0$ and $\frac{2}{3} + a$, respectively, the corresponding function $\phi$ will still solve \eqref{eq.1d.absorb} except for $\{ v = 0 \}$. However, it is impossible to find $M$ such that $\phi$ is $C^1$ across $\{ v = 0 \}$. More precisely, if we consider a function 
    \begin{align}
    \label{phi.remark}
        \phi(x,v)\coloneqq\begin{cases}
            x^{a}U\left(-a,\frac23,-\frac{v^3}{9x}\right)& \quad\text{if }v\leq0,\\
           M x^{a}e^{-\frac{v^3}{9x}}U\left(\frac23+a,\frac23,\frac{v^3}{9x}\right)&\quad\text{if }v>0
        \end{cases}
    \end{align}
    for some constant $M>0$, then $\phi$ satisfies \eqref{eq.1d.absorb} except for $\{ v = 0 \}$. However, to ensure $\phi\in C^1$ at $\{v=0\}$, the constant $M$ has to satisfy
    \begin{align*}
        \lim_{v\nearrow 0}\phi(x,v)=\lim_{v\nearrow 0}x^a\frac{\Gamma(1/3)}{\Gamma(-a+1/3)}=\lim_{v\searrow 0}Mx^a\frac{\Gamma(1/3)}{\Gamma(a+1)}=\lim_{v\searrow 0}\phi(x,v),
    \end{align*}
    and
    \begin{align*}
        \lim_{v\nearrow 0}\partial_v\phi(x,v)&=\lim_{v\nearrow 0}ax^a\left(-\frac{v^2}{3x}\right)\left(\frac{(-v)^3}{9x}\right)^{-\frac23}\frac{\Gamma(2/3)}{\Gamma(-a+1)}\\
        &=\lim_{v\searrow 0}-Mx^a\left(\frac23+a\right)\left(\frac{v^2}{3x}\right)\left(\frac{v^3}{9x}\right)^{-\frac23}\frac{\Gamma(2/3)}{\Gamma(a+5/3)}=\lim_{v\searrow 0}\partial_v\phi(x,v),
    \end{align*}
    where we have used \eqref{asymp1.U} and \eqref{asymp2.U}.
    This is equivalent to 
    \begin{align}\label{ineq.equiva}
        \frac{\Gamma(a+1)}{\Gamma(-a+1/3)}=\frac{a}{\frac23+a}\frac{\Gamma(a+5/3)}{\Gamma(-a+1)}=-\frac{\Gamma(a+2/3)}{\Gamma(-a)}.
    \end{align}
    We now consider the following three cases.
    \begin{itemize}
        \item $a,a+\frac23\notin \Z$. Then by the fact that 
        \begin{equation}\label{gamma.multiple}
            \mbox{$\Gamma(\xi)\Gamma(1-\xi)=\frac{\pi}{\sin(\pi \xi)}$ when $\xi\notin\Z$,}
        \end{equation} we have 
        \begin{align*}
            -\Gamma(1-(-a))\Gamma(-a)=-\frac{\pi}{\sin(\pi(-a))}\quad\text{and}\quad \Gamma(1-(a+2/3))\Gamma(a+2/3)=\frac{\pi}{\sin(\pi(a+2/3))}.
        \end{align*}
        To get \eqref{ineq.equiva}, it should be $(a+a+2/3)=1+2k$ for any $k\geq0$, which gives $a=k+1/6$.
        \item $a\in\Z$ and $a+\frac23\notin \Z$. Then we get
        \begin{align*}
            \Gamma(1-(-a))\Gamma(-a)\in \Z\quad\text{and}\quad \Gamma(1-(a+2/3))\Gamma(a+2/3)=\frac{\pi}{\sin(\pi(a+2/3))}=\frac{\pi}{\pm \sqrt{3}/2}\notin \Z,
        \end{align*}
        which implies that \eqref{ineq.equiva} never happens.
        \item $a\notin\Z$ and $a+\frac23\in \Z$. Similarly, we get that \eqref{ineq.equiva} is impossible.
    \end{itemize}
    Hence, we find that for all solutions to \eqref{eq.1d.absorb} of the form \eqref{phi.remark}, it must be  $a=\frac16+m$.
\end{remark}

 Special care is required in the proof of \autoref{lemma:1D-sol-Tricomi} due to the branch cut of $U$ in $(-\infty,0]$.

\begin{proof}[Proof of \autoref{lemma:1D-sol-Tricomi}]
First, we claim that for any $m\geq-1$, the functions
\begin{align*}
    U\left(-\frac16-m,\frac23,z\right)\quad\text{and}\quad  e^zU\left(\frac56+m,\frac23,-z\right)
\end{align*}
solve Kummer's equation 
\begin{equation}\label{eq.kummer}
    \left(\frac16+m\right)\Psi_m(z)+\left(\frac23-z\right)\Psi_m'(z)+z\Psi_m''(z)=0.
\end{equation}
in $z > 0$ and $z < 0$, respectively.
This seems to be a standard fact for the first function (see \cite[13.2.6]{DLMF}) but is less standard for the second one. Hence, we compute it by hand. First, we observe that by \eqref{eq:multiple-U}
\begin{align*}
    \frac{d}{dz}\left(e^zU\left(a,b,-z\right)\right)&=e^zU\left(a,b,-z\right)-e^zU'\left(a,b,-z\right)\\
    &=e^z\big(U\left(a,b,-z\right)+aU\left(a+1,b+1,-z\right)\big),
\end{align*}
and
\begin{align*}
     \frac{d^2}{dz^2}\big(e^zU\left(a,b,-z\right)\big)&=e^z\big(U\left(a,b,-z\right)+2aU\left(a+1,b+1,-z\right)-aU'(a+1,b+1,-z)\big)\\
     &=e^z\big(U\left(a,b,-z\right)+2aU\left(a+1,b+1,-z\right)+a(a+1)U(a+2,b+2,-z)\big).
\end{align*}
We now set $a_m=\frac56+m$ and $b=\frac23$ and compute the left-hand side in \eqref{eq.kummer} with $\Psi_m(z)=e^zU\left(a_m,b,-z\right)$. This yields,
\begin{align*}
&\left(\frac16+m\right)e^zU\left(a_m,b,-z\right)+\left(\frac23-z\right)\left(e^zU\left(a_m,b,-z\right)\right)'+z\left(e^zU\left(a_m,b,-z\right)\right)''\\
    &=e^z\left[\left(\frac16+m\right)U\left(a_m,b,-z\right)+\left(\frac23-z\right)\big(U\left(a_m,b,-z\right)+a_mU\left(a_{m+1},b+1,-z\right)\big)\right]\\
    &\quad+z e^z \big[ U\left(a_m,b,-z\right)+2a_mU\left(a_{m+1},b+1,-z\right)+a_ma_{m+1}U\left(a_{m+2},b+2,-z\right)\big]\\
    &=e^z\left(a_mU\left(a_m,b,-z\right)+a_m\left(\frac23+z\right)U\left(a_{m+1},b+1,-z\right)+za_ma_{m+1}U\left(a_{m+2},b+2,-z\right)\right)\\
    &= - a_m e^{-z}\left( - U\left(a_m,b,-z\right) + \left(-z - \frac23\right)U\left(a_m+1,b+1,-z\right)+(-z) (a_m+1)U\left(a_{m}+2,b+2,-z\right)\right) \\
    & = 0,
\end{align*}
where we have applied \eqref{eq.U} in the last line.
This implies that $e^zU\left(a_m,b,-z\right)$ is a solution to \eqref{eq.kummer} for $z \in (-\infty,0)$. Similarly, one can show that also $U(-\frac16-m,\frac23,z)$ is a solution to \eqref{eq.kummer} for $z \in (0,\infty)$.

It follows from an easy computation (see e.g. \cite{RoWe25}) that $x^{\frac16+m}\Psi_m(-v^3/(9x))$ is a solution to $v\partial_xg-\partial_{vv}g=0$, whenever $\Psi_m$ solves \eqref{eq.kummer}. Hence, we get that
\begin{equation}\label{defn.gm1.trico}
    g_{m,1}(x,v)\coloneqq x^{\frac16+m}U\left(-\frac16-m,\frac23,-\frac{v^3}{9x}\right) \quad \text{for } (x,v) \in D_1 := \{x>0\}\times \{v<0\},
\end{equation}
and 
\begin{equation}\label{defn.gm2.trico}
    g_{m,2}(x,v)\coloneqq M_mx^{\frac16+m}e^{-\frac{v^3}{9x}}U\left(\frac56+m,\frac23,\frac{v^3}{9x}\right)\quad \text{for } (x,v) \in D_2 := \{x>0\}\times \{v>0\}
\end{equation}
are solutions to
\begin{equation*}
    v\partial_xg_{m,i}-\partial_{vv}g_{m,i}=0 \quad\text{in }D_i
\end{equation*}
for each $i \in \{1,2\}$.

In order to conclude that $\phi_m$ defined in \eqref{rel.phiandU} are classical solutions, it remains to check that
\begin{align}\label{derative.gi}
    g_{m,1}(x,0)=g_{m,2}(x,0), \quad \partial_xg_{m,1}(x,0)=\partial_xg_{m,2}(x,0), \quad \partial_vg_{m,1}(x,0)=\partial_vg_{m,2}(x,0),
\end{align}
and
\begin{align}\label{dderative.gi}
    \partial_{vv}g_{m,1}(x,0)=\partial_{vv}g_{m,2}(x,0)=0.
\end{align}
\begin{itemize}
    \item For the first claim in \eqref{derative.gi} we recall $\lambda_m=1/6+m$ and $M_m = \Gamma(7/6 + m)/\Gamma(1/6-m)$. By \eqref{asymp1.U} and $\Gamma(z+1) = z \Gamma(z)$,
\begin{align*}
    g_{m,1}(x,0)=x^{\lambda_m}\frac{\Gamma(1/3)}{\Gamma(1/6-m)}=x^{\lambda_m}\frac{\Gamma(1/3)}{\Gamma(7/6+m)}M_m=g_{m,2}(x,0).
\end{align*}
\item For the second claim in \eqref{derative.gi} we recall $a_m=5/6+m$ and $b=2/3$.
Next, using \eqref{asymp1.U}, \eqref{asymp2.U}, and \eqref{eq:multiple-U}, we derive
\begin{align*}
    \partial_x g_{m,1}(x,v)&=\lambda_mx^{\lambda_{m}-1}U\left(-\lambda_m,b,-\frac{v^3}{9x}\right)+x^{\lambda_m}\frac{v^3}{9x^2}U'\left(-\lambda_m,b,-\frac{v^3}{9x}\right)\\
    &=\lambda_mx^{\lambda_m-1}\frac{\Gamma(1/3)}{\Gamma(-\lambda_m-b+1)}+O(v).
\end{align*}
Similarly, we get
\begin{align*}
    \partial_x g_{m,2}(x,v)=M_m\lambda_m x^{\lambda_m-1}e^{-\frac{v^3}{9x}}U\left(a_m,b,-z\right)+O(v),
\end{align*}
which implies 
\begin{align*}
    \partial_xg_{m,1}(x,0)=\frac{\Gamma(1/3)}{\Gamma(1/6-m)}\lambda_mx^{\lambda_m-1}=\partial_xg_{m,2}(x,0).
\end{align*}
\item For the third claim in \eqref{derative.gi} we observe that by \eqref{asymp1.U}, \eqref{asymp2.U}, and \eqref{eq:multiple-U},
\begin{equation}\label{appen.defn.j1}
\begin{aligned}
    \partial_v g_{m,1}(x,v)=-x^{\lambda_m}\frac{v^2}{3x}U'\left(-\lambda_m,b,-\frac{v^3}{9x}\right)&={\underbrace{-\lambda_mx^{\lambda_m}\frac{v^2}{3x}U\left(-\lambda_m+1,b+1,-\frac{v^3}{9x}\right)}_{J_1}}\\
    &=-\lambda_m\frac{\Gamma(b)}{\Gamma(-\lambda_m+1)}{x^{\lambda_m}}\frac{v^2}{3x}\left(-\frac{v^3}{9x}\right)^{-\frac23}+O(v),
\end{aligned}
\end{equation}
and
\begin{equation}\label{appen.defn.j2}
\begin{aligned}
    \partial_v g_{m,2}(x,v)&={\underbrace{M_mx^{\lambda_m}\left[\left(-\frac{v^2}{3x}\right)e^{-\frac{v^3}{9x}}U\left(a_m,b,\frac{v^3}{9x}\right)+e^{-\frac{v^3}{9x}}\frac{v^2}{3x}U'\left(a_m,b,\frac{v^3}{9x}\right)\right]}_{J_2}}\\
    &=(-a_m)M_mx^{\lambda_m}\frac{v^2}{3x}e^{-\frac{v^3}{9x}}U\left(a_m+1,b+1,\frac{v^3}{9x}\right) + O(v)\\
    &=-M_ma_m\frac{\Gamma(b)}{\Gamma(a_m+1)}x^{\lambda_m}\frac{v^2}{3x}\left(\frac{v^3}{9x}\right)^{-\frac23}e^{-\frac{v^3}{9x}}+O(v).
\end{aligned}
\end{equation}
Before we compare $\partial_v g_{m,1}(x,0)$ and $\partial_v g_{m,2}(x,0)$, we note 
\begin{align*}
    -a_l=(-5/6-l)=(1/6-(l+1))=\lambda_{-(l+1)}\quad\text{for any integer }l.
\end{align*}
In addition, we observe 
\begin{align*}
    \Gamma(a_{m} + 1)=a_m\Gamma(a_m)=\cdots= \prod_{i=-m}^{m}a_i\Gamma(a_{-m})=\prod_{i=-m}^{m}a_i\Gamma(-\lambda_{m-1})=\prod_{i=-m}^{m}a_i\Gamma(-\lambda_{m}+1),
\end{align*}
and
\begin{align*}
    \Gamma(7/6+m)=\Gamma(\lambda_{m+1})=\prod_{i=-m}^{m}\lambda_i\Gamma(\lambda_{-m})=\prod_{i=-m}^{m}\lambda_i\Gamma(1/6-m),
\end{align*}
which implies
\begin{align*}
    \lambda_m \frac{\Gamma(b)}{\Gamma(-\lambda_m + 1)} = \left( \prod_{i = -m}^m \lambda_i \right) a_m \frac{\Gamma(b)}{\Gamma(-\lambda_m + 1)} \left( \prod_{i = -m}^m a_i \right)^{-1}  = M_m a_m \frac{\Gamma(b)}{\Gamma(a_m+1)}.
\end{align*}
Thus, we deduce
\begin{align*}
    \partial_vg_{m,1}(x,0)=-\lambda_mx^{\lambda_m-\frac13}3^{\frac13}\frac{\Gamma(b)}{\Gamma({-\lambda_m+1})} = - M_m a_m x^{\lambda_m-\frac13}3^{\frac13}\frac{\Gamma(b)}{\Gamma({a_m+1})} =\partial_vg_{m,2}(x,0).
\end{align*}
\item To prove \eqref{dderative.gi} we use \eqref{asymp1.U}, \eqref{asymp2.U}, and \eqref{eq:multiple-U}. For $v < 0$, we recall $J_2$ and observe that
\begin{align*}
    \partial_{vv}g_{m,1}(x,v)&=-\lambda_mx^{\lambda_m}\frac{2v}{3x}U\left(-\lambda_m+1,b+1,-\frac{v^3}{9x}\right)+\lambda_mx^{\lambda_m}\frac{v^2}{3x}\frac{v^2}{3x}U'\left(-\lambda_m+1,b+1,-\frac{v^3}{9x}\right)\\
    &=-\lambda_mx^{\lambda_m}\frac{2v}{3x}\frac{\Gamma(b)}{\Gamma(-\lambda_m+1)}\left(-\frac{v^3}{9x}\right)^{-\frac23}+O(v)\\
    &\quad+\lambda_m(\lambda_m-1)x^{\lambda_m}\frac{v^4}{(3x)^2}U\left(-\lambda_m+2,b+2,-\frac{v^3}{9x}\right)\\
    &=-\lambda_mx^{\lambda_m}\frac{2v}{3x}\frac{\Gamma(b)}{\Gamma(-\lambda_m+1)}\left(-\frac{v^3}{9x}\right)^{-\frac23}+O(v)\\
    &\quad+\lambda_m(\lambda_m-1)x^{\lambda_m}\frac{v^4}{(3x)^2}\frac{\Gamma(b+1)}{\Gamma(-\lambda_m+2)}\left(-\frac{v^3}{9x}\right)^{-\frac53}\\
    &=O(v).
\end{align*} 
For $v > 0$, by using $J_1$ and similar arguments,
\begin{align*}
    \partial_{vv}g_{m,2}(x,v)&=O(v)+M_mx^{\lambda_m}e^{-\frac{v^3}{9x}}\left[\frac{2v}{3x}U'\left(a_m,b,\frac{v^3}{9x}\right)+\frac{v^2}{3x}\frac{v^2}{3x}U''\left(a_m,b,\frac{v^3}{9x}\right)\right]\\
    &=O(v)-M_mx^{\lambda_m}e^{-\frac{v^3}{9x}}a_m\frac{2v}{3x}U\left(a_m+1,b+1,\frac{v^3}{9x}\right)+\\
    &\quad+M_mx^{\lambda_m}e^{-\frac{v^3}{9x}}a_m(a_m+1)\frac{v^4}{9x^2}U\left(a_m+2,b+2,\frac{v^3}{9x}\right)\\
    &=O(v)+M_mx^{\lambda_m}e^{-\frac{v^3}{9x}}a_m\left[-\frac{2v}{3x}\frac{\Gamma(b)}{\Gamma(a_m+1)}\left(\frac{v^3}{9x}\right)^{-\frac23}+a_{m+1}\frac{v^4}{9x^2}\frac{\Gamma(b+1)}{\Gamma(a_m+2)}\left(\frac{v^3}{9x}\right)^{-1-\frac23}\right]\\
    &=O(v).
\end{align*}
Therefore, we have verified \eqref{dderative.gi}.
\end{itemize}
Hence, we conclude that the function $\phi_m$ is a classical solution to 
\begin{equation}\label{pde.phim}
    v\partial_x\phi_m-\partial_{vv}\phi_m=0\quad\text{in }\{x>0\}\times \bbR.
\end{equation}
Moreover, we are able to show that the $\phi_m$ satisfy the absorbing boundary condition, i.e. we prove
\begin{align}\label{int.phim}
    \phi_{m}(0,v)=0\quad\text{if }v>0.
\end{align}
To do this, first, we note from \eqref{explicit.U} that for $m \ge 0$
     \begin{equation*}
    \begin{aligned}
        \phi_m(x,v)=(-1)^m\frac{M_m}{\Gamma(a_m)}x^{\lambda_m}e^{-\frac{v^3}{9x}}\int_{0}^{\infty}e^{-\frac{v^3}{9x}\xi}\xi^{a_m-1}(1+\xi)^{b-a_m-1}\,d\xi\quad\text{if }v\geq0.
    \end{aligned}
    \end{equation*}
    After a change of variables, we deduce 
    \begin{equation}\label{explic.phim.pos}
        \begin{aligned}
            \phi_m(x,v)=(-1)^m\frac{M_m}{\Gamma(a_m)}x^{\lambda_m}e^{-\frac{v^3}{9x}}\left(\frac{v^3}{9x}\right)^{\frac13}\int_{0}^{\infty}e^{-\xi}\xi^{a_m-1}\left(\frac{v^3}{9x}+\xi\right)^{b-a_m-1}\,d\xi.
        \end{aligned}
    \end{equation}
Therefore, when $v^3\geq x$, we get since $b-a_m-1 < 0$ for any $m \ge 0$,
\begin{align}\label{ineq.upper.phim}
    |\phi_m(x,v)| \leq cx^{\lambda_m}e^{-\frac{v^3}{9x}}\left(\frac{v^3}{9x}\right)^{\frac13+b-a_m-1}\int_{0}^{\infty}e^{-\xi}\xi^{a_m-1}\,d\xi\leq c{x^{\lambda_m}}\left(\frac{v^3}{x}\right)^{-a_m}e^{-\frac{v^3}{9x}}.
\end{align}
Similarly, for $m= -1$, we deduce from \eqref{rel.phiandU}, \eqref{explicit.U}, and \eqref{aplus.U}
\begin{align*}
    \phi_{-1}(x,v) &= M_{-1}x^{-\frac56}e^{-\frac{v^3}{9x}}\left(\frac{v^3}{9x}\right)^{\frac13}U\left(\frac16,\frac43,\frac{v^3}{9x}\right) \\
    &=\frac{M_{-1}}{\Gamma(1/6)}x^{-\frac56}e^{-\frac{v^3}{9x}}\int_{0}^\infty e^{-\xi}\xi^{-\frac56}\left(\frac{v^3}{9x}+\xi\right)^{\frac16}\,d\xi \quad \text{if } v > 0.
\end{align*}

For $v^3 \ge x$, we get
\begin{align}
\label{eq:phi-1-bdry}
\phi_{-1}(x,v)\leq cx^{-\frac56}e^{-\frac{v^3}{9x}}\left[\left(\frac{v^3}{x}\right)^{\frac16}\int_0^{\frac{v^3}{9x}}e^{-\xi}\xi^{-\frac56}\,d\xi+\int_{\frac{v^3}{9x}}^{\infty}e^{-\xi}\xi^{-\frac46}\right]\leq cx^{-\frac56}e^{-\frac{v^3}{9x}}\left(\frac{v^3}{x}\right)^{\frac16}.
\end{align}

Thus, we have $\phi(x,v)\to 0$ as $x\to 0$ for any $v>0$. Hence, by \eqref{pde.phim} and \eqref{int.phim}, the functions $\phi_m$ are classical solutions to \eqref{eq.1d.absorb} for any $m \ge -1$. 

Lastly, we want to prove that $\phi_m$ solves \eqref{eq.1d.absorb} for any $m\geq0$ and $\phi_{-1}$ solves \eqref{eq.1d.phi-1} in the weak sense, respectively.
\begin{itemize}
    \item Assume $m\geq0$. First, we prove
    \begin{align}\label{l2dervphim}
        \partial^k_v\phi_m \in L^2(H_R)
    \end{align}
    for any $R>0$ and $k=1,2$. To this end, by applying the operator $\partial^2_v$ to the formulas \eqref{defn.gm1.trico} and \eqref{defn.gm2.trico}, and using \eqref{eq:multiple-U}, we deduce
    \begin{equation}\label{vvgm1.trico}
    \begin{aligned}
        |\partial_v^2g_{m,1}|&\leq cx^{\lambda_m}\left(\frac{|v|}{x}  \Bigg| U'\left(-\lambda_m,\frac23,-\frac{v^3}{9x}\right)   \Bigg| +\frac{|v|^4}{x^2}  \Bigg| U''\left(-\lambda_m,\frac23,-\frac{v^3}{9x}\right)  \Bigg|  \right)\\
        &\leq cx^{\lambda_m}\left(\frac{|v|}{x}U\left(-\lambda_{m-1},\frac53,-\frac{v^3}{9x}\right)+\frac{|v|^4}{x^2}U\left(-\lambda_{m-2},\frac83,-\frac{v^3}{9x}\right)\right),
    \end{aligned}
    \end{equation}
    and
    \begin{align*}
        |\partial_v^2g_{m,2}|&\leq cx^{\lambda_m}e^{-\frac{v^3}{18x}}\frac{v}{x}\left( \Bigg| U\left(a_m,\frac23,\frac{v^3}{9x}\right) \Bigg| +  \Bigg|  U'\left(a_m,\frac23,\frac{v^3}{9x}\right) \Bigg| +  \Bigg|  U''\left(a_m,\frac23,\frac{v^3}{9x}\right)  \Bigg| \right)\\
        &\leq cx^{\lambda_m}e^{-\frac{v^3}{18x}}\frac{v}{x}\left(U\left(a_m,\frac23,\frac{v^3}{9x}\right)+U\left(a_{m+1},\frac53,\frac{v^3}{9x}\right)+U\left(a_{m+2},\frac83,\frac{v^3}{9x}\right)\right),
    \end{align*}
    where we have used the fact that $(v^2/x)^{2}e^{-\frac{v^3}{9x}}\leq c \frac{v}{x}e^{-\frac{v^3}{18x}}$.
    Note 
    \begin{equation}\label{asym.U.lange}
        \begin{aligned}
            U(a,b,\pm v^3/9x)\leq\begin{cases}c&\quad\text{if }\pm v^3\leq x,\\
            c(|v|^3/9x)^{-a}&\quad\text{if }\pm v^3\geq x
            \end{cases}
        \end{aligned}
    \end{equation}
    for some constant $c=c(a,b)$, where we have used the fact that $U(a,b,z)$ is analytic with respect to the $z$-variable and \eqref{asymp4.U}. Using these properties, we get 
    \begin{equation}\label{partialvvphim2}
    \begin{aligned}
        &\int_{H_R\cap \{x\leq -v^3\}}|\partial_v^2\phi_m|^2+\int_{H_R\cap \{x\geq |v|^3\}}|\partial_v^2\phi_{m}|^2+\int_{H_R\cap \{x\leq v^3\}}|\partial_v^2\phi_{m}|^2\\
        &\leq c\left(\int_{-R}^{0}\int_{x\leq |v|^3} |v|^{-4+6\lambda_m}\,dx\,dv+\int_{0}^{R^3}\int_{|v|^3\leq x}x^{2(\lambda_m-1)}v^2\,dv\,dx+\int_{0}^{R}\int_{x\leq v^3}|v|^{-4+6\lambda_m}\,dx\,dv\right)\\
        &\leq c\left(\int_{-R}^{0}|v|^{-1+6\lambda_m}\,dv+\int_{0}^{R^3}x^{2\lambda_m-1}\,dx+\int_{0}^{R}|v|^{-1+6\lambda_m}\,dx\,dv\right)\leq c,
    \end{aligned}
    \end{equation}
    where we have also used the fact that $(v^3/x)^ke^{-\frac{v^3}{18x}}\leq c(k)$. Similarly, we can prove $\partial_v\phi_m\in L^{2+}(H_R)$. Then, by the equation, we get $v\partial_x\phi_m=\partial_{vv}\phi_m$, which gives $v\partial_x\phi_m\in L^2(H_R)$.
    Therefore, by Remark \ref{rmk.equ.weak.str.par}, $\phi_m$ is a weak solution to \eqref{eq.1d.absorb}.
    \item Assume $m=-1$. Similarly, we prove
    \begin{align*}
        \partial^k_v\phi_{-1}\in L^2(D_R)
    \end{align*}
    for any $R>1$ and $k = 1,2$, where we write $D_R\coloneq (0,R^3)\times(-R,-1/R)$. To prove it for $k=2$, we employ \eqref{vvgm1.trico} with $\lambda_m=\lambda_{-1}$ to get 
    \begin{align*}
        &\int_{D_R\cap \mathcal{R}^+}|\partial^2_{v}\phi_{-1}|^2+\int_{(1/R)^3}^{R^3}\int_{-R}^{R}|\partial^2_{v}\phi_{-1}|^2\leq c\left(\int_{-R}^{-1/R}\int_{x<|v|^3}|v|^{-4+6\lambda_{-1}}\,dx\,dv+1\right)\leq c,
    \end{align*}
    where we have also used the fact that $\phi_{-1}$ is smooth away from the boundary. Similarly, we can prove $\partial_v\phi_m\in L^2(D_R)$. In addition, $\phi_{-1}$ is a classical solution to \eqref{eq.1d.absorb}, $v\partial_x\phi_{-1}=\partial_{vv}\phi_{-1}\in L^2(D_R)$, which gives that it is a weak solution to \eqref{eq.1d.phi-1}. 
\end{itemize}
This completes the proof.
\end{proof}

\begin{remark}\label{rmk.integrable.derv.phim}
Here, we remark that as in \eqref{l2dervphim}, we can get more general results, which are used in \autoref{lem.bdry.hol}. For any $\eps>0$ and $k=0,1,2$, we have
\begin{equation}\label{claim.integrable}
\begin{aligned}
    v^k\partial^{k+1}_{v}\phi_0\in L^{8-\eps}(H_R)\quad\text{and}\quad v^k\partial^{k+2}_v\phi_0\in L^{\frac83-\eps}(H_R).
\end{aligned}
\end{equation}
For $k=0$, using \eqref{appen.defn.j1}, \eqref{appen.defn.j2} and \eqref{asym.U.lange}, we have 
\begin{align*}
     &\int_{H_R\cap \{x\leq -v^3\}}|\partial_v\phi_0|^{8-\eps}+\int_{H_R\cap \{x\geq |v|^3\}}|\partial_v\phi_{0}|^{8-\eps}+\int_{H_R\cap \{x\leq v^3\}}|\partial_v\phi_{0}|^{8-\eps}\\
        &\leq c\left(\int_{-R}^{0}\int_{x\leq |v|^3} |v|^{\frac{-8+\eps}2}\,dx\,dv+\int_{0}^{R^3}\int_{|v|^3\leq x}(x^{-\frac56}v^{2})^{8-\eps}\,dv\,dx+\int_{0}^{R}\int_{x\leq v^3}|v|^{\frac{-8+\eps}2}\,dx\,dv\right)\\
        &\leq c(R).
\end{align*}
Similarly, we can prove $v^2\phi_{-1}, v^{3}\partial_v\phi_{-1}\in L^{8-\eps}(H_R)$. Therefore, we prove the first claim in \eqref{claim.integrable}, as $ v^k\partial^{k+1}_{v}\phi_0$ is a sum of $\partial_v\phi_0,\, v^2\phi_{-1},\, v^3\partial_v\phi_{-1}$ by \eqref{lemma:derx.phim}.

On the other hand, by considering $|\partial_v^2\phi_m|^{\frac83-\eps}$ instead of $|\partial_v^2\phi_m|^2$ in \eqref{partialvvphim2}, we directly derive $\partial^{2}_v\phi_0\in L^{\frac83-\eps}(H_R)$. 
Next, we get
\begin{equation*}
\begin{aligned}
    &\int_{H_R\cap \{x\leq -v^3\}}|v^3\partial^{2}_v\phi_{-1}|^{\frac83-\eps}+\int_{H_R\cap \{x\geq |v|^3\}}|v^3\partial^{2}_v\phi_{-1}|^{\frac83-\eps}+\int_{H_R\cap \{x\leq v^3\}}|v^3\partial^{2}_v\phi_{-1}|^{\frac83-\eps}\\
        &\leq c\left(\int_{-R}^{0}\int_{x\leq |v|^3} |v|^{-\frac32(\frac{8}{3}-\eps)}\,dx\,dv+\int_{0}^{R^3}\int_{|v|^3\leq x}(x^{-\frac{11}6}v^{4})^{\frac83-\eps}\,dv\,dx+\int_{0}^{R}\int_{x\leq v^3}|v|^{-\frac32(\frac{8}{3}-\eps)}\,dx\,dv\right)\\
        &\leq c(R).
\end{aligned}
\end{equation*}
Similarly, we also derive that $v^2\partial_v\phi_{-1}\in L^{\frac83-\eps}(H_R)$ and $v^k\partial_v^{k+2}$ can be represented as a sum of such functions. Thus, we can deduce the second claim in \eqref{claim.integrable}.
\end{remark}

Now we discuss several properties of the functions $\phi_m$ such as the precise asymptotic behavior of $\phi_m$ close to the grazing point $(0,0)$ and at infinity, the regularity of $\phi_m$ and the relation between $\phi_m$ and $\phi_{m-1}$. To do so, it is convenient to split 
\begin{align*}
    (0,\infty) \times \R = \mathcal{R}^- \cup \mathcal{R}^0 \cup \mathcal{R}^+
\end{align*}
into the following three characteristic regions:
\begin{equation}\label{defn.r-r0r+}
\begin{aligned}
    &\mathcal{R}^-\coloneqq\{(x,v)\,:\,0<x\leq v^{3}\quad\text{with }v\geq0\},\\
    &\mathcal{R}^0\coloneqq\{(x,v)\,:\, x\geq v^3\geq -x\quad\text{with }x>0\},\\
    &\mathcal{R}^+\coloneqq\{(x,v)\,:\,0<x\leq-v^{3}\quad\text{with }v\leq0\}.
\end{aligned}
\end{equation}

We say that $f_1(x,v) \eqsim f_2(x,v)$ in a set $\cA \subset \R^2$, whenever there exists $C > 0$ such that for all $x,v \in \cA$ it holds $C^{-1} f_2(x,v) \le f_1(x,v) \le C f_2(x,v)$.

\begin{lemma}
Let the functions $\phi_m$ be defined as in \eqref{rel.phiandU} for $m \ge -1$. Then, the following hold:
\begin{itemize}
\item $\phi_0$ satisfies
\begin{equation}\label{phi.asymptotic}
\begin{aligned}
    \phi_0(x,v)\eqsim\begin{cases}
       x^{\frac16}\left(\frac{v^3}x\right)^{-\frac56}e^{-\frac{v^3}{9x}}\quad&\text{if }(x,v)\in\mathcal{R}^-,\\
        x^{\frac16}\quad&\text{if }(x,v)\in\mathcal{R}^0,\\
        |v|^{\frac12}\quad&\text{if }(x,v)\in \mathcal{R}^+.
    \end{cases}
\end{aligned}
\end{equation}
\item For any $m\geq 1$, $\phi_m$ satisfies
\begin{equation}\label{phim.asymptotic}
\begin{aligned}
    \phi_m(x,v)\eqsim\begin{cases}
        x^{\frac{1}{6} + m}\left(\frac{v^3}x\right)^{-\frac{5}{6} - m} e^{-\frac{v^3}{9x}}\quad&\text{if }(x,v)\in\mathcal{R}^-,\\
        x^{\frac{1}{6} + m}\quad&\text{if }(x,v)\in\mathcal{R}^0\cap(\{x>0\}\times \{v\geq0\}).
    \end{cases}
\end{aligned}
\end{equation}
\item $\phi_{-1}$ satisfies
\begin{equation}\label{phi-1.asymptotic}
    \phi_{-1}(x,v)\eqsim \begin{cases}
    x^{-\frac{5}{6}}\left(\frac{v^3}x\right)^{\frac{1}{6}} e^{-\frac{v^3}{9x}}\quad&\text{if }(x,v)\in\mathcal{R}^-,\\
    x^{-\frac56}\quad&\text{if }(x,v)\in\mathcal{R}^0,\\
        |v|^{-\frac52}\quad&\text{if }(x,v)\in \mathcal{R}^+.
    \end{cases}
\end{equation}
    \item The functions $\phi_m$ are smooth in $H_1\cap \mathcal{R}^-$ in the sense that for $(x,v)\in H_1\cap \mathcal{R}^{-}$ with $x\leq v^{3+\epsilon}$ for some $\epsilon>0$, 
\begin{align}\label{est.smooth.phim}
    |D^k\phi_m(x,v)|\leq c(k,m,\epsilon).
\end{align}
\item For any $m\geq0$,
\begin{align}\label{lemma:derx.phim}
        c_m\partial_x \phi_m (x,v) = c_{m-1}\left( \frac{1}{36} - m^2 \right) \phi_{m-1}(x,v),
    \end{align}
    where the constants $c_m$ are given in \eqref{cm.defn}.
 
\end{itemize}
\end{lemma}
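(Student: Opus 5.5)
Every claim reduces to the explicit representation \eqref{rel.phiandU}. All functions are homogeneous under $(x,v)\mapsto(r^3x,rv)$, so I would write $\phi_m(x,v)=x^{\lambda_m}\Psi_m(z)$ with $z=-v^3/(9x)$. The three regions then become $z\le -1/9$ for $\mathcal{R}^-$, $|z|\le 1/9$ for $\mathcal{R}^0$, and $z\ge 1/9$ for $\mathcal{R}^+$. Each two-sided asymptotic becomes a statement about the one-variable functions $U(-\lambda_m,\tfrac23,z)$ for $z>0$ and $e^{-|z|}U(a_m,\tfrac23,|z|)$ for $z<0$.

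\textbf{Region $\mathcal{R}^0$.} On $|z|\le 1/9$ the function $\Psi_m$ is continuous (it is $C^1$ across $z=0$ by the matching in the proof of \autoref{lemma:1D-sol-Tricomi}). Its value at $0$ is $\Gamma(1/3)/\Gamma(1/6-m)$ by \eqref{asymp1.U}, which has sign $(-1)^m$ for $m\ge0$ and is positive for $m=-1$. The normalisation $c_m$ makes it positive. I still need $\Psi_m>0$ on the whole compact interval and not only at $0$.
\begin{itemize}
\item On the $v\ge0$ side this follows from the integral formula \eqref{explicit.U}, since $a_m>0$.
\item On the $v<0$ side, for $m=0,-1$, I would argue via Step 1 of \autoref{lemma:1D-expansion:stationary}: there $\partial_v\phi_0<0$ and $\phi_0(x,0)>0$, so $\phi_0$ is positive for $v<0$.
\item For $\phi_{-1}$ I would use \eqref{aplus.U} to rewrite $U(5/6,2/3,z)=z^{1/3}U(7/6,4/3,z)$, which is positive by \eqref{explicit.U}.
\end{itemize}
For $m\ge1$ the statement only concerns $v\ge0$, which is why \eqref{phim.asymptotic} is restricted to that set.

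\textbf{Regions $\mathcal{R}^-$ and $\mathcal{R}^+$.}
\begin{itemize}
\item In $\mathcal{R}^-$, with $y=v^3/(9x)\ge 1/9$, write $\Psi_m=M_me^{-y}U(a_m,\tfrac23,y)$. I need the sharp two-sided bound $U(a,b,y)\eqsim y^{-a}$ for $y\ge 1/9$. The upper bound is \eqref{asymp4.U}. The lower bound follows from \eqref{explicit.U} after the change of variables used in \eqref{explic.phim.pos}, because the integrand $(y+\xi)^{b-a-1}$ is comparable to $y^{b-a-1}$ for $\xi\le y$. This gives $x^{\lambda_m}(v^3/x)^{-a_m}e^{-v^3/9x}$, i.e.\ exactly \eqref{phi.asymptotic} and \eqref{phim.asymptotic}. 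For $m=-1$ the same computation with \eqref{aplus.U} yields the exponent $+\tfrac16$, as in \eqref{eq:phi-1-bdry}.
\item In $\mathcal{R}^+$, with $z=|v|^3/(9x)\ge 1/9$, I need $U(-\lambda,\tfrac23,z)\eqsim z^{\lambda}$ for $\lambda=\tfrac16$ and $\lambda=-\tfrac56$. This comes from the standard large-$z$ expansion $U(a,b,z)=z^{-a}(1+O(1/z))$, the sharp form of \eqref{asymp4.U}, combined with positivity and continuity on compact $z$-ranges. This gives $x^{\lambda}(|v|^3/x)^{\lambda}=|v|^{3\lambda}$, i.e.\ $|v|^{1/2}$ and $|v|^{-5/2}$.
\end{itemize}
I expect this step to be the main obstacle, namely making the positivity of $U(-\lambda,\tfrac23,\cdot)$ on all of $(0,\infty)$ rigorous with the tools listed. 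If the monotonicity argument via \eqref{eq:multiple-U} fails for $\phi_{-1}$, I would fall back on the maximum principle applied to the explicit solution.

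\textbf{Estimate \eqref{est.smooth.phim}.} In $\{x\le v^{3+\epsilon}\}$ one has $y\ge v^{-\epsilon}/9$. Each derivative of $x^{\lambda_m}e^{-y}U(a_m,\tfrac23,y)$ is, by \eqref{eq:multiple-U}, a finite sum of terms of the form $x^{\lambda_m-i}v^{j}e^{-y}U(a_m+l,\tfrac23+l,y)$. Each such term is bounded by a polynomial in $v^{-1}$ and $x^{-1}$ times $e^{-y}$. Since $e^{-y}\le C_N y^{-N}$ and $x\le v^{3+\epsilon}$, we have $x^{-1}\lesssim y\,v^{-3}$ and $y\ge v^{-\epsilon}/9$, so the exponential decay absorbs every negative power for $v\le1$.

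\textbf{Identity \eqref{lemma:derx.phim}.} I would compute directly. For $v<0$, $\partial_x\big(x^{\lambda}U(-\lambda,\tfrac23,z)\big)$ with $z=-v^3/(9x)$ equals $x^{\lambda-1}\big(\lambda U-zU'\big)$. Using \eqref{eq:der-U-formula}, $zU'(a,b,z)=a\big[(1+a-b)U(a+1,b,z)-U(a,b,z)\big]$ with $a=-\lambda_m$, this collapses to $-\lambda_m(\tfrac13-\lambda_m)\,U(-\lambda_{m-1},\tfrac23,z)$. Since $\lambda_m(\tfrac13-\lambda_m)=\big(\tfrac16+m\big)\big(\tfrac16-m\big)=\tfrac1{36}-m^2$, this gives the constant, and the sign is fixed by the choice of $c_m$. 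For $v>0$ the same recurrence is applied to $e^{-y}U(a_m,\tfrac23,y)$. Alternatively, $\partial_x\phi_m$ and $\phi_{m-1}$ are both solutions with the same homogeneity that agree on $v<0$, and the matching at $v=0$ (values and $v$-derivatives, as in the proof of \autoref{lemma:1D-sol-Tricomi}) forces them to agree for $v>0$ as well.
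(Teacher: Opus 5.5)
Your proposal is correct, and it uses the same toolkit as the paper: the integral representation \eqref{explicit.U}, the transformation \eqref{aplus.U}, and the derivative and recurrence formulas. You organize the two-sided bounds differently, though.

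\textbf{Comparison with the paper.} The paper obtains every bound directly from explicit integrals. In each sub-region it writes $\phi_m$ as $x^{\lambda}$ times $\int_0^\infty e^{-\xi}\xi^{a-1}(y+\xi)^{b-a-1}\,d\xi$, applying \eqref{aplus.U} first when $a\le 0$. It then bounds this integral above and below, splitting at $\xi=y$ where necessary.
\begin{itemize}
\item You instead pass to the profile in $z=-v^3/(9x)$. You use continuity and positivity on the compact interval $|z|\le\tfrac19$, and large-$z$ behaviour in $\mathcal{R}^\pm$. This is shorter.
\item However, the lower bound in $\mathcal{R}^+$ for $m=0$ then needs the full asymptotic $U(a,b,z)\sim z^{-a}$. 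That is standard, but it is not \eqref{asymp4.U}, which is only an upper bound. The paper's splitting gets this bound from \eqref{explicit.U} alone.
\item For the $v>0$ half of \eqref{lemma:derx.phim}, the paper combines \eqref{eq:der-U-formula} with the contiguous relation \eqref{eq2.U} and $M_m=(\tfrac1{36}-m^2)M_{m-1}$.
\item Your alternative also works. $\partial_x\phi_m$ and $\phi_{m-1}$ are smooth homogeneous solutions of the same degree. Kummer's equation has indicial exponents $0$ and $\tfrac13$ at $z=0$, and $|z|^{1/3}\propto v$. Hence a homogeneous solution on $\{v>0\}$ is fixed by its value and $\partial_v$ at $v=0$.
\end{itemize}

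\textbf{Details to fix.}
\begin{itemize}
\item The positivity you flag as the main obstacle is already settled by your own tools. $U(-\tfrac16,\tfrac23,z)=z^{1/3}U(\tfrac16,\tfrac43,z)$ and $U(\tfrac56,\tfrac23,z)$ are both positive by \eqref{explicit.U}, so no maximum principle is needed.
\item The monotonicity route works only for $m=0$. For $\phi_{-1}$ one has $\partial_v\phi_{-1}>0$ on $\{v<0\}$, so positivity does not propagate from $v=0$.
\item On the $v\ge0$ side, your justification ``$a_m>0$'' fails for $m=-1$, since $a_{-1}=-\tfrac16$. There you also need \eqref{aplus.U}: $U(-\tfrac16,\tfrac23,y)=y^{1/3}U(\tfrac16,\tfrac43,y)$.
\item In the $v<0$ computation for \eqref{lemma:derx.phim} the sign is wrong. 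The correct identity is $\lambda U-zU'=+\lambda_m(\tfrac13-\lambda_m)\,U(-\lambda_{m-1},\tfrac23,z)$. Since $c_m^2=1$, the normalisation cannot absorb a sign, and the plus sign is exactly what yields \eqref{lemma:derx.phim}. This is consistent with $\partial_x\phi_0=\tfrac1{36}\phi_{-1}>0$.
\end{itemize}
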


We first establish the asymptotic behavior of the functions $\phi_m$. To do so, we crucially use the integral representation formula of $U$ from \eqref{explicit.U}. Note that the formula \eqref{phim.asymptotic} cannot cover all points $(x,v) \in (0,\infty) \times \R$, since the functions $\phi_m$ change sign for $m \ge 1$. This follows from \eqref{phi.asymptotic} and \eqref{lemma:derx.phim}.

\begin{proof}[Proof of \eqref{phi.asymptotic}-\eqref{phi-1.asymptotic}.]
Let us recall $b=\frac{2}{3}$,  $a_m = \frac{5}{6} + m$, and $\lambda_m = \frac{1}{6} + m$. We consider the two cases $m\geq0$ and $m=-1$ separately.

First, we assume $m\geq0$. We split the proof into four parts.
\begin{itemize}
    \item Suppose $(x,v)\in \mathcal{R}^-$. Using the explicit formula of $\phi_m$ given by \eqref{explic.phim.pos} when $v>0$, we derive since $b - a_m - 1 < 0$
    \begin{align*}
        \phi_m(x,v) &\geq \frac1cx^{\lambda_m}e^{-\frac{v^3}{9x}}\left(\frac{v^3}{x}\right)^{\frac13+b-a_m-1}\int_{0}^{\frac{v^3}{9x}}e^{-\xi}\xi^{a_m-1}\,d\xi \\
        &\geq \frac1cx^{\lambda_m}e^{-\frac{v^3}{9x}}\left(\frac{v^3}{x}\right)^{-a_m}\int_{0}^{\frac19}e^{-\xi}\xi^{a_m-1}\,d\xi \geq \frac1cx^{\lambda_m}\left(\frac{v^3}{x}\right)^{-a_m}e^{-\frac{v^3}{9x}}
    \end{align*}
    for some constant $c=c(m)$, where we have used the fact that $v^3/x\geq1$. Using this and \eqref{ineq.upper.phim},
    \begin{align}\label{res1.asymphi}
        \phi_m(x,v)\eqsim x^{\lambda_m}\left(\frac{v^3}{x}\right)^{-a_m}e^{-\frac{v^3}{x}}\quad\text{in }\mathcal{R}^0\cap(\{x>0\}\times\{v\geq0\}).
    \end{align}
    \item Suppose $(x,v)\in \mathcal{R}^0\cap(\{x>0\}\times\{v\geq0\})$. Proceeding as before and using the fact that $v^3\leq x$ and $a_m > 0$ for $m \ge 0$, we observe
    \begin{align*}
        \phi_m(x,v)&\ge \frac1cx^{\lambda_m}e^{-\frac{v^3}{9x}}\left(\frac{v^3}{x}\right)^{-a_m}\int_{0}^{\frac{v^3}{9x}}e^{-\xi}\xi^{a_m-1}\,d\xi\geq\frac{x^{\lambda_m}}{c}\left(\frac{v^3}{x}\right)^{-a_m}\int_0^{\frac{v^3}{9x}} \xi^{a_m-1}\,d\xi\geq cx^{\lambda_m}.
    \end{align*}
    Moreover, 
    \begin{align*}
        \phi_m(x,v)&\leq cx^{\lambda_m}\left(\frac{v^3}{x}\right)^{\frac13} \int_{0}^{\infty}e^{-\xi}\xi^{a_m-1}\left(\frac{v^3}{9x}+\xi\right)^{b-a_m-1}\,d\xi \\
        &\leq cx^{\lambda_m}\left(\frac{v^3}{x}\right)^{\frac13}\left[\int_{0}^{\frac{v^3}{9x}} \xi^{a_m-1} \left(\frac{v^3}{x}\right)^{b-a_m-1}\,d\xi+\int_{\frac{v^3}{9x}}^{\infty}\xi^{b-2}\,d\xi\right]\leq cx^{\lambda_m},
    \end{align*}
    where we have also used the fact that $b-2=-5/3<-1$ and $a_m > 0$. Thus, we get 
    \begin{align}\label{res2.asymphi}
        \phi_m(x,v)\eqsim x^{\lambda_m}\quad\text{in }\mathcal{R}^0\cap(\{x>0\}\times\{v\geq0\}).
    \end{align}
    \end{itemize}
    For the remaining regions, i.e. when $v < 0$, we only consider the case $m=0$. Using \eqref{rel.phiandU}, \eqref{aplus.U}, \eqref{explicit.U}, and a change of variables, we deduce the following formula for $v < 0$,
\begin{equation*}
    \begin{aligned}
        \phi_0(x,v)&= x^{\frac16}\left(-\frac{v^3}{9x}\right)^{\frac13} U\left(\frac{1}{6},\frac{4}{3},-\frac{v^3}{9x}\right)\\
        &=\frac{1}{\Gamma(1/6)}x^{\frac16}\left(-\frac{v^3}{9x}\right)^{\frac13}\int_{0}^{\infty}e^{\frac{v^3}{9x}\xi}\xi^{-\frac56}(1+\xi)^{\frac16}\,d\xi =\frac{1}{\Gamma(1/6)}x^{\frac16}\int_{0}^{\infty}e^{-\xi}\xi^{-\frac56}\left(-\frac{v^3}{9x}+\xi\right)^{\frac16}\,d\xi.
    \end{aligned}
    \end{equation*} 
    We stress that this formula is not valid for $m \ge 1$ since it requires $-\frac{1}{6} - m - b + 1 = \frac{1}{6} - m >0$.
    \begin{itemize}
    \item Suppose $(x,v)\in \mathcal{R}^0\cap(\{x>0\}\times\{v<0\})$. Then we deduce 
    \begin{align*}
        \phi_0(x,v)\geq \frac1c x^{\frac16}\int_{0}^{1}\xi^{-\frac56}\,d\xi\geq \frac1cx^{\frac16}
    \end{align*}
    and
    \begin{align*}
        \phi_0(x,v)\leq cx^{\frac16}\left[\int_{0}^{1}e^{-\xi}\xi^{-\frac56}\,d\xi+\int_{1}^\infty e^{-\xi}\xi^{-\frac46}\,d\xi\right]\leq cx^{\frac16},
    \end{align*}
    where we have used the fact that $|v|^3\leq x$. Altogether, we have shown
     \begin{align}\label{res3.asymphi}
        \phi_0(x,v)\eqsim x^{\frac16}\quad\text{in }\mathcal{R}^0\cap(\{x>0\}\times\{v<0\}).
    \end{align}
    \item  Suppose $(x,v)\in \mathcal{R}^+$. Observe 
    \begin{align*}
        \phi_0(x,v)\geq \frac1cx^{\frac16}\left(-\frac{v^3}{x}\right)^{\frac16}\int_{0}^{-\frac{v^3}{9x}}e^{-\xi}\xi^{-\frac56}\,d\xi\geq \frac1c|v|^{\frac12}\int_{0}^{\frac19}e^{-\xi}\xi^{-\frac56}\,d\xi\geq  \frac1c|v|^{\frac12}
    \end{align*}
    and
    \begin{align*}
        \phi_0(x,v)\leq cx^{\frac16}\left[\left(-\frac{v^3}{x}\right)^{\frac16}\int_{0}^{-\frac{v^3}{9x}}e^{-\xi}\xi^{-\frac56}\,d\xi+\int_{-\frac{v^3}{9x}}^\infty e^{-\xi}\xi^{-\frac46}\,d\xi\right]\leq cx^{\frac16}\left(\left(-\frac{v^3}{x}\right)^{\frac16}+1\right)\leq c|v|^{\frac12},
    \end{align*}
    where we have used the fact that $x\leq |v|^3$. This gives
     \begin{align}\label{res4.asymphi}
        \phi_0(x,v)\eqsim |v|^{\frac12}\quad\text{in }\mathcal{R}^{-}.
    \end{align}
\end{itemize}
By \eqref{res1.asymphi} \eqref{res2.asymphi}, \eqref{res3.asymphi}, and \eqref{res4.asymphi}, we obtain \eqref{phi.asymptotic} and \eqref{phim.asymptotic}. This completes the proof for $m\geq0$.

Finally, we consider the case $m=-1$. By \eqref{rel.phiandU}, \eqref{explicit.U}, and \eqref{aplus.U}, we get 
\begin{align*}
    \phi_{-1}(x,v)=\begin{cases}
        x^{-\frac56}U\left(\frac56,\frac23,-\frac{v^3}{9x}\right)=\frac{1}{\Gamma(5/6)}x^{-\frac56}\left(-\frac{v^3}{9x}\right)^{\frac13}\int_{0}^\infty e^{-\xi}\xi^{-\frac16}\left(-\frac{v^3}{9x}+\xi\right)^{-\frac76}\,d\xi&\quad\text{if }v<0,\\
        M_{-1}x^{-\frac56}e^{-\frac{v^3}{9x}}\left(\frac{v^3}{9x}\right)^{\frac13}U\left(\frac16,\frac43,\frac{v^3}{9x}\right)=\frac{M_{-1}}{\Gamma(1/6)}x^{-\frac56}e^{-\frac{v^3}{9x}}\int_{0}^\infty e^{-\xi}\xi^{-\frac56}\left(\frac{v^3}{9x}+\xi\right)^{\frac16}\,d\xi&\quad\text{if }v\geq0.
    \end{cases}
\end{align*}

We follow similar arguments as in the proof for $m\geq0$.
\begin{itemize}
    \item Suppose $(x,v)\in \mathcal{R}^-$. Then we have 
    \begin{align*}
        \phi_{-1}(x,v)\geq\frac{x^{-\frac56}}{c}e^{-\frac{v^3}{9x}}\left(\frac{v^3}{x}\right)^{\frac16}\int_0^{\frac{v^3}{9x}}e^{-\xi}\xi^{-\frac56}\,d\xi\geq \frac{x^{-\frac56}}ce^{-\frac{v^3}{9x}}\left(\frac{v^3}{x}\right)^{\frac16}\int_{0}^{\frac19}e^{-\xi}\xi^{-\frac56}\,d\xi\geq \frac{x^{-\frac56}}ce^{-\frac{v^3}{9x}}\left(\frac{v^3}{x}\right)^{\frac16},
    \end{align*}
    and the corresponding upper bound was already deduced in \eqref{eq:phi-1-bdry}.
    \item Suppose $(x,v)\in \mathcal{R}^0\cap (\{x>0\}\times\{v\geq0\})$. Then we get 
    \begin{align*}
        \phi_{-1}(x,v)\geq \frac{x^{-\frac56}}{c}e^{-\frac{v^3}{9x}}\left(\frac{v^3}{x}\right)^{\frac16}\int_0^{\frac{9x}{v^3}}e^{-\xi}\xi^{-\frac56}\,d\xi\geq \frac{x^{-\frac56}}{c}\left(\frac{v^3}{x}\right)^{\frac16}\int_0^{\frac{9x}{v^3}}\xi^{-\frac56}\,d\xi\geq \frac{x^{-\frac56}}c,
    \end{align*}
    and
    \begin{align*}
        \phi_{-1}(x,v)\leq cx^{-\frac56}e^{-\frac{v^3}{9x}}\left[\left(\frac{v^3}{x}\right)^{\frac16}\int_0^{\frac{9x}{v^3}}e^{-\xi}\xi^{-\frac56}\,d\xi+\int_{\frac{9x}{v^3}}^\infty e^{-\xi}\xi^{-\frac46}\,d\xi\right]\leq cx^{-\frac56}.
    \end{align*}
    \item Suppose $(x,v)\in \mathcal{R}^0\cap (\{x>0\}\times\{v<0\})$. Then we have 
    \begin{align*}
        \phi_{-1}(x,v)\geq \frac{x^{-\frac56}}{c}\left(-\frac{v^3}{9x}\right)^{\frac13-\frac76}\int_{0}^{-\frac{v^3}{9x}}e^{-\xi}\xi^{-\frac16}\,d\xi\geq \frac{x^{-\frac56}}{c}\left(-\frac{v^3}{9x}\right)^{-\frac56}\int_{0}^{-\frac{v^3}{9x}}\xi^{-\frac16}\,d\xi\geq \frac{x^{-\frac56}}{c},
    \end{align*}
    and
    \begin{align*}
        \phi_{-1}(x,v)\leq c{x^{-\frac56}}\left(-\frac{v^3}{9x}\right)^{\frac13}\left[\left(-\frac{v^3}{9x}\right)^{-\frac76}\int_{0}^{-\frac{v^3}{9x}}e^{-\xi}\xi^{-\frac16}\,d\xi+\int_{-\frac{v^3}{9x}}^{\infty}e^{-\xi}\xi^{-\frac43}\,d\xi\right]\leq cx^{-\frac56}.
    \end{align*}
    \item Suppose $(x,v)\in \mathcal{R}^+$. Then we obtain 
    \begin{align*}
        \phi_{-1}(x,v)\geq \frac{x^{-\frac56}}{c}\left(-\frac{v^3}{9x}\right)^{\frac13-\frac76}\int_{0}^{-\frac{v^3}{9x}}e^{-\xi}\xi^{-\frac16}\,d\xi\geq \frac{x^{-\frac56}}{c}\left(-\frac{v^3}{9x}\right)^{\frac13-\frac76}\int_{0}^{\frac{1}{9}}e^{-\xi}\xi^{-\frac16}\,d\xi\geq \frac{|v|^{-\frac52}}{c},
    \end{align*}
    and
    \begin{align*}
        \phi_{-1}(x,v)\leq c{x^{-\frac56}}\left(-\frac{v^3}{9x}\right)^{\frac13}\left[\left(-\frac{v^3}{9x}\right)^{-\frac76}\int_{0}^{-\frac{v^3}{9x}}e^{-\xi}\xi^{-\frac16}\,d\xi+\int_{-\frac{v^3}{9x}}^{\infty}e^{-\xi}\xi^{-\frac43}\,d\xi\right]\leq c |v|^{-\frac52}.
    \end{align*}
\end{itemize}
\end{proof}

\begin{proof}[Proof of \eqref{est.smooth.phim}.]
Fix two points $(x,v)\in H_1\cap \mathcal{R}^-$ such that $v^{3+\epsilon}\geq x$ for some $\epsilon>0$. Using \eqref{rel.phiandU} with $v\geq0$, we have for some constant $c \in \R$,
\begin{align*}
    \phi_m(x,v)=c x^{\lambda_m}\widetilde{U}\left(a_m,\frac23,\frac{v^3}{9x}\right),
\end{align*}
where we write $\lambda_m\coloneqq\frac16+m$, $a_m\coloneqq\frac56+m$, and $\widetilde{U}(a,b,z)\coloneqq e^{-z}U(a,b,z)$. Since by \cite[13.3.27]{DLMF},
\begin{align*}
    \widetilde{U}^{(j)}(a,b,z)=(-1)^j\widetilde{U}(a,b+j,z),
\end{align*}
we observe 
\begin{align*}
    D^k\phi_m(x,v)&=\sum_{\substack{j\in J,\\
    3|A_j|+|B_j|\leq 3(\lambda_m+2k)}}C_jx^{A_j}v^{B_j} \widetilde{U}^{(j)}\left(a_m,\frac23,\frac{v^3}{9x}\right)\\
    &=\sum_{\substack{j\in J,\\
    3|A_j|+|B_j|\leq 3(\lambda_m+2k)}}C_j' x^{A_j}v^{B_j}e^{-\frac{v^3}{9x}}U\left(a_m,\frac23+j,\frac{v^3}{9x}\right)
\end{align*}
for some constants $C_j, C_j' \in\bbR$, where $J=\{0,1,\ldots, k\}$.

Using this together with the fact that $|z|^K e^{-|z|}\leq c(K)e^{-|z|/2}$ for any $K > 0$, $v^3/x \geq v^{-\epsilon} $, and \eqref{asymp4.U}, we derive
\begin{equation}\label{ineq.derk.phim.app}
\begin{aligned}
    |D^k\phi_m(x,v)|\leq cx^{-(\lambda_m+2k)}v^{-3(\lambda_m+2k-a_m)}e^{-\frac{v^3}{9x}}&\leq c(k,m)v^{-6(\lambda_m+2k)-a_m}e^{-\frac{v^3}{18x}}\\
    &\leq c(k,m)v^{-6(\lambda_m+2k)-a_m}e^{-\frac{1}{18v^{\epsilon}}}\\
    &\leq c(k,m,\epsilon).
\end{aligned}
\end{equation}
This completes the proof of \eqref{est.smooth.phim}.
\end{proof}

\begin{proof}[Proof of \eqref{lemma:derx.phim}.]
    First, given $\lambda > 0$ and $c \in \R$, we deduce from \eqref{eq:der-U-formula} and the chain rule,
\begin{align*}
    \partial_x U(a,b,c/x) = - x^{-1} (c/x) U'(a,b,c/x) = - x^{-1} a \big[  (1+a-b) U(a+1,b,c/x) - U(a,b,c/x)  \big].
\end{align*}
Hence, we obtain the following formula:
\begin{align}
\label{eq:der-U-formula-2}
\begin{split}
    \partial_x [x^{\lambda} U(a,b,c/x)] &= \lambda x^{\lambda-1} U(a,b,c/x) + x^{\lambda} \partial_x U(a,b,c/x) \\
    &= x^{\lambda-1} \big[(\lambda + a) U(a,b,c/x) - a (1+a-b) U(a+1,b,c/x) \big].
\end{split}
\end{align}
Similarly, we derive
\begin{equation}\label{eq:der-U-formula-3}
\begin{aligned}
    \partial_x [x^\lambda e^{c/x} U(a,b,-c/x)]&=\lambda x^{\lambda-1}e^{c/x}U(a,b,-c/x)+x^{\lambda-1}(-{c}/{x})e^{c/x}U(a,b,-c/x)\\
    &\quad +x^{\lambda} e^{c/x} \partial_x U(a,b,-c/x)\\
    &=\lambda x^{\lambda-1}e^{c/x}U(a,b,-c/x)+x^{\lambda-1}(-{c}/{x})e^{c/x}U(a,b,-c/x)\\
    &\quad-x^{\lambda-1}e^{c/x}a\big[(1+a-b)U(a+1,b,-c/x)-U(a,b,-c/x)\big]\\
    &=x^{\lambda-1}e^{c/x}\big[-a(a-b+1)U(a+1,b,-c/x)+(a+\lambda-c/x)U(a,b,-c/x)\big].
\end{aligned}
\end{equation}

Recall from \eqref{rel.phiandU} that
\begin{align*}
    \phi_m(x,v) =\begin{cases} c_mx^{\frac{1}{6}+m} U\left(-\frac{1}{6} - m , \frac{2}{3} , -\frac{v^3}{9x} \right)&\quad\text{if }v<0,\\
    c_mM_mx^{\frac16+m}e^{-\frac{v^3}{9x}}U\left(\frac56+m,\frac23,\frac{v^3}{9x}\right)&\quad\text{if }v\geq0.
    \end{cases}
\end{align*}
Let us take $m \geq0$ and apply the formula \eqref{eq:der-U-formula-2} with
\begin{align*}
    a = -\frac{1}{6} - m, ~~ \lambda = \frac{1}{6} + m, ~~ b = \frac{2}{3}, ~~ c = -\frac{v^3}{9}.
\end{align*}
Then, we deduce for $v < 0$,
\begin{align*}
    c_m\partial_x \phi_{m}(x,v) &= x^{\frac{1}{6} + (m - 1) } \left(\frac{1}{6} + m \right)\left( \frac{1}{6} - m \right) U \left( -\frac{1}{6} - (m - 1) , \frac{2}{3} , -\frac{v^3}{9x} \right) \\
    &= c_{m-1}\left(\frac{1}{36} - m^2 \right) \phi_{m-1}(x,v).
\end{align*}
 On the other hand, when $v\geq0$, using the formula \eqref{eq:der-U-formula-3} with 
\begin{align*}
    a = \frac{5}{6} + m, ~~ \lambda = \frac{1}{6} + m, ~~ b = \frac{2}{3}, ~~ c = -\frac{v^3}{9}
\end{align*}
and \eqref{eq2.U}, we get
\begin{align*}
    c_m\partial_x\phi_m(x,v) &= M_m x^{\frac16+m-1}e^{-\frac{v^3}{9x}} \Bigg[ \left(-\frac{5}{6} - m \right) \left(\frac{7}{6} +m \right) U \left( \frac{11}{6} + m , \frac{2}{3} , \frac{v^3}{9x} \right) \\
    &\qquad\qquad\qquad\qquad \qquad \qquad\qquad + (1 + 2m + v^3/(9x)) U \left( \frac{5}{6} + m , \frac{2}{3} , \frac{v^3}{9x} \right) \Bigg]\\
    &= M_mx^{\frac16+m-1}e^{-\frac{v^3}{9x}}U\left(\frac56+m-1,\frac23,\frac{v^3}{9x}\right)=c_{m-1}\left(\frac{1}{36}-m^2\right)\phi_{m-1}(x,v),
\end{align*}
where we have also used the fact that 
\begin{align*}
    M_m=\left(\frac{1}{36}-m^2\right)M_{m-1}.
\end{align*}

\end{proof}

\subsection{1D solutions with in-flow boundary condition}

We now classify all $\lambda$-homogeneous solutions to kinetic Fokker-Planck equations with source term and data being homogeneous polynomials. Obviously, it suffices to consider $\lambda \in \N \cup \{ 0 \}$.
\begin{lemma}
\label{lemma:1D-classification-inflow}
Let $\lambda \in\N\cup\{0\}$ and $h$ be a $\lambda$-homogeneous solution to
    \begin{align}\label{sol.inflow.psi}
    \begin{cases}
        v \partial_x h - \partial_{vv} h &= p ~~ \text{ in } (0,\infty) \times \R, \\
        h(x,v) &= a v^{\lambda} ~~ \text{ on } \{ 0 \} \times \{ v > 0 \},
    \end{cases}
    \end{align}
where $p \in \cP_{\lambda-2}$ is a homogeneous polynomial of degree $\lambda-2$ and $a \in \R$. In addition, we assume that either $p\neq0$ or $a\neq0$, and that $h$ does not grow exponentially as $|(x,v)|\to\infty$. Then, there are a polynomial $P \in \cP_{\lambda}$ and a unique $\mu \in \R$ such that
\begin{align}
    h = P + \mu \psi_{[\lambda/3]},
\end{align}
where $\psi_{l}$ is defined as
\begin{align}\label{defn.psilambda}
    \psi_l(x,v) \coloneqq x^{\frac{3l+2}{3}}\left( M\left( - \frac{3l+2}{3} , \frac{2}{3} , -\frac{v^3}{9x} \right) +\frac{\Gamma\left(\frac23\right)}{\Gamma\left(-\frac{3l+2}3\right)}\frac{\Gamma\left(-\frac{3l+1}3\right)}{\Gamma\left(\frac43\right)} \frac{v}{(9x)^{1/3}} M\left( - \frac{3l+1}{3} , \frac{4}{3} ,-\frac{v^3}{9x} \right)\right),
\end{align}
where $M$ denotes a hypergeometric function of the first kind.
Moreover, when $\lambda=3l$ or $3l+1$ for some nonnegative integer $l$, then $\mu=0$.
\end{lemma}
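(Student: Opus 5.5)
We reduce to an ODE via the homogeneity ansatz. Since $h$ is $\lambda$-homogeneous with respect to the kinetic scaling $(x,v)\mapsto(r^3x,rv)$, we write $h(x,v)=x^{\lambda/3}\Psi(z)$ with $z=-v^3/(9x)$ on each of the half-lines $\{v<0\}$ and $\{v>0\}$. The same computation that turned $v\partial_x-\partial_{vv}$ into Kummer's equation in the proof of \autoref{lemma:1D-sol-Tricomi} then shows that $\Psi$ solves an inhomogeneous Kummer equation
\[
z\Psi''+\left(\tfrac23-z\right)\Psi'+\tfrac{\lambda}{3}\Psi=\widetilde p(z).
\]
Here $\widetilde p$ collects the contribution of the homogeneous source $p$, written as $x^{(\lambda-2)/3}$ times a function of $v/x^{1/3}$. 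Equivalently, we may avoid $z$ and work directly in the variable $s=v/x^{1/3}\in\R$, where the equation is a single second order linear ODE on all of $\R$ and no branch cut appears. I will use the $s$-formulation for the existence and uniqueness bookkeeping, and the Kummer form only to identify explicit solutions.

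\textbf{Step 1: a particular polynomial solution.} We first look for $P\in\cP_\lambda$, homogeneous of degree $\lambda$, with $v\partial_xP-\partial_{vv}P=p$ and $P(0,v)=av^\lambda$. The natural ansatz is $P=\sum_{3j+k=\lambda}\alpha_{j,k}x^jv^k$. The boundary condition fixes $\alpha_{0,\lambda}=a$. The operator sends $x^jv^k\mapsto jx^{j-1}v^{k+1}-k(k-1)x^jv^{k-2}$, which gives a triangular recursion in $j$ that we solve downward from the top. The solvability condition comes from the monomials of $p$, and we also have the requirement that $P$ contain no $x^{(\lambda+1)/3}$-type terms. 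The recursion closes exactly when $\lambda\equiv 0,1 \pmod 3$. In the case $\lambda\equiv 2$ one monomial $x^{(\lambda-2)/3}$ in $p$ (equivalently $a$) produces an obstruction, since the leftover top term would need the non-polynomial power $x^{(\lambda+1)/3}$. This is exactly where $\psi_l$ with $l=[\lambda/3]$ and degree $3l+2$ enters. We check directly that $\psi_l$ solves $v\partial_x\psi_l-\partial_{vv}\psi_l=c\,x^{l}$ for some constant $c\neq0$, with $\psi_l(0,v)=c'v^{3l+2}$. This follows from the Kummer identities, since $M(-\tfrac{3l+2}{3},\tfrac23,\cdot)$ and $z^{1/3}M(\cdot,\tfrac43,\cdot)$ span the solutions of the homogeneous Kummer equation with $a=-(3l+2)/3$. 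The coefficient combination in \eqref{defn.psilambda} is chosen so that the $\Gamma$-ratio cancels the exponential growth of $M$ as $z\to+\infty$, i.e.\ as $v\to-\infty$, producing the Tricomi combination via the connection formula $U=\frac{\Gamma(1-b)}{\Gamma(a-b+1)}M+\dots$. We then subtract a suitable $\mu\psi_l$ and solve for $P$ as before.

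\textbf{Step 2: uniqueness up to homogeneous solutions.} Let $h$ be any solution as in the statement. The difference $w=h-P-\mu\psi_{[\lambda/3]}$ is a $\lambda$-homogeneous solution of \eqref{eq.1d.absorb} without exponential growth. In the $s$-variable, $w=x^{\lambda/3}W(s)$, where $W$ solves a linear homogeneous ODE on $\R$ that has two-dimensional solution spaces on each side of $s=0$. The absorbing condition at $x=0,v>0$ corresponds to $s\to+\infty$ and kills the $U(\cdot,\cdot,-z)$-type branch there. The no-exponential-growth condition as $s\to-\infty$ kills the $M$-branch. Requiring $C^1$ matching at $s=0$ then leads to the same $\Gamma$-identity \eqref{ineq.equiva} analysed in the remark after \autoref{lemma:1D-sol-Tricomi}. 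That identity forces $\lambda/3=\tfrac16+m$, which is impossible for $\lambda\in\N\cup\{0\}$. Hence $w\equiv0$. Uniqueness of $\mu$ follows the same way, because $\psi_l$ is not a polynomial; if $\mu$ were not unique, a nonzero multiple of $\psi_l$ would have to equal a polynomial difference.

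\textbf{Main obstacle.} I expect the main obstacle to be Step 1 in the case $\lambda\equiv2\pmod 3$. There we must verify carefully that the specific $\Gamma$-coefficient in \eqref{defn.psilambda} yields subexponential growth for $v\to-\infty$, using \eqref{asymp4.U} and the $M$–$U$ connection formula. We also need to check that the source produced by $\psi_l$ is a nonzero multiple of $x^{l}$, so that the obstruction monomial can be absorbed. For this I would first compute $\psi_0$ explicitly: its source should be the constant $1$, matching \eqref{psi0.sol}. The general case then follows by the same Kummer recursion, e.g.\ using \eqref{eq2.U}.
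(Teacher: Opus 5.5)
\textbf{The gap is in Step 1, in the case $\lambda=3l+2$.} You claim that $v\partial_x\psi_l-\partial_{vv}\psi_l=c\,x^l$ with $c\neq0$. This is false.

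As you note yourself, $M(-\tfrac{3l+2}{3},\tfrac23,\tau)$ and $\tau^{1/3}M(-\tfrac{3l+1}{3},\tfrac43,\tau)$ both solve the \emph{homogeneous} Kummer equation. Here $\tau=-v^3/(9x)$, and $v/(9x)^{1/3}=-\tau^{1/3}$. Hence $\psi_l$ as defined in \eqref{defn.psilambda} satisfies $v\partial_x\psi_l-\partial_{vv}\psi_l=0$; this is \eqref{eq.reg.psilambda}. The function in \eqref{psi0.sol} is $\psi_0-cv^2$ (see the remark following the lemma). For $l\ge1$, even $\psi_l-c'v^{3l+2}$ has source $c'(3l+2)(3l+1)v^{3l}$, which is not a multiple of $x^l$.

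So the verification you single out as the main obstacle cannot succeed, and the obstruction cannot be absorbed through the source term. Your own recursion shows more: for $\lambda=3l+2$, the map $L=v\partial_x-\partial_{vv}$ is a bijection from homogeneous polynomials of degree $\lambda$ to those of degree $\lambda-2$. There are $l+1$ unknowns and $l+1$ equations, and the kernel is trivial: the top equation forces $\alpha_l=0$, and the recursion then gives $\alpha_0=0$. Consequently the polynomial $P$ with $LP=p$ is unique, and its trace $P(0,v)=\alpha_0(p)v^\lambda$ is forced. The mismatch $(a-\alpha_0)v^\lambda$ must therefore be removed by $\mu\psi_l$ through its \emph{boundary trace}. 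What you actually need is $\psi_l(0,v)=c'v^{3l+2}$ with $c'\neq0$, and your proposal never establishes this.

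\textbf{How to repair it within your framework.} The paper gets $c'\neq0$ by inserting the $\tau\to-\infty$ asymptotics \eqref{eq:tau-infty} and evaluating the resulting $\Gamma$-expression in \eqref{eq:values-gamma_-1D-absorbing}. Alternatively, it follows from your Step 2. The coefficient in \eqref{defn.psilambda} is exactly the no-growth condition \eqref{eq:first-cond}, so $\psi_l$ is a nonzero, $\lambda$-homogeneous, non-exponentially-growing solution of $Lg=0$. If $c'=0$ it would be a nontrivial absorbing solution, which Step 2 excludes via \eqref{ineq.equiva}.

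A smaller slip in Step 2: on $\{v>0\}$ the absorbing condition \emph{selects} the exponentially decaying branch $e^{\tau}U(\tfrac23+\tfrac{\lambda}{3},\tfrac23,-\tau)$. Every solution not proportional to it has trace equal to a nonzero multiple of $v^\lambda$. You wrote this the other way round.

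With these corrections, your route is a valid alternative to the paper's: construct a particular solution with the same data, then show the difference vanishes because integer homogeneities admit no absorbing solutions. The paper avoids this existence/uniqueness split. It subtracts only a polynomial with $LP=p$, ignoring the boundary condition. It then classifies all $\lambda$-homogeneous, non-exponentially-growing solutions of $Lg=0$ directly from the two-dimensional Kummer solution space: polynomials when $\lambda\equiv0,1$, and multiples of $\psi_l$ when $\lambda\equiv2$. The form $h=P+\mu\psi_l$ therefore does not rely on $c'\neq0$ at all; the trace computation is used only to pin down $\mu$ from the boundary datum.
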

\begin{remark}
From \eqref{eq.reg.psilambda}, we observe that $\psi_0$ solves the equation \eqref{sol.inflow.psi} with $p=0$ and $\lambda=2$. However in \eqref{psi0.sol}, we denote by the function $\psi_0(x,v)$ as $\psi_0(x,v)-a{v^2}$ to ensure that it solves the equation with the constant right-hand side and zero-boundary data.
\end{remark}

\begin{remark}
Note that by \cite[13.2.42]{DLMF} we have 
\begin{align}
\label{eq:psi-lambda-U}
    \psi_{l}(x,v) = c(l) x^{\frac{3l+2}{3}} U\left(-\frac{3l+2}3,\frac23,-\frac{v^3}{9x}\right), \qquad \text{ where } \qquad c(l) = 3 \frac{\Gamma(\frac{4}{3})}{\Gamma(-\frac{3l+1}{3})}
\end{align}
for $v < 0$. For $v > 0$ it extends analytically via the same expression, taking the principal branch of the function $U$ for $z \in (-\infty,0)$. 
\end{remark}

\begin{proof}[Proof of \autoref{lemma:1D-classification-inflow}]
If $a \not=0$, we consider $\tilde{h} = h - a v^{\lambda}$, which solves
\begin{align*}
       \begin{cases}
        v \partial_x \tilde{h} - \partial_{vv} \tilde{h} &= p + a\lambda(\lambda-1) v^{\lambda-2} ~~ \text{ in } (0,\infty) \times \R, \\
        \tilde{h}(x,v) &= 0~~ \text{ on } \{ 0 \} \times \{ v > 0 \}.
    \end{cases} 
\end{align*}
Hence, it suffices to consider the case $a = 0$.

By Step 2 in the proof of \cite[Lemma 3.7]{RoWe25}, we know that for any $p \in \cP_{\lambda-2}$, there exists $P \in \cP_{\lambda}$ such that
\begin{align*}
    v \partial_x P - \partial_{vv} P = p ~~ \text{ in } (0,\infty) \times \R.
\end{align*}
Moreover, there must be $b \in \R$ such that 
\begin{align}
    P(0,v) = b v^{\lambda} ~~ \text{ on } \{ 0 \} \times \{ v > 0 \}.
\end{align}
Hence, it suffices to find all $\lambda$-homogeneous solutions $g$ such that
\begin{align}
\label{eq:inhom-1D-problem}
        \begin{cases}
        v \partial_x g - \partial_{vv} g &= 0 ~~ \text{ in } (0,\infty) \times \R, \\
        g(x,v) &= b v^{\lambda} ~~ \text{ on } \{ 0 \} \times \{ v > 0 \},
    \end{cases}
\end{align}
since then, all solutions to our problem are of the form $h = g + P$.

From \cite{RoWe25}, we know that all functions $g : [0,\infty) \times \R \to \R$ that are homogeneous of degree $\lambda \ge 0$ and solve the homogeneous equation
\begin{align}
\label{eq:1D-hom-PDE}
v \partial_x g - \partial_{vv} g = 0 ~~ \text{ in } (0,\infty) \times \R
\end{align}
are of the form
\begin{align*}
    g(x,v) = x^{\frac{\lambda}{3}} \Psi(\tau), \qquad \tau := \frac{-v^3}{9x},
\end{align*}
where $\Psi$ solves the so-called Kummer's equation
\begin{align*}
    \frac{\lambda}{3} \Psi(\tau) + \left( \frac{2}{3} - \tau \right) \Psi'(\tau) + \tau \Psi''(\tau) = 0.
\end{align*}
By solving this equation explicitly, we obtain that any solution $g$ of \eqref{eq:1D-hom-PDE} is of the form
\begin{align}
\label{eq:h-formula-1D-2}
    g(x,v) = C_1 x^{\frac{\lambda}{3}} M\left( - \frac{\lambda}{3} , \frac{2}{3} , -\frac{v^3}{9x} \right) - C_2 9^{-\frac{1}{3}} v x^{\frac{\lambda-1}{3}} M\left( - \frac{\lambda-1}{3} , \frac{4}{3} ,-\frac{v^3}{9x} \right)
\end{align}
for some $C_1,C_2 \in \R$. Recall that if $-a \in \N$, then $M(a,b,z)$ is a polynomial in $z$ of degree $-a$. Otherwise, 
\begin{align}
\label{eq:asymp-1F1}
    M(a;b;z) \asymp 
    \begin{cases}
    \frac{\Gamma(b)}{\Gamma(a)} e^z z^{a-b} \qquad &\text{ as } z \to +\infty,\\
    \frac{\Gamma(b)}{\Gamma(b-a)}(-z)^{-a} \qquad &\text{ as } z \to -\infty.
    \end{cases}
\end{align}

Now, we consider the case $\lambda \in 3(\N\cup\{0\})$. Then, we must have $C_2 = 0$, since otherwise $g(1,v)$ grows exponentially as $v \to -\infty$ by \eqref{eq:asymp-1F1}, since then $\tau = -\frac{v^3}{9x} = -\frac{v^3}{9} \to +\infty$. Hence, in that case, $M \left( - \frac{\lambda}{3} , \frac{2}{3} , \tau \right)$ is a polynomial in $\tau = -\frac{v^3}{9x}$ of degree $\frac{\lambda}{3}$. By an analogous argument, we get that $C_1=0$ and $M \left( - \frac{\lambda-1}{3} , \frac{2}{3} , \tau \right)$ is polynomial when $\lambda - 1 \in 3(\N\cup\{0\})$.
Therefore $h \in \cP_{\lambda}$, and we are done. 

Hence, we can exclude the cases $\lambda \in 3(\N\cup\{0\})$ and $\lambda-1 \in 3(\N\cup\{0\})$. Therefore, by application of \eqref{eq:asymp-1F1} to $h$, and setting $\tau = -\frac{v^3}{9x}$ we obtain
\begin{align}
\label{eq:tau+infty}
g(x,v) &\asymp 
C_1 x^{\frac{\lambda}{3}} \frac{\Gamma(\frac{2}{3})}{\Gamma(-\frac{\lambda}{3})} e^{\tau} \tau^{-\frac{\lambda+2}{3}} + C_2 x^{\frac{\lambda}{3}} \frac{\Gamma(\frac{4}{3})}{\Gamma(\frac{1-\lambda}{3})} e^{\tau} \tau^{-\frac{\lambda+2}{3}} \qquad \text{ as } \tau \to +\infty,\\
\label{eq:tau-infty}
g(x,v) &\asymp C_1 x^{\frac{\lambda}{3}} \frac{\Gamma(\frac{2}{3})}{\Gamma(\frac{\lambda+2}{3})} (-\tau)^{\frac{\lambda}{3}} - C_2 x^{\frac{\lambda}{3}} \frac{\Gamma(\frac{4}{3})}{\Gamma(\frac{\lambda + 3}{3})} (-\tau)^\frac{\lambda}{3} \qquad \text{ as } \tau \to -\infty.
\end{align}

Thus, the parameters $C_1,C_2$ must satisfy
\begin{align}
\label{eq:first-cond}
    C_1 \frac{\Gamma(\frac{2}{3})}{\Gamma(-\frac{\lambda}{3})} + C_2 \frac{\Gamma(\frac{4}{3})}{\Gamma(\frac{1-\lambda}{3})} = 0,
\end{align}
since otherwise \eqref{eq:tau+infty} would imply again that $h(1,v)$ grows exponentially as $v \to -\infty$.  An equivalent way to guarantee \eqref{eq:first-cond}, is by taking $m \in \R$ and choosing
\begin{align*}
    C_1 = m, \qquad C_2 = - \frac{\Gamma\left(\frac23\right)}{\Gamma\left(-\frac\lambda3\right)}\frac{\Gamma\left(\frac{1-\lambda}3\right)}{\Gamma\left(\frac43\right)} m.
\end{align*}
Moreover, for any $v > 0$ we use \eqref{eq:tau-infty} and that by sending $x \to 0$ it holds $\tau := -\frac{v^3}{9x} \to - \infty$, to see that when $\lambda=3l+2$,
\begin{align}
\label{eq:values-gamma_-1D-absorbing}
    g(0,v) = m 9^{-\frac\lambda3}\Gamma\left(\frac23\right)  \left( \frac{1}{\Gamma(\frac{\lambda+2}{3})} + \frac{\Gamma(\frac{1-\lambda}{3})}{\Gamma(-\frac{\lambda}{3})\Gamma(\frac{\lambda + 3}{3})} \right) v^{\lambda}  = m 9^{-\frac\lambda3}\Gamma\left(\frac23\right)\Gamma\left(\frac{1-\lambda}{3}\right) (-1)^{l+1}\frac{4\pi}{\sqrt{3}} v^{\lambda},
\end{align}
where we used elementary properties of the Gamma function as in \eqref{gamma.multiple} to prove the second equality.

Clearly, we can now find a unique $m = m(\lambda,b) \in \R$ such that $g(0,v) = b v^{\lambda}$ for any $v > 0$, as $\lambda=3l+2$ for some $l\in\N\cup\{0\}$. In addition, such a choice of the constant $m$ makes the corresponding function $g$ a solution to \eqref{eq:inhom-1D-problem}. 

Hence, altogether, we have proved that all homogeneous solutions $h$ to \eqref{eq:1D-hom-PDE} are of the form
\begin{align*}
    g(x,v)=\mu_1P+\mu_2 x^{\frac{\lambda}{3}}\left( M\left( - \frac{\lambda}{3} , \frac{2}{3} , -\frac{v^3}{9x} \right) +\frac{\Gamma\left(\frac23\right)}{\Gamma\left(-\frac\lambda3\right)}\frac{\Gamma\left(\frac{1-\lambda}3\right)}{\Gamma\left(\frac43\right)} \frac{v}{(9x)^{1/3}} M\left( - \frac{\lambda-1}{3} , \frac{4}{3} ,-\frac{v^3}{9x} \right)\right)
\end{align*}
for some $P\in \cP_\lambda$ and $\mu_1,\mu_2 \in \R$, where $\mu_1=0$ when $\lambda-2\in 3(\N\cup\{0\})$ and $\mu_2=0$ when $\lambda-2\notin 3(\N\cup\{0\})$. Therefore, using this together with \eqref{defn.psilambda}, we get the desired result.

\end{proof}

We now prove several properties of the functions $\psi_\lambda$. Note that \eqref{eq.der.psilambda} parallelizes the corresponding relation for the $\phi_m$ \eqref{lemma:derx.phim}. Remarkably, the functions $\psi_{\lambda}$ are not smooth in $\mathcal{R}^-$, which stands in stark contrast to the behavior of the functions $\phi_m$ (see \eqref{est.smooth.phim}).

\begin{lemma}
\label{lemma:psi-reg}
Let $\psi_{l}$ be defined as in \eqref{defn.psilambda}, with a nonnegative integer $l$. Then, the following hold.
\begin{itemize}
\item For any $l \ge 1$,
\begin{align}\label{eq.der.psilambda}
    \partial_x\psi_{l}=c_l\psi_{l-1}.
\end{align}
\item $\psi_{l}$ is a weak solution to 
    \begin{equation}\label{eq.reg.psilambda}
\left\{
\begin{alignedat}{3}
v\partial_x\psi_{l}-\partial_{vv}\psi_{l}&=0&&\qquad \mbox{in  $\{x>0\}\times \bbR$}, \\
\psi_{l}&=c_\lambda v^{3l+2}&&\qquad  \mbox{in $ \{x=0\}\times \{v>0\}$}.
\end{alignedat} \right.
\end{equation}
for some constant $c_\lambda\in\bbR\setminus\{0\}$.
    \item For any $k\in\N$, 
\begin{align}\label{goal.vder.psilambda}
    |\partial_v^k\psi_l(x,v)|\leq c(k,j)\quad\text{for any }(x,v)\in H_1\cap \mathcal{R}^-\text{ with } x\leq v^{k+3-(3l+2)}.
\end{align}
\item For any $\epsilon\in(0,1)$,
\begin{align}\label{goal.xder.psilambda}
    \frac{|\psi_l(x_1,v)-\psi_l(x_2,v)|}{|x_1-x_2|^{1-\epsilon}}\leq c(\epsilon,j)\quad\text{for any }(x_1,v), (x_2,v) \in H_1 \cap \mathcal{R}^- \text{ with } x_1,x_2\leq v^{\frac1{\epsilon}}.
\end{align}
In particular, we have $\psi_0\in C^{3-3\eps}(\mathcal{H}_1\cap\{x\leq v^{\frac1\eps}\})$, but $\psi_0\notin C^{3-3\eps}(\mathcal{H}_1\cap\{x\leq v^{\frac1{\eps'}}\})$ if $\eps'>\eps$, where $\mathcal{H}_1$ is the 1-dimensional two-sided kinetic cylinder determined in Section \ref{sec:prelim}.
\end{itemize}
\end{lemma}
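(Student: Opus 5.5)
The plan is to base everything on the representation \eqref{eq:psi-lambda-U}, $\psi_l(x,v)=c(l)x^{\lambda/3}U(-\lambda/3,\tfrac23,-\tfrac{v^3}{9x})$ with $\lambda=3l+2$. For $v>0$ we use the analytic continuation across the branch cut. The identities for $U$ recorded at the start of the appendix then carry most of the argument.

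\textbf{Derivative relation \eqref{eq.der.psilambda}.} Apply \eqref{eq:der-U-formula-2} with $a=-\lambda/3$, $\lambda$ replaced by $\lambda/3$, $b=\tfrac23$ and $c=-v^3/9$. The term $(\lambda/3+a)U$ vanishes, which leaves $-a(1+a-b)x^{\lambda/3-1}U(a+1,b,\cdot)$. Since $a+1=-(\lambda-3)/3$, this is a constant multiple of $\psi_{l-1}$, exactly as in the computation of \eqref{lemma:derx.phim} for $v<0$. On the continued branch $v>0$ the same identity holds by analyticity.

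\textbf{Equation \eqref{eq.reg.psilambda}.} The proof of \autoref{lemma:1D-classification-inflow} already shows that $\psi_l$ is a classical solution of $v\partial_x\psi_l-\partial_{vv}\psi_l=0$ in $\{x>0\}$. The two functions $M$ appearing in \eqref{defn.psilambda} are entire, so no matching across $\{v=0\}$ is needed. The boundary value $c_\lambda v^{3l+2}$ with $c_\lambda\neq0$ is given by \eqref{eq:values-gamma_-1D-absorbing} with $m=1$.

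To pass from classical to weak solutions we argue as in \autoref{lemma:1D-sol-Tricomi}. By homogeneity of degree $\lambda\ge2$, together with the asymptotics \eqref{eq:asymp-1F1} and \eqref{asym.U.lange}, we have $|\partial_v\psi_l|\lesssim|(x^{1/3},v)|^{\lambda-1}$ and $|\partial_{vv}\psi_l|\lesssim|(x^{1/3},v)|^{\lambda-2}$ in the regions $\mathcal R^0$ and $\mathcal R^+$. On $\mathcal R^-$ we use the polynomial-in-$v^3/x$ growth from \eqref{eq:tau-infty}. These bounds give local $L^2$ integrability of $\nabla_v\psi_l$ and $v\partial_x\psi_l$ and continuity up to the boundary, so \autoref{rmk.equ.weak.str.par} applies.

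\textbf{Estimates \eqref{goal.vder.psilambda} and \eqref{goal.xder.psilambda}.} In $\mathcal R^-$ we have $\tau=-v^3/(9x)\to-\infty$. Here we expand $U(a,b,\tau)$ asymptotically. By \eqref{eq:values-gamma_-1D-absorbing}, the leading term is $c\,v^{\lambda}$. The remainder is a series whose successive terms gain factors of $\tau^{-1}$, so that
\[
\psi_l=c_\lambda v^\lambda+\sum_{j\ge1}d_j x^{j}v^{\lambda-3j}+R_N.
\]
This comes from the large-$|z|$ expansion $U(a,b,z)\sim z^{-a}\sum_j \frac{(a)_j(a-b+1)_j}{j!}(-z)^{-j}$; there is no exponential part, because the exponential term cancels by \eqref{eq:first-cond}.

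Each term $x^jv^{\lambda-3j}$ has $\partial_v^k$ of size $x^j v^{\lambda-3j-k}$. This quantity is bounded precisely when $x\le v^{(3j+k-\lambda)/j}$, and the worst case is $j=1$, which gives the condition $x\le v^{k+3-\lambda}$ in \eqref{goal.vder.psilambda}. The remainder $R_N$ is handled by choosing $N$ large, using the standard bound on the remainder of the asymptotic expansion; differentiating the remainder follows from \eqref{eq:multiple-U}.

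For \eqref{goal.xder.psilambda}, write $\partial_x\psi_l$ via \eqref{eq.der.psilambda}. For $l=0$ we must integrate $\partial_x\psi_0\sim v^{-1}$ in the correct manner: the bound $|\psi_0(x_1,v)-\psi_0(x_2,v)|\le |x_1-x_2|\,v^{-1}\le |x_1-x_2|^{1-\eps}$ holds whenever $|x_1-x_2|^{\eps}\le v$, and this follows from $x_i\le v^{1/\eps}$. For general $l$ the same argument applies with $\partial_x\psi_l\sim x^{l}v^{-1}$.

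The final claim $\psi_0\in C^{3-3\eps}$ on $\{x\le v^{1/\eps}\}$ combines three ingredients: the $v$-bounds from \eqref{goal.vder.psilambda} with $k=3$, the $x$-Hölder bound with exponent $(3-3\eps)/3$, and the kinetic Hölder characterization in the appendix (for instance \eqref{fund.lambda.eps}). Sharpness follows by evaluating the explicit term $d_1xv^{-1}$ along $x=v^{1/\eps'}$ with $\eps'>\eps$.

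\textbf{Main obstacle.} We expect the hardest step to be the rigorous uniform control of the asymptotic remainder and its derivatives in $\mathcal R^-$ up to $v\to0$, including the mixed derivatives needed for the full kinetic $C^{3-3\eps}$ norm. The first thing to try is to write $U$ via \eqref{aplus.U} and the integral representation \eqref{explicit.U} applied to $U(a-b+1,2-b,\cdot)$ for $\tau<0$ on the continued branch, and then to estimate the remainder directly from the integral.
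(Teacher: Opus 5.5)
Your treatment of \eqref{eq.der.psilambda}, \eqref{eq.reg.psilambda}, \eqref{goal.vder.psilambda} and \eqref{goal.xder.psilambda} is sound in outline, but it follows a different route from the paper in several places.

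\textbf{Derivative relation.} The paper differentiates the $M$-representation \eqref{defn.psilambda} using $zM'(a,b,z)=a(M(a+1,b,z)-M(a,b,z))$. It then checks with Gamma-function identities that the coefficient $C$ transforms correctly. Your single-term computation with \eqref{eq:der-U-formula-2} avoids this bookkeeping. However, the extension to $v>0$ must come from the identity theorem in $v$ (both sides are real-analytic in $v$ through the $M$-representation), not from the principal branch of $U$. For $z=-v^3/(9x)<0$ the real function $\psi_l$ uses the real cube root $z^{1/3}=-v/(9x)^{1/3}$, so it is not a multiple of $U(-\lambda/3,\frac23,z\pm i0)$.

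\textbf{Expansion in $\mathcal R^-$.} For the same reason, you should expand the $M$-representation as $\tau\to-\infty$ (\cite[13.5.1]{Abramowitz}, as the paper does). There the exponential contribution is $e^{\tau}$, which is exponentially small. The condition \eqref{eq:first-cond} concerns $\tau\to+\infty$ (i.e.\ $v<0$) and plays no role.

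With this correction, your picture $\psi_l=v^{\lambda}G(x/v^3)$ for $v>0$, with $G$ smooth on $[0,1]$ and Taylor coefficients $d_j$, is a cleaner form of the paper's Fa\`a di Bruno computation. The expansion may be differentiated because derivatives of $M$ are again $M$'s. The paper's identity $a_k=\binom{\lambda}{k}$, which vanishes for $k>\lambda$, says precisely that the leading term $d_0v^\lambda$ is a polynomial.

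This expansion is also what bounds $\partial_{vv}\psi_l$ in $\mathcal R^-$ for the weak-solution step. By itself, \eqref{eq:tau-infty} controls only $\psi_l$, not its derivatives.

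\textbf{The $x$-H\"older estimate.} For $l=0$, your mean-value argument using $|\partial_x\psi_0|=|v^{-1}\partial_{vv}\psi_0|\le C/v$ and $|x_1-x_2|\le v^{1/\eps}$ is simpler than the paper's case split. For $l\ge1$, however, $\partial_x\psi_l=c_l\psi_{l-1}$ is homogeneous of degree $3l-1\ge2$ and hence bounded, not of size $x^lv^{-1}$. This gives Lipschitz continuity in $x$ directly, as in the paper.

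\textbf{The gap: the claim $\psi_0\in C^{3-3\eps}$.} The kinetic Taylor polynomial of degree $2$ of a stationary function contains the term $(t-t_1)\mathcal D\psi_0$, where $\mathcal D\psi_0=v\partial_x\psi_0=\partial_{vv}\psi_0$. So you need H\"older continuity of $\partial_{vv}\psi_0$ in $x$ of order $1-3\eps$; this is the second line of \eqref{x.dir.psi0}, which the paper uses for $J_2$. Your listed ingredients do not provide it: neither bounds on $\partial_v^k\psi_0$ nor $x$-H\"older continuity of $\psi_0$ itself give it. Also, \eqref{fund.lambda.eps} does not apply, since it concerns functions of $(t,x')$ only.

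Within your framework the fix is one line. Write $\partial_{vv}\psi_0=H(x/v^3)$ with $H$ smooth on $[0,1]$. Then $|\partial_{vv}\psi_0(x_1,v)-\partial_{vv}\psi_0(x_2,v)|\le C|x_1-x_2|v^{-3}\le C|x_1-x_2|^{1-3\eps}$, because $|x_1-x_2|^{3\eps}\le v^3$.

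\textbf{The $v$-direction.} The estimate \eqref{goal.vder.psilambda} with $k=3$ bounds $\partial_v^3\psi_0$ only on $\{x\le v^4\}$; indeed $\partial_v^3\psi_0\approx-6d_1xv^{-4}$. So for $\eps\in(\frac14,\frac13)$ you must use the $(1-3\eps)$-H\"older continuity in $v$ of $H(x/v^3)$ rather than a bound on $\partial_v^3\psi_0$. The paper's $J_1$ estimate has the same weakness.

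\textbf{Sharpness.} Your sharpness argument requires $d_1\neq0$. Matching powers in $v\partial_x\psi_0=\partial_{vv}\psi_0$ gives $d_1=2d_0=2c_\lambda\neq0$.
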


\begin{proof}
First, we aim to prove \eqref{eq.der.psilambda}. The proof is very similar to \eqref{lemma:derx.phim}. By \cite[13.4.10]{Abramowitz}, we observe 
\begin{align*}
    zM'(a,b,z)=aM(a+1,b,z)-aM(a,b,z).
\end{align*}
Using this, we have 
\begin{align*}
    \partial_x(M\left(a,b,c/x\right))=-x^{-1}(c/x)M'(a,b,c/x)=-x^{-1}\left(aM(a+1,b,c/x)-aM(a,b,c/x)\right).
\end{align*}
With this, we derive
\begin{equation}\label{eq1.der.psilambda}
\begin{aligned}
    \partial_x\left(x^{-a}M(a,b,c/x)\right)&=-ax^{-a-1}M(a,b,-c/x)-x^{-a-1}\left(aM(a+1,b,c/x)-aM(a,b,c/x)\right)\\
    &=-ax^{-a-1}M(a+1,b,c/x).
\end{aligned}
\end{equation}
Therefore, we have 
\begin{align*}
    \partial_x\psi_{l}&=\partial_x\left(x^{-a}M(a,b,c/x)+Cv9^{-\frac13}x^{-a-\frac13}M\left(a+\frac13,b+\frac23,c/x\right)\right)\\
    &=-ax^{-a-1}M(a+1,b,c/x)-Cv9^{-\frac13}\left(a+\frac13\right)x^{-a-\frac13-1}M\left(a+\frac13+1,b+\frac23,c/x\right)\\
    &=\frac{3l+2}3x^{\frac{3l-1}3}\left(M\left(\frac{1-3l}{3},\frac23,-\frac{v^3}{9x}\right)+C\frac{\frac{3l+1}3}{\frac{3l+2}3}\frac{v}{(9x)^{\frac13}}M\left(\frac{2-3l}{3},\frac43,-\frac{v^3}{9x}\right)\right)\\
    &=\frac{3l+2}3x^{\frac{3l-1}3}\left(M\left(\frac{1-3l}{3},\frac23,-\frac{v^3}{9x}\right)+\frac{\Gamma\left(\frac23\right)}{\Gamma\left(\frac{1-3l}3\right)}\frac{\Gamma\left(\frac{2-3l}3\right)}{\Gamma\left(\frac43\right)}\frac{v}{(9x)^{\frac13}}M\left(\frac{2-3l}{3},\frac43,-\frac{v^3}{9x}\right)\right)\\
    &=\frac{3l+2}3\psi_{j-1}
\end{align*}
where $a=-\frac{3l+2}3$, $b=\frac23$, $c=-v^3$ and $C=\frac{\Gamma\left(\frac23\right)}{\Gamma\left(-\frac{3l+2}3\right)}\frac{\Gamma\left(-\frac{3l+1}3\right)}{\Gamma\left(\frac43\right)} $. In particular, we have used 
\begin{align*}
    \Gamma\left(-\frac{3l+2}3\right)\frac{-(3l+2)}3= \Gamma\left(\frac{1-3l}3\right)\quad\text{and}\quad \Gamma\left(-\frac{3l+1}3\right)\frac{-(3l+1)}3=\Gamma\left(\frac{2-3l}3\right).
\end{align*}
Therefore, we have proved \eqref{eq.der.psilambda}.

To prove that $\psi_l$ is a weak solution, first we show
\begin{align}\label{secder.psilambda}
    \partial_{vv}\psi_l\in L^\infty(H_R)\quad\text{for any }R>0.
\end{align}
To this end, let us write
\begin{align}\label{C.defn.appen}
    \widetilde{ {M}}(a,b,z)\coloneqq z^{b-1}M\left(a,b,-z\right),\quad \lambda=3l+2\quad\text{and}\quad C\coloneqq \frac{\Gamma\left(\frac23\right)}{\Gamma\left(\frac43\right)}\frac{\Gamma\left(\frac{1-\lambda}{3}\right)}{\Gamma\left(-\frac{\lambda}3\right)}
\end{align}
to see that 
\begin{align}\label{redefn.psilambda}
    \psi_l(x,v)=x^{\frac{\lambda}3}\left( M\left(-\frac{\lambda}{3},\frac23,-\frac{v^3}{9x}\right)+C\widetilde{M}\left(-\frac{\lambda-1}{3},\frac43,\frac{v^3}{9x}\right)\right),
\end{align}
by recalling \eqref{defn.psilambda}. Before we differentiate the function $\psi_l$, based on \cite[13.4.13]{Abramowitz} with $z$ replaced by $-z$, we claim that for any $z>0$,
\begin{align}\label{ind.psi1}
     \frac{d^j}{dz^j}\widetilde{M}(a,b,z)=\frac{d^j}{dz^j}\left(z^{b-1}M(a,b,-z)\right)=(b-j)_jz^{b-j-1}M(a,b-j,-z),
\end{align}
where we write 
\begin{equation*}
    (a)_0=1\quad\text{and}\quad (a)_j\coloneqq \prod_{k=0}^{j-1}(a+k)\text{ for any }j\geq1.
\end{equation*}
When $j=1$,
\begin{align*}
    \frac{d}{dz}\left(z^{b-1}M(a,b,-z)\right)&=(b-1)z^{b-2}M(a,b,-z)-z^{b-1}M'(a,b,-z)\\
    &=z^{b-2}((b-1)M(a,b,-z)+(-z)M'(a,b,-z))\\
    &=z^{b-2}(b-1)M(a,b-1,-z),
\end{align*}
where we have used \cite[13.4.13]{Abramowitz} with $z$ replaced by $-z$ for the last equality. Suppose \eqref{ind.psi1} is true for $j=m-1$. Similarly, we deduce 
\begin{align*}
    \frac{d^{m}}{dz^{m}}\left(z^{b-1}M(a,b,-z)\right)&=\frac{d}{dz}\left((b-m+1)_{m-1}z^{b-m}M(a,b-m+1,-z)\right)\\
    &=(b-m+1)_{m-1}(b-m)z^{b-m-1}M(a,b-m+1,-z)\\
    &\quad- (b-m+1)_{m-1}z^{b-m}M'(a,b-m+1,-z)\\
    &=(b-m)_mz^{b-m-1}M(a,b-m,-z).
\end{align*}
This implies \eqref{ind.psi1}, by induction. 

\begin{itemize}
    \item When $v<0$, we recall the formula \eqref{eq:psi-lambda-U}. Then by differentiating the function $\psi_l$ with respect to the $v$-variable and using \eqref{eq:multiple-U}, we get
    \begin{align*}
        |\partial^2_{v}\psi_l(x,v)|&\leq c\left(x^{\frac{\lambda}3-1}|v| \Bigg| U'\left(-\frac{\lambda}3,\frac23,-\frac{v^3}{9x}\right) \Bigg| +x^{\frac{\lambda}3-2}|v|^4 \Bigg| U''\left(-\frac{\lambda}3,\frac23,-\frac{v^3}{9x}\right) \Bigg| \right)\\
        &\leq c\left(x^{\frac{\lambda}3-1}vU\left(-\frac{\lambda}3+1,\frac53,-\frac{v^3}{9x}\right)+x^{\frac{\lambda}3-2}v^4U\left(-\frac{\lambda}3+2,\frac83,-\frac{v^3}{9x}\right)\right).
    \end{align*}
    Thus we now use \eqref{asymp4.U} and the fact that $U(a,b,z)$ is analytic, in order to further estimate
    \begin{align*}
        |\partial^2_{v}\psi_l(x,v)|\leq \begin{cases}
             cv^{{\lambda}-2}\leq cv^{3j}\leq c&\quad\text{if }-v^3\geq x,\\
            cx^{\frac{\lambda}3-\frac{2}3}\leq cx^{j}\leq c&\quad\text{if }-v^3\leq x.
        \end{cases}
    \end{align*}
    \item When $v\geq0$, we differentiate the equation given in \eqref{redefn.psilambda} to see that 
    \begin{equation}\label{ineq.psidd}
    \begin{aligned}
        \partial^2_{v}\psi_l(x,v)&=  x^{\frac{\lambda}3}\frac{2v}{3x}\left[(-1)M^{'}\left(-\frac{\lambda}3,\frac23,-\frac{v^3}{9x}\right)+C\widetilde{M}^{'}\left(-\frac{\lambda-1}3,\frac43,\frac{v^3}{9x}\right)\right]\\
        &\quad+x^{\frac{\lambda}3}\frac{v^4}{9x^2}\left[(-1)^2M^{''}\left(-\frac{\lambda}3,\frac23,-\frac{v^3}{9x}\right)+C\widetilde{M}^{''}\left(\frac{\lambda-1}3,\frac43,-\frac{v^3}{9x}\right)\right]\\
        &=-cx^{\frac{\lambda}3}\frac{v}{x}M\left(-\frac{\lambda}3+1,\frac53,-\frac{v^3}{9x}\right)+cx^{\frac{\lambda}3}\frac{v^4}{x^2}M\left(-\frac{\lambda}3+2,\frac83,-\frac{v^3}{9x}\right)\\
        &\quad+Cx^{\frac{\lambda}3}\left(\frac{v^3}{9x}\right)^{\frac13}\frac{2v}{3x}M'\left(-\frac{\lambda-1}3,\frac13,-\frac{v^3}{9x}\right),
    \end{aligned}
    \end{equation}
    where we have also used \cite[13.3.16]{DLMF}, \eqref{ind.psi1}, and  \cite[13.4.13]{Abramowitz}. Using \cite[13.3.16]{DLMF} again, we derive
    \begin{align*}
        |\partial^2_{v}\psi_l(x,v)|&\leq c\left(x^{\frac{\lambda-3}3}|v|M\left(-\frac{\lambda}3+1,\frac53,-\frac{v^3}{9x}\right)+x^{\frac{\lambda-6}3}|v|^4M\left(-\frac{\lambda}3+2,\frac83,-\frac{v^3}{9x}\right)\right)\\
        &\quad+cx^{\frac{\lambda-4}{3}}|v|^{2}M\left(-\frac{\lambda-1}3+1,\frac43,-\frac{v^3}{9x}\right).
    \end{align*}
    If $v^3\gg x$, then by using \eqref{eq:asymp-1F1}, we deduce
    \begin{equation}\label{ineq.pside}
    \begin{aligned}
        |\partial^2_{v}\psi_l(x,v)|
        &\leq c\left(x^{\frac{\lambda-3}3}|v|(v^3/9x)^{\frac{\lambda-3}3}+x^{\frac{\lambda-6}3}|v|^4(v^3/9x)^{\frac{\lambda-6}3}+x^{\frac{\lambda-4}{3}}|v|^{2}(v^3/9x)^{\frac{\lambda-4}3}\right)\leq cv^{\lambda-2}\leq c.
    \end{aligned}
    \end{equation}
    On the other hand, for the case that $v^3\leq cx$ for some large constant $c\geq1$, then we have
     \begin{align*}
         |\partial^2_{v}\psi_\lambda(x,v)|\leq c(x^{\frac{\lambda-3}{3}}x^{\frac13}+x^{\frac{\lambda-6}{3}}x^{\frac43}+x^{\frac{\lambda-4}{3}}x^{\frac23})\leq cx^{\frac{\lambda-2}{3}}\leq c.
     \end{align*}
\end{itemize}
Indeed, using \eqref{eq:values-gamma_-1D-absorbing} and \eqref{redefn.psilambda}, we get that $\psi_l$ is continuous up to the boundary $\{x=0\}\times \{v>0\}$ with $\psi_l=cv^{3l+2}$ for some constant $c\in\bbR$. Altogether, we have proved \eqref{secder.psilambda}, namely that $\partial_{vv}\psi_l\in L^\infty_{\mathrm{loc}}((0,\infty)\times \bbR)$. This gives $v\partial_x\psi_l\in L^\infty_{\mathrm{loc}}((0,\infty)\times \bbR)$, as $\psi_l$ solves \eqref{eq.reg.psilambda} in the classical sense. Therefore, by \autoref{rmk.equ.weak.str.par}, $\psi_l$ is a weak solution to \eqref{eq.reg.psilambda}.

Next, we are going to prove \eqref{goal.vder.psilambda}. 
To do so, first, we show that if $v^3\gg x$ with $v,x\geq0$ and $k>\lambda$,
\begin{align}\label{goal2.reg.psilambda}
    \partial_v^k\psi_l(x,v)=cv^{3l+2-k}O\left(\frac{x}{v^3}\right).
\end{align}
First, let us write 
\begin{align*}
    F(z)\coloneqq M\left(-\frac{\lambda}{3},\frac23,-z\right)+C\widetilde{M}\left(-\frac{\lambda-1}{3},\frac43,z\right)\quad\text{and}\quad G(v)\coloneqq \frac{v^3}{9x}
\end{align*}
and recall \eqref{C.defn.appen} to see that 
\begin{align}\label{redefn2.psilambda}
    x^{\frac\lambda3}(F\circ G)(v)=\psi_l(x,v).
\end{align}

Then by this and Fa\'a di Bruno's formula together with the fact that $G^{(j)}(v)=0$ when $j\geq4$, we have 
\begin{align}\label{formula.comp}
    \partial_{v}^k(F\circ G)(v)&=\sum_{i+2j+3m=k}\frac{k!}{i!l!2^{j}m!6^{m}}F^{(i+j+m)}(G(v))\left(\frac{v^2}{3x}\right)^{i}\left(\frac{2v}{3x}\right)^{j}\left(\frac{2}{3x}\right)^{m}.
\end{align}
Before proceeding with further estimates, we prove that if $\frac{v^3}{9x}\gg1$, then 
\begin{equation}\label{ineq.goal.derpsi}
\begin{aligned}
    F^{(j)}\left(\frac{v^3}{9x}\right)&=(-1)^j{M}^{(j)}\left(-\frac{\lambda}{3},\frac23,-\frac{v^3}{9x}\right)+C\widetilde{M}^{(j)}\left(-\frac{\lambda-1}{3},\frac43,\frac{v^3}{9x}\right)\\
    &=(-1)^jA_\lambda \left(\frac{v^3}{9x}\right)^{-j+\frac{\lambda}3}\left(\left(-\lambda/3\right)_{j}+O\left(\frac{9x}{v^3}\right)\right),
\end{aligned}
\end{equation} 
where
\begin{align*}
    A_\lambda\coloneq \Gamma\left(\frac23\right)\Gamma\left(\frac{1-\lambda}3\right)\left(\frac{1}{\Gamma\left(\frac{2+\lambda}3\right)\Gamma\left(\frac{1-\lambda}3\right)}+\frac{1}{\Gamma\left(-\frac{\lambda}3\right)\Gamma\left(\frac{3+\lambda}3\right)}\right).
\end{align*}
To prove this, we use \cite[13.5.1]{Abramowitz} together with the fact that $e^{ i\pi a}z^{-a}=(-z)^{-a}$ (note that we can always take the principle branch of $\log z = \log |z| + i \pi$ by the analyticity of $M$) for $z\in\bbR$ to see that when $v^3\gg x$, then for any $j$,
\begin{equation}\label{ineq1.mj1}
\begin{aligned}
    M\left(j-\frac{\lambda}{3},j+\frac23,-\frac{v^3}{9x}\right)=\frac{\Gamma(l+2/3)}{\Gamma(2/3+\lambda/3)}\left(\frac{v^3}{9x}\right)^{\frac{\lambda}3-j}\left(1+O\left(\frac{9x}{v^3}\right)\right),
\end{aligned}
\end{equation}
and
\begin{equation}\label{ineq2.mj1}
\begin{aligned}
    M\left(-\frac{\lambda-1}{3},-j+\frac43,-\frac{v^3}{9x}\right)=\frac{\Gamma(-j+4/3)}{\Gamma(-j+\lambda/3+1)}\left(\frac{v^3}{9x}\right)^{\frac{\lambda-1}3}\left(1+O\left(\frac{9x}{v^3}\right)\right).
\end{aligned}
\end{equation}
Recall from \cite[13.3.16]{DLMF} and \eqref{ind.psi1} that
\begin{align}\label{der.mj}
    {M}^{(l)}\left(-\frac{\lambda}{3},\frac23,-\frac{v^3}{9x}\right)=\frac{\left(-{\lambda}/{3}\right)_j}{(2/3)_j}M\left(j-\frac{\lambda}{3},j+\frac23,-\frac{v^3}{9x}\right),
\end{align}
and
\begin{align}\label{der.tilmj}
    C\widetilde{M}^{(j)}\left(-\frac{\lambda-1}{3},\frac43,\frac{v^3}{9x}\right)=C(4/3-j)_j\left(\frac{v^3}{9x}\right)^{\frac43-j-1}M\left(-\frac{\lambda-1}{3},-j+\frac43,-\frac{v^3}{9x}\right).
\end{align} 
By plugging \eqref{ineq1.mj1} and \eqref{ineq2.mj1} into \eqref{der.mj} and \eqref{der.tilmj}, respectively, we derive 
\begin{equation}\label{ineq3.mj1}
\begin{aligned}
    {M}^{(j)}\left(-\frac{\lambda}{3},\frac23,-\frac{v^3}{9x}\right)=(-\lambda/3)_j\frac{\Gamma(2/3)}{\Gamma(2/3+\lambda/3)}\left(\frac{v^3}{9x}\right)^{-j+\frac{\lambda}3}\left(1+O\left(\frac{9x}{v^3}\right)\right),
\end{aligned}
\end{equation}
and
\begin{equation}\label{ineq4.mj1}
\begin{aligned}
    &C\widetilde{M}^{(j)}\left(-\frac{\lambda-1}{3},\frac43,\frac{v^3}{9x}\right)=C\frac{\Gamma(4/3)}{\Gamma(-j+\lambda/3+1)}\left(\frac{v^3}{9x}\right)^{-j+\frac{\lambda}3}\left(1+O\left(\frac{9x}{v^3}\right)\right),
\end{aligned}
\end{equation}
where we have also used the fact that $\Gamma(a+j)=(a)_j\Gamma(a)$. Note from \eqref{C.defn.appen} that
\begin{align*}
    C\frac{\Gamma(4/3)}{\Gamma(-j+\lambda/3+1)}=C\frac{\Gamma(4/3)(-\lambda/3)_j}{(-1)^j\Gamma(\lambda/3+1)}=(-1)^j\frac{\Gamma((1-\lambda)/3)\Gamma(2/3)(-\lambda/3)_j}{\Gamma(-\lambda/3)\Gamma(\lambda/3+1)}.
\end{align*}
Therefore, using this, \eqref{ineq3.mj1}, and \eqref{ineq4.mj1}, we get \eqref{ineq.goal.derpsi}.

Now plugging \eqref{ineq.goal.derpsi} into \eqref{formula.comp} yields
\begin{equation}\label{der.comppsilambda}
\begin{aligned}
    &\frac{\partial_{v}^k(F\circ G)(v)}{A_\lambda}\\
    &=\sum_{i+2j+3m=k}\frac{k!}{i!j!2^{j}m!6^{m}}(-1)^{i+j+m}\left(\frac{v^3}{9x}\right)^{-(i+j+m)+\frac\lambda3}\left(\left(-\frac\lambda3\right)_{i+j+m}+O\left(\frac{9x}{v^3}\right)\right)\left(\frac{v^2}{3x}\right)^{i}\left(\frac{2v}{3x}\right)^{j}\left(\frac{2}{3x}\right)^{m}\\
    &=\sum_{i+2j+3m=k}\frac{k!}{i!j!m!}(-1)^{i+j+m}v^{\lambda-k}x^{-\frac\lambda3}\left(\left(-\frac\lambda3\right)_{i+j+m}+O\left(\frac{9x}{v^3}\right)\right)3^{i+j-\frac{2\lambda}3}.
\end{aligned}
\end{equation}
Now we want to prove for any $k>\lambda$,
\begin{align*}
    a_k\coloneqq\sum_{i+2j+3m=k}\frac{1}{i!j!m!}(-1)^{i+j+m}3^{i+j}\left(-\frac{\lambda}{3}\right)_{i+j+m}=0.
\end{align*}
This follows from the following observation. For a very small $y$ such as $|y|\ll1$, we get 
\begin{align*}
    (1+3y+3y^2+y^3)^{\frac{\lambda}3}&=\sum_{k=0}^{\infty}{\binom{\lambda/3}k}(3y+3y^2+y^3)^k\\
    &=\sum_{k=0}^{\infty}\frac{\lambda/3(\lambda/3-1)\cdots(\lambda/3-(k-1))}{k!}(3y+3y^2+y^3)^k\\
    &=\sum_{k=0}^{\infty}(-1)^k\frac{(-\lambda/3)_k}{k!}(3y+3y^2+y^3)^k\\
    &=\sum_{k=0}^{\infty}(-1)^k\frac{(-\lambda/3)_k}{k!}\sum_{i+j+m=k}\frac{(3y)^i(3y^2)^l(y^3)^m}{i!j!m!}k!\\
    &=\sum_{\substack{k=0\\
    i,l,m\geq0}}^{\infty}(-1)^{i+j+m}\frac{(-\lambda/3)_{i+j+m}}{i!j!m!}3^{i+j}y^{i+2j+3m},
\end{align*}
where we have used the generalized binomial theorem and the multinomial theorem. In addition, since $|y|\ll1$ and $(3y+3y^2+y^3)\ll1$, the above sums all converge. Moreover, we can rewrite
\begin{align*}
    (1+y)^\lambda=(1+3y+3y^2+y^3)^{\frac{\lambda}3}=\sum_{{k=0}}^{\infty}\sum_{i+2j+3m=k}(-1)^{i+j+m}\frac{(-\lambda/3)_{i+j+m}}{i!j!m!}3^{i+j}y^{i+2j+3m}=\sum_{k=0}^{\infty}a_ky^{k}.
\end{align*}
Therefore, we get $a_k=\binom{\lambda}{k}$ when $k\leq \lambda=3l+2$ and $a_k=0$ when $k>\lambda=3l+2$. 

By combining this result with \eqref{der.comppsilambda} and \eqref{redefn2.psilambda}, we arrive at \eqref{goal2.reg.psilambda}. Therefore, when $k>\lambda=3l+2$, we have for any $x \le v^{k+3-\lambda}$,
\begin{align}\label{ineq00.psilambda}
    |\partial_v^k\psi(x,v)|\leq cv^{\lambda-k}\frac{v^{k+3-\lambda}}{v^3}\leq c.
\end{align}
Indeed, when $v\leq v_0$ for some small constant $v_0$, then $v^3\gg v^{k+3-\lambda}\geq x$ and \eqref{goal2.reg.psilambda} implies the desired result. If $v\geq v_0$, then by applying \eqref{eq:asymp-1F1} with $x\ll1$ in the first equality given in \eqref{ineq.goal.derpsi} together with \eqref{der.mj}, and \eqref{der.tilmj}, we get \eqref{ineq00.psilambda}.

In addition, when $k\leq \lambda$, by \eqref{der.comppsilambda} and \eqref{redefn2.psilambda}, we have 
\begin{align*}
    |\partial_v^k\psi(x,v)|\leq cv^{{\lambda}-k}\leq c.
\end{align*}
Altogether, we have proved \eqref{goal.vder.psilambda}.

Now, we aim to prove \eqref{goal.xder.psilambda}. 
\begin{itemize}
    \item Assume $|x_1-x_2|\geq v^{\frac{1+k}{\eps}}$. We observe from \eqref{ineq.psidd}, \eqref{redefn.psilambda}, \eqref{ineq1.mj1} and \eqref{ineq2.mj1} that
    \begin{align}\label{psi0.asy.v}
        \partial^{k}_{v}\psi_0(x,v)=cv^{2-k}+v^{2-k}O(9x/v^3)=\partial^{k}_{v}\psi_0(0,v)+v^{2-k}O(9x/v^3)
    \end{align}
    when $k\in\{0,1,2\}$ $v^3\gg x$. Since the case $k=1$ similarly follows by a few elementary computations together with \eqref{redefn.psilambda}, \eqref{ineq1.mj1} and \eqref{ineq2.mj1}, we omit the proof.  Therefore, we get whenever $v\leq v_0$ for some small constant $v_0$,
    \begin{equation}\label{ineq1.xder.psilambda}
    \begin{aligned}
        \frac{|\partial_{v}^{k}\psi_0(x_1,v)-\partial_{v}^{k}\psi_0(x_2,v)|}{|x_1-x_2|^{1-\epsilon}}\leq c\sum_{j=1}^{2}\frac{|\partial_{v}^{k}\psi_0(x_j,v)-\partial_{v}^{k}\psi_0(0,v)|}{v^{\frac{(1+k)(1-\eps)}{\eps}}}&\leq \frac{1}{v^{\frac{(1+k)(1-\eps)}{\eps}}}\sum_{j=1}^{2}v^{2-k}O({9x_j}/{v^3})\leq c,
    \end{aligned}
    \end{equation}
    where we have used the fact that $x_j\leq v^{\frac{1+k}\epsilon} \ll v^3$ since $v \le v_0$. When $v \ge v_0$, then we use the boundary regularity result from \autoref{lem.reg.gamma+} to deduce \eqref{ineq1.xder.psilambda}, which is applicable since $(x,v)$ is away from $\gamma_0$ when $v\geq v_0$.
    \item Assume $|x_1-x_2|<v^{\frac{1+k}{\eps}}$. Then 
    \begin{equation}\label{ineq2.xder.psilambda}
    \begin{aligned}
        \frac{|\partial^{k}_{v}\psi_0(x_1,v)-\partial^{k}_{v}\psi_0(x_2,v)|}{|x_1-x_2|^{1-\epsilon}}&\leq |x_1-x_2|^\epsilon\int_{0}^{1}\left|\partial^{k}_{v}\partial_x\psi_0(tx_1+(1-t)x_2,v)\right|\,dt\\
        &\leq {|x_1-x_2|^\epsilon}\int_{0}^{1}\left|\partial^{k}_{v}(v^{-1}\partial_{vv})\psi_0(tx_1+(1-t)x_2,v)\right|\,dt\\
        &\leq c\frac{|x_1-x_2|^\epsilon}{v^{1+k}}\leq c ,
    \end{aligned}
    \end{equation}
    where we have used the fact that $v\partial_x\psi_0=\partial_{vv}\psi_0$ and \eqref{goal.vder.psilambda}. 
\end{itemize}
Combining \eqref{ineq1.xder.psilambda} and \eqref{ineq2.xder.psilambda}, we have 
\begin{equation}\label{x.dir.psi0}
\begin{aligned}
    &\frac{|\psi_0(x_1,v)-\psi_0(x_2,v)|}{|x_1-x_2|^{1-\eps}}\leq c&\quad\text{for any }x_1,x_2\leq v^{\frac1{\eps}},\\
    &\frac{|\partial^2_{v}\psi_0(x_1,v)-\partial_v^2\psi_0(x_2,v)|}{|x_1-x_2|^{1-3\eps}}\leq c&\quad\text{for any }x_1,x_2\leq v^{\frac1\eps},\\
\end{aligned}
\end{equation}
Now we use \eqref{goal.vder.psilambda}and \eqref{x.dir.psi0}, in order to derive $\psi_0\in C^{3-3\eps}(\{x\leq v^{\frac1\eps}\})$. Consider two points $z_1,z_2\in \mathcal{H}_1\cap  \{x\leq v^{\frac1\eps}\}$ and write for any $z\in \mathcal{H}_1\cap\{x\leq v^{\frac1\eps}\}$, $p_{z}$ as the second order Taylor polynomial of $\psi_0$ at $z$. Then we have 
\begin{align*}
    |(\psi_0-p_{z_1})(z_2)|\leq |(p_{(t_1,x_1,v_2)}-p_{z_1})(z_2)|+ |(\psi_0-p_{(t_1,x_1,v_2)})(z_2)|\eqqcolon J_1+J_2,
\end{align*}
where $p_{(t_1,x_1,v_2)}$ is  the second order Taylor polynomial of $\psi_0$ at $(t_1,x_1,v_2)$.
By the fundamental theorem of the calculus, \eqref{goal.vder.psilambda} and the fact that $v\partial_x\psi_0=\partial^2_v\psi_0$, we estimate $J_1$ as 
\begin{align*}
    J_1&\leq|v_2\partial_x\psi_0(x_1,v_2)-v_1\partial_x\psi_0(x_1,v_1)||t_2-t_1|\\
    &+|\psi_0(x_1,v_2)-(\psi_0(X_1)+\partial_v\psi_0(X_1)(v_2-v_1)+\partial^2_{v}\psi_0(X_1)(v_2-v_1)^2|\\
    &\leq |t_2-t_1||\partial^2_{v}\psi_0(x_1,v_2)-\partial^2_{v}\psi_0(x_1,v_1)|+c\|\partial_v^3\psi_0\|_{L^\infty(\{x_n\leq v_n^{\frac1\eps}\})}(v_2-v_1)^3\\
    &\leq c|t_2-t_1||v_1-v_2|,
\end{align*}
where $X_1\coloneqq (x_1,v_1)$.
For the estimate $J_2$, we follow the same lines as in \eqref{taylor1.g} and \eqref{taylor2.g} to get that
\begin{align*}
    J_2&\leq |p_{(t_2,x_1+(t_2-t_1)v_2,v_2)}(z_2)-p_{(t_1,x_1,v_2)}(z_2)|+|\psi_0(z_2)-p_{(t_2,x_1+(t_2-t_1)v_2,v_2)}(z_2)|\\
    &\leq \left|\int_{0}^1(\mathcal{D}\psi_0(t_1+\xi(t_2-t_1),x_1+\xi(t_2-t_1)v_2,v_2)-\mathcal{D}\psi_0(t_1,x_1,v_2))\,d\xi(t_2-t_1)\right|\\
    &\quad+|\psi_0(x_2,v_2)-\psi_0{(x_1+(t_2-t_1)v_2,v_2)}|\\
    &\leq \left|\int_{0}^1(v_2\partial_x\psi_0(x_1+\xi(t_2-t_1)v_2,v_2)-v_2\partial_x\psi_0(x_1,v_2))\,d\xi(t_2-t_1)\right|\\\
    &\quad+c|x_2-(x_1+(t_2-t_1)v_2)|^{1-\eps}\\
    &\leq c(|(t_2-t_1)v_2|^{1-3\eps}|t_2-t_1|+|x_2-(x_1+(t_2-t_1)v_2)|^{1-\eps}),
\end{align*}
where $\mathcal{D}=\partial_t+v\partial_x$, and we have used the fact that $\psi_0=\psi_0(x,v)$, $v\partial_x\psi_0=\partial_v^2\psi_0$, and \eqref{x.dir.psi0}. Combining estimates $J_1$ and $J_2$ leads to 
\begin{align}\label{taylor.psi0}
    |(\psi_0-p_{z_1})(z_2)|\leq c(|t_2-t_1||v_1-v_2|+|t_2-t_1|^{2-3\eps}+|x_2-(x_1+(t_2-t_1)v_2|^{1-\eps}),
\end{align}
which implies that $\psi_0\in C^{3-3\eps}(\mathcal{H}_1\cap\{x\leq v^{\frac1\eps}\})$ when $\eps\leq \frac13$. Note that when $\eps>\frac13$, then $\psi_0\in C^{3-3\eps}(\mathcal{H}_1\cap\{x\leq v^{3}\})$ follows from the first inequality given in \eqref{x.dir.psi0} and \eqref{secder.psilambda}.
In addition, for very small $\delta$ and $v$, we write  $x_1=\frac{v^{\frac{1}\eps-\delta}}2,\,x_2=v^{\frac{1}\eps-\delta}$. Then using \eqref{psi0.asy.v}, we have 
\begin{equation}\label{psi.sharp}
\begin{aligned}
    \frac{|\psi_0(x_1,v)-\psi_0(x_2,v)|}{|x_1-x_2|^{1-\epsilon}}\eqsim \frac{v^2O(v^{\frac{1}\eps-\delta-3})}{v^{(\frac{1}\eps-\delta)(1-\eps)}}\eqsim v^{-\delta\eps},
\end{aligned}
\end{equation}
which implies that $\psi_0\notin C^{3-3\eps}(\mathcal{H}_1\cap\{x\leq v^{\frac1\eps-\delta}\})$.

For $l\geq1$, by following the same lines, we deduce 
\begin{align*}
        \frac{|\psi_l(x_1,v)-\psi_l(x_2,v)|}{|x_1-x_2|}\leq c(j)\quad\text{for any }(x_1,v),(x_2,v)\in H_1\cap\mathcal{R}^{-}.
    \end{align*}

\end{proof}

\section{A stability result for weak solutions}
In this section, we prove that the limit of a sequence of weak solutions is also a weak solution. We give a result adapted to the setting of the proof of \autoref{lem.exp.nd.a}.

\begin{lemma}\label{lem.limit.app}
    Let $(f_m)$ be a sequence of weak solutions to 
    \begin{equation}\label{eq.fm.limit.app}
\left\{
\begin{alignedat}{3}
(\partial_t+v\cdot\nabla_x){f}_{m}-\ddiv({A}_m\nabla_v{f}_m)&=B_m\cdot\nabla_vf_m+F_m+\sum_{k=0}^{k_0}F_{m,k}\widetilde{\pmb{\Phi}}_{k}&&\quad \mbox{in  $\mathcal{H}_{\frac1{r_m}}$}, \\
{f}_m&=g_m&&\quad  \mbox{in $\gamma_-\cap \mathcal{Q}_{\frac1{r_m}}$},
\end{alignedat} \right.
\end{equation}
where $A_m\in C^{\alpha}(\mathcal{H}_{1/r_m})$, $F\in C^{\lambda}(\mathcal{H}_{1/r_m})$ $F_{m,k}\in C^{\lambda,k}(\mathcal{H}_{1/r_m})$, $g_m\in C^{\alpha}(\gamma_-\cap \mathcal{Q}_{1/r_m})$, $r_m\to0$ and $\widetilde{\pmb{\Phi}}_{k}$ is one of the elements given in \eqref{defn.mathcalg3}. We assume the following:
\begin{itemize}
    \item For any $R\leq \frac1{r_m}$,
    \begin{align}\label{ass1.limit.app}
    [F_m]_{C^{\lambda}(\mathcal{H}_{R})}, [F_{m,k}]_{C^{\lambda_1}(\mathcal{H}_{R})},\|g_m\|_{L^\infty(\gamma_-\cap \mathcal{Q}_{R})}\leq \frac{c(R)}{\theta(r_m)},\quad \|A_m\|_{C^{\alpha}(\mathcal{H}_{R})}+\|r_m^{-1}B_m\|_{L^\infty(\mathcal{H}_R)}\leq \Lambda,
\end{align}
where $\Lambda>0$, $\theta(r_m)\to 0$ as $m\to\infty$.
\item For any $\mathcal{Q}_1(z_0)\Subset \{x_n>0\}$,
\begin{align}\label{ass2.limit.app}
    f_m\to f_\infty\quad\text{in }C^{1,\alpha}(\mathcal{Q}_1(z_0)).
\end{align}
\item For any $R>0$, 
\begin{align}\label{ass3.limit.app}
    \|f_m\|_{C^\alpha(\mathcal{H}_R)}+\|\nabla_vf_m\|_{L^2(\mathcal{H}_R)}\leq c(R).
\end{align}
\end{itemize}

Then there is a weak solution $f_\infty=\lim\limits_{m\to\infty}f_m$ to
\begin{equation*}
\left\{
\begin{alignedat}{3}
(\partial_t+v\cdot\nabla_x){f}_{\infty}-\ddiv({A}_\infty\nabla_v{f}_\infty)&=F_\infty+F_{\infty,k}\widetilde{\pmb{\Phi}}_k&&\quad \mbox{in  $\mathcal{H}_{\frac1{r_m}}$}, \\
{f}_\infty&=0&&\quad  \mbox{in $\{x_n=0\}\times \{v_n>0\}$},
\end{alignedat} \right.
\end{equation*}
where $A_\infty$ is a constant matrix satisfying $\Lambda^{-1}I\leq A\leq \Lambda I$, $F_\infty\in \cP_{[\lambda]}$ and $F_{\infty,k}\in \cP_{[\lambda+k]}$.
\end{lemma}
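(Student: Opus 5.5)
We argue directly from the weak formulation of \eqref{eq.fm.limit.app} and pass to the limit term by term. The first step is to identify the limits of the data. By \eqref{ass1.limit.app}, the matrices $A_m$ are bounded in $C^\alpha(\mathcal{H}_R)$ for every $R$. As in the blow-up construction \eqref{widetildeab.exp.nd}, $A_m(z)=A(z_{0,m}\circ S_{r_m}z)$, so $[A_m]_{C^\alpha(\mathcal{H}_R)}\lesssim (r_mR)^\alpha\to0$. By Arzel\`a--Ascoli and a diagonal argument, $A_m\to A_\infty$ locally uniformly, and $A_\infty$ is a constant matrix satisfying $\Lambda^{-1}I\le A_\infty\le\Lambda I$. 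Next, $\|B_m\|_{L^\infty(\mathcal{H}_R)}\le\Lambda r_m\to0$.

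For the sources we follow the argument of \autoref{lem.bdry.hol.rep}. Write $F_m=(F_m-p_m)+p_m$, where $p_m\in\cP_{[\lambda]}$ is the Taylor polynomial at $0$. The seminorm bound gives $\|F_m-p_m\|_{L^\infty(\mathcal{H}_R)}\le c(R)/\theta(r_m)\to0$. The polynomials $p_m$ lie in a finite-dimensional space, so it suffices to bound them in $L^\infty(\mathcal{H}_1)$. Such a bound follows from \autoref{lem.bdry.hol.rep}, applied on $\mathcal{H}_R$ and using \eqref{ass3.limit.app}. Hence, along a subsequence, $p_m\to F_\infty\in\cP_{[\lambda]}$, and likewise $F_{m,k}\to F_{\infty,k}$ locally uniformly, with $F_{\infty,k}$ polynomials of the stated degree. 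Finally, $g_m\to0$ uniformly on compact subsets of $\gamma_-$.

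The second step is the convergence of $f_m$ and $\nabla_vf_m$. By \eqref{ass3.limit.app} and Arzel\`a--Ascoli, $f_m\to f_\infty$ locally uniformly on $\overline{\{x_n>0\}}$, and this limit agrees with the one in \eqref{ass2.limit.app}. Since $f_m=g_m\to0$ on $\gamma_-$ and the convergence is uniform up to the boundary, $f_\infty=0$ on $\{x_n=0\}\times\{v_n>0\}$. The bound on $\|\nabla_vf_m\|_{L^2(\mathcal{H}_R)}$ gives $\nabla_vf_m\rightharpoonup G$ weakly in $L^2(\mathcal{H}_R)$. By \eqref{ass2.limit.app} we have $\nabla_vf_m\to\nabla_vf_\infty$ pointwise in the interior, so $G=\nabla_vf_\infty$ and $\nabla_vf_\infty\in L^2_{\rm loc}$ up to the boundary.

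The third step passes to the limit in the weak formulation. Fix a test function $\psi\in C_c^\infty$ with $\supp\psi\cap\partial\mathcal{H}\subset\{x_n=0\}\times\{v_n\ge0\}$. The term $\int f_m(\partial_t+v\cdot\nabla_x)\psi$ converges by uniform convergence. For the diffusion term, $A_m\nabla_vf_m\cdot\nabla_v\psi\to A_\infty\nabla_vf_\infty\cdot\nabla_v\psi$ in integrated form, because $A_m\to A_\infty$ uniformly and $\nabla_vf_m$ converges weakly in $L^2$. The drift term satisfies $\int|B_m||\nabla_vf_m||\psi|\le \Lambda r_m\,c(R)\to0$. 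The boundary term $\int_{\gamma_-}g_m\psi(-v_n)$ tends to $0$. Among the sources, $F_m\psi\to F_\infty\psi$ uniformly. The terms $F_{m,k}\widetilde{\pmb{\Phi}}_k\psi$ converge in $L^1$ because $F_{m,k}\to F_{\infty,k}$ uniformly and $\widetilde{\pmb{\Phi}}_k\in L^1_{\rm loc}$ up to the boundary. When $\widetilde{\pmb{\Phi}}_k$ is one of the unbounded elements $\partial_{v_n}\pmb{\Phi}_k$ or $\partial_{v_n v_n}\pmb{\Phi}_k$ of $\mathcal{G}$, local integrability follows from \autoref{rmk.integrable.derv.phim} (e.g.\ $\partial_{vv}\phi_0\in L^{8/3-\eps}$) together with the explicit asymptotics.

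I expect the main obstacle to be the $\partial_{v_nv_n}$-type terms whose integrability near $\gamma_0$ is only borderline. The first fix to try is to integrate one $v_n$-derivative by parts onto $\psi F_{m,k}$, thereby rewriting the source in divergence form with an $L^2$ integrand such as $\partial_{v_n}\pmb{\Phi}\in L^{8-\eps}$. Any boundary terms produced on $\{x_n=0\}$ should vanish, because $\pmb{\Phi}_k$ vanishes on $\gamma_-$ and $\psi$ is supported away from $\gamma_+$. After this rewriting, uniform convergence of $F_{m,k}$ and $\partial_vF_{m,k}$ (from the polynomial convergence) suffices to pass to the limit, which concludes the argument.
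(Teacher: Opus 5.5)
Your proposal is correct and is essentially the paper's argument, written out in more detail. You bound the data via \autoref{lem.bdry.hol.rep} and \eqref{ass3.limit.app}, use the vanishing seminorms to get polynomial limits, and pass to the limit term by term in the weak formulation. Your explicit treatment fills in three steps the paper leaves implicit: the gradient term via weak $L^2$ convergence, the boundary condition via uniform convergence up to $\gamma_-$ from \eqref{ass3.limit.app}, and the constancy of $A_\infty$, which indeed needs the blow-up structure of $A_m$ and not just the stated $C^\alpha$ bound.

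The obstacle you anticipate in your last paragraph does not arise, and your own earlier sentence already disposes of it. As noted before \autoref{lem.bdry.hol}, every $\pmb{\Phi}\in\mathcal{G}$, including the $\partial_{v_nv_n}\pmb{\Phi}_k$, lies in $L^{2+\epsilon_0}$ on bounded sets up to $\{x_n=0\}$. Uniform convergence of $F_{m,k}$ therefore already gives convergence of $\int F_{m,k}\widetilde{\pmb{\Phi}}_k\psi$, and no integration by parts is needed.
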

\begin{proof}
    By Lemma \ref{lem.bdry.hol.rep} together with \eqref{ass1.limit.app} and \eqref{ass3.limit.app} , we have for any $\psi\in L^2(\mathcal{H}_R)$, 
\begin{align*}
    \left|\int_{\mathcal{H}_R}F_m \psi\,dz\right|\leq \|F_m\|_{L^\infty(\mathcal{H}_R)}\|\psi\|_{L^2(\mathcal{H}_R)}\leq c(\theta(r_m)^{-1}+\|f_m\|_{L^\infty(\mathcal{H}_R)})\|\psi\|_{L^2(\mathcal{H}_R)}\leq c(R)\|\psi\|_{L^2(\mathcal{H}_R)}.
\end{align*}
Thus, $F_m\to F_\infty$ for some $F_\infty\in L^2(\mathcal{H}_R)$. In particular, $[F_m]_{C^{\lambda}(\mathcal{H}_R)}\to0$. Thus $\|F_\infty\|_{C^{\lambda}(\mathcal{H}_R)}\leq c(R)$ by the above $L^\infty$-bound of $F$ together with the interpolation argument, which implies $F_\infty\in \cP_{[\lambda]}$. Similarly, we prove 
\begin{align*}
    \int_{\mathcal{H}_R}F_{m,k}\widetilde{\pmb{\Phi}}_k\psi \to \int_{\mathcal{H}_R}F_{\infty,k}\widetilde{\pmb{\Phi}}_k\psi, \qquad
    \int_{\mathcal{H}_R}B_m\cdot\nabla_vf_m\psi \to0.
\end{align*}
Moreover, by \eqref{ass2.limit.app} and the above observations, $f_\infty$ is a weak solution to 
\begin{align*}
    (\partial_t+v\cdot\nabla_x){f}_{\infty}-\ddiv({A}_\infty\nabla_v{f}_\infty)=F_\infty+F_{\infty,k}\widetilde{\pmb{\Phi}}_k\quad\text{in }\mathcal{H}_R,
\end{align*}
where $A_\infty$ is a constant matrix satisfying the uniformly ellipticity condition. This completes the proof.
\end{proof}


\bibliographystyle{alpha}
\bibliography{literature}

\end{document}